\theoremstyle{definition}
\newtheorem{definition}{Definition}[section]
\newtheorem{example}[definition]{Example}
\newtheorem{remark}[definition]{Remark}
\newtheorem{observation}[definition]{Observation}
\newtheorem{procedure}[definition]{Procedure}
\newtheorem{notation}[definition]{Notation}
\newtheorem{proofspecial}[definition]{Proof}
\theoremstyle{plain}
\newtheorem{theorem}[definition]{Theorem}
\newtheorem{lemma}[definition]{Lemma}
\newtheorem{proposition}[definition]{Proposition}
\newtheorem{corollary}[definition]{Corollary}
\newcommand{\NN}{\mathbb{N}}
\newcommand{\ZZ}{\mathbb{Z}}
\newcommand{\QQ}{\mathbb{Q}}
\newcommand{\CC}{\mathbb{C}}
\newcommand{\RR}{\mathbb{R}}
\newcommand{\AAA}{\mathbb{A}}
\newcommand{\TT}{\mathbb{T}}
\newcommand{\PP}{\mathbb{P}}
\newcommand{\mm}{\mathfrak{m}}
\newcommand{\Su}{\text{Suiv}}
\newcommand{\pos}{\text{pos}}
\newcommand{\ff}{f_{\alpha, \beta}}
\newcommand{\Hpt}{\text{Hilb}^{n,n+1,n+2}(0)}
\newcommand{\Hi}{\text{Hilb}}
\newcommand{\bigslant}[2]{{\raisebox{.05em}{$#1$}\!\left/\raisebox{-.1em}{$#2$}\right.}}
\def\quotient#1#2{%
 \raise1ex\hbox{$#1$}\Big/\lower1ex\hbox{$#2$}%
}
\begin{document}
\pagenumbering{gobble}
\frontmatter
\begin{titlepage}
\cleardoublepage
\centering
\title{Homology of the three flag Hilbert scheme}
\date{July 24, 2016}
\author{Daniele Boccalini}
\maketitle
\end{titlepage}

\markboth{}{}
\tableofcontents
\markboth{}{} 
\newpage

\cleardoublepage
\chapter*{R\'{e}sum\'{e}}
\addcontentsline{toc}{chapter}{Acknowledgements}\markboth{}{}

 On prouve l'existence d'un pavage affine pour le sch\'{e}ma de Hilbert 
\[
\Hpt := \left\{\CC[[x,y]]\supset I_n\supset I_{n+1}\supset I_{n+2}: I_i \,\,\text{ id\'{e}aux avec } \dim_{\CC} \quotient{\CC[x,y]}{I_i} = i  \right\}
\] 
des drapeaux de longueur trois des sous sch\'{e}mas 0-dimensionels qui sont support\'{e}s \`{a} l'origine de $\CC^2$. On atteint ce r\'{e}sultat en montrant que l'espace est stratifi\'{e} par des sous vari\'{e}t\'{e}s lisses, les strata de Hilbert-Samuel. On montre que chacun de ces strata a un pavage affine en cellules de dimension connue et index\'{e}es par des diagrammes de Young marqu\'{e}s.  Le pavage affine nous permet de montrer que les polyn\^{o}mes de Poincar\'{e} de $\Hi^{n,n+1, n+2}(0)$ sont tels que: 
\begin{equation}
\label{generating123intro1}
\sum_{n\geq 0} P_q\left(\Hi^{n,n+1, n+2}(0)\right) z^n =\frac{q+1}{(1-zq)(1-z^2q^2)}\,\,
\prod_{k\geq 1} \frac{1}{1-z^kq^{k-1}}.
\end{equation}
 Dans la preuve de (\ref{generating123intro1}) on construit une correspondance combinatoire entre l'homologie de nos espaces et l'homologie des certains sous espaces connus de  $\Hi^{n+1, n+3}(0)$. On obtient comme corollaire un pavage affine et une formule pour la s\'{e}rie g\'{e}n\'{e}ratrice des polyn\^{o}mes de Poincar\'{e} de $\Hi^{n, n+2}(0)$ pour tous les $n\in \NN$. \\
 
 \vspace{1em}

\textbf{\begin{large}{Mots-cl\'{e}s.}\end{large}} Sch\'{e}ma de Hilbert, homologie, drapeaux d'id\'{e}aux, strata de Hilbert-Samuel, pavage affine.

\chapter*{Abstract}
\addcontentsline{toc}{chapter}{Abstract}\markboth{}{}

We prove the existence of an affine paving for the three-step flag Hilbert scheme
\[
\Hpt := \left\{\CC[[x,y]]\supset I_n\supset I_{n+1}\supset I_{n+2}: I_i \,\,\text{ ideals with } \dim_{\CC} \quotient{\CC[x,y]}{I_i} = i  \right\}
\] 
of 0-dimensional subschemes that are supported at the origin of $\CC^2$. This is done by showing that the space stratifies in smooth subvarieties, the Hilbert-Samuel's strata, each of which has an affine paving with cells of known dimension, indexed by marked Young diagrams. The affine pavings of the Hilbert-Samuel's strata allow us to prove that the Poincar\'{e} polynomials for $\Hi^{n,n+1, n+2}(0)$ satisfy:\begin{equation}
\label{generating123intro}
\sum_{n\geq 0} P_q\left(\Hi^{n,n+1, n+2}(0)\right) z^n = \frac{q+1}{(1-zq)(1-z^2q^2)}\,\,
\prod_{k\geq 1} \frac{1}{1-z^kq^{k-1}}.
\end{equation}
In the process of proving (\ref{generating123intro}) we relate combinatorially the homology of our spaces with that of known subspaces of $\Hi^{n+1, n+3}(0)$. As a corollary we find an affine paving and a formula for the generating function of the Poincar\'{e} polynomials of $\Hi^{n, n+2}(0)$ for all $n\in \NN$. 

 \vspace{1em}

\textbf{\begin{large}{Keywords.}\end{large}} Hilbert scheme, homology, flags of ideals, Hilbert-Samuel's strata, affine paving. 
 \vspace{2em}

\newpage
\thispagestyle{empty}
\mbox{}
\newpage

\chapter*{Acknowledgements}
\addcontentsline{toc}{chapter}{Acknowledgements}\markboth{}{}

I would like to thank my advisor, Tam\'{a}s Hausel, for his fundamental help and encouragement during the preparation of this thesis, and more generally during these four long years of studying and exploring. I am very grateful to him also for the extraordinary group he created and guided here in Epfl. It was honestly a big honor and a lot fun to be part of it. Everyone contributed immensely in making this experience a really enjoyable and interesting adventure, and that is why I want to thank, and thank a lot, present and past members: Michael Groechenig, Michael Wong, Martin Mereb, Zongbin Chen, Mario Garcia Fernandez, Ben Davison, Szil\'{a}rd Szab\'{o}, Alexander Noll, Michael McBreen, and Yohan Brunebarbe. A special thank to Riccardo Grandi and Dimitri Wyss with whom not only I shared the experience of the PhD and a big messy desk, but also a deep and genuine friendship that I hope will last for many years to come. I also owe many thanks to the many visitors that I met during these years and from whom I learned so much. In particular I would like to thank Luca Migliorini for the truly invaluable teachings and support. Finally I would like to thank Pierrette Paulou that has always been the soul of our group: her help has been so fundamental and, more importantly, so kind and generous that it is impossible to overestimate it.

\hspace{3em} I would like to thank Kathryn Hess, Donna Testerman, Andr\'{a}s Szenes and Bal\'{a}zs Szendr\H{o}i for accepting to be part of the jury for my defense and devoting time to review my work.

\hspace{3em} The best part of all this experience has the name of Alexandre, the name of Marie and the name of Martina. Thanks a lot. 

\hspace{3em} Finally I would like to thank my family and my friends: after a lot of research they are still what I like the most. 

\hspace{3em} During these years, my research was supported by the Project Funding number 144119: \emph{Arithmetic harmonic analysis on Higgs, character and quiver varieties} issued by the Swiss National Science Foundation; Foundation that I thank a lot for this.

\chapter*{Introduction}
\addcontentsline{toc}{chapter}{Introduction}\markboth{INTRODUCTION}{}
\hspace{3em}The Hilbert scheme of points of a smooth surface is one of the most beautiful and most studied examples of a moduli space. It is an algebraic geometric object that relates to many branches of mathematics: symplectic geometry, representation theory, combinatorics and, recently, theoretical physics. This special position is ensured on one hand by the relative simplicity and naturalness of its definition, and, on the other hand, by the many interesting structures it is equipped with. Curiously these structures are of two natures: some are directly inherited from the base surface, others appear from the moduli problem.  \\

\hspace{3em} Given a smooth surface $X$, the Hilbert scheme of $n$ points of $X$, denoted as $\Hi^n(X)$, parametrizes 0-dimensional subschemes of $X$ of length $n$. The most generic example is a collection of distinct points of $X$: in this case the length is the number of points. However it is when the points start colliding together that the spectrum of possible scheme structures becomes more and more complicated and its geometry more and more interesting. For example when two points are \emph{infinitely close} the Hilbert scheme remembers the direction along which they came together, i.e. a tangent vector at the collision point. Subschemes that are entirely supported on a single point form a subvariety sometimes called the \emph{punctual Hilbert scheme} and denoted by $\Hi^n(0)$. This space is the same for every surface and every point. It is singular, not even normal, but projective, reduced and irreducible (Haiman \cite{haiman1998t} and Briancon \cite{brianccon1977description}). It precisely measures the difference between $\Hi^n(X)$ and $X^{(n)}$ the $n$-th symmetric power of $X$ that parametrizes $n$-tuples of points up to order. In this sense the punctual Hilbert scheme is of key importance for those structures of $\Hi^n(X)$ that are inherited from $X$: smoothness for example (Fogarty \cite{fogarty1968algebraic}), or an holomorphic symplectic form if $X$ has one (Beauville \cite{beauville1983varietes}). In fact $\Hi^n(0)$ is the most singular fiber of the natural forgetful map $\Hi^n(X) \to X^{(n)}$ that turns out to be a crepant resolution of singularities. The study of the geometry of $\Hi^n(0)$ is also an important step in the work of Haiman \cite{haiman2001hilbert} to prove the combinatorial conjecture of $n!$. In another sense, $\Hi^n(0)$ is interesting as it contains a lot of the topological information of $\Hi^n(X)$. One can prove that $\Hi^n(0)$ is a deformation retract of $\Hi^n(\CC^2)$, and, since $X$ is covered by open subvarieties diffeomorphic to $\CC^2$, $\Hi^n(X)$ is covered by open subvarieties diffeomorphic to $\Hi^n(\CC^2)$. \\

\hspace{3em} In 1987 Ellingsrud and Stromme \cite{ellingsrud1987homology} coronated the efforts of many presenting a neat description of the Borel-Moore homology of $\Hi^n(\CC^2)$ by exploiting a natural torus action on it. The exact form for the Poincar\'{e} polynomial of $\Hi^n(\CC^2)$ became more relevant when Goettsche \cite{gottsche1990betti} considered \emph{all of the Hilbert schemes $\Hi^n(\CC^2)$} for different $n$ \emph{at once} and proved the formula that bears his name
\[
\sum_{n=0}^{+\infty} \sum_{i\geq 0} \dim H_i\left(\Hi^n(\CC^2)\right)q^i z^n \quad = \quad \prod_{k=1}^{+\infty} \frac{1}{1-q^{2k-2}z^k}.
\]
Bundling all the $\Hi^n(\CC^2)$ together, not only produces prettier formulas, but it is the starting point for the study of those additional and somewhat mysterious structures that we mentioned above.  Motivated by Goettsche formula (that holds more generally \cite{gottsche1993perverse}) Witten and Vafa \cite{vafa1994strong} related the study of Hilbert schemes to string theory; Nakajima \cite{nakajima1997heisenberg} constructed a geometric representation of products of the Heisenberg and Clifford algebras on the homology of $\,\,\bigsqcup_{n} \Hi^n(\CC^2)$; Lehn \cite{lehn1999chern} used a Vertex Algebra structure to study the product in cohomology.  This just to cite some examples.\\

\hspace{3em} An important geometric player in studying all the Hilbert schemes together is a space that has also an intrinsic interest: the \emph{flag Hilbert scheme}. This parametrizes flags of 0-dimensional subschemes of specified lengths. Its global geometry deteriorates quickly: $\Hi^{n, n+1}(\CC^2)$ is the last one to be smooth, as Cheah \cite{cheah1998cellular} proves. Again, if we ask for all subschemes to be concentrated in only one point we get quite interesting varieties. For longer flags we get varieties with many irreducible components of different dimensions. The case we are most interested in in this thesis is $\Hi^{n, n+1, n+2}(0)$. The main goal is to prove the following result. 
\begin{theorem}
For every $n\in \NN$ the space $\Hi^{n, n+1, n+2}(0)$ has a cellular decomposition with cells that are isomorphic to affine spaces. These affine cells are indexed by Young diagrams of size $n+2$ with two marked boxes. The dimension of each affine cell is readable from its label and the homology classes of the closures of these cells give a graded basis for the homology of $\Hi^{n, n+1, n+2}(0)$. The Poincar\'{e} polynomials of $\Hi^{n, n+1, n+2}(0)$ for all $n$ fit into a generating function:
\[
\sum_{n=0}^{+\infty}\,\, \sum_{i\geq 0} \dim H_i\left(\Hi^{n, n+1, n+2}(0)\right)q^i z^n \quad = \quad \frac{q+1}{(1-zq^2)(1-z^2q^4)}\, \prod_{k=1}^{+\infty} \frac{1}{1-q^{2k-2}z^k}.
\]
\end{theorem}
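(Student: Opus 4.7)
The plan is to produce an affine paving of $\Hpt$ in three stages: stratify by Hilbert-Samuel functions to reduce to smooth pieces, apply Bialynicki-Birula on each stratum, and sum the resulting Poincar\'{e} series. For each punctual ideal $I \subset \CC[[x,y]]$ of finite colength, the associated graded with respect to the $\mm$-adic filtration is a monomial ideal, equivalently a Young diagram. Fixing the three diagrams $(\mu_n, \mu_{n+1}, \mu_{n+2})$ attached to $I_n \supset I_{n+1} \supset I_{n+2}$ cuts out a locally closed subvariety $H_{\mu_\bullet} \subset \Hpt$, and these subvarieties stratify $\Hpt$. I would first prove that each $H_{\mu_\bullet}$ is smooth: the single-ideal strata $H_\mu \subset \Hi^k(0)$ are smooth by a theorem of Iarrobino, and for the flag version I would exhibit the forgetful projection $H_{\mu_\bullet} \to H_{\mu_{n+2}}$ as an iterated fibration whose fibers parameterize, at each step, the choice of new generator producing the jump in Hilbert-Samuel function.

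Next, the diagonal torus action $t \cdot (x,y) = (tx, ty)$ on $\CC^2$ lifts to a $\CC^*$-action on $\Hpt$ preserving every stratum. Its fixed points are flags of monomial ideals, which correspond bijectively to Young diagrams of total size $n+2$ with two marked boxes recording the two transitions $I_{n+2} \subset I_{n+1} \subset I_n$. Because each $H_{\mu_\bullet}$ is smooth with isolated, contracted fixed points, Bialynicki-Birula supplies an affine cell decomposition of each stratum; taking the union over all strata gives the claimed affine paving of $\Hpt$ indexed by marked Young diagrams of size $n+2$. A tangent-weight count at each monomial fixed point expresses the complex dimension of each cell as an explicit combinatorial statistic of its labeling diagram.

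The final step is to evaluate $\sum_n P_q(\Hpt) z^n$ using this combinatorial description. The direct route is to group marked diagrams by the underlying unmarked diagram $\mu$ and sum over the two mark positions, which are constrained to lie in the successive outer corners of the associated graded; the mark contribution should factor out as a universal rational series. The abstract hints at a cleaner alternative: a combinatorial bijection between the marked diagrams indexing cells of $\Hpt$ and certain decorated diagrams indexing cells of known subspaces of $\Hi^{n+1, n+3}(0)$. I would pursue this second route, combining Cheah's affine paving of the two-step nested Hilbert scheme with Gottsche's classical formula $\prod_{k \geq 1}(1 - q^{2k-2} z^k)^{-1}$ for the punctual Hilbert scheme to obtain the prefactor $(q+1)/((1-zq^2)(1-z^2q^4))$ times Gottsche's product.

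The hardest step I expect to be the Bialynicki-Birula assembly: one must verify that the cells coming from different smooth Hilbert-Samuel strata glue into a genuine affine paving of the singular variety $\Hpt$ rather than just a cellular stratification, which requires tracking how $\CC^*$-orbits can degenerate as one crosses strata boundaries. This is also what forces the indexing of cells by \emph{global} marked Young diagrams (rather than stratum-by-stratum labels), and it is precisely this uniform indexing that makes the generating function collapse into the compact closed form asserted by the theorem.
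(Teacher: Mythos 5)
Your skeleton (stratify by Hilbert--Samuel type, prove each stratum smooth, apply Bialynicki--Birula, then sum) is the paper's skeleton, but three of your steps have genuine problems.

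First, the torus you propose is wrong. The diagonal action $t\cdot(x,y)=(tx,ty)$ fixes \emph{every} homogeneous ideal, so its fixed locus in $\Hi^{n,n+1,n+2}(0)$ contains the positive-dimensional strata $G_{T_1,T_2,T_3}$ (already for $n=2$ the curvilinear ideals $(\omega_1x+\omega_2y)+\mathfrak m^2$ give a fixed $\PP^1$). You need a generic one-parameter subgroup of the full $\TT^2$ with weights $0<w_1<w_2$, and the choice matters: only the torus $\TT_{1^+}$ with $(n+2)w_1>(n+1)w_2$ makes the Hilbert--Samuel strata unions of attracting sets, which is what lets the cells of the smooth strata assemble into a paving of the singular total space. (Incidentally, this is why the ``hardest step'' you flag --- gluing cells across strata --- is actually the easy part: with $\TT_{1^+}$ the attracting sets of the total space \emph{are} the attracting sets of the strata, by the Goettsche-type lemma.)

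Second, smoothness of $M_{T_1,T_2,T_3}$ cannot be obtained by the soft fibration argument you sketch. The paper proves in its last chapter that the analogous strata for four-step flags are already singular at $n=6$ (and for five-step flags at $n=3$), so any argument that would apply uniformly to iterated one-box extensions must fail. The actual proof computes an explicit weight basis $B(I_1,I_2,I_3)$ of the tangent space at each fixed point --- whose dimension carries a correction term $\diamond\,M(\Gamma_1,\Gamma_2,\Gamma_3)$ depending on the \emph{relative} position of the two added boxes --- and then matches it against a lower bound for $\dim M_{T_1,T_2,T_3}$ obtained from Iarrobino's standard generators, the key point being that when $\diamond=1$ one of the defining equations of the stratum is identically satisfied. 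Your fibration picture sees none of this.

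Third, for the generating function you cannot directly sum the cell dimensions produced by $\TT_{1^+}$: the resulting statistic (formula in terms of hook differences of $\Gamma_1$ and $\Gamma_2$ plus the diamond correction) does not visibly factor. The paper must first prove, via a wall-crossing/box-sliding argument exploiting Poincar\'e duality on the smooth $\TT_W$-fixed components, that the Poincar\'e polynomial is unchanged if one recomputes positive parts with respect to $\TT_\infty$, where the cell dimension collapses to $n+2-\ell(\Gamma_3)$ plus a $0/1$ correction. Only then does the $2{:}1$ combinatorial correspondence with fixed points of $\Hi^{n+1,n+3}(\CC^2)_{tr}$ work, and the closed form comes from Nakajima--Yoshioka's generating function for that smooth space, not from Cheah's two-step paving. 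Since $\Hi^{n,n+1,n+2}(\CC^2)$ is not smooth, the freedom to change the one-parameter subgroup is not automatic and is itself a substantial part of the proof.
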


The techniques we utilize are by now classical in this  area of studies. They were first used by Ellingsrud and Stromme \cite{ellingsrud1987homology, ellingsrud1988cell} , perfected by Goettsche \cite{gottsche1990betti, gottsche1994hilbert} and used by Cheah \cite{cheah1998cellular} to treat the case of $\Hi^{n, n+1}(0)$. To give some more details on the strategy of the proof we explain the structure of the different chapters.  \\

\hspace{3em}In Chapter I, after the general definitions, we quickly focus on flag Hilbert sche-\\mes of subvarieties concentrated at one point. Here we introduce a stratification due to Iarrobino \cite{iarrobino1972punctual, iarrobino1977punctual} that is key to understand the geometry of $\Hi^{n}(0)$ (and similar spaces). Every point of $\Hi^{n}(0)$ is an ideal $I \subset \CC[[x,y]]$ and as such has a Hilbert-Samuel's type $T(I) \in \NN^{n}$. The Hilbert-Samuel's strata $M_T$ are indexed by the possible Hilbert-Samuel's type $T \in \NN^{n}$ and contain all ideals $I$ such that $T(I)=T$. It turns out that the Hilbert-Samuel's strata are smooth, as Iarrobino \cite{iarrobino1972punctual} proves. In the last section we introduce the famous technique of Bialynicki-Birula \cite{bialynicki1973some} to prove that a \emph{smooth} space with a torus action has, under some conditions, an affine cell decomposition with cells labeled by torus fixed points. A result of Fulton \cite{fulton2013intersection} tells us that the closures of these cells give a graded basis for the Borel-Moore homology of the space. We show that $\Hi^{n}(\CC^2)$ and all related varieties carry a natural two dimensional torus action that comes from rescaling the coordinates of $\CC^2$. We finish the chapter by studying the torus fixed points of $\Hi^{n}(0)$, $\Hi^{n, n+1}(0)$ and $\Hi^{n, n+1, n+2}(0)$ and by relating these fixed points with marked Young diagrams. \\

\hspace{3em} In Chapter II we study the Zariski tangent spaces of $\Hi^{n}(\CC^2)$, $\Hi^{n, n+1}(\CC^2)$ and $\Hi^{n, n+1, n+2}(\CC^2)$ at their fixed points. In particular, we find a basis of eigenvectors for the two dimensional torus action at each fixed point and we interpret these eigenvectors as combinatorial gadgets of the marked Young diagram that labels the fixed point. The bases for the tangent spaces of $\Hi^{n, n+1, n+2}(\CC^2)$, that are our original contributions, are constructed extending the classical study of the similar bases for $\Hi^{n}(\CC^2)$ and $\Hi^{n, n+1}(\CC^2)$. The use of these bases is far reaching. We prove that $\Hi^{n}(\CC^2)$ is smooth and thus we describe a cell decomposition of $\Hi^{n, n+1}(0)$ and its homology (Fogarty \cite{fogarty1968algebraic}, Ellingsrud-Stromme \cite{ellingsrud1987homology}). We prove that all Hilbert-Samuel's strata $M_T\subset \Hi^{n}(\CC^2)$ are smooth and describe their cell decomposition and homology (Iarrobino\cite{iarrobino1977punctual, iarrobino2003family}, Goettsche \cite{gottsche1994hilbert}).  We prove that $\Hi^{n, n+1}(\CC^2)$ is smooth and thus describe a cell decomposition of $\Hi^{n, n+1}(0)$ and its homology (Cheah \cite{cheah1998cellular}). We prove that all Hilbert-Samuel's strata $M_{T_1,T_2}\subset \Hi^{n, n+1}(0)$ are smooth and describe their cell decomposition and homology (Cheah \cite{cheah1998cellular}). All of this crucially relies on smoothness and is possible thanks to the result of Bialynicki-Birula. Recall that unfortunately $\Hi^{n, n+1, n+2}(\CC^2)$ is not smooth. We end the section by giving an original description of the tangent spaces of the Hilbert-Samuel's strata $M_{T_1, T_2, T_3} $ of $ \Hi^{n, n+1, n+2}(0)$.  \\

\hspace{3em} In Chapter III we prove that the Hilbert-Samuel's strata $M_{T_1, T_2, T_3}$ are smooth. To do so we use results of Iarrobino \cite{iarrobino1977punctual} on special opens that cover $M_{T_1}$ and whose points have especially nice generators. We are then able to study the dimension of $M_{T_1, T_2, T_3}$ relating it with that of $M_{T_1}$. This, thanks to the knowledge acquired in Chapter 2 on the tangent spaces, is enough to prove smoothness. We can then apply Bialynicki-Birula decomposition to describe their cell decompositions and homologies, and ultimately the cell decomposition and the homology of $\Hi^{n, n+1, n+2}(0)$. In the last section we show that for longer flag cases, i.e. starting at $\Hi^{n, n+1, n+2, n+3}(0)$, smoothness of the Hilbert-Samuel's strata no longer holds. This means that the case of $\Hi^{n, n+1, n+2}(0)$ is the last case where the classical techniques we described yield interesting results. \\

\hspace{3em} In Chapter IV we prove the formula for the generating function. The fact that $ \Hi^{n, n+1, n+2}(\CC^2)$ is not smooth implies that the cell decomposition we obtained in Chapter 3 might depend on the single appropriate choice of a one dimensional subtorus action. The resulting combinatorics of the Poincar\'{e} polynomials is not well suited to prove the formula for the generating function. In the case of $\Hi^{n}(0)$ and $\Hi^{n, n+1}(0)$, smoothness of the ambient space guarantees that we are free to use any one dimensional subtorus. In fact we obtain many different cell decompositions that give rise to different combinatorial expressions of the same Poincar\'{e} polynomials $P_q\left(\Hi^{n}(0)\right)$ and $P_q\left(\Hi^{n, n+1}(0)\right)$. Understanding better the details of these cases is crucial to prove, combinatorially, that also in the case of $\Hi^{n, n+1, n+2}(0)$ we can rewrite  $P_q\left(\Hi^{n, n+1, n+2}(0)\right)$ more conveniently. Once this is done we actually do not need to sum the results. In fact it turns out that Nakajima and Yoshioka \cite{nakajima2008perverse} already considered the same generating function by studying a different family of smooth subspaces $\Hi^{n-1, n+1}(\CC^2)_{tr} \subset \Hi^{n-1, n+1}(\CC^2)$. Thus we only need to match the combinatorics. It remains unclear if there is also a geometrical connection between their spaces and ours. However we manage to deduce a last original result: an affine cell decomposition of the spaces $\Hi^{n, n+2}(0)$ and a generating function for their Poincar\'{e} polynomials.

\mainmatter
  
\chapter{Fundamental Facts}
\hspace{3em}In this chapter we introduce the geometrical spaces we are interested in and the techniques that will allow us to describe some of their geometrical properties. \\

\hspace{3em} The starting point is the definition of the Hilbert scheme of 0-dimensional subschemes of length $n$ on a smooth surface $X$. This variety parametrizes \emph{configurations} of $n$ points of $X$. We define the \emph{punctual Hilbert scheme} $\Hi^n(0)$ that measures the local difference between the Hilbert scheme $\Hi^n(X)$ and the $n$-th symmetric power of $X$ that parametrizes lists of $n$ points of $X$ up to order. \\

\hspace{3em} We then define the \emph{flag} version of the Hilbert scheme, that parametrizes flags of subschemes of specified length. Again we will be interested in the case where the support of all the subschemes is a single point of $X$ and thus we define the \emph{punctual flag Hilbert scheme}. Following Iarrobino \cite{iarrobino1977punctual}, we will stratify these spaces according to the Hilbert-Samuel's type of the ideals that compose the flags. \\

\hspace{3em} To study the topological properties of the spaces introduced we use the natural torus action induced by the rescaling action on the local coordinates of the plane $\CC^2$. An action with isolated fixed points on a smooth variety $Y$ has attracting sets that are affine cells thanks to the theorem of Bialynicki-Birula \cite{bialynicki1973some}. A result of Fulton \cite{fulton2013intersection} shows that in this situation the homology groups are freely generated by the homology classes of the affine cells. 

\section{Hilbert scheme of points}   

\hspace{3em} We start with some rather general definitions and then quickly specialize them to single out the spaces we are interested in. We give concrete presentations for them and work with these for the rest of the thesis. \\

\hspace{3em}Let $T$ be a locally noetherian  scheme, $X$ a  quasiprojective variety over $T$ and $\mathcal{L}$ a very ample invertible sheaf on $X$ over $T$.
\begin{definition}{ \cite{grothendieck1960techniques}}
 Let $\underline{\text{Hilb}}(\bigslant{X}{T})$ be the contravariant functor from the category of locally noetherian $T$-schemes to the category of sets, which, for locally noetherian $T$-schemes $U, V$ and a morphism $f: U\to V$, is given by:
\begin{align*}
\underline{\text{Hilb}}(\bigslant{X}{T})(U) = \left\{Z\subset X\times U \,\text{ closed subscheme, flat over }\, U \right\},\\
\underline{\text{Hilb}}(\bigslant{X}{T})(f) : \underline{\text{Hilb}}(\bigslant{X}{T})(V) \to \underline{\text{Hilb}}(\bigslant{X}{T})(U); \quad Z \mapsto Z \times_{U} V.
\end{align*}
For $U$ a locally noetherian $T$-scheme and $Z\subset X\times_T U \,\text{ closed subscheme, flat over }\, U$, let $p\colon Z\to X$ and $q\colon Z\to U $ be the two projections and $u\in U$. Define the Hilbert polynomial of $Z$ in $u$ as 
\[
P_u(Z)(m) := \chi (\mathcal{O}_{Z_u}(m))=\chi \left(\mathcal{O}_{Z_u}\otimes_{\mathcal{O_Z}}p^*(\mathcal{L}^m)\right), \qquad m\in \ZZ
\]
where $\chi$ is the Euler characteristic and $Z_u= q^{-1}(u).$ One can prove that $P_u(Z)(m)$ is a polynomial in $m$, independent of $u$, if $U$ is connected. Then we can fix the Hilbert polynomial to create a subfunctor. Let $P\in \QQ[x]$ and define $\Hi^{P}(X)$ to be the subfunctor given by
\[
\underline{\text{Hilb}}^P(\bigslant{X}{T})(U) = \left\{ \left.
\begin{matrix} Z\subset X\times U \\ \text{ closed subscheme} \end{matrix} \quad \right\vert
\begin{matrix} Z \text{ is flat over }\, U  \text{ and } \\ P_u(Z)=P \,\text{ for all } u\in U \end{matrix} 
 \right\}.
\] 
\end{definition}

\begin{theorem}{ \cite{grothendieck1960techniques}}
\label{hilbexists}
Let $X$ be projective over $T$. For every $P\in \QQ[x]$ the functor $\underline{\text{Hilb}}^P(\bigslant{X}{T})$ is representable by a projective $T$-scheme $\Hi^P(\bigslant{X}{T})$. For an open subscheme $Y \subset X$ the functor $\underline{\text{Hilb}}^P(\bigslant{Y}{T})$ is represented by an open subscheme 
\[
{\text{Hilb}}^P(\bigslant{Y}{T}) \quad \subset \quad {\text{Hilb}}^P(\bigslant{X}{T}).
\]
\end{theorem}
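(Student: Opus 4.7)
The plan is to follow Grothendieck's classical strategy: first reduce to the case $X = \PP^N_T$ using a closed embedding given by the very ample $\mathcal{L}$, then embed the Hilbert functor into a suitable Grassmannian functor via Castelnuovo--Mumford regularity, and finally cut out the correct locus as a closed subscheme of that Grassmannian. Since projectivity of the Grassmannian is already established, this will automatically give projectivity of $\Hi^P(X/T)$.

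The technical heart is a uniform regularity bound: given $P\in \QQ[x]$ and $N$, one shows there is an integer $m_0$ such that for every field $k$ and every closed subscheme $Z \subset \PP^N_k$ with Hilbert polynomial $P$, the ideal sheaf $\mathcal{I}_Z$ is $m_0$-regular. For $m \geq m_0$ this forces $H^i(\PP^N_k, \mathcal{I}_Z(m)) = 0$ for $i>0$, and $\dim_k H^0(\PP^N_k, \mathcal{I}_Z(m)) = r$ where $r := \binom{N+m}{N} - P(m)$. Combined with cohomology and base change, this means that for every flat family $Z \subset \PP^N \times U$ with Hilbert polynomial $P$ (with $U$ a locally noetherian $T$-scheme), the pushforward $q_*\mathcal{I}_Z(m)$ is a locally free $\OO_U$-module of rank $r$ sitting inside the trivial bundle $\OO_U \otimes_T H^0(\PP^N_T, \OO(m))$. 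This defines a natural transformation from $\underline{\text{Hilb}}^P(\PP^N_T/T)$ to the Grassmannian functor $\underline{\mathrm{Gr}}\bigl(r, H^0(\PP^N_T, \OO(m))\bigr)$, which is representable by a projective $T$-scheme $G$.

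The main obstacle, and really the core of the proof, is showing that this natural transformation is representable by a closed immersion. The strategy is to produce on $G$ a universal candidate subscheme $\mathcal{Z} \subset \PP^N \times G$: starting from the tautological subbundle $\mathcal{S} \subset \OO_G \otimes H^0(\PP^N_T, \OO(m))$ one forms the graded ideal generated by $\mathcal{S}$ under multiplication by $H^0(\OO(k))$ for all $k \geq 0$. One must then invoke Mumford's flattening stratification and the regularity bound simultaneously to show that the locus in $G$ where this $\mathcal{Z}$ is flat over the base with the correct Hilbert polynomial $P$ is actually \emph{closed} rather than merely locally closed; the regularity bound is exactly what upgrades semicontinuity statements into equalities on the relevant locus, and hence what makes the Hilbert subfunctor a closed subfunctor of the Grassmannian. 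This yields $\Hi^P(\PP^N/T)$ as a closed subscheme of $G$, hence projective over $T$. To get $\Hi^P(X/T)$ when $X \hookrightarrow \PP^N$ is closed, one further cuts out the closed subscheme of $\Hi^P(\PP^N/T)$ on which the universal family factors through $X \times \Hi^P(\PP^N/T)$; this again is closed because it is defined by the vanishing of the ideal of $X$ on $\mathcal{Z}$, combined with properness of the projection.

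For the second statement, once $\Hi^P(X/T)$ is constructed with its universal family $\mathcal{Z}\subset X\times \Hi^P(X/T)$, the subfunctor $\underline{\text{Hilb}}^P(Y/T)$ picks out those $Z\subset X\times U$ that factor through $Y\times U$. Applying the proper projection $\pi\colon X\times \Hi^P(X/T) \to \Hi^P(X/T)$ to the closed subset $\mathcal{Z}\cap\bigl((X\setminus Y)\times \Hi^P(X/T)\bigr)$ produces a closed subset of $\Hi^P(X/T)$, and its complement is an open subscheme that represents $\underline{\text{Hilb}}^P(Y/T)$ by the universal property; indeed, a $U$-point lies in this complement precisely when the pullback family avoids $X\setminus Y$, i.e.\ is supported in $Y$. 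The hard part throughout is the regularity--flatness interplay in the previous paragraph; everything else is a bookkeeping exercise in functorial representability.
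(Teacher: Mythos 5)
The paper does not prove this theorem; it is quoted directly from Grothendieck's FGA, so there is no internal proof to compare against. Your sketch is the standard Grothendieck--Mumford construction and its outline is correct: reduction to $\PP^N_T$ via $\mathcal{L}$, the uniform Castelnuovo--Mumford regularity bound depending only on $P$ and $N$, cohomology and base change to produce the rank-$r$ subbundle of $H^0(\OO(m))$ and hence the map to the Grassmannian, closedness of the Hilbert locus, the further closed condition for a closed subscheme $X\subset \PP^N$, and the open-subscheme statement via properness of the projection applied to $\mathcal{Z}\cap\bigl((X\setminus Y)\times \Hi^P(X/T)\bigr)$. The one step you state too loosely is the closedness of the Hilbert locus inside the Grassmannian: the flattening stratification only yields a locally closed stratum, and saying the regularity bound ``upgrades semicontinuity statements into equalities'' does not by itself close it up. The standard ways to finish are either the valuative criterion of properness (a flat family over the generic point of a DVR admits a unique flat limit with the same Hilbert polynomial, compatible with the Grassmannian embedding) or, more directly, Macaulay--Gotzmann persistence, which exhibits the locus as a determinantal, hence closed, condition in a single extra degree. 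With that step made precise the argument is complete; everything else, including the open-subfunctor claim for $Y\subset X$, is handled correctly.
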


\begin{definition}[Hilbert scheme of points]
From now on we will be interested in the case where $T= \text{spec}(\CC)$, and we will write $\Hi^P(X)$ for  ${\text{Hilb}}^P(\bigslant{X}{T})$.  Moreover we will only be interested in the case where $P=n\in \NN$ is a constant polynomial. 
Then we will write either $X^{[n]}$ or $\Hi^n(X)$, and call it the \emph{Hilbert scheme of $n$ points over $X$}. In fact we can identify the closed $X^{[n]}(\CC)$ points with the closed zero-dimensional subschemes of length $n$ of $X$ which are defined over $\CC$. In the most simple case such a scheme is just the set of $n$ distinct points of $X$ with the reduced induced structure, hence the name.
\end{definition}

\begin{definition}
Let $S_n$ be the symmetric group in $n$ letters acting on $X^n$ by permuting the factors. The geometric quotient $\bigslant{X^n}{S_n}$ exists and is called the $n$-th symmetric power of $X$, and is denoted as $X^{(n)}$. We denote the quotient map as follow: 
\[
\Phi_n: X^n \to X^{(n)}.
\]
\end{definition}

The $n$-th symmetric power parametrizes effective zero-cycles of degree $n$ of $X$, i.e. formal linear combinations of points of $X$ with nonnegative integer coefficients that sum to $n$:
\[
\sum_i n_i [x_i] \in X^{(n)} \text{ with } x_i \in X, n_i \in \NN \text{ and } \sum_i n_i = n. 
\] 
We have a stratification into locally closed subsets given by prescribing how many of the points in the support of the zero-cycle actually coincide.
\begin{definition}
Let $\nu = \nu_0 \geq \dots \geq \nu_r $ be a partition of $n$. Denote the diagonals of $X^n$ by  
\[
\Delta_{\nu_i} := \left\{(x_1, \dots, x_{\nu_i}) \left\vert x_1=\dots=x_{\nu_i}\right. \right\} \subset X^{\nu_i}
\]
and define 
\[
X^{n}_{\nu} := \prod_{i} \Delta_{\nu_i} \subset \prod_{i} X^{\nu_i} \subset X^n.
\]
Then we set 
\[
\overline{X^{(n)}_{\nu}} := \Phi_n\left(X^{n}_{\nu}\right) \quad \text{ and }\qquad 
X^{(n)}_{\nu} := \overline{X^{(n)}_{\nu}}\,\, \setminus\, \bigcup_{\mu > \nu}\,\, \overline{X^{(n)}_{\mu}},
\]
where $\mu >\nu$ means that $\mu$ is a coarser partition than $n$. The geometric points of $X^{(n)}_{\nu}$ are 
\[
X^{(n)}_{\nu} = \left\{\sum_{i} n_i [x_i] \in X^{(n)} \,\,\,\left\vert\,\,\text{ the points } x_i \text{ are pairwise distinct } \right.\right\}. 
\]
\end{definition}

Given $Z$ a subscheme of $X$ of length $n$ its support is precisely an effective zero-cycle of degree $n$. This gives the following celebrated relation between $\Hi^{n}(X)$ and $X^{(n)}$. 
\begin{theorem}{ \cite[\S 5.4]{mumford1994geometric} } 
There is canonical morphism called the Hilbert-Chow morphism 
\[
\pi_n: \Hi^{n}(X)\to X^{(n)}
\] 
that at the level of points is given by 
\[
Z \mapsto \sum_{x\in X} \text{len}(Z_x) [x], 
\]
where $\text{len}(Z_x)$ is the multiplicity of $Z$ at the point $x$.
\end{theorem}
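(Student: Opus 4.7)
The plan is to construct $\pi_n$ as a morphism of schemes via the characteristic-polynomial (or norm) construction applied to the universal family, and then verify the stated formula on closed points. The key structural input is that the universal family $p\colon \mathcal{Z}\to \Hi^n(X)$ provided by Theorem \ref{hilbexists} is finite and flat of relative degree $n$, so $p_*\mathcal{O}_{\mathcal{Z}}$ is locally free of rank $n$ on $\Hi^n(X)$.

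First I would work locally. Assume $X=\mathrm{Spec}(A)$ is affine and take an affine open $\mathrm{Spec}(R)\subset \Hi^n(X)$ over which $p_*\mathcal{O}_{\mathcal{Z}}$ trivializes as a free $R$-module $B$ of rank $n$. For each $a\in A$ the multiplication operator $m_a\colon B\to B$ is $R$-linear with characteristic polynomial $\chi_a(t)\in R[t]$; equivalently, one records the power-sum traces $\tr(m_a^k)\in R$ for $k=1,\dots,n$. These assignments are additive in $a$, multiplicative (via Cayley-Hamilton combined with $m_{ab}=m_a\circ m_b$), and depend only on the intrinsic $R$-algebra structure of $B$. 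They therefore assemble into an $S_n$-invariant morphism $\mathrm{Spec}(R)\to X^n$, hence, after passing to the invariant quotient, into a morphism $\mathrm{Spec}(R)\to X^{(n)}$. Compatibility with the desired formula on closed points is then a direct computation: given $Z\in \Hi^n(X)(\CC)$ with canonical decomposition $\mathcal{O}_Z=\bigoplus_i \mathcal{O}_{Z_{x_i}}$ and $n_i=\mathrm{len}(Z_{x_i})$, the operator $m_a$ acts on the Artinian local factor $\mathcal{O}_{Z_{x_i}}$ as $a(x_i)$ plus a nilpotent, so $\chi_a(t)=\prod_i (t-a(x_i))^{n_i}$, which is exactly the characteristic polynomial read off from the cycle $\sum_i n_i[x_i]$.

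Second I would globalize by patching. Because the construction depends only on the intrinsic $\mathcal{O}_{\Hi^n(X)}$-module structure of $p_*\mathcal{O}_{\mathcal{Z}}$ and not on the chosen trivialization, the local morphisms agree on overlaps and glue to a morphism $\pi_n$ defined on all of $\Hi^n(X)$. For general (non-affine) $X$ one covers $X$ by affines and uses the universal property of the geometric quotient $X^n/S_n$ to descend the resulting $S_n$-invariant maps into $X^{(n)}$. Uniqueness of the glued morphism is automatic because $X^{(n)}$ is separated.

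The main obstacle I expect is precisely this globalization step: showing that the locally defined characteristic-polynomial map lands in $X^{(n)}$ \emph{as a scheme}, rather than merely in the symmetric product \emph{as a set}, requires a nontrivial appeal to the GIT construction of the geometric quotient. An alternative that sidesteps the gluing, and that extends cleanly beyond the quasi-projective case, is the relative effective-divisor construction of Grothendieck-Deligne: embedding $X$ in projective space, one canonically associates to every finite flat family of length $n$ a relative effective Cartier divisor of degree $n$ on a general curve cut by hyperplanes, and checks that this divisor depends only on the image cycle in $X^{(n)}$.
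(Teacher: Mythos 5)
The paper does not prove this statement: it is quoted verbatim from Mumford--Fogarty, \emph{Geometric Invariant Theory} \S 5.4, and used as a black box, so there is no internal proof to compare yours against. On its own terms, your proposal follows the standard construction (the ``norm'' or linear-determinant map of Grothendieck--Mumford--Iversen): finiteness and flatness of the universal family $\mathcal{Z}\to \Hi^n(X)$ make $p_*\mathcal{O}_{\mathcal{Z}}$ locally free of rank $n$, the characteristic polynomials of the multiplication operators $m_a$ produce the candidate map to $X^{(n)}$, and the closed-point formula $\chi_a(t)=\prod_i(t-a(x_i))^{n_i}$ is exactly the right verification. Two caveats. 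First, the phrases ``additive in $a$'' and ``multiplicative via Cayley--Hamilton'' are not literally correct ($\tr(m_a^k)$ is not additive for $k>1$, and $\chi_{ab}\neq\chi_a\chi_b$); what is actually needed is that the assignment sending the elementary multisymmetric function $\sum_i 1\otimes\cdots\otimes a\otimes\cdots\otimes 1$ to $\tr(m_a)$ extends to a well-defined ring homomorphism $\left(A^{\otimes n}\right)^{S_n}\to R$, which over $\CC$ follows from the polarization theorem for multisymmetric functions but is the genuine content of the construction. Second, the real difficulty sits in that local step, not in the gluing: once the local morphism is characterized intrinsically, patching over a separated target is formal. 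You do flag essentially this issue at the end, and your fallback via relative effective Cartier divisors is indeed the route Mumford and Fogarty take, so the architecture of your argument is sound even if the multiplicativity step is waved at rather than carried out.
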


The above stratification of $X^{(n)}$ induces a stratification of $\Hi^n(X)$. Define its strata as $X^{[n]}_{\nu}:= \pi_n^{-1}(X^{(n)}_{\nu})$, for each $\nu\vdash n$.  Along a strata of $X^{(n)}$ the fibers of $\pi_n$ are constant and depend only on the dimension of $X$, if $X$ is smooth. For example, in the open smooth strata $X^{(n)}_{(1^n)}$ where all the points are distinct $\pi_n$ is an isomorphism. 
\begin{example}
Let $C$ be a smooth curve. Then the Hilbert-Chow morphism is actually an isomorphism that shows $C^{[n]} \cong C^{(n)}$. In fact there is only a single scheme structure on $n$ points. Suppose that $X=\CC$, then $X^{(n)} \cong X^n$ as Newton's theorem on symmetric functions
 \[
 \CC[x_1, \dots, x_n]^{S_n} \cong \CC[e_1, \dots, e_n]
 \]
 proves. Here $e_i$ is the $i$-th elementary symmetric function. One can also show that $(\PP^1)^{(n)} = \PP^n$. 
 \end{example}

\hspace{3em} We will only be interested in the case of a smooth surface $X$. In this case the symmetric power is singular: as soon as at least two points coincide the stabilizer is not trivial. Famously it turns out that in this case the Hilbert scheme is smooth, as we will see. \\
 
\hspace{3em}From now on we will focus on the case where $X=\CC^2$. Observe that the Theorem of existence \ref{hilbexists} proves that for every $X$, with a covering of opens isomorphic to $\CC^2$, there is an open cover of $\Hi^n(X)$ with opens that are isomorphic to $\Hi^n(\CC)$. This is true also in the category of complex analytic spaces thanks to the definition of the \emph{Duady space}.  We do not need this but we give as a reference \cite{de2000douady}.

\begin{example} Consider the case $X=\CC^2$. Denote $R=\CC[x,y]$ its ring of functions. Then we can identify 
\[
(\CC^2)^{[n]} = \left\{I \subset R\,\, \left\vert \,\,I \text{ is an ideal such that } \dim_{\CC} \left(\bigslant{R}{I}\right)=n \right. \right\}.
\]
We call the dimension of the vector space $\bigslant{R}{I}$ the \emph{length} of $I$. For example if $p_1=(x_1,y_1),\dots,  p_n=(x_n,y_n)$ are $n$ distinct points of $\CC^2$, then there is a unique ideal $I$ of length $n$ of functions that vanish exactly at $p_1, \dots, p_n$. These ideals represent the generic examples. At the other end of the spectrum of examples there are the powers of the maximal ideal $(x, y)^d$ that live in $\Hi^{\frac{d(d+1)}{2}}(\CC^2)$ for every $d \in \NN$. 
\end{example}

\begin{observation}{\cite[Chapter~1]{nakajima1999lectures}}
A possibly more explicit description of $(\CC^2)^{[n]}$ is the one of Nakajima in terms of commuting matrices. Consider $M_x, M_y \in \text{End}(\CC^n)$, and $v \in \text{Hom}_{\CC} (\CC, \CC^n)$. Then define 
\[
\widetilde{H} \,\,:=\,\, \left\{\left(M_x, M_y, v\right) \,\, \left\vert\,\, \begin{matrix} M_xM_y - M_yM_x = 0,\\
\left\langle M^k_xM^l_y(v(1))\,\,\,\left\vert\,\,\, k, l \geq 0\right. \right\rangle = \CC^n
\end{matrix}\right.\right\}\,.
\]
The first condition says that the actions of $M_x$ and $M_y$ commute. The second condition, that is a stability condition, says that there does not exist an $M_x$, $M_y$ invariant subspace of $\CC^n$ that contains the vector $v(1)\in \CC^n$, $1\in \CC$. There is an action of $\text{GL}_n(\CC)$ on such triples given by 
\[
g\cdot \left(M_x, M_y, v\right) :=  \left(gM_xg^{-1}, gM_yg^{-1}, gv\right)
\] 
for $g \in \text{GL}_n(\CC)$. The action turns out to have closed orbits and trivial stabilizers. Nakajima \cite[Theorem~1.9]{nakajima1999lectures} proves that we have the following isomorphism: 
\[
\Hi^n (\CC^2) \cong \bigslant{\tilde{H}}{\text{GL}_n(\CC)}.
\]
The above map is given by the following procedure on closed points. If $I \subset \CC[x,y]$ is an ideal of length $n$ we define $M_x$ as the multiplication action of $x$ on the $n$ dimensional vector space $\bigslant{\CC[x,y]}{I}$. Similarly for $M_y$. The homomorphism $v \in \text{Hom}_{\CC} (\CC, \CC^n)$ is given by $v(1) = 1$ $\text{mod} \,\, I$. It is clear that $M_x$ and $M_y$ commute since the multiplication in $\CC[x,y]$ is commutative. The stability condition follows from the fact that $1$ is a $\CC[x,y]$ generator of $\CC[x,y]$. Conversely given a triple $\left(M_x, M_y, v\right) \in \widetilde{H}$ we can define a map $\phi: \CC[x,y] \to \CC^n$ as $\phi(f)=  f(M_x, M_y)v(1)$. Stability of the triple proves that $\phi$ is surjective and that $I:=\text{Ker}(\phi)$ is an ideal of length $n$. \\

\hspace{3em} Since the two matrices commute we can always simultaneously conjugate both matrices to upper triangular matrices. Call  $(\lambda_1, \dots, \lambda_n)$ and $(\mu_1, \dots, \mu_n)$ the elements on the diagonal of $M_x$ and $M_y$ respectively. Then the Hilbert Chow map sends the triple $\left(M_x, M_y, v\right)$ to $\{(\lambda_1, \mu_1),\dots, (\lambda_n, \mu_n)\} \in (\CC^2)^{(n)}$ .
\end{observation}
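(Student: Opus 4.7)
The plan is to write down explicit mutually inverse maps on closed points and then upgrade the set-theoretic bijection to a scheme-theoretic isomorphism. The guiding intuition is that giving an $n$-dimensional quotient of $\CC[x,y]$ is equivalent to putting a cyclic $\CC[x,y]$-module structure on $\CC^n$ together with a chosen cyclic generator, which in turn is the same as giving a pair of commuting $n \times n$ matrices $(M_x, M_y)$ and a vector $v(1) \in \CC^n$ that generates $\CC^n$ under all polynomials in $M_x, M_y$; the ambiguity in identifying $\CC[x,y]/I$ with $\CC^n$ produces the $\text{GL}_n(\CC)$-action.

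First I would construct $\Phi\colon \Hi^n(\CC^2) \to \widetilde{H}/\text{GL}_n(\CC)$ as follows. Given $I \subset \CC[x,y]$ with $\dim_\CC \CC[x,y]/I = n$, pick an ordered basis of $V := \CC[x,y]/I$, let $M_x, M_y$ be the matrices of multiplication by $x$ and $y$ in that basis, and set $v(1) := 1 + I \in V$. Commutativity of multiplication in $\CC[x,y]$ forces $M_xM_y = M_yM_x$, and the fact that $1 \in \CC[x,y]$ generates the quotient as a $\CC[x,y]$-module is precisely the stability condition. A change of basis is simultaneous conjugation on $(M_x, M_y)$ together with left multiplication on $v$, which is exactly the $\text{GL}_n(\CC)$-action, so $\Phi$ descends to the quotient. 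Conversely, $\Psi$ sends $(M_x, M_y, v)$ to $\ker\phi$, where $\phi\colon \CC[x,y] \to \CC^n$, $f \mapsto f(M_x, M_y)v(1)$: commutativity makes $\phi$ a well-defined ring homomorphism, stability forces surjectivity, and conjugation by $g \in \text{GL}_n(\CC)$ only replaces $\phi$ by $g \circ \phi$, so the kernel is unchanged. A direct computation then shows $\Psi \circ \Phi = \mathrm{id}$ and $\Phi \circ \Psi = \mathrm{id}$ on closed points.

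The main obstacle is promoting this set-theoretic bijection to an isomorphism of schemes, because $\Hi^n(\CC^2)$ is represented via the Hilbert functor while the right-hand side is a geometric quotient. I would proceed functorially. Over $\widetilde{H}$ the trivial rank-$n$ bundle carries universal commuting endomorphisms and a universal section, giving a surjection of $\OO_{\widetilde{H}}$-algebras $\OO_{\widetilde{H}} \otimes \CC[x,y] \twoheadrightarrow \OO_{\widetilde{H}}^{\oplus n}$ and hence a flat family of length-$n$ ideals parametrized by $\widetilde{H}$. Equivariance of this family under $\text{GL}_n(\CC)$, together with freeness of the action (a consequence of the stability condition), allows the family to descend to $\widetilde{H}/\text{GL}_n(\CC)$; the universal property of $\Hi^n(\CC^2)$ from Theorem \ref{hilbexists} then yields a morphism $\widetilde{H}/\text{GL}_n(\CC) \to \Hi^n(\CC^2)$. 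In the reverse direction, the pushforward of $\OO/\mathcal{I}^{\mathrm{univ}}$ from the universal family on $\Hi^n(\CC^2)$ is locally free of rank $n$, and its frame bundle is a principal $\text{GL}_n(\CC)$-bundle carrying a tautological triple, producing the inverse morphism. Mutual inverseness then follows from the closed-point check above, provided one has verified that $\widetilde{H}/\text{GL}_n(\CC)$ really is a geometric quotient, which is the last delicate point where freeness and properness of the $\text{GL}_n(\CC)$-action must be used.
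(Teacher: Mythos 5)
Your construction on closed points is exactly the one the paper gives: multiplication operators on $\CC[x,y]/I$ with $v(1)=1 \bmod I$ in one direction, and $I=\ker\bigl(f\mapsto f(M_x,M_y)v(1)\bigr)$ in the other, with the $\GL_n(\CC)$-action absorbing the choice of basis. The scheme-theoretic upgrade you sketch (universal family over $\widetilde{H}$, descent along the free action, frame bundle of the pushforward) is precisely what the paper delegates to the citation of Nakajima's Theorem~1.9, so your proposal is correct and follows the same route, merely filling in the part the paper outsources.
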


\hspace{3em} For a more general surface $X$, not only smoothness, but most of the topological and geometrical properties of $X^{[n]}$ are a mixture of the corresponding properties of the base surface $X$ and of the most singular fiber of the Hilbert-Chow map, sometimes called the \emph{punctual Hilbert scheme}. \\

\hspace{3em} The punctual Hilbert scheme, and its flag versions, are the object of our study. 
\begin{definition}
Consider the Chow morphism $\pi : (\CC^2)^{[n]}\to (\CC^2)^{(n)}$, and denote $0=(0,\dots,0)$ $\in (\CC^2)^{(n)}$. Then the \emph{punctual Hilbert scheme} is 
\[
\Hi^{n}(0) := \pi^{-1} (0)
\]
with the induced scheme structure. 
\end{definition}

The closed points of $\Hi^{n}(0)$ are the schemes of length $n$ whose support is concentrated at the origin $0\in \CC^2$. An example is $(x,y^n)\in (\CC^2)^{[n]}$. If $\mm=(x,y)$ is the maximal ideal in $R=\CC[x,y]$ and $\hat{R}=\CC[[x,y]]$ is the completion of $R$ in $\mm$, then we will see that 
\[
\Hi^{n}(0) = \Hi^n(R)_{red} = \Hi^n(\bigslant{R}{\mm^n})_{red} \subset \text{Gras}(n, \bigslant{R}{\mm^n})\, .
\]
In terms of the description of commuting matrices:
\[
\Hi^{n}(0) = \quotient{\left\{\left(M_x, M_y, v\right) \in \widetilde{H}\,\, \left\vert\,\,
(M_x)^n= (M_y)^n=0\right.\right\} }{\text{GL}_n}\, .
\]
\begin{example}
Pose $n=2$. Then $\Hi^2(0)$ is isomorphic to $\PP^1$. In fact all ideals of length two of functions with zeros only in $(0,0)$ are of the form $(\omega_1 x + \omega_2 y ) +(x^2, xy, y^2)$ as ideals of $\CC[x,y]$, with $[\omega_1 :\omega_2] \in \PP^1$. The intuition behind this is that when two points collide at the origin $(0,0) \in \CC^2$ we remember the direction along which they collided i.e. the vector $[\omega_1 :\omega_2] \in \PP(T_{(0,0)}\CC^2)$, where $T_{(0,0)}\CC^2$ is the tangent space at the origin. \\

\hspace{3em}The global Hilbert scheme $\Hi^2(\CC^2)$ is then stratified as follow 
\[
\begin{tikzcd}
  (\CC^2)^{[2]}_{(1,1)} \bigsqcup  (\CC^2)^{[2]}_{(2)} \arrow[hook]{r} \arrow{d}{\pi_2} &  (\CC^2)^{[2]} \arrow{d}{\pi_2} \\
  (\CC^2)^{(2)}_{(1,1)} \bigsqcup (\CC^2)^{(2)}_{(2)} \arrow[hook]{r} & (\CC^2)^{(2)}
\end{tikzcd}
\]
where the vertical arrow is the Hilbert-Chow map and has fiber $\PP^1 = \Hi^2(0)$ over $(\CC^2)^{(2)}_{(2)}$, where two points coincide, and a single point $\{ \text{ pt }\}$ over $(\CC^2)^{(2)}_{(1,1)}$, where two points are distinct. In fact, in this case, it is easy to see that the Hilbert scheme is the blow up of the 2-symmetric product of $\CC^2$ at the diagonal, that is its singular locus. 
\end{example}

\begin{example}
Pose $n=3$. Then the punctual Hilbert scheme $\Hi^3(0)$ is a cone over $\PP^1$ with an isolated singular point. The singular point is the ideal $\frak{m}^2= (x^2, xy, y^2)$. It is different from all the others in the sense that, if we call $Z_{\infty}$ the corresponding subscheme, then $T_{(0,0)}Z_{\infty}$ is two dimensional, whereas all the other points of $\Hi^3(0)$ have tangent spaces that are one dimensional. Said in another way, the other points correspond to those subschemes that are contained in the germ of a smooth curve. They are called \emph{curvilinear}. They are of the form $I=(y^3, x+\omega_2 y +\alpha y^2)$ plus the ideals $I = (x^3, y+\alpha x^2)$. It is clear that these curvilinear ideals form an affine bundle over $\Hi^2(0)= (\omega_1 x + \omega_2 y ) +(x^2, xy, y^2)$, and that the bundle is compactified with the point $\frak{m}^2$ at infinity. One can write down explicitly a model for $\Hi^3(0)$: consider in $\PP^4 = \text{Proj}(\CC[a, b, c, d, e])$ the projective cone over a rational normal cubic given by equations $ac-b^2, ad-bc, bd-c^2$. Then the family of subschemes of $\CC^2$ parametrized by $\Hi^3(0)$ is the zero set of the ideal $(ax+by+ex^2, bx+cy+exy, cx+dy+ey^2)$. \\

\hspace{3em} The global Hilbert scheme $\Hi^3(\CC^2)$ is then stratified as follow:
\[
\begin{tikzcd}
  (\CC^2)^{[3]}_{(1,1,1)} \,\,\, \bigsqcup \,\,\,(\CC^2)^{[3]}_{(2,1)} \bigsqcup \,\,\,  (\CC^2)^{[3]}_{(3)} \arrow[hook]{r} \arrow{d}{\pi_3}&  (\CC^2)^{[3]} \arrow{d}{\pi_3} \\
  (\CC^2)^{(3)}_{(1,1,1)}\,\,\, \bigsqcup\,\,\, (\CC^2)^{(3)}_{(2,1)}\,\,\, \bigsqcup\,\,\,   (\CC^2)^{(3)}_{(3)} \arrow[hook]{r} &  (\CC^2)^{[2]}
\end{tikzcd}
\]
where the Hilbert Chow map $\pi_3$ has fibers that are respectively isomorphic to a point over $(\CC^2)^{(3)}_{(1,1,1)}$, i.e. where the three points are distinct, to $\Hi^2(0)$ over $(\CC^2)^{[3]}_{(2,1)}$, i.e. where two points coincide and the third is different, and to $\Hi^3(0)$ over $(\CC^2)^{[3]}_{(3)}$ i.e. where the three points coincide.  We will give a description of $\Hi^4(0)$ in the following section.
\end{example}

\hspace{3em}The induced scheme structure of $\pi_n^{-1}(0)$ is reduced as a result of Haiman proves. A celebrated result of Brian\c{c}on proves irreducibility and tells us the dimension of the punctual Hilbert scheme. Irreducibility is equivalent to say that the curvilinear ideals of the form $(y^n , x+\omega_2 y +\alpha_1 y^2 +\dots+\alpha_{n-2}y^{n_1})$ are dense in every $\Hi^n(0)$, $n\in \NN$. 
\begin{theorem}{ \cite[Theorem II.2.3]{brianccon1977description}, \cite[Poposition~2.10]{haiman1998t}}
\label{irreducible}
The punctual Hilbert scheme $\Hi^{n}(0)$ is projective and irreducible of dimension $n-1$. It is a locally complete intersection. 
\end{theorem}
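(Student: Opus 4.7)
The theorem comprises four assertions—projectivity, irreducibility, $\dim = n-1$, and the locally complete intersection property—which I would attack in turn.

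For projectivity, I would first observe that every $I \in \Hi^n(0)$ contains $\mm^n$: the quotient $R/I$ is a local Artinian ring of length $n$, so its maximal ideal $\overline{\mm}$ satisfies $\overline{\mm}^n = 0$. Hence $I/\mm^n$ is a codimension-$n$ subspace of the finite-dimensional vector space $R/\mm^n$, stable under multiplication by $x$ and $y$, giving a closed embedding
\[
\Hi^n(0) \hookrightarrow \text{Gras}\bigl(\dim_{\CC}(R/\mm^n) - n,\; R/\mm^n\bigr)
\]
into a projective variety.

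For the dimension and irreducibility, I would introduce the open locus $C \subset \Hi^n(0)$ of \emph{curvilinear} ideals, namely those $I$ for which $R/I$ is generated over $\CC$ by a single element; openness follows from upper semicontinuity of the Zariski cotangent space dimension along the tautological family. After a linear change of coordinates making $\bar y$ the generator, every curvilinear ideal takes the normal form $(y^n,\, x - c_1 y - c_2 y^2 - \cdots - c_{n-1} y^{n-1})$, so this chart is isomorphic to $\AAA^{n-1}$; gluing with the symmetric chart where $\bar x$ generates, $C$ is smooth irreducible of dimension $n-1$, in fact fibred over $\PP(T_0 \CC^2) \cong \PP^1$ with $\AAA^{n-2}$ fibres. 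The theorem then reduces to showing that $C$ is dense in $\Hi^n(0)$.

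Density is the main obstacle—this is Briançon's theorem. My plan is twofold: first, use a one-parameter subgroup of the rescaling torus on $\CC^2$ to degenerate any $I$ to a torus-fixed, hence monomial, ideal $I_\lambda$ indexed by some $\lambda \vdash n$; second, construct for each $I_\lambda$ an explicit one-parameter family of ideals whose generic member is curvilinear and whose special fibre is $I_\lambda$. These families are built combinatorially by perturbing the monomial generators $x^a y^b$ of $I_\lambda$ with correction terms indexed by the outer corners of $\lambda$, tuned so that the ideal becomes singly generated away from the special fibre. This combinatorial construction is the delicate heart of the argument, carried out in full in \cite{brianccon1977description}; Haiman's reducedness result \cite{haiman1998t} then identifies $\Hi^n(0)$ with the scheme-theoretic fibre of $\pi_n$.

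For the locally complete intersection property, I would rely on the local description of $\Hi^n(0)$ inside the smooth ambient $\Hi^n(\CC^2)$ given by the commuting-matrix model: around any $I_0$ a slice to the $\GL_n$-action provides local coordinates in which the fibre of $\pi_n$ over $0$ is cut out by explicit polynomial conditions. Using the tangent-space computations to be developed in Chapter II, one extracts $n+1$ independent defining equations forming a regular sequence in $\mathcal{O}_{\Hi^n(\CC^2),\, I_0}$, matching the codimension $2n - (n-1) = n+1$ and yielding the LCI property.
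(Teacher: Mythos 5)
The paper itself offers no proof of this statement: it is quoted verbatim from Brian\c{c}on and Haiman, so your proposal can only be judged on its own merits. Your treatment of projectivity (via $I\supset \mm^n$ and the closed embedding into the Grassmannian) and of the curvilinear locus $C$ (open, irreducible, smooth of dimension $n-1$, an $\AAA^{n-2}$-bundle over $\PP^1$) is correct and matches the description of $M_{(1,\dots,1)}$ given elsewhere in the paper.

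The genuine gap is in your strategy for density, which is the entire content of Brian\c{c}on's theorem. Your two steps establish (i) every $I$ \emph{specializes} under the torus to a monomial ideal $I_\lambda$, and (ii) every $I_\lambda$ lies in $\overline{C}$. These two facts do not imply $I\in\overline{C}$: specialization points the wrong way. Concretely, if $\Hi^n(0)$ had a second irreducible component $Z$, then $Z$ would be closed and torus-invariant, hence would contain monomial ideals, and those monomial ideals could perfectly well also lie in $\overline{C}$ (components may intersect) while the generic point of $Z$ does not. What is actually needed is a \emph{generization} of an arbitrary $I$ to a curvilinear ideal, which is what Brian\c{c}on's staircase deformations provide; alternatively one combines a lower bound $\dim Z\geq n-1$ for every component (which follows from the local complete intersection presentation by $n+1$ equations, via Krull's height theorem) with Iarrobino's bound $\dim M_T\leq n-2$ for every non-curvilinear stratum, forcing every component to be $\overline{C}$. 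A similar issue affects your LCI paragraph: the natural equations cutting out $\Hi^n(0)$ set-theoretically in the smooth $2n$-dimensional $\Hi^n(\CC^2)$ are the $2n$ symmetric functions of the coordinates of the support, and identifying $n+1$ of them that suffice scheme-theoretically (together with reducedness of the resulting scheme) is precisely Haiman's contribution; it does not fall out of the tangent-space computations of Chapter II, which concern fixed points of the smooth ambient space rather than defining equations of the fibre.
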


The fact that $\Hi^2(\CC^2)$ is a resolution of $(\CC^2)^{(2)}$ is not accidental. In fact for every $n$ the Hilbert scheme is in some sense the best resolution of singularities of the symmetric product of $\CC^2$. 
\begin{theorem}{ \cite[Theorem~2.4]{fogarty1968algebraic}, \cite[Theorem~3]{beauville1983varietes}} The Hilbert-Chow morphism
$\pi_n : (\CC^2)^{[n]}\to (\CC^2)^{(n)}$ is a symplectic resolution of singularities. 
\end{theorem}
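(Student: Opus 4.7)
The plan is to verify the three ingredients of a symplectic resolution separately: smoothness of $(\CC^2)^{[n]}$, the proper-and-birational structure of $\pi_n$, and the existence of a holomorphic symplectic form on $(\CC^2)^{[n]}$ whose restriction matches the natural form on the smooth part of $(\CC^2)^{(n)}$. For smoothness, following Fogarty, I would use the deformation-theoretic identification
\[
T_{[Z]}(\CC^2)^{[n]} \;=\; \Hom_{R}(I_Z, R/I_Z), \qquad R=\CC[x,y].
\]
Since the open stratum $(\CC^2)^{[n]}_{(1^n)}$ of reduced length-$n$ configurations is smooth of dimension $2n$ and meets every irreducible component, each local dimension is at least $2n$, so smoothness reduces to the inequality $\dim_{\CC}\Hom_R(I_Z,R/I_Z)\le 2n$ at every closed point. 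This function of $[Z]$ is upper semi-continuous and invariant under the $(\CC^{\times})^2$-action by coordinate rescaling, hence the inequality propagates from the torus fixed points to all of $(\CC^2)^{[n]}$. The fixed points are the monomial ideals of colength $n$, and at such an ideal the verification becomes an explicit count via arm/leg statistics of the associated Young diagram, a calculation carried out in Chapter II of the thesis.

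Second, properness of $\pi_n$ is immediate from Theorem~\ref{hilbexists}, which exhibits $(\CC^2)^{[n]}$ as relatively projective over $(\CC^2)^{(n)}$. For birationality, the open stratum $(\CC^2)^{(n)}_{(1^n)}$ of $n$-tuples of distinct points is dense in $(\CC^2)^{(n)}$, and $\pi_n$ restricts to an isomorphism over it, since an $n$-tuple of distinct points admits a unique reduced length-$n$ subscheme structure. In fact $(\CC^2)^{(n)}_{(1^n)}$ is precisely the smooth locus of $(\CC^2)^{(n)}$ (at any point of a deeper stratum the stabilizer in $S_n$ is non-trivial), and $\pi_n$ identifies $(\CC^2)^{[n]}_{(1^n)}$ with it.

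Third, and this is the content of Beauville's argument, I would construct the symplectic form. Let $\omega_0=dx\wedge dy$ be the standard form on $\CC^2$ and set $\Omega := \sum_{i=1}^{n} p_i^{*}\omega_0$ on $(\CC^2)^n$, with $p_i$ the $i$-th projection. Then $\Omega$ is $S_n$-invariant and non-degenerate, so it descends to a holomorphic symplectic form on the smooth locus of $(\CC^2)^{(n)}$ and, pulling back by the isomorphism of the previous step, produces a holomorphic symplectic form on $(\CC^2)^{[n]}_{(1^n)}$. The complement of this open subset is the preimage of the big diagonal and has codimension two in the smooth variety $(\CC^2)^{[n]}$, so by Hartogs' extension this $2$-form extends uniquely to a global holomorphic $2$-form $\widetilde{\Omega}$. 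Non-degeneracy of $\widetilde{\Omega}$ follows from the same codimension-two principle applied to the zero locus of $\widetilde{\Omega}^{\wedge n}$, a section of the canonical bundle: this locus is either empty or of pure codimension one, but it misses the dense open set $(\CC^2)^{[n]}_{(1^n)}$, hence it is empty.

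The main obstacle is the smoothness step. Although Fogarty's theorem is classical, verifying $\dim_{\CC}\Hom_R(I_Z,R/I_Z) \le 2n$ at monomial ideals requires the detailed combinatorial bookkeeping of the $(\CC^{\times})^2$-weight decomposition of this $\Hom$ group, and this weight decomposition is precisely the combinatorial backbone underlying the affine paving program developed in the rest of the thesis. Once smoothness is granted, properness, birationality, and the Hartogs extension of the symplectic form are essentially formal.
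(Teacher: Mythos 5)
The paper itself does not prove this theorem; it is cited from Fogarty and Beauville, and only the smoothness half is established later (Proposition \ref{hilbsmooth}), by exactly the tangent-space computation you outline, so your first two steps are sound in spirit. One soft spot there: the claim that the stratum of reduced subschemes ``meets every irreducible component'' is essentially the irreducibility of $(\CC^2)^{[n]}$, which is normally deduced \emph{from} smoothness plus connectedness; to avoid circularity one should obtain the lower bound $\dim_{[Z]}(\CC^2)^{[n]}\ge 2n$ from obstruction theory instead, using that $I_Z$ admits a length-one free resolution over $\CC[x,y]$ so that $\hom(I_Z,\OO_Z)-\mathrm{ext}^1(I_Z,\OO_Z)=2n$. (The thesis's own Proposition \ref{hilbsmooth} elides the same point.)

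The genuine gap is in the symplectic step. The complement of $(\CC^2)^{[n]}_{(1^n)}$ is \emph{not} of codimension two: it is the exceptional divisor of $\pi_n$, of dimension $2n-1$ (generically $n-2$ distinct points plus one length-two point carrying a tangent direction), hence of codimension one. The big diagonal does have codimension two in $(\CC^2)^{(n)}$, but its $\pi_n$-preimage does not, because the fibers of $\pi_n$ over it are positive-dimensional. Consequently Hartogs extension from $(\CC^2)^{[n]}_{(1^n)}$ is not available, and the non-degeneracy argument collapses for the same reason: a pure codimension-one zero locus can perfectly well miss the dense open stratum --- the boundary divisor itself does. Beauville's actual argument works on the larger open set $(\CC^2)^{[n]}_{*}$ of subschemes having at most one non-reduced point, of multiplicity two; the complement of \emph{that} open set has codimension two. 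On $(\CC^2)^{[n]}_{*}$ one must check by an explicit local computation (reducible to $n=2$, where $(\CC^2)^{[2]}$ is the $\mathbb{Z}/2$-quotient of the blow-up of $\CC^2\times\CC^2$ along the diagonal) that the descended $2$-form extends holomorphically and stays non-degenerate across the exceptional divisor; only then do Hartogs and the canonical-section argument finish the proof. That local computation is the real content of the symplectic half of the theorem, and it is exactly the step your proposal skips.
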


We are only interested in smoothness and not in the symplectic structure. Nowadays there are many different proofs of smoothness for  $(\CC^2)^{[n]}$. We will show it by describing the Zariski tangent space and giving its dimension at each point. \\

\hspace{3em}We now define the flag Hilbert scheme that parametrizes flags of ideals of points. 
\begin{definition}[Flag Hilbert scheme]
For $\mathfrak{n} = (n_1, \dots, n_k) \in \NN^k$, with $n_1< \dots < n_k$, define the \emph{flag Hilbert scheme} and the \emph{flag punctual Hilbert scheme} to be, respectively, 
\begin{align*}
\Hi^{\mathfrak{n}}(\CC^2) &:= \left\{I_{n_i}\in \Hi^{n_i}(\CC^2)\vert I_{n_1} \supset \dots \supset I_{n_k} \right\} \subset \bigtimes_{i=1}^{k}  \Hi^{n_i}(\CC^2), \\
\Hi^{\mathfrak{n}}(0) &:= \left\{I_{n_i}\in \Hi^{n_i}(0)\vert I_{n_1} \supset \dots \supset I_{n_k} \right\} \subset \bigtimes_{i=1}^{k}  \Hi^{n_i}(0)\, .
\end{align*}
\end{definition}
It is clear that we have an Hilbert-Chow map also for the flag Hilbert scheme and thus a corresponding stratification according to the multiplicities of the supports of the schemes in the flag. In particular the dimension of $\Hi^{\mathfrak{n}}(\CC^2)$ is $2n_k$. The geometry of $\Hi^{\mathfrak{n}}(0)$ becomes more complicated even for short flags. For example irreducibility holds only in the case where $\mathfrak{n} = (n, n+1)$, see \cite[Prop.~18, Thorem~19]{chaput2012equivariant}. The homology of $\Hi^{\mathfrak{n}}(0)$ is known for $\mathfrak{n} = n$ and $\mathfrak{n} = (n, n+1)$. We will give a basis of the homology for the case $\mathfrak{n} = (n, n+1, n+2)$. 


\section{Hilbert-Samuel's strata}
In the rest of the thesis we want to understand basic geometrical properties of $\Hi^n(0)$, of $\Hi^{n, n+1}(0)$ and of $\Hi^{n, n+1, n+2}(0)$. In order to do so we introduce a stratification of these spaces due to Iarrobino. 

\begin{definition}[Hilbert-Samuel type]
Let $\hat{R}= \CC[[x,y]]$, $\mm=(x,y)$ be its maximal ideal and $\hat{R}_i= \bigslant{\mm^i}{\mm^{i+1}}$ be the space of forms of degree $i$. Suppose $I \subset \hat{R}$ is an ideal of length $n$.  Then we define $T(I)=(t_i(I))_{i\geq 0} \in \NN^{\infty}$, the \emph{Hilbert-Samuel type}, of $I$ as:
\[
t_i(I):= \dim \left(\bigslant{\mm^i}{I\cap \mm^{i}+\mm^{i+1}}\right)= \dim \bigslant{\hat{R}_i}{I_i} \quad \text{where }\, I_i:= \bigslant{I\cap \mm^i}{I\cap \mm^{i+1}}.
\]
Denote $|T|=\sum_{i\geq 0 } t_i$. Call \emph{initial degree of $I$} the first index $d=d(I)$ such that $t_d< d+1$. 
\end{definition}
\begin{example}
Let $I=(x, y^n) \subset \hat{R}$. Then $I $ has length $n$. We have $t_i(I)=1$ for $0\leq i \leq n-1$ and $t_i(I)=0$ for $i\geq n$. Its initial degree is $1$. Let $\mm^d \subset \hat{R}$. Then the length of $\mm^d$ is $N = \frac{d(d+1)}{2}$. We have $t_i(\mm^d) = i+1$ for $0\leq i \leq d$, and $t_i(\mm^d) = 0$ for $d+1\leq i$. Its initial degree is $d$. 
\end{example}
\begin{lemma}
Let $I\subset \hat{R}$ be an ideal of length $n$ and $T=(t_i(I))_{i\geq 0}$, then
\begin{itemize}
\item[(1)] 
$\dim \bigslant{\mm^j}{I\cap \mm^j} = \sum_{i\geq j} t_i $
for all $j\geq 0$, in particular $|T|=n$.
\item[(2)] $I\supset \mm^n$.
\end{itemize}
\end{lemma}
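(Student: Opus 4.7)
The plan is to filter $\mm^j$ by the descending chain $\mm^j \supset \mm^{j+1} \supset \mm^{j+2} \supset \cdots$ and push this chain down to a filtration of $\mm^j/(I \cap \mm^j)$ by the images of $\mm^i + (I \cap \mm^j)$ for $i \geq j$. Because $\hat{R}/I$ is a local Artinian ring (its maximal ideal is nilpotent, as $I$ has finite $\CC$-codimension in the local ring $\hat{R}$), some power $\mm^N$ is contained in $I$, so this filtration stabilizes at $0$ after finitely many steps. Using the second isomorphism theorem together with the modular law, and noting that $i \geq j$ forces $(I \cap \mm^j) \cap \mm^i = I \cap \mm^i$, the $i$-th successive quotient simplifies to
\[
\frac{\mm^i + (I \cap \mm^j)}{\mm^{i+1} + (I \cap \mm^j)} \;\cong\; \frac{\mm^i}{\mm^{i+1} + (I \cap \mm^i)},
\]
which is precisely $\hat{R}_i/I_i$ and has $\CC$-dimension $t_i$. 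Adding up the dimensions of the graded pieces gives $\dim \mm^j/(I \cap \mm^j) = \sum_{i \geq j} t_i$. Specializing to $j = 0$ converts the left-hand side into $\dim \hat{R}/I = n$, so $|T| = n$.

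\textbf{Plan for (2).} Here the goal is to show that the Hilbert–Samuel function must already vanish by index $n$, and then to upgrade this vanishing from the associated graded level to a containment inside $I$. Since $I \subsetneq \mm$ (otherwise $I = \hat{R}$ and $n = 0$), one has $t_0 = 1$; more generally, were $t_i \geq 1$ to hold for every $i = 0, 1, \ldots, n$, part (1) would give $|T| \geq n+1$, a contradiction. Let $d$ be the smallest index with $t_d = 0$; the previous sentence then forces $d \leq n$. The equality $t_d = 0$ rewrites as $\mm^d \subset I + \mm^{d+1}$, i.e.\ in the Artinian local ring $A := \hat{R}/I$ with maximal ideal $\bar{\mm}$ one has $\bar{\mm}^d = \bar{\mm} \cdot \bar{\mm}^d$. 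Nakayama's lemma applied to the finitely generated $A$-module $\bar{\mm}^d$ gives $\bar{\mm}^d = 0$, that is $\mm^d \subset I$. Hence $\mm^n \subset \mm^d \subset I$, proving (2).

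\textbf{Where the real work lies.} Both items reduce to standard commutative algebra, but the genuinely non-routine step is the identification of the graded pieces in (1): one must verify that the modular-law computation really collapses $(I \cap \mm^j) \cap \mm^i$ to $I \cap \mm^i$ (which uses $i \geq j$) and that the resulting quotient matches the defining expression of $t_i$. Once (1) is in place, the $|T| = n$ statement is immediate from $j = 0$, and (2) is a one-line Nakayama argument on top of it.
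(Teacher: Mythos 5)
Your proof is correct and follows essentially the same route as the paper's: both compute $\dim \mm^j/(I\cap\mm^j)$ by exhibiting a finite filtration (the paper phrases it as a direct-sum decomposition) whose graded pieces are the $\hat{R}_i/I_i$, and both deduce (2) from $|T|=n$ together with the fact that the first vanishing $t_d$ forces $\mm^d\subset I$. Your explicit Nakayama argument for that last implication cleanly justifies a step the paper leaves implicit.
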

\begin{proof}
Call $Z = \bigslant{\hat{R}}{I}$ and $Z_i$ the image of $Z$ under the projection map $\hat{R} \to \bigslant{\hat{R}}{I_i}$. Of course we have that $\bigcap_{i\geq 0 } Z_i = 0$. Since $Z$ is finite dimensional, there must exist $i_0$ such that $Z_{i_0}= 0 $, i.e. $\frak{m}^{i_0} \subset  I$.  There are isomorphisms of vector spaces: 
\[
Z_i = \bigslant{\mathfrak{m}^{i}}{\mathfrak{m}^{i} \cap I} \,\, \cong \,\, \bigoplus_{j = i }^{i_0 -1} \bigslant{\hat{R}_j}{I_j}\, .
\]
Then if we choose $i_0$ to be minimal we have $\bigslant{\hat{R}_i}{I_i}\neq 0$ for $i<i_0$. This proves (1). Since $t_j =0$ implies $\frak{m}^{i} \subset  I$, (2) is  a consequence of $|T|= n$.
\end{proof}
\begin{remark}
Since every length $n$ ideal of $\hat{R}$ contains $\mathfrak{m}^n$, we can see it as an ideal in $\bigslant{\hat{R}}{\mathfrak{m}^n}$. Thus the Hilbert scheme $\Hi^n\left(\bigslant{\hat{R}}{\mathfrak{m}^n}\right)$ also parametrizes the ideals of length $n$ in $R$. For the same reason all the reduced schemes $\left(\Hi^n\left(\bigslant{\hat{R}}{\mathfrak{m}^k}\right)\right)_{\text{red}}$ are naturally  isomorphic for $k \geq n$. We will denote one of these by $\left(\Hi^n(\hat{R})\right)_{\text{red}}$. Here $\left(\Hi^n(\hat{R})\right)_{\text{red}}$ is the closed subscheme with the reduced induced structure of the Grassmannian $\text{Grass}(n, \bigslant{\hat{R}}{\mathfrak{m}^n})$ of $n$ dimensional quotients of $\bigslant{\hat{R}}{\mathfrak{m}^n}$ whose geometric points are the ideal of length $n$ in $\bigslant{\hat{R}}{\mathfrak{m}^n}$. Of course the intuition is that, since a point in $\Hi^n(\CC^2)$ that is supported only at the origin is an ideal of $I \subset \CC[x, y]$ such that $\mathfrak{m} \subset {I}$ we can see it as a point of $\left(\Hi^n(\hat{R})\right)_{\text{red}}$ and vice-versa. To be more precise we state the following Lemma that can be found in Goettsche \cite[Chapter~2]{gottsche1994hilbert}.  
\end{remark}
\begin{lemma}{\cite[Lemmas~2.1.2, 2.1.4]{gottsche1994hilbert}}
The natural morphism that maps a subscheme of length $n$ supported at a point to this point  $\pi :(\CC^2)^{[n]}_{(n)}\to \CC^2$ factors through the Hilbert Chow map and is a locally trivial fiber bundle in the Zariski topology with fiber $\left(\Hi^n(\hat{R})\right)_{\text{red}} = \Hi^n(0)$. 
\end{lemma}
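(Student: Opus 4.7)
The plan is to exploit the translation action of $\CC^2$ on itself, which lifts to an action on every level of the Hilbert scheme, and then produce a \emph{global} trivialization (which of course gives the Zariski-local one claimed).

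First, I would check factorization through the Hilbert-Chow morphism. By definition $(\CC^2)^{[n]}_{(n)}=\pi_n^{-1}\bigl((\CC^2)^{(n)}_{(n)}\bigr)$, and $(\CC^2)^{(n)}_{(n)}$ is the image of the small diagonal $\Delta\subset (\CC^2)^n$ under the symmetrization map, which is isomorphic to $\CC^2$ via $p\mapsto n[p]$. Since a subscheme $Z$ concentrated at a single point $p$ is sent by $\pi_n$ to $\mathrm{len}(Z_p)[p]=n[p]$, the natural map $\pi$ is exactly the restriction of $\pi_n$, composed with this identification.

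Second, I would set up the translation action. The group scheme $\CC^2$ acts on itself by translation; this induces diagonal actions on $(\CC^2)^n$ and $(\CC^2)^{(n)}$, and an action on $(\CC^2)^{[n]}$ by $(p,Z)\mapsto Z+p$, which at the level of ideals corresponds to the pullback by the translation automorphism of $\CC[x,y]$. The Hilbert-Chow morphism is manifestly equivariant, and on $(\CC^2)^{(n)}_{(n)}=\CC^2$ the induced action is the standard translation. The stratum $(\CC^2)^{[n]}_{(n)}$ is preserved, and $\pi$ becomes $\CC^2$-equivariant. Define
\[
\Phi\colon \CC^2\times \Hi^n(0)\longrightarrow (\CC^2)^{[n]}_{(n)},\qquad (p,I)\longmapsto I+p,
\]
and its inverse $Z\mapsto (\pi(Z),\, Z-\pi(Z))$. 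Both maps are morphisms of schemes because translation is a regular action and $\pi$ is a morphism. Hence $\Phi$ is an isomorphism of schemes over $\CC^2$, which gives the (globally, hence locally) trivial fiber bundle structure with fiber $\Hi^n(0)$.

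Third, I would identify $\Hi^n(0)$ with $\bigl(\Hi^n(\hat R)\bigr)_{\mathrm{red}}$. By the previous lemma every length-$n$ ideal $I\subset \CC[x,y]$ with support at the origin satisfies $\mathfrak{m}^n\subset I$, so $I$ descends to and is determined by its image in $\CC[x,y]/\mathfrak{m}^n = \hat R/\mathfrak{m}^n$. Passing to the Grassmannian $\mathrm{Grass}(n,\hat R/\mathfrak{m}^n)$, the subscheme of ideals sits inside as a closed subscheme, and by Haiman's reducedness result for $\Hi^n(0)$ (Theorem~\ref{irreducible}) the two reduced closed subschemes agree.

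The only real subtlety will be verifying that $\Phi$ is a morphism of schemes, not just a bijection on closed points: this comes down to noting that the translation action is a morphism of $\CC^2$-schemes, and that the assignment $Z\mapsto \pi(Z)$ is regular because it factors through $\pi_n$, which is a morphism. Everything else is formal once the translation symmetry is in place.
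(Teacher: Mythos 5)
Your proof is correct and is essentially the standard argument: the paper gives no proof of this lemma (it is quoted directly from G\"ottsche), and G\"ottsche's own proof is exactly the translation-action trivialization you describe, together with Haiman's reducedness to identify the fiber with $\left(\Hi^n(\hat{R})\right)_{\text{red}}$. Your observation that for $X=\CC^2$ the bundle is in fact \emph{globally} trivial is also accurate and slightly stronger than the stated claim.
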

\hspace{3em} We can regroup ideals according to their Hilbert-Samuel function to get a stratification of $\Hi^n(\hat{R})_{\text{red}}$.

\begin{definition}[Hilbert-Samuel's strata]
Let $T=(t_i)_{i\geq 0}$ be a sequence of nonnegative integers, with $|T|=n$, we define the \emph{Hilbert-Samuel's stratum} $M_T$ and the \emph{homogenous Hilbert-Samuel's stratum} $G_T$ to be, respectively, 
\begin{align*}
M_T &:= \left\{I \in \Hi^n(0)\,\, \left\vert\,\, T(I)= T \right.\right\} \subset \Hi^n(0),\\
G_T &:= \left\{I \in \Hi^n(0) \,\, \left\vert\,\, T(I)= T, \quad I \text{ is homegenous } \right.\right\} \subset \Hi^n(0).
\end{align*}
Let $\rho_T : M_T \to G_T$ be the surjective morphism that associates to an ideal $I$ the associated homogenous ideal, i.e. the ideal generated by all the initial forms of $f\in I$. It is surjective and the natural embedding $G_T\subset M_T$ is a section. The fact that such a map is well defined is a classical observation, and can be proved, for example, by reasoning a way similar to the proof of Lemma \ref{lemmagoettschecells}. 
If $\mathbf{T}= (T_1, \dots, T_k)$ is a $k$-tuple of sequences of nonnegative integers $T_i=(t_{i,j})_{j\geq 0}$ satisfying $|T_i|=n_i$ then we define 
\begin{align*}
M_{\mathbf{T}} &:= \Hi^{\mathfrak{n}}(0) \cap \left(M_{T_1}\times \cdots\times M_{T_k}\right),\\
G_{\mathbf{T}} &:= \Hi^{\mathfrak{n}}(0) \cap \left(G_{T_1}\times \cdots\times G_{T_k}\right).
\end{align*}
Again we have a morphism $\rho_{\mathbf{T}}\colon M_{\mathbf{T}}\to G_{\mathbf{T}}$ with section $G_{\mathbf{T}}\subset M_{\mathbf{T}}$. It is clear that the strata $M_{\mathbf{T}}$ which are not empty stratify $\Hi^{\mathfrak{n}}(0)$. 
\end{definition}
\begin{lemma}{\cite[Lemma~1.3]{iarrobino1977punctual}}
\label{admissible}
Let $T=(t_i)_{i\geq 0}$ be a sequence of nonnegative integers with $|T|=n$, then $M_T$ and $G_T$ are not empty if and only if 
\[
T = (1, 2, \dots, d, t_d, t_{d+1}, \dots, t_{n-1}, 0, \dots) \quad \text{with } \, d+1> t_d\geq t_{d+1}\geq \dots\geq t_{n-1}\geq 0.
\]
Moreover, if $\mathfrak{n}= (n, n+1, \dots, n+k-1)$, given $\mathbf{T}= (T_1, \dots, T_k)$ with $|T_i|=n_i$, $G_{\mathbf{T}}$ and  $M_{\mathbf{T}}$ are not empty if and only if 
\begin{itemize}
\item[(1)] $T_i = (1, 2, \dots , d_i, t_{i, d}, t_{i, d+1}, \dots) \quad \text{with } d_i+1> t_{i,d}\geq t_{i,d+1}\geq \dots\geq t_{i,n_i-1}\geq 0$
\item[(2)] For all $j=2, \dots k$ there exists and index $m_j$ such that $t_{j,m_j}=t_{j-1, m_j}+1$. 
\end{itemize}
From now on, we call such a $k$-tuple of $T_i$ \emph{admissible}. 
\end{lemma}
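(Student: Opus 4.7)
The plan is to prove the two halves of Lemma~\ref{admissible}---first the single-ideal statement, then its flag generalisation---each split into necessity and sufficiency.

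For a single ideal $I$ of length $n$ with initial degree $d$: for $i<d$ we have $I\cap\mathfrak{m}^i\subset\mathfrak{m}^{i+1}$, so the graded piece $J_i:=(I\cap\mathfrak{m}^i+\mathfrak{m}^{i+1})/\mathfrak{m}^{i+1}$ vanishes and $t_i=\dim\hat{R}_i=i+1$, while $t_d<d+1$ by definition of $d$. The weak monotonicity $t_i\geq t_{i+1}$ for $i\geq d$ follows from a Macaulay-type inequality in two variables, $\dim J_{i+1}\geq \dim J_i+1$ whenever $0\neq J_i\neq\hat{R}_i$, which I would prove elementarily by picking $f\in J_i$ of maximal $y$-degree and observing that $yf$ cannot lie in $xJ_i$ (every monomial of $xJ_i$ has $y$-degree at most $\deg_y f$), while multiplication by $x$ is injective. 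For sufficiency, given an admissible $T$, the set $\{x^a y^{i-a}:i\geq 0,\ 0\leq a\leq t_i-1\}$ is downward closed under divisibility precisely because $T$ is admissible, so it is the complement (inside the monomial basis of $\hat{R}$) of a monomial ideal $I_T$; this $I_T$ is homogeneous with $T(I_T)=T$, producing a point of $G_T\subset M_T$.

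For the flag statement, condition~(1) is automatic from the single-ideal case applied to each $I_{n_j}$. Necessity of condition~(2) uses that $I_{n_j}\subset I_{n_{j-1}}$ has codimension one: the inclusion $J_i(I_{n_j})\subset J_i(I_{n_{j-1}})$ gives $t_{j,i}\geq t_{j-1,i}$ for every $i$, and summing the nonnegative differences produces $n_j-n_{j-1}=1$, so exactly one index $m_j$ satisfies $t_{j,m_j}=t_{j-1,m_j}+1$ (all other $t$'s coinciding). For sufficiency I would apply the \emph{pack-from-left} construction of the single-ideal case to every $T_j$ and verify that the resulting Young diagrams form a nested chain $\lambda_1\subset\cdots\subset\lambda_k$. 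The column heights of $\lambda_j$ are $h_c(T_j)=\#\{i:t_{j,i}\geq c+1\}$; the relation $T_j=T_{j-1}+e_{m_j}$ leaves all $h_c$ unchanged except at $c^*=t_{j-1,m_j}$, whose height jumps by exactly one, and the new box then sits in antidiagonal $m_j$ thanks to the identity $c^*+h_{c^*}(T_{j-1})=m_j$. The chain of monomial ideals corresponding to $\lambda_1\subset\cdots\subset\lambda_k$ is then a point of $G_{\mathbf{T}}\subset M_{\mathbf{T}}$.

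The main obstacle in this plan is justifying that last combinatorial step: I need to verify that the box added between $\lambda_{j-1}$ and $\lambda_j$ is a genuine cocorner located in antidiagonal $m_j$. This amounts to the identity $\#\{i:t_{j-1,i}>t_{j-1,m_j}\}=m_j-t_{j-1,m_j}$, which follows from a short case analysis: admissibility of $T_j$ forces $m_j\geq d_{j-1}$ and the strict drop $t_{j-1,m_j-1}>t_{j-1,m_j}$, so the $i$'s counted split cleanly into the initial segment $\{v,v+1,\ldots,d_{j-1}-1\}$ (contributing $d_{j-1}-v$) and the decreasing tail $\{d_{j-1},\ldots,m_j-1\}$ (contributing $m_j-d_{j-1}$), where $v=t_{j-1,m_j}$. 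None of the individual steps is deep, but the indexing needs care.
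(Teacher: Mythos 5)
The paper does not actually prove this lemma: it is quoted from Iarrobino (\cite[Lemma~1.3]{iarrobino1977punctual}), so there is no internal proof to compare against. Your argument is correct and is essentially the standard one. The growth bound $\dim J_{i+1}\geq\dim J_i+1$ via ``multiply the top-$y$-degree element by $y$, everything else by $x$'' is exactly the two-variable Macaulay estimate that underlies Iarrobino's proof, and your pack-from-left staircase is precisely the ideal $I_T$ with ``normal pattern'' $\Gamma_T$ that the paper later imports from Iarrobino in Chapter~3; your verification that the staircase is downward closed, and that the column heights $h_c(T_j)=\#\{i: t_{j,i}\geq c+1\}$ increase by one in a single column when $T_j=T_{j-1}+e_{m_j}$, are both sound.

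Two remarks. First, the step you single out as the main obstacle is not needed: once each $\lambda_j$ is a Young diagram with diagonal sequence $T_j$ and the chain is nested, the flag of monomial ideals already lies in $G_{\mathbf{T}}$; that the added box sits on antidiagonal $m_j$ is an automatic consequence of $T(\lambda_j)=T(\lambda_{j-1})+e_{m_j}$, not a hypothesis to be checked. (Your case analysis for it is fine anyway, except that when $m_j=d_{j-1}$ the ``strict drop'' $t_{j-1,m_j-1}>t_{j-1,m_j}$ can fail; both segments are then empty, so the identity still holds.) Second, be aware that your proof really establishes the equivalence with the \emph{stronger} form of condition (2), namely $t_{j,i}=t_{j-1,i}$ for all $i\neq m_j$ together with $t_{j,m_j}=t_{j-1,m_j}+1$: your necessity argument derives exactly this from $t_{j,i}\geq t_{j-1,i}$ and $\sum_i(t_{j,i}-t_{j-1,i})=1$, and your sufficiency argument uses it. As literally printed, condition (2) only asserts the existence of one index where the value jumps by one, which is strictly weaker and would make the ``if'' direction false (e.g.\ $T_1=(1,2,2)$, $T_2=(1,2,1,1,1)$ satisfy (1) and the literal (2), yet $M_{T_1,T_2}=\emptyset$ since $t_{2,2}<t_{1,2}$). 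The stronger form is clearly what is intended, and is the one the paper uses later in (\ref{admissiblejump}); your proof matches that intended statement.
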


\begin{example}
Let $T= (t_i)_{i\geq 0}$ be with $t_i=1$ for $0\leq i \leq n-1$ and $t_i=0$ for $i\geq n$, with $n\in \NN$. Then $G_T$ is isomorphic to $\PP^1$ and it is parametrized as follow: 
\[
G_T = (\omega_1 x +\omega_2 y ) + \mathfrak{m}^n, \quad [\omega_1, \omega_2] \in \PP^1.
\] 
On the other hand $M_T$ fibers on $G_T$ with affine fibers of dimension $n-2$ given, for example on the affine chart $\{( x +\omega_2 y ) + \mathfrak{m}^n \,\vert \omega_2 \in \CC \} \subset G_T$, by 
\[
\left\{(x +\omega_2 y +\alpha_1 y^2 +\dots +\alpha_{n-2} y^{n-1}, y^n) \left\vert\,\, \omega_2, \alpha_i \in \CC \,\, i=1,\dots n-2 \right.\right\}.
\]
One can work out the transition functions on the intersection with the affine chart $\{( \omega_1 x + y ) + \mathfrak{m}^n \,\vert \omega_1 \in \CC \} \subset G_T$ to check that $\rho_T$ is an affine bundle that is not a linear bundle. The ideals in $M_T$ are called \emph{curvilinear} as they arise when $n$ points collide following a trajectory that describes a smooth curve. This is equivalent of saying that the initial degree is one. The Theorem of Brian\c{c}on \ref{irreducible} proves that $M_T\subset \Hi^n(0)$ is a Zariski open and dense subset. In terms of commuting matrices these points can be parametrized as follow:
\[
\begin{pmatrix}
0&1&0&\dots &0\\
0&0&1&\dots &0\\
&\vdots&&\vdots& \\
0&0&\dots&0&1 \\
0&0&0&\dots&0
\end{pmatrix}\,\, ,
\quad \begin{pmatrix}
0&a_1&a_2&\dots &a_n\\
0&0&a_1&\dots &a_{n-1}\\
&\vdots&&\vdots& \\
0&0&\dots&0&a_1 \\
0&0&0&\vdots&0
\end{pmatrix}\,\, ,
\quad
\begin{pmatrix} 0 \\ 0 \\ \vdots \\0\\1
\end{pmatrix}\,. 
\]
\end{example}
\begin{example}
Pose $n=4$. The possible admissible types for an ideal $I \in \Hi^4(0)$ are $T=(1,1,1,1)$ and $T'=(1,2,1)$. As we have seen in the above example $M_T$ is an affine bundle over $\PP^1$ with fiber of dimension $2$. Instead $M_{T'} = G_{T'} \cong \PP^2$ as one can check with the following parametrization:
\[
M_{T'} = \left\{(\phi_1, \phi_2)+\mathfrak{m}^3\,\, \left\vert \,\,\dim_{\CC^2} \,\, \bigslant{\langle \phi_1, \phi_2 \rangle}{\mathfrak{m}^3} = 2,\quad \phi_i \in \mathfrak{m}^2   \right. \right\}. 
\]
\begin{remark}
All of the points of $M_{T'}$ are singular in $\Hi^4(0)$, even though not all points of $M_{T'}$ are analytically equivalent in $\Hi^4(0)$: in fact one can see that there are two possibilities that give rise to different geometrical behaviors. Interestingly enough the two different behaviors can be describe like this: one set of points is the set of points $I$ of  $M_{T'}$ such that there exists at least a point $J_I$ in $M_{(1,2,1,1)}$ with $(I, J_I) \in \Hi^{4,5}(0)$. The other set is the complementary in $M_{T'}$. Of all the examples we wrote up for small $n$ (say $n\leq 11$) similar criteria to identify the analytical type of points in $\Hi^{n}(0)$ always hold. The tangent spaces at points of $\Hi^{n}(0)$ is the subject of the recent paper \cite{bejleri2016tangent}.
\end{remark}
\hspace{3em} The admissible types for flags of two ideals $(I_1, I_2) \in  \Hi^{4,5}(0)$ are the following: 
$T_1, T_2 = (1,1,1,1, 0), (1,1,1,1,1, 0)$, $T_1, T'_2 = (1,1,1,1, 0), (1,2,1,1, 0)$, $T'_1, T'_2 = (1,2,1,0), (1,2,1,1,0)$ and $T'_1, T''_2 = (1,2,1,0), (1,2,2,0)$. We describe an open chart of the Hilbert-Samuel's strata, an obvious change of coordinates of the plane shows how to cover the corresponding stratum with such charts. 
\begin{align*}
M_{T_1, T_2} &= \text{Bundle with fiber } \AAA^3 \text{ over } \PP^1 \supset \left\{ \begin{matrix}(y^4, x+\omega_2 y +\alpha_1 y^2 +\alpha_2 y^3)\supset \\ (y^5, x+\omega_2 y +\alpha_1 y^2 +\alpha_2 y^3+\alpha_3 y^4)\end{matrix} \right\} \, , \\
M_{T_1, T'_2} & = \text{Bundle  with fiber } \AAA^2 \text{ over } \PP^1 \supset \left\{ \begin{matrix}(y^4, x+\omega_2 y +\alpha_1 y^2 +\alpha_2 y^3)\supset \\ \begin{pmatrix}y^4, x^2+\omega_2 xy +\alpha_1 xy^2 +\alpha_2 xy^3, \\ xy+\omega_2 y^2 +\alpha_1 y^3 +\alpha_2 y^4\end{pmatrix}\end{matrix} \right\} \, ,  
\end{align*}
\begin{align*}
M_{T'_1, T'_2} & = \text{Bundle with fiber } \AAA^2 \text{ over } \PP^1 \supset \left\{\begin{matrix}( y^3, x^2+\omega_2 xy, xy +\omega_2 y^2) \supset \\ \begin{pmatrix}y^4, x^2+\omega_2 xy +\alpha_1 y^3\\ xy +\omega_2 y^2+\alpha_2  y^3 \end{pmatrix} \end{matrix} \right\} \, , \\
M_{T'_1, T''_2} & = \text{Bundle with fiber } \PP^1 \text{ over } \PP^2 \supset \left\{\begin{matrix}( y^3, x^2+\alpha_1 y^2, xy +\alpha_2 y^2) \supset \\ (y^3, x^2+\theta xy+(\alpha_1+\theta \alpha_2) y^2 )\end{matrix} \right\} \, . 
\end{align*}

\hspace{3em} The admissible types for flags of three ideals $(I_1, I_2, I_3) \in  \Hi^{4,5, 6}(0)$ are 
\[
\begin{matrix}
T_1, T_2, T_3 = (1,1,1,1), (1,1,1,1,1), (1,1,1,1,1,1)\,,  & T_1, T_2, T'_3 = (1,1,1,1), (1,1,1,1,1), (1,2,1,1,1),\\
T_1, T'_2, T'_3 = (1,1,1,1), (1,2,1,1), (1,2,1,1,1)\,,  & T_1, T'_2, T''_3 = (1,1,1,1), (1,2,1,1), (1,2,2,1,0), \\
T'_1, T'_2, T'_3 = (1,2,1), (1,2,1,1), (1,2,1,1,1)\, , &T'_1, T'_2, T''_3 = (1,2,1), (1,2,1,1), (1,2,2,1)  \\
T'_1, T''_2, T''_3 = (1,2,1), (1,2,2), (1,2,2,1)\, , & T'_1, T''_2, T'''_3 = (1,2,1), (1,2,2), (1,2,3)\, . \\
\end{matrix}
\] 
As the list starts to be too long we give a description of only some of Hilbert-Samuel's strata. \begin{align*}
M_{T_1, T_2, T_3} &= \text{Bundle with fiber } \AAA^4 \text{ over } \PP^1,  \\
M_{T'_1, T'_2, T'_3} & = \text{Bundle  with fiber } \AAA^3 \text{ over } \PP^1,   \\
M_{T'_1, T''_2, T''_3} & = \text{Bundle with fiber } \PP^1 \text{ over  a bundle with fiber } \PP^1 \text{ over } \PP^2 \\
M_{T'_1, T''_2, T'''_3} & = \text{Bundle with fiber } \PP^0= \{\text{ pt }\} \text{ over }  M_{T'_1, T''_2}. 
\end{align*}
\end{example}

\begin{proposition}{ \cite[Theorem~3.13]{iarrobino1977punctual}}
\label{smoothnessIarrobino}
Let $T=(t_i)_{i\geq 0}$ be an admissible sequence of nonnegative integers. Then $M_T$ is smooth, and $G_T$ is smooth and projective. The map $\rho_T: M_T\to G_T$ is an affine fibration, Zariski locally trivial. The dimensions of $M_T$ and of $G_T$ are given in \ref{dimMT}.
\end{proposition}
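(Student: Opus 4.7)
The plan is to establish the three assertions simultaneously by constructing $G_T$ as an iterated Grassmann bundle and by exhibiting $\rho_T\colon M_T\to G_T$ as a Zariski-locally trivial affine fibration; smoothness of $M_T$ and the expected dimension will then be automatic from smoothness of the base and the fibers.

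For $G_T$, a homogeneous ideal $J$ with $T(J)=T$ is determined by the graded data $(J_d, J_{d+1}, \ldots, J_N)$, where $J_i\subset \hat{R}_i$ has codimension $t_i$, $d$ is the initial degree of $T$, and $N$ is the first index beyond which $t_i=0$, subject to the containments $\hat{R}_1\cdot J_i\subset J_{i+1}$. I would build $G_T$ inductively in ascending degree: at degree $d$ the piece $J_d$ varies over the Grassmannian $\Gras(d+1-t_d,\hat{R}_d)$, and at each subsequent step, given a compatible $(J_d, \ldots, J_i)$, the valid choices of $J_{i+1}$ are the codimension-$t_{i+1}$ subspaces of $\hat{R}_{i+1}$ containing the already determined $\hat{R}_1\cdot J_i$. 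The key point, for which admissibility of $T$ is essential, is that $\dim(\hat{R}_1\cdot J_i)$ depends only on $T$ and not on the particular $J_i$ chosen; this is a Macaulay-type statement for binary forms that forces the tower to assemble into an honest iterated Grassmann bundle. Hence $G_T$ is smooth and projective of the expected dimension.

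For $\rho_T$, fix $J\in G_T$ and select homogeneous generators $g_1,\ldots, g_s$ of $J$ of degrees $e_1,\ldots, e_s$. Every $I\in\rho_T^{-1}(J)$ admits a presentation $I=(g_1+h_1, \ldots, g_s+h_s)$ with $h_j\in \mm^{e_j+1}$, and two such choices define the same ideal precisely when their difference lies in a linear subspace cut out by the first syzygies of $J$. The resulting quotient is naturally an affine space whose dimension depends only on $T$. Over a Zariski open chart of $G_T$ on which homogeneous generators can be chosen algebraically from the tautological data of the Grassmann bundles, this presentation globalizes to a Zariski-local trivialization of $\rho_T$ as an affine bundle, whence $M_T$ is smooth.

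The main obstacle is a uniformity issue arising at both levels of the construction. At the level of $G_T$, one must verify that $\dim(\hat{R}_1\cdot J_i)$ does not jump as $(J_d,\ldots, J_i)$ varies along the locus defined by $T$, a constancy statement that depends delicately on admissibility and on the specifics of multiplication of binary forms. At the level of $\rho_T$, one must show that the space of syzygies of $J$ imposed by the condition $T(I)=T$ has constant rank as $J$ varies in $G_T$, so that the fibers are affine spaces of the same dimension; this is the heart of Iarrobino's analysis and can be read off the tangent-obstruction theory for flat deformations of $J$ preserving the Hilbert--Samuel type.
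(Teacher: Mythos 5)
Your strategy for $G_T$ founders on the very ``key point'' it rests on: it is \emph{not} true that $\dim(\hat{R}_1\cdot J_i)$ is independent of $J_i$ as $J_i$ ranges over the Grassmannian chosen at the previous stage of your tower. Take $T=(1,2,3,2,2)$, which is admissible with initial degree $d=3$, so that $\dim J_3=2$ and $\dim J_4=3$. A generic $J_3\in\text{Grass}(2,\hat{R}_3)$ has $\dim(\hat{R}_1\cdot J_3)=4>3$, so the fibre of your would-be bundle over such a $J_3$ is empty; only the special subspaces $J_3=\hat{R}_1\cdot h$ with $h\in\hat{R}_2$ (a proper closed $\PP^2$ inside the four-dimensional Grassmannian) admit any compatible $J_4$ at all. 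The constancy you need does hold for those tuples that actually extend to a graded ideal of type $T$ — you say as much in your ``obstacle'' paragraph — but then the tower is built over the locus of such tuples rather than over a Grassmannian, and the smoothness of that locus is precisely what is to be proved: the argument is circular. The same circularity appears in your treatment of $\rho_T$: that every perturbation $(g_j+h_j)$ modulo syzygies yields an ideal with the \emph{same} Hilbert--Samuel type, and that the fibre is an affine space of dimension depending only on $T$, is exactly the unobstructedness you invoke via ``tangent--obstruction theory'', i.e.\ it is equivalent to the smoothness being proved rather than an input to it.

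The paper's route sidesteps both issues by being completely explicit. Iarrobino's normal patterns (Proposition \ref{normalcover}) cover $M_T$ and $G_T$ by finitely many opens $M_P$ and $G_P$, one for each of finitely many systems of parameters, all containing the monomial fixed point $I_T$; Theorem \ref{standard} then produces, by a division procedure, \emph{standard generators} whose free coefficients give explicit isomorphisms $M_P\cong\AAA^{\dim M_T}$ and $G_P\cong\AAA^{\dim G_T}$, the remaining coefficients being determined polynomially by the free ones. Smoothness, the dimension count of Corollary \ref{dimMT}, and the Zariski-local triviality of $\rho_T$ (on each chart it is the coordinate projection forgetting the non-homogeneous coefficients) all fall out of this one construction, cross-checked against the weight bases for the tangent spaces at the torus-fixed points computed in Chapter 2. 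If you want to salvage your bundle picture for $G_T$, you would need the precise two-variable growth statement for $\dim(\hat{R}_1 V)$ and a direct proof that the compatibility locus at each stage is smooth; as written, the proposal leaves exactly the hard content unproved.
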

We will see later a proof of this proposition. The techniques used are central to many of the discussions in this thesis.

\section{Torus action and Borel-Moore homology}

In this section we describe the techniques used to compute the basic topological properties of the spaces we introduced.  There are two main ingredients. The first is a result of Fulton that tells us that if we find an affine cell decomposition for a variety the homology classes of the closure of the cells form a graded basis for the Borel-Moore homology of the variety. The second is a result of Bialynicki-Birula that finds for us an affine cell decomposition for varieties with a nice enough torus action. In the rest of the section we describe a torus action on the relevant spaces and we study the fixed points. \\

\hspace{3em}Borel-Moore homology is historically the preferred homological theory to study the topology of Hilbert schemes. We will only need Proposition \ref{Fulton} below, and we refer at Fulton \cite[Chapter~19]{fulton2013intersection} and reference therein for more details. Borel-Moore homology, indicated with $H_{\ast}$, is singular homology with locally finite supports and integer coefficients. For a space $X$ that is imbedded as a closed subspace of $\RR^n$ one can see that  
\[
H_i (X) \cong H^{n-i} (\RR^n, \RR^n \setminus X) 
\]
where the group on the right is relative singular cohomology with integer coefficients. For a complex scheme $X$ there is a cycle map 
\[
cl: A_{\ast}(X) \to H_{\ast} (X) 
\]
where $A_{\ast}$ is the Chow group of $X$. 

\begin{definition}
Let $X$ be a scheme over $\CC$. A cell decomposition of $X$ is a filtration 
\[
X=X_n \supset X_{n-1} \supset \dots \supset X_{0} \supset X_{-1}=\emptyset
\] 
such that $X_i\setminus X_{i-1}$ is a disjoint union of schemes $U_{i, j}$ isomorphic to affine spaces $\AAA^{n_{i,j}}$ for all $i=0, \dots, n$. We call $U_{i, j}$ the cells of the decomposition. Often we stress the adjective \emph{affine} and say that $X$ has an affine cell decomposition.  
\end{definition}

\begin{proposition}{ \cite[Exercise~19.1.11]{fulton2013intersection}}
\label{Fulton}
Let $X$ be a scheme over $\CC$ with a cell decomposition. Then 
\begin{itemize}
\item[(1)] $H_{2i+1} \,(X) = 0$ for all $i$. 
\item[(2)] $H_{2i} \,(X)$ is the free abelian group generated by the homology classes of the closure of the $i$-dimensional cells, for all $i$.
\item[(3)] The cycle map $\text{cl} : A_{\ast}(X) \to H_{\ast}(X)$ is an isomorphism. 
\end{itemize}
\end{proposition}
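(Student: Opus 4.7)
The plan is to prove all three claims simultaneously by induction on the length of the filtration, using the long exact sequence of Borel--Moore homology (and its Chow-theoretic analogue) associated to a closed/open decomposition. The key computational input is the Borel--Moore homology of affine space: $H_{k}(\AAA^{n}) \cong \ZZ$ if $k=2n$ and $0$ otherwise, generated by the fundamental class $[\AAA^{n}]$. Consequently, if $U_{i} := X_{i}\setminus X_{i-1} = \bigsqcup_{j} U_{i,j}$ is a disjoint union of affine spaces of (complex) dimension $n_{i,j}$, then $H_{\ast}(U_{i})$ is concentrated in even degrees, with $H_{2d}(U_{i})$ freely generated by the classes $[U_{i,j}]$ for those $j$ with $n_{i,j}=d$. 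The analogous statement for Chow groups $A_{\ast}(U_{i})$ holds for the same reason, and the cycle map between them is obviously an isomorphism on each affine cell.

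Next I would invoke the functorial localization long exact sequence for Borel--Moore homology of the closed embedding $X_{i-1}\hookrightarrow X_{i}$ with open complement $U_{i}$:
\[
\cdots \to H_{k+1}(U_{i}) \to H_{k}(X_{i-1}) \to H_{k}(X_{i}) \to H_{k}(U_{i}) \to H_{k-1}(X_{i-1}) \to \cdots
\]
and the parallel right-exact sequence for Chow groups $A_{k}(X_{i-1}) \to A_{k}(X_{i}) \to A_{k}(U_{i}) \to 0$, compatible with the cycle map. The induction hypothesis is that (1), (2), (3) hold for $X_{i-1}$; the base case $X_{-1}=\emptyset$ is trivial. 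Since $H_{\text{odd}}(X_{i-1})=0$ by hypothesis and $H_{\text{odd}}(U_{i})=0$ by the computation above, the long exact sequence breaks into short exact sequences
\[
0 \to H_{2d}(X_{i-1}) \to H_{2d}(X_{i}) \to H_{2d}(U_{i}) \to 0
\]
and forces $H_{\text{odd}}(X_{i})=0$, proving (1). Since $H_{2d}(U_{i})$ is free abelian on the classes $[U_{i,j}]$ with $n_{i,j}=d$, the sequence splits (non-canonically), giving (2) at the level of abstract groups.

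The remaining point is to identify a splitting by the fundamental classes of closures. For each cell $U_{i,j}$ the closure $\overline{U_{i,j}}\subset X_{i}$ is a closed subvariety whose fundamental class $[\overline{U_{i,j}}]\in H_{2n_{i,j}}(X_{i})$ restricts, under the map $H_{\ast}(X_{i})\to H_{\ast}(U_{i})$, to the generator $[U_{i,j}]$ of the corresponding summand of $H_{\ast}(U_{i})$. Combined with the induction hypothesis giving a basis of $H_{\ast}(X_{i-1})$ by closures of lower cells, this exhibits the full collection of closures as a basis of $H_{\ast}(X_{i})$, proving (2). For (3), the cycle map fits into a morphism of the two exact sequences; it is an isomorphism on $X_{i-1}$ by induction and on $U_{i}$ by the affine-space computation, so by the five lemma (or direct diagram chase, using that $A_{k}(U_{i})\to A_{k-1}(X_{i-1})$ vanishes by degree reasons and matching this with the splitting in homology) it is an isomorphism on $X_{i}$ as well.

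The main technical subtlety, and essentially the only non-formal point, is verifying that the fundamental classes $[\overline{U_{i,j}}]$ really do split the short exact sequence, i.e.\ that $\overline{U_{i,j}}$ defines an honest Borel--Moore cycle whose restriction to $U_{i}$ is the generator. This requires knowing that closures of locally closed subvarieties carry fundamental classes in Borel--Moore homology functorial under open restriction, which is standard but is the conceptual heart of the argument; everything else is bookkeeping driven by the vanishing of odd Borel--Moore homology on cells.
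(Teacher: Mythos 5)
The paper does not prove this proposition at all: it is quoted verbatim as \cite[Exercise~19.1.11]{fulton2013intersection} and used as a black box, so there is no internal proof to compare against. Your argument is correct and is exactly the standard solution to Fulton's exercise: induct on the filtration, use the Borel--Moore localization long exact sequence together with $H_k(\AAA^n)=\ZZ$ for $k=2n$ and $0$ otherwise to kill odd homology and split the sequence, lift the generators of $H_*(U_i)$ to the fundamental classes $[\overline{U_{i,j}}]$ (noting $\overline{U_{i,j}}\cap U_i=U_{i,j}$ since each cell is open and closed in its stratum), and compare with the right-exact Chow localization sequence. The one place to be careful is your appeal to the five lemma for part (3): the Chow sequence $A_k(X_{i-1})\to A_k(X_i)\to A_k(U_i)\to 0$ is only right-exact, so a naive five lemma does not apply; the clean way to finish, which your parenthetical essentially describes, is to observe that right-exactness plus induction shows $A_*(X_i)$ is \emph{generated} by the classes of cell closures, and since the cycle map sends these generators to a $\ZZ$-basis of $H_*(X_i)$ by part (2), they must be linearly independent in $A_*(X_i)$ as well, whence $\text{cl}$ is an isomorphism.
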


\begin{definition}
We will only study the Borel-Moore homologies of varieties that have an affine cell decomposition. In particular all odd dimensional homology groups will be zero so that we will denote $b_i(X) \in \NN$, and call it the \emph{$i$-th Betti number} of the variety $X$, the dimension of $H_{2i}(X)$. Moreover we define $P_q(X) \in \NN[q]$, the \emph{Poincar\'{e}} polynomial of $X$, as: 
\begin{equation}
P_{q} (X) := \sum_{i} q^{b_i(X)} = \sum_{i} q^{\dim H^{2i}(X)}.
\end{equation}
\end{definition}

Let $X$ be a smooth projective variety over $\CC$ with an action of the multiplicative group $\TT = \CC^*$. Denote this action with a dot $"\cdot"$. If $x\in X$ is a fixed point for this action, the torus acts also on the tangent space at $x$, denote it $T_x X$. Let $T^+_xX \subset T_x X$ be the linear subspace on which all the weights of the induced action of $\TT$ are positive. 

\begin{theorem}{ \cite[Theorem 4.4]{bialynicki1973some}}
\label{Bialynicki-Birula}
Let $X$ be a smooth projective variety over $\CC$ with an action of $\TT$. Assume that the set of fixed points is the finite set $\left\{\,x_1, \dots, x_m\,\right\}$. For all $i= 1, \dots, m$ we define the \emph{attracting set} at the fixed point $x_i$ as: 
\[
X_{x_i} = X_{i} := \left\{\, x \in X \,\,\left\vert\,\,\lim_{t\to 0} t\cdot x = x_i\right.\right\}.
\]
Then we have: 
\begin{itemize}
\item[(1)] $X$ has an affine cell decomposition whose cells are the $X_i$. 
\item[(2)] $T_{x_i} X_i = T^+_{x_i} X $. 
\end{itemize}
\end{theorem}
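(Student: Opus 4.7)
The plan is to analyze the torus action locally near each fixed point, to transport this local picture via the action of $\TT$, and then to organize the resulting pieces into a filtration by cells.

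First, since $X$ is projective hence proper, for every $x\in X$ the orbit morphism $\TT\to X$, $t\mapsto t\cdot x$, extends uniquely to a morphism $\AAA^1\to X$ by the valuative criterion of properness. Evaluating at $0$ gives a well-defined limit $\lim_{t\to 0} t\cdot x$, which is necessarily a fixed point. Consequently $X=\bigsqcup_{i=1}^m X_i$ as a set, and it remains to show that each $X_i$ is locally closed, isomorphic to an affine space of the expected dimension, and that the $X_i$ assemble into a filtration.

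Next I would analyze the action in a neighborhood of each $x_i$. Because $\TT$ is linearly reductive and $x_i$ is a smooth fixed point, the induced action on $T_{x_i}X$ decomposes into weight spaces; since $\{x_1,\dots,x_m\}$ is finite, the weight-zero part is trivial at $x_i$, so $T_{x_i}X=T^+_{x_i}X\oplus T^-_{x_i}X$. Choosing $\TT$-eigencoordinates and using a $\TT$-equivariant version of the formal/\'etale slice argument, one produces an equivariant isomorphism between a neighborhood of $x_i$ in $X$ and a neighborhood of the origin in $T_{x_i}X$ with its linear action. In this local model the attracting set is visibly $T^+_{x_i}X$, which already gives statement (2) and shows that $X_i$ is locally isomorphic to affine space near $x_i$.

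To globalize this local identification I would use the torus flow itself: for any $y\in X_i$, there exists a dense open $U\subset \AAA^1$ containing $0$ such that $t\cdot y$ lies in the chosen neighborhood of $x_i$ for all $t\in U$, and one can patch the local isomorphism into a global morphism $X_i\to T^+_{x_i}X$. Checking that this morphism is an isomorphism of schemes (not just a set-theoretic bijection) is the main obstacle: one must rule out jumping phenomena and verify smoothness, using the fact that the relative cotangent sheaf is locally free of the right rank thanks to the linearization. Finally, to produce a filtration $X=X_n\supset\dots\supset X_0\supset\emptyset$ with $X_k\setminus X_{k-1}=\bigsqcup_{\dim X_i=k} X_i$, it suffices to check that the closure of each cell $X_i$ meets only cells of smaller or equal dimension; this follows by ordering fixed points according to the weight of $\TT$ on the fiber of an ample line bundle, which is monotone along attracting orbits and forces $\overline{X_i}\subset\bigcup_{j:\,w(x_j)\le w(x_i)} X_j$.
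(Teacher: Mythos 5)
This theorem is quoted by the paper directly from Bialynicki-Birula's article and is not proved in the text, so there is no in-paper argument to compare yours against; I can only assess the sketch on its own terms. Your roadmap — limits exist by properness, the weight-zero part of $T_{x_i}X$ vanishes because the fixed points are isolated, a local linearization near each $x_i$ identifies the attracting set with $T^+_{x_i}X$, and a filtration is obtained by ordering fixed points via the weights of a linearized ample line bundle — is the standard one, and the last step is essentially correct. One caveat there: the filtration you get is indexed by those weights, not by cell dimension; the claim that "the closure of each cell meets only cells of smaller or equal dimension" is not needed for the notion of cell decomposition used in this paper (the successive differences $X_k\setminus X_{k-1}$ are allowed to contain cells of arbitrary dimension) and is in general false for Bialynicki-Birula decompositions, so you should drop it rather than try to prove it.

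The genuine gap is that the two steps you treat as routine, or explicitly defer as "the main obstacle," are precisely the content of the theorem. First, producing a $\TT$-equivariant identification of a neighborhood of $x_i$ with a neighborhood of the origin in $T_{x_i}X$ is not a formal consequence of linear reductivity: one first needs a $\TT$-invariant affine neighborhood of $x_i$ (Sumihiro's theorem, or Bialynicki-Birula's own analysis of the graded completed local ring), and even then the linearization is only formal or \'etale, so it takes an argument to conclude that the honest attracting set inside $X$ is what the linear model predicts. Second, and more seriously, upgrading the set-theoretic bijection $X_i \to T^+_{x_i}X$ to an isomorphism of schemes is the heart of the proof — this is where one shows $X_i$ is a $\TT$-equivariant affine fibration over $x_i$ — and "one must rule out jumping phenomena and verify smoothness" names that difficulty without supplying an argument. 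As written, the proposal is an accurate outline of the standard strategy but does not close either of these gaps, so it does not yet constitute a proof.
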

\begin{remark}
The condition of projectivity is only needed to ensure that all the limits $\lim_{t\to 0} t\cdot x$, $x\in X$ actually exists. The theorem remains true if we replace the latter condition with the hypothesis of  projectivity of $X$. 
\end{remark}

\hspace{3em} We have a two dimensional torus action on $\Hi^{\mathfrak{n}}(0)$ that comes from the rescaling action on $\hat{R}=\CC[[x,y]]$. 
\begin{align*}
\text{If }\, \tau= (\tau_1, \tau_2)\in \TT^2  , f=f(x,y) &\in \CC[x,y], \quad \text{  then }  \tau\cdot f = f(\tau_1 x, \tau_2 y)\\
 I&\in \Hi^n(\CC^2),\quad\quad  \tau\cdot I = \left(\tau\cdot f \left\vert f \in I\right)\right.. 
\end{align*}
Of course this action is the restriction of the two torus action on $\Hi^{\mathfrak{n}}(\CC^2)$.\\

\hspace{3em}The following observation is well known and clear.
\begin{lemma}
The fixed points for the  $\TT^2 \circlearrowright \Hi^n(\CC^2)$ action are the monomial ideals in $\Hi^n(\CC^2)$. In particular we have a bijection of sets: 
\begin{align*}
\left\{\text{ fixed points of } \TT^2 \circlearrowright \Hi^n(\CC^2) \right\} & \longleftrightarrow \left\{ \text{ monomial ideals in } \Hi^n(\CC^2) \right\}
\end{align*}
Let $\mathfrak{n} \in \NN^{k}$ for some $k \in \NN$. The fixed points for the $\TT^2 \circlearrowright \Hi^{\mathfrak{n}}(\CC^2)$ action are the flags of monomial ideals in $\Hi^{\mathfrak{n}}(\CC^2)$.
\end{lemma}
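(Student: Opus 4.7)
The plan is to use the weight-space decomposition of $\CC[x,y]$ under $\TT^2$. The ring decomposes as
\[
\CC[x,y] \;=\; \bigoplus_{(a,b)\in \NN^2} \CC\cdot x^a y^b,
\]
and this is precisely the decomposition into (one-dimensional) $\TT^2$-weight spaces, since $\tau\cdot x^a y^b = \tau_1^a \tau_2^b\, x^a y^b$ and the characters $(a,b)\mapsto \tau_1^a\tau_2^b$ are pairwise distinct. So the first step is to observe that any monomial ideal $I\in \Hi^n(\CC^2)$ is obviously $\TT^2$-stable: it is a sum of weight spaces, and each weight space is sent to itself by the action.

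For the converse, suppose $I\in \Hi^n(\CC^2)$ satisfies $\tau\cdot I = I$ for all $\tau\in \TT^2$. The key step is to show that $I$ is a sum of weight spaces. Given $f=\sum c_{a,b}\, x^a y^b \in I$, I would argue that each individual monomial $x^a y^b$ with $c_{a,b}\neq 0$ lies in $I$. The cleanest way is to note that $I$ is finite-codimensional in $\CC[x,y]$, so it contains some $\mm^N$; hence we may work in the finite-dimensional quotient $\CC[x,y]/\mm^N$, on which $\TT^2$ acts semisimply with one-dimensional weight spaces. Any $\TT^2$-stable subspace of a semisimple representation is the direct sum of its intersections with the isotypic components; since every isotypic component here is a single line $\CC\cdot x^a y^b$, $I$ must be spanned by monomials. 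Alternatively, one can extract the weight $(a,b)$ component of $f$ by an explicit averaging/interpolation: specializing $\tau=(t,1)$ and $\tau=(1,s)$ one sees that the Laurent polynomials $t\mapsto \tau\cdot f$ and $s\mapsto \tau\cdot f$ take values in $I$, and the coefficient of $t^a s^b$ is $c_{a,b}\, x^a y^b$, forcing $c_{a,b}\, x^a y^b \in I$.

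Together, these two implications give the bijection between $\TT^2$-fixed points of $\Hi^n(\CC^2)$ and monomial ideals $I\subset \CC[x,y]$ with $\dim_\CC \CC[x,y]/I = n$; such ideals are in bijection with the staircase complements of size $n$, i.e.\ Young diagrams of size $n$.

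For the flag case, a flag $I_{n_1}\supset\cdots\supset I_{n_k}$ is $\TT^2$-fixed if and only if each $I_{n_i}$ is individually $\TT^2$-fixed (since the torus acts diagonally on the product $\prod_i \Hi^{n_i}(\CC^2)$ and the flag condition is automatically preserved by a componentwise torus action). Applying the single-ideal result to each $I_{n_i}$ yields that the fixed points are exactly the flags of monomial ideals. I do not anticipate a serious obstacle here; the only mildly delicate point is the argument that a $\TT^2$-stable subspace of $\CC[x,y]$ is spanned by monomials, and this is immediate from the semisimplicity of the torus action together with the fact that all weight spaces are one-dimensional.
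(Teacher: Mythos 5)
The paper does not actually prove this lemma: it is introduced with ``The following observation is well known and clear'' and no proof is given, so there is nothing to compare against. Your argument is the standard one and is essentially correct: monomial ideals are sums of $\TT^2$-weight lines and hence stable, and conversely a $\TT^2$-stable subspace of $\CC[x,y]$ must be a sum of weight lines because the weights $(a,b)\mapsto \tau_1^a\tau_2^b$ are pairwise distinct; the flag case then follows componentwise since the torus acts diagonally and inclusions are preserved. One small inaccuracy: it is not true that every ideal of colength $n$ in $\CC[x,y]$ contains a power of $\mm=(x,y)$ (the ideal of a point away from the origin is a counterexample), so the reduction to $\CC[x,y]/\mm^N$ needs the prior observation that a $\TT^2$-stable ideal is supported at the unique torus-fixed point of $\CC^2$, namely the origin. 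This is easily repaired, and in any case your second argument --- extracting the weight components of $f\in I$ by evaluating $\tau\cdot f$ at finitely many $\tau$ and interpolating, which only uses that $I$ is a linear subspace and that each $f$ has finitely many monomials --- bypasses the issue entirely and is the cleaner route.
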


Every one-parameter subgroup of $\TT^2$ will then act on $\Hi^{\mathfrak{n}}(0)$.
\begin{definition}
\label{tori}
Let $\mathfrak{n} = (n_1, \dots,n_k) \in \NN^k$. Let $\phi : G_m \to \TT^2$ be a one-parameter subgroup of the form $\phi(\tau) = (\tau^{w_1}, \tau^{w_2})$ with $w_1, w_2 \in \ZZ$. We say that it is \emph{generic} with respect to the action on $\Hi^{\mathfrak{n}}(\CC^2)$ if it has the same fixed points as $\TT^2$. This means avoiding a finite set of given hyperplanes in the lattice of one-parameter subgroups of $\TT^2$. We define two generic one-parameter subgroups that we will use, for different goals, in the rest of the thesis: 
\begin{equation}
\label{one-parameter}
\begin{matrix} 
\TT_{\infty} \text{ generic, with weights } w_1, w_2 \text{ such that } 0<w_1<w_2 \text{ and } 1 \ll \frac{w_2}{w_1}\, , \\
\TT_{1^+} \text{ generic, with weights } w_1, w_2 \text{ such that } 0<w_1<w_2 \text{ and } n_k\cdot w_1 > (n_k-1)\cdot w_2.\end{matrix}
\end{equation}  
Here $1 \ll \frac{w_2}{w_1}$ is relative to $n_k$ and in fact it is enough that $ n_k < \frac{w_2}{w_1}$.
\end{definition}

\hspace{3em} The action of $\TT_{1^+}$ behaves especially well with respect to the stratification in Hilbert-Samuel's strata.  
\begin{lemma}{\cite[Lemma~2.2.9]{gottsche1990betti}}
\ref{lemmagoettschecells} Let $T$ be an admissible sequence of nonnegative integers as in \ref{admissible}. Suppose that we are considering the $\TT_{1^+}$ torus action on $\Hi^{n}(0)$, i.e.  with weights $w_1, w_2$ such that $0<w_1<w_2$ and $nw_1 > (n-1)w_2$. Then we have that: 
\begin{itemize}
\item[(1)] $M_{{T}}$ is a union of attracting sets that are attracting sets of $\Hi^{n}(0)$. 
\item[(2)] $\rho_T : M_{{T}} \to G_{{T}}$ is equivariant with respect to the $\TT_{1^+}$ action.
\item[(3)] The $\TT_{1^+}$ action induces an attracting sets decomposition of $G_{{T}}$. The attracting sets are the intersection of the attracting sets of $M_{{T}} $ with $G_{{T}}$. 
\end{itemize} 

The same is true for $\mathfrak{n}=(n_1, \dots, n_k)$, $T= (T_1, \dots, T_k)$ admissible k-tuple of sequences of nonnegative integers, and the $\TT_{1^+}$ action with weights $w_1, w_2$ such that $0<w_1<w_2$ and $n_kw_1 > (n_k-1)w_2$. 
\end{lemma}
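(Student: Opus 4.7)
The plan is to reduce the lemma to a single observation: the weight condition $nw_1>(n-1)w_2$ forces the $\TT_{1^+}$-weight function $w(x^iy^j)=iw_1+jw_2$ to \emph{refine} the standard grading $\deg(x^iy^j)=i+j$ on $\hat{R}/\mathfrak{m}^n$. Indeed, monomials of standard degree $d\leq n-1$ have weight in $[dw_1,dw_2]$, and the inequality $dw_2<(d+1)w_1$ follows from $w_2/w_1<n/(n-1)\leq (d+1)/d$. So within $\hat{R}/\mathfrak{m}^n$ the weighted order strictly ranks monomials of different standard degree, while on each standard-homogeneous piece $\hat{R}_d$ the action of $\TT_{1^+}$ is diagonal in the monomial basis.

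From this I would derive \textbf{(1)}. For any $I\in \Hi^n(0)$ the limit $I_0=\lim_{\tau\to 0}\tau\cdot I$ exists (by projectivity of $\Hi^n(0)$) and is a monomial ideal. To see $T(I_0)=T(I)$, observe that $T(I)$ depends only on the dimensions $\dim I_i\subset \hat{R}_i$, where $I_i=(I\cap\mathfrak{m}^i)/(I\cap\mathfrak{m}^{i+1})$ is the $i$-th standard-graded piece of the associated graded ideal. Since $\TT_{1^+}$ acts diagonally on each $\hat{R}_i$, the family $\tau\cdot I_i$ degenerates, as $\tau\to 0$, to the coordinate subspace of $\hat{R}_i$ spanned by the weighted-initial monomials of any basis of $I_i$; this coordinate subspace has the same dimension. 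Hence $T(I_0)=T(I)$: every monomial fixed point of $\Hi^n(0)$ lies in a unique $G_T$, and its full attracting set in $\Hi^n(0)$ is contained in the corresponding $M_T$.

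For \textbf{(2)}, since $\TT_{1^+}$ preserves the standard grading, writing $f=f_d+f_{d+1}+\cdots\in I$ in standard-homogeneous parts gives $\tau\cdot f=\tau\cdot f_d+\tau\cdot f_{d+1}+\cdots$, with $\tau\cdot f_d$ still standard-homogeneous of degree $d$. Taking initial forms of elements and the ideal they generate yields $\rho_T(\tau\cdot I)=\tau\cdot \rho_T(I)$. For \textbf{(3)}, $G_T$ is cut out inside $\Hi^n(0)$ by conditions on the associated graded ideal and is therefore $\TT_{1^+}$-invariant; all fixed points of $\TT_{1^+}$ on $M_T$ are monomial (hence homogeneous) and already lie in $G_T$, so the attracting set in $G_T$ of any fixed point $I_0$ is exactly $W_{I_0}\cap G_T$, where $W_{I_0}$ is its attracting set in $M_T$. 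For the flag version, the bound $n_kw_1>(n_k-1)w_2$ gives the refinement property for all length-$n_i$ components with $i\leq k$, and the argument applies ideal-by-ideal, using the equivariance of each $\rho_{T_i}$ to guarantee that the $\TT_{1^+}$-limit of a flag is again a flag.

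The main obstacle---really the only nontrivial point---is the preservation of the Hilbert--Samuel type under the $\TT_{1^+}$-degeneration; everything else is a formal consequence of the fact that $\TT_{1^+}$ acts by a grading refining the standard one, so I would spend most of the writing on the initial-subspace computation in paragraph two.
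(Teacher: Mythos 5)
Your proposal is correct and follows essentially the same route as the paper: the key step in both is that the condition $nw_1>(n-1)w_2$ makes the torus weight refine the standard degree, so lifting a (row-echelon) basis of each graded piece $I_j$ to elements of $I$ and taking $\tau\to 0$ shows the initial monomials survive in the limit, whence $t'_j\leq t_j$ degreewise and $|T'|=|T|=n$ forces $T'=T$. The paper only writes out point (1) explicitly (points (2) and (3) being treated as immediate), so your extra remarks on equivariance of $\rho_T$ and on $G_T$ are consistent with, and slightly more complete than, the printed argument.
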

\begin{proof}
We prove it in the case of $\Hi^{n}(0)$ since the proof is the same and we do not want to complicate it with indexes. Let then $I \in \Hi^{n}(0)$ be with Hilbert function $T$. For $j \in \NN$ call $s_j = j +1 -t_j $ the dimension of $I_j$, the space of initial forms of $I$ of degree $j$. Call $J = \lim_{t\to 0} t \cdot I$. We need to prove that $J \in M_{{T}}$. Suppose that the Hilbert function of $J$ is $T' = (t'_i)_{i\geq 0}$. Choose $f_1, \dots, f_{s_j} \in I$ such that their initial forms $g_1, \dots, g_{s_j}$ are a basis for $I_j$, so that the $g_i$ are homogenous of degree $j$. We can assume, up to linear combinations, that the $g_i$ are of the form: 
\[
g_i = x^{l(i)}y^{j-l(i)} + \sum_{m>l(i)}g_{i, m} x^m y^{j-m} \qquad g_{i,m} \in \CC
\]
with $l(1)>l(2)>\dots > l(s_j)$. Then by the choice of weights we have that $\lim_{t\to0} t \cdot f_i = x^{l(i)}y^{j-l(i)}$, in fact other terms of $f_i$ either must have higher degree than $j$ and then go to $0$ faster, either have degree $j$ i.e. are in the support of $g_i$, and then have higher $y$ degree that forces them to go to zero faster. This proves that all the $x^{l(i)}y^{j-l(i)} \in J_j$, and then 
\[
t'_j = j+i - \dim J_j \geq t_j.
\]
Since $J \in \Hi^{n}(0)$, it is still true that $|T'| = n$, and thus $T'=T$. 
\end{proof}
\begin{observation}
In the case of $\Hi^{n}(\CC^2) \supset M_{{T}}$ all the attracting sets are, as we will see,  affine cells. This follows from smoothness of $\Hi^{n}(\CC^2) $, that implies smoothness of $\Hi^{n}(\PP^2)$ and the fact that its attracting sets are affine. It is then an easy observation that the attracting sets for $M_{{T}}$ are some of the attracting sets for $\Hi^{n}(\PP^2)$. This is the only case Goettsche was interested in. For the case $\Hi^{n, n+1}(\CC^2) \supset M_{{T_1, T_2}}$, always thanks to smoothness, the attracting sets are affine cells. The main geometric result of this thesis is that it is still true that the attracting cells are affine for $M_{{T_1, T_2, T_3}}$, since these spaces are smooth, even though the ambient space $\Hi^{n, n+1, n+2}(\CC^2)$ it is not. 
\end{observation}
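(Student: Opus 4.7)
The plan is to prove the three statements in order (with (1) the substantive one and (2), (3) short formal consequences), then to extend to flags componentwise. The linchpin is a weight ordering on monomials which, under the hypothesis $n w_1 > (n-1) w_2$, refines the total-degree filtration on $\hat{R}/\mathfrak{m}^n$.

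Assign weight $w(x^a y^b) := a w_1 + b w_2$. Because every $I \in \Hi^n(0)$ contains $\mathfrak{m}^n$, only monomials of degree $< n$ are relevant. Within a fixed degree $j$, weight is strictly decreasing in the $x$-exponent. Across degrees, for $j < k \leq n$ one has $k w_1 \geq (j+1) w_1 > j w_2$, so the minimum weight at degree $k$ strictly exceeds the maximum weight at degree $j$. Hence the $\TT_{1^+}$-limit of any element of $\hat{R}/\mathfrak{m}^n$ picks out its lowest-weight monomial, which necessarily lies in the lowest-occurring total degree — this is the only place the weight inequality is used.

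For part (1), take $I \in M_T$ and for each degree $j$ choose $f_1, \ldots, f_{s_j} \in I \cap \mathfrak{m}^j$ (with $s_j = j+1-t_j$) whose degree-$j$ initial forms span $I_j$. By iteratively selecting $g_i$ with maximal remaining $x$-exponent $l(i)$ after quotienting out $g_1, \ldots, g_{i-1}$, one can put the initial forms in echelon form
\[
g_i \;=\; x^{l(i)} y^{j - l(i)} \;+\; \sum_{m < l(i)} c_{i, m}\, x^m y^{j-m}, \qquad l(1) > l(2) > \cdots > l(s_j),
\]
so that the pivot monomial is the unique lowest-weight term of $g_i$. By the weight analysis the (rescaled) limit $\lim_{\tau \to 0} \tau \cdot f_i$ equals $x^{l(i)} y^{j - l(i)}$: lower-$x$ terms of $g_i$ are suppressed (higher weight within degree $j$), and any tail of $f_i$ of degree $> j$ is suppressed faster still. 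Setting $J := \lim_{\tau \to 0} \tau \cdot I$, the $s_j$ independent monomials $x^{l(i)} y^{j - l(i)}$ all lie in $J_j$, giving $\dim J_j \geq s_j$, i.e.\ $t'_j \leq t_j$ for the Hilbert-Samuel function $T' = (t'_j)$ of $J$. Since $|T'| = n = |T|$, equality is forced and $J \in M_T$. Thus every $\TT_{1^+}$-orbit in $M_T$ accumulates inside $M_T$, so $M_T$ is precisely a union of attracting sets of $\Hi^n(0)$.

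For (2), the torus rescales coordinates and preserves total degree, so it commutes with extraction of initial forms: $\rho_T(\tau \cdot I) = \mathrm{in}(\tau \cdot I) = \tau \cdot \mathrm{in}(I) = \tau \cdot \rho_T(I)$. For (3), $G_T \subset M_T$ is a $\TT^2$-invariant closed subscheme, and every $\TT_{1^+}$-fixed point of $M_T$ is a monomial — hence homogeneous — ideal with Hilbert-Samuel function $T$, so all such fixed points already lie in $G_T$. Combined with the equivariant retraction $\rho_T$, this forces the attracting-set decomposition of $M_T$ to restrict to $G_T$, with attracting sets of $G_T$ being exactly intersections with $G_T$ of attracting sets of $M_T$. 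The flag extension is identical: apply (1) componentwise (each $\lim \tau \cdot I_{n_i}$ lies in $M_{T_i}$ by the scalar case), note that the chain condition $I_{n_1} \supset \cdots \supset I_{n_k}$ is closed and hence preserved under limits, and observe that $n_k w_1 > (n_k - 1) w_2$ controls every relevant $n_i \leq n_k$. The main technical obstacle is the echelon-form step in (1): one must arrange each pivot to be the genuine lowest-weight term of its row — not merely the smallest among a preselected pivot column set — and this in turn rests precisely on the weight inequality $n w_1 > (n-1) w_2$, without which lower-weight tails from higher degrees could absorb a degree-$j$ pivot in the limit.
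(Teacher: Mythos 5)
Your proposal is a correct proof, but of the wrong statement: what you have proven is the lemma immediately \emph{preceding} this observation (Goettsche's Lemma~2.2.9), namely that the $\TT_{1^+}$-flow preserves the Hilbert--Samuel strata. That part of your argument is sound and follows the paper's own route: put the degree-$j$ initial forms of $I$ in echelon form, observe that under the weight inequality $n w_1 > (n-1)w_2$ each pivot is the unique lowest-weight monomial of its generator, deduce $\dim J_j \geq s_j$ for the limit ideal $J$, and force equality from $\lvert T'\rvert = \lvert T\rvert = n$. (Your echelon form, with non-pivot terms of \emph{lower} $x$-exponent than the pivot, is in fact the internally consistent version; the paper's displayed formula has the inequality on $m$ reversed relative to its own prose.) The equivariance of $\rho_T$, the restriction to $G_T$, and the componentwise flag extension are likewise fine.

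The gap is that the observation asserts the attracting sets are \emph{affine cells}, and nothing in your proposal addresses affineness. Flow-invariance only shows each stratum is a union of attracting sets of the ambient punctual Hilbert scheme; to identify those sets with affine spaces one needs the theorem of Bia{\l}ynicki-Birula, which requires a smooth projective variety carrying the torus action. For $\Hi^{n}(0)$ and $\Hi^{n,n+1}(0)$ this is supplied by the smoothness of $\Hi^{n}(\PP^2)$ and $\Hi^{n,n+1}(\PP^2)$, itself resting on the tangent-space bases $B(I)$ and $B(I_1,I_2)$ of Chapter~2. For the three-step case the ambient space $\Hi^{n,n+1,n+2}(\CC^2)$ is singular, so that route is unavailable, and the whole point of the observation is that one must instead prove smoothness of the strata $M_{T_1,T_2,T_3}$ directly --- the main geometric theorem of the thesis, carried out in Chapter~3 by matching an Iarrobino-style dimension count (standard generators, the equations $g_i$, and the extra $\diamond$ relation of Lemma~\ref{diamondequationissatisfied}) against the tangent-space dimensions of Lemma~\ref{dimensiontangent123}. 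Your proposal contains none of this, so as a proof of the stated observation it is missing its central ingredient.
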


\hspace{3em} To better deal with the fixed points of the torus action we need to introduce some notations. 
\begin{definition}[Partitions and Young diagrams]
Let $\nu=\nu_0\geq \nu_1\geq\dots $ be a partition  of $n$, i.e. the $\nu_i \in \NN$ are nonnegative integers weakly decreasing that sum to $n$. We will write $\nu \vdash n$ to say that $\nu$ is a partition of $n$. We will also write $|\nu |=n$. The \emph{length} of the partition $\nu$, denoted $\ell(\nu)$, is the minimum index $i$ for which $\nu_i=0$. We will also write a partition $\nu \vdash n$ as 
\[
\nu = (1^{\alpha_1}, 2^{\alpha_2}, \dots, n^{\alpha_n}) \quad \text{ where }\,\, \sum_i \alpha_i i = n \qquad \text{ and }\,\,\alpha_i \in \NN
\]
if the parts of $\nu$ are $\alpha_n$ times $n$, $\dots$, $\alpha_1$ times $1$. We will confuse the $n$-tuple of integers $(\alpha_1, \dots, \alpha_n)$ with $\nu$. \\

Consider the first quadrant of $\RR\times \RR$ as covered by square (boxes) with vertices the points of integer coordinates, side $1$, and  indexed by the coordinate of their left-lower vertex. We denote this set of boxes $\Delta$. A \emph{Young diagram} $\Gamma$ is a finite set of boxes of $\Delta$ such that if $(i, j) \in \Gamma$ then $(i-1, j)$ and $(i,j-1)$ are either in $\Gamma$ or have negative coordinates. Young diagrams are also called \emph{Ferrers} diagrams, and they are in bijection with partitions of integers.  The \emph{Young diagram} of $\nu$, $\Gamma(\nu)$, is the set of boxes labeled by $(0,0), (0,1),\dots (0,\nu_0-1)$, $(1,0), (1,1),\dots (1,\nu_1-1)$, $\dots$, i.e. is the Young diagram with $\nu_i$ boxes in the $i$-th column. \\

\begin{minipage}{0.45\textwidth}
\begin{center}
\ytableausetup{boxsize=1.2em, aligntableaux=bottom}
\begin{ytableau} 
\, \\
\,&\,  \\
\,&\, &\,\\
\,&\, &\,  \\
\,&\, &\,&\,  \\
\end{ytableau}
\end{center}\end{minipage}
\begin{minipage}{0.4\textwidth}
The Young diagram $\Gamma(\nu)$ associated to $\nu=(5,4,3,1)$. We think of $\Gamma$ as living in the lattice $\NN\times \NN$, that we called $\Delta$. 
\end{minipage}

\vspace{0.5em}
Conversely if we are given a Young diagram we write the partition associated to it as $\nu= \#\{\text{boxes in 0-th column}\} \geq \dots\geq \#\{\text{boxes in i-th column}\}\geq \dots\,$. This is a bijection between Young diagrams and partitions. From now on we will confuse the two and write $\Gamma \vdash n$. For example the \emph{length} of a Young diagram is the number of its columns. 

The \emph{diagonal sequence} of $\nu$ is $T(\nu)=(t_0(\nu), \dots, t_j(\nu), \dots)$, where
\[
t_i(\nu) :=  \#\, \left\{ \left. (l,j) \in \Gamma(\nu) \right\vert l+j=i \right\}.
\]
\begin{minipage}{0.45\textwidth}
\begin{center}
\ytableausetup{boxsize=1.2em, aligntableaux=bottom}
\begin{ytableau} 
\none[3]\\
\none[4]&\, \\
\none[3]&\,&\,  \\
\none[2]&\,&\, &\,\\
\none[t_0=1\qquad]&\,&\, &\,  \\
\none[]&\,&\, &\,&\,  \\
\end{ytableau}
\end{center}\end{minipage}
\begin{minipage}{0.35\textwidth}
The diagonal sequence of $\Gamma$ is the number of boxes on each antidiagonal. In this case is $T(\Gamma)=(1, 2, 3, 4, 3)$. Of course $|T|:= \sum_i t_i = |\Gamma|$. 
\end{minipage}

\vspace{0.7em}
The \emph{hook difference} of a box $(u,v)\in \Gamma(\nu)$, denoted as $h_{u,v}(\nu)$ or simply as $h_{u,v}$, is 
\[
h_{u,v} :=  \#\, \left\{\left.(l,v) \in \Gamma(\nu)\right\vert l>u \right\}- \#\, \left\{\left.(u,j) \in \Gamma(\nu)\right\vert j>v \right\}.
\]
It is the difference between the number of boxes in $\Gamma(\nu)$ in the same row and to the right of $(u,v)$ and the number of boxes in the same column and above $(u,v)$. \\

\begin{minipage}{0.45\textwidth}
\begin{center}
\ytableausetup{boxsize=1.1em, aligntableaux=bottom}
\begin{ytableau} 
\none[\,]&0 \\
\none[\,]&-1  \\
\none[\,]&0&1 &0\\
\none[\,]&-1&0 &-1  \\
\none[]&2&3 &2&3&2&1&0  \\
\end{ytableau}
\end{center}\end{minipage}
\begin{minipage}{0.35\textwidth}
The boxes of $\Gamma$ are each marked with their hook difference. 
\end{minipage}
\end{definition}

\begin{definition}[Bijection between Young diagrams and torus fixed points of $\Hi^n(0)$]
We can interpret each box in $\NN\times \NN$ as a monomial in $R=\CC[x,y]$: to the box labeled by $(i, j)$ we associate the monomial $x^iy^j$. Then we can associate to each partition a \emph{monomial ideal}. Explicitly if $\nu=\nu_0\geq  \nu_1\geq \dots, \nu_{n-1}\geq 0$ is a partition of $n$, we define $I_{\nu}$ as 
\[
I_{\nu} = \left(y^{\nu_0}, x^1y^{\nu_1}, \dots, x^iy^{\nu_i},\dots,  x^{\ell(n)}\right).
\]
Observe that the monomials that are \emph{not} in $I_{\nu}$ are exactly the monomials labeled by boxes in $\Gamma(\nu)$, so it is clear that $I_{\nu}\in \Hi^n(0)$. Moreover to each $I\in \Hi^n(0)$ we can associated a partition of $n$ by defining 
\[
\nu_i \,\,:=\,\, \min \left\{k \,\,\left\vert\,\, x^i y^k \in I \right.\right\}. 
\] 
This gives a bijection between monomial ideals and partitions. When we look at it as a bijection between Young diagrams and monomial ideals we write $I_{\Gamma}$ or $I(\Gamma)$. \\

\hspace{3em} For a monomial ideal in $I\in \Hi^n(0)$ we define the \emph{standard monomial generators} $(\alpha_0, \dots, \alpha_s)$ as the list of minimal monomial generators of $I$ ordered with decreasing $x$ power. If $\Gamma=\Gamma(I)$ is the corresponding Young diagram, its standard monomial generators are the boxes $(i,j)\in  \Delta\setminus \Gamma$ such that $(i-1, j)$ (and $(i, j-1)$) is either in $\Gamma$ or $i-1<0$ (or $j-1<0$). These are the \emph{external corners} of $\Gamma$.
\end{definition}
\begin{minipage}{0.45\textwidth}
\begin{center}
\ytableausetup{boxsize=1.3em, aligntableaux=bottom}
\begin{ytableau} 
\none[\alpha_4]\\
\,&\, &\none[\alpha_3]\\
\,&\,&\, &\none[\alpha_2] \\
\,&\,&\, &\,\\
\,&\,&\, &\, &\none[\alpha_1] \\
\,&\,&\, &\,&\, &\none[\alpha_0]  \\
\end{ytableau}
\end{center}\end{minipage}
\begin{minipage}{0.50\textwidth}
The standard monomial generators of the represented torus fixed point of $\Hi^n(\CC^2)$. The indexes are labelled so that to a lower index corresponds a monomial with higher $x$ degree. 
\end{minipage}

\hspace{0.4em}

\begin{lemma}
Let $\nu=\nu_0\geq \nu_1\geq\dots $ be a partition of $n$. Then the monomial ideal $I$ associated to the partition $\nu$, i.e. $I = \left(y^{\nu_1},\dots, x^iy^{\nu_i},\dots,  x^{\ell({\nu})}\right)$, satisfies $T(I)= T(\nu)$.
\end{lemma}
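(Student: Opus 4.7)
The plan is to exploit directly the fact that $I$ is a monomial ideal in order to reduce the computation of $t_i(I)$ to a counting of monomials, which will then be matched with the antidiagonal counts defining $t_i(\nu)$.

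First I would observe that, because $I$ is generated by monomials, both $I \cap \mathfrak{m}^i$ and $I \cap \mathfrak{m}^{i+1}$ are monomial submodules: they are spanned, as $\CC$-vector spaces, by all monomials in $I$ of degree at least $i$ and at least $i+1$ respectively. Consequently the subspace
\[
\frac{I \cap \mathfrak{m}^i + \mathfrak{m}^{i+1}}{\mathfrak{m}^{i+1}} \subset \frac{\mathfrak{m}^i}{\mathfrak{m}^{i+1}} = \hat{R}_i
\]
has, as a basis, exactly the set of monomials $x^l y^j$ of total degree $l+j=i$ that lie in $I$. Since $\hat{R}_i$ itself has the canonical basis $\{x^l y^j : l+j = i\}$, passing to the quotient $\hat{R}_i/I_i$ gives a basis indexed by the monomials $x^l y^j$ of degree $i$ that are \emph{not} in $I$.

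Next I would invoke the tautological bijection between monomials not in $I$ and boxes of $\Gamma(\nu)$ established just above in the definition: by construction of the partition–to–monomial ideal correspondence, $x^l y^j \in I$ if and only if $(l,j) \notin \Gamma(\nu)$. Combining this with the previous step,
\[
t_i(I) \;=\; \dim\frac{\hat{R}_i}{I_i} \;=\; \#\bigl\{(l,j) \in \Gamma(\nu) : l+j = i\bigr\} \;=\; t_i(\nu),
\]
which is exactly the desired equality $T(I) = T(\nu)$.

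There is really no obstacle here: the whole argument is tautological once one recognizes that the Hilbert–Samuel type is computed stratum by stratum on the associated graded of $\hat{R}$, and that for monomial ideals this associated graded decomposes compatibly with the monomial basis so that the antidiagonal of $\Gamma(\nu)$ directly labels a basis of $\hat{R}_i/I_i$. The only thing worth being a little careful about is making sure one uses $\mathfrak{m}^{i+1}$ correctly so that only the degree-$i$ part of $I$ contributes, but this is immediate from the monomial description.
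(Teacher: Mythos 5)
Your proof is correct and follows essentially the same route as the paper's: both identify a monomial basis for the degree-$i$ graded piece of $I$ and count the complementary monomials, which are exactly the boxes of $\Gamma(\nu)$ on the antidiagonal $l+j=i$. The paper phrases the count via the condition $j\geq \nu_i$ while you phrase it via the box/monomial bijection, but these are the same computation.
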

\begin{proof}
Let $T(I)= (t_i)_{i\geq 0}$. The monomials $x^iy^j$ with $i+j=l$ and $j\geq \nu_i$ are a basis of the space $I_l$ of homogenous polynomials of degree $l$ in $I$. Then we have 
\begin{align*}
t_l &= l+1- \# \left\{(i,j) \in \NN^2 \left\vert\,\,i+j=l, \,\,\, j\geq \nu_i\, \right.\right\} = \# \left\{(i,j) \in \Gamma(I) \left\vert\,\,i+j=l \right.\right\} = t_l(\Gamma(I)).
\end{align*} 
\end{proof}

\begin{definition}
Let $\Gamma_1$ and $\Gamma_2$ be two Young diagrams of size, respectively, $n$ and $n+1$, such that $\Gamma_1 \subset \Gamma_2$. Then the monomial ideals associated satisfy 
\[
I_{\Gamma_1} \supset I_{\Gamma_2}
\]
and the couple represents a fixed point of $\Hi^{n, n+1}(0)$. In this case we will use the following notation: 
\[
\Gamma = (\Gamma_1, \Gamma_2) \vdash \, [n, n+1].
\]
Clearly there is a bijection between fixed points of $\Hi^{n, n+1}(0)$ and couples $(\Gamma_1, \Gamma_2) \vdash [n, n+1]$.

\hspace{3em}Let $\Gamma_1$, $\Gamma_2$ and $\Gamma_3$ be three Young diagrams of size, respectively, $n$, $n+1$ and $n+2$, such that $\Gamma_1 \subset \Gamma_2 \subset \Gamma_3$. Then the monomial ideals associated satisfy 
\[
I_{\Gamma_1} \supset I_{\Gamma_2} \supset I_{\Gamma_3}
\]
and the triple represents a fixed point of $\Hi^{n, n+1, n+2}(0)$. In this case we will use the following notation: 
\[
\Gamma = (\Gamma_1, \Gamma_2, \Gamma_3) \vdash \, [n, n+1, n+2].
\]
Clearly there is a bijection between fixed points of $\Hi^{n, n+1, n+2}(0)$ and triples $(\Gamma_1, \Gamma_2, \Gamma_3) \vdash [n, n+1, n+2]$.
\end{definition}


\chapter{Tangent spaces at torus fixed points}

\hspace{3em}In this chapter we study the Zariski tangent space of the spaces $\Hi^{n}(\CC^2)$, \\ $\Hi^{n, n+1}(\CC^2)$ , and $\Hi^{n, n+1, n+2}(\CC^2)$ at a torus fixed point. 
 
 \hspace{3em}The stepping stone is an homological interpretation of the tangent spaces as spaces of $R$-homomorphisms, where $R=\CC[x, y]$, that dates back to Grothendieck. Then one can interpret these $R$-homomorphisms in terms of the combinatorial data that describe a torus fixed point. 
 
 \hspace{3em} The chapter is divided in three sections:  one for each of the spaces $\Hi^{n}(\CC^2)$, \\ $\Hi^{n, n+1}(\CC^2)$ , and $\Hi^{n, n+1, n+2}(\CC^2)$. The goal of each section is to define a pure weight basis for the tangent space at each fixed point and to study the weights of the elements of these bases. The study of these weights will also help to understand the tangent spaces of the Hilbert-Samuel's strata of our Hilbert schemes. 
 
 \hspace{3em}In the cases of $\Hi^{n}(\CC^2)$ and $\Hi^{n, n+1}(\CC^2)$ we will then be able to show that the spaces are smooth, and we will thus give graded bases for the homologies of $\Hi^{n}(0)$ and $\Hi^{n, n+1}(0)$. 
 
 \hspace{3em}As we will see the space $\Hi^{n, n+1, n+2}(\CC^2)$ is not smooth. However we will  prove in the next chapter that the Hilbert-Samuel's strata are still smooth, thus allowing us to use the same techniques to give graded bases for their homologies and for that of $\Hi^{n, n+1, n+2}(0)$. \\

 \hspace{3em}The rough strategy is the following: given a fixed point of $\Hi^{n}(\CC^2)$ labelled by $\Gamma_1$ we define a basis $B(\Gamma_1)$ for the tangent space at $I_{\Gamma_1}$. Then we will use $B(\Gamma_1)$ to build a basis $B(\Gamma_1, \Gamma_2)$ for the tangent space at the fixed point $(I_{\Gamma_1}, I_{\Gamma_2}) \in \Hi^{n,n+1}(\CC^2)$ with  $\Gamma_2 \vdash n+1$. The modifications we need to perform on $B(\Gamma_1)$ depend on the combinatorics of the couple of Young diagrams  $(\Gamma_1, \Gamma_2)$. Then we start by $B(\Gamma_1, \Gamma_2)$ to build a basis $B(\Gamma_1, \Gamma_2, \Gamma_3)$ for the tangent space of $\Hi^{n,n+1,n+2}(\CC^2)$  where  $\Gamma_3 \vdash n+2$. Most of the modifications needed will only depend on the couple $(\Gamma_2, \Gamma_3)$, exactly as in passing from $B(\Gamma_2)$ to $B(\Gamma_2, \Gamma_3)$.  In fact only few (in the general case only one) modifications will actually depend on the full triple $(\Gamma_1, \Gamma_2, \Gamma_3)$. This is really the key philosophical point of most of the arguments in this thesis: the geometrical or combinatorial properties of a flag of three ideals $(I_1, I_2, I_3)$ can be understood by looking at the  corresponding properties for two  flags of two ideals, i.e.  $(I_1, I_2)$ and $(I_2, I_3)$, independently, and then taking into account some, in our cases always manageable, properties that are truly intrinsic to the triple. \\

 \hspace{3em}We start the chapter with the interpretation of the tangent spaces of the various Hilbert schemes in terms of $R$-homomorphisms.

\begin{lemma}{\cite{grothendieck1960techniques}}
\label{Rmorphisms}
Let $I \in \Hi^n(\CC^2)$ be a fixed point and denote with $T_I\left(\Hi^n(\CC^2) \right)$ the Zariski tangent space of $\Hi^n(\CC^2)$ at $I$. Then we have a natural $\TT^2$-equivariant isomorphism  
\[
T_I \Hi^n(\CC^2) \quad \cong\quad \text{Hom}_R (I, \bigslant{R}{I}). 
\]
Let $\,\,\mathfrak{n}=(n, n+1, n+2, \dots, n+k)$. Let $\mathbf{I} = (I_1, I_{2}, \dots, I_{k}) \in \Hi^{\mathfrak{n}}(\CC^2)$ be a fixed point, and denote $T_{\mathbf{I}}\left(\Hi^{\mathfrak{n}}(\CC^2) \right)$ the Zariski tangent space of $\Hi^{\mathfrak{n}}(\CC^2) $ at the point $\mathbf{I}$. For $1\leq i<j\leq k$ we have $I_i \supset I_{j}$ so that we can define the obvious maps:
\begin{align*}
\phi_{ij} : \text{Hom}_R (I_{i}, \bigslant{R}{I_i}) \to \text{Hom}_R (I_{j}, \bigslant{R}{I_i}),\quad & (f:I_i\to \bigslant{R}{I_i})\mapsto (\left.f\right\vert_{I_{j}}:I_j\to \bigslant{R}{I_i}),   \\
\psi_{ij} : \text{Hom}_R (I_{j}, \bigslant{R}{I_j}) \to \text{Hom}_R (I_{j}, \bigslant{R}{I_i}), \quad & (f:I_j\to \bigslant{R}{I_j})\mapsto (p\circ f:I_j\to \bigslant{R}{I_j}\twoheadrightarrow \bigslant{R}{I_i})\, . 
\end{align*}
Define also the projection maps \[\pi_{ij}: \bigoplus_{l=1}^k  \text{Hom}_R (I_{i}, \bigslant{R}{I_i}) \to  \text{Hom}_R (I_{i}, \bigslant{R}{I_i})\oplus \text{Hom}_R (I_{j}, \bigslant{R}{I_j})\, .\]
Then we have a $\TT^2$-equivariant isomorphism  
\[
T_{\mathbf{I}}\left(\Hi^{\mathfrak{n}}(\CC^2) \right)\quad \cong \bigcap_{1\leq i<j\leq k} \left(\text{Ker}(\phi_{ij}-\psi_{ij})\circ \pi_{ij}\right). 
\]
\end{lemma}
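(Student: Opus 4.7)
The plan is to reduce everything to the classical single-ideal identification $T_I \Hi^n(\CC^2) \cong \text{Hom}_R\bigl(I, \bigslant{R}{I}\bigr)$ of Grothendieck, which I take as given. Recall that this isomorphism sends a tangent vector, viewed as a flat first-order deformation $\tilde{I}\subset R[\varepsilon]/(\varepsilon^2)$ of $I$, to the $R$-homomorphism that takes $g\in I$ to the class of $(\tilde{g}-g)/\varepsilon$ modulo $I$, where $\tilde{g}\in \tilde{I}$ is any lift of $g$. The construction is natural in $\tilde{I}$, so the $\TT^2$-equivariance comes for free once the torus acts trivially on $\varepsilon$.

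For the flag case, I would interpret a tangent vector at $\mathbf{I}$ via the functor of points of $\Hi^{\mathfrak{n}}(\CC^2)$: it corresponds to a flag of flat first-order deformations $\tilde{I}_1\supset \tilde{I}_2\supset \dots \supset \tilde{I}_k$ in $R[\varepsilon]/(\varepsilon^2)$, each of the prescribed colength, reducing to $\mathbf{I}$ modulo $\varepsilon$. Forgetting the flag structure and applying the single-ideal case to each $\tilde{I}_i$ produces an element $f_i\in \text{Hom}_R(I_i, \bigslant{R}{I_i})$, and one assembles these into a canonical, torus-equivariant linear map
\[
\Psi\colon T_{\mathbf{I}}\Hi^{\mathfrak{n}}(\CC^2)\;\longrightarrow\; \bigoplus_{i=1}^k \text{Hom}_R\bigl(I_i,\bigslant{R}{I_i}\bigr),
\]
which is injective because each $\tilde{I}_i$ is recovered from $f_i$ by the single-ideal bijection.

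The central step is to identify the image of $\Psi$ with the intersection $\bigcap_{i<j}\ker\bigl((\phi_{ij}-\psi_{ij})\circ \pi_{ij}\bigr)$. For the inclusion ``$\subseteq$'', fix $i<j$ and $g\in I_j \subset I_i$, and choose a lift $\tilde{g}\in \tilde{I}_j$. Since $\tilde{I}_j\subset \tilde{I}_i$, the same element $\tilde{g}$ is also a lift of $g$ when we view it inside $I_i$, so the single element $(\tilde{g}-g)/\varepsilon$ represents both $f_j(g)$ modulo $I_j$ and $f_i(g)$ modulo $I_i$. Reducing $f_j(g)$ further modulo $I_i$ then yields $\psi_{ij}(f_j)(g)=\phi_{ij}(f_i)(g)$, which is exactly the desired compatibility.

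The main obstacle is the reverse inclusion: starting from a tuple $(f_1,\dots,f_k)$ satisfying $\phi_{ij}(f_i)=\psi_{ij}(f_j)$ for all $i<j$, one must build an honest flag of flat deformations realizing it. The single-ideal result yields candidate deformations $\tilde{I}_1,\dots,\tilde{I}_k$, and the only thing left is to verify the nested inclusions $\tilde{I}_j\subset \tilde{I}_i$. The argument should run element-wise: given $g\in I_j$ and a lift $\tilde{g}\in \tilde{I}_j$ realizing $f_j(g)$, the single-ideal bijection supplies some lift $\tilde{g}'\in \tilde{I}_i$ of $g$ realizing $f_i(g)$, and the identity $\psi_{ij}(f_j)=\phi_{ij}(f_i)$ forces $(\tilde{g}-\tilde{g}')/\varepsilon \in I_i$, i.e.\ $\tilde{g}-\tilde{g}' \in \varepsilon I_i\subset \varepsilon \tilde{I}_i$, hence $\tilde{g}\in \tilde{I}_i$. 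Torus equivariance of the final isomorphism is inherited from that of $\Psi$ together with the canonicity of the maps $\phi_{ij}$, $\psi_{ij}$ and $\pi_{ij}$.
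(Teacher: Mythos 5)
The paper offers no proof of this lemma: it is stated with only the citation to Grothendieck and used as a black box, so there is nothing in the text to compare your argument against. Your deformation-theoretic proof is the standard one and is correct: the identification of a tangent vector with a flag of flat first-order deformations $\tilde{I}_1\supset\dots\supset\tilde{I}_k$ over $\CC[\varepsilon]/(\varepsilon^2)$ comes straight from the functor of points, the forward inclusion follows by evaluating both $\phi_{ij}(f_i)$ and $\psi_{ij}(f_j)$ on a common lift $\tilde{g}\in\tilde{I}_j\subset\tilde{I}_i$, and your reverse-inclusion computation correctly shows that the compatibility $\phi_{ij}(f_i)=\psi_{ij}(f_j)$ forces $\tilde{I}_j\subset\tilde{I}_i$ for the deformations $\tilde{I}_m=\left\{g+\varepsilon h \mid g\in I_m,\ h\equiv f_m(g)\ \mathrm{mod}\ I_m\right\}$ produced by the single-ideal case.
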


 \hspace{3em}Now that we know what the tangent spaces are, we only need to find weight bases for them.


\section{A weight basis for $T_I \,\Hi^n(\CC^2)$}

 \hspace{3em}Suppose that $I \in \,\Hi^n(\CC^2)$ is a torus fixed point. The tangent space $T_I \,\Hi^n(\CC^2)$ at $I$ is then equipped with the torus action. The goal of this section is to understand the tangent space $T_I \,\Hi^n(\CC^2)$ in terms of $R$-homomorphisms $\text{Hom}_R (I, \bigslant{R}{I})$, and in particular of those $R$-homomorphisms that are of pure weights with respect to the torus action. The goal is to visualize them as arrows of boxes of $\Gamma(I)$.\\

 \hspace{3em}It is clear that, to describe an $f \in \text{Hom}_R (I, \bigslant{R}{I})$ we need only to prescribe the images of the generators of $I$, and we need only to describe these images in terms of linear combinations of monomials in $\Gamma(I)$. It is good to visualize the situation in terms of Young diagrams: on the left we have, in black, the set of standard monomial generators of $I$ and on the right we have, in gray,  the elements of $\Gamma(I)$ i.e. those monomials that are \emph{not} in $I$. 

\begin{minipage}{0.46\textwidth}
\begin{center}
\ytableausetup{boxsize=1.5em, aligntableaux=bottom}
\begin{ytableau} 
*(black) \\
\,&*(black)  \\
\,&\, &*(black) \\
\,&\, &\,  \\
\,&\, &\,&*(black)  \\
\,&\, &\, &\,&\, &\,&*(black)  \\
\,&\, &\, &\, &\, &\, &\, &*(black) \\
\end{ytableau}

\small{In black the standard monomial generators of $I$. }
\end{center}
\end{minipage}
\begin{minipage}{0.46\textwidth}
\begin{center}
\begin{ytableau} 
*(gray)  \\
*(gray) &*(gray)  \\
*(gray) \,&*(gray) \, &*(gray) \, \\
*(gray) \, &*(gray) \, &*(gray) \, \\
*(gray) \,&*(gray) \, &*(gray) \, &*(gray) \,&*(gray) \, &*(gray) \, \\
*(gray) \,&*(gray) \, &*(gray) \, &*(gray) \, &*(gray) \, &*(gray) \, &*(gray) \, \\
\end{ytableau}

\small{In gray the elements of $\Gamma(I)$. }
\end{center}
\end{minipage}
\vspace{0.8em }

 \hspace{3em}If we only look for maps $f$ of pure weight, the image of a generator $\alpha$ must be itself a scalar multiple of a monomial $\beta$ in $\Gamma(I)$, so graphically $\alpha$ moves $p$ boxes to the left and $q$ boxes upward, where $p$ and $q$ can be negative, and reaches $\beta$ inside $\Gamma(I)$. In terms of monomials $f(\alpha)=c\beta= c x^py^q\alpha $ with $x^py^q \in \CC[x^{\pm 1}, y^{\pm 1}]$, $c\in \CC$. If the scalar $c$ is not zero, the fact that $f$ is an $R$-homomorphism forces every other $\alpha' \in I$ to be sent either to $c x^py^q\alpha'$ or to zero; graphically they must move by the same exact translation, or they must go to zero. For $\alpha$ a standard monomial generator of $I$, and $\beta\in \Gamma(I)$, call 
\[S_{\alpha, \beta} := \left\{f \in \text{Hom}_R (I, \bigslant{R}{I})\vert f(\alpha) = \beta \right\}.
\]
 \hspace{3em}It can happen that $S_{\alpha, \beta}= \emptyset$. For example, if $I=\mathfrak{m}^{2}= (x^2,xy,y^2) \in \Hi^3(0)$, then there does not exist $f \in \text{Hom}_R (I, \bigslant{R}{I})$ that sends $x^2\mapsto 1$.

\begin{minipage}{0.45\textwidth}
\ytableausetup{boxsize=2.5em, aligntableaux=bottom}
\begin{ytableau}
\none[y^2] \\
\,& \none[xy]&\none[x^2y] \\
*(lightgray)1&\, &\none[x^2] \\
\end{ytableau}
\end{minipage}
\begin{minipage}{0.45\textwidth}
If $x^2\mapsto 1$ then $x^2 y \mapsto y$, 
but then $xy\mapsto \gamma$ cannot be defined. In fact its image $\gamma \in \bigslant{R}{I}$ should be such that $x\gamma= y$. This is impossible. 
\end{minipage}
\vspace{1em}

To understand for which $\alpha$ and $\beta$ we have that $S_{\alpha, \beta}\neq \emptyset$ we introduce the following notations. 

\begin{definition}
\label{defPalpha}
Let $\nu = \nu_0\geq \nu_1\dots\geq \nu_{n-1}$ be a partition of $n$, let $\Gamma$ be the associated Young diagram and let $I=I_{\Gamma}=(\alpha_0,\dots,  \alpha_s)$ be the monomial ideal associated to $\Gamma$, i.e. $I=\left(y^{\nu_1},\dots, x^iy^{\nu_i},\dots,  x^{\ell({\nu})}\right)$. We define
\begin{align*}
p_i &:= \deg_y \alpha_{i+1} - \deg_y \alpha_{i} = \text{ vertical distance between } \alpha_{i} \text{ and } \alpha_{i+1}, \qquad \,\,\,\,p_s := \infty, \\
q_i& := \deg_x \alpha_{i} - \deg_x \alpha_{i-1} = \text{ horizontal distance between } \alpha_{i} \text{ and } \alpha_{i-1}, \qquad q_0 := \infty.
\end{align*}
For each $\alpha = \alpha_i$ we  also define
\begin{align*}
P_{\alpha} = P_{\alpha_i} &:= \left\{\left.\beta \in \Gamma \right\vert \deg_x \beta < \deg_x \alpha \text{ and } \beta y^{p_i} \in I \right\}, \qquad P_{\alpha_s} := \emptyset, \\
Q_{\alpha} = Q_{\alpha_i} &:= \left\{\left.\beta \in \Gamma \right\vert \deg_y \beta < \deg_y \alpha \text{ and } \beta x^{q_i} \in I \right\}, \qquad Q_{\alpha_0} := \emptyset.
\end{align*} 
\end{definition}

\begin{example}
Let $I$ be the monomial ideal represented by the diagram below. Let $\alpha=\alpha_3$ be the generator of $I$ marked in the picture with its name. We have that $p_3=2$ and $q_3=3$. The elements of $P_{\alpha}$ are indicated with a $p$ while the elements of $Q_{\alpha}$ are indicated with a $q$. 
\begin{center}
\ytableausetup{boxsize=1.0em}
\begin{ytableau} 
p \\
p&p \\
\,&p &p \\
\,&\, &p \\
\,&\, &\, &p&p &p \\
\,&\, &\, &p&p &p &p \\
\,&\, &\, &\,&\, & \, & p &\none[\alpha] \\
\,&\, &\, &\,&\, &\, &\,&q &q &q \\
\,&\, &\, &\,&\, &\, &\,&q &q &q \\
\,&\, &\, &\,&\, &\, &\,&\, &\, &\,&\, &\, &\,&q &q &q \\
\,&\, &\, &\,&\, &\, &\,&\, &\, &\,&\, &\, &\,&\, &q &q &q
\end{ytableau}
\end{center}
\end{example}

The fact that $\beta \in P_{\alpha}\cup Q_{\beta}$ implies that there exists an $f \in S_{\alpha, \beta}$, as the next Lemma proves. 
\begin{lemma}[definition]
\label{definitionf}
Let $I=(\alpha_0, \dots, \alpha_s)$ be a monomial ideal of length $n$, and let  $\alpha= \alpha_i$ be one of its standard monomial generators. Let $\beta \in P_{\alpha} \cup Q_{\alpha}$. Then $S_{\alpha, \beta} \neq \emptyset $. In this case we define $f_{\alpha, \beta} \in S_{\alpha, \beta}$ as follow:\\

\hspace{3em}If $\beta \in P_{\alpha}$, then $\beta \in \Gamma(I)$ is $q$ boxes to the left and $p$ boxes above $\alpha$, where $p,q\geq 0$. We define $f_{\alpha, \beta}$ by prescribing the images of each of the generators of $I$ as:
\begin{align*}
\text{if } \beta \in P_{\alpha_i}, \beta= \alpha_i \left(\frac{y^p}{x^q}\right), \quad \text{ then }\,\,\,\,\,\, f_{\alpha_i, \beta}(\alpha_k) = 
\begin{cases}
\alpha_k \left(\frac{y^p}{x^q}\right) &\text{if } k  < i, \\
\beta &\text{if } k=i, \\
0 &\text{if } i< k.
\end{cases}  
\end{align*}
\hspace{3em}If $\beta \in Q_{\alpha}$, then $\beta \in  \Gamma(I)$ is $q$ boxes to the right and $p$ boxes below $\alpha$, where $p,q\geq 0$. We define $f_{\alpha, \beta}$ by prescribing the images of each of the generators of $I$ as:
\begin{align*}
\text{if } \beta \in Q_{\alpha_i}, \beta= \alpha_i \left(\frac{x^q}{y^p}\right), \quad \text{ then }\,\,\,\,\,\, f_{\alpha_i, \beta}(\alpha_k) = 
\begin{cases}
0 &\text{if } k < i, \\
\beta &\text{if } k=i, \\
\alpha_j \left(\frac{x^q}{y^p}\right) &\text{if } i< k.
\end{cases}  
\end{align*}
\end{lemma}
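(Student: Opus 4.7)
The task reduces to verifying that the values prescribed on generators extend to a well-defined $R$-linear map $I \to R/I$; it is then immediate that $f_{\alpha_i,\beta}$ lies in $S_{\alpha_i,\beta}$ since $f_{\alpha_i,\beta}(\alpha_i)=\beta$ by construction. Any $R$-homomorphism $I\to R/I$ is determined by its images on the generators $\alpha_0,\dots,\alpha_s$, subject to the syzygies among them. Since $I$ is a monomial ideal in a polynomial ring in two variables, those syzygies are generated by the Koszul-type relations between consecutive generators: for each $k=1,\dots,s$ one has $y^{p_{k-1}}\alpha_{k-1}=x^{\delta_k}\alpha_k$ in $R$, where $\delta_k:=\deg_x\alpha_{k-1}-\deg_x\alpha_k>0$ is the horizontal step. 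So the whole proof reduces to checking that the assigned images are honest elements of $R/I$ and that they respect each of these $s$ relations.

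First I would check that the prescribed values land in $R$ (and not merely in a Laurent extension). In the $P_{\alpha_i}$ case one has $\beta=\alpha_i\, y^p x^{-q}$ with $p,q\geq 0$ and $q\leq\deg_x\alpha_i$, the latter because $\beta\in\Gamma\subset R$ forces $\deg_x\beta\geq 0$. For $k\leq i$ the generator $\alpha_k$ has $x$-degree at least $\deg_x\alpha_i$, so $\alpha_k\, y^p/x^q$ remains a genuine monomial in $R$. The $Q_{\alpha_i}$ case is identical with the roles of $x$ and $y$ swapped.

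Next I would run through the $s$ syzygies in each case. In the $P_{\alpha_i}$ case, for every index $k<i$ the two sides of the syzygy are obtained from the ambient relation in $R$ by multiplying through by $y^p/x^q$, hence remain equal in $R$ and \emph{a fortiori} in $R/I$; the syzygy at $k=i$ works identically because $f(\alpha_i)=\alpha_i\, y^p/x^q$ by definition. For $k>i+1$ both sides vanish. The only non-trivial check is at $k=i+1$: the right-hand side $x^{\delta_{i+1}}\alpha_{i+1}$ is sent to $0$ while the left-hand side $y^{p_i}\alpha_i$ is sent to $y^{p_i}\beta$, so one needs $y^{p_i}\beta\equiv 0\pmod I$, which is \emph{exactly} the defining condition $\beta y^{p_i}\in I$ of membership in $P_{\alpha_i}$. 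The $Q_{\alpha_i}$ case is symmetric: the boundary check that was trivial above now becomes the critical one, and the condition $\beta x^{\delta_i}\in I$ is precisely what membership in $Q_{\alpha_i}$ demands.

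The main obstacle is essentially bookkeeping: keeping track of which offsets are positive or negative, and locating which of the $s$ syzygy checks becomes the non-trivial one in each of the $P$- and $Q$-cases. Once one sees that the sets $P_{\alpha_i}$ and $Q_{\alpha_i}$ have been tailored so that the unique ``boundary'' syzygy collapses to the condition $\beta y^{p_i}\in I$ (resp.\ $\beta x^{\delta_i}\in I$), the lemma is a direct verification with no hidden difficulties.
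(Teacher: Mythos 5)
Your proposal is correct and follows essentially the same route as the paper: both arguments reduce the well-definedness of $f_{\alpha,\beta}$ to checking the Koszul syzygies between consecutive standard generators, observe that all of these are automatic except the one at the boundary between nonzero and zero images, and note that this critical relation is exactly the condition $\beta y^{p_i}\in I$ (resp.\ $\beta x^{q_i}\in I$) built into the definition of $P_{\alpha_i}$ (resp.\ $Q_{\alpha_i}$). The paper presents this pictorially via an example, but the content is identical; your version is just more explicit about the syzygy module.
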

\begin{proof}
Suppose that $\alpha=\alpha_3$ is the one depicted below and that $\beta \in Q_{\alpha}$ is the box two boxes below it and one to the right, i.e. the box marked with the $3$. Then $f_{\alpha, \beta}$ is the homomorphism depicted in the picture, where with the index $k$  we denote the box inside $\Gamma(I)$  that is the image of $\alpha_k$ if and only if this image is not zero. 
\begin{center}
\ytableausetup{boxsize=1.5em}
\begin{ytableau} 
\none[\alpha_8] \\
\, &\none[\alpha_7]\\
\,& 8& \none[\alpha_6] \\
\,&\, &7 &\, \\
\,&\, & \,& 6 &\none[\alpha_5] \\
\,&\, &\, &\,&\, &\, \\
\,&\, &\, &\,&\, & 5 &\, \\
\,&\, &\, &\,&\, & \, & \, &\none[\alpha_3]&\none&\none&\none[\,\,x^3\alpha_3] \\
\,&\, &\, &\,&\, &\, &\,&\, &\, &\, \\
\,&\, &\, &\,&\, &\, &\,&\, & 3 &\,&\none&\none&\none[x^3f(\alpha_3)] \\
\,&\, &\, &\,&\, &\, &\,&\, &\, &\,&\, &\, &\,&\, &\, &\, \\
\,&\, &\, &\,&\, &\, &\,&\, &\, &\,&\, &\, &\,&\, &\, &\, &\,
\end{ytableau}
\end{center}
Of course $f_{\alpha_i,\beta}(\alpha_k)$ is well defined if $k>i$ since it will be a monomial in $\CC[x,y]$ (graphically it will not fall out of the positive quadrant).
The condition $\beta \in Q_{\alpha_3}$ precisely makes sure that we can send all generators with lower index to zero: the element of $I$ with lowest degree that is both divisible by $\alpha_i$ and $\alpha_k$ with $k<i$ is $x^{q_i}\alpha_i = y^{p_{i-1}}\alpha_{i-1}$, but, by definition of $Q_{\alpha_i}$, $x^{q_i}\beta \in I$, and then $0$ mod $I$. If $\beta \in P_{\alpha}$ the argument is completely analogous. 
\end{proof}
\begin{observation}
This is slightly different from the construction of Cheah \cite{cheah1998cellular}, and it is the choice of Goettsche \cite{gottsche1990betti}. In the case of Cheah, $\ff$ is chosen to be the element in $S_{\alpha, \beta}$ that sends the biggest possible number of generators to zero. 
\end{observation}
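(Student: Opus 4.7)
The plan is to establish this comparative remark by unwinding three explicit recipes for an element of $S_{\alpha_i, \beta}$: the one just given, G\"ottsche's in \cite{gottsche1990betti}, and Cheah's in \cite{cheah1998cellular}. Since the statement is not an assertion about $\ff$ in isolation but rather a cross-reference to the literature, the verification reduces to a careful reading of the two sources and a counting of generators sent to zero in each construction.

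First I would match our formulas with G\"ottsche's. For $\beta \in Q_{\alpha_i}$ the recipe above sends $\alpha_k \mapsto 0$ for $k<i$ and $\alpha_k \mapsto \alpha_k\left(x^q/y^p\right)$ for $k \geq i$, where the latter expression is interpreted modulo $I$ and so may incidentally vanish whenever $\alpha_k(x^q/y^p) \in I$. A term-by-term comparison with the corresponding formula in G\"ottsche \cite{gottsche1990betti} confirms that the two coincide; the case $\beta \in P_{\alpha_i}$ follows by the symmetry $x \leftrightarrow y$, so no additional argument is needed to establish that $\ff$ as defined here is G\"ottsche's choice.

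To contrast with Cheah I would count $N(f) := \#\{k : f(\alpha_k) = 0\}$ for various $f \in S_{\alpha_i, \beta}$. For $\ff$ as defined above and $\beta \in Q_{\alpha_i}$, exactly the indices $k<i$ contribute to $N(\ff)$, together with those $k>i$ for which $\alpha_k(x^q/y^p) \in I$; all other $\alpha_k$ with $k>i$ are sent to a nonzero monomial in $\bigslant{R}{I}$. Cheah's recipe, by contrast, forces $\alpha_k \mapsto 0$ for every $k \neq i$ such that this is compatible with the constraints $\alpha_i \mapsto \beta$ and $R$-linearity. To verify that her choice is genuinely maximal in $S_{\alpha_i, \beta}$ one observes that an element of $S_{\alpha_i, \beta}$ is determined, up to scalars on each generator, by which generators are sent to zero; the syzygies of $I$ restrict the allowable collections of zeros, and Cheah's map is obtained by selecting the unique choice where this allowable collection is as large as possible.

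The main obstacle is bibliographic rather than mathematical: one must translate notation across two papers that use different indexing conventions for the standard monomial generators and the elements of $\Gamma(I)$, and verify that the distinction observed here, namely that G\"ottsche (and we) keep the images $\alpha_k(x^q/y^p)$ for $k > i$ whenever these are nonzero modulo $I$, whereas Cheah replaces them by zero whenever $R$-linearity still permits, is genuine and not an artifact of reparametrization. Once this translation is performed, the observation follows by a line-by-line inspection of the two recipes, together with the elementary count of zeros above.
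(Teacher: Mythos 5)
The paper gives no proof of this observation---it is a bibliographic remark appended to the definition of $\ff$---and your verification is correct and supplies exactly the intended content: the recipe in Lemma \ref{definitionf} coincides with G\"ottsche's, whereas Cheah's element of $S_{\alpha,\beta}$ is the one whose zero-set among the standard generators is maximal subject to $R$-linearity. One small imprecision: for a pure-weight $f$ with $f(\alpha_i)=\beta=x^{-q}y^{p}\alpha_i$, the syzygies $y^{p_k}\alpha_k=x^{q_{k+1}}\alpha_{k+1}$ force every other generator to go either to zero or to its translate by the \emph{same} Laurent monomial with the \emph{same} scalar, so such an element is determined by its zero-set outright, not merely ``up to scalars on each generator''; this only strengthens your counting argument.
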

\begin{definition}[$B(I)$]
\label{definitionB(I)}
For $I=(\alpha_0, \dots, \alpha_s)$ a monomial ideal with standard monomial generators $\alpha_i$'s, we define $B(I)$ to be the finite subset of $T_I \,\Hi^n(\CC^2))$ that contains all the $\ff$ as defined in Lemma \ref{definitionf} : 
\begin{equation}
B(I) \, := \,\left \{\ff \,\,\left\vert\,\, \alpha \text{ standard monomial generator and } \beta\in P_{\alpha}\cup Q_{\alpha} \right. \right\}.
\end{equation}
\end{definition}
\begin{lemma}
\label{cardinalityB(I)}
Let $I=(\alpha_0, \dots, \alpha_s)$ be a monomial ideal of length $n$ with prescribed standard monomial generators. The set $B(I)$ defined in \ref{definitionB(I)} has cardinality $2n$.
\end{lemma}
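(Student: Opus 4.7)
My plan is to count $|B(I)|$ by showing separately that $\sum_i |P_{\alpha_i}| = n$ and $\sum_i |Q_{\alpha_i}| = n$, and then invoking the injectivity of the parameterization $(\alpha, \beta) \mapsto \ff$. The $Q$-count will follow from the $P$-count by transposing the Young diagram (which swaps $x$ and $y$, and interchanges the $P$- and $Q$-conditions), so only one side needs a direct combinatorial argument.

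For the $P$-side, I would write the standard generators as $\alpha_i = (a_i, b_i)$ with $a_0 > \cdots > a_s$ and $0 = b_0 < \cdots < b_s$, so that the column of $\Gamma$ at $x$-coordinate $u \in [a_{j+1}, a_j)$ has height $\nu_u = b_{j+1}$. Unpacking the definition, a box $\beta = (u, v) \in \Gamma$ lies in $P_{\alpha_i}$ precisely when $u < a_i$ and $v \geq \nu_u - p_i$; in other words, $P_{\alpha_i}$ is the union over $u < a_i$ of the top $\min(p_i, \nu_u)$ boxes of column $u$, so $|P_{\alpha_i}| = \sum_{u=0}^{a_i - 1} \min(p_i, \nu_u)$. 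I would then reorder the sum column by column: a fixed column $u$ contributes only for $i \leq j(u)$, where $j(u)$ is the unique index with $u \in [a_{j(u)+1}, a_{j(u)})$; on that range the bound $b_{i+1} \leq b_{j(u)+1} = \nu_u$ automatically forces $\min(p_i, \nu_u) = p_i$; and the telescope $\sum_{i=0}^{j(u)} p_i = b_{j(u)+1} - b_0 = \nu_u$ then delivers a contribution of exactly $\nu_u$ per column. Summing over $u$ yields $\sum_i |P_{\alpha_i}| = \sum_u \nu_u = n$, as desired.

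Injectivity of the parameterization should be immediate from Lemma \ref{definitionf}: the support of $\ff$ as a function on $\{\alpha_0, \dots, \alpha_s\}$ is an initial segment when $\beta \in P_\alpha$ and a final segment when $\beta \in Q_\alpha$, and the extremal index in this support recovers $\alpha$, while $\beta = \ff(\alpha)$. I do not expect a real obstacle in this proof; the only delicate step is organizational, namely translating the ideal-theoretic condition ``$\beta y^{p_i} \in I$'' into the geometric statement ``$\beta$ lies in one of the top $p_i$ rows of its column of $\Gamma$'' and threading the bijection $u \leftrightarrow j(u)$ carefully enough that the telescope cleanly produces $\nu_u$.
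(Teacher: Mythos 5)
Your proof is correct and follows essentially the same route as the paper: both reduce the count to $\sum_i \#P_{\alpha_i}=n$ via the observation that each column to the left of $\alpha_i$ contributes exactly $p_i$ boxes to $P_{\alpha_i}$ (your $\min(p_i,\nu_u)=p_i$ step), followed by a telescoping sum and a transposition argument for the $Q_{\alpha_i}$. The only cosmetic difference is that you exchange the order of summation (grouping by column rather than by generator) before telescoping, and you spell out the injectivity of $(\alpha,\beta)\mapsto f_{\alpha,\beta}$, which the paper dismisses as clear by definition.
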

\begin{proof}
By definition it is clear that $\ff = f_{\alpha', \beta'}$ if and only if $(\alpha, \beta) = (\alpha', \beta')$. Moreover for a fixed $\alpha$, $P_{\alpha}$ and $Q_{\alpha}$ are disjoint. Then $\# B(I) = \sum_{i=0}^s( \#P_{\alpha_i} +\# Q_{\alpha_i}$). Suppose that $\nu_0 \geq \nu_1 \geq \dots \geq \nu_{s-1}> \nu_s=0$ is the associated partition of $n$, i.e. the number of columns of $\Gamma(I)$. Then for a generator $\alpha_i$ the distance $p_i = \nu_{s-i-1} -\nu_{s-i}$ (for $i=s$ we can put $p_s=\infty$, but it does not matter since $P_{\alpha_s}=0$). The number of elements in $P_{\alpha_i}$ is equal to $p_i$ times the number of columns in $\Gamma(I)$ that are on the left of $\alpha_i$. Thus we have that 
\begin{equation}
\label{cardinalityPalphas}
\# P_{\alpha_i} = (\nu_{s-i-1} -\nu_{s-i})(s-i) = \sum_{i \leq k< s} (\nu_{s-i-1} -\nu_{s-i}).
\end{equation}
Then we have 
\[
\sum_{0\leq i\leq s} \# P_{\alpha_i} = \sum_{0\leq i\leq s}  \sum_{0\leq k< s-i} (\nu_{s-i-1} -\nu_{s-i}) = \sum_{0 \leq i\leq j \leq s} (\nu_{s-i-1} -\nu_{s-i}) = \sum_{0< i \leq s} \nu_{s-i} = n, 
\]
since $\nu_s=0$. By transposing we prove the same for $\sum_i \#\,Q_{\alpha_i}$. 
\end{proof}
\begin{lemma}
\label{B(I)basis}
Let $I=(\alpha_0, \dots, \alpha_s)$ be a monomial ideal of length $n$ with prescribed standard monomial generators. The set $B(I)$ is a basis for $T_I \,\Hi^n(\CC^2)$.
\end{lemma}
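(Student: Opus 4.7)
The plan is to combine an explicit $\TT^2$-weight analysis of $\Hom_R(I, R/I)$ with the cardinality count $\#B(I)=2n$ from Lemma \ref{cardinalityB(I)}. By Lemma \ref{Rmorphisms} I identify $T_I\,\Hi^n(\CC^2)$ with $\Hom_R(I, R/I)$. Since $I$ is a monomial ideal, the two-torus acts diagonally on $I$ and on $R/I$ and splits $\Hom_R(I, R/I)$ as a direct sum of weight spaces. Each element $\ff\in B(I)$ is manifestly a weight vector with character equal to the Laurent monomial $\beta\alpha^{-1}$, so I can treat each weight separately.

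First I would fix a weight $(a,b)\in\ZZ^2$ and describe the corresponding piece of $\Hom_R(I, R/I)$: a pure weight-$(a,b)$ homomorphism $\phi$ is determined by scalars $c_i\in\CC$ with $\phi(\alpha_i)\equiv c_i\,\alpha_i x^a y^b\pmod I$ (and $c_i=0$ forced whenever $\alpha_i x^a y^b\in I$), subject to the compatibilities $c_i\,y^{p_i}\alpha_i x^a y^b\equiv c_{i+1}\,x^{q_{i+1}}\alpha_{i+1} x^a y^b \pmod I$ coming from the syzygies $y^{p_i}\alpha_i = x^{q_{i+1}}\alpha_{i+1}$. The key observation is that, reading off these constraints against the staircase $\Gamma(I)$, the admissible nonzero tuples $(c_i)$ of a given weight are generated precisely by either a ``$P$-move'' (translation by $(-q, p_i)$ with $q\geq 1$) or a ``$Q$-move'' (translation by $(q_i, -p)$ with $p\geq 1$), which is exactly how $P_{\alpha_i}$ and $Q_{\alpha_i}$ were defined in Definition \ref{defPalpha}.

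For linear independence within a fixed weight, I would order the generators $\alpha_0,\dots,\alpha_s$ by decreasing $x$-degree and evaluate a hypothetical relation $\sum_i c'_i f_{\alpha_i,\beta_i}=0$ successively on each $\alpha_k$. Lemma \ref{definitionf} arranges that in the $P$-case the images $f_{\alpha_i,\beta_i}(\alpha_k)$ are nonzero only for $k\leq i$, and in the $Q$-case only for $k\geq i$, with nonzero value exactly at $k=i$; the resulting matrix of evaluations is triangular with nonzero diagonal, forcing all $c'_i=0$. Combined with the description of each weight space as spanned by the $\ff$'s of that weight, this shows that $B(I)$ is a basis; summing dimensions across weights recovers $\dim T_I\,\Hi^n(\CC^2)=2n$, which (together with irreducibility of $\Hi^n(\CC^2)$) also yields smoothness at $I$.

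The main obstacle is the spanning claim: one must verify that $P_{\alpha_i}$ and $Q_{\alpha_i}$ are exactly large enough to produce, in every weight space, a basis of syzygy-compatible tuples, with no missing or redundant vectors. This is a combinatorial check about how translations by $(a,b)$ interact with the staircase shape $\Gamma(I)$; the cleanest approach is to split the argument into the $P$-half and the $Q$-half, handle the $P$-half by induction on the generator index (propagating the constraint along the chain of consecutive syzygies until one reaches either $\alpha_0$ or an index where $\alpha_i x^a y^b\in I$), and then deduce the $Q$-half by the transpose involution on Young diagrams, which swaps the roles of $(x,y)$, of $(p_i,q_i)$, and of $P_{\alpha_i},Q_{\alpha_i}$.
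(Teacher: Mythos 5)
Your proposal is correct and follows essentially the same route as the paper: decompose $\Hom_R(I,\bigslant{R}{I})$ into $\TT^2$-weight spaces, get linear independence from triangularity of evaluation against the generators ordered by $x$-degree, and get spanning by propagating the consecutive syzygy constraints $y^{p_i}\alpha_i = x^{q_{i+1}}\alpha_{i+1}$. The combinatorial verification you defer at the end is exactly the paper's induction on the number of generators with nonzero image: for a pure-weight $f$ the extremal nonzero generator $\alpha_{\bar{\imath}}$ must map into $P_{\alpha_{\bar{\imath}}}$ (or $Q_{\alpha_{\bar{\imath}}}$) because its neighbour is sent to zero, so subtracting the corresponding $f_{\alpha_{\bar{\imath}},\beta}$ strictly reduces that count.
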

\begin{proof}
We prove first that the $\ff$ are linearly independent. Let $\sum_{(\alpha, \beta)} c_{\alpha, \beta } \, \ff = 0 $ with $c_{\alpha, \beta} \in \CC$, and suppose by contradiction that at least one coefficient is not zero. We can suppose that the left hand side term is of pure weight, otherwise we can study it taking all the terms of given weight. If there is a couple $(\alpha, \beta)$ with $c_{\alpha, \beta} \neq 0$ and $\beta \in P_{\alpha}$ (resp. $\beta \in Q_{\alpha}$), then, for all the other couples $(\alpha', \beta')$ with $c_{\alpha', \beta'} \neq 0$, we have $\beta' \in P_{\alpha'}$ ( resp. $\beta' \in Q_{\alpha'}$). Suppose then $\beta \in P_{\alpha}$ for one couple. Then take $\overline{\alpha}$ the standard generator with $c_{\overline{\alpha}, \beta} \neq 0$ that is on the highest row, then $(\sum_{(\alpha, \beta)} c_{\alpha, \beta }\ff)(\overline{\alpha}) \neq 0 $ since every other generator of $I$ is sent to a lower row. This is absurd as the combination was zero. 

\hspace{3em}Now we prove that $B(I)$ generates $T_I \,\Hi^n(\CC^2)$. Since we know that $T_{I} \Hi^{n}(\CC^2)$ is generated by pure weight elements we only need to prove that every $f \in \text{Hom}_R (I, \bigslant{R}{I})$ of pure weight is in the span of $B(I)$. Let $n(f)=  \#\,\{\alpha_i\vert f(\alpha_i)\neq 0\}$ be the number of generators of $I$ that $f$ does not send to $0$. We use induction on $n(f)$. 

\hspace{3em}If $n(f)=0$, then $f=0$, and there is nothing to prove. 

\hspace{3em}Suppose now that all $\tilde{f}$ with $n(\tilde{f})<t$ are in the span of $B(I)$, and suppose $f$ has $n(f)=t$. Since $f$ is of pure weight we know that either:
\begin{itemize}
\item[1)] we have $\bar{\imath}:= \max \{i\vert f(\alpha_i) \neq 0\}$ is strictly smaller than $s$ and $f(\alpha_{\bar{\imath}})\in \langle \beta \rangle$ with $\beta \in P_{\alpha_{\bar{\imath}}}$, or 
\item[2)] we have $\bar{\imath}:= \min \{i\vert f(\alpha_i) \neq 0\}$ is strictly bigger than $0$ and $f(\alpha_{\bar{\imath}})\in \langle \beta \rangle$ with $\beta \in Q_{\alpha_{\bar{\imath}}}$. 
\end{itemize}
\hspace{3em}Suppose we are in the first case. Renormalize $f$ so that $f(\alpha_{\bar{\imath}})=\beta$.

Then thanks to the definition of $\bar{\imath}$, since $y^{p_{\bar{\imath}}}\beta \in I$, we have that  $
f-f_{\alpha_{\bar{\imath}}, \beta}$ is well defined and sends strictly more generators to $0$ mod $I$, i.e. $n(f-f_{\alpha_{\bar{\imath}}})<t$. By induction we have that $f \in \text{span}(B(I))$ as desired. \\

\hspace{3em}The case 2), is completely analogous. 

\end{proof}
\begin{proposition}{ \cite[Theorem~2.4]{fogarty1968algebraic}}
\label{hilbsmooth}
The Hilbert scheme $\Hi^n(\CC^2)$ is smooth. 
\end{proposition}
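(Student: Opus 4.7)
The plan is to combine the tangent space computation that has just been set up with the torus action to spread smoothness from the finitely many monomial fixed points to every point of $\Hi^{n}(\CC^2)$. Concretely, I would first record that $\Hi^{n}(\CC^2)$ is irreducible of dimension $2n$: irreducibility follows by combining Brian\c{c}on's theorem (Theorem \ref{irreducible}) with the fact that the Hilbert-Chow morphism $\pi_n\colon \Hi^{n}(\CC^2)\to (\CC^2)^{(n)}$ is an isomorphism over the open locus of $n$ distinct points, and the dimension equals that of $(\CC^2)^{(n)}$. Therefore, to prove smoothness it suffices to show $\dim T_I\,\Hi^{n}(\CC^2) = 2n$ for every closed point $I$.

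At a torus fixed point $I = I_\Gamma$, Lemma \ref{B(I)basis} gives that $B(I)$ is a basis of $T_I\,\Hi^{n}(\CC^2)$, while Lemma \ref{cardinalityB(I)} computes $\#B(I)=2n$. Hence $\dim T_{I_\Gamma}\,\Hi^{n}(\CC^2) = 2n$ for every monomial ideal $I_\Gamma\in\Hi^{n}(\CC^2)$, and every torus fixed point is a smooth point.

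To extend smoothness to arbitrary $I\in\Hi^{n}(\CC^2)$, I would invoke upper semicontinuity of the tangent space dimension: the smooth locus $U\subset\Hi^{n}(\CC^2)$ is Zariski open, and by the previous step $U$ contains every $\TT^{2}$-fixed point. Moreover $U$ is $\TT^{2}$-invariant since the group acts by automorphisms. Now fix a generic one-parameter subgroup $\TT_\infty \subset \TT^2$ with weights $0<w_1\ll w_2$ as in Definition \ref{tori}. Since $\TT_\infty$ acts on $\CC^2$ with the origin as its unique fixed point, for any $I\in\Hi^{n}(\CC^2)$ the support of $\tau\cdot I$ is pulled toward $0\in\CC^2$ as $\tau\to 0$; the limit $I_{0}:=\lim_{\tau\to 0}\tau\cdot I$ exists inside the projective ambient $\Hi^{n}(\PP^2)$ and is a monomial ideal whose support lies at the origin, hence $I_0\in\Hi^{n}(\CC^2)$. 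By the previous paragraph $I_0\in U$, and openness of $U$ implies that $\tau\cdot I\in U$ for all $\tau$ sufficiently close to $0$. But $\tau\cdot I$ lies in the $\TT_\infty$-orbit of $I$, and $\TT_\infty$-invariance of $U$ then forces $I\in U$. This proves that $U = \Hi^{n}(\CC^2)$ and completes the argument.

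The main obstacle I anticipate is precisely the verification that the limit $I_0=\lim_{\tau\to 0}\tau\cdot I$ does not escape from $\Hi^{n}(\CC^2)$ into the boundary $\Hi^{n}(\PP^2)\setminus\Hi^{n}(\CC^2)$. The choice of a one-parameter subgroup with strictly positive weights on the coordinates $x,y$ of $\CC^2$ is exactly what guarantees this: the support of $\tau\cdot I$ is the image under $\tau$ of the support of $I$, which converges to the attracting fixed point $0\in\CC^2$. Everything else in the argument, the existence of the limit in $\Hi^{n}(\PP^2)$, openness of the smooth locus, and torus-invariance, is formal.
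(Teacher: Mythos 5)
Your proof is correct and follows essentially the same route as the paper: both use Lemmas \ref{cardinalityB(I)} and \ref{B(I)basis} to get $\dim T_{I}\,\Hi^n(\CC^2)=2n$ at every monomial fixed point, and then use the fact that every ideal flows under a positive-weight one-parameter subgroup to a monomial ideal, together with (semi)continuity, to propagate smoothness to all points. Your version is slightly more explicit about the two points the paper leaves implicit — that the ambient dimension is $2n$ (via irreducibility) and that the limit ideal stays supported in $\CC^2$ — but the underlying argument is identical.
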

\begin{proof}
We just proved that at every torus fixed point $I$ the Zariski tangent space has dimension equal to the dimension of $\Hi^n(\CC^2)$, Lemma \ref{cardinalityB(I)} and Lemma \ref{B(I)basis}. However since every point in $\Hi^n(\CC^2)$ is attracted by a fixed point, and the dimension of the tangent space can only be greater at a fixed point, we have that the same statement holds for every point. Thus every point of $\Hi^n(\CC^2)$ is smooth. 
\end{proof}

\begin{theorem}{ \cite[Theorem~1.1]{ellingsrud1987homology}}
\label{ES}
The space $\Hi^{n}(0)$ has an affine cell decomposition given by the attracting sets at its torus fixed points for every generic one dimensional torus acting with weights $0<w_1<w_2$. If we chose the $\TT_{\infty}$ action of $\CC^*$ on $\CC[x,y]$, i.e. the action with weights $w_1, w_2$ such that $0<w_1<w_2$ and $1 \ll \frac{w_2}{w_1}$, then the affine cell attracted by a monomial ideal $I$  has dimension $n-d$ where $d= \min \{i\vert x^i \in I\}= \ell(\Gamma(I))$. In particular the Betti numbers for the Borel-Moore homology of $\Hi^{n}(0)$ satisfy: 
\[
b_i =  \#\,\left\{ \,\nu \vdash n\, \left\vert \,\,\,\ell(\nu) = n-i \right.\, \right\}.
\]

The Poincar\'{e} polynomial of $\Hi^{n}(0)$ is 
\[
P_q\,\left(\Hi^{n}(0)\right) = \sum_{\Gamma \vdash n} \,\, q^{n-\ell(\Gamma)}.
\]
\end{theorem}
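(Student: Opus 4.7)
The plan is to derive the cell decomposition and Poincar\'e polynomial of $\Hi^n(0)$ from a Bialynicki--Birula analysis on the smooth ambient space $\Hi^n(\CC^2)$, whose smoothness is granted by Proposition \ref{hilbsmooth}. To have a proper variety on which Theorem \ref{Bialynicki-Birula} applies cleanly, I would compactify via $\CC^2\subset\PP^2$, obtaining the smooth projective variety $\Hi^n(\PP^2)$ carrying an extended $\TT^2$ action. For any generic one-parameter subgroup $\phi(t)=(t^{w_1},t^{w_2})$ with $0<w_1<w_2$ the fixed points are isolated, so Theorem \ref{Bialynicki-Birula} produces an affine cell decomposition of $\Hi^n(\PP^2)$ with cells $X_I$ indexed by $\TT$-fixed points.

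The next step is to identify which cells lie in $\Hi^n(0)$. With the excerpt's pullback convention $\tau\cdot f=f(\tau_1 x,\tau_2 y)$, the induced map on zero sets is $V(\tau\cdot J)=\tau^{-1}V(J)$, so under $\tau\to 0$ with positive weights every non-origin point of $V(J)$ escapes to infinity. Hence an ideal $J$ can flow to a monomial ideal $I$ supported at the origin only if $J$ is itself supported at the origin, giving $X_I\subset\Hi^n(0)$. The same observation shows that $\Hi^n(0)$ is $\TT$-invariant and, being projective, every $J\in\Hi^n(0)$ admits a limit at some monomial fixed point of $\Hi^n(0)$; thus $\Hi^n(0)=\bigsqcup_I X_I$ is an affine cell decomposition for every generic positive-weight one-parameter subgroup.

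Specializing to the $\TT_\infty$ action, the cell dimension is $\dim X_I=\dim T^+_I\Hi^n(\CC^2)$ by Theorem \ref{Bialynicki-Birula}, and I would compute it using the weight basis $B(I)$ of Lemma \ref{B(I)basis}. Each $\ff$ has $\TT^2$-weight $(\deg_x\beta-\deg_x\alpha)w_1+(\deg_y\beta-\deg_y\alpha)w_2$, which with $w_2/w_1>n$ is positive exactly when $\deg_y\beta>\deg_y\alpha$. Since $\beta\in Q_\alpha$ forces $\deg_y\beta<\deg_y\alpha$, only contributions from the $P_\alpha$'s can be positive, and a direct inspection of the Young diagram shows that exactly $q_{i+1}$ boxes of $P_{\alpha_i}$ sit in the row of $\alpha_i$ (one in each of the $q_{i+1}$ columns strictly between $\alpha_{i+1}$ and $\alpha_i$). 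Summing over $i$ and using $\sum_i\#P_{\alpha_i}=n$ from Lemma \ref{cardinalityB(I)} together with the telescoping $\sum_{i\geq 0}q_{i+1}=\deg_x\alpha_0=\ell(\Gamma(I))$ yields $\dim T^+_I\Hi^n(\CC^2)=n-\ell(\Gamma(I))$.

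Proposition \ref{Fulton} then converts the cell decomposition into the graded Borel--Moore basis, giving $b_i(\Hi^n(0))=\#\{\Gamma\vdash n\mid \ell(\Gamma)=n-i\}$ and the stated Poincar\'e polynomial. The most delicate step will be the inclusion $X_I\subset\Hi^n(0)$, which relies on the precise direction of the torus action on ideals and the equivariance of the Hilbert--Chow morphism; once that setup is in place, the weight count is a finite combinatorial check through the definitions of $P_\alpha$, $Q_\alpha$ and the integers $p_i,q_i$.
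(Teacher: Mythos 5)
Your proposal is correct and follows essentially the same route as the paper: compactify to the smooth projective $\Hi^n(\PP^2)$, apply Bialynicki--Birula, observe that the cells attracted to monomial ideals supported at the origin are exactly those contained in $\Hi^{n}(0)$ because the rescaling flow pushes every non-origin point of the support away, and then count the positive-weight vectors in the basis $B(I)$ to get $n-\ell(\Gamma(I))$. The only cosmetic difference is in the final count: you identify the non-positive elements of $\bigcup_i P_{\alpha_i}$ as the $q_{i+1}$ same-row boxes per generator and telescope, whereas the paper projects them all down to the bottom row; both give $\ell(\Gamma(I))$.
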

\begin{proof}
Consider $\PP^2$ with homogenous coordinates $[Z:X:Y]$. Observe that $\Hi^{n}(\PP^2)$ is smooth since there is an open cover with opens that are isomorphic to $\Hi^{n}(\CC^2)$ that is smooth thanks to Proposition \ref{hilbsmooth}. Consider the $\TT$ action on it given by $t\cdot[Z:X:Y] = [t^{W_0}Z: t^{W_1}X: t^{W_2}]$, with $W_0+W_1+W_2=0$, and $W_0<W_1<W_2$. It has finitely many fixed points, so we can apply Theorem \ref{Bialynicki-Birula} to obtain a cell decomposition of $\Hi^{n}(\PP^2)$. Consider the subvariety  $ \Hi^n(P_0)\subset \Hi^{n}(\PP^2)$ parametrizing subschemes that have support in $P_0=[1:0:0]$. Since under the $\TT$ action every point of $\PP^2$ flows away from $P_0$ we have that 
\[
Z \in \Hi^n(P_0)\quad  \iff \quad \lim_{t\to 0} (t\cdot Z) \in \Hi^n(P_0).
\]
This shows that $\Hi^n(P_0)$ is a union of some of the cells of the Bialynicki-Birula  cell decomposition of $\Hi^{n}(\PP^2)$ and we need only to calculate their dimension to find a graded basis for the BM homology of $\Hi^n(0)$. Choose $W_1= \frac{2w_1-w_2}{3}$ and $W_2= \frac{2w_2-w_1}{3}$ and use the identifications
\[
x:= \frac{X}{Z}\quad y:= \frac{Y}{Z}\qquad R:= \CC[[x,y]],
\]
we have $\Hi^n(P_0) = \Hi^n(R)_{\text{red}}$ with the torus action described in the statement. \\

\hspace{3em}In particular $T^+_I \Hi^n(0)= T^+_I \Hi^n(\PP^2)=T^+_I \Hi^n(\CC^2)$. Let then $I=(\alpha_0, \dots, \alpha_s)$ be a monomial ideal of length $n$, with prescribed standard monomial generators. We want to study the positive part of the tangent space $T_I^+\Hi^{n}(\CC^2)$. Given the choice of torus, the weight of $\ff$ is positive if and only if $\beta \in P_{\alpha}$ and $\beta$ lies in a row strictly higher than the one of $\alpha$. We know that $\sum_i \#P_{\alpha_i}= n$ from (\ref{cardinalityPalphas}), and we know that the only $\beta \in P_{\alpha}$ that are \emph{not} on a row higher than $\alpha$ are those on the same row of $\alpha$. It is clear, by projecting down, that these are as many as the boxes in the first row, i.e. the number of columns. Then we have that  $\dim T_I^+\Hi^{n}(\CC^2) = n-d$ as wanted. \\

\hspace{3em}Then thanks to Theorem \ref{Bialynicki-Birula} we know that $\Hi^{n}(0)$ has an affine cell decomposition with cells labeled by monomial ideals and of dimensions $\dim T_I^+\Hi^{n}(0)$. Moreover, thanks to Proposition \ref{Fulton} we know that the cycles associated to these cells give us a graded basis for the homology of $\Hi^{n}(0)$. 
\end{proof}
\begin{theorem}{ \cite[Theorem~0.1]{gottsche1990betti}}
\label{generatingGoettsche}
The generating function for the Poincar\'{e} polynomials of $\Hi^{n}(0)$ as $n$ varies is 
\begin{equation}
\label{goettsheformula}
\sum_{n=0}^{+\infty } \, P_q\,\left(\Hi^{n}(0)\right) z^n = \prod_{k\geq 1} \, \frac{1}{1- z^kq^{k-1}}.
\end{equation}
\end{theorem}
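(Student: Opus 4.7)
The plan is to combine the cell-count formula of Theorem \ref{ES} with the classical Euler-type product expansion of partitions by part sizes. By Theorem \ref{ES} we already have the finite sum
\[
P_q\left(\Hi^{n}(0)\right) = \sum_{\Gamma \vdash n} q^{n-\ell(\Gamma)},
\]
so substituting this into the left-hand side of \eqref{goettsheformula} reduces the statement to the purely combinatorial identity
\[
\sum_{n\geq 0} \sum_{\Gamma\vdash n} q^{n-\ell(\Gamma)} z^n = \prod_{k\geq 1} \frac{1}{1-z^k q^{k-1}}.
\]

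First I would parametrize partitions by their multiplicity vectors, writing each partition as $\Gamma = (1^{\alpha_1}, 2^{\alpha_2}, \dots, n^{\alpha_n})$ with $\alpha_k \in \NN$, exactly as introduced in the definition of partitions earlier in the text. Under this parametrization the two relevant statistics become linear in the $\alpha_k$: one has $|\Gamma| = \sum_k k\alpha_k$ and $\ell(\Gamma) = \sum_k \alpha_k$, and consequently
\[
n - \ell(\Gamma) = \sum_{k\geq 1} k\alpha_k - \sum_{k\geq 1} \alpha_k = \sum_{k\geq 1} (k-1)\alpha_k.
\]
Thus the monomial $q^{n-\ell(\Gamma)} z^n$ indexed by $\Gamma$ factorizes as $\prod_{k\geq 1} \left(z^k q^{k-1}\right)^{\alpha_k}$.

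Next, the sum over all partitions $\Gamma$ of all sizes $n\geq 0$ becomes an unconstrained sum over tuples $(\alpha_1, \alpha_2, \dots)\in \NN^{\infty}$ with only finitely many nonzero entries. Interchanging the sum and the (formal) product, which is legitimate at the level of formal power series in $z$ because only finitely many $\alpha_k$ contribute to each coefficient of $z^n$, yields
\[
\sum_{(\alpha_k)_{k\geq 1}} \prod_{k\geq 1} \left(z^k q^{k-1}\right)^{\alpha_k} = \prod_{k\geq 1} \sum_{\alpha_k \geq 0} \left(z^k q^{k-1}\right)^{\alpha_k} = \prod_{k\geq 1}\frac{1}{1-z^k q^{k-1}},
\]
which is exactly \eqref{goettsheformula}.

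There is no real obstacle: the geometric content sits entirely in Theorem \ref{ES} (which gave the affine paving and its dimension count), and what remains is an Euler-style rearrangement. The only point requiring minor care is the interchange of sum and product, which is justified by noting that the coefficient of $z^n$ on either side involves only the $\alpha_k$ with $k\leq n$, so everything is a finite operation degree by degree.
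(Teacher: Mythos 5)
Your proof is correct and takes essentially the same route as the paper: both substitute the cell count of Theorem \ref{ES} and reduce the statement to the combinatorial identity $\sum_{n\geq 0}\sum_{\Gamma\vdash n}q^{\,n-\ell(\Gamma)}z^n=\prod_{k\geq 1}(1-z^kq^{k-1})^{-1}$. The only difference is that the paper simply cites this identity as a well-known consequence of Euler's product formula, whereas you carry out the multiplicity-vector factorization $n-\ell(\Gamma)=\sum_k(k-1)\alpha_k$ explicitly, which is a harmless and indeed welcome elaboration.
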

\begin{proof}
Call $p(n,k)$ the number of partitions of $n$ with $l$ parts. Then we know $b_i (\Hi^n(0))= p(n, n-i)$, thanks to Theorem \ref{ES}. The formula in the statement then is simply the well known combinatorial identity 
\[
\sum_{n=0}^{+\infty} \sum_{i=0}^{+\infty} p(n, n-i) q^i z^n = \prod_{k=1}^{+\infty} \frac{1}{1-q^{k-1}z^k}
\]
that follows easily from the famous Euler identity for the generating function of the number  of partitions of an integer $n$. 
\end{proof}

\begin{remark} Goettsche proves a more general formula that holds for every smooth surface $X$. It is worth mentioning its ground breaking result even though we do not use it. We need to admit non zero odd homology groups to work in this generality, and the formula is:
\[
\sum_{n=0}^{+\infty } \,\sum_{j=0}^{+\infty} \dim H_j\,\left(\Hi^{n}(X)\right)q^j z^n = \prod_{k= 1}^{+\infty}\prod_{i=0}^4 \, \left(1- z^kq^{2k-2+i}\right)^{(-1)^{\dim H_i(X)}}.
\] 
\end{remark}
\subsection{The tangent space to the Hilbert-Samuel's strata, case $\mathfrak{n}=n$}

\hspace{3em}Now that we know a weight basis for the tangent space $T_I \Hi^{n}(\CC^2)$ at each fixed point $I$, we can study the tangent spaces $T_I M_{T}$ (resp. $T_I G_{T}$) for those fixed points $I \in  M_{T}$ i.e such that $T(I)= T$. Here we suppose that $T= (t_i)_{i\geq 0}$ is an admissible sequence of nonnegative integers as in \ref{admissible}. The key observation for this study is due to Goettsche and is the following . 

\begin{observation}{ \cite[Chapter~2]{gottsche1994hilbert}}
\label{goettschetangenttoM}
Let $I$ be a monomial ideal with $T(I)=T$ and standard monomial generators $(\alpha_0, \dots,\alpha_s)$.  With the choice of weights $\TT_{1^+}$, we have the natural identifications:
\begin{align*}
T_{I} M_{T} &= \left\langle f_{\alpha, \beta}\,\,\left\vert\,\, f_{\alpha,\beta} \text{ preserves or raises the degree}\right.\right\rangle=\left\langle f_{\alpha, \beta} \,\,\left\vert\,\, \deg{\beta}\geq \deg \alpha \right. \right\rangle\, ;\\
T_{I} G_{T} &= \left\langle f_{\alpha, \beta}\,\,\left\vert\,\, f_{\alpha,\beta} \text{ preserves the degree}\right. \right\rangle=\left\langle f_{\alpha, \beta}\,\,\left\vert\,\, \deg{\beta} = \deg \alpha\right. \right\rangle.
\end{align*}
Moreover we can determine the subspace of the tangent space where the weights of the torus action are positive simply as 
\begin{align*}
T_{I}^+ M_{T} &= \left\langle f_{\alpha, \beta}\,\,\left\vert\,\, f_{\alpha,\beta} \text{ raises the degree, or preserves it and strictly raises it in the } y\right.\right\rangle \\ &=\left\langle f_{\alpha, \beta}\,\,\left\vert\,\, \deg{\beta}> \deg \alpha, \text{ or }  \deg{\beta}= \deg \alpha\text{ and }\deg_y\beta > \deg_y\alpha\right. \right\rangle\,;\\
T_{I}^+ G_{T} &= \left\langle f_{\alpha, \beta}\,\,\left\vert\,\, f_{\alpha,\beta} \text{ preserves the degree and strictly raises it in the } y\right.\right\rangle \\ 
&=\left\langle f_{\alpha, \beta}\,\,\left\vert\,\, \deg{\beta} = \deg \alpha, \deg_y\beta  >\deg_y \alpha\right. \right\rangle.
\end{align*}
\end{observation}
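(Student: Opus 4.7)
The plan rests on the identification $T_I\Hi^n(\CC^2)\cong \Hom_R(I,R/I)$ of Lemma~\ref{Rmorphisms} together with the weight basis $B(I)=\{\ff\}$ of Lemma~\ref{B(I)basis}. Since $I$ is monomial, both $I$ and $R/I$ are graded, and $\Hom_R(I,R/I)$ inherits a $\ZZ$-grading in which $\ff$ is homogeneous of degree $\deg\beta-\deg\alpha$. The two descriptions of $T_I G_T$ and $T_I M_T$ will come from identifying these subspaces with natural pieces of this grading, and the positive parts $T_I^+G_T$, $T_I^+M_T$ will then be read off from a direct weight computation using the defining inequalities of $\TT_{1^+}$.

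For $T_I G_T$, I would observe that $G_T$ sits inside the fixed locus of the diagonal scaling action $\lambda\cdot(x,y)=(\lambda x,\lambda y)$ on $\Hi^n(\CC^2)$. The induced action on $\Hom_R(I,R/I)$ is precisely the grading action (a homogeneous element of degree $d$ is scaled by $\lambda^d$), so its fixed locus is the degree-$0$ piece; concretely, embedding $G_T$ into $\prod_i\mathrm{Gr}(\dim R_i-t_i,R_i)$ via $I\mapsto (I_i)_i$ identifies $T_I G_T$ with the $R$-linear maps $I\to R/I$ that preserve the grading, i.e.\ with $\langle\ff:\deg\beta=\deg\alpha\rangle$.

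For $T_I M_T$, I would use the affine fibration $\rho_T\colon M_T\to G_T$ of Proposition~\ref{smoothnessIarrobino}, together with its section $G_T\hookrightarrow M_T$, to obtain the split exact sequence
\[
0\to T_I\bigl(\rho_T^{-1}(I)\bigr)\to T_I M_T\to T_I G_T\to 0.
\]
The fiber $\rho_T^{-1}(I)$ over a homogeneous $I$ parametrizes ideals whose associated homogeneous ideal is $I$, i.e.\ ideals obtained by adding to each generator $\alpha$ a polynomial of strictly higher degree; hence $T_I\rho_T^{-1}(I)=\langle\ff:\deg\beta>\deg\alpha\rangle$, and combining gives $T_I M_T=\langle\ff:\deg\beta\geq\deg\alpha\rangle$. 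The inclusion $\supset$ here can also be verified directly, since adding a higher-degree tail to any generator does not alter any initial form and so preserves every $t_i$.

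Finally I would compute the $\TT_{1^+}$-weight of each basis vector: for $\alpha=x^ay^b$ and $\beta=x^{a'}y^{b'}$ this weight is $(a'-a)w_1+(b'-b)w_2$. For $\beta\in P_\alpha$ with $p=b'-b\geq 0$ and $q=a-a'\geq 0$ it becomes $pw_2-qw_1$; this is positive when $p\geq q$ and $p>0$ (since then $pw_2-qw_1\geq p(w_2-w_1)>0$) and negative when $p<q$, because $\beta\in\Gamma(I)$ forces $p\leq n-1$ and the defining bound $nw_1>(n-1)w_2$ then yields $w_2/w_1<n/(n-1)\leq (p+1)/p\leq q/p$. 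The case $\beta\in Q_\alpha$ is symmetric with weight $qw_1-pw_2$. Translating the sign conditions back in terms of $\deg\alpha$, $\deg\beta$, $\deg_y\alpha$, $\deg_y\beta$ recovers the descriptions of $T_I^+G_T$ and $T_I^+M_T$ in the statement. The main subtlety I anticipate is the $\subset$ direction for $T_I M_T$, which relies on Iarrobino's structural result Proposition~\ref{smoothnessIarrobino}; everything else is either an action-theoretic observation or a short case analysis.
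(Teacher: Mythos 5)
The paper itself gives no proof of this statement: it is recorded as an \emph{observation} and deferred to Goettsche, so you are supplying an argument the text omits. Your overall route --- identify $T_I G_T$ with the degree-zero part of $\text{Hom}_R(I,\bigslant{R}{I})$ via the diagonal $\CC^*$-action, identify $T_I M_T$ through the fibration $\rho_T$, and then read off the $\TT_{1^+}$-positive parts from the weight inequality $nw_1>(n-1)w_2$ --- is the right one, and your final weight computation is complete and correct (in particular the bound $p\leq n-1$ coming from $\beta\in\Gamma(I)$, which is exactly what makes every degree-lowering $\ff$ negative and every degree-raising one positive).

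The one genuine gap is the step $T_I\rho_T^{-1}(I)=\left\langle \ff : \deg\beta>\deg\alpha\right\rangle$. The inclusion $\supseteq$ follows from the explicit arcs you describe (adding $\epsilon$ times a higher-degree tail to a generator), but the inclusion $\subseteq$ --- equivalently, that no degree-lowering $\ff$, and no degree-preserving $Q$-type vector, is tangent to $M_T$ --- is precisely the content of the statement and is not implied by saying the fiber ``parametrizes ideals obtained by adding higher-degree tails'': that description only exhibits a family containing the fiber and gives no upper bound on its tangent space. To close it you can invoke Iarrobino's explicit charts (Theorem \ref{standard}: every ideal of $M_P$ has standard generators $\alpha_i+\sum a_{\alpha_i,\beta}\beta+\sum d_{\alpha_i,\gamma}\gamma$ with every term of degree at least $\deg\alpha_i$, so the chart's tangent space at its monomial center lies in the degree-nonnegative part), but note this only treats the normal-pattern fixed point directly; for an arbitrary monomial $I\in M_T$ you would have to work in a rotated chart from Proposition \ref{normalcover}, which is awkward. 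The cleaner closure, and the one closest to Goettsche, is to observe that $\rho_T(J)=\lim_{t\to 0}t\cdot J$ for the \emph{diagonal} one-parameter subgroup (the same initial-form computation as in the proof of Lemma \ref{lemmagoettschecells}), so that $M_T$ is exactly the Bialynicki--Birula attracting set of the fixed component $G_T$ inside the smooth $\Hi^{n}(\CC^2)$; the version of Theorem \ref{Bialynicki-Birula} for non-isolated fixed points then gives $T_I M_T=\left(T_I\Hi^{n}(\CC^2)\right)^{\geq 0}$ and $T_I G_T=\left(T_I\Hi^{n}(\CC^2)\right)^{0}$ in one stroke, the grading being by total degree, after which your $\TT_{1^+}$ weight analysis finishes the proof.
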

\begin{lemma}{ \cite[Lemma~2.2.11]{gottsche1994hilbert}}
The dimension of the subspace $T_{I}^+ M_{T}$ of $T_{I} M_{T}$ where the weights of the action are positive is: 
\begin{equation}
\label{posskew1}
\dim(T_{I}^+ M_{T}) = n-  \#\, \left\{(u,v)\in \Gamma(\nu) \vert h_{u,v} = 0 \text{ or } h_{u,v} = 1 \right\}\, .
\end{equation}
\end{lemma}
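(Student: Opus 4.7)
My strategy is to combine Observation \ref{goettschetangenttoM}, which identifies $T_I^+ M_T$ as the span of a distinguished subset of the $f_{\alpha,\beta}$, with the classical arm--leg description of the $\TT^2$-weights of $T_I \Hi^n(\CC^2)$ at a torus-fixed point.

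First I would establish, directly from Definition \ref{defPalpha} and the construction of $B(I)$ in Lemma \ref{definitionf}, the following weight decomposition: each box $s = (u,v) \in \Gamma(\nu)$ accounts for exactly two elements of $B(I)$, whose $\TT^2$-weights are $(-a(s)-1,\,l(s))$ and $(a(s),\,-l(s)-1)$, where $a(s)$ and $l(s)$ are the arm and leg lengths of $s$ in $\Gamma(\nu)$. Since the $\TT^2$-weight of $f_{\alpha,\beta}$ is $(\deg_x\beta - \deg_x\alpha,\ \deg_y\beta - \deg_y\alpha)$, the total-degree shifts $\deg\beta - \deg\alpha$ of these two weights are $-h_{u,v}-1$ and $h_{u,v}-1$ respectively.

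Next I apply Observation \ref{goettschetangenttoM}: the space $T_I^+ M_T$ is the span of those $f_{\alpha,\beta}$ whose weight $(p,q)$ satisfies either $p+q>0$, or $p+q=0$ together with $q>0$. Translating to the two weight families:
\begin{itemize}
\item A weight $(-a-1,\,l)$ lies in $T_I^+ M_T$ iff $l \geq a+1$, i.e.\ $h_{u,v} \leq -1$; the borderline case $l = a+1$ does satisfy the second clause because the $y$-component $a+1 \geq 1$ is strictly positive.
\item A weight $(a,\,-l-1)$ lies in $T_I^+ M_T$ iff $a \geq l+2$, i.e.\ $h_{u,v} \geq 2$; the borderline case $a = l+1$ fails, since the $y$-component $-l-1 < 0$ violates the second clause.
\end{itemize}

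The stated identity then follows by a direct count: since $|\Gamma(\nu)| = n$,
\[
\dim T_I^+ M_T \;=\; \#\{s\in\Gamma(\nu):h_{u,v}\leq -1\}\,+\,\#\{s\in\Gamma(\nu):h_{u,v}\geq 2\} \;=\; n - \#\{s\in\Gamma(\nu):h_{u,v}\in\{0,1\}\}.
\]

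The main obstacle I anticipate is the box--weight correspondence in the first step: unpacking Definitions \ref{defPalpha}--\ref{definitionf} to verify that each $s\in\Gamma(\nu)$ really does contribute exactly one $P$-type and one $Q$-type element of $B(I)$ with the stated hook weights. This is a combinatorial bookkeeping exercise already implicit in the column-by-column computation $\#B(I) = 2n$ of Lemma \ref{cardinalityB(I)}; the subtlety lies in aligning the asymmetric definitions of $P_\alpha$ and $Q_\alpha$ (which privilege generators of smaller, respectively larger, $x$-degree) with the symmetric arm--leg data of $\Gamma(\nu)$.
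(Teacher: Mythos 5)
Your proposal is correct and arrives at the same count, but it organizes the character computation differently from the paper. The paper's proof (following Göttsche) writes the $\TT^2$-representation $T_I\Hi^n(\CC^2)$ as the double sum (\ref{characterformula}) indexed by pairs $0\le i\le j<s$ and $\nu_{j+1}\le k\le \nu_j-1$, extracts the positivity conditions $i+\nu_i>j+k+1$ and $i+\nu_i<j+k$ term by term, and only then converts them into hook-difference conditions by introducing the transpose partition $\hat\nu$ and the identity relating $h_{u,v}$ to $\nu_u$ and $\hat\nu_v$. You instead parametrize the same character box by box via arm and leg lengths, so the total degree shifts $-h_{u,v}-1$ and $h_{u,v}-1$ --- and hence the conditions $h\le -1$ and $h\ge 2$, with the two borderline cases resolved exactly by the paper's criterion ``$a+b>0$, or $a+b=0$ and $b>0$'' --- appear immediately, and no transpose-partition bookkeeping is needed. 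What your route buys is a cleaner endgame; what it costs is the box--weight correspondence itself, which you rightly identify as the real content. Lemma \ref{cardinalityB(I)} only gives $\#B(I)=2n$, not the refined statement that each box contributes one $P$-type vector of weight $(-a-1,\,l)$ and one $Q$-type vector of weight $(a,\,-l-1)$; that identification (the Ellingsrud--Str\o mme character formula, equivalent to (\ref{characterformula}) after reindexing) is true but would have to be proved with the same care the paper devotes to its own indexing, since the sets $P_\alpha$ and $Q_\alpha$ are attached to generators rather than to boxes. With that lemma supplied, your argument is complete and correct.
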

\begin{proof}
We give the proof of Goettsche. Call $\nu=\nu_0\geq\dots > \nu_s=0$ the partition associated to $I$. Call $\lambda$ and $\mu$ two linearly independent characters of the two torus $\TT^2$ acting on $\hat{R}$ as $t\cdot x = \lambda(t) x$ and $t\cdot y = \mu(t) y$. Then the existence of the basis $B(I)$ shows that as representation of $\TT^2$ we have the identity
\begin{equation}
\label{characterformula}
T_{I}\Hi^n(\CC^2) \quad =\quad \sum_{0\leq i\leq j <s } \,\, \sum_{k= \nu_{j+1}}^{\nu_j -1} \, (\lambda^{i-j-1}\mu^{ \nu_i-k-s} +\lambda^{i-j}\mu^{ k-\nu_i})
\end{equation}  
as each addendum is the weight of one of the $\ff \in B(I)$. \\

\hspace{3em}The action of $\TT_{1^+}$ has positive weight on $\lambda^{a}\mu^{b}$
 if and only if $a+b>0$ or a+b=0 and $b>0$. Then the term $(\lambda^{i-j-1}\mu^{ \nu_i-k-s})$ has positive weight if and only if $i+\nu_i> j+k+1$ and the term $(\lambda^{i-j}\mu^{ k-\nu_i})$  has positive weight if and only if $i+\nu_{i}< j+ k$. Denote with $\hat{\nu}$ the \emph{transpose} partition of $\nu$, i.e. $\hat{\nu}_i = \# \left\{(m,i) \vert (m,i) \in \Gamma(\nu) \right\}$. Then it is clear that $\hat{\nu}_k$ is the smallest $j$ satisfying $k \geq \nu_{j}$, so that $\hat{\nu}_{k}-1$ is the smallest $j$ satisfying $k \geq \nu_{j+1}$. Notice also that we can reformulate the definition of the hook difference in terms of the transpose partition as
 \[
 h_{u,v}(\nu) = \nu_u-u -\hat{\nu}_v +v. 
 \]
 Then we have that 
 \begin{align*}
 \dim T^+_{I} M_{T}  = \,\, & \sum_{0\leq i\leq j <s } \left( \nu_j -\nu_{j+1} - \#\, \left\{k \in \ZZ \left\vert \begin{matrix} \nu_{j+1}\leq k < \nu_{j};\\ 0 \leq j+k-i-\nu_{i}+1\leq 1\end{matrix}\right.\right\}\right) \\
 =\,\, &  \sum_{0\leq i <s } \left( \nu_i  - \#\, \left\{k \in \ZZ \left\vert 0\leq k <\nu_i, \,\,\,   0 \leq\nu_i+i-\hat{\nu}_k-k \leq 1\right.\right\}\right) \\
 =\,\, & n- \#\left\{(u,v) \in \Gamma(\nu) \left\vert \,\, 0 \leq h_{u,v}(\nu)\leq 1\, \right.\right\}.
 \end{align*}
 
  \end{proof}
For the homogenous Hilbert-Samuel's stratum $G_T$ Goettsche shows, with similar arguments, the following Lemma of which we omit the proof. 
\begin{lemma}{\cite[Lemma~2.2.12]{gottsche1994hilbert}}
The dimension of the subspace $T_{I}^+ G_{T}$ of $T_{I} G_{T}$ where the weights of the action are positive is: 
\[
\dim(T_{I}^+ G_{T}) = n-  \#\, \left\{(u,v)\in \Gamma(\nu) \vert h_{u,v} = 1 \right\}\, .
\]
\end{lemma}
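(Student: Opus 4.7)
The plan is to mimic the proof of the previous lemma (equation (\ref{posskew1})), changing only the constraints on the $\TT^2$-weight $(a,b)$ of a basis element $f_{\alpha,\beta}\in B(I)$ that select $T^+_I G_T$ inside $T^+_I M_T$. By Observation \ref{goettschetangenttoM}, $T_I G_T$ is spanned by those $f_{\alpha,\beta}$ which are strictly degree-preserving, i.e.\ whose weight $(a,b)$ satisfies $a+b=0$, and the positive part $T^+_I G_T$ is further cut out by $b>0$. This matches the condition ``positive weight under $\TT_{1^+}$'' isolated in the previous proof, since on the hyperplane $a+b=0$ one has $w_1 a + w_2 b = b(w_2-w_1)$, whose sign is the sign of $b$.

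First I would reuse the character decomposition (\ref{characterformula}) of $T_I\Hi^n(\CC^2)$ as a sum of two families of weights indexed by $0\le i\le j<s$ and $\nu_{j+1}\le k<\nu_j$. On each family I impose the linear equation $a+b=0$ together with the strict inequality $b>0$. A direct case check shows that only one of the two families can satisfy both conditions simultaneously (the other has $b<0$ whenever $a+b=0$, hence contributes nothing). Reindexing the surviving sum via the transpose partition $\hat\nu$ and using the identity $h_{u,v}(\nu)=\nu_u-u-\hat\nu_v+v$ exactly as in the computation at the end of the previous proof, the contributing triples $(i,j,k)$ are in bijection with the boxes $(u,v)\in\Gamma(\nu)$ satisfying $h_{u,v}\neq 1$. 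This yields
\[
\dim T^+_I G_T \;=\; |\Gamma(\nu)| \,-\, \#\{(u,v)\in\Gamma(\nu)\mid h_{u,v}=1\} \;=\; n \,-\, \#\{(u,v)\in\Gamma(\nu)\mid h_{u,v}=1\},
\]
as claimed.

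The only place where the argument genuinely diverges from the $M_T$ case is the replacement of the pair of weak inequalities $0\le j+k-i-\nu_i+1\le 1$ used for $M_T$ (which picked out boxes with $h_{u,v}\in\{0,1\}$) by the single equality $j+k-i-\nu_i+1=1$ coming from $a+b=0$ (which picks out boxes with $h_{u,v}=1$ alone). This is the main obstacle: one must keep careful track of the shift by one in the $\mu$-exponent of the relevant family so that the equation $a+b=0$ lands on $h_{u,v}=1$ rather than on $h_{u,v}=0$. Once this indexing is verified, the rest is purely combinatorial bookkeeping, formally identical to that in the proof of (\ref{posskew1}).
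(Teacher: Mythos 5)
Your overall strategy is the right one: the paper omits this proof entirely (it only remarks that G\"ottsche proves it ``with similar arguments'' and gives a citation), so adapting the character computation from the preceding lemma is exactly the intended route, and your observation that on the hyperplane $a+b=0$ the sign of the $\TT_{1^+}$-weight is the sign of $b$ is correct. However, the key combinatorial step fails. You claim that after imposing $a+b=0$ the surviving weights are in bijection with the boxes $(u,v)\in\Gamma(\nu)$ with $h_{u,v}\neq 1$, hence number $n-\#\{h_{u,v}=1\}$. This cannot be right: $T_I^+G_T$ is a subspace of $T_IG_T$, which by Observation \ref{goettschetangenttoM} is spanned by the \emph{degree-preserving} $f_{\alpha,\beta}$ and has dimension $\dim G_T$, in general much smaller than $n$. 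Imposing the codimension-one condition $a+b=0$ on a family of $n$ weights leaves only $\dim G_T$ of them, not $n$ minus a small correction. Concretely, take $T=(1,1,\dots,1)$ and $I=(x,y^n)$, so that $G_T\cong\PP^1$ and $\Gamma(\nu)$ is a single column; then every hook difference is $\leq 0$, so $\#\{h_{u,v}=1\}=0$ and your formula gives $\dim T_I^+G_T=n$, whereas $\dim T_IG_T=1$ (the only degree-preserving basis vector is $f_{x,y}$). The error is in the ``direct case check'': for the family with $\beta\in P_\alpha$ \emph{every} degree-preserving element is positive (there $b=p=q\geq 1$), and for the family with $\beta\in Q_\alpha$ every degree-preserving element is negative ($b=-p\leq -1$); so what actually has to be proved is that the number of degree-preserving $f_{\alpha,\beta}$ with $\beta\in Q_\alpha$ equals $\#\{(u,v)\in\Gamma(\nu)\mid h_{u,v}=1\}$, and your argument does not address this count.

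Be aware also that the statement as printed appears to carry a typo inherited from the $M_T$ lemma: the formula consistent with the Betti-number formula for $G_T$ quoted just below, and with the $\PP^1$ example above, is
\[
\dim\left(T_I^+G_T\right)\;=\;\dim G_T\;-\;\#\,\left\{(u,v)\in\Gamma(\nu)\,\left\vert\, h_{u,v}=1\right.\right\},
\]
with $\dim G_T=\sum_{j\geq d}(t_{j-1}-t_j+1)(t_j-t_{j+1})$ in place of $n$. A correct adaptation of the $M_T$ argument must therefore keep both families, note that only the degree-preserving terms survive at all, and then match the negative half of the survivors with the boxes of hook difference $1$; as written, your proof establishes neither the printed formula nor the corrected one.
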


Thanks to smoothness, Proposition \ref{smoothnessIarrobino}, we can apply Theorem \ref{Bialynicki-Birula} and Proposition \ref{Fulton} to immediately get the following. 
\begin{theorem}{\cite[Theorem~2.2.7]{gottsche1994hilbert}}
Let $T= (t_i)_{i\geq 0 }$ be a sequence of nonnegative integers, with $|T|= n$ as in \ref{admissible}. Then we have that:
\begin{itemize}
\item[(1)] The Hilbert-Samuel's strata $M_T$ and $G_T$ have an affine cell decomposition. 
\item[(2)] The Betti numbers of $M_T$ satisfy
\[
b_{i}(M_T) =  \#\,\left\{  \nu \vdash n \left\vert \begin{matrix}T(\nu)=T\\  \#\,\left\{(u,v)\in \Gamma(\nu)\,\,\vert\,\, h_{u,v}\in \{0,1\} \right\}=n-i \end{matrix} \right. \right\}\,.
\]
\item[(3)] The Betti numbers of $G_T$ satisfy
\[
b_{i}(G_T) =  \#\,\left\{  \nu \vdash n \left\vert \begin{matrix}T(\nu)=T\\  \#\,\left\{(u,v)\in \Gamma(\nu)\,\,\vert\,\, h_{u,v}=1\right\}=i \end{matrix} \right. \right\}\, .
\]
\end{itemize}
\end{theorem}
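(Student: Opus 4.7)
The plan is to assemble the three ingredients already developed in the preceding pages: smoothness of $M_T$ and $G_T$ (Proposition \ref{smoothnessIarrobino}), the stratification-compatibility of the $\TT_{1^+}$-action (Lemma \ref{lemmagoettschecells}), and the dimension counts for the positive parts of the tangent spaces at fixed points. Once these are in hand, the statement follows by a direct application of Bialynicki-Birula (Theorem \ref{Bialynicki-Birula}) and the homology computation of Fulton (Proposition \ref{Fulton}).

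First I would identify the torus fixed points of $M_T$ and $G_T$ inside $\Hi^n(0)$. Since the fixed points of the $\TT^2$-action on $\Hi^n(\CC^2)$ are exactly the monomial ideals, and since the Hilbert-Samuel type of $I_\nu$ equals the diagonal sequence $T(\nu)$, the fixed points in $M_T$ (and hence in $G_T$, which is a section of $\rho_T$) are in bijection with partitions $\nu\vdash n$ with $T(\nu)=T$. I would then invoke Lemma \ref{lemmagoettschecells} to handle the subtlety that $M_T$ is not projective: for the torus $\TT_{1^+}$, each $M_T$ is a union of $\TT_{1^+}$-attracting sets of the projective variety $\Hi^n(0)$, so every point of $M_T$ has a limit as $t\to 0$ and that limit lies in $M_T$. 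By the same lemma the map $\rho_T$ is $\TT_{1^+}$-equivariant, so the attracting sets of $G_T$ are the intersections with $G_T$ of the attracting sets of $M_T$, and again all limits exist.

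Next, using smoothness (Proposition \ref{smoothnessIarrobino}), I apply the theorem of Bialynicki-Birula (Theorem \ref{Bialynicki-Birula}, in the form made available by the remark after it, which only requires that limits exist). This yields an affine cell decomposition of $M_T$ indexed by fixed points $I_\nu$ with $T(\nu)=T$, with cell at $I_\nu$ of dimension $\dim T^+_{I_\nu} M_T$, and similarly for $G_T$. This gives part (1). Fulton's Proposition \ref{Fulton} then tells us that the classes of the closures of these cells form a graded $\ZZ$-basis of the Borel-Moore homology, so the Betti number $b_i$ is the number of cells of complex dimension $i$.

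Finally, to get (2) and (3) I substitute the explicit cell-dimension formulas proved just before the statement: the $\TT_{1^+}$-positive weights in $T_{I_\nu} M_T$ number $n - \#\{(u,v)\in\Gamma(\nu) : h_{u,v}\in\{0,1\}\}$, while those in $T_{I_\nu} G_T$ number $n - \#\{(u,v)\in\Gamma(\nu) : h_{u,v}=1\}$. Counting the fixed points whose attracting cells have the prescribed dimension gives the stated formulas. The only genuinely non-routine step is the one addressed by Lemma \ref{lemmagoettschecells}: without it, Bialynicki-Birula would not apply to the non-projective $M_T$ and the resulting decomposition could in principle spill out of the stratum; everything else is bookkeeping assembled from results already proved.
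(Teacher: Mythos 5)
Your proposal follows exactly the route the paper itself takes: smoothness from Proposition \ref{smoothnessIarrobino}, compatibility of the $\TT_{1^+}$-action with the stratification from Lemma \ref{lemmagoettschecells} (which is indeed the one non-formal point, since $M_T$ is not projective), then Theorem \ref{Bialynicki-Birula} and Proposition \ref{Fulton}, and finally the positive-weight counts at the monomial fixed points. Parts (1) and (2) come out correctly this way. The one place where "bookkeeping" is not actually routine is part (3): if you feed the quoted formula $\dim T_I^+G_T = n - \#\{(u,v): h_{u,v}=1\}$ into Fulton's proposition you get $b_i(G_T)=\#\{\nu : \#\{h_{u,v}=1\}=n-i\}$, which is not the stated $\#\{h_{u,v}=1\}=i$, so your claim that the substitution "gives the stated formulas" fails for $G_T$. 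In fact that dimension formula cannot be the cell dimension as written: for $T=(1,\dots,1)$ one has $G_T\cong\PP^1$, the two fixed points are the single row and the single column, and $n-\#\{h_{u,v}=1\}$ takes the values $n-1$ and $n$, whereas the attracting cells have dimensions $0$ and $1$; a direct check with the basis $B(I)$ gives positive parts of dimension $\dim G_T-\#\{h_{u,v}=1\}$ in this example. So to land on the statement of (3) you must either correct the positive-part count for $G_T$ or argue via the symmetry of the Betti numbers of the smooth projective variety $G_T$ (equivalently, use the negative rather than the positive attracting sets); as written, your last step for $G_T$ does not close.
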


We can then utilize smoothness of the Hilbert-Samuel's strata and their homogenous counterparts to give their dimensions. 
\begin{corollary}{\cite[Theorem~2.12]{iarrobino1977punctual}}
\label{dimMT}
Let $T= (t_i)_{i\geq 0 }$ be a sequence of nonnegative integers, with $|T|= n$ as in \ref{admissible}. The dimension of $M_T$ and $G_T$ are: 
\begin{align*}
\dim M_{T} &= n- \sum_j (t_{j-1}-t_j)\frac{(t_{j-1}-t_j+1)}{2},\\
\dim G_T &= \sum_{j\geq d} (t_{j-1}-t_j+1)(t_{j}-t_{j+1}).
\end{align*}
\end{corollary}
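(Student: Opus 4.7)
The strategy is to combine the smoothness of $M_T$ and $G_T$ (Proposition \ref{smoothnessIarrobino}) with the affine cell decomposition just established. Since $M_T$ is smooth and irreducible, its dense open Bialynicki--Birula cell has dimension equal to $\dim M_T$, and one obtains the clean identity
\[
\dim M_T \;=\; \max_{I \in M_T^{\TT}} \dim T_I^+ M_T,
\]
where the maximum ranges over the torus-fixed points; the analogue holds for $G_T$. Equivalently, by smoothness alone, $\dim M_T = \dim_{\CC} T_I M_T$ at any closed point, and one may compute at a convenient monomial ideal $I_\nu$ with $T(\nu) = T$.

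First I would invoke the hook-length formulas for $\dim T_I^+ M_T$ and $\dim T_I^+ G_T$ supplied by the preceding two lemmas of Goettsche, namely
\[
\dim T_I^+ M_T = n - \#\{(u,v) \in \Gamma(\nu) : h_{u,v} \in \{0,1\}\}, \qquad \dim T_I^+ G_T = n - \#\{(u,v) \in \Gamma(\nu) : h_{u,v} = 1\}.
\]
Thus the proof reduces to the combinatorial optimisation problem of minimising $\#\{(u,v) : h_{u,v} \in \{0,1\}\}$ (respectively $\#\{(u,v) : h_{u,v} = 1\}$) among all partitions $\nu$ with fixed diagonal sequence $T(\nu) = T$.

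Second, I would exhibit a particular $\nu_\ast$ with $T(\nu_\ast) = T$ achieving the minimum --- a natural candidate being the partition whose Young diagram is left-justified on each antidiagonal (i.e.\ the $\nu_\ast$ making each column as tall as possible subject to the constraint $T(\nu_\ast) = T$). For this $\nu_\ast$ the boxes with hook difference $0$ or $1$ organise antidiagonal-by-antidiagonal: on the $j$-th antidiagonal the descent $t_{j-1} - t_j$ produces exactly $\binom{t_{j-1} - t_j + 1}{2}$ boxes of small hook difference, and summing over $j$ yields $\sum_j \binom{t_{j-1} - t_j + 1}{2}$. A short argument, comparing hook differences along an antidiagonal after swapping the relative heights of two columns, shows this count is optimal. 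For $G_T$, the same analysis restricted to hook difference exactly $1$ regroups the surviving boxes into the factorised count $\sum_{j \geq d}(t_{j-1} - t_j + 1)(t_j - t_{j+1})$, where the extra factor $(t_j - t_{j+1})$ records the vertical multiplicity of each hook-$1$ column.

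The main obstacle will be the optimisation step: proving both that $\nu_\ast$ minimises the hook-$\{0,1\}$ (resp.\ hook-$1$) count and that the minimum admits the stated closed form. This requires a careful shape comparison lemma --- essentially showing that any local rearrangement of $\nu$ preserving $T$ cannot decrease the count --- together with the explicit antidiagonal enumeration. This combinatorial core is the content of Iarrobino's original dimension computation in \cite{iarrobino1977punctual}, and once established the corollary follows immediately from smoothness.
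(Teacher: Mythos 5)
Your route for $M_T$ --- identify $\dim M_T$ with the dimension of the dense Bialynicki--Birula cell and then minimise $\#\{(u,v):h_{u,v}\in\{0,1\}\}$ over all $\nu$ with $T(\nu)=T$ --- could in principle be made to work, but it is a detour and the step you yourself flag as the main obstacle is genuinely missing: you never prove that the left-justified diagram $\Gamma_T$ minimises the hook count, nor that the minimum has the stated closed form, and you also need connectedness of $M_T$ (which follows from Iarrobino's covering by the opens $M_{P(x,y-a_jx)}$, all containing the common fixed point $I_T$) for the dense cell to have full dimension. The argument the paper intends is shorter and avoids the optimisation entirely: by smoothness $\dim M_T=\dim_{\CC}T_I M_T$ at \emph{any} fixed point, and Observation \ref{goettschetangenttoM} identifies $T_I M_T$ with the span of all $f_{\alpha,\beta}$ with $\deg\beta\geq\deg\alpha$ --- the \emph{whole} degree-nondecreasing part, not just the positive part --- so one simply counts these pairs at the single convenient fixed point $I_T$ (this is Iarrobino's count $\#\,PM$ from Theorem \ref{standard}); no comparison between different partitions is needed.

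The $G_T$ half of your proposal does not survive even in principle. The formula $\dim T_I^+G_T=n-\#\{(u,v):h_{u,v}=1\}$ that you quote is a typo in the source: the correct statement, consistent with the Betti-number formula $b_i(G_T)=\#\{\nu : \#\{h_{u,v}=1\}=i\}$ stated immediately afterwards and with the example $G_{(1,\dots,1)}\cong\PP^1$, is $\dim T_I^+G_T=\#\{(u,v):h_{u,v}=1\}$. With the formula as you use it, your recipe returns $\dim G_T=n$ at the left-justified partition (where no box has hook difference $1$), contradicting the very identity you are proving. Even after correcting the sign, the extremal partition for $G_T$ is not your $\nu_\ast$: for $T=(1,1,1)$ the maximiser of $\#\{h_{u,v}=1\}$ is the single row, not the single column $\Gamma_T$, so ``the same analysis'' does not carry over. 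Here too the fix is to compute the full tangent space $T_I G_T=\langle f_{\alpha,\beta} : \deg\beta=\deg\alpha\rangle$ at one fixed point (Iarrobino's $\#\,PG$) rather than optimising the positive part over all shapes.
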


\begin{remark}
As an immediate consequence we obtain that the Poincar\'{e} polynomial for $\Hi^{n}(0)$ can be written as 
\[
P_q\left(\Hi^{n}(0)\right)\quad =\quad \sum_{\Gamma \vdash n} q^{\pos_{\TT_{1^+}}(\Gamma)}
\]
where $\pos_{\TT_{1^+}}(\Gamma)$ is the quantity described in formula (\ref{posskew1}). Just looking at it combinatorially, it is not immediately clear that this formula coincides with the one we wrote in Theorem \ref{ES}, nor that these polynomials can be summed to give the same generating function as in Proposition \ref{generatingGoettsche}. In the last chapter we devote a section to explain in some details how this is working.
\end{remark}


\section{A weight basis for $T_{I_1, I_2} \Hi^{n, n+1}(\CC^2)$}

\hspace{3em}Now that we defined a basis for the tangent space of  $\Hi^{n}(\CC^2)$ at each of its fixed points, it is time to do the same for the tangent spaces of $\Hi^{n, n+1}(\CC^2)$ at its fixed points. If $I_1, I_2$ is such a fixed point, the vector space we want to understand, thanks to Lemma \ref{Rmorphisms},  is  
\[
T_{I_1, I_2} \Hi^{n, n+1}(\CC^2) = \text{Ker}(\phi_{12}-\psi_{12}) \subset \text{Hom}_R(I_1, \bigslant{R}{I_1}) \oplus \text{Hom}_R(I_2, \bigslant{R}{I_2}).
\]

\hspace{3em}In the previous section we saw how to define a weight basis for $ \text{Hom}_R(I_1, \bigslant{R}{I_1})$ and for  $\text{Hom}_R(I_2, \bigslant{R}{I_2})$. The idea is that the two bases share a lot in common, since $I_1$ and $I_2$ differ by a small amount. There is  only one monomial in $I_1$ but not in $I_2$, call it $\alpha_j$.  

\hspace{3em}In the weight basis $B(I_1, I_2)$  there will be two types of vectors: The first type underlines the similarities between $ \text{Hom}_R(I_1, \bigslant{R}{I_1})$ and $\text{Hom}_R(I_2, \bigslant{R}{I_2})$: more precisely these are vectors of the form $(\ff, \,\,\star)$ with $\ff \in B(I_1)$ and $\star$ some appropriate vector in $T_{I_2} \Hi^{n+1}(\CC^2)$ that looks like $\ff$ so that the difference is zero for $\phi_{12}-\psi_{12}$. The second type underlines the differences between $I_1$ and $I_2$: more precisely these are vectors of the form $(0, (\alpha \mapsto \alpha_j))$, where we use the fact that $\alpha_j$ is $0$ mod $I_1$ and not $0$ mod $I_2$. \\

\hspace{3em}Lets dive into the details.  \\
 
\begin{notation}
Let $(I_1, I_2)$ be a fixed point of the $\TT^2$ action on $\Hi^{n, n+1}(\CC^2)$, i.e. $I_1, I_2$ are monomial ideals of lengths $n$ and $n+1$ respectively, with $I_1\supset I_2$. Call $(\alpha_1, \dots, \alpha_s)$ the standard monomial generators of $I_1$ and $(\alpha'_1, \dots, \alpha'_{s'})$ the standard monomial generators of $I_2$. Call $\Gamma_i=\Gamma(I_i)$, then call $j$ the index for which
\[
\Gamma_2 = \Gamma_1 \cup \{\alpha_j\}.
\]
We will denote this configuration also as $\Gamma= (\Gamma_1, \Gamma_2) \vdash [n, n+1]$. 

\hspace{3em}Call $p_i$ and $q_i$, as before, the distances between generators of $I_1$, and $p'_i$ and $q'_i$ the distances between generators of $I_2$: 
\begin{align*}
&p_i := \deg_y \alpha_{i+1} - \deg_y \alpha_{i} \quad p_s := \infty,\quad  q_i := \deg_x \alpha_{i} - \deg_x \alpha_{i-1} \quad q_0 := \infty,
\\
&p'_i := \deg_y \alpha'_{i+1} - \deg_y \alpha'_{i} \quad p'_{s'} := \infty, \quad q'_i := \deg_x \alpha'_{i} - \deg_x \alpha'_{i-1} \quad q'_0 := \infty.
\end{align*}

Analogously we call $P_{\alpha}=P_{\alpha_i}$ and  $Q_{\alpha}=Q_{\alpha_i}$ the relevant sets for  $\alpha=\alpha_i$ a generator of $I_1$, and $P_{\alpha'}=P_{\alpha'_i}$ and  $Q_{\alpha'}=Q_{\alpha'_i}$ the relevant sets for $\alpha'=\alpha'_i$ a generator of $I_2$. Whenever confusion is possible we will clarify if $\ff$ is seen as an element in the basis $B(I_1)$ of $T_{I_1} \Hi^{n}(\CC^2)$ or as an element in the basis $B(I_2)$ of $T_{I_2} \Hi^{n+1}(\CC^2)$, and so on. 

\end{notation}

\begin{definition}[Cases]
\label{cases12}
Let $I_1=(\alpha_1, \dots,\alpha_s), I_2=(\alpha'_1, \dots,\alpha'_{s'})$ be  ideals with prescribed standard monomial generators, such that $(I_1, I_2)$ is a fixed point for $ \Hi^{n, n+1}(\CC^2)$, and such that $\Gamma(I_2)=\Gamma(I_1)\cup\{\alpha_j\}$. There are four possible different cases we need to distinguish. In the following pictures we will indicate with a bullet $\bullet$ those standard monomial generators of $I_2$ that are not already standard monomial generators of $I_1$. 

\textbf{\large{Cases 1a), 1b)} ($s=s'$)}

The number of generators $s+1$ and $s'+1$ is the same for $I_1$ and $I_2$. This can only happen if $p_j=1$ or $q_j=1$ but not both. Then we distinguish the two possibilities, and we look at the generators of $I_2$ in terms of those of $I_1$:\\

\begin{minipage}{0.45\textwidth}
\[
\begin{matrix}\text{Case 1a) },\\ q_j=1\end{matrix} \quad
\begin{cases}
\alpha'_1&= \alpha_1 \\
&\dots\\
\alpha'_j &= y\alpha_j\\
& \dots\\
\alpha'_{s'}&=\alpha'_{s}=\alpha_s.
\end{cases}
\]
\end{minipage}
\begin{minipage}{0.45\textwidth}
\[
\begin{matrix}\text{Case 1b) },\\ p_j=1 \end{matrix}\quad
\begin{cases}
\alpha'_1&= \alpha_1 \\
&\dots\\
\alpha'_j &= x\alpha_j\\
&\dots\\
\alpha'_{s'}&=\alpha'_{s}=\alpha_s.
\end{cases}
\]
\end{minipage}

\begin{minipage}{0.45\textwidth}
\begin{center}
\ytableausetup{boxsize=1.2em, aligntableaux=bottom}
\begin{ytableau} 
\, \\
\,&\,  \\
\,&\, &\,&\none[\bullet] \\
\,&\, &\,&*(green) \alpha_j  \\
\,&\, &\,&\,  \\
\,&\, &\, &\,&\, &\,
\end{ytableau}
\end{center}
\end{minipage}
\begin{minipage}{0.45\textwidth}
\begin{center}
\ytableausetup{boxsize=1.2em, aligntableaux=bottom}
\begin{ytableau} 
\, \\
\,&\,  \\
\,&\, &\, \\
\,&\, &\,  \\
\,&\, &\,&\,&*(green) \alpha_j &\none[\bullet] \\
\,&\, &\, &\,&\, &\,
\end{ytableau}
\end{center}
\end{minipage} 

\vspace{1em}

\textbf{\large{Case 2)} ($s'=s+1$)}

This happens if and only if $p_j>1, q_j>1$, or $j=0$ and $p_0>1$, or $j=s$ and $q_s>1$. Then we have 

\begin{minipage}{0.45\textwidth}
\[
\begin{matrix}\text{Case 2) },\\
 p_j,q_j>1
 \end{matrix} \quad
\begin{cases}
\alpha'_1&= \alpha_1 \\
&\dots\\
\alpha'_j &= x\alpha_j\\
\alpha'_{j+1}&= y\alpha_j \\
\alpha'_{j+2}&= \alpha_{j+1}\\
&\dots\\
\alpha'_{s'}&=\alpha'_{s+1}=\alpha_s.
\end{cases}
\]
\end{minipage}
\begin{minipage}{0.45\textwidth}
\begin{center}
\ytableausetup{boxsize=1.2em, aligntableaux=bottom}
\begin{ytableau} 
\, \\
\,&\,&\none[\bullet]  \\
\,&\, &*(green) \alpha_j&\none[\bullet] \\
\,&\, &\,&\,  \\
\,&\, &\,&\,  \\
\,&\, &\, &\,&\, &\,
\end{ytableau}
\end{center}
\end{minipage}

\vspace{1em}
\textbf{\large{Case 3} ($s'=s-1$)}

This happens if and only if $p_j=1, q_j=1$. Then we have

\begin{minipage}{0.45\textwidth}
\[
\begin{matrix}\text{Case 3) },\\ p_j,q_j=1\end{matrix} \quad
\begin{cases}
\alpha'_1&= \alpha_1 \\
&\dots\\
\alpha'_{j-1} &= \alpha_{j-1}\\
\alpha'_{j}&= \alpha_{j+1} \\
\alpha'_{j+1}&= \alpha_{j+2}\\
&\dots\\
\alpha'_{s'}&=\alpha'_{s-1}=\alpha_s.
\end{cases}
\]
\end{minipage}
\begin{minipage}{0.45\textwidth}
\begin{center}
\ytableausetup{boxsize=1.2em, aligntableaux=bottom}
\begin{ytableau} 
\, \\
\,&\,&*(green) \alpha_j  \\
\,&\, &\, \\
\,&\, &\,&\,  \\
\,&\, &\,&\,  \\
\,&\, &\, &\,&\, &\,
\end{ytableau}
\end{center}
\end{minipage}
\end{definition}

\hspace{3em}We now focus on the task of understanding for which $\ff \in B(I)$ there exists $h\in$ $ \text{Hom}_R(I_2, \bigslant{R}{I_2})$ such that $(\ff,h)\in T_{I_1, I_2} \Hi^{n, n+1}(\CC^2)$, i.e.  $\phi_{12}(\ff)-\psi_{12}(h)=0 \,\,\text{mod}\,\,I_1$. Such an $h$ does not always exist, as the following example shows. 
\begin{example}
Let the following diagram represents the fixed point of $\Hi^{3, 4}(\CC^2)$ with $I_1=(x^2, xy, y^2)$, $I_2=(x^2, xy, y^3)$, and $\alpha_j= y^2$. Let $\ff$ be the map $f_{x^2, y}\in B(I)$. Then there does not exist a map $h\in \text{Hom}_R(I_2, \bigslant{R}{I_2})$ such that $\phi_{12}(\ff)-\psi_{12}(h)=0$, because such a map would send $x^2 \mapsto y$ and $x^2y \mapsto y^2\neq 0 \,\text{ mod } I_2$ which is absurd since it should also be $xy\mapsto 0 \implies x^2y \mapsto x\cdot0=0$. 

\begin{minipage}{0.45\textwidth}
\ytableausetup{boxsize=2.5em, aligntableaux=bottom}
\begin{ytableau}
\none[y^3] \\
*(green)\alpha_j\\
\,& \none[xy]&\none[x^2y] \\
\,&\, &\none[x^2] \\
\end{ytableau}
\end{minipage}
\begin{minipage}{0.45\textwidth}
If $f(x^2)=y$ then $h(x^2)=y$, \\
but then $h(xy)$ cannot be defined as we discussed before. 
\end{minipage}
\vspace{1em}
\end{example}

\begin{definition}
\label{obs}
Let $I_1=(\alpha_1, \dots,\alpha_s), I_2=(\alpha'_1, \dots,\alpha'_{s'})$ be a fixed point for $ \Hi^{n, n+1}(\CC^2)$, such that $\Gamma(I_2)=\Gamma(I_1)\cup\{\alpha_j\}$, with prescribed standard monomial generators. We define a subset of indexes $(\alpha, \beta)$ of the basis $ B(I_1)= \left \{f_{\alpha, \beta}\vert (\alpha, \beta) \right \}$ and we denote it $\text{Obs}(I_1,I_2)$.  We will see that these correspond to those $\ff \in B(I_1)$ that we do not want to try to extend to elements $(\ff, h) \in T_{I_1, I_2} \Hi^{n, n+1}(\CC^2)$. The definition of $\text{Obs}(I_1,I_2)$ varies according to the possible cases of Definition \ref{cases12}. 
\begin{equation}
\label{IObs12}
\begin{matrix}
\begin{matrix}
\text{Case 1a)} \\
\text{Obs}(I_1,I_2):=
\left\{
\begin{matrix}
(\alpha_i, \frac{\alpha_j}{x^{q_i}}) \text{ for } i>j,\\
(\alpha_i, \frac{\alpha_j}{y^{p_i}}) \text{ for } i<j-1.
\end{matrix}
\right\}  
\end{matrix}\qquad
\vspace*{0.5em}
&
\begin{matrix}
\text{Case 1b)} \\
\text{Obs}(I_1,I_2):=
\left\{
\begin{matrix}
(\alpha_i, \frac{\alpha_j}{x^{q_i}}) \text{ for } i>j+1,\\
(\alpha_i, \frac{\alpha_j}{y^{p_i}}) \text{ for } i<j.
\end{matrix}
\right\}
\end{matrix}  
\vspace*{0.5em} \\
\begin{matrix}
\text{Case 2)} \\
\text{Obs}(I_1,I_2):=
\left\{
\begin{matrix}
(\alpha_i, \frac{\alpha_j}{x^{q_i}}) \text{ for } i>j,\\
(\alpha_i, \frac{\alpha_j}{y^{p_i}}) \text{ for } i<j.
\end{matrix}
\right\}  
\end{matrix}\qquad
\vspace*{0.5em}
&
\begin{matrix}
\text{Case 3)} \\
\text{Obs}(I_1,I_2):=
\left\{
\begin{matrix}
(\alpha_i, \frac{\alpha_j}{x^{q_i}}) \text{ for } i>j+1,\\
(\alpha_i, \frac{\alpha_j}{y^{p_i}}) \text{ for } i<j-1.
\end{matrix}
\right\}
\end{matrix}
\end{matrix}
\end{equation}
\end{definition}

\begin{example}The pictures below represent some of elements of $\text{Obs}$ that we defined.  For each picture there are two $\ff$ represented: the one represented with stars sends the generator of $I_1$ marked with a star to the element of $\Gamma_1$ marked with a star. The other is represented in a similar way with two bullets. 
\begin{equation*}
\begin{matrix}
\begin{matrix}
\text{Case 1a)}\\

\ytableausetup{boxsize=1.2em, aligntableaux=bottom}
\begin{ytableau} 
\none[] \\
\, \\
\,&\,&\none[\bullet]  \\
\,&\, &\, \\
\,&\, & \bullet &*(green) \alpha_j  \\
\,&\, &\,& \star  \\
\,&\, &\, &\,&\, &\,&\none[\star]
\end{ytableau}\end{matrix} \qquad\qquad \qquad\qquad\qquad \qquad
&
\begin{matrix}
\text{Case 1b)}\\

\vspace*{0.5em}
\ytableausetup{boxsize=1.2em, aligntableaux=bottom}
\begin{ytableau} 
\none[] \\
\, \\
\,&\,&\none[\bullet]  \\
\,&\, &\,  \\
\,&\, &\, \\
\,&\, &\,&\bullet&*(green) \alpha_j  \\
\,&\, &\, &\,&\star &\,&\none[\star]
\end{ytableau}\end{matrix} \\
\begin{matrix}
\text{Case 2)}\\

\ytableausetup{boxsize=1.2em, aligntableaux=bottom}
\begin{ytableau} 
\none[] \\
\, &\none[\bullet]\\
\,&\,  \\
\,&\bullet &*(green) \alpha_j \\
\,&\, &\,&\,  \\
\,&\, &\star&\, &\none[\star] \\
\,&\, &\, &\,&\, &\,
\end{ytableau}\end{matrix}\qquad\qquad\qquad\qquad\qquad \qquad
& \begin{matrix}
\text{Case 3)}\\

\ytableausetup{boxsize=1.2em, aligntableaux=bottom}
\begin{ytableau} 
\none[\bullet]\\
\, \\
\,&\bullet&*(green) \alpha_j  \\
\,&\, &\, \\
\,&\, &\star&\,  \\
\,&\, &\,&\,&\none[\star]  \\
\,&\, &\, &\,&\, &\,
\end{ytableau}\end{matrix}
\end{matrix}
\end{equation*}
\end{example}
\begin{observation}
\label{numberobs}
Observe that in terms of $s'=s_2$, the highest index of a standard generator of $I_2$, the set $\text{Obs}(I_1,I_2) $ has the same number of elements for all cases.
\[
\begin{matrix}
\begin{matrix}
\text{Case 1a)}\\
 \#\,\text{Obs}(I_1,I_2) =  s_1+1-2 = s_2+1-2.
\end{matrix} \qquad
&
\begin{matrix}
\text{Case 1b)}\\
 \#\,\text{Obs}(I_1,I_2) =  s_1+1-2 = s_2+1-2.
\end{matrix} 
\\
\begin{matrix}
\text{Case 2)}\\
 \#\,\text{Obs}(I_1,I_2) =  s_1+1-1 = s_2+1-2.
\end{matrix} 
&
\begin{matrix}
\text{Case 3)}\\
 \#\,\text{Obs}(I_1,I_2) =  s_1+1-3 = s_2+1-2.
\end{matrix} 
\end{matrix}
\]
\end{observation}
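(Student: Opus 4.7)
The plan is to verify the cardinality formula by direct counting in each of the four cases of Definition~\ref{cases12}, and then to match the arithmetic against the relations between $s_1$ and $s_2$ that are dictated by the case distinction. Recall that in Cases 1a) and 1b) we have $s_1 = s_2 = s$, in Case 2) we have $s_2 = s_1 + 1$, and in Case 3) we have $s_2 = s_1 - 1$. This is the first thing I would record, because once we know the total count in terms of $s = s_1$ in each case, the claimed common value $s_2 + 1 - 2$ follows by substitution.

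Next I would enumerate the indices satisfying each condition in the definition of $\text{Obs}(I_1,I_2)$. In Case 1a), the indices $i > j$ range over $\{j+1,\ldots,s\}$ contributing $s-j$ pairs of the form $(\alpha_i, \alpha_j/x^{q_i})$, while the indices $i < j-1$ range over $\{0,\ldots,j-2\}$ contributing $j-1$ pairs of the form $(\alpha_i, \alpha_j/y^{p_i})$. The sum is $s - 1 = s_1 + 1 - 2$. Case 1b) is symmetric: $(s - j - 1) + j = s - 1$. In Case 2), the conditions $i > j$ and $i < j$ contribute $(s-j) + j = s = s_1 + 1 - 1$, which indeed equals $s_2 + 1 - 2$ since $s_2 = s_1 + 1$. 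In Case 3), the tighter conditions $i > j+1$ and $i < j-1$ give $(s - j - 1) + (j - 1) = s - 2 = s_1 + 1 - 3 = s_2 + 1 - 2$, since here $s_2 = s_1 - 1$.

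The only delicate points, which I would address explicitly, are the boundary situations $j = 0$ and $j = s$. When $j = 0$ we have $P_{\alpha_0}$-type pairs absent (no $i < j-1$ exists), and the relevant case is forced to be 2) with $p_0 > 1$ by the constraint in Definition~\ref{cases12}; the counting above still gives $s$. Analogously for $j = s$. In these extreme situations the elementary count $(s-j)+j=s$ remains correct and there is no off-by-one issue.

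Since the verification is purely combinatorial once the case distinction is fixed, there is no real obstacle: the main point is simply to make sure the arithmetic collapses in each of the four cases to the uniform value $s_2 + 1 - 2$, which I view as a sanity check aligned with the interpretation that obstructions to extending elements of $B(I_1)$ are controlled by the generators of $I_2$ rather than those of $I_1$.
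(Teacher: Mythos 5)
Your proposal is correct and is exactly the intended verification: the paper states this Observation without proof, and the direct index count in each of the four cases of Definition~\ref{obs}, combined with the relations $s_2=s_1$ (Cases 1a, 1b), $s_2=s_1+1$ (Case 2), $s_2=s_1-1$ (Case 3) read off from Definition~\ref{cases12}, is all that is needed. The only small inaccuracy is your remark that $j=0$ forces Case 2): since $q_0=\infty$ it only rules out Cases 1a) and 3), and $j=0$ with $p_0=1$ falls under Case 1b); but your general formulas $(s-j-1)+j$ etc.\ already handle this correctly, so the count is unaffected.
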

\begin{lemma}[definition]
\label{suiv}
Let $I_1=(\alpha_1, \dots,\alpha_s), I_2=(\alpha'_1, \dots,\alpha'_{s'})$ be  ideals with prescribed standard monomial generators, such that $(I_1, I_2)$ is a fixed point for $ \Hi^{n, n+1}(\CC^2)$, and such that $\Gamma(I_2)=\Gamma(I_1)\cup\{\alpha_j\}$. Let $f_{\alpha, \beta}$ be one of the elements of the basis $B(I_1)$ of $T_{I_1}\Hi^{n}(\CC^2)$ such that $(\alpha, \beta) \notin \text{Obs}(I_1, I_2)$. Then we define
\begin{equation}
\label{definitionSuiv}
\Su(f_{\alpha, \beta})(\alpha'_k) = \ff(\alpha'_k), \text{ for all } k=0, \dots, s'.
\end{equation}
Then $\Su(\ff)$ is well defined as an element of $ \text{Hom}_R(I_2, \bigslant{R}{I_2})$. Moreover it is such that
 \[\left(\ff, \Su(\ff)\right) \in T_{I_1, I_2}\Hi^{n, n+1}(\CC^2)\,.\] 
\end{lemma}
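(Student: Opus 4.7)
The plan is to split the statement into two independent parts: first, that the prescription $\alpha'_k \mapsto \ff(\alpha'_k)$ well-defines an element of $\Hom_R(I_2, R/I_2)$; and second, that the resulting pair $(\ff, \Su(\ff))$ satisfies $(\phi_{12} - \psi_{12})(\ff, \Su(\ff)) = 0$. The second is immediate from the construction: $\Su(\ff)(\alpha'_k)$ and $\ff(\alpha'_k)$ are defined by the same canonical monomial lift of the output of $\ff$, so after post-composing $\Su(\ff)$ with the natural surjection $R/I_2 \twoheadrightarrow R/I_1$ we recover exactly $\phi_{12}(\ff)(\alpha'_k)$. All the content of the lemma is in the first part.

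For that, I would first note that $\ff$ always outputs either zero or a specific scalar multiple of a monomial in $\Gamma_1 \subset \Gamma_2$, so the formula (\ref{definitionSuiv}) unambiguously assigns an element of $R/I_2$ to each generator $\alpha'_k$ of $I_2$. The only remaining thing to verify is $R$-linearity, which amounts to checking, for each consecutive pair of generators $\alpha'_k, \alpha'_{k+1}$ of $I_2$ tied together by the syzygy $y^{p'_k}\alpha'_k = x^{q'_{k+1}}\alpha'_{k+1}$, the identity
\[
y^{p'_k}\,\Su(\ff)(\alpha'_k) \,=\, x^{q'_{k+1}}\,\Su(\ff)(\alpha'_{k+1}) \pmod{I_2}.
\]
Because $\ff$ is already an $R$-homomorphism into $R/I_1$, this equality holds modulo $I_1$ for free. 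Since the kernel of the surjection $R/I_2 \twoheadrightarrow R/I_1$ is spanned by the single monomial $\alpha_j$, the only possible obstruction is that the difference of the two sides lifts to a nonzero scalar multiple of $\alpha_j$ in $R/I_2$.

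The plan is then to run a case-by-case check through the four configurations of Definition \ref{cases12}. For consecutive pairs $(\alpha'_k, \alpha'_{k+1})$ whose indices are well separated from $j$, both are inherited verbatim from generators of $I_1$ and the syzygy already holds in $R$ exactly as it does in $I_1$. The nontrivial verifications concern the small number of syzygies involving the ``new'' generators $x\alpha_j$ and/or $y\alpha_j$ of $I_2$ appearing in Cases 1a), 1b), 2), or the bridging syzygy between $\alpha_{j-1}$ and $\alpha_{j+1}$ in Case 3). I expect the main obstacle to be the bookkeeping: in each case one must split on whether $\beta \in P_\alpha$ or $\beta \in Q_\alpha$, whether the index $i$ of $\alpha = \alpha_i$ lies below, at, or above $j$, and whether the value $\ff(\alpha_j)$ itself vanishes. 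A direct computation in each subcase will exhibit a residual $\alpha_j$ in the syzygy difference precisely when $(\alpha, \beta)$ belongs to the list (\ref{IObs12}) defining $\text{Obs}(I_1, I_2)$; our hypothesis $(\alpha, \beta) \notin \text{Obs}(I_1, I_2)$ excludes these bad indices exactly and delivers the desired $R$-linearity.
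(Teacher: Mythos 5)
Your proof is correct in outline but takes a genuinely different route from the paper. The paper does not verify the syzygies directly: it observes that the hypothesis $(\alpha,\beta)\notin\text{Obs}(I_1,I_2)$ is designed exactly so that $\beta$ still lies in $P'_{\alpha}\cup Q'_{\alpha}$ (the sets computed relative to $I_2$, possibly after replacing $\alpha=\alpha_j$ by $x\alpha_j$ or $y\alpha_j$, or re-indexing in Case 3 of \ref{cases12}), and then identifies $\Su(\ff)$ with the explicit basis element $f'_{\alpha',\beta'}\in B(I_2)$, whose well-definedness was already established in Lemma \ref{definitionf}; membership in $\ker(\phi_{12}-\psi_{12})$ is then read off generator by generator, as in your second part. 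Your route instead reduces $R$-linearity to the consecutive syzygies $y^{p'_k}\alpha'_k=x^{q'_{k+1}}\alpha'_{k+1}$ (which do generate all relations for a monomial ideal in two variables), uses that the defect vanishes mod $I_1$, and isolates the obstruction as a residual scalar multiple of $\alpha_j$, the unique monomial spanning $\ker(R/I_2\twoheadrightarrow R/I_1)$. This is a clean and arguably more conceptual explanation of \emph{why} the list (\ref{IObs12}) is the right one: at the single critical syzygy (the one at the index $i$ of $\alpha=\alpha_i$) the residual is $y^{p'_i}\beta$ (resp.\ $x^{q'_i}\beta$), which lies in $I_1$ by definition of $P_{\alpha}$ (resp.\ $Q_{\alpha}$) and fails to lie in $I_2$ exactly when it equals $\alpha_j$ — and the shifted distances $p'_i,q'_i$ near $j$ account for the excluded boundary indices in Definition \ref{obs}. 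What your write-up buys is transparency; what it costs is that the decisive case check is only announced, not executed, and that is the one place the hypothesis is actually consumed. Two points deserve care if you carry it out: (i) the evaluation $\ff(\alpha'_k)$ on the new generators $x\alpha_j,y\alpha_j$ is an element of $R/I_1$, and its lift to $R/I_2$ is genuinely ambiguous when the underlying monomial equals $\alpha_j$ (zero mod $I_1$ but not mod $I_2$); the paper resolves this by decreeing $\Su(\ff)$ to be a specific element of $B(I_2)$, and either choice keeps $(\ff,\Su(\ff))$ in the kernel but they are different homomorphisms, so the basis $B(I_1,I_2)$ depends on the convention; (ii) you only need the implication ``$(\alpha,\beta)\notin\text{Obs}\Rightarrow$ no residual,'' not the ``precisely when'' you assert, though the converse is what later makes the dimension count in Observation \ref{numberobs} sharp.
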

\begin{proof}
Suppose $\alpha\neq \alpha_j$, then $\alpha$ is also a generator of $I_2$. Moreover the hypothesis that $(\alpha, \beta)$ is not an element of  $\text{Obs}(I_1, I_2)$ ensures that $\beta \in P'_{\alpha} \cup Q'_{\alpha}$ so that we can define $\ff' \in B(I_2)$ and we have that $\ff'=\Su(\ff)$. Of course 
\[
\phi_{12}(\ff)- \psi_{12}(\Su(\ff)) = 0\
\] 
since it is zero on each generator $\alpha'_k$ of $I_2$. Similarly if $\alpha=\alpha_j$ but $x\alpha$ (or $y\alpha$) is a generators of $I_2$, we have that $\Su(\ff) = f'_{x\alpha, x\beta} \in B(I_2)$ (or $\Su(\ff) = f'_{y\alpha, y\beta} \in B(I_2)$) so that it is well defined and it is, again obviously, in the kernel of $\phi_{12}-\psi_{12}$. If, finally, $\alpha=\alpha_j$ and we are in case 3) of \ref{cases12} then, if $\beta \in Q_{\alpha}$,  $y\frac{\beta}{x^{q_{j+1}}}\in Q_{\alpha'}$ and, if  $\beta \in P_{\alpha}$, $x\frac{\beta}{x^{p_{j-1}}}\in P_{\alpha'}$ and we have that
\begin{align*}
\text{If } \beta \in Q_{\alpha}, &\text{ then } \Su(\ff)= f'_{\alpha'_{j}, y\frac{\beta}{x^{q_{j+1}}}}\, ,\\
\text{If } \beta \in P_{\alpha}, &\text{ then } \Su(\ff)= f'_{\alpha'_{j-1}, x\frac{\beta}{x^{p_{j-1}}}}\, .
\end{align*}
This shows that also in this case $\Su(\ff)$ is well defined. Again it is clear that the vector $(\ff, \Su(\ff))$ is in $ \text{Ker}(\phi_{12}-\psi_{12})$.
\end{proof}
\begin{definition}
\label{definitionh}
For every generator $\alpha'_i$ of $I_2$ define:
\[
h_{\alpha'_i, \alpha_j}\in \text{Hom}_R(I_2, \bigslant{R}{I_2}) \qquad\text{ as } \quad h_{\alpha'_i, \alpha_j}(\alpha'_k) = \begin{cases}0 &\text{if } k\neq i, \\ \alpha_j &\text{if } k=i \, .\end{cases} 
\] 
\end{definition}
\begin{observation}
The $h_{\alpha'_i, \alpha_j}$ as above are well defined. Moreover
\[
(0, h_{\alpha_i, \alpha_j}) \in \text{Ker}(\phi_{12}-\psi_{12})  
\] 
are linearly independent vectors, as it follows recalling that $\alpha_j \cong 0 \text { mod }\, I_1$ but $\alpha_j \ncong 0 \text { mod }\, I_1.$
\end{observation}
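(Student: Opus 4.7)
The plan is to split the claim into three independent pieces: (i) each $h_{\alpha'_i, \alpha_j}$ is a well-defined $R$-module map $I_2 \to R/I_2$; (ii) each pair $(0, h_{\alpha'_i, \alpha_j})$ lies in $\text{Ker}(\phi_{12} - \psi_{12})$; and (iii) the family $\{(0, h_{\alpha'_i, \alpha_j})\}_{i=0}^{s'}$ is linearly independent in $T_{I_1, I_2} \Hi^{n, n+1}(\CC^2)$.

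For (i), I would use the fact that, for a monomial ideal in two variables, the module of syzygies on the standard monomial generators $(\alpha'_0, \dots, \alpha'_{s'})$ is generated by the adjacent relations $y^{p'_i}\alpha'_i - x^{q'_{i+1}}\alpha'_{i+1} = 0$. Thus it suffices to verify that the prescribed assignment sends each such relation to $0$ in $R/I_2$. Away from the subscript $i$ this is immediate; otherwise the image is $\pm r \cdot \alpha_j$ with $r$ a monomial of strictly positive degree. The key geometric observation is that $\alpha_j$ is a standard monomial generator of $I_1$ and, since $\Gamma(I_2) = \Gamma(I_1) \cup \{\alpha_j\}$, we have $I_2 = I_1 \setminus \{\alpha_j\}$ as a set of monomials. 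Hence any strict monomial multiple of $\alpha_j$ already lies in $I_1$ and is distinct from $\alpha_j$, so it lies in $I_2$. This handles well-definedness uniformly across the four cases of Definition~\ref{cases12}.

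For (ii), I would unpack the definition of $\psi_{12}$: it composes $h_{\alpha'_i, \alpha_j}$ with the projection $R/I_2 \twoheadrightarrow R/I_1$. The image of $h_{\alpha'_i, \alpha_j}$ is contained in the $R$-submodule of $R/I_2$ generated by the class of $\alpha_j$, and since $\alpha_j \in I_1$ this submodule maps to zero in $R/I_1$. Combined with $\phi_{12}(0) = 0$, the pair $(0, h_{\alpha'_i, \alpha_j})$ lies in $\text{Ker}(\phi_{12} - \psi_{12})$.

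For (iii), I would test a vanishing combination $\sum_i c_i (0, h_{\alpha'_i, \alpha_j}) = 0$ on each generator $\alpha'_k$, which yields $c_k \alpha_j \equiv 0 \pmod{I_2}$; since $\alpha_j \in \Gamma(I_2)$ its class is nonzero in $R/I_2$, forcing $c_k = 0$. The only mildly delicate point in the whole argument is the syzygy check in (i), and this is the step I would treat most carefully; the remaining content is essentially formal, so I do not anticipate a real obstacle.
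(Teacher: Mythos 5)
Your proposal is correct and follows the same line as the paper, which justifies the whole observation by the single remark that $\alpha_j$ vanishes mod $I_1$ but not mod $I_2$ (the second $I_1$ in the printed statement is a typo for $I_2$); your parts (ii) and (iii) are exactly that remark spelled out. The only content the paper leaves fully implicit is well-definedness, and your syzygy check — reduced via $\Gamma(I_2)=\Gamma(I_1)\cup\{\alpha_j\}$ to the fact that every strict monomial multiple of $\alpha_j$ lies in $I_2$ — is the intended way to fill that in.
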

\begin{definition}
\label{definitionB(I1I2)}
Let $I_1=(\alpha_1, \dots,\alpha_s), I_2=(\alpha'_1, \dots,\alpha'_{s'})$ be a fixed point for $ \Hi^{n, n+1}(\CC^2)$, such that $\Gamma(I_2)=\Gamma(I_1)\cup\{\alpha_j\}$, with prescribed standard monomial generators. Then we define the set: 
\begin{align}
B(I_1,I_2) := &\left\{\left(0, h_{\alpha'_i, \alpha_j}\right)\,\,\left\vert\,\,  i=0,\dots,s' \right.\right\}\,\, \cup\\
 &\cup\,\left\{\left(f_{\alpha, \beta}, \Su(f_{\alpha, \beta})\right)\,\,\left\vert\,\, \ff\in B(I_1), \, (\alpha, \beta)\notin \text{Obs}(I_1, I_2))\right.\right\} \nonumber
 \\
 & \qquad \subset\,\, \text{Hom}_R(I_1, \bigslant{R}{I_1})\oplus\text{Hom}_R(I_2, \bigslant{R}{I_2}). \nonumber
\end{align}
\end{definition}

\begin{observation}
\label{cardinalityB(I1I2)}
Observe that $ \#\, B(I_1,I_2)= 2n+2$. In fact $\# B(I_1) = 2n$, $\# \text{Obs}(I_1, I_2) = s'+1-2$ thanks to Observation \ref{numberobs}, and $\#\left\{(0, h_{\alpha'_i, \alpha_j})\vert i=0,\dots,s' \right\} = s'+1$. 
\end{observation}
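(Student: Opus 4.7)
The plan is to verify the claimed cardinality by directly summing the sizes of the two disjoint subfamilies that make up $B(I_1,I_2)$ in Definition \ref{definitionB(I1I2)}: the family $F_1$ of pairs $(0, h_{\alpha'_i, \alpha_j})$ for $i=0,\dots,s'$, and the family $F_2$ of pairs $(f_{\alpha, \beta}, \Su(f_{\alpha, \beta}))$ for $(\alpha, \beta)$ ranging over the indexing set of $B(I_1)$ minus $\text{Obs}(I_1, I_2)$. Disjointness of $F_1$ and $F_2$ is immediate, since every $f_{\alpha, \beta} \in B(I_1)$ is nonzero (it sends its generator $\alpha$ to the nonzero class $\beta \in R/I_1$), so no element of $F_2$ has zero first coordinate.

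First I would count $F_1$. The assignment $i \mapsto h_{\alpha'_i, \alpha_j}$ is injective: by Definition \ref{definitionh}, $h_{\alpha'_i, \alpha_j}$ takes the value $\alpha_j \not\equiv 0 \bmod I_2$ on $\alpha'_i$ and zero on the remaining generators, so different values of $i$ produce different homomorphisms. Hence $\#F_1 = s'+1$.

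Next I would count $F_2$. By Lemma \ref{cardinalityB(I)}, the indexing pairs $(\alpha, \beta)$ of $B(I_1)$ number exactly $2n$, and the map $(\alpha, \beta)\mapsto (f_{\alpha, \beta}, \Su(f_{\alpha, \beta}))$ is injective because $\ff$ already determines $(\alpha,\beta)$. By Observation \ref{numberobs}, $\#\text{Obs}(I_1, I_2) = s'+1-2 = s'-1$ uniformly across all four cases of Definition \ref{cases12}. The only mildly subtle point to verify is that every pair listed in Definition \ref{obs} really indexes an element of $B(I_1)$, i.e.\ that $\alpha_j/x^{q_i}$ lies in $Q_{\alpha_i}$ and $\alpha_j/y^{p_i}$ lies in $P_{\alpha_i}$ for the prescribed ranges of $i$; this is a direct read-off from the Young diagram in each of the four cases, using the constraints on $p_j, q_j$ that define the cases. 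Granting this, $\#F_2 = 2n-(s'-1) = 2n-s'+1$.

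Adding, $\#B(I_1, I_2) = \#F_1 + \#F_2 = (s'+1)+(2n-s'+1) = 2n+2$, as claimed. There is no real obstacle here: the nontrivial work has already been absorbed into Lemma \ref{cardinalityB(I)} and the case-by-case tally of Observation \ref{numberobs}, and the present statement reduces to the arithmetic above once disjointness and injectivity of the two families are noted.
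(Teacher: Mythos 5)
Your proof is correct and follows essentially the same route as the paper: count the $s'+1$ vectors of type $(0,h_{\alpha'_i,\alpha_j})$, count the $2n-(s'-1)$ vectors of type $(f_{\alpha,\beta},\Su(f_{\alpha,\beta}))$ using Lemma \ref{cardinalityB(I)} and Observation \ref{numberobs}, and add. Your added remarks on disjointness of the two families and on $\text{Obs}(I_1,I_2)$ genuinely being a subset of the index set of $B(I_1)$ are small but worthwhile points of rigor that the paper leaves implicit.
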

\begin{lemma}
\label{B(I1I2)isbasis}
With the notations as in Definition \ref{definitionB(I1I2)}, we have that $B(I_1,I_2)$ is a basis for $T_{I_1, I_2} \Hi^{n, n+1}(\CC^2)$. 
\end{lemma}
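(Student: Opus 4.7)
The plan is to verify both linear independence and spanning of $B(I_1,I_2)$ inside $\text{Ker}(\phi_{12}-\psi_{12}) \cong T_{(I_1,I_2)}\Hi^{n,n+1}(\CC^2)$.

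For linear independence, suppose one has a vanishing combination
\[\sum_{(\alpha,\beta)\notin\text{Obs}} c_{\alpha,\beta}\,(\ff,\Su(\ff)) + \sum_i \mu_i\,(0,h_{\alpha'_i,\alpha_j}) = 0.\]
Projecting to the first coordinate and applying Lemma \ref{B(I)basis} forces all $c_{\alpha,\beta}=0$. The residual identity $\sum_i \mu_i\,h_{\alpha'_i,\alpha_j}=0$, evaluated on each generator $\alpha'_k$ of $I_2$, yields $\mu_k\,\alpha_j \equiv 0 \pmod{I_2}$, and since $\alpha_j\notin I_2$ every $\mu_k$ must vanish.

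For spanning, take any $(f,g)\in\text{Ker}(\phi_{12}-\psi_{12})$; by torus-equivariance of $\phi_{12}-\psi_{12}$ we may assume $(f,g)$ is of pure weight. Expand $f=\sum_{(\alpha,\beta)} c_{\alpha,\beta}\,\ff$ uniquely through $B(I_1)$. The key claim, whose proof is the crux, is that $c_{\alpha,\beta}=0$ for every $(\alpha,\beta)\in\text{Obs}(I_1,I_2)$. Granting this, the difference
\[(0,g') \;:=\; (f,g) - \sum_{(\alpha,\beta)\notin\text{Obs}} c_{\alpha,\beta}\,(\ff,\Su(\ff))\]
lies in the kernel with vanishing first coordinate, so $g'\in\text{Hom}_R(I_2,R/I_2)$ satisfies $g'(\alpha'_k)\in I_1/I_2 = \CC\cdot\alpha_j$ for every generator $\alpha'_k$ of $I_2$. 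Writing $g'(\alpha'_k)=\lambda_k\,\alpha_j$ in $R/I_2$ immediately expresses $g'=\sum_k \lambda_k\,h_{\alpha'_k,\alpha_j}$, exhibiting $(f,g)$ in the span of $B(I_1,I_2)$.

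The main obstacle is thus verifying the vanishing of the $\text{Obs}$ coefficients. The strategy is a case-by-case inspection matched to the four cases of Definition \ref{cases12}. The set $\text{Obs}(I_1,I_2)$ is designed to pick out precisely those $(\alpha,\beta)$ for which $\phi_{12}(\ff)$ fails to lift to a well-defined element of $\text{Hom}_R(I_2,R/I_2)$: on one of the new generators of $I_2$ arising from $\alpha_j$ (e.g. $x\alpha_j$ or $y\alpha_j$ in Case 2), the $R$-linearity requirement on a tentative $g$ matching $\phi_{12}(\ff)$ modulo $I_1$ conflicts with the syzygy connecting that generator to another generator of $I_2$. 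One extracts the required vanishing inductively by taking the extremal $(\alpha,\beta)\in\text{Obs}$ in the pure-weight expansion of $f$, exactly as in the proof of Lemma \ref{B(I)basis}. As a by-product, Observation \ref{cardinalityB(I1I2)} then gives $\dim T_{(I_1,I_2)}\Hi^{n,n+1}(\CC^2) = 2n+2 = \dim\Hi^{n,n+1}(\CC^2)$, which simultaneously yields smoothness of the flag Hilbert scheme at this fixed point.
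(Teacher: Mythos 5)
Your proposal follows essentially the same route as the paper's proof: the linear-independence argument is identical (project to the first factor, then evaluate the residual combination of the $h_{\alpha'_i,\alpha_j}$ on generators), and your spanning step is the paper's induction on the number of generators of $I_1$ not killed by $f$, repackaged as the claim that the $\text{Obs}$-coefficients in the unique $B(I_1)$-expansion of $f$ vanish. One caveat on the crux, which you correctly identify but only sketch: as written, ``taking the extremal $(\alpha,\beta)\in\text{Obs}$'' does not quite close the argument. The syzygy contradiction (from $y^{p_{\bar{\imath}}}\alpha_{\bar{\imath}}=x^{q_{\bar{\imath}+1}}\alpha_{\bar{\imath}+1}$ forcing $\alpha_j\equiv 0 \bmod I_2$) requires knowing that $f$ — and hence $g$ modulo $I_1$ — kills the adjacent generator $\alpha_{\bar{\imath}+1}$ (resp.\ $\alpha_{\bar{\imath}-1}$ in the $Q$-case); this is guaranteed only when $\bar{\imath}$ is extremal among \emph{all} generators with $f(\alpha_{\bar{\imath}})\neq 0$, not merely among the $\text{Obs}$ indices. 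The paper therefore peels off the globally extremal nonzero generator, proves \emph{that} index cannot lie in $\text{Obs}(I_1,I_2)$, subtracts the corresponding $(\ff,\Su(\ff))$, and recurses; your vanishing claim for the $\text{Obs}$-coefficients then follows a posteriori from uniqueness of the $B(I_1)$-expansion. With that adjustment the argument is complete, and your closing remark recovering smoothness of $\Hi^{n,n+1}(\CC^2)$ from $\dim T_{(I_1,I_2)}=2n+2$ matches the paper's subsequent theorem.
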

\begin{proof}
First of all we observe that by definition all elements of $B(I_1, I_2)$ are in the $\text{Ker}(\phi_{12}-\psi_{12})$, and are of pure weights. Abbreviate $\text{Obs} (I_1, I_2)$ to $\text{Obs}$. \\

\hspace{3em}Let us now prove that they are linearly independent. Suppose
\[
\sum_{i=0}^{s'} c_i\,(0,h_{\alpha'_i, \alpha_j}) + \sum_{(\alpha,\beta)\notin \text{Obs}}c_{\alpha, \beta}\,(f_{\alpha, \beta}, \Su(f_{\alpha, \beta})) = (0,0).
\]
Then in particular $\sum_{(\alpha,\beta)\notin \text{Obs}}c_{\alpha, \beta}\,f_{\alpha, \beta}=0$ as an element of  $\text{Hom}_R(I_1, \bigslant{R}{I_1})$ where we know that the vectors $f_{\alpha, \beta}$ are linearly independent, so that $c_{\alpha, \beta}= 0$ for each $(\alpha, \beta)$. Then, also $\sum_{i=0}^{s'}\, c_i\,h_{\alpha'_i, \alpha_j}\in \text{Hom}_R(I_2, \bigslant{R}{I_2})$, which again implies that $c_i=0$ for all $i=0, \dots, s'$. 

\hspace{3em}Let us then prove that $B(I_1,I_2)$ generates $T_{I_1, I_2} \Hi^{n, n+1}(\CC^2)$. Since we know that $T_{I_1, I_2} \Hi^{n, n+1}(\CC^2)$ is generated by pure weight elements we only need to prove that every $g=(f,h) \in \text{Ker}(\phi_{12}-\psi_{12})$ of pure weight is in the span of $B(I_1,I_2)$. Let $n(f)$ be the number of generators of $I_1$ that the first coordinate of $g$ does not send to $0$ i.e $n(f)=  \#\,\{\alpha_i\vert f(\alpha_i)\neq 0\,\}$. Then we use induction on $n(f)$. \\

\hspace{3em}If $n(f)=0$, then $f=0$, and then $g=(0,h)$ is such that $h(\alpha'_i)= 0 \,\,\,\text{mod }\, I_1$ so that $h(\alpha'_i)\in  \langle \alpha_j\rangle $. Then, clearly $h \in \langle h_{\alpha'_i, \alpha_j} \vert i=0, \dots s' \rangle $. Suppose now that all $\tilde{g}=(\tilde{f}, \tilde{h})$ with $n(\tilde{f})<t$ are in the span of $B(I_1,I_2)$, and suppose $g=(f,h)$ with $n(f)=t$. Since $f$ is of pure weight we know that either:
\begin{itemize}
\item[1)] we have $\bar{\imath}:= \max \{i\vert f(\alpha_i) \neq 0\}$ is strictly smaller than $s$ and $f(\alpha_{\bar{\imath}})\in \langle \beta \rangle$ with $\beta \in P_{\alpha_{\bar{\imath}}}$, or 
\item[2)] we have $\bar{\imath}:= \min \{i\vert f(\alpha_i) \neq 0\}$ is strictly bigger than $0$ and $f(\alpha_{\bar{\imath}})\in \langle \beta \rangle$ with $\beta \in Q_{\alpha_{\bar{\imath}}}$. 
\end{itemize}
Suppose we are in the first case. Renormalize $g=(f,h)$ so that $f(\alpha_{\bar{\imath}})=\beta$. First of all notice that $(\alpha_{\bar{\imath}}, \beta)\notin \text{Obs}$, otherwise we would be in the situation where
\[
\bar{\imath}\leq j\qquad \text{ and }\quad \beta =\frac{\alpha_j}{y^{p_{\bar{\imath}}}}
\]
with $f(\alpha_i)=0$ for all $i>\bar{\imath}$. But all $\theta \in \text{Hom}_R(I_2, \bigslant{R}{I_2})$ such that $\theta(\alpha_{\bar{\imath}})= \frac{\alpha_j}{y^{p_{\bar{\imath}}}}$ have $\theta(\alpha_{\bar{\imath}+1})\neq 0 $, if not 
\[
y^{p_{\bar{\imath}}}\alpha_{\bar{\imath}} =x^{q_{\bar{\imath}+1}} \alpha_{\bar{\imath}+1} \quad\implies \qquad\alpha_j=y^{p_{\bar{\imath}}}\frac{\alpha_j}{y^{p_{\bar{\imath}}}}=\theta(y^{p_{\bar{\imath}}}\alpha_{\bar{\imath}})=\theta(x^{q_{\bar{\imath}+1}}\alpha_{\bar{\imath}+1})=0.
\]
This equality mod $I_2$ is absurd since $\alpha_j \notin I_2$. Then, always thanks to the definition of $\bar{\imath}$, we have that $y^{p_{\bar{\imath}}}\beta \in I_1$, and thanks to the fact that $(\alpha_{\bar{\imath}}, \beta)\notin \text{Obs}$, we have that 
\[
(f-f_{\alpha_{\bar{\imath}}, \beta}, h-\Su(f_{\alpha_{\bar{\imath}}, \beta})) \in \text{Ker}(\phi_{12}-\psi_{12})
\]
is well defined and has first coordinate that sends strictly more generators to $0$ mod $I_1$, i.e. $n(f-f_{\alpha_{\bar{\imath}}})<t$. By induction we have that $g \in \text{span}(B(I_1,I_2))$ as desired. \\

\hspace{3em} The case 2) is completely analogous. 
\end{proof}

\begin{theorem}{ \cite[Theorem~3.2.2, 3.3.3]{cheah1998cellular}}
The space $\Hi^{n, n+1}(\CC^2)$ is smooth. The space $\Hi^{n, n+1}(0)$ has an affine cell decomposition given by the attracting sets at torus fixed points. Moreover if the $\TT^1$ action on $R$ has weights $0<w_1< w_2$ and $1\ll \frac{w_2}{w_1}$ then the affine attracting cell in $\Hi^{n, n+1}(0)$ of the point $(I_1, I_2)$ has dimension $n+1-d$ where $d=\min\{i\vert x^i \in I_2\}$ i.e. $d=\ell(\Gamma(I_2))$. In particular the Betti numbers of $\Hi^{n, n+1}(0)$ satisfy: 
\[
b_i =  \#\,\left\{(\Gamma_1, \Gamma_2)  \left\vert  \Gamma_1\vdash n, \,\, \Gamma_2\vdash n+1,\,\, \Gamma_1\subset \Gamma_2 \text{ and }\,\ell(\Gamma_2) = n+1-i\right. \right\}\, .
\]
The Poincar\'{e} polynomial of $\Hi^{n, n+1}(0)$ is 
\[
P_q\,\left(\Hi^{n, n+1}(0)\right) = \sum_{(\Gamma_1, \Gamma_2)\, \vdash\, [n, n+1]} \,\, q^{n+1-\ell(\Gamma_2)}.
\]
The generating function for these polynomials is 
\begin{equation}
\label{cheahgeneratingformula}
\sum_{n=0}^{+\infty } \, P_q\,\left(\Hi^{n, n+1}(0)\right) z^n = \left(\frac{1}{1- zq}\right)\,\,\prod_{k\geq 1} \, \frac{1}{1- z^kq^{k-1}}.
\end{equation}
\end{theorem}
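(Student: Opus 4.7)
The plan is to leverage the basis $B(I_1, I_2)$ built in Lemma \ref{B(I1I2)isbasis} to get smoothness from a dimension count at the fixed points, then apply Bialynicki-Birula and Fulton as in Theorem \ref{ES}, and finally reduce the generating-function formula to Theorem \ref{generatingGoettsche} via a direct combinatorial split.

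First I would establish smoothness of $\Hi^{n, n+1}(\CC^2)$ at every torus fixed flag $(I_1, I_2)$: by Lemma \ref{B(I1I2)isbasis} the set $B(I_1, I_2)$ is a basis of the Zariski tangent space, and Observation \ref{cardinalityB(I1I2)} gives $\#\,B(I_1, I_2) = 2n+2$, which matches the dimension of the flag Hilbert scheme. Smoothness then propagates to every point by exactly the argument of Proposition \ref{hilbsmooth}: under the $\TT_\infty$ action (both weights strictly positive) every ideal flag flows as $t \to 0$ to a monomial fixed flag in $\Hi^{n, n+1}(\CC^2)$, so upper semi-continuity of tangent dimension gives $\dim T_p \leq \dim T_{p_0} = 2n+2$, while $\dim T_p \geq \dim \Hi^{n,n+1}(\CC^2) = 2n+2$ at every point; hence equality everywhere.

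Second, to obtain the affine paving, I embed $\Hi^{n, n+1}(\CC^2)$ in $\Hi^{n, n+1}(\PP^2)$, which is smooth (by the local isomorphism with $\Hi^{n, n+1}(\CC^2)$) and projective. Choosing the same one-parameter subgroup of the diagonal torus on $\PP^2$ used in Theorem \ref{ES}, Theorem \ref{Bialynicki-Birula} produces an affine cell decomposition of $\Hi^{n, n+1}(\PP^2)$, and because every point of $\PP^2$ flows away from $P_0 = [1:0:0]$, the subvariety $\Hi^{n, n+1}(0) = \Hi^{n, n+1}(P_0)$ is a union of cells; Proposition \ref{Fulton} then produces the graded homology basis. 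The cell attached to a fixed point $(I_1, I_2)$ has dimension equal to the number of positive-weight vectors in $B(I_1, I_2)$, and a case-by-case inspection along the four cases of Definition \ref{cases12} — counting how many vectors $f_{\alpha,\beta}$ survive the removal of $\text{Obs}(I_1, I_2)$ and how many of the new vectors $(0, h_{\alpha'_i, \alpha_j})$ have positive weight, exactly mirroring the computation that gave $n-\ell(\Gamma_1)$ in Theorem \ref{ES} — shows this number equals $n+1-\ell(\Gamma_2)$. This yields the claimed Betti numbers and Poincar\'{e} polynomial.

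Finally, for the generating function I would partition the sum
\[
P_q\bigl(\Hi^{n, n+1}(0)\bigr) \;=\; \sum_{(\Gamma_1, \Gamma_2) \,\vdash\, [n, n+1]} q^{n+1 - \ell(\Gamma_2)}
\]
according to whether the extra box opens a new column of $\Gamma_2$ (giving $\ell(\Gamma_2) = \ell(\Gamma_1) + 1$, and contributing exactly $P_q(\Hi^n(0))$ since each $\Gamma_1 \vdash n$ admits a unique such extension) or extends an existing column (giving $\ell(\Gamma_2) = \ell(\Gamma_1)$ and contributing $q \cdot \sum_{\Gamma_1 \vdash n} r(\Gamma_1)\,q^{n-\ell(\Gamma_1)}$, where $r(\Gamma_1)$ is the number of removable corners of $\Gamma_1$, i.e. the number of distinct column heights). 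Encoding a partition by its distinct column heights $h_1 > \dots > h_k \geq 1$ with multiplicities $a_i \geq 1$, the factor $r = k$ becomes $\tfrac{\partial}{\partial x}\big|_{x=1}$ applied to $\prod_{h \geq 1}\bigl(1 + x \cdot z^h q^{h-1}/(1-z^h q^{h-1})\bigr)$; using Theorem \ref{generatingGoettsche} for the un-marked product, this evaluates to
\[
\sum_{n \geq 0} z^n \sum_{\Gamma \vdash n} r(\Gamma)\, q^{n - \ell(\Gamma)} \;=\; \frac{z}{1-zq} \prod_{k \geq 1} \frac{1}{1 - z^k q^{k-1}}.
\]
Combining the two pieces gives $\bigl(1 + \tfrac{qz}{1-qz}\bigr) \prod_k (1-z^k q^{k-1})^{-1} = \tfrac{1}{1-zq}\prod_k(1-z^k q^{k-1})^{-1}$, which is exactly formula (\ref{cheahgeneratingformula}). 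The main obstacle I anticipate is the bookkeeping in step two: carefully matching the weight count in $B(I_1, I_2)$ to $n+1 - \ell(\Gamma_2)$ across all four cases of Definition \ref{cases12}, in particular keeping track of how $\#\,\text{Obs}(I_1, I_2)$ and the weights of the new vectors $(0, h_{\alpha'_i, \alpha_j})$ balance the change $\ell(\Gamma_1) \rightsquigarrow \ell(\Gamma_2)$.
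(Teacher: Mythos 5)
Your proposal is correct. For smoothness and for the affine paving of $\Hi^{n,n+1}(0)$ you follow exactly the paper's route: the count $\#\,B(I_1,I_2)=2n+2$ together with the basis lemma gives smoothness at fixed points, attraction to fixed points propagates it everywhere, and the embedding into the smooth projective $\Hi^{n,n+1}(\PP^2)$ plus Bialynicki--Birula and Fulton gives the cells and the homology basis; the weight count $n+1-\ell(\Gamma_2)$ is the same bookkeeping the paper carries out (in the three-step setting) in Chapter 4, where the net effect of deleting $\mathrm{Obs}(I_1,I_2)$ and adjoining the vectors $(0,h_{\alpha'_i,\alpha_j})$ is $+1$ exactly when the added box does not open a new column. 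Where you genuinely diverge is the generating function: the paper explicitly declines to prove it and cites Cheah's one-page computation, whereas you supply a complete self-contained derivation. Your split of the pairs $(\Gamma_1,\Gamma_2)$ according to whether $\ell(\Gamma_2)=\ell(\Gamma_1)+1$ (unique extension per $\Gamma_1$, contributing $P_q(\Hi^n(0))$) or $\ell(\Gamma_2)=\ell(\Gamma_1)$ (counted by the number $r(\Gamma_1)$ of distinct column heights) is correct, and the evaluation
\[
\frac{\partial}{\partial x}\Big\vert_{x=1}\prod_{h\geq 1}\Bigl(1+x\,\tfrac{z^hq^{h-1}}{1-z^hq^{h-1}}\Bigr)
=\Bigl(\sum_{h\geq 1}z^hq^{h-1}\Bigr)\prod_{h\geq 1}\frac{1}{1-z^hq^{h-1}}
=\frac{z}{1-zq}\prod_{h\geq 1}\frac{1}{1-z^hq^{h-1}}
\]
checks out, giving $\bigl(1+\tfrac{qz}{1-qz}\bigr)=\tfrac{1}{1-qz}$ as the prefactor. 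This buys you independence from the external reference at the cost of a short combinatorial computation; it is essentially Cheah's argument reorganized, and it is compatible in spirit with the marked-diagram manipulations the paper later uses for the three-step case.
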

\begin{proof}
The proof of smoothness follows immediately from Observation \ref{cardinalityB(I1I2)} and Lemma \ref{B(I1I2)isbasis} that tell us that the dimension of the tangent space at each torus fixed point is the same as the dimension of the variety. \\

\hspace{3em} The proof on the Betti numbers goes exactly as the proof of Theorem \ref{ES}. First of all $\Hi^{n,n+1}(\PP^2)$ is smooth since $\Hi^{n,n+1}(\CC^2)$ is. Then Theorem \ref{Bialynicki-Birula} gives us a cell decomposition with the dimension of the cells specified by the positive part of the tangent spaces at fixed points. Call $P_0\in \PP^2$ the fixed points with zero dimensional attracting cells for the $\TT$ action on $\PP^2$; since the limiting process preserves the support for subschemes with support concentrated in $P_0$ we have that $\Hi^{n, n+1}(0)= \Hi^{n,n+1}(P_0)$ is union of cells of $\Hi^{n,n+1}(\PP^2)$. \\
 
 \hspace{3em} We do not include the proof of the statement on the generating function. Cheah \cite[pp~69-70]{cheah1998cellular} proves it directly in one page. In \cite[Chapter~5]{nakajima2008perverse} the authors give a slightly more indirect and more geometrical proof whose ingredients are more similar to the discussion we will give in the last section of Chapter 4.
\end{proof}

\subsection{The tangent space to the Hilbert-Samuel's strata, case $\mathfrak{n}=(n, n+1)$}

\hspace{3em}For $T_1, T_2$ two admissible sequences of nonnegative integers as in \ref{admissible}, we want to study the tangent space of $M_{T_1, T_2}$ ( resp. of $G_{T_1, T_2}$) at a flag of monomial ideals $(I_1, I_2)$ with $T(I_i)= T_i$.

\hspace{3em}Thanks to Observation \ref{goettschetangenttoM} we can reduce this to the study of the weights of the elements of the basis $B(I_1, I_2)$. Moreover to understand the cellular decomposition of $M_{T_1, T_2}$ induced by the action of the torus $\TT_{1^+}$ we are interested in studying the positive parts of these tangent spaces.\\

\hspace{3em}We divide the study of elements of $B(I_1, I_2)$ into two observations, according to their type. 
\begin{observation}
\label{weight(0,h)}
To study the weight of the first kind of elements
\[
(0,\, h_{\alpha'_i,\alpha_j}), \quad \text{ with } i=0, \dots, s'
\] 
we need only to compare the degree of a generator of $I_2$ with that of $\alpha_j$ the only  monomial in $I_1$ but not in $I_2$. More precisely, we have that the $(0,h_{\alpha'_i, \alpha_j}) \in B(I_1, I_2)$ is tangent to $M_{T(I_1), T(I_2)}$ if and only if $\deg \alpha'_i\leq\deg \alpha_j$; tangent to $G_{T(I_1), T(I_2)}$ if and only if $\deg \alpha'_i= \deg \alpha_j$; tangent to  $M_{T(I_1), T(I_2)}$ and with positive weight if and only if $\deg \alpha'_i\leq\deg \alpha_j$ and $\deg_y \alpha'_i<\deg_y \alpha_j$; tangent to  $G_{T(I_1), T(I_2)}$ and with positive weight if and only if $\deg \alpha'_i=\deg \alpha_j$ and $\deg_y \alpha'_i<\deg_y \alpha_j$. 

\begin{example}
$\,$

\begin{minipage}{0.6\textwidth}\begin{small}
\ytableausetup{boxsize=2.1em, aligntableaux=bottom}
\,\,
\begin{ytableau} 
\none[m,g]\\
\,&\none[m,g] \\
\,&\,&*(green) \alpha_j  \\
\,&\, &\,&\none[\quad m^+,g^+] \\
\,&\, &\,&\,  \\
\,&\, &\,&\,&\none[m^+]  \\
\,&\, &\, &\,&\, &\,&\,&\,
\end{ytableau}\end{small}
\end{minipage}
\begin{minipage}{0.35\textwidth}
With $m$ (resp. $g$) we indicate a generator 
of $I_2$ that produces a tangent vector 
to \[ M_{T(I_1), T(I_2)} (\text{ resp. } G_{T(I_1), T(I_2)})\] 
The plus is added if the vector produced has positive weight. 
\end{minipage}
\vspace{1em}
\end{example}
\end{observation}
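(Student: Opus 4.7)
The observation is a direct reading of Observation~\ref{goettschetangenttoM} through the specific shape of the second basis vector $(0,h_{\alpha'_i,\alpha_j})$; I plan to proceed in three short steps.

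First, I would identify its weight. The first coordinate is zero, and the second coordinate is the $R$-module map whose only nonzero image on a standard generator of $I_2$ is $h_{\alpha'_i,\alpha_j}(\alpha'_i)=\alpha_j$. The torus therefore acts on this vector with pure weight equal to that of the monomial quotient $\alpha_j/\alpha'_i$, namely $(\deg_x\alpha_j-\deg_x\alpha'_i)w_1+(\deg_y\alpha_j-\deg_y\alpha'_i)w_2$. So the only data that matter are the two scalar comparisons $\deg\alpha'_i$ vs.\ $\deg\alpha_j$ and $\deg_y\alpha'_i$ vs.\ $\deg_y\alpha_j$.

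Second, I would translate the tangency statements. Since $M_{T(I_1),T(I_2)}=\Hi^{n,n+1}(0)\cap(M_{T(I_1)}\times M_{T(I_2)})$, a vector $(\xi_1,\xi_2)\in T_{(I_1,I_2)}\Hi^{n,n+1}(\CC^2)$ lies in $T_{(I_1,I_2)}M_{T(I_1),T(I_2)}$ (resp.\ $T_{(I_1,I_2)}G_{T(I_1),T(I_2)}$) iff each $\xi_k$ lies in $T_{I_k}M_{T(I_k)}$ (resp.\ $T_{I_k}G_{T(I_k)}$). The zero first component is automatic; for the second component, Observation~\ref{goettschetangenttoM} says precisely that tangency to $M_{T(I_2)}$ corresponds to preserving or raising the total degree, giving $\deg\alpha'_i\leq\deg\alpha_j$, and tangency to $G_{T(I_2)}$ corresponds to preserving total degree, giving $\deg\alpha'_i=\deg\alpha_j$.

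Third, the positive-weight statements follow by applying the $\TT_{1^+}$ positivity part of the same observation to the map $h_{\alpha'_i,\alpha_j}$: a pure-weight map has positive weight exactly when it raises the total degree, or preserves it while strictly raising the $y$-degree. Intersecting this criterion with the tangency conditions from step two yields the four stated equivalences. The only technical point worth checking is that $(0,h_{\alpha'_i,\alpha_j})$ automatically lies in $\text{Ker}(\phi_{12}-\psi_{12})$, so is a genuine tangent vector of the flag Hilbert scheme; this is immediate since $\alpha_j\in I_1$, so $\psi_{12}(h_{\alpha'_i,\alpha_j})$ vanishes modulo $I_1$ on $\alpha'_i$ and is zero on all other generators. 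This was already noted right after Definition~\ref{definitionh}, so no further obstacle remains and the observation becomes pure bookkeeping on top of Observation~\ref{goettschetangenttoM}.
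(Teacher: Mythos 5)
Your three steps are exactly the intended justification: the paper gives no separate proof for this observation, and its content is precisely (i) the weight of $(0,h_{\alpha'_i,\alpha_j})$ is that of the monomial $\alpha_j/\alpha'_i$, (ii) tangency to the $M$- resp. $G$-stratum is the componentwise degree criterion of Observation \ref{goettschetangenttoM}, and (iii) positivity for $\TT_{1^+}$ is the ``raises the total degree, or preserves it and strictly raises the $y$-degree'' test. Your steps (i) and (ii) are fine, as is the remark that $(0,h_{\alpha'_i,\alpha_j})\in\mathrm{Ker}(\phi_{12}-\psi_{12})$.

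The one place you are too quick is the final sentence of step three. Intersecting the correct positivity test with the tangency condition $\deg\alpha'_i\le\deg\alpha_j$ gives ``$\deg\alpha'_i<\deg\alpha_j$, or ($\deg\alpha'_i=\deg\alpha_j$ and $\deg_y\alpha'_i<\deg_y\alpha_j$)'', which is \emph{not} the displayed condition ``$\deg\alpha'_i\le\deg\alpha_j$ and $\deg_y\alpha'_i<\deg_y\alpha_j$''. The two differ exactly when a generator of $I_2$ sits on a strictly lower antidiagonal than $\alpha_j$ but on a weakly higher row, and this does occur at genuine fixed points: take $I_1=(y,x^2)\supset I_2=(y,x^3)$, so $\alpha_j=x^2$, and the generator $\alpha'_i=y$. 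Then $\deg y=1<2=\deg x^2$ while $\deg_y y=1>0=\deg_y x^2$, and the weight $2w_1-w_2$ of $(0,h_{y,x^2})$ is positive for $\TT_{1^+}$; so the vector is tangent to $M_{T(I_1),T(I_2)}$ with positive weight, your derived criterion says so, but the literal condition in the observation says otherwise. The counts actually used downstream ($\bullet M^+$ in Definition \ref{starsanddots} and Lemma \ref{dimensiontangent12}, which include every generator on a strictly lower antidiagonal) agree with your derivation, so yours is the right criterion; you should simply not assert that it coincides with the displayed one, but note instead that the $\deg_y$-comparison is only the correct tie-breaker for generators lying on the same antidiagonal as $\alpha_j$ (which is also all the accompanying picture exhibits).
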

\begin{observation}
\label{weight(f,f)}
We want now to analyze the second type of elements of the basis $B(I_1, I_2) $:
\[
\left\{\left(f_{\alpha, \beta}, \Su(f_{\alpha, \beta})\right) \,\,\left\vert\,\, \ff\in B(I_1), \, (\alpha, \beta)\notin \text{Obs}(I_1, I_2))\right.\right\} .
\]
From the definition of $\Su(\ff)$ we have that $(\ff,\Su(\ff))$ has the same weight as $\ff$. So  $(\ff,\Su(\ff)) \in B(I_1, I_2)$ is tangent to $M_{T(I_1), T(I_2)}$ if and only if $\ff$ is tangent to $M_{T(I_1)}$ i.e. if and only if $\deg \alpha \leq \deg \beta$. Similarly for the positive part, and for the tangent to $G_{T(I_1), T(I_2)}$ and its positive part. In fact it is enough to understand when $\ff$ with $(\alpha, \beta)\in \text{Obs}(I_1,I_2)$ \emph{is} raising or preserving degree, and so on, and then understand the dimension of the appropriate vector spaces by \emph{difference}: if $(\alpha, \beta)\in \text{Obs}(I_1,I_2)$, $\ff$ does \emph{not} extend to an element of $B(I_1, I_2) $. 

Looking at definition \ref{obs} we have that $f=\ff$ with $(\alpha, \beta)\in \text{Obs}(I_1,I_2)$ is such that either 
\begin{itemize} 
\item[] case1) $\qquad f(\alpha_i) = \frac{\alpha_j}{y^{p_i}}  \text{ with } i<j\, \text{ or } $
\item[] case2)   $\qquad f(\alpha_i) = \frac{\alpha_j}{x^{q_i}}  \text{ with } i>j\,.$
\end{itemize}
In case1) we have the weight of $ \ff $ is equal to $\deg \frac{\alpha_j}{y^{p_i}} -\deg \alpha_i = \deg {\alpha_j} -y^{p_i}\deg \alpha_i$. The same formula holds for the $y$-degree. In case 2), similarly, we have: $\text{wt}(\ff) = \deg \frac{\alpha_j}{x^{q_i}} -\deg \alpha_i = \deg {\alpha_j} -x^{q_i}\deg \alpha_i$. 

Notice also that for each $\alpha_i$ generator of $I_1$ there is at most one $\beta$ such that $(\alpha_i, \beta)\in \text{Obs}(I_1,I_2)$. Then to know for which $\alpha_i, i=0,\dots,s$ there is a $\beta$ such that $\ff \in T_{I_1} \Hi^{n}(\CC^2)$ does not produce an element of the basis $B(I_1, I_2)$ of $T_{I_1, I_2} \Hi^{n, n+1}(\CC^2)$ but that is, say, raising degree (i.e. $\ff$ is tangent to $M_{T(I_1)}$),  the relevant check on degrees is between $\alpha_j$ and $y^{p_i}\alpha_i$ if $i\leq j$ and between $\alpha_j$ and $x^{q_i}\alpha_i$ if $i\geq j$. Notice that we do not need to distinguish cases as in the definition of $\text{Obs}(I_1, I_2)$ \ref{obs}, since for the indexes $i=j-1,j,j+1$, where there is actually a difference in the definition of $\text{Obs}(I_1, I_2)$, we have $\deg \alpha_j<\deg y^{p_i}\alpha_i$ or $\deg \alpha_j<\deg x^{q_i}\alpha_i$ so that we would not consider them anyway, since the corresponding $\ff$ is of negative weight. We can then summarize and say
\begin{equation}
f_{\alpha_i, \beta} \in T_{I_1} M_{T(I_1)} \text{ but } (\alpha_i, \beta)\in \text{Obs}(I_1,I_2) \iff 
\begin{cases}
\deg y^{p_i}\alpha_i\leq  \deg \alpha_j & \text{ if } i\leq j, \\
 \deg x^{q_i}\alpha_i \leq\deg \alpha_j & \text{ if } i\geq j\, . 
\end{cases}
\end{equation}
Remark once more that these are the vectors we will need to exclude. Similarly for all the other vector spaces we are interested in. Let us visualize it in an example: 

\begin{minipage}{0.6\textwidth}
\ytableausetup{boxsize=2em, aligntableaux=bottom}
\,\,
\begin{ytableau} 
\,&\, &\, &\,&\, &\,&\,&\, &\,\\
*(lightgray)\,&*(green)\alpha_j\, &\, &\,&\, &\,&\,&\, &\,\\
*(lightgray)\,&*(lightgray)\, &\, &\,&\, &\,&\,&\, &\,\\
*(lightgray)\,&*(lightgray)\, &*(lightgray)\, &\,&\, &\,&\,&\, &\,\\
*(lightgray)\,&*(lightgray)\, &*(lightgray)\, &\,&m^+ g^+ &\,&\,&\, &\,\\
*(lightgray)\,&*(lightgray)\, &*(lightgray)\, &*(lightgray)\,&\, & m^+ &\,&\, &\,\\
*(lightgray)\,&*(lightgray)\, &*(lightgray)\, &*(lightgray)\,&*(lightgray)\, &\,&\,&\, &\,\\
*(lightgray)\,&*(lightgray)\, &*(lightgray)\, &*(lightgray)\,&*(lightgray)\, & \, &\,&\, &\,\\
\end{ytableau}
\end{minipage}
\begin{minipage}{0.35\textwidth}
With $m$ (resp. $g$)  we indicate $y^{p}$ times a generator  
of $I_1$ that produce a tangent vector to $M_{T_1}$ (resp. $G_{T_1}$) that cannot be extended to a tangent vector of $M_{T_1, T_2}$ (resp. $G_{T_1, T_2}$). The plus as exponent is there if the tangent to $M_{T_1}$ (resp. $G_{T_1}$)  is also attracting. 
\end{minipage}
\vspace{1em}
\end{observation}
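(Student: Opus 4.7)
The plan is to establish the weight equality $\mathrm{wt}(\ff, \Su(\ff)) = \mathrm{wt}(\ff)$ in $T_{I_1, I_2}\Hi^{n,n+1}(\CC^2)$, then read off the tangency and positivity criteria from Observation \ref{goettschetangenttoM} applied to the first coordinate, and finally count the resulting tangent subspaces by difference from $B(I_1)$.

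For the weight equality, I would examine Lemma \ref{suiv} case by case, following Definition \ref{cases12}. When $\alpha \neq \alpha_j$, the generator $\alpha$ persists in $I_2$, so $\Su(\ff)$ coincides with $f'_{\alpha, \beta}$ and the image-to-source monomial ratio is the same $x^a y^b$ as in $\ff$. When $\alpha = \alpha_j$, Lemma \ref{suiv} replaces $(\alpha, \beta)$ by one of $(x\alpha, x\beta)$ or $(y\alpha, y\beta)$, or in case~3 by a further shifted pair whose monomial ratio still matches. In every subcase both components of $(\ff, \Su(\ff))$ lie in the same $\TT^2$-eigenspace, so its weight is determined entirely by the first coordinate.

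With the weight equality in hand, the tangency claims to $M_{T_1, T_2}$, $G_{T_1, T_2}$ and their positive parts are immediate from Observation \ref{goettschetangenttoM}: the criterion there is phrased purely in terms of the weight of $\ff$, namely the degree comparison between $\alpha$ and $\beta$. Since the weight is insensitive to the second coordinate, the same inequalities characterize the tangency of $(\ff, \Su(\ff))$. The dimension of each resulting subspace is then obtained by enumerating the $\ff \in B(I_1)$ with the relevant degree inequality and subtracting the contributions indexed by $(\alpha, \beta) \in \mathrm{Obs}(I_1, I_2)$, which by construction do not extend to basis elements of $B(I_1, I_2)$.

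The main, but still routine, obstacle is justifying the uniform displayed criterion identifying these excluded pairs independently of which of the four cases of Definition \ref{cases12} we are in. This amounts to verifying that for the boundary indices $i \in \{j-1, j, j+1\}$, where the definition of $\mathrm{Obs}(I_1, I_2)$ in \ref{obs} genuinely varies case by case, the corresponding $\ff$ always has strictly negative weight and so would not be counted anyway. This reduces to comparing $\deg \alpha_j$ with $\deg(y^{p_i}\alpha_i)$ or $\deg(x^{q_i}\alpha_i)$ in each configuration, and one can read off from the diagrams of Definition \ref{cases12} that the inequality goes the wrong way in each instance, which is precisely what makes the boundary complications invisible to the degree-raising count.
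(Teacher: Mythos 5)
Your proposal follows essentially the same route as the paper: the weight of $(\ff,\Su(\ff))$ equals that of $\ff$ because $\Su$ preserves the monomial ratio in every case of Lemma \ref{suiv}, tangency is then read off from the degree comparison of $\alpha$ and $\beta$ as in Observation \ref{goettschetangenttoM}, the count is done by subtracting the $\mathrm{Obs}(I_1,I_2)$ contributions, and the uniform criterion is justified exactly as you say, by checking that for $i\in\{j-1,j,j+1\}$ the relevant $\ff$ has negative weight so the case distinctions in Definition \ref{obs} are invisible to the count. This matches the paper's argument step for step.
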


\begin{definition}
\label{starsanddots}
We put together what we remarked in the last two observations to give a combinatorial rule of how the dimension of the tangent space at the two step flag case grows with respect to the one step flag case.\\ 

\hspace{3em}Let $\Gamma_2= \Gamma_1\cup\{\alpha_j\}$ be the Young diagrams associated to monomial ideals $(I_1, I_2)$ in $\Hi^{n, n+1}(\CC^2)$. Then put a $\bullet$ at each box occupied by a standard monomial generators of $I_2$, and a $\star$ at each vertex of $\Gamma_2$, i.e. wherever on the same column below and on the same row on the left there is a $\bullet$, and no others marked boxes in between.

\begin{center}
\ytableausetup{boxsize=1em, aligntableaux=bottom}
\begin{ytableau} 
\bullet &\star &\, &\,&\, &\,&\,&\, &\,&\,&\, &\,&\,&\, &\,\\
*(lightgray)\,&\bullet &\star &\,&\, &\,&\,&\, &\,&\,&\, &\,&\,&\, &\,\\
*(lightgray)\,&*(lightgray)\, &\, &\,&\, &\,&\,&\, &\,&\,&\, &\,&\,&\, &\,\\
*(lightgray)\,&*(lightgray)\, &\bullet &\star&\, &\,&\,&\, &\,&\,&\, &\,&\,&\, &\,\\
*(lightgray)\,&*(lightgray)\, &*(lightgray)\, &\,&\, &\,&\,&\, &\,&\,&\, &\,&\,&\, &\,\\
*(lightgray)\,&*(lightgray)\, &*(lightgray)\, &\bullet&\star &\,&\,&\, &\,&\,&\, &\,&\,&\, &\,\\
*(lightgray)\,&*(lightgray)\, &*(lightgray)\, &*(lightgray)\,&\bullet &\,&\,&\star &\,&\,&\, &\,&\,&\, &\,\\
*(lightgray)\,&*(lightgray)\, &*(lightgray)\, &*(lightgray)\,&*(lightgray)\, &*(lightgray)\,&*(green)\alpha_j&\, &\,&\,&\, &\,&\,&\, &\,\\
*(lightgray)\,&*(lightgray)\, &*(lightgray)\, &*(lightgray)\,&*(lightgray)\, &*(lightgray)\,&*(lightgray)\,&\, &\,&\,&\, &\,&\,&\, &\,\\
*(lightgray)\,&*(lightgray)\, &*(lightgray)\, &*(lightgray)\,&*(lightgray)\, &*(lightgray)\,&*(lightgray)\,&\, &\,&\,&\, &\,&\,&\, &\,\\
*(lightgray)\,&*(lightgray)\, &*(lightgray)\, &*(lightgray)\,&*(lightgray)\, &*(lightgray)\,&*(lightgray)\,&\bullet &\,&\star&\, &\,&\,&\, &\,\\
*(lightgray)\,&*(lightgray)\, &*(lightgray)\, &*(lightgray)\,&*(lightgray)\, &*(lightgray)\,&*(lightgray)\,&*(lightgray)\, &*(lightgray)\,&\bullet&\star &\,&\,&\, &\,\\
*(lightgray)\,&*(lightgray)\, &*(lightgray)\, &*(lightgray)\,&*(lightgray)\, &*(lightgray)\,&*(lightgray)\,&*(lightgray)\, &*(lightgray)\,&*(lightgray)\,&\bullet &\,&\,&\, &\,\\
\end{ytableau}
\end{center}

Define $\Lambda_i$ the $i$-th antidiagonal of the positive quadrant, i.e. all boxes $(k,m)$ with $k+m=i$ for $i\geq 0$. Say that $\Lambda_i< \Lambda_k$ if $i< k$, i.e. if $\Lambda_i$ is a lower antidiagonal than $\Lambda_k$. Call $\Lambda_{\alpha_j}$ the antidiagonal that passes through $\alpha_j$. Then define the following numbers: 
\begin{align*}
&\bullet \, M(\Gamma_1, \Gamma_2)\,\, :=\,\,  \#\, \{\bullet \in \Lambda_i \left\vert \Lambda_i < \Lambda_{\alpha_j} \right. \} +   \#\, \{\bullet \in \Lambda_{\alpha_j} \} \\
&\bullet \, G(\Gamma_1, \Gamma_2) \,\,:=\,\,   \#\, \{\bullet \in \Lambda_{\alpha_j} \} \\
&\bullet M^+(\Gamma_1, \Gamma_2) \,\,:=\,\,   \#\, \{\bullet \in \Lambda_i \left\vert \Lambda_i < \Lambda_{\alpha_j} \right. \} +   \#\, \{\bullet \in \Lambda_{\alpha_j}\left\vert \bullet \text{ to the right of } \alpha_j \right. \} \\
&\bullet G^+(\Gamma_1, \Gamma_2) \,\,:=\,\,    \#\, \{\bullet \in \Lambda_{\alpha_j}\left\vert \bullet \text{ to the right of } \alpha_j \right. \} \\
&\star \, M(\Gamma_1, \Gamma_2)\,\, :=\,\,  \#\, \{\star \in \Lambda_i \left\vert \Lambda_i < \Lambda_{\alpha_j} \right. \} +   \#\, \{\star \in \Lambda_{\alpha_j} \} \\
&\star \, G(\Gamma_1, \Gamma_2) \,\,:=\,\,   \#\, \{\star \in \Lambda_{\alpha_j} \} \\
&\star M^+(\Gamma_1, \Gamma_2) \,\,:=\,\,   \#\, \{\star \in \Lambda_i \left\vert \Lambda_i < \Lambda_{\alpha_j} \right. \} +   \#\, \{\star \in \Lambda_{\alpha_j}\left\vert \star \text{ to the right of } \alpha_j \right. \} \\
&\star G^+(\Gamma_1, \Gamma_2) \,\,:=\,\,    \#\, \{\star \in \Lambda_{\alpha_j}\left\vert \star \text{ to the right of } \alpha_j \right. \} .
\end{align*}
\end{definition}

\begin{lemma}{\cite[Lemma~3.4.9]{cheah1998cellular}}
\label{dimensiontangent12}
Suppose that $(I_1, I_2) \in G_{T_1, T_2} \subset M_{T_1, T_2}$ is a fixed point of the $\TT_{1^+}$ action on $\Hi^{n, n+1}(0)$ with generic weights $w_1<w_2$ and $(n+1)w_1>n w_2$.  Call $(\Gamma_1, \Gamma_2)$ the couple of Young diagrams associated to $(I_1, I_2)$. Then: 
\begin{itemize}
\item[1)] The dimension of the tangent space to $M_{T_1, T_2}$ at $(I_1, I_2)$ is equal to 
\[
\dim M_{T_1} + \bullet \, M(\Gamma_1, \Gamma_2) -\star \, M(\Gamma_1, \Gamma_2).
\]
\item[2)] The dimension of the tangent space to $G_{T_1, T_2}$ at $(I_1, I_2)$ is equal to 
\[
\dim G_{T_1} + \bullet \, G(\Gamma_1, \Gamma_2) -\star \, G(\Gamma_1, \Gamma_2).
\]
\item[3)] The dimension of the positive part of tangent space to $M_{T_1, T_2}$ at $(I_1, I_2)$ is equal to 
\[
\dim T^+_{I_1}M_{T_1}  + \bullet M^+(\Gamma_1, \Gamma_2) -\star M^+(\Gamma_1, \Gamma_2).
\]
\item[4)] The dimension of the positive part of the tangent space to $G_{T_1, T_2}$ at $(I_1, I_2)$ is equal to 
\[
\dim T^+_{I_1}G_{T_1} + \bullet G^+(\Gamma_1, \Gamma_2)-\star G^+(\Gamma_1, \Gamma_2).
\]
\end{itemize}
\end{lemma}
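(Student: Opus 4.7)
\hspace{3em}The plan is to use the explicit weight basis $B(I_1,I_2)$ from Lemma \ref{B(I1I2)isbasis}, which splits as $B_1\sqcup B_2$, where $B_1=\{(0,h_{\alpha'_i,\alpha_j})\}_{i=0}^{s'}$ and $B_2=\{(\ff,\Su(\ff))\,:\,\ff\in B(I_1),\ (\alpha,\beta)\notin \mathrm{Obs}(I_1,I_2)\}$. The natural flag analogue of Observation \ref{goettschetangenttoM} shows that a pure-weight vector in $T_{I_1,I_2}\Hi^{n,n+1}(\CC^2)$ lies in $T_{I_1,I_2} M_{T_1,T_2}$ (resp.\ in $T_{I_1,I_2} G_{T_1,T_2}$) iff both coordinates preserve or raise (resp.\ preserve) the total degree; positivity of the $\TT_{1^+}$-weight additionally requires the $y$-degree to strictly increase whenever the total degree is unchanged. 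Each of the four dimensions I want can therefore be obtained by counting basis vectors satisfying the appropriate numerical condition on the degrees.

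\hspace{3em}First I would analyse $B_1$. By Observation \ref{weight(0,h)}, the vector $(0,h_{\alpha'_i,\alpha_j})$ belongs to $T_{I_1,I_2} M_{T_1,T_2}$ precisely when $\deg\alpha'_i\le \deg\alpha_j$, with the analogous equality-and-$y$-degree variants for the other three sub-tangent-spaces. Since the bullets of Definition \ref{starsanddots} are by construction the standard monomial generators of $I_2$, and the antidiagonal $\Lambda_i$ through $\alpha'_i$ records its total degree, these four conditions translate verbatim into the four counts $\bullet M,\,\bullet G,\,\bullet M^+,\,\bullet G^+$ of Definition \ref{starsanddots}.

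\hspace{3em}Next I would treat $B_2$. Observation \ref{weight(f,f)} says that $(\ff,\Su(\ff))$ lies in $T_{I_1,I_2} M_{T_1,T_2}$ (resp.\ in the $G_{T_1,T_2}$, positive, or positive-$G$ variant) iff $\ff$ lies in $T_{I_1} M_{T_1}$ (resp.\ in the corresponding subspace of $T_{I_1}\Hi^n(\CC^2)$). Thus the $B_2$-contribution equals the number of such $\ff\in B(I_1)$, minus the number of them excluded by the condition $(\alpha,\beta)\in \mathrm{Obs}(I_1,I_2)$. Because $M_{T_1}$ and $G_{T_1}$ are smooth by Proposition \ref{smoothnessIarrobino}, the first count equals $\dim M_{T_1}$, $\dim G_{T_1}$, $\dim T^+_{I_1}M_{T_1}$, or $\dim T^+_{I_1}G_{T_1}$ respectively. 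Combining with the $B_1$-count then yields the four claimed formulas, provided the subtracted terms are identified with $\star M,\star G,\star M^+,\star G^+$.

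\hspace{3em}The hard part is precisely this last identification: the obstructed targets $y^{p_i}\alpha_i$ for $i<j$ and $x^{q_i}\alpha_i$ for $i>j$ must be shown to coincide with the $\star$-vertices of $\Gamma_2$, so that the degree conditions from Observation \ref{weight(f,f)} become the "$\le \Lambda_{\alpha_j}$'' and "strictly to the right of $\alpha_j$'' conditions used to define $\star M,\star G,\star M^+,\star G^+$. I expect this to be a case-by-case verification through the four possibilities of Definition \ref{cases12}: in cases 1a, 1b and 2 the obstructed index sets of Definition \ref{obs} enumerate exactly the vertices of $\Gamma_2$, whereas in case 3 the indices $i=j\pm 1$ are absent from $\mathrm{Obs}(I_1,I_2)$ for the good reason that inserting $\alpha_j$ merges two consecutive vertices of $\Gamma_1$ into a single vertex of $\Gamma_2$, so the counts still balance. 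Once this combinatorial matching is secured, the proof is finished.
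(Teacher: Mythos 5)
Your proposal is correct and follows essentially the same route as the paper, whose proof simply invokes the basis $B(I_1,I_2)$ together with Observations \ref{weight(0,h)} and \ref{weight(f,f)}. The case-by-case matching you flag as the remaining ``hard part'' is already disposed of in Observation \ref{weight(f,f)}: for the boundary indices $i=j-1,j,j+1$ where the definition of $\text{Obs}(I_1,I_2)$ varies between the cases of Definition \ref{cases12}, the relevant degree comparison always fails (the corresponding $\ff$ has negative weight), so those indices never enter any of the four counts and the identification of the subtracted terms with the $\star$-vertices goes through uniformly.
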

\begin{proof}
The proof is immediate thanks to the definition of $B(I_1, I_2)$ \ref{definitionB(I1I2)} and Observations \ref{weight(0,h)}  and \ref{weight(f,f)}.
\end{proof}

In the next chapter we will see that these strata are smooth, and we will talk about their dimension and about their homology.


\section{A weight basis for $T_{I_1,I_2,I_3} \Hi^{n,n+1,n+2}(\CC^2)$}

\hspace{3em} If $(I_1,I_2,I_3)$ is a fixed point of $\Hi^{n,n+1,n+2}(\CC^2)$ we denote by $\alpha_j$ the only monomial in $I_1$ but not in $I_2$, and by $\alpha'_l$ the only monomial in $I_2$ but not in $I_3$. We need to find a weight basis for 
\[
T_{I_1,I_2,I_3} \Hi^{n,n+1,n+2}(\CC^2)\cong 
\bigcap_{1\leq i<j\leq 3} \left(\text{Ker}(\phi_{ij}-\psi_{ij})\circ \pi_{ij}\right).
\]
 Trying to mimic what we did for the two step flag case, we want to find elements in \\$\text{Hom}_R(I_1, \bigslant{R}{I_1})\oplus\text{Hom}_R(I_2, \bigslant{R}{I_2})\oplus\text{Hom}_R(I_3, \bigslant{R}{I_3})$ of the form 
\begin{align*}
(0,0, (\alpha \mapsto \alpha'_l)), \\
(0,(\alpha \mapsto \alpha_j), \Su((\alpha \mapsto \alpha_j)),\\ 
(\ff, \Su(\ff), \Su(\Su(\ff))).
\end{align*} 

\hspace{3em}As before, then, most of the work is to understand for what elements we can actually define the Suiv. For example, we know already that for the last kind of vectors we need to take $\ff \in B(I_1)$ with $(\alpha, \beta) \notin \text{Obs}(I_1, I_2)$. Presumably we need to take care about the problem of extending vectors from $B(I_2)$ to $B(I_2, I_3)$. For this reason we will define \ref{definitionObs23}. In fact the real twist of the three flag case is Definition \ref{definitionNotP}: this tackles the problem of making sure that we pick out those vectors of $B(I_2)$ that do not extend to vectors in $B(I_2, I_3)$ but that actually matter for the construction of $B(I_1, I_2, I_3)$. Philosophically we are trying to define $B(I_1, I_2, I_3)$ by using twice, and independently, the two flag case and then slightly correcting with some information that is intrinsic to the three flag case. 

\begin{notation}
Let $(I_1,I_2,I_3)$ be a fixed point of $\Hi^{n,n+1,n+2}(\CC^2)$, i.e. each $I_i$ is a monomial ideal with $\dim\bigslant{R}{I_i}= n+i-1$ and $I_1\supset I_2\supset I_3$. We assume that: 
\begin{align*}
I_1=(\alpha_0, \dots, \alpha_s),\quad I_2=(\alpha'_0, \dots, \alpha'_{s'}),\quad I_3=(\alpha''_0, \dots, \alpha''_{s''}), \\
\Gamma(I_3) = \Gamma(I_2) \cup \{\alpha'_l\} = \Gamma(I_1)\cup \{\alpha_j\} \cup \{\alpha'_l\},
\end{align*}
where the $\alpha_i$s are standard monomial generators of $I_1$, the $\alpha'_i$s are standard monomial generators of $I_2$, the $\alpha''_i$s are standard monomial generators of $I_3$ and there are $s''+1=s_3+1$ of them. In the first step we add the box $\alpha_j$ to $\Gamma(I_1)$ that was a generator of $I_1$. In the second step we add $\alpha'_l$ to $\Gamma(I_2)$ that was a generator of $I_2$. \end{notation}

\begin{definition}
\label{casesjl}
We distinguish two cases based on the relative position of $\alpha_j$ and $\alpha'_l$. The corresponding fixed points will have different geometrical properties. 
\vspace{0.5 em}

\begin{minipage}{0.6\textwidth} 
\begin{center}\textbf{case a)}\end{center}The second box we add, $\alpha'_l$, is a minimal generator also of $I_1$. This is always the case unless $\alpha'_l$ is immediately above or on the right of $\alpha_j$. In this case we often drop the prime and call the second box simply $\alpha_l$. 

\end{minipage}
\begin{minipage}{0.35\textwidth}
\begin{center}
\ytableausetup{boxsize=1em, aligntableaux=bottom}
\begin{ytableau} 
\,&*(cyan)\alpha_l\\
\,&\, \\
\,&\,&*(green) \alpha_j  \\
\,&\, &\,&\, \\
\,&\, &\,&\,  \\
\,&\, &\,&\,  \\
\,&\, &\, &\,&\, &
\end{ytableau}
\end{center} 
\end{minipage}

\vspace*{2em}

\begin{minipage}{0.6\textwidth} \begin{center}\textbf{case b)}\end{center}The second box we add, $\alpha'_l$, is \emph{not} a minimal generator of $I_1$. This can happen only if  $\alpha'_l$ is immediately above or to the right of $\alpha_j$, i.e. $\alpha'_l=y\alpha_j$ or $\alpha'_l=x\alpha_j$. We will keep the prime in $\alpha'_l$ in this case.  
\end{minipage}
\begin{minipage}{0.35\textwidth}
\begin{center}
\ytableausetup{boxsize=1em, aligntableaux=bottom}
\begin{ytableau} 
\,\\
\,&\,&*(cyan)\alpha_l \\
\,&\,&*(green) \alpha_j  \\
\,&\, &\,&\, \\
\,&\, &\,&\,  \\
\,&\, &\,&\,  \\
\,&\, &\, &\,&\, &
\end{ytableau}
\end{center} 
\end{minipage}
\end{definition}

\begin{notation}
Call $p_i$ and $q_i$, as before, the distances between generators of $I_1$, and $p'_i$ and $q'_i$ the distances between generators of $I_2$, and $p''_i$ and $q''_i$ the distances between generators of $I_3$: 
\begin{align*}
&p_i := \deg_y \alpha_{i+1} - \deg_y \alpha_{i} \quad p_s := \infty,\quad  q_i := \deg_x \alpha_{i} - \deg_x \alpha_{i-1} \quad q_0 := \infty,
\\
&p'_i := \deg_y \alpha'_{i+1} - \deg_y \alpha'_{i} \quad p'_{s'} := \infty, \quad q'_i := \deg_x \alpha'_{i} - \deg_x \alpha'_{i-1} \quad q'_0 := \infty,\\
&p''_i := \deg_y \alpha''_{i+1} - \deg_y \alpha''_{i} \quad p''_{s''} := \infty,\quad  q''_i := \deg_x \alpha''_{i} - \deg_x \alpha''_{i-1} \quad q''_0 := \infty.
\end{align*}
Analogously we call $P_{\alpha}=P_{\alpha_i}$ and  $Q_{\alpha}=Q_{\alpha_i}$ the relevant sets for $\alpha=\alpha_i$ generator of $I_1$, $P_{\alpha'}=P_{\alpha'_i}$ and  $Q_{\alpha'}=Q_{\alpha'_i}$ those for the generators of $I_2$ and $P_{\alpha''}=P_{\alpha''_i}$ and  $Q_{\alpha''}=Q_{\alpha''_i}$ those for the generators of $I_3$. Whenever confusion is possible we will clarify if $\ff$ is seen as an element in the basis $B(I_1)$ of $T_{I_1} \Hi^{n}(\CC^2)$ or as an element in the basis $B(I_2)$ of $T_{I_2} \Hi^{n+1}(\CC^2)$, and so on. 
\end{notation}
\begin{definition}
\label{cases123}
We need to distinguish between the possible cases of Definition \ref{cases12} on $s, s'$ and $s''$. To give a name to all possible cases we will combine the name of the case as in \ref{cases12}  and the name of the boxes it refers to. For example the case $l 1a)$ means that, we do not care about the position of $\alpha_j$, and when we add $\alpha'_l$ we add it as a minimal generator of $I_2$ that has $p'>1$ and $q'=1$. Another example:  $j 1a), l 3)$ means that  $p_j\geq 2, q_j=1$ and $p'_l=1=q'_l$. \\

\hspace{3em}It is clear that, between these cases and the cases for the relative position of $\alpha_j$ and $\alpha_l$ of \ref{casesjl}, we have quite a few possible cases. 
\end{definition}

\begin{example}
Before introducing $\text{Obs}(I_2,I_3)$, we show with an example why we need to care about extending vectors from $T_{I_2} \Hi^{n+1}(\CC^2)$ to $T_{I_2, I_3} \Hi^{n+1, n+2}(\CC^2)$ in order to define vectors of the third type $\left(\ff, \Su(\ff)), \Su(\Su(\ff)\right)$. 

\vspace{0.6em}
\begin{minipage}{0.35\textwidth}
\begin{center}
\ytableausetup{boxsize=1.3em, aligntableaux=bottom}
\begin{ytableau} 
\,&*(cyan)\alpha_l \\
\,&\,\\
\,&\bullet&*(green) \alpha_j  \\
\,&\, &\,\\
\,&\, &\,&\,  \\
\,&\, &\,&\,&\none[\bullet]  \\
\,&\, &\, &\,&\,
\end{ytableau}
\end{center} 
\end{minipage}
\begin{minipage}{0.6\textwidth} \small{Let $\ff \in B(I_1)$ be the map depicted by the bullets, i.e. that sends the bullet outside $\Gamma_{1}$ to the bullet inside and is zero on the other generators of $I_1$. Then $(\ff, \Su(\ff)) \in B(I_1, I_2)$, but there does not exist $g \in T_{I_3} \Hi^{n+2}(\CC^2)$ such that $(\ff, \Su(\ff), g)$ is tangent to $T_{I_1, I_2,I_3} \Hi^{n, n+1,n+2}(\CC^2)$, simply because there does not exist a map $g \in T_{I_3} \Hi^{n+2}(\CC^2)$ that sends $\bullet$ to $\bullet$.}
\end{minipage}
\vspace{1em}
\end{example}

\begin{definition}
\label{definitionObs23}
We need to define $\text{Obs}(I_1,I_2)$ and $\text{Obs}(I_2,I_3)$. We define them only according to the respective cases of the last box added, and not on the relative position of $\alpha_j$ and $\alpha_l$. Thus, for example, $\text{Obs}(I_2,I_3)$ depends only on $p'_l$ and $q'_l$ and not on $p_j$ and $q_j$. 

\hspace{3em}For  $\text{Obs}(I_1,I_2)$, then, the definition is exactly as in \ref{IObs12}. The definition of $\text{Obs}(I_2,I_3)$ is completely analogous but we add it for convenience. 
\[
\begin{matrix}
\begin{matrix}
\text{Case $l$ 1a)} \\
\text{Obs}(I_2,I_3):=
\left\{
\begin{matrix}
(\alpha'_i, \frac{\alpha_l}{x^{q_i}}) \text{ for } i>l\\
(\alpha'_i, \frac{\alpha_l}{y^{p_i}}) \text{ for } i<l-1.
\end{matrix}
\right\}  
\end{matrix}\qquad
&
\begin{matrix}
\text{Case $l$ 1b)} \\
\text{Obs}(I_2,I_3):=
\left\{
\begin{matrix}
(\alpha'_i, \frac{\alpha_l}{x^{q_i}}) \text{ for } i>l+1\\
(\alpha'_i, \frac{\alpha_l}{y^{p_i}}) \text{ for } i<l.
\end{matrix}
\right\}
\end{matrix}  \\
\vspace{0.7em}
\begin{matrix}
\text{Case $l$ 2)} \\
\text{Obs}(I_2,I_3):=
\left\{
\begin{matrix}
(\alpha'_i, \frac{\alpha_l}{x^{q_i}}) \text{ for } i>l\\
(\alpha'_i, \frac{\alpha_l}{y^{p_i}}) \text{ for } i<l.
\end{matrix}
\right\}  
\end{matrix}\qquad
&
\begin{matrix}
\text{Case $l$ 3)} \\
\text{Obs}(I_2,I_3):=
\left\{
\begin{matrix}
(\alpha'_i, \frac{\alpha_l}{x^{q_i}}) \text{ for } i>l+1\\
(\alpha'_i, \frac{\alpha_l}{y^{p_i}}) \text{ for } i<l-1.
\end{matrix}
\right\}
\end{matrix}  \\
\end{matrix}
\]
\end{definition}
\begin{observation}
\label{numberobs3}
Again observe that in terms of $s''=s_3$, where $s''+1$ is the number of generators of $I_3$, the set $\text{Obs}(I_2,I_3) $ has the same number of elements along all cases.
\[
\begin{matrix}
\begin{matrix}
\text{Case $l$ 1a)}\\
 \#\,\text{Obs}(I_1,I_2) =  s_2-2 = s_3+1-2
\end{matrix} \qquad
&
\begin{matrix}
\text{Case l 1b)}\\
 \#\,\text{Obs}(I_2,I_3) =  s_2-2 = s_3+1-2
\end{matrix} 
\\
\begin{matrix}
\text{Case $l$ 2)}\\
 \#\,\text{Obs}(I_2,I_3) =  s_2-1 = s_3+1-2
\end{matrix} 
&
\begin{matrix}
\text{Case $l$ 3)}\\
 \#\,\text{Obs}(I_2,I_3) =  s_2-3 = s_3+1-2
\end{matrix} 
\end{matrix}
\]
\end{observation}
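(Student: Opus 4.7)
The plan is to verify the claim by a direct case-by-case enumeration, in exact parallel with the proof of Observation \ref{numberobs}. In each of the four cases of Definition \ref{definitionObs23}, the set $\text{Obs}(I_2, I_3)$ is the disjoint union of two families: pairs of the form $(\alpha'_i, \alpha_l / x^{q'_i})$ whose index $i$ runs over an interval above a threshold near $l$, and pairs of the form $(\alpha'_i, \alpha_l / y^{p'_i})$ whose index $i$ runs below a threshold near $l$. I will simply count the integer points in each interval and sum, then use the case-dependent relation between $s_2$ and $s_3$ from Definition \ref{cases12} (applied to the transition $I_2 \to I_3$ instead of $I_1 \to I_2$) to collapse the four separate formulas to the single common value $s_3 + 1 - 2$.

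Concretely, in Case $l$ 1a) there are $s' - l$ pairs in the first family (for $i = l+1, \ldots, s'$) and $l - 1$ in the second (for $i = 0, \ldots, l - 2$), giving $s' - 1$ in total. Since Case $l$ 1a) is one of the regimes where the number of generators is unchanged from $I_2$ to $I_3$, we have $s_3 = s_2 = s'$, so the count is $s_3 + 1 - 2$. Case $l$ 1b) is entirely symmetric. In Case $l$ 2) the lower-index range extends by one to $\{0, \ldots, l-1\}$, producing $s'$ pairs altogether; but in this case $s_3 = s_2 + 1 = s' + 1$, so the count is again $s_3 + 1 - 2$. In Case $l$ 3) both index ranges shrink by one, yielding $s' - 2$ pairs, while $s_3 = s_2 - 1 = s' - 1$, and once more the count equals $s_3 + 1 - 2$.

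The whole argument is a combinatorial bookkeeping exercise, and the key conceptual point — which does not really constitute an obstacle — is that the case-dependent shift of the threshold bounds in Definition \ref{definitionObs23} is exactly balanced by the case-dependent change $s_3 - s_2 \in \{-1, 0, 1\}$ recorded in Definition \ref{cases12}. The four distinct expressions in terms of $s_2$ therefore all rewrite to the same value in terms of $s_3$, which is the content of the observation. Since the definition of $\text{Obs}(I_2, I_3)$ was engineered in analogy with $\text{Obs}(I_1, I_2)$, the proof literally reduces to rereading the counting already done for Observation \ref{numberobs}.
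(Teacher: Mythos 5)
Your proof is correct and is essentially the paper's own (implicit) argument: the paper states this as an observation without proof, and your case-by-case enumeration of the two index ranges in Definition \ref{definitionObs23}, combined with the case-dependent relation between $s''$ and $s'$, is exactly the verification intended. The only caveat is notational: your intermediate counts $s'-1$, $s'$, $s'-2$ use $s_2=s'$ (the highest index, as in Observation \ref{numberobs}), whereas the displayed formulas $s_2-2$, $s_2-1$, $s_2-3$ in Observation \ref{numberobs3} only check out if $s_2$ there denotes the \emph{number} of generators $s'+1$; your final common value $s_3+1-2$ agrees with the paper in every case, so this is an inconsistency in the paper's labels rather than a gap in your argument.
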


However, as suggested at the beggining of the section, not all elements of $\text{Obs}(I_2, I_3)$ are relevant. The next example shows why this can happen. 
\begin{example}
$\,$\\
$\,$\\
\begin{minipage}{0.35\textwidth}
\begin{center}
\ytableausetup{boxsize=1.3em, aligntableaux=bottom}
\begin{ytableau} 
\,&*(cyan)\alpha_l \\
\,&\,\\
\,&\bullet&*(green) \alpha_j  \\
\,&\, &\,&\none[\bullet]\\
\,&\, &\,&\,  \\
\,&\, &\,&\,  \\
\,&\, &\, &\,&\, &
\end{ytableau}
\end{center} 

\end{minipage}
\begin{minipage}{0.6\textwidth} Let $f_{\bullet,\bullet} \in B(I_2)$ be the map depicted by the bullets, i.e. that sends the bullet outside $\Gamma_{2}$ to the bullet inside and is zero on other generators of $I_2$ that are above the bullet outside (but is not zero on the remaining generators of $I_2$, $\alpha'_0$ and $\alpha'_1$). Notice that $p'_{\bullet}= 2$. Then $(\alpha, \beta) \in \text{Obs}(I_2, I_3)$, but there does not exist $f \in T_{I_1}\Hi^n(\CC^2)$ such that $\ff= \Su(f)$ since $p_{\bullet}=1$. Then the couple $(\alpha, \beta)$ in $B(I_2, I_3)$ is actually irrelevant to our purposes. 
\end{minipage}
\vspace{1em}
\end{example}

To avoid counting unnecessarily obstructions at the second step we introduce this definition.
\begin{definition}
\label{definitionNotP}
We define a set of indexes $\text{NotP}=\text{NotP}(\Gamma_1, \Gamma_2,\Gamma_3) :=\{(\alpha'_i, \beta)\}$ whose corresponding elements $\left\{f_{\alpha'_i, \beta}\right\}$ are in $ \text{Hom}_R(I_2, \bigslant{R}{I_2})$. In the general case there will be only one of these, and it will be such that $f_{\alpha'_i, \beta} \in \text{Obs}(I_2,I_3)$ but there is not $f$ an $R$-homomorphism in $\text{Hom}_R(I_1, \bigslant{R}{I_1})$ with $\Su(f)= f_{\alpha'_i, \beta}$. We need to distinguish between the two cases $a)$ and $b)$ of \ref{casesjl}.

\begin{center}
\textbf{case a)} \\
$\alpha_l$ not immediately above or to the right of $\alpha_j$
\end{center}
The definition is according to the cases \ref{cases12} for $\alpha_j$.  
\[
\begin{matrix}
\begin{matrix}
\text{Case $j$ 1a)}\\
\text{If } (j<l) \text{ then }\, \text{NotP} :=  \left\{\left(\alpha_{j-1}, \frac{\alpha_l}{y^{p'_{j-1}}}\right)\right\}\\
\text{If } (j>l) \text{ then }\, \text{NotP} :=  \left\{\left(y\alpha_{j}, \frac{\alpha_l}{x}\right)\right\}.
\end{matrix} \qquad
&
\begin{matrix}
\text{Case $j$ 1b)}\\
\text{If } (j<l) \text{ then }\, \text{NotP} :=  \left\{\left(x\alpha_{j}, \frac{\alpha_l}{y}\right)\right\}\\
\text{If } (j>l) \text{ then }\, \text{NotP} :=  \left\{\left(y\alpha_{j+1}, \frac{\alpha_l}{x^{q'_{j+1}}}\right)\right\}.
\end{matrix}
\\
\begin{matrix}
\text{Case $j$ 2)}\\
\text{If } (j<l) \text{ then }\, \text{NotP} :=  \left\{\left(x\alpha_{j}, \frac{\alpha_l}{y}\right)\right\}\\
\text{If } (j>l) \text{ then }\, \text{NotP} :=  \left\{\left(y\alpha_{j}, \frac{\alpha_l}{x}\right)\right\}.
\end{matrix} \qquad
&
\begin{matrix}
\text{Case $j$ 3)}\\
\text{If } (j<l) \text{ then }\, \text{NotP} :=  \left\{\left(\alpha_{j-1}, \frac{\alpha_l}{y^{p'_{j-1}}}\right)\right\}\\
\text{If } (j>l) \text{ then }\, \text{NotP} :=  \left\{\left(\alpha_{j+1}, \frac{\alpha_l}{x^{q'_{j+1}}}\right)\right\}.
\end{matrix}
\end{matrix}
\]

\begin{center}
\textbf{case b)} \\
$\alpha'_l$ is immediately above or to the right of $\alpha_j$. 
\end{center}
\begin{minipage}{0.45\textwidth}
\begin{center}
If $\alpha'_l= y\alpha_j$ we define 
\end{center}
\[
\text{NotP}:= \left\{(\alpha'_i, \alpha_j)\left\vert i< j , \text{ and } p'_i=1\right.\right\}. 
\]
\end{minipage}
\begin{minipage}{0.45\textwidth}
\begin{center}
If $\alpha'_l= x\alpha_j$ we define 
\end{center}
\[
\text{NotP}:= \left\{(\alpha'_i, \alpha_j)\left\vert i> j , \text{ and } q'_i=1 \right.\right\}. 
\]
\end{minipage}
\end{definition}
\begin{observation}
In case a), i.e. $\alpha_l$ not immediately above or on the right of $\alpha_j$, there is always only a single element in $\text{NotP}$. 
\end{observation}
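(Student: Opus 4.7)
The plan is to prove this by direct inspection of Definition \ref{definitionNotP}, case a). The first step is to justify that the dichotomy $j<l$ or $j>l$ exhausts all possibilities: in case a), $\alpha_l$ is by hypothesis a minimal generator of $I_1$ distinct from $\alpha_j$, so when $j$ and $l$ are both read off from the generator list of $I_1$ (ordered by decreasing $x$-power), we must have $j \neq l$.

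With that in place, I would tabulate the eight sub-subcases obtained by combining the four cases $j$ 1a), $j$ 1b), $j$ 2), $j$ 3) of Definition \ref{cases12} with the two possibilities $j < l$ or $j > l$. In every one of these, the definition prescribes $\text{NotP}$ to be a set containing exactly one explicitly written pair $(\alpha,\beta)$. Hence $|\text{NotP}| = 1$ in case a). In particular there is nothing to sum or to estimate: the claim reduces to reading off the eight lines of the definition.

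The only substantive verification, which I do not expect to present a real obstacle, is that the singleton element exhibited in each sub-subcase is genuinely well-defined and genuinely relevant: one must check that $\alpha$ is a generator of $I_2$, that $\beta \in P_{\alpha} \cup Q_{\alpha}$ so that $f_{\alpha,\beta}$ exists, that $(\alpha,\beta) \in \text{Obs}(I_2,I_3)$, and that no $f \in \text{Hom}_R(I_1,\bigslant{R}{I_1})$ satisfies $\Su(f)=f_{\alpha,\beta}$. This is a local combinatorial check that uses only the known relation between $(p_i,q_i)$ and $(p'_i,q'_i)$ imposed by $\Gamma_2 = \Gamma_1 \cup \{\alpha_j\}$ together with the position of $\alpha_l$ dictated by case a), and it amounts to verifying that the eight-fold case distinction in the definition was engineered to pick out exactly one admissible pair in each situation.
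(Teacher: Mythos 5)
Your proposal is correct and matches the paper's (implicit) justification: the paper offers no separate argument for this observation because Definition \ref{definitionNotP}, case a), explicitly prescribes $\text{NotP}$ as a one-element set in each of the sub-subcases indexed by the case of $\alpha_j$ and the sign of $j-l$, exactly as you read off. Your additional remark that $j\neq l$ in case a) (so the dichotomy $j<l$ or $j>l$ is exhaustive) is the only point needing a word, and your justification via $\alpha_l$ being a minimal generator of $I_1$ distinct from $\alpha_j$ is the intended one.
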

\begin{example}
In the example below we depict four possible cases of nested triples of monomial ideals, and the corresponding $\text{NotP}\in  \text{Hom}_R(I_2, \bigslant{R}{I_2})$, which is just a singleton. As usual we represent an $R$-homomorphism with a couple of corresponding symbols, the one outside $\Gamma_2$ represents the generator of $I_2$, the one inside represents its image. 
\begin{center}
\ytableausetup{boxsize=1.5em, aligntableaux=bottom}
\begin{ytableau}  
   *(lightgray)& \none[\bullet] &  \none& \none &  \none & \none & \none& \none & \none& \none& \none & \none& \none& \none & \none& \none& \none & \none\\
   *(lightgray)& *(green)\alpha_j &  \none& \none &  \none & \none & \none& \none & \none& \none& \none & \none& \none& \none & \none& \none& \none & \none\\
 \none &  *(lightgray)&  \none[\star]& \none&  \none & \none & \none& \none & \none& \none& \none & \none& \none& \none & \none& \none& \none & \none\\
  \none&  \none& *(lightgray)&  *(green)\alpha_j&  \none & \none & \none& \none & \none& \none& \none & \none& \none& \none & \none& \none& \none & \none\\
  \none&  \none&  \none&  *(lightgray)&  *(lightgray)&  \none & \none& \none & \none& \none& \none & \none& \none& \none & \none& \none& \none & \none\\
  \none&  \none&  \none&  \none& *(lightgray) &  *(lightgray) & \none& \none & \none& \none& \none & \none& \none& \none & \none& \none& \none & \none\\
  \none&  \none &  \none &  \none& *(lightgray) \star &  *(lightgray) \bullet &*(cyan)\alpha_l& \none & \none& \none& \none & \none& \none& \none & \none &   \none&  \none&  \none\\
  \none&  \none&  \none&  \none&  \none&  \none &  *(lightgray) \diamond &  \none&  \none & \none& \none& \none & \none& \none& \none & \none& \none& \none & \none \\
  \none&  \none &  \none &  \none&\none&\none& *(lightgray) \bigtriangleup &   *(lightgray) &  \none& \none & \none& \none& \none & \none& \none& \none & \none\\ 
  \none&  \none &  \none &  \none&\none&\none&  \none &\none &   *(lightgray) &  *(lightgray)& *(green)\alpha_j  & \none& \none& \none & \none& \none& \none & \none\\ 
  \none&  \none &  \none &  \none&\none&\none&  \none &\none &   \none &  \none& *(lightgray) & \none[\bigtriangleup]& \none& \none & \none& \none& \none & \none\\ 
 \none&  \none &  \none &  \none&\none&\none&  \none &\none &   \none &  \none& *(lightgray) &*(lightgray) & \none& \none & \none& \none& \none & \none\\ 
  \none&  \none &  \none &  \none&\none&\none&  \none &\none &   \none &  \none&\none& *(lightgray) & \none& \none& \none & \none& \none& \none & \none\\ 
  \none&  \none &  \none &  \none&\none&\none&  \none &\none &   \none &  \none&\none& *(lightgray) & *(green)\alpha_j & \none[\diamond]& \none & \none& \none& \none & \none\\ 
 \none&  \none &  \none &  \none&\none&\none&  \none &\none &   \none &  \none& \none&\none& *(lightgray) & *(lightgray)& \none& \none & \none& \none& \none & \none\\ 
\end{ytableau}
\end{center}
\end{example}

\begin{lemma}[definition]
Let $(\alpha', \beta') \in \text{Obs}(I_2, I_3)$ be such that $(\alpha', \beta')\notin \text{NotP}$. Then there exists $\ff\in B(I_1)$ such that $(\ff, f_{\alpha', \beta'})\in \text{Ker}(\phi_{12}-\psi_{12})$ and $(\alpha', \beta') = (x^ay^b\alpha, x^ay^b \beta)$ with $(a, b) = (0,0), (0,1)$ or $(1, 0)$. We define $\ff := \text{Prec}(f_{\alpha', \beta'}) $ and 
\[
\text{PObs}(I_1, I_2, I_3)=\text{PObs}=\left\{(\alpha, \beta) \left\vert\, \ff = \text{Prec}(f_{\alpha', \beta'}) \text{ for } (\alpha', \beta') \in \text{Obs}(I_2, I_3)\setminus \text{NotP} \right.\right\}
\]
\end{lemma}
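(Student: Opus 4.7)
The plan is a case-by-case verification organized by the type of addition of $\alpha_j$ (the four cases of Definition \ref{cases12}) combined with the two cases a) and b) of Definition \ref{casesjl}. The guiding principle: every generator $\alpha'$ of $I_2$ is either already a generator of $I_1$ or is of the form $x\alpha_j$ or $y\alpha_j$. In the first situation the candidate preimage is $\text{Prec}(f_{\alpha',\beta'}) = f_{\alpha,\beta}$ with $(\alpha,\beta)=(\alpha',\beta')$ (so $(a,b)=(0,0)$); in the second we set $\alpha = \alpha_j$ with $(a,b)=(1,0)$ or $(0,1)$ accordingly and $\beta = \beta'/x^ay^b$. This will automatically place $(\alpha',\beta')$ in the prescribed form.

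First I would write out the explicit dictionary between $(\alpha_i)$ and $(\alpha'_i)$ supplied by Definition \ref{cases12}: in case $j$ 1a) the lists coincide except $\alpha'_j = y\alpha_j$; in case $j$ 1b), $\alpha'_j = x\alpha_j$; in case $j$ 2) there is an extra generator with $\alpha'_j = x\alpha_j$ and $\alpha'_{j+1} = y\alpha_j$; in case $j$ 3) one generator is removed and the indices shift above $j$. Given $(\alpha', \beta') = (\alpha'_i, \alpha_l/x^{q'_i})$ or $(\alpha'_i, \alpha_l/y^{p'_i})$ from $\text{Obs}(I_2,I_3)$, this dictionary unambiguously determines the candidate $\alpha = \alpha'_i/x^ay^b$. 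The content of the lemma is then twofold: (i) $\beta := \beta'/x^ay^b$ lies in $\Gamma_1$ (not merely in $\Gamma_2$) and moreover in $P_\alpha \cup Q_\alpha$, so that $f_{\alpha,\beta}$ is a legitimate element of $B(I_1)$; (ii) the identity $\Su(f_{\alpha,\beta}) = f_{\alpha',\beta'}$ holds, whence $(\ff, f_{\alpha',\beta'}) \in \text{Ker}(\phi_{12}-\psi_{12})$ follows directly from Lemma \ref{suiv}.

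The set $\text{NotP}$ is quarantining precisely those pairs for which one of these two conditions fails. In case a), where $\alpha_l$ is a genuine generator of $I_1$, the only obstruction comes from indices $i$ immediately neighboring $j$: the values of $p'_i, q'_i$ in $I_2$ differ from $p_{i'}, q_{i'}$ in $I_1$ because of the insertion of $y\alpha_j$ or $x\alpha_j$, and a separate inspection of each of the four cases of $j$ shows that exactly one index is pathological; this is the singleton recorded by Definition \ref{definitionNotP} in case a). In case b), where $\alpha_l \in \{x\alpha_j, y\alpha_j\}$, the target $\beta' = \alpha_j$ lies outside $\Gamma_1$ altogether, and several generators $\alpha'_i$ on the same row (if $\alpha_l = y\alpha_j$) or column (if $\alpha_l = x\alpha_j$) as $\alpha_j$ would all want to map to $\alpha_j$ but have no preimage in $B(I_1)$; these are exactly the pairs collected into $\text{NotP}$ in case b).

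The main obstacle is the sheer amount of bookkeeping: roughly sixteen subcases (four for $\alpha_j$, four for $\alpha_l$, with further bifurcation for case b)), and in each one must verify both the $P/Q$-membership condition and the identification $\Su(\ff) = f_{\alpha',\beta'}$ via the explicit formulas of Lemma \ref{suiv}. Once the generator dictionary is written down each individual verification is mechanical, and $\text{Prec}$ is automatically well-defined as a function because the candidate $\alpha = \alpha'/x^ay^b$ and the triple $(a,b)$ are uniquely forced by which of the three monomials $\alpha', \alpha'/x, \alpha'/y$ is a generator of $I_1$. The resulting set $\text{PObs}(I_1, I_2, I_3) = \text{Prec}\bigl(\text{Obs}(I_2,I_3) \setminus \text{NotP}\bigr)$ then catalogs precisely the indices in $B(I_1)$ that must be excluded when one assembles the basis of $T_{I_1, I_2, I_3}\Hi^{n, n+1, n+2}(\CC^2)$ in the next subsection.
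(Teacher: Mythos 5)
Your proposal is correct and follows essentially the same route as the paper, whose entire proof is the one-sentence observation that under the hypothesis either $\alpha'$ is itself a minimal generator of $I_1$ or $\alpha'/x^ay^b$ is for $(a,b)\in\{(0,1),(1,0)\}$, and that $\beta$ (respectively $\beta'/x^ay^b$) lies in the relevant $P\cup Q$ set, with $\text{NotP}$ excluding exactly the failures. You simply make explicit the case bookkeeping that the paper leaves implicit.
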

\begin{proof}
It is enough to notice that, under the hypothesis on $(\alpha', \beta')$, either $\alpha$ is a minimal generator of $I_1$ (or one of $\frac{\alpha}{x^ay^b}$ with $(a, b) =(0,1), (1, 0)$ is), and $\beta \in P_{\alpha} \cup Q_{\alpha}$ (or $\frac{\beta}{x^ay^b} $ is in $ P_{\frac{\alpha}{x^ay^b}} \cup Q_{\frac{\alpha}{x^ay^b}}$). 
\end{proof}

\begin{lemma}
\label{inKer}
Let $(f,h,g) \in \text{Hom}_R(I_1, \bigslant{R}{I_1})\oplus\text{Hom}_R(I_2, \bigslant{R}{I_2})\oplus\text{Hom}_R(I_3, \bigslant{R}{I_3})$ be such that $(f,h) \in \text{Ker}(\phi_{12}-\psi_{12})$ and $(h,g) \in \text{Ker}(\phi_{23}-\psi_{23})$, then $(f,g) \in \text{Ker}(\phi_{13}-\psi_{13})$, so that $(f,h,g) \in \bigcap_{1\leq i<j\leq 3} \left(\text{Ker}(\phi_{ij}-\psi_{ij})\circ \pi_{ij}\right)$. 
\end{lemma}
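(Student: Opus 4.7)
The plan is to chase the definitions of $\phi_{ij}$ and $\psi_{ij}$ directly: the statement is essentially the transitivity of the congruences that these two operators encode, made visible by using that $I_3\subset I_2 \subset I_1$.

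First I would unfold the kernel conditions. The condition $(f,h)\in \text{Ker}(\phi_{12}-\psi_{12})$ means that for every $\alpha \in I_2$,
\[
f(\alpha) \equiv h(\alpha) \pmod{I_1},
\]
where we lift $h(\alpha)\in R/I_2$ to $R$ before reducing modulo $I_1$ (this is well-defined because $I_2\subset I_1$). Similarly $(h,g)\in \text{Ker}(\phi_{23}-\psi_{23})$ amounts to
\[
h(\alpha) \equiv g(\alpha) \pmod{I_2}
\]
for every $\alpha\in I_3\subset I_2$.

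Next, I would fix $\alpha\in I_3$; since $I_3 \subset I_2 \subset I_1$, the two relations above can be combined. From the second we get $h(\alpha) - g(\alpha) \in I_2/I_2 \subset R/I_2$, and lifting and reducing, the inclusion $I_2 \subset I_1$ gives $h(\alpha) \equiv g(\alpha) \pmod{I_1}$. Combining with the first congruence at $\alpha$ yields
\[
f(\alpha) \equiv h(\alpha) \equiv g(\alpha) \pmod{I_1},
\]
which is exactly the condition $\phi_{13}(f)(\alpha) = \psi_{13}(g)(\alpha)$ in $R/I_1$. Since this holds for every $\alpha\in I_3$, we conclude $(f,g)\in \text{Ker}(\phi_{13}-\psi_{13})$, and therefore $(f,h,g)$ lies in the triple intersection $\bigcap_{1\leq i<j\leq 3}\text{Ker}(\phi_{ij}-\psi_{ij})\circ\pi_{ij}$.

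There is no real obstacle: the only thing to be careful about is that $\psi_{ij}$ involves the projection $R/I_j \twoheadrightarrow R/I_i$, which is only defined because $I_j\subset I_i$; the compatibility of these projections for the chain $I_3\subset I_2\subset I_1$ is exactly what allows the two pairwise congruences to compose into the third.
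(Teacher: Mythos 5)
Your proof is correct and follows essentially the same route as the paper: the paper writes the identity as the telescoping sum $(f|_{I_3}-h|_{I_3})+(h|_{I_3}-g)\equiv 0 \pmod{I_1}$, observing that the first term vanishes mod $I_1$ by hypothesis and the second vanishes mod $I_2$ and hence mod $I_1$ since $I_2\subset I_1$, which is exactly your elementwise transitivity argument.
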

\begin{proof}
It is enough to check that 
\[
(\left. f \right\vert_{I_3} - \left. h\right\vert_{I_3}) + (\left. h\right\vert_{I_3} - g) = 0 \quad \text{ mod } I_1
\]
since the two terms in parenthesis are $0$: the first directly by hypothesis, the second because by hypothesis is $0$ mod $I_2$ and $I_1\supset I_2$.
\end{proof}

\begin{lemma}[definition]
\label{suivsuiv}
Let $(\alpha, \beta)$ with $\ff \in B(I_1)$ and $(\alpha, \beta) \notin \text{Obs}(I_1, I_2)\cup \text{PObs} $. Then we can define 
\[
\left(\ff, \Su(\ff), \Su(\Su(\ff))\right) \in \bigcap_{1\leq i<j\leq 3} \left(\text{Ker}(\phi_{ij}-\psi_{ij})\circ \pi_{ij}\right)\, .
\]
\end{lemma}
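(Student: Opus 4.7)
The plan is to chain together two applications of Lemma \ref{suiv}, linked by a single compatibility check, and then close everything up with Lemma \ref{inKer}. First, since $(\alpha,\beta)\notin\mathrm{Obs}(I_1,I_2)$, Lemma \ref{suiv} applied to the flag $I_1\supset I_2$ gives immediately that $\Su(\ff)\in\mathrm{Hom}_R(I_2,R/I_2)$ is well defined and that the pair $(\ff,\Su(\ff))$ lies in $\mathrm{Ker}(\phi_{12}-\psi_{12})$. By the case analysis in the proof of Lemma \ref{suiv}, we may write $\Su(\ff)=f_{\alpha',\beta'}$ for some index pair $(\alpha',\beta')$ explicitly determined by $(\alpha,\beta)$ and the relative position of $\alpha_j$.

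The crucial intermediate step is to show that $(\alpha',\beta')\notin\mathrm{Obs}(I_2,I_3)$, so that Lemma \ref{suiv} may be applied a second time, now to the flag $I_2\supset I_3$. There are two possibilities to rule out. The first is $(\alpha',\beta')\in\mathrm{Obs}(I_2,I_3)\setminus\mathrm{NotP}$: but in this case the very definition of $\mathrm{PObs}$ forces $(\alpha,\beta)=\mathrm{Prec}(f_{\alpha',\beta'})\in\mathrm{PObs}$, contradicting our hypothesis. The second is $(\alpha',\beta')\in\mathrm{NotP}$: but by Definition \ref{definitionNotP}, elements of $\mathrm{NotP}$ are precisely those elements of $\mathrm{Obs}(I_2,I_3)$ which are \emph{not} of the form $\Su(f)$ for any $f\in\mathrm{Hom}_R(I_1,R/I_1)$, whereas we have explicitly produced such a preimage, namely $\ff$ itself. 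Both cases being excluded, we conclude $(\alpha',\beta')\notin\mathrm{Obs}(I_2,I_3)$.

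With this in hand, a second application of Lemma \ref{suiv}, to the pair $(I_2,I_3)$ and the element $f_{\alpha',\beta'}\in B(I_2)$, yields that $\Su(\Su(\ff))=\Su(f_{\alpha',\beta'})$ is well defined and that $(\Su(\ff),\Su(\Su(\ff)))\in\mathrm{Ker}(\phi_{23}-\psi_{23})$. Finally, Lemma \ref{inKer} is exactly the tool that assembles the two kernel memberships $(\ff,\Su(\ff))\in\mathrm{Ker}(\phi_{12}-\psi_{12})$ and $(\Su(\ff),\Su(\Su(\ff)))\in\mathrm{Ker}(\phi_{23}-\psi_{23})$ into membership of the triple in $\mathrm{Ker}(\phi_{13}-\psi_{13})$ as well, and hence in the full intersection $\bigcap_{1\le i<j\le 3}\mathrm{Ker}(\phi_{ij}-\psi_{ij})\circ\pi_{ij}$.

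The main obstacle I expect is purely bookkeeping: one must be confident that the case analyses defining $\mathrm{Obs}(I_1,I_2)$, $\mathrm{Obs}(I_2,I_3)$, $\mathrm{NotP}$ and $\mathrm{PObs}$ really do interlock as claimed, and in particular that in case b) of Definition \ref{casesjl}, where $\alpha'_l$ is adjacent to $\alpha_j$ and $\mathrm{NotP}$ can contain several elements, the argument that $\Su(\ff)\notin\mathrm{NotP}$ still goes through uniformly---because by construction \emph{every} element of $\mathrm{NotP}$ fails to lie in the image of $f\mapsto\Su(f)$ on $B(I_1)$, irrespective of the subcase. Once this consistency is verified along the subcases of Definitions \ref{cases123} and \ref{casesjl}, the logical skeleton above gives the lemma with no further calculation.
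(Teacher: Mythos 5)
Your proof is correct and follows the route the paper intends: the paper's own proof is the single sentence ``completely analogous to the proof of Lemma \ref{suiv},'' and Lemma \ref{inKer} is stated immediately beforehand precisely so that the two kernel memberships $(\ff,\Su(\ff))\in\mathrm{Ker}(\phi_{12}-\psi_{12})$ and $(\Su(\ff),\Su(\Su(\ff)))\in\mathrm{Ker}(\phi_{23}-\psi_{23})$ can be assembled as you do. Your intermediate step --- ruling out $\Su(\ff)\in\mathrm{Obs}(I_2,I_3)$ by splitting into the $\mathrm{PObs}$ and $\mathrm{NotP}$ alternatives --- is exactly the role the hypothesis $(\alpha,\beta)\notin\mathrm{PObs}$ is designed to play, so you have in fact supplied more detail than the paper itself records.
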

\begin{proof}
The proof is completely analogous to the proof of Lemma \ref{suiv}. 
\end{proof}

Now that we have dealt with vectors of type $\left(\ff, \Su(\ff), \Su(\Su(\ff))\right)$, we will deal with vectors of type $\left(0, (\alpha \mapsto \alpha_j), \Su(\alpha \mapsto \alpha_j)\right)$. The situation is easier: if we are in case a) of \ref{casesjl} $\Su(\alpha \mapsto \alpha_j)$ always exists, and the definition is as below. In case b) we still need some care as it might happen that $(\alpha'_i, \alpha_j) \in \text{Obs}(I_2, I_3)$. 

\begin{definition} Let $(\alpha'_0, \dots, \alpha'_{s'})$ be the list of standard monomial generators of $I_2$. 

\begin{center}
\textbf{case a)} \\
\begin{small}$\alpha_l$ is not immediately above or to the right of $\alpha_j$. \end{small}
\end{center}
Let $h_{\alpha'_i, \alpha_j} \in B(I_2)$ be the vector already defined in (\ref{definitionh}), i.e. the map that takes $\alpha'_i \in I_2$ to $\alpha_j \in \Gamma_2$ and sends all other generators of $I_2$ to zero. Then we define $\Su(h_{\alpha'_i, \alpha_j})\in \text{Hom}_R(I_3, \bigslant{R}{I_3})$ by specifying the image of each generator of $I_3$ as:
\[
\Su(h_{\alpha'_i, \alpha_j})(\alpha''_k) = \begin{cases}h_{\alpha'_i, \alpha_j}(\alpha''_k) &\text{ if } \alpha''_k= x^ay^b\alpha'_i, \text{ with } a,b\geq 0, \\
0 &\text{ otherwise}. 
 \end{cases}
\]
This works for all $\alpha'_i$ exactly because $(\alpha'_i, \alpha_j) \notin \text{Obs}(I_2, I_3)$. The $h_{\alpha'_i, \alpha_j}$  we just defined are clearly linearly independent. 
\begin{center}
\textbf{case b)} \\
\begin{small}$\alpha'_l$ is immediately above or to the right of $\alpha_j$. \end{small}
\end{center}
In this case we define $h_{\alpha'_i, \alpha_j} :=f_{\alpha'_i, \alpha_j} \in B(I_2)$, i.e. through the usual Definition \ref{definitionf}. Then, if $(\alpha'_i, \alpha_j) \notin \text{Obs}(I_2, I_3)$,  we also define $\Su(h_{\alpha'_i, \alpha_j})$ with the usual definition, i.e. through Lemma \ref{suiv}. 
\end{definition}

\begin{definition}
 Let $(\alpha''_0, \dots, \alpha''_{s''})$ be the list of standard monomial generators of $I_3$. 
 We define $h_{\alpha''_i, \alpha'_l}\in \text{Hom}_R(I_3, \bigslant{R}{I_3})$ as 
\[
h_{\alpha''_i, \alpha'_l}(\alpha''_k) = \begin{cases}\alpha'_l &\text{ if } \alpha''_k = \alpha''_i, \\
0 &\text{ otherwise. }\end{cases}
\]
We have $s''+1=s_3+1$ of these and they are linearly independents. 
\end{definition}

\begin{definition}($B(I_1, I_2, I_3)$)
\label{definitionB(I1I2I3)}
Let $(I_1,I_2,I_3)$ be a fixed point of $\Hi^{n, n+1, n+2}(\CC^2)$ with $\alpha_j$ the only monomial in $I_1$ but not in $I_2$ and $\alpha'_l$ the only monomial in $I_2$ but not in $I_3$. Then we define the subset
\[B(I_1,I_2,I_3) \subset \text{Hom}_R(I_1, \bigslant{R}{I_1})\oplus\text{Hom}_R(I_2, \bigslant{R}{I_2})\oplus\text{Hom}_R(I_3, \bigslant{R}{I_3})\]
as
\begin{align}
\label{base3}
B(I_1,I_2,I_3) := &\left\{ (0,0,h_{\alpha''_i, \alpha'_l})\,\vert \,\,i=0, \dots, s'' \right\}\,\cup  \nonumber\\
&\cup\, \left\{\left(0,h_{\alpha'_i, \alpha_j}, \Su(h_{\alpha'_i, \alpha_j})\right)\,\,\vert\,\, (\alpha'_i, \alpha_j)\notin \text{Obs}(I_2, I_3) \right\} \,\cup  \\
&\cup \left\{\left(\ff, \Su(\ff), \Su(\Su(\ff))\right)\,\,\left\vert \,\, (\alpha,\beta) \notin \text{Obs}(I_1,I_2)\cup\text{PObs} \right.\right\}\nonumber
\end{align}
\end{definition}
\begin{observation}
\label{cardinalityB(123)}
In case a), i.e. $\alpha_l$ is not immediately above or to the left of $\alpha_j$, we have that:
\[
 \#\, B(I_1, I_2, I_3) = 2n+5.
\]
To see this, start observing that we have $s''+1$ elements in the first set. Then $(\alpha'_i, \alpha_j) \notin \text{Obs}(I_2, I_3)$ for all $i=0,\dots,s'$, so that we have $s'+1$ elements in the second set.
\begin{align*}
 \#\,\left\{ (0,0,h_{\alpha''_i \alpha_l})\,\vert \,\,i=0, \dots, s'' \right\} \,=\, s''+1 \\
 \#\, \left\{\left(0,h_{\alpha'_i, \alpha_j}, \Su(h_{\alpha'_i, \alpha_j})\right)\,\,\vert\,\, i=0,\dots, s' \right\}\,=\, s'+1
\end{align*}

Moreover $\text{Obs}(I_1, I_2)$ and $\text{PObs}$ are distinct sets of cardinality $s'+1-2$ and $s''+1-3$ respectively. So
\[
 \#\,\left\{\left(\ff, \Su(\ff), \Su(\Su(\ff))\right)\,\,\left\vert \,\, (\alpha\beta) \notin \text{Obs}(I_1,I_2)\cup\text{PObs} \right.\right\}=  \#\, B(I_1) - (s'+1-2) - (s''+1-3).
\]
Finally remembering $ \#\, B(I_1)=2n$ and putting together the results we have $ \#\, B(I_1,I_2,I_3) = 2n+5$.
\end{observation}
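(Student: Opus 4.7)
The plan is to decompose $B(I_1,I_2,I_3)$ as the disjoint union of the three sets appearing in Definition \ref{definitionB(I1I2I3)} and count each piece, then add up. The first piece $\{(0,0,h_{\alpha''_i,\alpha'_l})\mid i=0,\ldots,s''\}$ contributes $s_3+1$ trivially, since the $h_{\alpha''_i,\alpha'_l}$ are in bijection with the standard monomial generators of $I_3$.

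For the second piece I need to show that in case a), $(\alpha'_i,\alpha_j)\notin \text{Obs}(I_2,I_3)$ for every $i=0,\ldots,s'$, which then gives $s_2+1$ elements. The defining feature of $\text{Obs}(I_2,I_3)$ is that its second coordinate is of the form $\alpha_l/x^{q'_i}$ or $\alpha_l/y^{p'_i}$, i.e.\ a monomial sharing a row or column with $\alpha_l$. In case a), $\alpha_l$ is a standard monomial generator of $I_1$, and any two distinct standard monomial generators of a monomial ideal are on distinct rows and distinct columns by construction. Since $\alpha_j$ is also a standard monomial generator of $I_1$ and $\alpha_j\neq\alpha_l$, $\alpha_j$ cannot equal $\alpha_l/x^{q'_i}$ or $\alpha_l/y^{p'_i}$, and the required containment fails for every $i$.

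For the third piece the count is $\#B(I_1)-\#\text{Obs}(I_1,I_2)-\#\text{PObs}$, provided the two sets being subtracted are disjoint subsets of the indexing set of $B(I_1)$. By Lemma \ref{cardinalityB(I)} we have $\#B(I_1)=2n$, and by Observation \ref{numberobs} we have $\#\text{Obs}(I_1,I_2)=s_2-1$. For $\#\text{PObs}$, the Prec construction gives a bijection between $\text{Obs}(I_2,I_3)\setminus\text{NotP}$ and $\text{PObs}$: indeed in case a) the only multiplications used to cross from $I_1$ to $I_2$ are by $1$, $x$, or $y$, and since the elements of $\text{Obs}(I_2,I_3)$ all have source among $\alpha'_0,\ldots,\alpha'_{s'}$ and second coordinate sharing a row or column with $\alpha_l$ (which is itself a generator of $I_1$), one can invert this uniquely. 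Using Observation \ref{numberobs3} and the remark that $\#\text{NotP}=1$ in case a), this gives $\#\text{PObs}=(s_3-1)-1=s_3-2$.

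The main obstacle in making this rigorous is the disjointness claim $\text{Obs}(I_1,I_2)\cap\text{PObs}=\emptyset$: an element of $\text{Obs}(I_1,I_2)$ maps $\alpha$ to a box on the row or column of $\alpha_j$, while an element of $\text{PObs}$ maps to a box on the row or column of $\alpha_l$ (possibly shifted by multiplication by $x$ or $y$), and since $\alpha_j$ and $\alpha_l$ are distinct minimal generators of $I_1$ in case a), their rows and columns are disjoint up to the finite shift controlled by $\text{NotP}$ — which is precisely why $\text{NotP}$ was introduced. Once this disjointness is verified, adding the three counts gives
\[
\#B(I_1,I_2,I_3) \;=\; (s_3+1)+(s_2+1)+\bigl(2n-(s_2-1)-(s_3-2)\bigr) \;=\; 2n+5,
\]
as claimed.
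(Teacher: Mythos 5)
Your proposal is correct and follows essentially the same route as the paper: the same three-piece decomposition of $B(I_1,I_2,I_3)$, the same cardinalities drawn from Observations \ref{numberobs} and \ref{numberobs3} together with $\#\,\text{NotP}=1$ in case a), and the same final arithmetic giving $2n+5$. The disjointness of $\text{Obs}(I_1,I_2)$ and $\text{PObs}$ that you flag as the remaining obstacle is likewise only asserted, not proved, in the paper's own observation, and your justification that $(\alpha'_i,\alpha_j)\notin\text{Obs}(I_2,I_3)$ (distinct standard monomial generators of $I_1$ lie on distinct rows and distinct columns) is actually more explicit than what the paper provides.
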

\begin{observation}
In case b), i.e. $\alpha'_l$ is immediately above or to the left of $\alpha_j$, we have that:
\[
 \#\, B(I_1, I_2, I_3) = 2n+4.
\]
Suppose, for example that $\alpha'_l=y\alpha_j$. Then we have $s''+1$ elements in the first set. Moreover $(\alpha'_i, \alpha_j) \notin \text{Obs}(I_2, I_3)$ if and only if $i<j$ and $p'_i=1$ i.e. if and only if $(\alpha'_i, \alpha_j) \in \text{NotP}$, so that we have $s'+1-  \#\, \text{NotP}$ elements in the second set. 
\begin{align*}
 \#\,\left\{ (0,0,h_{\alpha''_i \alpha_l})\,\vert \,\,i=0, \dots, s'' \right\} \,=\, s''+1 \\
 \#\, \left\{\left(0,h_{\alpha'_i, \alpha_j}, \Su(h_{\alpha'_i, \alpha_j})\right)\,\,\vert\,\,(\alpha'_i\alpha_j) \notin \text{Obs}(I_2,I_3)  \right\}\,=\, s'+1- \#\, \text{NotP}
\end{align*}
Finally $\text{Obs}(I_1, I_2)$ and $\text{PObs}$ are distinct sets of cardinality $s'+1-2$ and $s''+1-2- \#\,\text{NotP}$ respectively. So that 
\[
 \#\,\left\{\left(\ff, \Su(\ff), \Su(\Su(\ff))\right)\,\,\left\vert \,\, (\alpha\beta) \notin \text{Obs}(I_1,I_2)\cup\text{PObs} \right.\right\}=  \#\, B(I_1) - (s'+1-2) - (s''+1-3).
\]
Again $ \#\, B(I_1)=2n$, and putting together the results we have $ \#\, B(I_1,I_2,I_3) = 2n+4$.

\end{observation}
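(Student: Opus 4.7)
The plan is to compute the cardinality by adding the sizes of the three constituent sets in (\ref{base3}), which are manifestly pairwise disjoint since they have different patterns of vanishing in the first two coordinates: a vector of the form $(0,0,h_{\alpha''_i,\alpha'_l})$ cannot equal one of the form $(0, h_{\alpha'_i,\alpha_j},\Su(h_{\alpha'_i,\alpha_j}))$ because the latter has nonzero second coordinate (recall $h_{\alpha'_i,\alpha_j}(\alpha'_i)=\alpha_j\not\equiv 0\bmod I_2$), and neither of these can equal a vector with nonzero first coordinate $\ff\in B(I_1)$.

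First I would count the first set: it is indexed by the generators $\alpha''_0,\dots,\alpha''_{s''}$ of $I_3$ so it contributes $s''+1$ vectors, and the $h_{\alpha''_i,\alpha'_l}$ are linearly independent (and in particular distinct). Next, in the case b) assumption $\alpha'_l=y\alpha_j$, I would verify that $h_{\alpha'_i,\alpha_j}=f_{\alpha'_i,\alpha_j}\in B(I_2)$ is well defined for every $i=0,\dots,s'$ — this is immediate from $\alpha_j\in Q_{\alpha'_i}\cup P_{\alpha'_i}$, which can be checked by a direct case distinction on whether $i<j$, $i=j$, or $i>j$ using the position of $\alpha_j$ relative to $\alpha'_l$. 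Then $\Su(h_{\alpha'_i,\alpha_j})$ is defined exactly when $(\alpha'_i,\alpha_j)\notin\text{Obs}(I_2,I_3)$, and by the very definition of $\text{NotP}$ in case b), the pairs of the form $(\alpha'_i,\alpha_j)$ that lie in $\text{Obs}(I_2,I_3)$ are precisely the elements of $\text{NotP}$. Thus the second set has cardinality $s'+1-\#\text{NotP}$.

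For the third set I would invoke Lemma~\ref{cardinalityB(I)} to get $\#B(I_1)=2n$, Observation~\ref{numberobs} to get $\#\text{Obs}(I_1,I_2)=s'-1$, and the identity $\#\text{PObs}=\#\text{Obs}(I_2,I_3)-\#\text{NotP}=(s''-1)-\#\text{NotP}$, which follows because $\text{Prec}$ sets up a bijection between $\text{Obs}(I_2,I_3)\setminus\text{NotP}$ and $\text{PObs}$ by construction. The main check needed here is that $\text{Obs}(I_1,I_2)$ and $\text{PObs}$ are disjoint subsets of $\{(\alpha,\beta)\}$ indexing $B(I_1)$; this is where I expect the main obstacle to lie, but it follows from the observation that $\text{PObs}$ consists of pairs $(\alpha,\beta)$ whose image $\Su(\ff)$ hits the monomial $\alpha'_l$ lying in $\Gamma_2$, while $\text{Obs}(I_1,I_2)$ consists of pairs whose naive $\Su$ would hit $\alpha_j$ which sits outside $\Gamma_2$ — so the two obstructions are of distinct geometric nature and cannot coincide. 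Hence the third set has size $2n-(s'-1)-((s''-1)-\#\text{NotP})=2n-s'-s''+2+\#\text{NotP}$.

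Adding the three contributions gives
\[
(s''+1)+(s'+1-\#\text{NotP})+(2n-s'-s''+2+\#\text{NotP})\;=\;2n+4,
\]
as claimed. The only delicate point is the disjointness of $\text{Obs}(I_1,I_2)$ and $\text{PObs}$ inside the index set of $B(I_1)$, which I would verify by a short case analysis paralleling Definition~\ref{definitionNotP}, distinguishing according to the cases $j\,1a)$--$j\,3)$ and using that $\alpha_j\neq\alpha'_l$.
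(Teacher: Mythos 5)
Your proof is correct and follows the same route as the paper: it counts the three constituent sets of $B(I_1,I_2,I_3)$ separately, using $\#\, B(I_1)=2n$, $\#\,\text{Obs}(I_1,I_2)=s'+1-2$, $\#\,\text{Obs}(I_2,I_3)=s''+1-2$, the identification of the pairs $(\alpha'_i,\alpha_j)$ lying in $\text{Obs}(I_2,I_3)$ with $\text{NotP}$, and the disjointness of $\text{Obs}(I_1,I_2)$ and $\text{PObs}$ (which the paper merely asserts, so your sketch is at least as careful). Your bookkeeping is in fact the internally consistent version: taking $\#\,\text{PObs}=(s''+1-2)-\#\,\text{NotP}$ makes the $\#\,\text{NotP}$ terms cancel to give $2n+4$ for any value of $\#\,\text{NotP}$, whereas the paper's displayed formula for the third set reuses the case a) expression $\#\, B(I_1)-(s'+1-2)-(s''+1-3)$, which agrees with its own preceding sentence only when $\#\,\text{NotP}=1$.
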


\begin{lemma}
\label{B(123)base}
Let $(I_1,I_2,I_3)$ be a fixed point of $\Hi^{n, n+1, n+2}(\CC^2)$. The set $B(I_1,I_2,I_3)$ as defined in \ref{definitionB(I1I2I3)} is a weight basis for $T_{I_1,I_2,I_3} \Hi^{n,n+1,n+2}(\CC^2)$
\end{lemma}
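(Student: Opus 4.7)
The plan is to verify the three defining properties of a basis: (i) each element of $B(I_1,I_2,I_3)$ lies in $T_{I_1,I_2,I_3}\Hi^{n,n+1,n+2}(\CC^2)$, (ii) the elements are linearly independent, and (iii) they span the tangent space. Throughout I mimic the proof of Lemma \ref{B(I1I2)isbasis}, exploiting Lemma \ref{inKer} to reduce the three pairwise kernel conditions to checking only the $(12)$- and $(23)$-conditions.

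For (i), vectors of the first type $(0,0,h_{\alpha''_i,\alpha'_l})$ lie trivially in $\text{Ker}(\phi_{12}-\psi_{12})$ and, since $\alpha'_l\equiv 0\bmod I_2$, also in $\text{Ker}(\phi_{23}-\psi_{23})$. Vectors of the second type $(0,h_{\alpha'_i,\alpha_j},\Su(h_{\alpha'_i,\alpha_j}))$ lie in the $(12)$-kernel because $\alpha_j\equiv 0\bmod I_1$, and in the $(23)$-kernel by the defining property of $\Su$ (Lemma \ref{suiv} applied to the pair $(I_2,I_3)$). Vectors of the third type $(\ff,\Su(\ff),\Su(\Su(\ff)))$ satisfy both conditions by Lemmas \ref{suiv} and \ref{suivsuiv}.

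For (ii), suppose a linear combination
\[
\sum_i c_i(0,0,h_{\alpha''_i,\alpha'_l})+\sum_i d_i(0,h_{\alpha'_i,\alpha_j},\Su(h_{\alpha'_i,\alpha_j}))+\sum_{(\alpha,\beta)}e_{\alpha,\beta}(\ff,\Su(\ff),\Su(\Su(\ff)))=(0,0,0).
\]
Projecting onto the first coordinate yields $\sum e_{\alpha,\beta}\ff=0$ in $\text{Hom}_R(I_1,R/I_1)$; since $\{\ff\}$ is a subset of the basis $B(I_1)$ (Lemma \ref{B(I)basis}), all $e_{\alpha,\beta}$ vanish. Projecting onto the second then gives $\sum d_i h_{\alpha'_i,\alpha_j}=0$; these maps are clearly linearly independent (each sends a distinct generator of $I_2$ to the fixed monomial $\alpha_j$ and all others to $0$), so all $d_i$ vanish. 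Projecting onto the third forces all $c_i=0$ by the same reasoning.

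For (iii), given a pure-weight $(f,h,g)\in T_{I_1,I_2,I_3}\Hi^{n,n+1,n+2}(\CC^2)$, induct on $n(f)+n(h)+n(g)$ where $n$ counts nonzero images on generators. If $f\neq 0$, pick the extremal generator $\alpha_{\bar\imath}$ (as in Lemma \ref{B(I1I2)isbasis}) with $f(\alpha_{\bar\imath})=c\beta$. Exactly as in that proof, $(\alpha_{\bar\imath},\beta)\notin\text{Obs}(I_1,I_2)$: an obstruction at the second step would produce a relation $y^{p_{\bar\imath}}\alpha_{\bar\imath}=x^{q_{\bar\imath+1}}\alpha_{\bar\imath+1}$ forcing $\alpha_j\equiv 0\bmod I_2$, which is absurd. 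A dual argument at the $(23)$-step — applying the kernel relation to a defining relation in $I_3$ and using that $\alpha'_l\notin I_3$ — shows $(\alpha_{\bar\imath},\beta)\notin\text{PObs}$; thus the third-type vector associated to $(\alpha_{\bar\imath},\beta)$ is in $B(I_1,I_2,I_3)$, and subtracting an appropriate multiple reduces $n(f)$. If $f=0$, then $h$ takes values in $\langle\alpha_j\rangle$; pick an extremal $\alpha'_{\bar\imath}$ with $h(\alpha'_{\bar\imath})=c\alpha_j$ and subtract a second-type vector, using the same obstruction argument at the $(23)$-step to verify $(\alpha'_{\bar\imath},\alpha_j)\notin\text{Obs}(I_2,I_3)$. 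If also $h=0$, then $g$ takes values in $\langle\alpha'_l\rangle$ and is tautologically a linear combination of the first-type generators.

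The main obstacle is the refinement required in step (iii): when $f\neq 0$, I must verify that the extremal choice of $(\alpha_{\bar\imath},\beta)$ avoids $\text{PObs}$, not merely $\text{Obs}(I_1,I_2)$. The argument is the natural iterate of the two-flag contradiction, applied now to the $(23)$-kernel condition and to $\alpha'_l$ rather than $\alpha_j$; the case analysis of Definition \ref{definitionNotP} — particularly case b), where $\alpha'_l\in\{y\alpha_j,x\alpha_j\}$ — must be checked one case at a time to ensure that the $\text{NotP}$ indices really are the only ones that can arise from an extremal tangent vector, so that $\text{PObs}=\text{Obs}(I_2,I_3)\setminus\text{NotP}$ captures exactly the correct obstructions.
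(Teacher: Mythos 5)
Your proposal follows essentially the same route as the paper: membership via Lemma \ref{inKer} and the $\Su$ constructions, linear independence by triangular projection onto the three coordinates, and spanning by induction on the number of generators with nonzero image, with the key point being the exclusion of $\text{Obs}(I_1,I_2)$ and $\text{PObs}$ for the extremal pair $(\alpha_{\bar{\imath}},\beta)$. The one detail you leave implicit --- and which the paper makes explicit --- is that before running the $(23)$-level contradiction to rule out $(\alpha_{\bar{\imath}},\beta)\in\text{PObs}$, one must first subtract suitable combinations of the second-type vectors $\left(0,h_{\alpha'_i,\alpha_j},\Su(h_{\alpha'_i,\alpha_j})\right)$ so that $h$ vanishes on the generators $\alpha'_i$ with $i\geq\bar{\imath}$; only then does the syzygy argument force $\alpha'_l\equiv 0 \bmod I_3$ and yield the contradiction.
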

\begin{proof}

We prove first that they are all linearly independent. Let
\begin{align*}
\sum_{i=0}^{s''} c''_i\,\,\left(0,0,h_{\alpha''_i, \alpha'_l}\right) \,
+\, &\sum_{i=0}^{s'}c'_i\,\,\left(0,h_{\alpha'_i, \alpha_j}, \Su(h_{\alpha'_i, \alpha_j})\right) \,+\\+\, &\sum_{(\alpha, \beta)\notin \text{Obs}(I_1, I_2)\cup \text{Pobs}} c_{\alpha, \beta}\,\, \left(\ff, \Su(\ff), \Su(\Su(\ff))\right) = (0,0,0).
\end{align*}
Then we have $\sum_{\alpha, \beta} c_{\alpha, \beta}\,\, \left(\ff \right)=0 \in \text{Hom}_R(I_1, \bigslant{R}{I_1})
$ that implies $c_{\alpha, \beta} = 0$ for all $(\alpha, \beta)$. Then also $\sum_{i=0}^{s'} c'_i\,\,\left(h_{\alpha'_i, \alpha_j}\right)\in\text{Hom}_R(I_2, \bigslant{R}{I_2})$, that in turns implies $c'_i=0$ for all $i=0,\dots,s'$; and finally $\sum_{i=0}^{s''} c''_i\,\,\left(h_{\alpha''_i, \alpha'_l}\right)=0$ implies $c''_i=0$ for all $i=0,\dots,s''$. \\

\hspace{3em}We prove that they actually generate $T_{I_1,I_2,I_3} \Hi^{n,n+1,n+2}(\CC^2)$. As we know, we can generate the tangent space with elements of pure weight, thus it is enough to show that all such elements are in the span of $B(I_1,I_2,I_3)$. Let $\tau=(f,h,g)$ be a tangent vector of pure weight. We will prove that it belongs to $\text{span}(B(I_1,I_2,I_3))$ by induction on $n(f)$ the number of generators of $I_1$ that $f$ does not send to $0$. \\

\hspace{3em}If $n(f)=0$, then $f=0$, and $\tau= (0,h,g)$. Since in particular $(0,h) \in \text{Ker}(\phi_{12}-\psi_{12})$ we have that $h(\alpha'_i)\in \langle \alpha_j \rangle$ for all $i=0,\dots,s'$. Then we argue by induction on $ n(h)$, the number of generators of $I_2$ that $h$ does not send to $0$. 

\hspace{3em}Either $n(h)=0$, i.e. $h=0$ and, reasoning in the same way, $g \in \langle h_{\alpha''_i \alpha_l}\vert i=0,\dots, s''\rangle \subset T_{I_3} \Hi^{n+2}(\CC^2)$, or $n(h)=a$. 

\hspace{3em}If we are in case a) of cases \ref{casesjl}, we have immediately that $h \in \langle h_{\alpha'_i, \alpha_j}\vert i=0,\dots, s'\rangle \subset T_{I_2} \Hi^{n+1}(\CC^2)$ and we can subtract 
\[
(h-h_{\alpha'_k \alpha_j}, g-\Su(h_{\alpha'_k \alpha_j})) \text{ with } h(\alpha'_k) \neq 0
\] 
to be able to use the induction step. 

\hspace{3em}If we are in case b), we need to be slightly more careful and use the same reasoning as the induction step in Lemma \ref{B(I)basis}. In particular if we suppose, say, $\alpha_j \in P'_{\alpha'_i}$ and $\bar{\imath}:= max_k\{k \vert h(\alpha'_k)=0\}$, we see that $(\alpha'_{\bar{\imath}}, \alpha_j) \notin \text{Obs}(I_2, I_3)$ and we subtract 
\[
(h-h_{\alpha'_{\bar{\imath}} \alpha_j}, g-\Su(h_{\alpha'_k \alpha_j}))
\]
to be able to use the induction step. If $\alpha_j \in Q'_{\alpha'_i}$ the reasoning is similar. \\

\hspace{3em}Suppose now the statement true for all $\tau'= (f',h',g')$ with $n(f')<t$, and suppose that $\tau=(f,h,g)$ has $n(f)=t$. Since $f$ is of pure weight we know that either:
\begin{itemize}
\item[1)] we have $\bar{\imath}:= \max \{i\vert f(\alpha_i) \neq 0\}$ is strictly smaller than $s$ and $f(\alpha_{\bar{\imath}})\in \langle \beta \rangle$ with $\beta \in P_{\alpha_{\bar{\imath}}}$, or 
\item[2)] we have $\bar{\imath}:= \min \{i\vert f(\alpha_i) \neq 0\}$ is strictly bigger than $0$ and $f(\alpha_{\bar{\imath}})\in \langle \beta \rangle$ with $\beta \in Q_{\alpha_{\bar{\imath}}}$. 
\end{itemize}
Suppose we are in the first case. Renormalize $\tau=(f,h,g)$ so that $f(\alpha_{\bar{\imath}})=\beta$.

First of all notice that $(\alpha_{\bar{\imath}}, \beta)\notin \text{Obs}(I_1,I_2)\cup \text{PObs}$. In fact if $(\alpha_{\bar{\imath}}, \beta)\in \text{Obs}(I_1,I_2)$  we would be in the situation where
\[
\bar{\imath}\leq j\qquad \text{ and } \beta =\frac{\alpha_j}{y^{p_{\bar{\imath}}}}
\]
with $f(\alpha_i)=0$ for all $i>\bar{\imath}$. But all $\theta \in \text{Hom}_R(I_2, \bigslant{R}{I_2})$ such that $\theta(\alpha_{\bar{\imath}})= \frac{\alpha_j}{y^{p_{\bar{\imath}}}}$ has $\theta(\alpha_{\bar{\imath}+1})\neq 0 $, if not 
\[
y^{p_{\bar{\imath}}}\alpha_{\bar{\imath}} =x^{q_{\bar{\imath}+1}} \alpha_{\bar{\imath}+1} \implies \alpha_j=y^{p_{\bar{\imath}}}\frac{\alpha_j}{y^{p_{\bar{\imath}}}}=\theta(y^{p_{\bar{\imath}}}\alpha_{\bar{\imath}})=\theta(x^{q_{\bar{\imath}+1}}\alpha_{\bar{\imath}+1})=0 
\]
that mod $I_2$ is absurd, since $\alpha_j \notin I_2$.  Then, indeed, $(\alpha_{\bar{\imath}}, \beta)\notin \text{Obs}(I_1, I_2)$. Suppose, now, $(\alpha_{\bar{\imath}}, \beta)\in \text{PObs}$. This can happen only in the case a). Moreover, by definition, it means that 
\[
\bar{\imath}\leq l\qquad \text{ and }\quad  \beta =\frac{\alpha_j}{y^{p'_{\bar{\imath}}}}.
\]
Then, since $(f,h) \in \text{Ker}(\phi_{12}-\psi_{12})$
\[
h(\alpha_{\bar{\imath}})=\beta, \text{ and } h(\alpha'_{i})\in\langle \alpha_j \rangle \text{ for all } i\geq \bar{\imath}.
\]
So that we can suppose that $h(\alpha'_{i})=0 \text{ for all } i\geq \bar{\imath}$ by changing $\tau=(f,h,g)$ with appropriate combinations of $(0,h_{\alpha'_i, \alpha_j}, \Su(h_{\alpha'_i, \alpha_j})) \in B(I_1,I_2,I_3)$. But then, exactly as before, it is impossible that there exists a $g\in \text{Hom}_R(I_3, \bigslant{R}{I_3})$ such that $(h,g)\in \text{Ker}(\phi_{23}-\psi_{23})$. So, in fact, $(\alpha_{\bar{\imath}}, \beta)\notin \text{Obs}(I_1,I_2)\cup \text{PObs}$.

Then, always thanks to the definition of $\bar{\imath}$, we have that $y^{p_{\bar{\imath}}}\beta \in I_1$, and thanks to the fact that $(\alpha_{\bar{\imath}}, \beta)\notin \text{Obs}(I_1,I_2)\cup \text{PObs}$, we have that 
\[
\left(f-f_{\alpha_{\bar{\imath}}, \beta}, h-\Su(f_{\alpha_{\bar{\imath}}, \beta}),g - \Su(\Su(f_{\alpha_{\bar{\imath}}, \beta}))\right) \in \bigcap_{1\leq i<j\leq 3} \left(\text{Ker}(\phi_{ij}-\psi_{ij})\circ \pi_{ij}\right)
\]
is well defined and has first coordinate that sends strictly more generators to $0$ mod $I_1$, i.e. $n(f-f_{\alpha_{\bar{\imath}}})<t$. So that by induction we have that $\tau \in \text{span}(B(I_1,I_2,I_3))$ as desired. \\

\hspace{3em}The case 2) is completely analogous. 
\end{proof}
\begin{lemma}[Cheah]
The spaces $\Hi^{n,n+1,n+2}(\CC^2)$ are singular for all $n\geq 1$.
\end{lemma}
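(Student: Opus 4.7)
The approach is to exhibit, for each $n\ge 1$, a torus-fixed point of $\Hi^{n,n+1,n+2}(\CC^2)$ at which the Zariski tangent space has dimension strictly greater than $\dim \Hi^{n,n+1,n+2}(\CC^2) = 2(n+2) = 2n+4$. The key observation is already in hand: by Observation \ref{cardinalityB(123)}, the basis $B(I_1,I_2,I_3)$ has cardinality $2n+5$ as soon as the triple falls into \emph{case a)} of Definition \ref{casesjl}, i.e.\ as soon as $\alpha'_l$ is \emph{not} immediately above or to the right of $\alpha_j$. Combined with Lemma \ref{B(123)base}, which identifies $B(I_1,I_2,I_3)$ with a basis of the Zariski tangent space, this forces $\dim T_{(I_1,I_2,I_3)} \Hi^{n,n+1,n+2}(\CC^2) = 2n+5 > 2n+4$, and hence singularity at $(I_1,I_2,I_3)$.

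Concretely, I would propose the explicit flag
\[
(I_1, I_2, I_3) \;=\; \bigl((x,\, y^n),\; (x,\, y^{n+1}),\; (x^2,\, xy,\, y^{n+1})\bigr).
\]
A direct check shows the chains $I_1 \supset I_2 \supset I_3$ of monomial ideals with $\dim_{\CC} R/I_i = n-1+i$, so this is a torus-fixed point of $\Hi^{n,n+1,n+2}(\CC^2)$. In Young-diagram language, $\Gamma_1$ is a single column of height $n$; the box added in passing to $\Gamma_2$ is $\alpha_j = (0,n)$, and the box added in passing to $\Gamma_3$ is $\alpha'_l = (1,0)$. For every $n \ge 1$, the box $(1,0)$ is neither at position $(0, n+1)$ nor at position $(1,n)$, so $\alpha'_l$ is neither immediately above nor immediately to the right of $\alpha_j$; thus we are in case a) of Definition \ref{casesjl}.

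Applying Observation \ref{cardinalityB(123)} yields $\#\,B(I_1,I_2,I_3) = 2n+5$, and Lemma \ref{B(123)base} then gives $\dim T_{(I_1,I_2,I_3)} \Hi^{n,n+1,n+2}(\CC^2) = 2n+5$. Comparing with the dimension $2n+4$ of the ambient flag Hilbert scheme (recalled just after the definition of $\Hi^{\mathfrak{n}}(\CC^2)$), the tangent space is strictly too large, so $(I_1, I_2, I_3)$ is a singular point and the theorem follows. There is essentially no obstacle beyond verifying that our explicit triple genuinely lies in case a); the cardinality count and the dimension comparison are already done in the preceding lemmas.
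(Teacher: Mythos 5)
Your proof is correct and follows the same route as the paper: locate a torus-fixed point in case a) of Definition \ref{casesjl}, invoke Observation \ref{cardinalityB(123)} and Lemma \ref{B(123)base} to get a $(2n+5)$-dimensional tangent space, and compare with $\dim \Hi^{n,n+1,n+2}(\CC^2)=2n+4$. The only difference is that you make the existence of such a fixed point explicit with the flag $((x,y^n),(x,y^{n+1}),(x^2,xy,y^{n+1}))$, which checks out, whereas the paper simply asserts that a case a) fixed point always exists.
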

\begin{proof}
We always have a fixed point of type described in case $a)$ of Definition \ref{casesjl}. For that fixed point Observation \ref{cardinalityB(123)}  and Lemma \ref{B(123)base} tell us that the tangent space is $2n+5$ dimensional, whereas we know that the dimension of the space is $2n+4$. 
\end{proof}

\subsection{The tangent to the Hilbert-Samuel's strata, case $\mathfrak{n}= (n, n+1, n+2)$}

\hspace{3em}We can now study the weights of the elements of the basis $B(I_1, I_2, I_3)$ in order to understand the tangent space at $(I_1, I_2, I_3)$ of $M_{T(I_1), T(I_2), T(I_3)}$ and of  $G_{T(I_1), T(I_2), T(I_3)}$, and their respective positive parts. We divide this task in three observations, one for each type of vectors in the basis $B(I_1, I_2, I_3)$. 
\begin{observation}
\label{weight(0,0,h)}
Let us look first at the first kind of elements in $B(I_1,I_2,I_3)$ i.e. 
\[
\left\{ (0,0,h_{\alpha''_i, \alpha'_l})\,\vert \,\,i=0, \dots, s'' \right\}.
\]
For these we only need to compare the degree of a generator of $I_3$ with that of $\alpha'_l$ the only  monomial in $I_2$ but not in $I_3$. More precisely, we have that $(0,0,h_{\alpha''_i, \alpha'_l}) \in B(I_1, I_2, I_3)$ is tangent to $M_{T(I_1), T(I_2), T(I_3)}$ if and only if $\deg \alpha''_i\leq\deg \alpha'_l$; tangent to $G_{T(I_1), T(I_2), T(_3)}$ if and only if $\deg \alpha''_i= \deg \alpha'_l$; tangent to  $M_{T(I_1), T(I_2), T(I_3)}$ and with positive weights if and only if $\deg \alpha''_i\leq\deg \alpha'_l$ and $\deg_y \alpha''_i<\deg_y \alpha'_l$; tangent to  $G_{T(I_1), T(I_2), T(I_3)}$ and with positive weight if and only if $\deg \alpha''_i=\deg \alpha'_l$ and $\deg_y \alpha''_i<\deg_y \alpha'_l$. So everything is perfectly analogous to the case described in Observation \ref{weight(0,h)} and its graphic interpretation. 

\begin{minipage}{0.4\textwidth}
\begin{center}
\ytableausetup{boxsize=1.5em, aligntableaux=bottom}
\,\,
\begin{ytableau} 
\none[\bullet]\\
\,&\none[\bullet] \\
\,&\,&*(cyan) \alpha'_l  \\
\,&\, &\,&\none[\bullet] \\
\,&\, &\,&*(green)\alpha_j  \\
\,&\, &\,&\,&\none[\bullet] \\
\,&\, &\,&\,&\,&\none[\bullet] \\
\,&\, &\, &\,&\, &\,&\none[\bullet]
\end{ytableau}

\begin{small}The bullets are generators of $I_3$. \end{small}
\end{center}
\end{minipage}
\begin{minipage}{0.58\textwidth}
The first set of elements of $B(I_1, I_2, I_3)$ contains as many vectors tangent to $M_{T(I_1), T(I_2), T(I_3)}$ as bullets on the same anti-diagonal of $\alpha'_l$ or on lower anti-diagonals. They are positive if they are on a strictly lower anti-diagonal or are on the right of $\alpha'_l$. Similarly for  $G_{T(I_1), T(I_2), T(I_3)}$. The relative position of $\alpha'_l$ and $\alpha_j$ does not matter here.  
\end{minipage}

\end{observation}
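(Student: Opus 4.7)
The plan is to reduce the three-step flag computation to the single-ideal computation provided by Observation \ref{goettschetangenttoM}. By Lemma \ref{Rmorphisms}, a triple $(f, g, h)$ represents a tangent vector to $\Hi^{n, n+1, n+2}(\CC^2)$ at $(I_1, I_2, I_3)$ precisely when it lies in the three kernels $\text{Ker}(\phi_{ij}-\psi_{ij})$; and it is tangent to $M_{T_1, T_2, T_3}$ (respectively $G_{T_1, T_2, T_3}$) exactly when in addition each of $f$, $g$, $h$ is tangent to the single-ideal stratum $M_{T_i}$ (respectively $G_{T_i}$). For the vector $(0, 0, h_{\alpha''_i, \alpha'_l})$ the first two entries vanish and hence are trivially tangent, so the whole problem reduces to checking when $h_{\alpha''_i, \alpha'_l} \in T_{I_3} \Hi^{n+2}(\CC^2)$ is tangent to $M_{T_3}$ or $G_{T_3}$, and similarly for the positive parts.

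Next I would invoke Goettsche's criterion (Observation \ref{goettschetangenttoM}). The map $h_{\alpha''_i, \alpha'_l}$ is pure-weight, of bi-weight $(\deg_x \alpha'_l - \deg_x \alpha''_i,\, \deg_y \alpha'_l - \deg_y \alpha''_i)$, because by construction it sends the generator $\alpha''_i$ to $\alpha'_l$ and every other standard monomial generator of $I_3$ to $0$. Goettsche's criterion then immediately yields: tangency to $M_{T_3}$ is equivalent to $\deg \alpha'_l \geq \deg \alpha''_i$, and tangency to $G_{T_3}$ is equivalent to $\deg \alpha'_l = \deg \alpha''_i$, which are the first two claims of the observation.

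For the positive-weight statements I would unwind Definition \ref{tori}: the weights $0 < w_1 < w_2$ of $\TT_{1^+}$ satisfy $n_k w_1 > (n_k - 1) w_2$, so on the bi-weights $(a, b)$ that can arise among generators of a length-$(n+2)$ ideal, the one-parameter weight $a w_1 + b w_2$ is positive if and only if either $a + b > 0$, or $a + b = 0$ and $b > 0$. Combined with the bi-weight computed above, this gives exactly the additional constraint $\deg_y \alpha'_l > \deg_y \alpha''_i$ whenever the total-degree shift is zero, completing the four assertions.

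The main obstacle is essentially nonexistent. Since the first two entries of $(0, 0, h_{\alpha''_i, \alpha'_l})$ vanish, the kernel conditions for $(1,2)$ and $(1,3)$ hold automatically, and the remaining $(2,3)$-kernel condition is part of the statement that $(0, 0, h_{\alpha''_i, \alpha'_l}) \in B(I_1, I_2, I_3) \subset T_{I_1, I_2, I_3} \Hi^{n, n+1, n+2}(\CC^2)$ established in Lemma \ref{B(123)base}. So everything is pure degree bookkeeping, and the graphical interpretation as bullets on or below the antidiagonal through $\alpha'_l$ is immediate. In particular, the relative position of $\alpha_j$ and $\alpha'_l$ plays no role here, since $h_{\alpha''_i, \alpha'_l}$ does not depend on $\alpha_j$.
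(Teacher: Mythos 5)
Your proposal is correct and follows the same route as the paper: since the first two components of $(0,0,h_{\alpha''_i,\alpha'_l})$ vanish, everything reduces to the degree bookkeeping of Observation \ref{goettschetangenttoM} applied to the pure-weight map $\alpha''_i\mapsto\alpha'_l$ in $\text{Hom}_R(I_3,\bigslant{R}{I_3})$, together with the $\TT_{1^+}$ positivity rule ($a+b>0$, or $a+b=0$ and $b>0$), exactly as the paper does by declaring the situation "analogous to Observation \ref{weight(0,h)}". Your formulation of the positive-weight condition --- the constraint $\deg_y\alpha''_i<\deg_y\alpha'_l$ is only imposed when the total degrees agree --- is in fact the accurate one, matching the paper's graphical description (bullets on strictly lower anti-diagonals, or on the same anti-diagonal to the right of $\alpha'_l$) rather than the slightly overstated inline "if and only if".
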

\begin{observation}
\label{weight(0,h,h)}
Let us look now at the second kind of elements in $B(I_1,I_2,I_3)$ i.e. 
\[
\left\{\left(0,h_{\alpha'_i, \alpha_j}, \Su(h_{\alpha'_i, \alpha_j})\right)\,\,\vert\,\, i=0,\dots, s',\text{ and } (\alpha'_i,  \alpha_j) \notin \text{Obs}(I_2, I_3) \right\}.
\]
We need to distinguish cases of Definition \ref{casesjl}. In case a), i.e. $\alpha_l$ is not immediately to the left or above $\alpha_j$ we have that $(\alpha'_i, \alpha_j)\notin \text{Obs}(I_2,I_3)$, for all $i=0,\dots,s'$. Again  $(0, h, \Su(h))$ has the same weight as $h$, so that  we can study the weight of the triple only by comparing the degree of $\alpha'_i$, a generator of $I_2$, with that of $\alpha_j$ the only monomial in $I_1$ but not in $I_2$. More precisely, we have that $\left(0,h_{\alpha'_i, \alpha_j}, \Su(h_{\alpha'_i, \alpha_j})\right) \in B(I_1, I_2, I_3)$ is tangent to $M_{T(I_1), T(I_2), T(I_3)}$ if and only if $\deg \alpha'_i\leq\deg \alpha_j$; tangent to $G_{T(I_1), T(I_2), T(_3)}$ if and only if $\deg \alpha'_i= \deg \alpha_j$; tangent to  $M_{T(I_1), T(I_2), T(I_3)}$ and with positive weights if and only if $\deg \alpha'_i\leq\deg \alpha_j$ and $\deg_y \alpha'_i<\deg_y \alpha_j$; tangent to  $G_{T(I_1), T(I_2), T(I_3)}$ and with positive weights if and only if $\deg \alpha'_i=\deg \alpha_j$ and $\deg_y \alpha'_i<\deg_y \alpha_j$. Graphically: 

\begin{center}\textbf{case a)}\end{center}
\begin{minipage}{0.4\textwidth}
\begin{center}
\ytableausetup{boxsize=1.5em, aligntableaux=bottom}
\,\,
\begin{ytableau} 
\none[\bullet]\\
\,&\none[\bullet] \\
\,&\,&*(cyan)\bullet \\
\,&\, &\,&\none[\bullet] \\
\,&\, &\,&*(green)\alpha_j  \\
\,&\, &\,&\,&\none[\bullet] \\
\,&\, &\,&\,&\,&\none[\bullet] \\
\,&\, &\, &\,&\, &\,&\none[\bullet]
\end{ytableau}

\begin{small}The bullets are the generators of $I_2$.\end{small}
\end{center}
\end{minipage}
\begin{minipage}{0.58\textwidth}
Here the position of $\alpha_l$ does not matter, as long as it is not immediately above or to the left of $\alpha_j$. The second set of elements of $B(I_1, I_2, I_3)$ contains as many vectors tangent to $M_{T(I_1), T(I_2), T(I_3)}$ as bullets on the same anti-diagonal of $\alpha_j$ or on lower anti-diagonals. They are positive if they are on a strictly lower anti-diagonal or are on the right of $\alpha_j$. Similarly for  $G_{T(I_1), T(I_2), T(I_3)}$. \end{minipage}

\vspace{1em}
In case b) i.e. if $\alpha'_l= y\alpha_j$ or $\alpha'_l= x\alpha_j$ we have, instead, that some of $(\alpha'_i, \alpha_j)$ are actually in $\text{Obs}(I_1,I_2)$. Precisely these are $\left\{(\alpha_i, \alpha_j)\vert i<j \text{ and } p'_i=1\right\}$ if   $\alpha'_l= y\alpha_j$, (resp. $\{(\alpha_i, \alpha_j)\vert i>j \text{ and } q'_i=1\}$ if   $\alpha'_l= x\alpha_j$). 

\hspace{3em}However instead of not counting them we do like this: 
we count them, and we consider their weight as usual by comparing $\deg \alpha'_i$ with $\deg \alpha_j$ if $i<j$ (resp. $\deg \alpha'_i$ with $\deg \alpha_j$ if $i>j$), but then we \emph{subtract their contribution} considering their weight by comparing $\deg y\alpha'_i$ with $\deg y\alpha_j=\alpha'_l$ if $i<j$ (resp. $\deg x\alpha'_i$ with $\deg x\alpha_j=\alpha'_l$ if $i>j$). Graphically:

\begin{center}\textbf{case b)}\end{center}
\begin{minipage}{0.4\textwidth}
\begin{center}
\ytableausetup{boxsize=1.5em, aligntableaux=bottom}
\,\,
\begin{ytableau} 
\none[\bullet]\\
\,&\,&\none[\bullet] \\
\,&\, &\,&*(cyan)\alpha'_l \\
\,&\, &\,&*(green)\alpha_j  \\
\,&\, &\,&\,&\none[\bullet] \\
\,&\, &\,&\,&\, \\
\,&\, &\,&\,&\,&\none[\bullet] &\none[\star]\\
\,&\, &\, &\,&\, &\,&\none[\bullet] &\none[\,\,p'_0=1]
\end{ytableau}

\begin{small}The bullets are some generators of $I_2$.\end{small}
\end{center}
\end{minipage}
\begin{minipage}{0.58\textwidth}
Here $\alpha'_l=y\alpha_j$. The second set of elements of $B(I_1, I_2, I_3)$ contains as many vectors tangent to $M_{T(I_1), T(I_2), T(I_3)}$ as bullets on the same anti-diagonal of $\alpha_j$ or on lower anti-diagonals. They are positive if they are on a strictly lower anti-diagonal or are on the right of $\alpha_j$. But we should have not counted those on the right of $\alpha_j$ with $p'_i=1$ (in this case only $\alpha'_0$). Then we subtract all the $\star$ for these, according to their weight that we calculate by comparing the degree of the star with the degree of $\alpha'_l=y\alpha_j$. Similarly for $G_{T(I_1), T(I_2), T(I_3)}$.
 \end{minipage}

\end{observation}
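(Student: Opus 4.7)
The plan is to reduce the statement to a weight comparison between $\alpha_j$ and the generators $\alpha'_i$ of $I_2$, and then to carefully account for the combinatorial set $\text{Obs}(I_2, I_3)$.

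First I would note that for a triple of the form $(0, h_{\alpha'_i, \alpha_j}, \Su(h_{\alpha'_i, \alpha_j}))$, all three components carry the same pure weight $\text{wt}(\alpha_j) - \text{wt}(\alpha'_i)$: the map $h_{\alpha'_i, \alpha_j}$ sends a single generator to a single monomial, and by construction $\Su(h_{\alpha'_i, \alpha_j})$ either agrees with $h_{\alpha'_i, \alpha_j}$ on a generator of $I_3$ of the form $x^a y^b \alpha'_i$ or sends it to zero. Applying the criterion of Observation~\ref{goettschetangenttoM} componentwise, the triple lies in $T_{(I_1, I_2, I_3)} M_{T_1, T_2, T_3}$ exactly when its weight preserves or raises total degree, which reduces to the single condition $\deg \alpha'_i \leq \deg \alpha_j$; lying in the tangent space to $G_{T_1, T_2, T_3}$ requires $\deg \alpha'_i = \deg \alpha_j$; and the positive-weight conditions for the $\TT_{1^+}$ action additionally require $\deg_y \alpha'_i < \deg_y \alpha_j$. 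Graphically these are precisely the generators $\alpha'_i$ lying on or below the antidiagonal through $\alpha_j$ (respectively strictly below, or on the same antidiagonal but to the right of $\alpha_j$, for the positive part).

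It remains to determine which indices $i$ genuinely contribute, i.e., satisfy $(\alpha'_i, \alpha_j) \notin \text{Obs}(I_2, I_3)$. In case~(a), where $\alpha_l$ is not adjacent to $\alpha_j$, I would check directly from Definition~\ref{definitionObs23} that no pair $(\alpha'_i, \alpha_j)$ lies in $\text{Obs}(I_2, I_3)$: the obstruction set consists of pairs whose second coordinate has the form $\alpha_l / y^{p'_i}$ or $\alpha_l / x^{q'_i}$, none of which can equal $\alpha_j$ under the case~(a) hypothesis that $\alpha_l$ sits away from $\alpha_j$. Hence all $s'+1$ vectors contribute, and the count is a direct reading of the antidiagonal picture.

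Case~(b), where $\alpha'_l$ equals $y\alpha_j$ or $x\alpha_j$, is where the real work lies and where I expect the main obstacle. Here the obstructed pairs are precisely those listed in $\text{NotP}$, namely $(\alpha'_i, \alpha_j)$ with $i<j$ and $p'_i = 1$ (resp. $i>j$ and $q'_i = 1$). The strategy suggested by the text is an inclusion--exclusion: first count as in case~(a), as if every generator of $I_2$ produced a contributing vector of weight $\text{wt}(\alpha_j) - \text{wt}(\alpha'_i)$; then, for each obstructed index, subtract a term whose weight is computed by promoting $\alpha'_i$ to $y\alpha'_i$ (resp. $x\alpha'_i$) and $\alpha_j$ to $\alpha'_l = y\alpha_j$ (resp. $x\alpha_j$). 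To justify the correction I would verify that these ``overcounted'' contributions match exactly the phantom vectors that would extend under $B(I_2) \to B(I_2, I_3)$ were it not for $\text{Obs}(I_2, I_3)$, and that the weights agree on both sides of the bookkeeping. The manipulation is delicate but should follow the same linear-independence and exhaustion pattern as in the proof of Lemma~\ref{B(123)base}, restricted to the appropriate weight subspaces.
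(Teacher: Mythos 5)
Your proposal is correct and follows essentially the same route as the paper: reduce everything to the weight of $h_{\alpha'_i,\alpha_j}$ via Observation \ref{goettschetangenttoM}, note that in case a) no pair $(\alpha'_i,\alpha_j)$ lies in $\text{Obs}(I_2,I_3)$, and in case b) count all $s'+1$ generators and then subtract the $\text{NotP}$ contributions with the shifted weight (comparing $y\alpha'_i$ with $\alpha'_l$), precisely so that the correction cancels against the corresponding terms in the analysis of the third type of vectors (Observation \ref{countsarethesame}). The only point worth tightening is your case a) justification: $\alpha_j=\alpha'_l/y^{p'_i}$ would force $\alpha'_l=y^{p'_i}\alpha_j$, which fails not merely because $\alpha'_l$ is not adjacent to $\alpha_j$, but because in case a) both are minimal generators of $I_1$ and hence incomparable under divisibility.
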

\vspace{0.8em}
\begin{observation}
\label{weight(f,f,f)}
Finally let us look at the third type of elements of $B(I_1,I_2,I_3)$, those of the form 
\[
\left(\ff, \Su(\ff), \Su(\Su(\ff))\right) \,\text{ with }\, (\alpha\beta) \notin \text{Obs}(I_1,I_2)\cup\text{PObs}.\]
\hspace{3em}Again we need only to consider the weight of $\ff$. In particular $(\ff, \Su(\ff),$ $ \Su(\Su(\ff))) \in B(I_1, I_2, I_3)$ is tangent to $M_{T(I_1), T(I_2), T(I_3)}$ if and only if $\ff$ is tangent to $M_{T(I_1)}$ i.e. if and only if $\deg \alpha \leq \deg \beta$. Similarly for the positive part, and for the tangent to $G_{T(I_1), T(I_2), T(I_3)}$ and its positive part. 

\hspace{3em}In fact we need to record when $\ff$ with $(\alpha, \beta)\in \text{Obs}(I_1,I_2) \cup \text{PObs}$ \emph{is} raising or preserving degree, and so on, and then understand the dimension of the appropriate vector spaces by \emph{difference}. 

Suppose first $(\alpha, \beta)\in \text{Obs}(I_1,I_2)$, then everything goes as in Observation \ref{weight(f,f)}, and for each $\alpha_i$ generator of $I_1$ there is at most one $\beta$ such that $(\alpha_i, \beta)\in \text{Obs}(I_1,I_2)$ and the relevant checks on degree are between either $y^{p_i}\alpha_i$ and $\alpha_j$ or between $x^{q_i}\alpha_i$ and $\alpha_j$, so that 
\begin{equation}
f_{\alpha_i, \beta} \in T_{I_1} M_{T(I_1)} \text{ but } (\alpha_i, \beta)\in \text{Obs}(I_1,I_2) \iff 
\begin{cases}
 \deg y^{p_i}\alpha_i \leq\deg \alpha_j& \text{ if } i\leq j, \\
 \deg x^{q_i}\alpha_i \leq\deg \alpha_j& \text{ if } i\geq j 
\end{cases}
\end{equation}
and similarly for all the other spaces we are interested in. Graphically: 

\begin{minipage}{0.5\textwidth}
\begin{center}
\ytableausetup{boxsize=1.5em, aligntableaux=bottom}
\,\,
\begin{ytableau} 
\,\\
\,\\
\,&\, &\,\\
\,&\, &\,&*(green)\alpha_j  \\
\,&\, &\,&\, \\
\,&\, &\,&\,&\none[]&\none[\star] \\
\,&\, &\,&\,&\,&\none[] &\none[\star]\\
\,&\, &\, &\,&\, &\,&\none[]
\end{ytableau}

\begin{small}The stars are in correspondence with $\text{Obs}(I_1, I_2)$. \end{small}
\end{center}
\end{minipage}
\begin{minipage}{0.48\textwidth}
Here the position $\alpha_l$ does not matter. We want to understand who are those $(\alpha, \beta)$ such that $\ff \in B(I_1)$ but $(\alpha, \beta) \in \text{Obs}(I_1, I_2)$, and study their weights. Then exactly as in the Observation \ref{weight(f,f)}, we need to subtract a star according to the anti-diagonal it is in relative to the anti-diagonal of $\alpha_j$.   \end{minipage}
 \vspace*{1em}

Suppose now that  $(\alpha, \beta)\in \text{PObs}$, then necessarily $(x^ay^b\alpha,\, x^ay^b\beta)\in \text{Obs}(I_2, I_3)$ with $(a,b)=$ $(0,0), (0,1)$ or $(1,0)$ and $x^ay^b\alpha=\alpha'_i$ a minimal generator of $I_2$. Then we can focus on $\text{Obs}(I_2, I_3)$. For these we can reproduce, completely analogously, the same reasoning we did in the case of $\text{Obs}(I_1, I_2)$, replacing, though, generators of $I_1$ with generators of $I_2$, and $\alpha_j$ with $\alpha_l$. 

\textbf{Case a)}. Remember that there is only one element in $\text{NotP}$ and it corresponds to the index of $\text{Obs}(I_2, I_3)$ that is not relevant, because there is not $\ff \in B(I_1)$ that realizes it as $\Su(\ff)$.  Recall that $\text{NotP}$ is either
\begin{align*}
\text{If } (j<l)\, \text{ then }\, \text{NotP} = 
\begin{cases} 
\left(\alpha_{j-1}, \frac{\alpha_l}{y^{p'_{j-1}}}\right) &\text{ or }\\
\left(x\alpha_{j}, \frac{\alpha_l}{y}\right). &
\end{cases}\\
\text{If } (j>l)\, \text{ then }\,   \text{NotP} = 
\begin{cases} 
\left(y\alpha_{j}, \frac{\alpha_l}{x}\right) &\text{ or }\\
\left(\alpha_{j+1}, \frac{\alpha_l}{x^{q'_{j-1}}}\right). &
\end{cases} 
\end{align*}
Now look first at one case, say $(j<l)$. Then interestingly enough, the relevant test on the degrees is between $y^{p'_{j-1}}\alpha_{j-1}$ and $\alpha_l$ or between $yx\alpha_{j}$ and $\alpha_l$ but in any case $y^{p'_{j-1}}\alpha_{j-1}$ or $yx\alpha_{j}$ is the box one to the left and one above of $\alpha_j$, so that in the graphical visualization of $\ref{starsanddots}$ the relevant check is always the same. The same is true if $(j>l)$. Thus, in all cases, the $Obs(I_2, I_3)$ that we need not to consider, is the one represented by the box one to the right and one above $\alpha_j$. This starts to be relevant only if it is on the same anti-diagonal as $\alpha'_l$ or on a lower one, i.e. \emph{only if $\alpha_j$ is on an anti-diagonal at least two steps lower than the one of $\alpha_l$}. 
Graphically we can summarize as follow:  

\begin{center}\textbf{Case a}) \end{center}
\vspace*{1em}
\begin{minipage}{0.42\textwidth}
\begin{center}
\ytableausetup{boxsize=1.5em, aligntableaux=bottom}
\begin{ytableau} 
*(cyan) \alpha_l\\
\,&\none[]&\none[\star]\\
\, &\,\\
\, &\,& \none[\,] &\none[\diamond]\\
\, &\,&*(green)\alpha_j   \\
\, &\,&\,&\none[] & \none[\,] &\none[\star] \\
\,&\, &\,&\,&\,&\none[\,] &\none[\star]\\
\,&\, &\, &\,&\, &\,&\none[]
\end{ytableau}

\begin{small}The stars are in correspondence with the $\text{Obs}(I_2, I_3)$ that matter. \end{small}
\end{center}
\end{minipage}
\begin{minipage}{0.57\textwidth}
We need to subtract those $(\alpha, \beta)$ with $\ff \in B(I_1)$ and the correct weight, such that $(\alpha, \beta) \in \text{PObs}$. For each such, there is an $(\alpha'_i, \beta') \in \text{Obs}(I_2, I_3)$. We can then proceed by studying this $(\alpha'_i, \beta')\in \text{Obs}(I_2, I_3)$ i.e. comparing the associated star with $\alpha_l$. However we need not consider $\text{NotP}$, the corner of $\alpha_j$, that is the box one to the left and one above of $\alpha_j$, is represented as a $\diamond\,$ in the picture.
\end{minipage}
\vspace{2em}

\textbf{Case b)}
Now either $\alpha'_l=y\alpha_j$ or $\alpha'_l=x\alpha_j$. Suppose to be in the first case. When we list the elements $\ff\in B(I_1)$ according to their weight, we have still to exclude all those for which $(\alpha, \beta) \in \text{PObs}$. The $\text{PObs}$ are in one to one correspondence with the $\text{Obs}(I_2, I_2)$ except for $\text{NotP} = \left\{(\alpha'_i, \alpha_j) \left\vert i<j \text{ and } p'_i=1 \right.\right\}.$

\begin{center}\textbf{case b)}\end{center}
\begin{minipage}{0.4\textwidth}
\begin{center}
\ytableausetup{boxsize=1.5em, aligntableaux=bottom}
\,\,
\begin{ytableau} 
\none[]&\none[\star]\\
\, \\
\,&\, &\,&*(cyan)\alpha'_l \\
\,&\, &\,&*(green)\alpha_j  \\
\,&\, &\,&\,&\none[]&\none[\star] \\
\,&\, &\,&\,&\, \\
\,&\, &\,&\,&\,&\none[] &\none[\bigcirc]\\
\,&\, &\, &\,&\, &\,&\none[]
\end{ytableau}

\begin{small}The stars represent $\text{Obs}(I_2, I_3)\setminus \text{NotP}$\\
The $\bigcirc$ represents $\text{NotP}$. 
\end{small}
\end{center}
\end{minipage}
\begin{minipage}{0.58\textwidth}
Here $\alpha'_l=y\alpha_j$. We need to subtract $\text{Obs}(I_2, I_3)\setminus \text{NotP}$ according to their weights, i.e. according to the relative position of the anti-diagonal they are in and $\alpha'_l$. The elements in $ \text{NotP}$ are the $\bigcirc$ above the $\alpha'_i$ with $p'_i=1$. 
 \end{minipage}

\begin{observation}
\label{countsarethesame}
The elements in $\text{NotP}$, the $\bigcirc$ of the picture above, are exactly those stars we subtracted in the Observation \ref{weight(0,h,h)} \textbf{case b)}. This insures that the final formula is the same along all cases. 
\end{observation}
\end{observation}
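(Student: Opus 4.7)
The plan is to verify the claimed bijection and matching of weights by writing out both sides directly. Assume without loss of generality that we are in the subcase $\alpha'_l = y\alpha_j$, the subcase $\alpha'_l = x\alpha_j$ being entirely symmetric. By definition, $\text{NotP} = \{(\alpha'_i, \alpha_j) \mid i < j, \ p'_i = 1\}$, and I would first argue that both the stars subtracted in Observation \ref{weight(0,h,h)} case b) and the circles in Observation \ref{weight(f,f,f)} case b) are parametrized by this very same set.

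For the first side, recall that in Observation \ref{weight(0,h,h)} case b) we formally included \emph{all} second-type vectors $(0, h_{\alpha'_i, \alpha_j}, \Su(h_{\alpha'_i, \alpha_j}))$ by comparing $\deg \alpha'_i$ with $\deg \alpha_j$, even though for $(\alpha'_i, \alpha_j) \in \text{NotP}$ the vector is ill-defined because $(\alpha'_i, \alpha_j) \in \text{Obs}(I_2, I_3)$. The correction (the star) is then subtracted with the weight obtained by comparing $\deg y\alpha'_i$ with $\deg y\alpha_j = \deg \alpha'_l$. Thus the stars are indexed by $\text{NotP}$, and each star carries the weight of $y\alpha'_i$ relative to $\alpha'_l$.

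For the second side, the circles in Observation \ref{weight(f,f,f)} case b) are by construction the elements of $\text{Obs}(I_2, I_3)$ that do \emph{not} lie in $\text{PObs}$, i.e.\ those with no precursor in $B(I_1)$; this is exactly $\text{NotP}$. In the enumeration of third-type tangent vectors we subtract weighted contributions for every element of $\text{Obs}(I_2, I_3) \setminus \text{NotP}$, so the circles mark precisely the would-be subtractions that are \emph{not} performed, each carrying the weight one would have assigned: $\deg \alpha_j - \deg \alpha'_i$, equivalently $\deg y\alpha_j - \deg y\alpha'_i = \deg \alpha'_l - \deg y\alpha'_i$.

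Finally, the identification and weight-matching is immediate. The identity map on $\{(\alpha'_i, \alpha_j) \mid i < j,\ p'_i = 1\}$ provides the bijection, and multiplication by $y$ translates generator and image by the same amount, so $\deg y\alpha'_i - \deg \alpha'_l = \deg \alpha'_i - \deg \alpha_j$ and $\deg_y y\alpha'_i - \deg_y \alpha'_l = \deg_y \alpha'_i - \deg_y \alpha_j$. Hence both total and $y$-degree comparisons agree. The extra subtraction performed in Observation \ref{weight(0,h,h)} case b) exactly cancels the omitted subtraction in Observation \ref{weight(f,f,f)} case b), and the combined count for $\dim T^+_{(I_1,I_2,I_3)} M_{T_1,T_2,T_3}$ (and analogously for $M$, $G$, $G^+$) in case b) matches the formula obtained in case a). The main obstacle is nothing more than careful bookkeeping: one must unpack the definitions of $\text{NotP}$, $\text{Obs}(I_2,I_3)$, and $\text{PObs}$ and track how they enter the dimension count, but once the two accounting conventions are aligned the cancellation is immediate.
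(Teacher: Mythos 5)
Your verification of \textbf{case b)} is correct and follows essentially the same reasoning as the paper: both the stars subtracted in Observation \ref{weight(0,h,h)} case b) and the circles here are indexed by $\text{NotP}=\left\{(\alpha'_i,\alpha_j)\,\vert\, i<j,\ p'_i=1\right\}$, and since $\alpha'_l=y\alpha_j$ and $p'_i=1$ the two weight conventions (comparing $y\alpha'_i$ with $\alpha'_l$ versus comparing $\alpha'_i$ with $\alpha_j$) differ by a common translation by $y$ and hence agree in every degree, so the two corrections cancel.

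There is, however, a genuine gap: your opening ``without loss of generality'' only covers the two symmetric subcases of case b) ($\alpha'_l=y\alpha_j$ or $\alpha'_l=x\alpha_j$), and \textbf{case a)} of Definition \ref{casesjl} --- the generic situation in which $\alpha'_l$ is \emph{not} adjacent to $\alpha_j$ --- is never addressed. In that case $\text{NotP}$ is a single element whose explicit form depends on which of the four configurations of Definition \ref{cases12} the box $\alpha_j$ realizes and on whether $j<l$ or $j>l$; the content of the observation is that in every one of these configurations the relevant degree test for the omitted element of $\text{Obs}(I_2,I_3)$ is the \emph{same}, namely the comparison of the box $xy\alpha_j$ (one step up and one step to the right of $\alpha_j$) with $\alpha'_l$, so that this omission contributes if and only if $\deg\alpha_j\leq \deg\alpha_l-2$. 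This is precisely the origin of the $\diamond$ correction term in Lemma \ref{dimensiontangent123}, which drives the main dimension count of the paper (and the extra $+1$ in Proposition \ref{mainproposition}), so it cannot be subsumed under the cancellation argument you give for case b): in case a) nothing cancels, and one must instead check the uniformity of the test across all subcases of Definition \ref{definitionNotP}. You also do not comment on the preliminary claims that the weight of $\left(\ff,\Su(\ff),\Su(\Su(\ff))\right)$ equals that of $\ff$ and that the $\text{Obs}(I_1,I_2)$ bookkeeping proceeds as in Observation \ref{weight(f,f)}; these are routine, but the case a) analysis is not.
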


We are now ready to write combinatorial formulas for the dimensions of the tangent spaces at $M_{T_1, T_2, T_3}$ $G_{T_1, T_2, T_3}$ and their positive parts.  To state them we need to recall Definition \ref{starsanddots}.  We will apply it to the two couples of Young diagrams $(\Gamma_1, \Gamma_2)$ and $(\Gamma_2, \Gamma_3)$. Each couple is considered independently from the other. The only extra bit of information that remembers that we are actually dealing with a triple of nested Young diagrams is given by the following.
\begin{definition}
$\,$\\
$\,$\\
\begin{minipage}{0.7\textwidth}
Let 
\[
\Gamma_3 = \Gamma_2 \cup \{\alpha_1\}= \Gamma_1\cup\{\alpha_2\}\cup\{\alpha_1\}  \]
be a triple of nested Young diagrams. Then if $\alpha_2$ is \emph{not} immediately above or on the right of $\alpha_1$ put a diamond $\diamond$ in the box immediately above and to the right of $\alpha_1$ i.e. in position $xy\alpha_1$.  
\end{minipage}
\begin{minipage}{0.3\textwidth}
\begin{center}
\ytableausetup{boxsize=1.2em, aligntableaux=bottom}
\begin{ytableau} 
*(cyan) \alpha_2\\
\,&\none[]\\
\, &\,\\
\, &\,& \none[\,] &\none[\diamond]\\
\, &\,&*(green)\alpha_1   \\
\, &\,&\,&\none[] & \none[\,]  \\
\,&\, &\,&\,&\,&\none[\,] \\
\,&\, &\, &\,&\, &\,&\none[]
\end{ytableau}
\end{center}
\end{minipage}

\vspace{1em}
Then define, in all cases, 
\begin{align*}
&\diamond \, M(\Gamma_1, \Gamma_2, \Gamma_3)\,\, :=\,\,  \#\, \{\diamond \in \Lambda_i \left\vert \Lambda_i < \Lambda_{\alpha_2} \right. \} +   \#\, \{\diamond \in \Lambda_{\alpha_2} \} \\
&\diamond \, G(\Gamma_1, \Gamma_2, \Gamma_3) \,\,:=\,\,   \#\, \{\diamond \in \Lambda_{\alpha_2} \} \\
&\diamond M^+(\Gamma_1, \Gamma_2, \Gamma_3) \,\,:=\,\,   \#\, \{\diamond \in \Lambda_i \left\vert \Lambda_i < \Lambda_{\alpha_2} \right. \} +   \#\, \{\diamond \in \Lambda_{\alpha_2}\left\vert\,\, \diamond \text{ to the right of } \alpha_2 \right. \} \\
&\diamond G^+(\Gamma_1, \Gamma_2, \Gamma_3) \,\,:=\,\,    \#\, \{\diamond \in \Lambda_{\alpha_2}\left\vert\,\, \diamond \text{ to the right of } \alpha_2 \right. \}.
\end{align*}

\end{definition}


\begin{lemma}
\label{dimensiontangent123}
Suppose that $(I_1, I_2, I_3) \in G_{T_1, T_2, T_3} \subset M_{T_1, T_2, T_3}$ is a fixed point of the $\TT_{1^+}$ action on $\Hi^{n, n+1, n+2}(0)$ with generic weights $w_1, w_2$ such that $w_1<w_2$ and $(n+2)w_1 >(n+1)w_2$. Call $(\Gamma_1, \Gamma_2, \Gamma_3)$ the triple of Young diagrams associated to $(I_1, I_2, I_3)$. 
\begin{itemize}
\item[1)] The dimension of the tangent space to $M_{T_1, T_2, T_3}$ at $(I_1, I_2, I_3)$ is equal to 
\[
\dim M_{T_1} + \bullet \, M(\Gamma_1, \Gamma_2)+ \bullet \, M(\Gamma_2, \Gamma_3) -\star \, M(\Gamma_1, \Gamma_2)-\star \, M(\Gamma_2, \Gamma_3) + \diamond \, M(\Gamma_1, \Gamma_2, \Gamma_3).
\]
\item[2)] The dimension of the tangent space to $G_{T_1, T_2, T_3}$ at $(I_1, I_2, I_3)$ is equal to 
\[
\dim G_{T_1} + \bullet \, G(\Gamma_1, \Gamma_2)+ \bullet \, G(\Gamma_2, \Gamma_3) -\star \, G(\Gamma_1, \Gamma_2)-\star \, G(\Gamma_2, \Gamma_3) + \diamond \, G(\Gamma_1, \Gamma_2, \Gamma_3).
\]
\item[3)] The dimension of the positive part of tangent space to $M_{T_1, T_2, T_3}$ at $(I_1, I_2, I_3)$ is equal to 
\[
\dim T^+_{I_1} M_{T_1} + \bullet M^+(\Gamma_1, \Gamma_2)+ \bullet M^+(\Gamma_2, \Gamma_3) -\star M^+(\Gamma_1, \Gamma_2)-\star M^+(\Gamma_2, \Gamma_3) + \diamond M^+(\Gamma_1, \Gamma_2, \Gamma_3).
\]
\item[4)] The dimension of the positive part of the tangent space to $G_{T_1, T_2, T_3}$ at $(I_1, I_2, I_3)$ is equal to 
\[
\dim T^+_{I_1} G_{T_1} + \bullet G^+(\Gamma_1, \Gamma_2)+ \bullet G^+(\Gamma_2, \Gamma_3) -\star G^+(\Gamma_1, \Gamma_2)-\star G^+(\Gamma_2, \Gamma_3) + \diamond G^+(\Gamma_1, \Gamma_2, \Gamma_3).
\]
\end{itemize}
\end{lemma}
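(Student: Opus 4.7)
The strategy is direct: count the elements of the basis $B(I_1,I_2,I_3)$ from Definition \ref{definitionB(I1I2I3)} whose weights satisfy the relevant conditions dictated by Observation \ref{goettschetangenttoM} applied at each of the three levels. By Lemma \ref{B(123)base} this basis consists of three visibly distinct types, and by Observations \ref{weight(0,0,h)}, \ref{weight(0,h,h)}, \ref{weight(f,f,f)} the weight of each of these triples is controlled by a single, easy-to-visualize comparison of degrees of pairs of boxes of the Young diagrams. I will therefore prove all four identities simultaneously: the arguments for tangent-to-$M$, tangent-to-$G$, positive-tangent-to-$M$, and positive-tangent-to-$G$ differ only in the strict/non-strict nature of the inequalities imposed on the degrees, and the same bookkeeping goes through in each of the four cases.

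First I would handle the contribution of the vectors $(0,0,h_{\alpha''_i,\alpha'_l})$. By Observation \ref{weight(0,0,h)} the count of those tangent to $M_{T_1,T_2,T_3}$ is exactly the number of standard generators of $I_3$ lying on the antidiagonal $\Lambda_{\alpha'_l}$ or lower, which by Definition \ref{starsanddots} applied to the couple $(\Gamma_2,\Gamma_3)$ is precisely $\bullet\,M(\Gamma_2,\Gamma_3)$; the positive part gives $\bullet M^+(\Gamma_2,\Gamma_3)$, and likewise for $G$. Next I would count the vectors $(0,h_{\alpha'_i,\alpha_j},\Su(h_{\alpha'_i,\alpha_j}))$. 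In case a) of Definition \ref{casesjl}, where $(\alpha'_i,\alpha_j)\notin\text{Obs}(I_2,I_3)$ for every $i$, Observation \ref{weight(0,h,h)} immediately yields $\bullet\,M(\Gamma_1,\Gamma_2)$, $\bullet M^+(\Gamma_1,\Gamma_2)$, etc. Finally I would count the third family $(f_{\alpha,\beta},\Su(f_{\alpha,\beta}),\Su(\Su(f_{\alpha,\beta})))$. These are in bijection with the degree-raising (resp.\ degree-preserving, positive) elements of $B(I_1)$ whose index does not lie in $\text{Obs}(I_1,I_2)\cup\text{PObs}$. By Goettsche's identification (Observation \ref{goettschetangenttoM}) $B(I_1)\cap T_{I_1}M_{T_1}$ contributes $\dim M_{T_1}$, while Observation \ref{weight(f,f,f)} shows that removing $\text{Obs}(I_1,I_2)$ subtracts exactly $\star\,M(\Gamma_1,\Gamma_2)$ and removing $\text{PObs}$ (which is in bijection with $\text{Obs}(I_2,I_3)\setminus\text{NotP}$ via the map Prec) subtracts $\star\,M(\Gamma_2,\Gamma_3)$, except that in case a) the single element of $\text{NotP}$ corresponds to the $\diamond$-box one step northeast of $\alpha_j$, so an extra $\diamond\,M(\Gamma_1,\Gamma_2,\Gamma_3)$ must be added back whenever that box matters (i.e., when $\alpha_j$ sits at least two antidiagonals below $\alpha_l$).

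Summing the three contributions yields the claimed formula in case a). The main subtlety is case b), where $\alpha'_l\in\{y\alpha_j,x\alpha_j\}$: here the diamond box coincides with a box of $\Gamma_3$ so $\diamond\,M(\Gamma_1,\Gamma_2,\Gamma_3)=0$ by construction, but now several pairs $(\alpha'_i,\alpha_j)$ do lie in $\text{Obs}(I_2,I_3)$ and $\text{NotP}$ is a larger set than in case a). The cancellation that makes the final formula identical in both cases is exactly the content of Observation \ref{countsarethesame}: the stars one must subtract from the count of the second family in Observation \ref{weight(0,h,h)} case b) are in canonical bijection with the elements of $\text{NotP}$, which are precisely the $\text{Obs}(I_2,I_3)$-elements that one must \emph{not} remove from the count of the third family. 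I would make this bijection explicit box by box (it sends an $\alpha'_i$ with $p'_i=1$ lying on the same row as $\alpha_j$ to the corresponding $\bigcirc$-box above it) and verify that the degree comparisons agree, so the subtraction done for the second family is cancelled by the corresponding non-subtraction for the third family, producing the same combinatorial expression as in case a).

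The hard part will be the bookkeeping in case b): one must check carefully that the signed combination of the contributions really does coincide with the case a) formula, since the graphical dictionary of $\bullet,\star,\diamond$ is calibrated on case a). The cleanest way, and the one I would carry out, is to exhibit the explicit bijection announced in Observation \ref{countsarethesame} between the ``extra'' stars of the second family and the elements of $\text{NotP}$, verify its degree-preserving property for each of the four weight regimes ($M$, $G$, $M^+$, $G^+$) separately, and then conclude that both the terms $\diamond$ and the discrepancy between case a) and case b) reduce to zero. The positive-weight versions follow the same template with the one-parameter subgroup $\TT_{1^+}$: the condition $(n+2)w_1>(n+1)w_2$ guarantees that an $f_{\alpha,\beta}$ that preserves total degree is attracting iff it strictly increases $\deg_y$, which is exactly the criterion encoded in the superscript $+$ in Definition \ref{starsanddots}.
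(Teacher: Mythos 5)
Your proposal is correct and follows essentially the same route as the paper: the paper's own proof is a one-line assembly of Definition \ref{definitionB(I1I2I3)} and Observations \ref{weight(0,0,h)}, \ref{weight(0,h,h)}, \ref{weight(f,f,f)} and \ref{countsarethesame}, and your write-up is precisely the expanded bookkeeping of those observations, including the correct treatment of the case b) cancellation between the extra stars of the second family and the elements of $\text{NotP}$. No gaps.
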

\begin{proof}
The statement follows immediately from the definition of $B(I_1, I_2, I_3)$ in \ref{definitionB(I1I2I3)} and   Observations \ref{weight(0,0,h)},  \ref{weight(0,h,h)}, \ref{weight(f,f,f)} and \ref{countsarethesame}.
\end{proof}

In the next chapter we will prove that the Hilbert-Samuel's strata are smooth using the first two points of Lemma \ref{dimensiontangent123} and studying the dimensions of the strata. Once smoothness is proven the last two points of Lemma \ref{dimensiontangent123} will give us the homological degrees of a basis for the homology of $\Hi^{n, n+1, n+2}(0)$. 
 
\chapter{Smoothness of the Hilbert-Samuel's strata}

\hspace{3em}In this chapter we prove the main geometric result we need, namely that the Hilbert-Samuel's strata for the Hilbert Scheme $\Hi^{n, n+1, n+2}(0)$ are smooth. This will allow us to use the theorem of Bialynicki-Birula to study their cell decompositions, and, ultimately, to study the homology of $\Hi^{n, n+1, n+2}(0)$ itself. 

\hspace{3em}The strategy is the following. Now that we know the dimensions of the tangent spaces at the fixed points we can prove that the spaces $M_{T_1, T_2, T_3}$ (and $G_{T_1, T_2, T_3}$) are smooth by studying their dimensions. In particular we want to prove the following. 
\begin{proposition}
\label{geometricdimensionvstangentspace}
Let $T_1, T_2, T_3$ be three admissible  sequences of nonnegative integers as in \ref{admissible}. Then 
\[
\begin{matrix}
\dim M_{T_1, T_2, T_3 } \geq \dim T_{I_1, I_2, I_3} M_{T_1, T_2, T_3 } &\quad & \dim G_{T_1, T_2, T_3 } \geq \dim T_{I_1, I_2, I_3} G_{T_1, T_2, T_3 }
\end{matrix}
\]
for all $I_1, I_2, I_3$ fixed points of the $\TT^2$ action on $M_{T_1, T_2, T_3 }$. 
\end{proposition}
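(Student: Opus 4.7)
The plan is to bound $\dim M_{T_1, T_2, T_3}$ from below at each torus fixed point $(I_1, I_2, I_3)$ by producing an explicit parametrized family of flags through that point whose dimension matches the tangent space count of Lemma \ref{dimensiontangent123}. Since the inequality $\dim M_{T_1, T_2, T_3} \leq \dim T_{(I_1, I_2, I_3)} M_{T_1, T_2, T_3}$ is automatic at any point (and the maximum is attained at fixed points, since every point flows to one under $\TT_{1^+}$), matching the two gives smoothness, and the same bookkeeping specialized to homogeneous ideals handles $G_{T_1, T_2, T_3}$.

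First I would invoke Iarrobino's result, already cited in Proposition \ref{smoothnessIarrobino} and Corollary \ref{dimMT}, which gives a Zariski open covering of $M_{T_1}$ by affine charts $U_{\Gamma_1}$, one for each Young diagram $\Gamma_1$ with $T(\Gamma_1) = T_1$. On $U_{\Gamma_1}$ every ideal has normal form generators of the shape $\alpha_i + \sum_{\beta} c_{\alpha_i, \beta}\,\beta$, where the sum runs over boxes $\beta \in \Gamma_1$ with $\deg \beta \geq \deg \alpha_i$, and the free parameters $\{c_{\alpha_i, \beta}\}$ exhibit $U_{\Gamma_1}$ as an affine space whose dimension matches $\dim T_{I_1} M_{T_1}$.

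Next I would extend this parametrization one step at a time. Over $U_{\Gamma_1}$, choosing a subideal $I_2 \subset I_1$ of length $n+1$ with $T(I_2) = T_2$ corresponds to specifying the extra generator at the box $\alpha_j$: concretely, for each $(I_1, \alpha_j)$ we introduce free parameters $c_{\alpha'_i, \alpha_j}$ along the elements of $B(I_2)$ compatible with the flag, together with new parameters $\text{Suiv}(f_{\alpha,\beta})$ along those $(\alpha, \beta) \notin \text{Obs}(I_1, I_2)$. This is exactly the combinatorial content of Definitions \ref{obs}, \ref{suiv} and the cell count encoded by the $\bullet$ and $\star$ decorations of $(\Gamma_1, \Gamma_2)$ from Definition \ref{starsanddots}; the construction here mirrors Cheah's chart construction for $\Hi^{n, n+1}(\CC^2)$, and the dimension matches $\dim T_{(I_1, I_2)} M_{T_1, T_2}$ of Lemma \ref{dimensiontangent12}. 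Iterating once more with the addition of $\alpha'_l$ produces a parametrized chart for $M_{T_1, T_2, T_3}$ near $(I_1, I_2, I_3)$ whose free parameters fall into three groups: the triples $(f, \text{Suiv}(f), \text{Suiv}(\text{Suiv}(f)))$ with $(\alpha, \beta) \notin \text{Obs}(I_1, I_2) \cup \text{PObs}$, the pairs $(0, h_{\alpha'_i, \alpha_j}, \text{Suiv}(h_{\alpha'_i, \alpha_j}))$, and the vectors $(0, 0, h_{\alpha''_i, \alpha'_l})$ — exactly the basis $B(I_1, I_2, I_3)$ of Definition \ref{definitionB(I1I2I3)}.

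The main obstacle is the third layer, because $\Hi^{n, n+1, n+2}(\CC^2)$ is genuinely singular: one must verify that each parameter introduced in the extension actually preserves $T(I_3) = T_3$ rather than merely $|I_3| = n+2$, and symmetrically that nothing in the combinatorial count from Lemma \ref{dimensiontangent123} is missed. The delicate point is precisely the case distinction of Definition \ref{casesjl}: when $\alpha'_l$ is not adjacent to $\alpha_j$ (Case a) a straightforward $\text{Suiv}$--like extension works and the $\diamond$ correction appears as an additional free direction; when $\alpha'_l$ is immediately above or to the right of $\alpha_j$ (Case b) the parameters in $\text{NotP}$ from Definition \ref{definitionNotP} collide and reduce the count by exactly the amount recorded in Observation \ref{countsarethesame}. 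Confirming on the geometric side that these are the only constraints — for instance by inspecting the leading terms of the universal deformation in the chart and matching with the tangent space formula — closes the argument. Restricting the whole construction to coefficients of homogeneous weight gives the corresponding statement for $G_{T_1, T_2, T_3}$.
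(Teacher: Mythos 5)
Your overall strategy is the paper's: cover $M_{T_1}$ by Iarrobino's normal-pattern charts with standard generators, extend the parametrization one box at a time, and compare the resulting dimension with the tangent-space count of Lemma \ref{dimensiontangent123}. But there is a genuine gap at exactly the point where the three-step case differs from the two-step case. Iterating the two-step construction, one introduces $\bullet\,M(\Gamma_1,\Gamma_2)$ parameters and imposes $\star\,M(\Gamma_1,\Gamma_2)$ equations, then $\bullet\,M(\Gamma_2,\Gamma_3)$ parameters and $\star\,M(\Gamma_2,\Gamma_3)$ equations; this only yields
\[
\dim M_{P(T_1,T_2,T_3)} \;\geq\; \dim M_{P(T_1)} + \textstyle\sum \bullet - \sum \star,
\]
which falls short of $\dim T_{I_1,I_2,I_3} M_{T_1,T_2,T_3}$ by exactly $\diamond\, M(\Gamma_1,\Gamma_2,\Gamma_3)$ when the latter equals $1$. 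You assert that in case a) ``the $\diamond$ correction appears as an additional free direction,'' but you give no mechanism for it. The paper's essential ingredient here is Lemma \ref{diamondequationissatisfied}: when $\deg \alpha_j \leq \deg \alpha_l - 2$, the equation $g'_{j-1}$ arising from the relation $y^{p'_{j-1}}f'_{j-1} - xf'_j$ at the third step is \emph{identically} satisfied modulo $(f''_j,\dots,f''_d)$, because the corresponding relation at the first step already forced its coefficient to vanish and the extra factor of $y$ lets one rewrite everything in terms of the modified generators. Without this (or an equivalent argument), your lower bound does not reach the tangent-space dimension and smoothness does not follow.

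A secondary imprecision: the new coordinates at each step are not ``free parameters along the elements of $B(I_2)$''; they are the $\theta_i$ modifying $f_i \mapsto f_i + \theta_i f_j$ for $i<j$, subject to the $\star$-many division-procedure equations, and the count matching $T_{(I_1,I_2)}M_{T_1,T_2}$ is the difference $\bullet - \star$, not a bijection with basis vectors. Relatedly, the delicate case analysis in the normal-pattern charts is much smaller than you suggest (only cases 1a) and 2) of Definition \ref{cases12} occur there), and the issue you flag about $\text{NotP}$ collisions in case b) is a feature of the tangent-space combinatorics at general fixed points, not of the chart where the dimension bound is actually proved.
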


\hspace{3em} As a consequence of smoothness, we know that the attracting sets are affine cells, and their dimensions are equal to the positive part of the tangent spaces that we already calculated in the previous chapter. 

\hspace{3em}To prove Proposition \ref{geometricdimensionvstangentspace} we use the results of Iarrobino \cite{iarrobino1977punctual}. For every $T$, admissible sequence of integers, he defines a special Young diagram $\Gamma_T$, whose attracting set $A_T$ he proves being affine by giving \emph{explicitly} a set of special generators for each ideal in the cell. Moreover he proves that $M_T$ is covered by a finite union of spaces isomorphic to this cell. 

\hspace{3em}Then we see how this can be extended to $M_{T_1, T_2}$. More precisely we introduce an affine space $\AAA^{\bullet \, M(\Gamma_{T_1}, \Gamma_{T_2})}$ and $\star \, M(\Gamma_{T_1}, \Gamma_{T_2})$ equations that cut out of $\AAA_T \times \AAA^{\bullet \, M(\Gamma_{T_1}, \Gamma_{T_2})}$ exactly the attracting cell labeled by $(\Gamma_{T_1}, \Gamma_{T_2})$. This suffices to prove that the dimension of $M_{T_1, T_2}$ is greater than or equal to the dimension of its tangent space at each point. 

\hspace{3em}The previous step can be extended without problem to $M_{T_1, T_2, T_3}$ by iterating it twice. However to get to the dimension of the tangent space in this case, we need to gain an extra dimension whenever $\diamond\, (\Gamma_{T_1}, \Gamma_{T_2}, \Gamma_{T_3}) =\,1$, recalling point (1) of Lemma \ref{dimensiontangent123}. To do so we show that one of the $\star\,M(\Gamma_{T_2}, \Gamma_{T_3}) $ equations we mentioned above is actually trivially satisfied. \\

\hspace{3em} We repeat all the arguments to prove similar statements for $G_{T_1, T_2, T_3}$.\\

\hspace{3em} Finally, in the last section, we present some direct computations for attracting sets of Hilbert schemes of longer flags. As a result we give sharp bounds for when Hilbert-Samuel's strata are no longer all smooth. We give the Poincar\'{e} polynomials for those few remaining Hilbert schemes whose attracting sets are all affine.   

\section{Iarrobino's standard generators}

\hspace{3em}We need to recall the results of Iarrobino on the punctual Hilbert scheme. The crucial step is that of the definition of \emph{normal pattern} associated to a type $T$ and a \emph{system of parameters} $(u, v)=(ax+by, cx+dy)$. For an ideal with normal pattern we find especially nice generators, the so called \emph{standard generators}. \\

A system of parameters is simply a linear change of coordinates of the plane
\[
(u,v) = \begin{pmatrix} a & b\\ c&d
\end{pmatrix} \begin{pmatrix}x\\y\end{pmatrix} \quad \text{with }  \begin{pmatrix} a & b\\ c&d
\end{pmatrix} \in \text{GL}_2(\CC),
\]
as $\text{GL}_2(\CC)$ acts by automorphisms on $R=\CC[[x,y]]$. Fixing coordinates $(u,v)$, we consider monomials in $u$ and $v$.
Let us begin to define a \emph{pattern} for an ideal $I \in \Hi^n(0)$. This is a set of monomials, in some sense, as disjoint as possible from $I$. 

\begin{definition}
Let $P$ be a set of monomials in $(u,v)$. We denote by $P_j$ the monomials in $P$ having degree $j$. By \emph{type}  $T(P)$ we mean the sequence\[
T(P) \,=(t_0, \dots, t_j,\dots), \qquad \text{where } t_j = \#\,P_j\, .
\]
Let $I \in \Hi^n(0)$. We say that $I$ has pattern $P$ if one of the following equivalent conditions is satisfied. 
\begin{itemize}
\item[(i)] For all $j$,  $\langle P\cap m^j\rangle \oplus I\cap m^j = m^j$,
\item[(ii)] For all $j$,  $\langle P_j\rangle \oplus I_j = R_j$, 
\item[(iii)] $\langle P \rangle \cap I = 0, \quad \text{and }\,T(P) = T(I).$
\end{itemize} 
\end{definition}

\begin{definition}
Let $T$ be an admissible type  as in \ref{admissible} and $(u, v)=(ax+by, cx+dy)$ a system of parameters. Define $\Gamma_T$ to be the only Young diagram that has diagonal sequence $T(\Gamma_T)$ equal to $T$ and is such that all its boxes have the lowest possible $x$ coordinate.

\hspace{3em}The \emph{normal pattern} $P=P\left((u,v),T\right)$ is the set of monomials in $u,v$ with exponents in $\Gamma_T$. Explicitly, $P=\bigcup_j P_j$, where
\begin{align*}
P_j &:= \left\{\,\,\text{the }t_j \text{ monomials of degree } j \text{ with highest $v$ degree}\,\, \right\} \\ &\qquad= \left\{\,\,u^{j-t_j}v^{t_j+1},\,\, u^{j-t_j-1}v^{t_j+2},\,\,\dots,  \,\,uv^{j-1},\,\, v^j  \,\,\right\}.
\end{align*}

\hspace{3em}Once a system of parameters is chosen we define $I_T=I_{(u,v), T}$ to be the monomial ideal associated to $\Gamma_T$ in the parameters $(u,v)$. Most of the time there will be no possible confusion on the choice of parameters, so we simply write $I_T$. 
\end{definition}

\begin{example}
Graphically we can see an example as follow. Let $T=(1, 2, \dots, 6, 5, 3, 2, 0)$. Then $\Gamma_T$ is:

\begin{minipage}{0.5\textwidth}
\begin{center}
\ytableausetup{boxsize=1 em, aligntableaux=bottom}
\begin{ytableau} 
\,\\
\,&\,\\
\,&\,\\
\,&\,&\, \\
\, &\,&\,\\
\, &\,&\,&\,   \\
\, &\,&\,&\,&\,  \\
\,&\, &\,&\,&\,\\
\,&\, &\, &\,&\, &\,
\end{ytableau}
\end{center}
\end{minipage}
\begin{minipage}{0.45\textwidth}
A Young diagram with normal pattern $T=(t_i)_{i\geq 0}$ : all the anti-diagonals $\Lambda_i$ are filled with $t_i$ elements starting from the left and without interruptions.  
\end{minipage}
\end{example}

\begin{definition}
Let $P$ be a normal pattern for $T$ and $(u,v)$. Then we define 
\[
\begin{matrix}
M_P\,=\,M_{P\left((u,v), T\right)}\,:=\,\left\{I\in \Hi(0)\left\vert\, I \text{ has pattern }\, P  \right. \right\} \subset M_T \\
\text{and}\\
G_P\,=\,G_{P\left((u,v), T\right)}\,:=\,\left\{I\in \Hi(0)\left\vert\, I \text{ is homogeneous and  has pattern }\, P  \right. \right\} \subset G_T.
\end{matrix}
\]
\end{definition}
\begin{remark}
\label{normalpararecell}
In the original choice of parameters $(u,v)=(x,y)$ we have that $M_P$ is exactly the attracting set of $M_T$ to $I_T$, and $G_P$ is the attracting set of $G_T$ to $I_T$. Fixing a type $T$ all $M_P$ (resp. $G_P$) for different systems of parameters are clearly isomorphic.
\end{remark} 
\begin{proposition}{\cite[Proposition~3.2]{iarrobino1977punctual}}
\label{normalcover}
Fix an admissible type $T$. Let $N=\sum_j t_j(j+1-t_j)$. Then whenever $P$ runs through the normal patterns of type $T$ in any sets of distinct systems of parameters $(x, y-a_0x), \dots, (x, y-a_N x)$ with $a_i \in \CC$ we have
\[
\bigcup_{j=0}^N \,\,M_{P(x, y-a_jx)} \,= \, M_T, \quad \text{ and } \quad \bigcup_{j=0}^N\,\, G_{P(x, y-a_jx)} \,=\, G_T.
\]
In other words, each ideal $I\in M_T$ (resp. any $I\in G_T$) fails to have normal pattern in at most $N$ systems of parameters of the specified sort.  Observe that the $M_{P(x, y-a_jx)}$ (resp. the $G_{P(x, y-a_jx)}$) share the fixed point $I_T$ so that the above unions are connected. 
\end{proposition}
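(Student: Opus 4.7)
The plan is to fix an arbitrary $I \in M_T$ and show that the set
\[
A(I) \;=\; \{a \in \CC : I \text{ does not have pattern } P((x, y-ax), T)\}
\]
has cardinality at most $N$; granted this, among any $N+1$ distinct slopes $a_0, \dots, a_N$, at least one lies outside $A(I)$, proving that the union equals $M_T$. Using criterion (ii) in the definition of pattern, $a \in A(I)$ precisely when there exists a degree $j$ for which $\langle P_j(x, y-ax)\rangle$ fails to be complementary to $I_j$ inside $R_j$. Writing $A(I) = \bigcup_j A_j(I)$, the problem reduces to bounding $|A_j(I)|$ for each $j$.

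For each $j$, fix a basis of $I_j$ together with the $t_j$ vectors $\{x^i(y-ax)^{j-i} : 0 \le i \le t_j-1\}$ spanning $\langle P_j(x, y-ax)\rangle$; expressed in the monomial basis $\{x^k y^{j-k}\}$ of $R_j$, these $j+1$ vectors form a square matrix whose determinant $D_j(a)$ is a polynomial in $a$ whose zero locus is exactly $A_j(I)$. The crux of the proof is the sharp bound
\[
\deg_a D_j(a) \;\le\; t_j(j+1-t_j).
\]
The natural route is to interpret $a \mapsto \langle P_j(x, y-ax)\rangle$ as a morphism from $\AAA^1$ to $\mathrm{Gr}(t_j, R_j)$, namely the orbit of $\langle P_j(x,y)\rangle$ under the unipotent one-parameter subgroup sending $y \mapsto y - ax$, and to bound the degree of its image in the Pl\"ucker embedding. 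The naive multilinear bound that sums the individual column degrees exceeds $t_j(j+1-t_j)$, so the sharp bound comes from the specific leftmost-boxes combinatorics of $\Gamma_T$: the highest-degree contributions in $a$ force the $t_j$ columns to collapse onto the single monomial $x^j$ and its neighbours, producing linear dependences that kill the top coefficients of $D_j(a)$ down to degree $t_j(j+1-t_j)$.

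Next, one must verify that $D_j(a)$ is not identically zero. Because $I$ has Hilbert-Samuel type $T$, the subspace $I_j \subset R_j$ has codimension $t_j$, so the locus of $t_j$-planes complementary to $I_j$ is a nonempty Zariski open subset of $\mathrm{Gr}(t_j, R_j)$; since the curve $a \mapsto \langle P_j(x, y-ax)\rangle$ is nonconstant (its limit as $a \to \infty$ in the projective completion of the orbit lies in a different Schubert cell than the $a = 0$ point) and no nonconstant morphism from $\AAA^1$ to a projective variety can avoid a dense open set, we conclude $D_j(a) \not\equiv 0$. Summing the degree bounds over $j$ yields $|A(I)| \le \sum_j t_j(j+1-t_j) = N$, which proves the statement for $M_T$. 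The same argument applies verbatim to $G_T$, since the pattern condition is imposed degree-by-degree and $P_j$ depends only on $T$.

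The hardest step will be the sharp degree bound $\deg_a D_j \le t_j(j+1-t_j)$: extracting the exact exponent requires either a careful determinantal calculation exploiting the upper-triangular shape of the expansion of $x^i(y-ax)^{j-i}$ in the monomial basis, or an equivalent computation of the degree of the closure of the one-parameter orbit in the Pl\"ucker embedding of $\mathrm{Gr}(t_j, R_j)$. Finally, the unions are connected because $I_T$ itself lies in every $M_{P((x, y-a_j x), T)}$: reducing each $x^i(y - a_j x)^{j-i}$ modulo $I_{T,j}$ expresses it as a combination of $\{x^i y^{j-i} : 0 \le i \le t_j-1\}$, and the resulting $t_j \times t_j$ change-of-basis matrix is upper triangular with ones on the diagonal, hence invertible.
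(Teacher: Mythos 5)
First, a point of reference: the paper does not prove this proposition — it is quoted directly from Iarrobino — so your argument can only be measured against the statement itself. Your architecture is the right one: fix $I\in M_T$, write the bad locus as $A(I)=\bigcup_j A_j(I)$ where $A_j(I)$ is the zero set of a determinant $D_j(a)$ detecting $\langle P_j((x,y-ax))\rangle\cap I_j\neq 0$, bound $\deg_a D_j$ by $t_j(j+1-t_j)$, and sum. Your closing remark on connectedness is also correct: reducing $x^i(y-ax)^{j-i}$ modulo $(I_T)_j$ is unitriangular in the monomial basis of $R_j/(I_T)_j$, so $I_T$ lies in every chart.

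Two steps, however, are genuinely defective. First, the degree bound — which you yourself single out as the crux — is asserted rather than proved, and your heuristic for it is misdirected: the bound has nothing to do with the ``leftmost-boxes combinatorics of $\Gamma_T$'' (it holds with an arbitrary codimension-$t_j$ subspace in place of $I_j$). The actual mechanism is that, expanding $D_j$ by Laplace along the $t_j$ columns coming from $P_j$, each $t_j\times t_j$ minor of the matrix with entries $\binom{j-i}{m-i}(-a)^{m-i}$ is a \emph{single} monomial $c_S a^{d_S}$ with $d_S=\sum_{m\in S}m-\binom{t_j}{2}\leq t_j(j+1-t_j)$, every permutation term in a given minor having the same $a$-degree. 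Second, and more seriously, your non-vanishing argument rests on a false principle: a nonconstant morphism from $\AAA^1$ to a projective variety can perfectly well miss a dense open set, namely by landing inside its closed complement — here the Schubert divisor $\{W:\, W\cap I_j\neq 0\}$, which is a hyperplane section that could a priori contain the whole orbit curve. So $D_j\not\equiv 0$ is not established, and this is precisely where the linear independence of $I_j$ must enter. Both gaps are repaired at once by noting that $\langle P_j((x,y-ax))\rangle=(y-ax)^{j+1-t_j}\,R_{t_j-1}$, so $a\in A_j(I)$ if and only if $(y-a)^{j+1-t_j}$ divides some nonzero element of the $(j+1-t_j)$-dimensional space $I_j(1,y)$, i.e.\ if and only if the Wronskian of a basis of $I_j(1,y)$ vanishes at $a$; this Wronskian is not identically zero in characteristic $0$, and, computed with an echelon basis of distinct degrees $\leq j$, has degree at most $(j+1-t_j)j-2\binom{j+1-t_j}{2}=t_j(j+1-t_j)$.
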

\hspace{3em}Iarrobino then, in order to prove that $M_T$ is smooth, proves by brute force that $M_P$ is isomorphic to an affine space by giving an explicit isomorphism. This isomorphism specifies the coefficients of a fixed number of some polynomials that are a special set of generators of ideals in $M_P$. Even if remark \ref{normalpararecell} and the fact that the attracting sets are affine cells (thanks to Byialinucky-Birula) already prove smoothness, the explicitness of the work of Iarrobino is essential to have more informations on $M_T$ and $G_T$. \\

\hspace{3em}To go on we need to introduce more notations. We work for the rest of the chapter only with the starting systems of parameter $(u,v)=(x,y)$. \\

\hspace{3em}Suppose $T=(t_i)_{i\geq 0}$ is chosen. Observe that for $I_T$, the only monomial ideal of normal pattern, the standard monomial generators are especially easy. We have $I=(\alpha_0, \alpha_1,\dots,  \alpha_d)$ where $d$ is the initial degree of $I$ and there exist $k_i \in \NN$ such that $k_0=0< k_1<\dots< k_d$ and $\alpha_i=x^{d-i}y^{k_i}.$

\begin{minipage}{0.5\textwidth}
\begin{center}
\ytableausetup{boxsize=2em, aligntableaux=bottom}
\begin{ytableau} 
\none[y^{k_d}]\\
\,&\none[\quad xy^{k_{d-1}}]\\
\,&\,&\none[\alpha_i]\\
\,&\,&\, \\
\, &\,&\, &\none[\dots]\\
\, &\,&\,&\,&\none[\qquad x^{d-2}y^{k_2}]   \\
\, &\,&\,&\,&\,  \\
\,&\, &\,&\,&\,&\none[\qquad x^{d-1}y^{k_1}]\\
\,&\, &\, &\,&\, &\,&\none[x^d]
\end{ytableau}
\end{center}
\end{minipage}
\begin{minipage}{0.5\textwidth}
The standard monomial generators of the monomial ideal $I_T$ with normal pattern are especially easy: 
\begin{itemize} 
\item[$\bullet$] There are exactly $d+1$ monomial generators where $d$ is that initial degree of $I_{T}$ i.e. $s=d$ following previous notation. 
\item[$\bullet$] For all $i=1, \dots, d$ we have $q_i = 1$. Recall that $q_i$ is the $x$ distance between $\alpha_i$ and $\alpha_{i+1}$. 
\item[$\bullet$] The $k_i $ are computable from the $p_t$ for $t \leq i$, where $p_t$ is the $y$ distance between $\alpha_t$ and $\alpha_{t+1}$. 
\end{itemize}
\end{minipage}
\vspace{0.7em}

\begin{definition} Let $I_T$ be the monomial ideal with normal pattern $P(T)$, and let $(\alpha_0, \dots, \alpha_d)$ be the list of its standard monomial generators. If  $\alpha=\alpha_i$ is one of them, then we define, using the notation of \ref{defPalpha}, the following: 
\begin{align}
&\qquad \qquad \qquad\qquad  P^{\geq}_{\alpha}\,\, :=\,\, \left\{\beta \in P_{\alpha} \vert \deg \beta \geq \deg \alpha \right\},\nonumber\\
&\qquad \qquad\qquad \qquad  P^{=}_{\alpha} \,\, :=\,\, \left\{\beta \in P_{\alpha} \vert \deg \beta = \deg \alpha \right\},\nonumber\\
&\qquad \qquad \qquad \qquad S^{\geq}_{\alpha} \,\, :=\,\, \left\{\gamma \in \Gamma_T \setminus P_{\alpha} \vert \deg \gamma \geq \deg \alpha \right\},\\
&\qquad \qquad \qquad \qquad S^{=}_{\alpha} \,\, :=\,\, \left\{\gamma \in \Gamma_T \setminus P_{\alpha} \vert \deg \gamma = \deg \alpha \right\}.\nonumber
\end{align}
We also set:
\begin{align*}
PM:= &\,\, \bigcup_{i=0}^d P^{\geq}_{\alpha_i} \,\, =\,\, \bigcup_{i=0}^d\left\{(\alpha_i, \beta)\left\vert \beta \in P_{\alpha_i} \deg \beta \geq \deg \alpha_i \right.\right\}, \\
PG:= &\bigcup_{i=0}^d P^{=}_{\alpha_i}= \,\, \bigcup_{i=0}^d\left\{(\alpha_i, \beta)\left\vert \beta \in P_{\alpha_i} \deg \beta  = \alpha \right.\right\}, \\
SM:=&\bigcup_{i=0}^d \,\, S^{\geq}_{\alpha_i}= \bigcup_{i=0}^d\left\{(\alpha_i, \gamma)\left\vert\,\gamma \in \Gamma_T \setminus P_{\alpha} \text{ and } \deg \gamma \geq \deg \alpha_i \right.\right\}, \\
SG:=&\bigcup_{i=0}^d S^{=}_{\alpha_i} \,\, = \bigcup_{i=0}^d\,\, \left\{(\alpha_i, \gamma)\left\vert \,\gamma \in \Gamma_T \setminus P_{\alpha} \text{ and } \deg \gamma = \deg \alpha_i \right.\right\}.
\end{align*}
\end{definition}
\begin{theorem}{\cite[Lemma 2.4, Prop. 2.5, Lemma 2.7, Prop. 2.8]{iarrobino1977punctual}}

\label{standard}
Let $T$ be an admissible sequence of nonnegative integers.  
\begin{itemize}
\item[(1)] We have that $\dim M_T= \#\, PM$ and there is an isomorphism $\AAA^{\dim M_T} \cong M_P$. The isomorphism is explicitly constructed by determining the coefficients of certain polynomials that will be generators of a point in $M_P$. More precisely:
\item[(2)] For each $(a_{\alpha, \beta})_{(\alpha, \beta)\in PM}$ point in $\AAA^{\dim M_T}$ there exist unique coefficients $d_{\alpha, \gamma}\in \CC$ for all $(\alpha, \gamma)\in SM$ such that the ideal $I=I(a_{\alpha, \beta})$ generated by the polynomials
\begin{equation}
\label{standardgenerators}
f_{i} = \alpha_i + \sum_{\beta \in P^{\geq}_{\alpha_i}} a_{\alpha_i, \beta}\, \beta + \sum_{\gamma \in S^+_{\alpha_i}} d_{\alpha_i, \gamma}\, \gamma
\end{equation}  
is in $M_P$. These polynomials are called the \emph{standard generators} for $I$. The coefficients $d_{\alpha_i, \gamma}$ are polynomial expressions in the $a_{\alpha_k, \beta}$'s, for $k\leq i$.
\item[(3)] We have that $\dim G_T= \#\, PG$ and there is an isomorphism $\AAA^{\dim G_T} \cong G_P$. The isomorphism is explicitly constructed by determining the coefficients of certain polynomials that will be generators of a point in $G_P$. More precisely:
\item[(4)] For each $(a_{\alpha, \beta})_{(\alpha \beta)\in PG}$ point in $\AAA^{\dim G_T}$ there exist unique coefficients $d_{\alpha, \gamma}\in \CC$ for all $(\alpha, \gamma)\in SG$ such that the ideal $I=I(a_{\alpha, \beta})$ generated by the polynomials
\begin{equation}
\label{standardgeneratorsG}
f_{i} = \alpha_i + \sum_{\beta \in P^=_{\alpha_i}} a_{\alpha_i, \beta}\, \beta + \sum_{\gamma \in S^=_{\alpha_i}} d_{\alpha_i, \gamma}\, \gamma
\end{equation}  
is in $G_P$. These polynomials are called the \emph{standard generators} for $I=I(a_{\alpha, \beta})$. The coefficients $d_{\alpha_i, \gamma}$ are polynomial expressions in the $a_{\alpha_k, \beta}$'s, for $k\leq i$. 
\end{itemize}
\end{theorem}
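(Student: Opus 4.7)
The plan is to follow Iarrobino's constructive approach: exhibit normal-form standard generators for each ideal in $M_P$ (resp.\ $G_P$) and show that the free coefficients parametrize an affine space.

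First I would establish uniqueness of the standard generators. Fix $I \in M_P$ and set $e_i := \deg \alpha_i$. The pattern condition for $I$ yields $\mathfrak{m}^j = \langle P \cap \mathfrak{m}^j \rangle \oplus (I \cap \mathfrak{m}^j)$ for every $j \geq 0$. Since $\alpha_i \in \mathfrak{m}^{e_i}$, projecting $\alpha_i$ onto $I \cap \mathfrak{m}^{e_i}$ along $\langle P \cap \mathfrak{m}^{e_i} \rangle$ produces a unique element $f_i \in I$ such that $f_i - \alpha_i$ is a $\CC$-linear combination of monomials in $\Gamma_T$ of degree $\geq e_i$. Splitting $\Gamma_T \cap \mathfrak{m}^{e_i} = P^{\geq}_{\alpha_i} \sqcup S^{\geq}_{\alpha_i}$ then gives the shape (\ref{standardgenerators}), with coefficients labelled $a_{\alpha_i, \beta}$ on $P^{\geq}_{\alpha_i}$ and $d_{\alpha_i, \gamma}$ on $S^{\geq}_{\alpha_i}$.

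The main step, which I expect to be the principal obstacle, is showing that the $d$'s are uniquely determined polynomial expressions in the $a$'s. The syzygies of the monomial generators of $I_T$ are generated by consecutive edge relations $y^{p_i}\alpha_i = x^{q_{i+1}}\alpha_{i+1}$. Substituting $f_i, f_{i+1}$ produces
\[
y^{p_i} f_i - x^{q_{i+1}} f_{i+1} = -\, y^{p_i}(f_i - \alpha_i) + x^{q_{i+1}}(f_{i+1} - \alpha_{i+1}),
\]
a polynomial that lies in $I$ by construction. Using the $f_j$'s to repeatedly reduce those monomials of its support that lie in $I_T$ (a finite Buchberger-style normal form reduction that terminates by the Noetherian property), one obtains an expression supported entirely in $\langle P \rangle$; since $I \cap \langle P \rangle = 0$, it must vanish identically. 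Ordering monomials of $\Gamma_T$ by antidiagonal and within each antidiagonal by increasing $y$-degree, the resulting vanishing equations form a lower-triangular system whose unknowns are the $d_{\alpha_i, \gamma}$ and whose constant terms depend polynomially on the $a_{\alpha_k, \beta}$ with $k \leq i$; solving recursively expresses every $d$ as a polynomial in the $a$'s, proving both existence and uniqueness.

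The isomorphism then proceeds as follows. Define $\Psi \colon \AAA^{\# PM} \to M_P$ by $(a_{\alpha, \beta}) \mapsto (f_0, \dots, f_d)$; this is regular because the $f_i$ are polynomial in the $a$'s and $d$'s, and the $d$'s are themselves polynomial in the $a$'s. Surjectivity and injectivity follow from the first step: every $I \in M_P$ admits unique standard generators, so $I \mapsto (a_{\alpha_i, \beta})$ is the inverse, and it is regular because the $a_{\alpha_i, \beta}$ depend linearly on the coefficients determining $I$ via the direct-sum decomposition $R = \langle P \rangle \oplus I$. The dimension count $\# PM = \dim M_T$ follows from a combinatorial rearrangement matching it with the formula of Corollary \ref{dimMT} after regrouping contributions by antidiagonals. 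For the homogeneous case (3)-(4), the same argument applies with every projection replaced by its degree-$e_i$ component: an ideal in $G_P$ is homogeneous, forcing each $f_i$ to be homogeneous of degree $e_i$ and the corrections to lie in $P^{=}_{\alpha_i} \sqcup S^{=}_{\alpha_i}$. The syzygy-based triangular solution carries over verbatim, yielding (\ref{standardgeneratorsG}) and the isomorphism $\AAA^{\# PG} \cong G_P$.
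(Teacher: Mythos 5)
Your proposal is correct and matches the approach of the cited source: the paper itself explicitly omits a proof of Theorem \ref{standard} (deferring to Iarrobino), but the syzygy reduction you describe --- forming $y^{p_i}f_i - x^{q_{i+1}}f_{i+1}$, eliminating monomials of $I_T$ by the $f_j$'s, and using $I \cap \langle P \rangle = 0$ to extract a triangular system determining the $d_{\alpha_i,\gamma}$ in terms of the $a_{\alpha_k,\beta}$ with $k\leq i$ --- is exactly the argument the paper later adapts as Procedure \ref{divisionprocedure} and attributes to Iarrobino's proof of this theorem. (Only a cosmetic slip: the identity should read $y^{p_i}f_i - x^{q_{i+1}}f_{i+1} = y^{p_i}(f_i-\alpha_i) - x^{q_{i+1}}(f_{i+1}-\alpha_{i+1})$, with the opposite signs from what you wrote.)
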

\begin{remark}
We will not prove the theorem. However the arguments we use in the next few lemmas can be easily adapted to prove it. As a matter of fact the arguments that follow are adapted from the proof of Iarrobino. 
\end{remark}
\begin{example}
\label{examplestandardgenerators}
We would like to visualize more clearly the standard generators we introduced in (\ref{standardgenerators}). Let then $I(a_{\alpha, \beta})$ be the ideal in $M_P$ defined as in (\ref{standardgenerators}). We focus for example on $f_1$: 
\begin{equation*}
\qquad \qquad f_{1} = \alpha_1 + \sum_{\beta \in P^{\geq}_{\alpha_1}} a_{\alpha_1 \beta}\, \beta + \sum_{\gamma \in S^{\geq}_{\alpha_1}} d_{\alpha_1\gamma}\, \gamma\, .
\end{equation*} 

\begin{minipage}{0.45\textwidth}
\begin{center}
\ytableausetup{boxsize=2em, aligntableaux=bottom}
\begin{ytableau} 
\none[\qquad f_d=y^{k_d}]\\
a&\none[\quad f_{d-1}] \\
d&a\\
\,&d&\none[f_i]\\
\,&\,&a \\
\, &\,& \, &\none[f_2]\\
\, &\,&\,&\, &\none[f_1]   \\
\, &\,&\,& \, & \,&\none[\quad f_0 ]  
\end{ytableau}
\end{center}
\end{minipage}
\begin{minipage}{0.55\textwidth}
The standard generators of an ideal with normal pattern: 
Here we concentrate on $f_1$. The boxes marked with an $a$ correspond to the monomial in $P^{\geq}_{\alpha_1}$, while the boxes marked with a $d$ correspond to monomials in $S^{\geq}_{\alpha_1}$. The $a$'s are the free coefficients to be chosen in $\AAA^{\# P^{\geq}_{\alpha_1}}$ while the $d$'s are the unique coefficients that depend on the free choices made for the free coefficients of $f_1$ and $f_0$. Notice in particular that there are no monomials in the support of $f_1$ with degree lower than $\alpha_1$. Also, all other monomials have strictly bigger $y$ degree than $\alpha_1$. 
\end{minipage}
\end{example}

\vspace{1 em}
Let now $T_1=(t_{1,i})_{i\geq 0}, T_2=(t_{2,i})_{i\geq 0}, T_3=(t_{3,i})_{i\geq 0}$ be three admissible sequences of nonnegative integers, nested as in (\ref{admissible}), with $|T_i|=n+i-1$. We call $m,m'$ the indexes of where we have the jump, i.e. the indexes such that 
\begin{equation}
\label{admissiblejump}
t_{2, i} = \begin{cases} t_{1, i} &\text{ if } i\neq m, \\ t_{1, m}+1 &\text{ if } i= m \end{cases} \qquad\text{ and }\quad t_{3, i} = \begin{cases} t_{2, i} &\text{ if } i\neq m', \\ t_{2, m}+1 &\text{ if } i= m' \end{cases}
\end{equation}
Let $\Gamma_i= \Gamma_{T_i}$ be the standard Young diagram associated to $T_i$, and let $I_{\Gamma_i}$ be the monomial ideal associated to $\Gamma_i$. 
\begin{observation}  One of the gifts of working with the normal patterns is that most of the cases of $\ref{cases12}$, among which we had to distinguish in the previous sections, will simply not happen now. More precisely only case 1a) and case 2) are possible. This is true at both steps $j$ and $l$. Moreover case 2) is possible if and only if $t_{1,d}=d$ and $\alpha_j=\alpha_0$ or $t_{2,d}=d$ and $\alpha_l=\alpha'_0$.

\begin{minipage}{0.45\textwidth}
\begin{center}\text{Case 1a):  possible.} \end{center}\quad
\begin{center}
\ytableausetup{boxsize=1.2em, aligntableaux=bottom}
\begin{ytableau} 
\, \\
\,&\,  \\
\,&\, &\,\\
\,&\, &\,&*(green) \alpha  \\
\,&\, &\,&\,  \\
\,&\, &\, &\,&\,
\end{ytableau}
\end{center}
\end{minipage} 
\begin{minipage}{0.45\textwidth}
\begin{center}\text{Case 1b): not possible.} \end{center}\quad
\begin{center}
\ytableausetup{boxsize=1.2em, aligntableaux=bottom}
\begin{ytableau} 
\, \\
\,&\,  \\
\,&\, &\, \\
\,&\, &\, &\none[h] \\
\,&\, &\,&\,&*(green) \alpha \\
\,&\, &\, &\,&\, &\,
\end{ytableau}
\end{center}
\end{minipage} 

The boxes marked with the $h$ for \emph{hole} show why some of the cases of \ref{cases12} are not standard. 

\begin{minipage}{0.45\textwidth}
\begin{center}\text{Case 2):  possible only with $\alpha=\alpha_0$.} \end{center}\quad
\begin{center}
\ytableausetup{boxsize=1.2em, aligntableaux=bottom}
\begin{ytableau} 
\, \\
\,&\,  \\
\,&\, &\,\\
\,&\, &\, \\
\,&\, &\,&\,  \\
\,&\, &\, &\,&*(green) \alpha
\end{ytableau}
\end{center}
\end{minipage} 
\begin{minipage}{0.45\textwidth}
\begin{center}\text{Case 3): not possible.} \end{center}\quad
\begin{center}
\ytableausetup{boxsize=1.2em, aligntableaux=bottom}
\begin{ytableau} 
\, &\none[h]\\
\,&\,  &*(green) \alpha\\
\,&\, &\, \\
\,&\, &\,  \\
\,&\, &\,&\, \\
\,&\, &\, &\,&\, 
\end{ytableau}
\end{center}
\end{minipage} 

\end{observation}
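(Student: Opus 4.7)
The plan is to read the statement off the explicit shape of a normal pattern combined with the constraint that $\Gamma_2 = \Gamma_{T_2}$ is itself a normal pattern. First I would recall that $\Gamma_T$ fills each anti-diagonal $\Lambda_i$ with its $t_i$ leftmost boxes, and that its standard generators form the staircase $\alpha_i = x^{d-i}y^{k_i}$ with $0 = k_0 < k_1 < \dots < k_d$. This immediately forces $q_0 = \infty$ and $q_i = 1$ for every $i \geq 1$, so the only standard generator with $q > 1$ is $\alpha_0$. The hypothesis that $\Gamma_2 = \Gamma_1 \cup \{\alpha_j\}$ is again a normal pattern then becomes a purely positional condition: the new box on $\Lambda_m$, where $m = \deg \alpha_j$, must sit at position $t_{1,m}$ from the left (zero-indexed), since normal patterns fill anti-diagonals from the left.

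Suppose first $\alpha_j = \alpha_0 = x^d$: its position on $\Lambda_d$ is $d$, so the positional condition reads $t_{1,d} = d$. Combined with admissibility (Lemma \ref{admissible}) this forces $k_1 \geq 2$, i.e.\ $p_0 \geq 2$, so we are in case 2), and case 1b) (which would require $k_1 = 1$) is impossible. Suppose instead $\alpha_j = \alpha_i$ with $i \geq 1$: then $q_j = 1$, so only cases 1a) and 3) are formally allowed. The key step is ruling out case 3), i.e.\ $p_j = 1$: in that scenario $\alpha_{j+1}$ would be an external corner of $\Gamma_1$ immediately to the left of $\alpha_j$ on the same anti-diagonal $\Lambda_m$, which forces $t_{1,m} \leq (d-j)-1$; but extending to a normal pattern $\Gamma_2$ demands that $\alpha_j$, sitting at position $d-j$ on $\Lambda_m$, be added at position $t_{1,m}$, i.e.\ that $t_{1,m} = d-j$. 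The contradiction eliminates case 3) and leaves case 1a).

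The same argument applied to the pair $(\Gamma_2, \Gamma_3)$ yields the statement at the second jump $l$, with $t_{1,d}$ replaced by $t_{2,d}$ and $\alpha_0$ by $\alpha'_0$. The main obstacle is really the bookkeeping for case 3): one must see clearly that the presence of the adjacent external corner $\alpha_{j+1}$ on $\Lambda_m$ in $\Gamma_1$ shortens this anti-diagonal by enough to create an unavoidable gap when one tries to extend to a normal pattern. Once that is unwound, cases 1b) and 3) disappear together and case 2) is pinned down exactly by the equalities $t_{1,d} = d$ (respectively $t_{2,d} = d$), as the observation claims.
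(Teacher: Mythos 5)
Your argument is correct and is essentially a formalization of the paper's own justification: the paper dismisses cases 1b) and 3) by pointing at the ``hole'' the added box would leave on its anti-diagonal, which is exactly your positional condition that $\alpha_j$ must land at position $t_{1,m}$ of $\Lambda_m$ while the adjacent external corner forces $t_{1,m}\le d-j-1$, and the characterization of case 2) via $t_{1,d}=d$ (resp.\ $t_{2,d}=d$) is the same. The only nitpick is that $k_1\ge 2$ when $t_{1,d}=d$ follows from the normal-pattern shape itself (the box $x^{d-1}y$ at position $d-1$ of $\Lambda_d$ already lies in $\Gamma_1$) rather than from admissibility, but this does not affect the argument.
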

\begin{observation}
Observe that for $I_{\Gamma_1}$ and $I_{\Gamma_2}$, nested monomial ideals with normal patterns, the numbers we defined in \ref{starsanddots} are also more explicit. In fact elements on the same diagonal or on lower diagonals must all lie on their right. Explicitly:
\begin{align}
\label{bulletsforstandard}
\bullet \, M(\Gamma_1, \Gamma_2) &= \# \left\{i=0, \dots, d\,\left\vert \, \deg \alpha_i \leq \deg \alpha_j \right.\right\} = j,\\
\star \, G(\Gamma_1, \Gamma_2) &= \# \left\{i=0, \dots, d\,\left\vert \, \deg \alpha_i = \deg \alpha_j \right.\right\}= \begin{cases} t_{1,m-1}-t_{1, m}+1 & \text{ if } m=d, \\ t_{1,m-1}-t_{1, m} & \text{ if } m>d,
\end{cases} \nonumber\\
\, \nonumber\\
\label{r}
\star \, M(\Gamma_1, \Gamma_2) &  = \# \left\{i=0, \dots, d\,\left\vert \, \deg \alpha_i \leq \deg \alpha_j -1\right.\right\}= \sum_{k=d}^{m-1} t_{1,k-1}-t_{1, k},\\
\star \, G(\Gamma_1, \Gamma_2) &  = \# \left\{i=0, \dots, d\,\left\vert \, \deg \alpha_i = \deg \alpha_j -1\right.\right\}= t_{1,k-2}-t_{1, k-1}.\nonumber
\end{align}
(See next subsection for more details on the case for $G(\Gamma_1, \Gamma_2)$). If $I_{\Gamma_1}$,  $I_{\Gamma_2}$ and $I_{\Gamma_3}$ are nested monomial ideals with normal patterns, the interesting observation is that: 
\begin{equation}
\begin{matrix}
\diamond \, M(\Gamma_1, \Gamma_2, \Gamma_3) = \begin{cases}1 &\text{ if } \deg \alpha_j\leq \deg \alpha_l -2, \\ 0 &\text{ otherwise. }\end{cases} &
\diamond \, G(\Gamma_1, \Gamma_2, \Gamma_3) = \begin{cases}1 &\text{ if } \deg \alpha_j = \deg \alpha_l -2, \\ 0 &\text{ otherwise. }\end{cases}
\end{matrix}
\end{equation}
\end{observation}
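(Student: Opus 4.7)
The plan is to derive all three groups of identities from two structural facts about normal patterns. First, in $\Gamma_T$ every $q_i$ equals $1$, so the anti-diagonal index $\deg\alpha_i=(d-i)+k_i$ of the $i$-th external corner is a weakly increasing function of $i$, with $\deg\alpha_{i+1}=\deg\alpha_i$ exactly when $p_i=1$. Second, the left-justified convention of the normal pattern forces the local picture around the added box to fall into either case $1a)$ or case $2)$ of Definition~\ref{cases12}: cases $1b)$ and $3)$ would require a missing box at a strictly smaller $x$-coordinate than $\alpha_j$ on its anti-diagonal, while case $2)$ can occur only when $\alpha_j=\alpha_0=x^d$ sits at the tip of the initial-degree anti-diagonal $\Lambda_d$.

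I first establish the $\bullet$-identities. In case $1a)$ the generators of $I_2$ are obtained from those of $I_1$ by replacing $\alpha_j$ with $y\alpha_j$, which sits on $\Lambda_{m+1}$, strictly above $\Lambda_m$; combined with monotonicity this shows that $\{\alpha_0,\dots,\alpha_{j-1}\}$ are precisely the generators of $I_2$ on $\Lambda_m$ or below, giving $\bullet M(\Gamma_1,\Gamma_2)=j$. In case $2)$ the generator $\alpha_0=x^d$ is replaced by $x\alpha_0,\,y\alpha_0\in\Lambda_{d+1}$, leaving no generator of $I_2$ on $\Lambda_d$ or below, so $\bullet M=0=j$. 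Restricting the same analysis to $\Lambda_m$ exactly yields $\bullet G$, once one notes that the external corners of $\Gamma_1$ on $\Lambda_m$ occupy the $x$-coordinates $t_{1,m},\,t_{1,m}+1,\dots,t_{1,m-1}-1$ (with the extra corner $x=d$ when $m=d$), and that removing $\alpha_j$ from this list produces the stated piecewise expression in the $t_{1,\ast}$'s.

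Next I turn to the $\star$-identities. Since $q'_{i+1}=1$ in a normal pattern, the $i$-th star sits at $(d'-i,k'_{i+1})$, on anti-diagonal $\deg\alpha'_{i+1}+1$. Counting stars on $\Lambda_m$ or below therefore reduces to counting generators $\alpha'_{i+1}$ of $I_2$ (with $i\ge 0$) satisfying $\deg\alpha'_{i+1}\le m-1$. Translating back to corners of $I_1$ via the substitution $\alpha_j\leadsto y\alpha_j$ and bundling the count anti-diagonal by anti-diagonal using the formula of the previous paragraph yields $\star M(\Gamma_1,\Gamma_2)=\sum_{k=d}^{m-1}(t_{1,k-1}-t_{1,k})$ and, analogously, $\star G(\Gamma_1,\Gamma_2)=t_{1,m-2}-t_{1,m-1}$ (the unbound index $k$ in the excerpt's last formula being a typo for $m$).

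Finally, the $\diamond$-identities are immediate. In case a) of Definition~\ref{casesjl} the diamond occupies the single box $xy\alpha_j$, on anti-diagonal $\deg\alpha_j+2$; in case b) no diamond is placed. Since each of the sets $\{\diamond\in\Lambda_i:\Lambda_i<\Lambda_{\alpha'_l}\}$ and $\{\diamond\in\Lambda_{\alpha'_l}\}$ contains at most one element, $\diamond M=1$ iff $\deg\alpha_j+2\le\deg\alpha'_l$ and $\diamond G=1$ iff $\deg\alpha_j+2=\deg\alpha'_l$, matching the stated indicator formulas. The main obstacle throughout is the bookkeeping around case $2)$ and its $l$-analogue, where the initial degree of $I_2$ jumps from $d$ to $d+1$ and the generator indices shift by one; once this boundary case is dispatched separately, the uniform closed forms follow.
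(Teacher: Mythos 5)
Your argument is correct and is essentially the direct verification that the paper leaves implicit: this statement appears only as an unproved observation, resting exactly on the two structural facts you isolate (all $q_i=1$ and weak monotonicity of $\deg\alpha_i$ along the external corners of a normal pattern, plus the restriction to cases 1a) and 2) recorded in the observation immediately preceding this one), and your placement of the stars on the anti-diagonals $\deg\alpha'_{i+1}+1$ and of the diamond on $\Lambda_{\deg\alpha_j+2}$ gives the stated closed forms. The one caution is a residual off-by-one ambiguity in the second displayed line: the set $\left\{i \,:\, \deg\alpha_i = \deg\alpha_j\right\}$ taken over the generators of $I_1$ includes $i=j$ itself and is what equals the stated piecewise formula, so your parenthetical about \emph{removing} $\alpha_j$ from that list would undercount by one — but this looseness (like the doubled label $\star\,G$ and the unbound index $k$ that you correctly flag as a typo for $m$) is already present in the source statement rather than introduced by your argument.
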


\section{$M_{T_1, T_2, T_3}$ is smooth.} 

 \hspace{3em}We start now to apply the results of Iarrobino. Thanks to Proposition \ref{normalcover} we have that:
\begin{align*}
M_{P((x,y), T_1, T_2, T_3)} &= \left\{(I_1, I_2, I_3)\in\Hi^{n,n+1,n+2}(0)\left\vert \,\,I_i \cap \langle P((x,y), T_i) \rangle = \{0\}, \, i=1, 2, 3\right.\right\} \\
&= \text{ attracting set of } M_{ T_1, T_2, T_3} \text{ to the fixed point } \left(I_{\Gamma_1},I_{\Gamma_2},I_{\Gamma_3}\right).
\end{align*}
is open in $M_{ T_1, T_2, T_3}$ (and in fact $M_{ T_1, T_2, T_3}$ is covered with a finite number of opens of the form $M_{P((u,v), T_1, T_2, T_3)}$). In particular $\dim M_{ T_1, T_2, T_3} = M_{P((x,y), T_1, T_2, T_3)}$. Now  $M_{P((x,y), T_1, T_2, T_3)}$ has a unique fixed point, namely $(I_{\Gamma_1},I_{\Gamma_2},I_{\Gamma_3})$, and we already know the dimension of the tangent space at this point thanks to Lemma \ref{dimensiontangent123}. Thus, in order to prove Proposition \ref{geometricdimensionvstangentspace}, we need only to prove the following. 
\begin{proposition}For $T_1, T_2, T_3$ three admissible sequences of nonnegative integers as in \ref{admissible}, we have
\[
\dim M_{P((x,y), T_1, T_2, T_3)} \geq \dim T_{I_{\Gamma_1},I_{\Gamma_2},I_{\Gamma_3}} M_{ T_1, T_2, T_3}\, .
\]
\end{proposition}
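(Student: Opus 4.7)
The plan is to extend Iarrobino's explicit parametrization of $M_P$ by $\AAA^{\dim M_{T_1}}$ (Theorem \ref{standard}) to the flag setting, building the attracting set of $M_{T_1,T_2,T_3}$ at $(I_{\Gamma_1},I_{\Gamma_2},I_{\Gamma_3})$ as a subscheme of an explicit affine space cut out by a controlled number of equations, and then checking that its dimension dominates the tangent dimension computed in Lemma \ref{dimensiontangent123}.

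First I would handle the two-step case $M_{P((x,y),T_1,T_2)}$. Starting from the standard generators $f_0,\dots,f_d$ of $I_1\in M_P$ given by Theorem \ref{standard}, I would describe a candidate $I_2\subset I_1$ by producing generators $f'_0,\dots,f'_{s'}$ whose leading terms are the standard monomial generators $\alpha'_i$ of $I_{\Gamma_2}$. For each generator of $I_2$ that is not already a generator of $I_1$ (the standard monomial generators of $I_2$ on or below the antidiagonal $\Lambda_{\alpha_j}$ of $\alpha_j$), I would introduce one new free coefficient for each monomial of $\Gamma_2$ lying on the same antidiagonal to the right of, or on lower antidiagonals than $\alpha_j$; counting them yields exactly $\bullet\,M(\Gamma_1,\Gamma_2)$ new free parameters, so a priori we sit in $\AAA^{\dim M_{T_1}+\bullet\,M(\Gamma_1,\Gamma_2)}$. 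Then, exactly as in Iarrobino's argument, writing out the $S$-polynomials that express the nesting $I_2\subset I_1$ and the closure under multiplication forces $\star\,M(\Gamma_1,\Gamma_2)$ polynomial equations in these new coefficients (one per corner of $\Gamma_2$ relative to $\alpha_j$, i.e.\ one per $\star$ in Definition \ref{starsanddots}). This realizes $M_{P((x,y),T_1,T_2)}$ as a subscheme of $\AAA^{\dim M_{T_1}+\bullet\,M(\Gamma_1,\Gamma_2)}$ cut out by $\star\,M(\Gamma_1,\Gamma_2)$ equations, hence of dimension at least $\dim M_{T_1}+\bullet\,M(\Gamma_1,\Gamma_2)-\star\,M(\Gamma_1,\Gamma_2)$, matching Lemma \ref{dimensiontangent12}.

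Next I would iterate this construction for $I_3\supset I_2$: we gain $\bullet\,M(\Gamma_2,\Gamma_3)$ new free coefficients and impose $\star\,M(\Gamma_2,\Gamma_3)$ new equations expressing $I_3\subset I_2$. Naively this gives the bound $\dim M_{T_1}+\bullet\,M(\Gamma_1,\Gamma_2)+\bullet\,M(\Gamma_2,\Gamma_3)-\star\,M(\Gamma_1,\Gamma_2)-\star\,M(\Gamma_2,\Gamma_3)$. The main obstacle, and the truly three-step ingredient, is to recover the extra $\diamond\,M(\Gamma_1,\Gamma_2,\Gamma_3)$ dimension appearing in Lemma \ref{dimensiontangent123}. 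Whenever $\diamond\,M(\Gamma_1,\Gamma_2,\Gamma_3)=1$, the corner $xy\alpha_j$ sits on or below $\Lambda_{\alpha_l}$, and I claim that the $S$-polynomial equation associated to this corner is automatically satisfied: it follows from the $S$-polynomial relation already imposed at the previous step (the one labelled by the $\diamond$ corner of $\alpha_j$ relative to $\alpha_l$ inside the nesting $I_2\subset I_1$) by multiplying that relation by $x$ or $y$ and reducing mod $I_2$. This redundancy is exactly the combinatorial input recorded by $\text{NotP}$ in Definition \ref{definitionNotP}, and it is the same redundancy that makes $\diamond\,M$ appear in the tangent-space count (Observation \ref{countsarethesame}). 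Checking this identity of $S$-polynomials in coordinates is the main technical step; it is a direct verification using the explicit form of the standard generators in (\ref{standardgenerators}) and the fact that in the normal pattern, the only possibly redundant equation is the one associated with the diagonal corner $xy\alpha_j$ of $\Gamma_2$ relative to $\alpha_l$.

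Combining these bounds, the attracting set $M_{P((x,y),T_1,T_2,T_3)}$ has dimension at least
\[
\dim M_{T_1}+\bullet\,M(\Gamma_1,\Gamma_2)+\bullet\,M(\Gamma_2,\Gamma_3)-\star\,M(\Gamma_1,\Gamma_2)-\star\,M(\Gamma_2,\Gamma_3)+\diamond\,M(\Gamma_1,\Gamma_2,\Gamma_3),
\]
which equals the tangent dimension at $(I_{\Gamma_1},I_{\Gamma_2},I_{\Gamma_3})$ by part (1) of Lemma \ref{dimensiontangent123}, proving the Proposition for $M_{T_1,T_2,T_3}$. Since $\dim T_pM_{T_1,T_2,T_3}\geq\dim M_{T_1,T_2,T_3}$ always holds, equality must hold throughout and $M_{T_1,T_2,T_3}$ is smooth at the fixed point, hence (by the torus limit argument as in Proposition \ref{hilbsmooth}) everywhere. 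The parallel argument for $G_{T_1,T_2,T_3}$ runs identically, replacing the weak inequalities on degrees ($\geq$) by equalities ($=$) throughout, using part (4) of Theorem \ref{standard} to start from $\AAA^{\dim G_{T_1}}\cong G_P$, and invoking parts (2) and (4) of Lemma \ref{dimensiontangent123} at the end.
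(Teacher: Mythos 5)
Your proposal follows essentially the same route as the paper: parametrize the attracting set via Iarrobino's standard generators, add $\bullet\,M$ free coefficients and impose $\star\,M$ division/S-polynomial equations at each of the two steps (the paper's Lemma \ref{dim2} and Procedure \ref{divisionprocedure}), and then show that when $\diamond\,M(\Gamma_1,\Gamma_2,\Gamma_3)=1$ the equation at the corner $xy\alpha_j$ is redundant because it is the $y$-multiple of a relation already forced at the first step (the paper's Lemma \ref{diamondequationissatisfied}). The argument and the bookkeeping match the paper's proof, so there is nothing further to add.
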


\begin{remark}
As a consequence we have that for all $I_1, I_2, I_3$ fixed points of the $\TT^2$ action on $M_{T_1, T_2, T_3 }$ the dimension of $T_{I_1, I_2, I_3} M_{T_1, T_2, T_3 }$ is the same. This can also be seen as a combinatorial result. 
\end{remark}

Even if the possibilities for nested Young diagrams with normal patterns are fewer, we still need to treat differently two possible cases. 
\begin{definition}
\label{casesstandard}
\textbf{Case i) } For the three monomial ideals $I_{\Gamma_1}$,  $I_{\Gamma_2}$ and $I_{\Gamma_3}$ we have that the initial degrees coincide $d=d'=d''$. This is the generic case. It happens if we are in case $j 1)$ and in case $l 1)$ of the cases discussed in Definition \ref{cases123}.  

\textbf{Case ii) } All the other cases, i.e. either $d+1=d'=d''$ or $d+1=d'+1=d''$.  This happens if and only if we are, respectively, in case $j 2)$ or in case $l 2)$: 

\begin{minipage}{0.45\textwidth}
\begin{center}\textbf{Case j 2)}\\ \small{$t_{1,d}=d$ and $m=d$} \end{center}\quad
\begin{center}
\ytableausetup{boxsize=1.2em, aligntableaux=bottom}
\begin{ytableau} 
\,\\
\,&\,  \\
\,&\, &\,\\
\,&\, &\,&*(cyan) \alpha_l  \\
\,&\, &\,&\,  \\
\,&\, &\, &\,&*(green) \alpha_j
\end{ytableau}
\end{center}
\end{minipage} 
\begin{minipage}{0.45\textwidth}
\begin{center}\textbf{Case l 2)}\\ \small{$t_{2,d}=d$ and $m'=d$} \end{center}\quad
\begin{center}
\ytableausetup{boxsize=1.2em, aligntableaux=bottom}
\begin{ytableau} 
\, \\
\,&\,  \\
\,&\, &\,\\
\,&\, &\,&*(green) \alpha_j \\
\,&\, &\,&\,  \\
\,&\, &\, &\,&*(cyan) \alpha_l
\end{ytableau}
\end{center} 
\end{minipage} 
\end{definition}

We start by considering the case where $d=d'=d''$ as it is more interesting. In fact, having understood that, we will be able to deal with the other cases with some simple observations. Consider the parametrization of $M_{P(T_1)}\cong \AAA^{ \#\,PM}$ given in \ref{standard}, and call $I(a_{\alpha, \beta})$, $(\alpha, \beta) \in PM$, the ideal $I((a_{\alpha, \beta}))=(f_0,\dots, f_d)$ generated by the standard generators associated to $(a_{\alpha, \beta}))_{(\alpha, \beta) \in PM}$ like in (\ref{standardgenerators}). \\

\hspace{3em}We want to see what kind of ideals $J$ are such that $(I,J) \in M_{P((x,y), T_1, T_2)}$. For each $\theta_i \in \CC$ with $i=0, \dots, j-1$ we define the ideal $J(a_{\alpha, \beta}, \theta_i)$ giving its generators in terms of those of $I(a_{\alpha, \beta})$: 
\begin{equation}
\label{definitionJ}
J(a_{\alpha, \beta}, \theta_i) :=
\begin{pmatrix} 
f'_0 = &f_0 + \theta_0 f_j \\
\dots&\dots\\
f'_{j-1} =&f_{j-1}+\theta_{j-1} f_j\\
f'_j =&yf_j\\
f'_{j+1} =&f_{j+1} \\
\dots&\dots\\
f'_{d}=&f_d
\end{pmatrix}\,\,,
\qquad \begin{matrix}
\text{where the $f_i$, recall, are }\\
f_{i} = \alpha_i + \sum_{\beta \in P^{\geq}_{\alpha_i}} a_{\alpha_i, \beta}\, \beta + \sum_{\gamma \in S^{\geq}_{\alpha_i}} d_{\alpha_i, \gamma}\, \gamma \\
\text{ and }\\
I((a_{\alpha, \beta}))=(f_0, \dots, f_d),\,\, a_{\alpha, \beta},\theta_i \in \CC.
\end{matrix}
\end{equation}
\begin{lemma}
\label{dim2}
Call $r= *M(\Gamma_1, \Gamma_2)$. Then there exist $r$ equations $g_0, \dots, g_{r-1}$ in the\\ $((a_{\alpha, \beta}),(\theta_i))_{(\alpha, \beta) \in PM, i=0, \dots, j-1}$, defined as 
\begin{equation}
\label{equationsg}
y^{p_i}f_i - xf_{i+1} = g_i (a_{\alpha, \beta},\theta_s) f_j \quad \text{mod } \, (f'_{i+1}, \dots, f'_d), 
\end{equation}
such that 
\[
(I(a_{\alpha, \beta}), J(a_{\alpha, \beta}, \theta_i)) \in M_{P((x,y), T_1, T_2)} \iff g_i (a_{\alpha, \beta},\theta_s) = 0 \text{ for all } i=0,\dots,  r-1.
\]
Equivalently if we define $Y$ to be the variety cut out by the $g_i$'s then we have 
\begin{equation}
\label{starequationsinclusion}
\AAA^{ \#\,P
M} \times \AAA^{j} \supset Y := \left\{(a_{\alpha, \beta},\theta_s)_{(\alpha, \beta), s}\,\, \left\vert\,\,\, g_i (a_{\alpha, \beta},\theta_s) = 0\right.\right\} \hookrightarrow  M_{P((x,y), T_1, T_2)}.
\end{equation}
\end{lemma}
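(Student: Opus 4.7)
The plan is to read off the defining equations of $M_{P((x,y),T_1,T_2)}$ inside $\AAA^{\#PM}\times\AAA^{j}$ as the integrability conditions for the collection $\{f'_0,\ldots,f'_d\}$ to generate an ideal of the required Hilbert--Samuel type and pattern; these conditions are precisely the lifts of the Koszul syzygies among the leading monomials of $I(a_{\alpha,\beta})$.

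First I would exploit that in the normal pattern $q_{i+1}=1$ for every $i=0,\ldots,d-1$, so the leading monomials satisfy $y^{p_i}\alpha_i=x\alpha_{i+1}$. Consequently $y^{p_i}f_i-xf_{i+1}$ lies in $I$ and has initial degree strictly greater than $\deg\alpha_i$. By the uniqueness part of Theorem~\ref{standard}, reducing this element modulo $(f_{i+1},\ldots,f_d)$ produces a unique normal form whose monomial support lies in $\Gamma_1$. Rewriting $f_k=f'_k-\theta_kf_j$ for $k<j$ and recalling $f'_j=yf_j$ and $f'_k=f_k$ for $k>j$, the reduction modulo $(f'_{i+1},\ldots,f'_d)$ yields a residue in which the only monomial not already in $(f'_{i+1},\ldots,f'_d)$ (modulo $I$) is a multiple of $\alpha_j$. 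Thus one obtains uniquely
\[
y^{p_i}f_i-xf_{i+1}\equiv g_i(a_{\alpha,\beta},\theta_s)\,f_j\pmod{(f'_{i+1},\ldots,f'_d)},
\]
which is exactly (\ref{equationsg}) and defines $g_i$ as a polynomial in the free parameters.

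Second I would show that only $r=\star M(\Gamma_1,\Gamma_2)$ of these relations give nontrivial equations, matching (\ref{r}). The point is that if $\deg\alpha_i\geq\deg\alpha_j$, the residue of $y^{p_i}f_i-xf_{i+1}$ has initial degree $\geq\deg\alpha_j+1$, and can therefore be absorbed entirely in $(f'_{i+1},\ldots,f'_d)$ without ever needing $f_j$; hence $g_i\equiv 0$ identically. When $i\geq j$ a parallel argument using $f'_j=yf_j$ shows that the syzygy $y^{p_j-1}f'_j-xf'_{j+1}$ vanishes modulo $(f'_{j+1},\ldots,f'_d)$ identically, so again no equation is produced. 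By the count (\ref{r}) this leaves exactly $r$ nontrivial equations, indexed by $i$ with $\deg\alpha_i\leq\deg\alpha_j-1$.

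Finally I would identify the vanishing locus of the $g_i$'s with $M_{P((x,y),T_1,T_2)}$. If all $g_i=0$, then every syzygy $y^{p_i}f'_i-xf'_{i+1}$ lies in $(f'_{i+1},\ldots,f'_d)$, so $J:=(f'_0,\ldots,f'_d)$ has leading ideal $I_{\Gamma_2}$; in particular $\dim_\CC R/J=n+1$ and $J$ has pattern $P(T_2)$, hence $(I,J)\in M_{P((x,y),T_1,T_2)}$. Conversely, given $(I,J)$ in the stratum, $J$ admits a unique set of standard generators relative to $P(T_2)$, which must be expressible as some $(f'_0,\ldots,f'_d)$ of the form (\ref{definitionJ}) (the $\theta_s$ being read off from the coefficient of $\alpha_j$ in the reduction of $\alpha'_k$ to normal form); the existence of $J$ as an ideal then forces the $g_i$ to vanish. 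The principal obstacle will be making the reduction process in step one genuinely polynomial and canonical---in particular, confirming that the coefficient of $f_j$ in the residue depends polynomially on the parameters and that no further hidden equation beyond the $g_i$'s is required---both of which I expect to follow from the uniqueness portion of Iarrobino's Theorem~\ref{standard} applied inductively to the generators of decreasing $x$-degree.
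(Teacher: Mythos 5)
Your proposal is correct and follows the paper's strategy almost exactly in its first two steps: the $g_i$ are produced by reducing the consecutive syzygies $y^{p_i}f_i - xf_{i+1}$ against $(f'_{i+1},\dots,f'_d)$ and using $I(a_{\alpha,\beta})\cap\langle\Gamma_1\rangle=0$ to kill the $\Gamma_1$-part of the residue, and the identities $g_i\equiv 0$ for $\deg\alpha_i\geq\deg\alpha_j$ follow for degree reasons, exactly as in the paper's Procedure \ref{divisionprocedure}. The one place where you genuinely diverge is the sufficiency direction. You invoke a Buchberger-type criterion --- all S-pairs reduce to zero, hence the leading ideal of $J$ is $I_{\Gamma_2}$, hence the colength is $n+1$ and the pattern is $P(T_2)$ --- whereas the paper proves $J(a_{\alpha,\beta},\theta_i)\cap\langle\Gamma_2\rangle=0$ directly, by a reverse double induction (on the least index $t$ with $h_t\neq 0$ in an expression $f=\sum_i h_i f'_i$, and then on $\mathrm{in}_y(h_t)$), substituting the relations $g_i=0$ to push the putative element of $J\cap\langle\Gamma_2\rangle$ out of existence. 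Your route is shorter if one is willing to cite the criterion, but it silently relies on two facts that the paper establishes by hand: first, that the consecutive S-pairs suffice (true for staircase monomial ideals in two variables, and in effect what the paper's induction proves); second, that the conditions $g_i=0$, which concern syzygies of the $f_i$, really do give reductions to zero of the S-pairs of the $f'_i$ --- the cross-terms $\theta_i y^{p_i}f_j-\theta_{i+1}xf_j$ and the shifted exponent $p'_{j-1}=p_{j-1}+1$ around the index $j$ require the separate verification that $xf_j\in J(a_{\alpha,\beta},\theta_i)$, which the paper records as (\ref{xfj}) and proves by the same division procedure. Neither point is an obstacle, but both need to be stated for your argument to close.
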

\begin{proof}

We divide the proof in two parts: in the first part a division procedure will define for us the equations $g_i$. In the second part we will prove that satisfying these equations is a sufficient condition to be in $M_{P((x,y), T_1, T_2)}$. The first part, or rather the procedure that proves it, will be used also later, so we add a name for future reference.

\begin{procedure}
\label{divisionprocedure}

We want to prove that condition \ref{equationsg} actually defines $r$ equations $g_i$. We do a reduction (division), reasoning by degree. For every element $h \in R=\CC[[x,y]]$, write $\text{in}_y(h) \in \NN\cup \{+\infty\}$ to be the lowest degree of a $y$ power appearing in the expansion of $h$. Write $y^{p_i}f_i - xf_{i+1}$ as a linear combination of monomials: some will be in $\Gamma_1$, some will be outside $\Gamma_2$ and one will be $\alpha_j$:
\begin{equation}
\label{a}
y^{p_i}f_i - xf_{i+1} = \sum_{\beta \in \Gamma_1} c_{\beta}\, \beta +\sum_{\gamma \notin \Gamma_2} c_{\gamma}\, \gamma + c_{\alpha_j} \alpha_j \qquad c_{\beta},c_{\gamma}, c_{\alpha_j} \in \CC.
\end{equation}
Now we will use the $f'_s, s>i $ to eliminate the $\gamma \notin \Gamma_2$ in order, starting from the lowest $\text{in}_y(\gamma)$. Precisely: let $t= \text{in}_y(y^{p_i}f_i)$. For $p\geq t$ order all monomials $\gamma \notin \Gamma_2$ with $\text{in}_y(\gamma)=p$ by their ascending $x$ degree. To eliminate them in order means to perform many successive steps.
  \\

\textbf{[step $p=t$]} Looking at the definition of $f_i$ and $f_{i+1}$ there are no $\gamma \notin \Gamma_2$ with $c_{\gamma} \neq 0$  and $p= \text{in}_y(\gamma)$ in (\ref{a}). 

\textbf{[step $p=t+1(\text{ if }< t+p_{i+1})$]} Looking at the definition of $f_i$, $f_{i+1}$ and $f'_{i+1}$, there is only at most one $\gamma \notin \Gamma_2$ with $c_{\gamma} \neq 0$ and $\text{in}_y(\gamma)=p$ in the left hand side, and this is $y\alpha_{i+1}$. Then eliminate it subtracting $c_{y\alpha_{i+1}}yf'_{i+1}$. Now we do not have any $\gamma \notin \Gamma_2$  with $c_{\gamma} \neq 0$ and $\text{in}_y \gamma \leq p $. More specifically we are left with:
\begin{align*}
y^{p_i}f_i - xf_{i+1} -c_{y\alpha_{i+1}}yf'_{i+1} &= \sum_{\beta \in \Gamma_1} G_{i, \beta, p}(a_{\alpha, \beta}, \theta_s)\, \beta \\ & +\sum_{\gamma \notin \Gamma_2, \text{in}(\gamma)>p} c_{\gamma,p}\, \gamma +  g_{i,p}(a_{\alpha, \beta}, \theta_s)\alpha_j.
\end{align*}
Here $G_{i,\beta,p}(a_{\alpha, \beta}, \theta_s), g_{i,p}(a_{\alpha, \beta}, \theta_s)$ and $c_{\gamma,p}$ are polynomials in the $a_{\alpha, \beta}, \theta_s$ and depend on the current step  $p$. We repeat this step similarly until we reach $p=t+p_{i+1}$.

\textbf{[...]}

\textbf{[step $p=t+p_{i+1}$]} Let $(\gamma_1, \dots,\gamma_m)$ be the ordered (by ascending $x$ degree) list of monomials $\gamma \notin \Gamma_2$ with $c_{\gamma} \neq 0$ and $\text{in}_y(\gamma)=p$. Multiply $y^{p_{i+1}}f'_{i+1}$ by appropriate coefficients and monomials in $x$ to eliminate the $\gamma_e$ starting from the last one. If needed eliminate $\alpha_{j+2}$ by a \emph{linear} multiple of $f_{j+2}$. The result is: 
\begin{align*}
y^{p_i}f_i - xf_{i+1} + h(x)yf'_{i+1} &+ c(a_{\alpha, \beta}, \theta_s) f'_{j+2} = \sum_{\beta \in \Gamma_1} G_{i,\beta, p-1}(a_{\alpha, \beta}, \theta_s)\, \beta \, + \\ & +\sum_{\gamma \notin \Gamma_2, \text{in}(\gamma)>p} c_{\gamma,p-1}\, \gamma +  g_{i,p-1}(a_{\alpha, \beta}, \theta_s)\alpha_j.
\end{align*}
Here again we have polynomial expressions in the $(a_{\alpha, \beta}, \theta_s)$, like $c(a_{\alpha, \beta}, \theta_s)\in\CC$ or the coefficients of $h(x)$. The sum on the $\gamma \notin \Gamma_2$ is now only on those $\gamma$ with $\text{in}_y(\gamma)>p$. We repeat this step until we reach $p=t+p_{i+1}+p_{i+2}$, where we repeat this step but use also $f'_{i+3}$ to eliminate, if necessary $\alpha_{i+3}$.

\textbf{[...]}

\textbf{[step $p= \text{in}(\alpha_j)=k_j$]} As all the previous steps but with the difference that {we eliminate} multiple of $\alpha_j$ with $f_j$ and not with $f'_j$.

\textbf{[...]}

\textbf{[step p=$k_d$ or $p=k_d+1$] }Now we are left only with multiple of $y^{k_d}$, or $y^{k_d+1}$, depending on whether $j=d$ i.e. on whether $f'_d=$ $y^{k_d}$, or $y^{k_d+1}$. Then we can eliminate all the remaining $\gamma \notin \Gamma_2$ with appropriate multiples of $f_d$ (and not $f'_d$: cfr. step $p= \text{in}(\alpha_j)=k_j$, if $f'_d = yfd$ then $j=d$ and we use $f_d$, otherwise $f'_d=f_d$ so there is no difference) and be left with an expression:
\begin{equation}
\label{equazione23giugno}
y^{p_i}f_i - xf_{i+1} + \sum_{t>i} h_i f'_i + g_i(a_{\alpha, \beta}, \theta_s) f_j= \sum_{\beta \in \Gamma_1} G_{i,\beta}(a_{\alpha, \beta}, \theta_s)\, \beta .
\end{equation}
Here, again, the $G_{i,\beta}(a_{\alpha, \beta}, \theta_s)$ and $g_i(a_{\alpha, \beta}, \theta_s)$ are coefficients that depend polynomially on $a_{\alpha, \beta}, \theta_s \in \AAA^{ \#\,PG} \times \AAA^{j}$, and, since this is the last step, we do not write the dependence on the step $p$. 

Now since the left hand side of (\ref{equazione23giugno}) is in $I(a_{\alpha, \beta})$ and the right hand side is in $\langle \Gamma_1 \rangle$, and by hypothesis $I(a_{\alpha, \beta})$ has $T$ normal pattern, i.e. 
\[
I(a_{\alpha, \beta}) \cap \langle \Gamma_1 \rangle= \{0\},
\] 
we know that the $G_{i, \beta}(a_{\alpha, \beta}, \theta_s)$ must all vanish. Thus we are left with $g_i$, the equation we wanted. 

\begin{example}[Procedure, picture below] We want to see how the equation $g_0$, relating $f_0$ and $f_1$, arises. The boxes marked with $a_1$ (resp. $a_{1,2}$) are the free coordinates for $f_1$ (resp. $f_1$ and $f_2$, of course these coefficients can be different one from the others), and those marked with $d_1$ are the possibly non-zero but constrained coefficients of $f_1$. All others are zero. When we multiply $f_0$ by $y^2$ the boxes of its potentially nonzero coefficients move two steps up, and those of $f_1$ multiplied by $x$ move one to the right. First step $p=t=2$: there are no boxes that get out of $\Gamma_1$ on the 3-rd row (that of $y$ degree 2). At the $4$-th row (step $p=3$) we eliminate what got out with $yf_1'$. Step $p=t+p_1=4$, row=$5$, we eliminate those that we can (i.e. above and to the right, in this specific case none) with $f'_1$, the other, only that marked with $f_2$,  with $f'_2$. When we get to step=$7=k_j$ we use $f_j$ instead of $f'_j$. We finish with $f_d$.

\begin{center}
\ytableausetup{boxsize=2em, aligntableaux=bottom}
\begin{ytableau} 
\none[\qquad f_{d}=y^{k_d}]\\
a_{1,2}&\none[ f_{d-1}]\\
a_{1,2}&a_{1,2}&\none[\quad f_{j}] \\
d_1&a_{1,2}&a_{1,2}\\
\,&d_1&a_{1,2}&\none[f_2]\\
\,&\,&d_1&a_{1,2} \\
\,&\, &\,& a_1 & \none[f_1] & \none[\qquad\quad y^2f_0 -xf_1]\\
\,&\, &\,&\,& a_1    \\
\,&\, &\,&\,& \, &\none[\quad f_0 ]  
\end{ytableau}
\end{center}
\end{example}
\textbf{End procedure}
\end{procedure}

\hspace{3em}We have defined equations $g_i$ for all $i=0, \dots, r-1$. Recall now that $r-1 = \max \{i \vert \deg{\alpha_i} < \deg \alpha_j\}$. Then if we repeat the previous argument for $i\geq r$ we get that the corresponding $g_i$ are identically zero on $\AAA^{ \#\,PM} \times \AAA^{j}$, for degree reasons. (This is a big part of Iarrobino's proof of Proposition \ref{standard}).
By the definition of $J$ we have that 
\[
(J(a_{\alpha, \beta}, \theta_i), f_j) = I(a_{\alpha, \beta}).
\]
We also claim that 
\begin{equation}
\label{xfj}
xf_j \in J(a_{\alpha, \beta}, \theta_i).
\end{equation}
This can be seen by applying the above Procedure \ref{divisionprocedure} to $xf_j$ and see that we do not have a remainder outside $J(a_{\alpha, \beta}, \theta_i)$.
Then we have that either $J(a_{\alpha, \beta}, \theta_i) = I$ or $(I(a_{\alpha, \beta}),$ $ J(a_{\alpha, \beta}, \theta_i))$ $ \in M_{P((x,y), T_1, T_2)}$ as we want. 

\hspace{3em}To prove the lemma we then need to check that we are in the second case. Thus it is sufficient to show that, whenever the coefficients $((a_{\alpha, \beta})_{(\alpha, \beta)}, (\theta_i)_{i}) \in \AAA^{\dim M_P}\times \AAA^j $ satisfy $g_i(a_{\alpha, \beta}, \theta_s)=0$, we have that $J(a_{\alpha, \beta}, \theta_i)$ has normal pattern $P(T_2)$. This amounts to show that 
\[
J(a_{\alpha, \beta}, \theta_i) \cap \langle \Gamma_2 \rangle = 0.
\]
Suppose then that we have chosen $(a_{\alpha, \beta}, \theta_s) \in \AAA^{ \#\,PM} \times \AAA^{j}$, such that $g_i (a_{\alpha, \beta},\theta_s) = 0$ for all $i=0, \dots, r-1$. 
Since by hypothesis $I(a_{\alpha, \beta}) \cap \langle \Gamma_1 \rangle = 0$, we need to prove that 
\[
\text{there does not exist } f \in J(a_{\alpha, \beta}, \theta_i) \text{ such that } f = \alpha_j + \sum_{\beta \in \Gamma_1} c_{\beta} \beta, c_{\beta} \in \CC.
\] 
Suppose by absurd that there exists such a $f$, and find a contradiction. Write $f=\sum_{i\geq 0} h_if'_i$ for some polynomial $h_i \in R$, and call $\text{in}_J(f)$ the minimum index for which $h_i\neq 0$. Then we will prove the statement by reverse induction on $\text{in}_J(f)$. 

We have a contradiction if $\text{in}_J(f)=d$, because then 
\[
f= h_df'_d = \begin{cases} h_df_d =h_d y^{k_d} & \text{ if } j<d \\
h_dyf_d= h_d y^{k_d+1} & \text{ if } j=d,
\end{cases}
\] 
does not contain  in its expansion $\alpha_j$ as a monomial, as the degree is strictly bigger. This is indeed  true for all $\text{in}_J(f)\geq j$. 

Suppose now the statement true for all $\text{in}_J(f)>t$, $j>t$ and lets prove it for $\text{in}_J(f)=t$. We write $h_t$ as a series in $y$ 
\begin{align}
\label{1}
&h_t = h_{t,0}(x)+ h_{t,1}(x)y^1+ \dots + h_{t, k_d-1}(x)y^{k_d -1}+ h_{t, k_d}(x)y^{k_d} +\dots, \text{ where } h_{t,s} \in \CC[[x]].\nonumber\\
&f = h_tf'_t + \sum_{i>t} h_i f'_i =  \alpha_j + \sum_{\beta \in \Gamma_1} c_{\beta} \beta. 
\end{align}
Observe that if $h_{t,0}(x)= h_{t,1}(x)= \dots = h_{t, k_d-1}(x)= h_{t, k_d}(x)=0$ then we do arrive at a contradiction since $y^{k_d+1}=f'_d$ (or $y^{k_d+1}=yf'_d$) and then we can rewrite $f$ as \[f=\sum_{t<i<d}h_i f'_i + (h_d+h_t(x, y/y^{k_d+1})f'_t)f'_d\] i.e. as an element in $(f'_{t+1}, \dots, f'_d)$ with $\text{in}_J(f)>t$ for which, by induction hypothesis, we already know there is a contradiction. 

\hspace{3em}We will then proceed by reverse induction on $\text{in}_y(h_t)$, the minimal index for which $h_{t,s}(x) \neq 0$. We just finished giving the base induction case when $\text{in}_y(h_t)=k_d+1$, and we proceed with the induction step. We observe that $\text{in}_y(h_t)$ cannot be $0$, otherwise the monomial $x^a\alpha'_t$, for the appropriate $a\in \NN$,  would appear only once in the lefthand side of (\ref{1}) and never in the righthand side. In fact, for the same reason, it must be that $h_{t,m} = 0$ for all $m\leq p'_i$. Observe that $p'_i\geq p_i$ for $i<j$. Then we are finally able to use the equations $g_i$. In particular for $i=t$ we have: 
\begin{equation}
\label{2}
y^{p_t} f_t - x f_{t+1} = g_t(a_{\alpha, \beta}, \theta_m)f_j + \sum_{i\geq t} h'_i f'_i
\end{equation}
and since we picked $(a_{\alpha, \beta}, \theta_s) \in \AAA^{ \#\,PM} \times \AAA^{j}$, such that $g_i (a_{\alpha, \beta},\theta_s) = 0$ for all $i$ we can substitute (\ref{2}) into (\ref{1}) to obtain an $f$ with $\text{in}(h_t)$ strictly larger. Then we are done. 
\end{proof}
\begin{corollary} 
\label{MT1T2dimension}
Suppose that $T_1, T_2$ is an admissible nested couple of sequences of nonnegative integers. Then
\begin{equation}
\label{dimensionMp12}
\dim M_{P((x,y), T_1, T_2)} \geq \dim M_{P((x,y), T_1)} +\bullet \, M(\Gamma_1, \Gamma_2) -\star \, M(\Gamma_1, \Gamma_2).
\end{equation}
\end{corollary}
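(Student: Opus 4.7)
The plan is to deduce this statement essentially as a direct consequence of Lemma \ref{dim2}. From that lemma we have the embedding
\[
Y = \bigl\{(a_{\alpha,\beta},\theta_s) \in \AAA^{\#PM} \times \AAA^{j} \,\bigl\vert\, g_i(a_{\alpha,\beta},\theta_s)=0,\ i=0,\dots,r-1\bigr\} \hookrightarrow M_{P((x,y),T_1,T_2)},
\]
sending $(a_{\alpha,\beta},\theta_s)$ to $(I(a_{\alpha,\beta}), J(a_{\alpha,\beta},\theta_s))$. First I would estimate $\dim Y$: since $Y$ is cut out of an affine space of dimension $\#PM + j$ by the $r$ equations $g_i$, the standard codimension bound gives $\dim Y \geq \#PM + j - r$.

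Next I would identify each of the three quantities with the three terms appearing in the claimed inequality. By Theorem \ref{standard}(1) we have $\#PM = \dim M_{P((x,y),T_1)}$. By the explicit computation (\ref{bulletsforstandard}) for nested Young diagrams in normal pattern we have $j = \bullet\, M(\Gamma_1,\Gamma_2)$. Finally, $r = \star\, M(\Gamma_1,\Gamma_2)$ by the definition of $r$ made at the beginning of Lemma \ref{dim2}. Substituting these three identifications into $\dim Y \geq \#PM + j - r$ gives exactly the desired inequality, provided I can transfer it along the embedding $Y \hookrightarrow M_{P((x,y),T_1,T_2)}$.

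The remaining thing to check — and the only real content beyond bookkeeping — is that the map $Y \to M_{P((x,y),T_1,T_2)}$ is injective, so that $\dim M_{P((x,y),T_1,T_2)} \geq \dim Y$. Injectivity in the first factor is immediate because $(a_{\alpha,\beta}) \mapsto I(a_{\alpha,\beta})$ is the isomorphism $\AAA^{\#PM} \cong M_{P((x,y),T_1)}$ of Theorem \ref{standard}(1). For the second factor, fix $I = I(a_{\alpha,\beta})$ and suppose $(\theta_s)$ and $(\theta'_s)$ produce the same ideal $J$. Then for each $i<j$ the element $f_i + \theta_i f_j$ and $f_i + \theta'_i f_j$ both lie in $J$, so their difference $(\theta_i-\theta'_i)f_j$ lies in $J$. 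Since $f_j$ is the unique generator of $I$ not already in $J$ (i.e.\ $f_j \bmod J$ spans the one-dimensional $I/J$), this forces $\theta_i = \theta'_i$, establishing injectivity.

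Putting these pieces together yields the chain of inequalities
\[
\dim M_{P((x,y),T_1,T_2)} \;\geq\; \dim Y \;\geq\; \#PM + j - r \;=\; \dim M_{P((x,y),T_1)} + \bullet\, M(\Gamma_1,\Gamma_2) - \star\, M(\Gamma_1,\Gamma_2),
\]
which is exactly (\ref{dimensionMp12}). The main obstacle has already been handled inside Lemma \ref{dim2} itself, namely verifying via the division Procedure \ref{divisionprocedure} that the vanishing of the $g_i$ is not only necessary but sufficient for $J$ to have normal pattern $P(T_2)$; once that is in hand, the present corollary is a matter of reading off dimensions.
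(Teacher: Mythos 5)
Your argument is essentially the paper's own argument, but it is incomplete: it silently assumes you are in the situation where Lemma \ref{dim2} applies, and that lemma is set up and proved only under the standing hypothesis that the initial degrees of $T_1$ and $T_2$ coincide ($d=d'$, i.e.\ case i) of \ref{casesstandard}, which for normal patterns means $\alpha_j$ is added in case 1a)). The corollary must also cover case ii), where $d'=d+1$ and necessarily $\alpha_j=\alpha_0=x^d$. There your machinery breaks down concretely: with $j=0$ the family $J(a_{\alpha,\beta},\theta_i)$ has no $\theta$ parameters and degenerates to $(yf_0,f_1,\dots,f_d)$, which does \emph{not} have colength $n+1$ in $R$ (the monomial $x^{d+1}$ is not in the ideal generated by $x^dy$ and the higher generators, so the quotient $I/(yf_0,f_1,\dots,f_d)$ has dimension at least $2$). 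So the map $Y\to M_{P((x,y),T_1,T_2)}$ you want to use does not even land in the target in this case, and the claim $xf_j\in J$ from the proof of Lemma \ref{dim2} (which relies on $q_j=1$, i.e.\ on $x\alpha_j$ being a $y$-multiple of $\alpha_{j-1}$) is false when $j=0$.

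The fix is short and is what the paper does: in case ii) there is exactly one ideal $J$ with $(I(a_{\alpha,\beta}),J)\in M_{P((x,y),T_1,T_2)}$, namely $J=(xf_0,yf_0,f_1,\dots,f_d)$, so $\dim M_{P((x,y),T_1,T_2)}\geq \dim M_{P((x,y),T_1)}$; and by (\ref{bulletsforstandard}) and (\ref{r}) one has $\bullet\,M(\Gamma_1,\Gamma_2)=\star\,M(\Gamma_1,\Gamma_2)=0$ in this case, so the inequality (\ref{dimensionMp12}) still reads correctly. With that case added, the rest of your write-up (the codimension bound $\dim Y\geq \#PM+j-r$, the identifications $\#PM=\dim M_{P((x,y),T_1)}$, $j=\bullet\,M$, $r=\star\,M$, and the injectivity of $Y\to M_{P((x,y),T_1,T_2)}$, which you argue slightly more explicitly than the paper does) is correct and matches the paper's route.
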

\begin{proof}
We have just proven the statement in case i), i.e. whenever $d=d'$ in Lemma \ref{dim2}. We only need to deal with the other case. 
Suppose then $d+1=d'$. This implies $\alpha_j=\alpha_0$. Then there is actually only one ideal $J(a_{\alpha, \beta})$ such that 
\[
\left(I(a_{\alpha, \beta}), J(a_{\alpha, \beta})\right) \in M_{P((x,y), T_1, T_2)}
\]
and it is $J(a_{\alpha, \beta}) = (xf_0,yf_0, f_1, \dots, f_d)$ where the $f_i$ are the standard generators of $I(a_{\alpha, \beta})$. Then it is still true that 
\[
\dim M_{P((x,y), T_1, T_2)} \geq \dim M_{P((x,y), T_1)} +\bullet \, M(\Gamma_1, \Gamma_2) -\star \, M(\Gamma_1, \Gamma_2).
\]
In this case: $\bullet \, M(\Gamma_1, \Gamma_2)\,= \,\star \, M(\Gamma_1, \Gamma_2)=0$ and $\dim M_{P((x,y), T_1, T_2)} \geq \dim M_{P((x,y), T_1)}$.
\end{proof}

\begin{corollary}{\cite[Proposition 3.4.11.]{cheah1998cellular}}
Let $T_1$ and $T_2$ be two sequences of nonnegative integers as in \ref{admissible}. Call $m$ the index such that $t_{2,m}=t_{1,m}+1$, and $d$ the initial degree of $T_1$. Then $M_{T_1, T_2}$ is smooth of dimension
\[ \dim M_{T_1, T_2} = n + \sum_{j\geq d} \frac{(t_{j-1}-t_{j})(t_{j-1}-t_{j}+1)}{2}\,\, + \,\, (t_{m-1}-t_{m}+1).\]
\end{corollary}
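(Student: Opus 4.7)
The plan is to combine the tangent-space computation of Lemma \ref{dimensiontangent12} with the variety-dimension lower bound of Corollary \ref{MT1T2dimension}, both already in hand. At any torus fixed point $(I_1, I_2) \in M_{T_1, T_2}$ with associated Young diagrams $(\Gamma_1, \Gamma_2)$, Lemma \ref{dimensiontangent12} gives
\[
\dim T_{(I_1, I_2)} M_{T_1, T_2} = \dim M_{T_1} + \bullet M(\Gamma_1, \Gamma_2) - \star M(\Gamma_1, \Gamma_2).
\]
Specializing to $(\Gamma_1, \Gamma_2) = (\Gamma_{T_1}, \Gamma_{T_2})$ the standard Young diagrams, the $\TT_{1^+}$-attracting open $M_{P((x,y), T_1, T_2)}$ containing this fixed point satisfies, by Corollary \ref{MT1T2dimension} together with the affine parametrization $M_{P((x,y), T_1)} \cong \AAA^{\dim M_{T_1}}$ of Theorem \ref{standard},
\[
\dim M_{T_1, T_2} \geq \dim M_{P((x,y), T_1, T_2)} \geq \dim M_{T_1} + \bullet M(\Gamma_{T_1}, \Gamma_{T_2}) - \star M(\Gamma_{T_1}, \Gamma_{T_2}).
\]
Since the tangent-space dimension always bounds the variety dimension from above, chaining these inequalities forces equality, hence smoothness at this fixed point.

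To propagate smoothness to every point, I first apply Proposition \ref{normalcover} at both flag steps to cover $M_{T_1, T_2}$ by finitely many opens of the form $M_{P((u,v), T_1, T_2)}$, each being the $\TT_{1^+}$-attracting neighborhood of a single torus fixed point (after a generic linear change of coordinates). The argument above applies verbatim in each such open, giving smoothness at all fixed points. Then, since by Lemma \ref{lemmagoettschecells} every point of $M_{T_1, T_2}$ flows to a fixed point under $\TT_{1^+}$, and tangent-space dimension is upper semicontinuous, smoothness extends from the fixed points to the entire stratum. A side benefit is that $\bullet M(\Gamma_1, \Gamma_2) - \star M(\Gamma_1, \Gamma_2)$ must be constant across the fixed points of $M_{T_1, T_2}$.

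For the dimension formula I evaluate at the standard fixed point. The computations following (\ref{bulletsforstandard}) give $\bullet M(\Gamma_{T_1}, \Gamma_{T_2}) = j$ and $\star M(\Gamma_{T_1}, \Gamma_{T_2}) = \sum_{k=d}^{m-1}(t_{1,k-1}-t_{1,k})$. Plugging these, together with Corollary \ref{dimMT} for $\dim M_{T_1}$, into the identity $\dim M_{T_1, T_2} = \dim M_{T_1} + \bullet M - \star M$ and simplifying the resulting telescoping sums in terms of the jump index $m$ and the initial degree $d$ yields the stated closed form.

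The main obstacle will be the last, purely combinatorial bookkeeping: isolating the contribution of the index $k=m$ to produce the extra $t_{m-1}-t_m+1$ summand, and separately checking case 2 of Definition \ref{casesstandard} (where $\alpha_j = \alpha_0$ forces $d' = d+1$ and the fiber of $M_{T_1, T_2} \to M_{T_1}$ collapses to a point) so that the closed form remains uniform across all admissible $(T_1, T_2)$.
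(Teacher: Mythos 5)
Your proposal is correct and follows essentially the same route as the paper: the paper's proof likewise combines the lower bound on $\dim M_{P((x,y),T_1,T_2)}$ from Lemma \ref{dim2}/Corollary \ref{MT1T2dimension} with the tangent-space computation of Lemma \ref{dimensiontangent12} to force equality at the standard fixed point, covers $M_{T_1,T_2}$ by such opens via Proposition \ref{normalcover}, propagates smoothness using semicontinuity and the $\TT_{1^+}$-flow, and reads off the dimension from Proposition \ref{dimMT}. The only thing the paper adds is the remark that this proof is not strictly needed here, since $M_{T_1,T_2}$ is already a union of Bialynicki-Birula cells of the smooth ambient space $\Hi^{n,n+1}(\PP^2)$.
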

\begin{proof}
Lemma \ref{dim2} and at Lemma \ref{dimensiontangent12} prove that  $\dim M_{P((x,y), T_1, T_2)} \geq \dim T_{I_{\Gamma_1},I_{\Gamma_2}} M_{ T_1, T_2}$. This proves that $M_{P((x,y), T_1, T_2)}$ is smooth: in fact all other points have Zariski tangent space of dimension smaller or equal to that of the fixed point. The Proposition of Iarrobino \ref{normalcover} proves that $M_{T_1, T_2}$ is covered by opens that are isomorphic to $M_{P((x,y), T_1, T_2)}$.  The formula for the dimension in terms of the $t_{j,i}$ is clear by looking at Lemma \ref{dimensiontangent12} and Proposition \ref{dimMT} on the dimension of $M_{T_1}$.\\

\hspace{3em}Remember that in this case that we do not need this proof: in fact as we know $\Hi^{n, n+1}(\PP^2)$ is smooth and has an affine paving. Moreover $M_{T_1, T_2}$ is the union of some of those cells, and $M_{P((x,y), T_1, T_2)}$ is one of those cells. However, as already mentioned, $[n, n+1]$ is the last case where smoothness of the ambient space  $\Hi^{n, n+1}(\CC^2)$ holds. 
\end{proof}

\begin{observation}Observe that as a consequence we get that the inclusion in (\ref{starequationsinclusion}) is actually an isomorphism. \\
 
\hspace{3em} When we write the generators of $J(a_{\alpha, \beta}, \theta_i)$ we actually do not write its \emph{standard generators} since the element $f'_j=yf_j$ could have expansion with non zero coefficient for a monomial outside $\Gamma_2$. We could easily remedy to this but it is not needed. In fact to prove the key Lemma \ref{dim2} we used the fact that the expansions of the standard generators $f_i$ has only terms of higher or equal degree than  $\alpha_i$ and the only one with smallest possible $y$ degree is exactly $\alpha_i$. This remains true for the set of generators we gave for $J(a_{\alpha, \beta}, \theta_i)$. This means that we are able to iterate Lemma $\ref{dim2}$ to find enough ideals $K(a_{\alpha, \beta}, \theta_i, \eta_s)$ such that $(a_{\alpha, \beta}, \theta_i), K(a_{\alpha, \beta}, \theta_i, \eta_s) \in M_{P((x,y), T_2, T_3)}$. That is exactly what we are about to do.
\end{observation}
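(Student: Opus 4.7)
The plan is to show that the morphism $\varphi\colon Y\to M_{P((x,y),T_1,T_2)}$ from Lemma~\ref{dim2}, sending $(a_{\alpha,\beta},\theta_i)$ to $(I(a_{\alpha,\beta}),J(a_{\alpha,\beta},\theta_i))$, is an isomorphism by matching dimensions and exhibiting an explicit algebraic inverse $\psi$. First the dimensions. The scheme $Y$ sits in $\AAA^{\#\,PM+j}=\AAA^{\dim M_{T_1}+\bullet M(\Gamma_1,\Gamma_2)}$ cut out by $r=\star M(\Gamma_1,\Gamma_2)$ equations, so $\dim Y\geq \dim M_{T_1}+\bullet M-\star M$. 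Corollary~\ref{MT1T2dimension} gives $\dim M_{P((x,y),T_1,T_2)}\geq \dim M_{T_1}+\bullet M-\star M$. In the opposite direction, Lemma~\ref{dimensiontangent12}(1), smoothness of $M_{T_1}$, and the universal bound $\dim X\leq \dim T_xX$ applied at the fixed point $(I_{\Gamma_1},I_{\Gamma_2})$ give $\dim M_{P((x,y),T_1,T_2)}\leq \dim M_{T_1}+\bullet M-\star M$; and the inclusion $\varphi$ forces $\dim Y\leq \dim M_{P((x,y),T_1,T_2)}$. All four quantities therefore coincide, the $g_i$ form a regular sequence so $Y$ is a reduced complete intersection, and $M_{P((x,y),T_1,T_2)}$ is smooth at its fixed point.

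Next I would construct $\psi\colon M_{P((x,y),T_1,T_2)}\to Y$ by hand. Given $(I,J)$, Theorem~\ref{standard} uniquely produces $(a_{\alpha,\beta})\in\AAA^{\#\,PM}$ together with standard generators $f_0,\dots,f_d$ of $I$. The quotient $I/J$ is one-dimensional over $\CC$; since $J\cap\langle\Gamma_2\rangle=0$ the element $f_j=\alpha_j+(\text{terms in }\Gamma_1)\in I\cap\langle\Gamma_2\rangle$ has nonzero image in $I/J$ and hence generates it, so $I=J+\CC\cdot f_j$ as $\CC$-vector spaces. For each $i=0,\dots,j-1$ there is consequently a unique scalar $\theta_i\in\CC$ with $f_i+\theta_i f_j\in J$; set $\psi(I,J):=(a_{\alpha,\beta},\theta_i)$. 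Algebraicity of $\psi$ follows because $\theta_i$ is the single coordinate of the image of $\bar f_i$ in the one-dimensional quotient $I/J$, and this coordinate is a Cramer-type expression in the coefficients of the generators of $I$ and $J$ once $J$ is presented in its own Iarrobino standard form.

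The verification that $\varphi$ and $\psi$ are mutually inverse is then routine. The composition $\psi\circ\varphi=\text{id}_Y$ is immediate since the generators $f'_i=f_i+\theta_i f_j$ of $J(a,\theta)$ exhibit $\theta_i$ as the required scalar. For $\varphi\circ\psi=\text{id}$, the ideal $J(a,\theta)$ contains $f_i+\theta_i f_j\in J$ for $i<j$ by construction, and $yf_j,xf_j\in J$ because $\mathfrak{m}\cdot(I/J)=0$ forces $\mathfrak{m}\cdot f_j\subseteq J$; for $i>j$ the scalar attached to $\bar f_i\in I/J$ vanishes by inspection of the $\Gamma_2$-component of $f_i$ in the decomposition $R=J\oplus\langle\Gamma_2\rangle$, giving $f_i\in J$. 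Hence $J(a,\theta)\subseteq J$, and equality follows from matching colengths $n+1$ in $R$. Finally $(a,\theta)\in Y$ because $J(a,\theta)=J$ has normal pattern $T_2$, which by the converse direction of the division procedure of Lemma~\ref{dim2} is equivalent to the vanishing of all $g_i(a,\theta)$. The main obstacle is the uniform algebraic extraction of $\theta_i$ in families together with the vanishing check for $i>j$; both amount to linear-algebra statements once one works relative to a universal family of standard generators.

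The closing remark of the observation, preparing the iteration for the three-flag case, then follows formally. The only structural property used in the proof of Lemma~\ref{dim2} is that each generator of $I$ has its lowest $y$-degree monomial equal to the corresponding $\alpha_i$, with every other monomial of strictly higher $y$-degree in that column. This invariant is preserved by the generating set $f'_0,\dots,f'_d$ of $J(a,\theta)$: for $i<j$ the element $f'_i=f_i+\theta_i f_j$ still has lowest $y$-term $\alpha_i$ (because $\deg_y\alpha_i\leq \deg_y\alpha_j$ along the division procedure's range of relevance), for $i=j$ the element $f'_j=yf_j$ has lowest $y$-term $y\alpha_j=\alpha'_j$, and for $i>j$ nothing has changed. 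Consequently the division procedure of Lemma~\ref{dim2} applies verbatim with $J(a,\theta)$ playing the role of $I$ and $\alpha'_l$ the new added monomial, producing the analogous variety $K(a,\theta,\eta)$ and the embedding into $M_{P((x,y),T_2,T_3)}$ that will be iterated in the next chapter.
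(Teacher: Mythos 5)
Your first paragraph is the argument the paper intends: Krull's bound gives $\dim Y\geq \#\,PM+j-r$, Lemma \ref{dim2} gives the inclusion into $M_{P((x,y),T_1,T_2)}$, and the tangent-space bound at the unique fixed point (Lemma \ref{dimensiontangent12} together with the fact that every point of the attracting cell degenerates to the fixed point) caps the dimension from above, so all the quantities agree. That is exactly how the first claim of the Observation is meant to follow from the preceding Corollary, and your closing paragraph — that the generators $f'_i$ of $J(a_{\alpha,\beta},\theta_i)$ retain the property that the unique monomial of lowest $y$-degree (and lowest total degree) in $f'_i$ is $\alpha'_i$, even though $f'_j=yf_j$ is no longer a standard generator in Iarrobino's sense — is precisely the content of the second half. (Your side remark that the $g_i$ form a regular sequence cutting out a \emph{reduced} complete intersection is neither justified nor needed.)

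The one genuine soft spot is in your explicit inverse, at ``for $i>j$ the scalar attached to $\bar f_i\in I/J$ vanishes by inspection of the $\Gamma_2$-component of $f_i$ in the decomposition $R=J\oplus\langle\Gamma_2\rangle$.'' The projection onto $\langle\Gamma_2\rangle$ along $J$ is not computed by restricting the monomial expansion of $f_i$ to $\Gamma_2$ — the monomials of $f_i$ lying in $\Gamma_1\subset\Gamma_2$ are redistributed between the two summands in a way that depends on $J$ — so there is nothing to inspect. The statement is true, but it needs the weight filtration: write $f_i=g+cf_j$ with $g\in J$ and suppose $c\neq 0$. Every monomial of $f_i$ has total degree $\geq\deg\alpha_i\geq\deg\alpha_j$ and $y$-degree $\geq k_i>k_j$, whereas $\alpha_j$ is the unique monomial of $f_j$ of degree $\deg\alpha_j$ and $y$-degree $k_j$; hence the $\TT_{1^+}$-initial form of $g$ is $-c\,\alpha_j$. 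Since $J$ lies in the attracting cell of $I_{\Gamma_2}$, that initial form must lie in $I_{\Gamma_2}$, contradicting $\alpha_j\in\Gamma_2$. With this repaired your map $\psi$ is a genuine inverse, and it is worth keeping: an injective map between irreducible varieties of equal dimension is only dominant, so the surjectivity of $Y\to M_{P((x,y),T_1,T_2)}$ that the Observation asserts really does rest on an argument of this kind rather than on the dimension count alone.
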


Now that we understand how and why the dimension of $M_{P((x,y), T_1, T_2)}$ changes with respect to the dimension of $M_{P((x,y), T_1)}$, we can also understand how it changes adding a further step and considering $M_{P\left((x,y), T_1, T_2, T_3\right)}$. Lemma \ref{dimensiontangent123} shows that the dimension of $T_{I_{\Gamma_1},I_{\Gamma_2},I_{\Gamma_3}} M_{P((x,y), T_1, T_2, T_3}$ grows, with respect to $\dim T_{I_{\Gamma_1}} M_{P}$, exactly following the rule (\ref{dimensionMp12}) twice for the inclusions $\Gamma_1\supset \Gamma_2$ and $\Gamma_2\supset\Gamma_3$ except when $\deg \alpha_j \leq \deg \alpha_l-2$. In this case \emph{it grows by one more}. \\

\hspace{3em}Let us iterate twice the results in Lemma \ref{dim2}. For this let $I(a_{\alpha, \beta}) \in M_{P(T_1)}$ with $(a_{\alpha, \beta})_{\alpha, \beta} \in \AAA^{ \#\,PM}$ be the ideal with standard generators $(f_0, \dots, f_d)$ as in (\ref{standardgenerators}). Let $(\theta_i)_{0\leq i< j} \in \AAA^j $, and $(\eta_i)_{0\leq i< l} \in \AAA^l$. We define the ideals $J(a_{\alpha, \beta}, \theta_i)$ and $K(a_{\alpha, \beta}, \theta_i, \eta_s)$ mimicking the construction in (\ref{definitionJ})
\[
J(a_{\alpha, \beta}, \theta_i) :=
\begin{pmatrix} 
f'_0 = &f_0 + \theta_0 f_j \\
\dots&\dots\\
f'_{j-1} =&f_{j-1}+\theta_{j-1} f_j\\
f'_j =&yf_j\\
f'_{j+1} =&f_{j+1} \\
\dots&\dots\\
f'_{d}=&f_d
\end{pmatrix},\quad
K(a_{\alpha, \beta}, \theta_i, \eta_s) :=
\begin{pmatrix} 
f''_0 = &f'_0 + \eta_0 f'_l \\
\dots&\dots\\
f''_{l-1} =&f'_{l-1}+\eta_{l-1} f'_l\\
f''_l =&yf'_l\\
f''_{l+1} =&f'_{l+1} \\
\dots&\dots\\
f''_{d}=&f'_d
\end{pmatrix}\, .
\]
The results of Lemma (\ref{dim2}) tell us that there exists $r = *M(\Gamma_1, \Gamma_2)$ equations $g_i(a_{\alpha, \beta}, \theta_s)$ in $\AAA^{ \#\,PM}\times \AAA^j$ and $r' = *M(\Gamma_2, \Gamma_3)$ equations $g'_i(a_{\alpha, \beta}, \theta_t, \eta_s)$ in $\AAA^{ \#\,PM}\times \AAA^j \times \AAA^l$ such that:
\begin{align*}
(I(a_{\alpha, \beta}), J(a_{\alpha, \beta}, \theta_i)) \in M_{P((x,y), T_1, T_2)} &\iff g_i (a_{\alpha, \beta},\theta_s) = 0 \text{ for all } i=0,\dots,  r-1,
\end{align*} and 
\begin{equation}
\label{equationfor3}
\begin{matrix}
(I(a_{\alpha, \beta}), J(a_{\alpha, \beta}, \theta_i), K(a_{\alpha, \beta}, \theta_i, \eta_s)) \in M_{P((x,y), T_1, T_2,T_3)} \text{ with }  g_i (a_{\alpha, \beta},\theta_s) = 0\,\, \forall \,i=0,\dots,  r-1 \\
\begin{sideways}$\iff\,\,$ \end{sideways} \\ 
g'_i (a_{\alpha, \beta},\theta_t,\eta_s) = 0 \text{ for all } i=0,\dots,  r'-1. 
\end{matrix}
\end{equation}
Then, since $j=\bullet \, M(\Gamma_1, \Gamma_2)$ and $l= \bullet \, M(\Gamma_2, \Gamma_3)$ we have that
\begin{equation}
\label{MT1T2T3dimension}
\dim M_{P((x,y), T_1, T_2, T_3)}\geq \dim M_{P((x,y), T_1} + \bullet \, M(\Gamma_1, \Gamma_2) +\bullet \, M(\Gamma_2, \Gamma_3)-\star \, M(\Gamma_1, \Gamma_2)-\star \, M(\Gamma_2, \Gamma_3).
\end{equation}
This is good enough to prove $\dim M_{P((x,y), T_1, T_2, T_3)} \geq \dim T_{I_{\Gamma_1},I_{\Gamma_2},I_{\Gamma_3}} M_{ T_1, T_2, T_3}$ in all cases where $\,\,\diamond \, M(\Gamma_1, \Gamma_2, \Gamma_3)=0$. \\

\hspace{3em}Thus the last step is to show that when  $\diamond \, M(\Gamma_1, \Gamma_2, \Gamma_3)=1$, one of the equations we found is not necessary.  More precisely.

\begin{lemma}
\label{diamondequationissatisfied}
Utilize all notations as above. Suppose $\deg \alpha_j \leq \deg \alpha_{l} -2$. Then $j-1< r'$ and  
\begin{align*}
g'_{j-1}(a_{\alpha, \beta},\theta_t,\eta_s)=0 \text{ for all } &\left((a_{\alpha, \beta})_{\alpha, \beta},(\theta_t)_{0\leq t< j},(\eta_s)_{0\leq t< l}\right) \in \AAA^{ \#\,PM}\times \AAA^j \times \AAA^l \\ 
\text{ such that }\quad g_i (a_{\alpha, \beta},\theta_s)& = 0\,\,\, \forall \,i=0,\dots, r-1 \quad \text{ and } \\
g'_i (a_{\alpha, \beta},\theta_t,\eta_s)& = 0 \,\, \forall \,i=0,\dots, r'-1,\,\,\text{ for }\qquad  i\neq j-1.
\end{align*}
In other words the equation $g'_{j-i}$ is not necessary because trivially satisfied whenever $\deg \alpha_j \leq \deg \alpha_{l} -2$, even though it appears in the list (\ref{equationfor3}), being $j-1< r'$. 
\end{lemma}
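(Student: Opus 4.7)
The approach is to relate $g'_{j-1}$, which arises from the division Procedure \ref{divisionprocedure} at the step $\Gamma_2\supset\Gamma_3$, to $g_{j-1}$ from the step $\Gamma_1\supset\Gamma_2$. The guiding idea is that the hypothesis $\deg\alpha_j\leq\deg\alpha_l-2$ leaves enough vertical room between the two added boxes that the relation forcing $g'_{j-1}=0$ is already implied by the relation $g_{j-1}=0$ together with the ideal inclusion $y\cdot I_2\subset I_3$. I would first dispatch the index bound $j-1<r'$: combining $\deg\alpha_{j-1}\leq \deg\alpha_j$ (from the monotonicity of degrees along the generators of $\Gamma_1$ in normal pattern) with the hypothesis $\deg\alpha_j\leq\deg\alpha_l-2<\deg\alpha'_l$, one places $j-1$ in the range producing a nontrivial equation, by the formula (\ref{r}).

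The main work is the algebraic identity, obtained by substituting $f'_{j-1}=f_{j-1}+\theta_{j-1}f_j$, $f'_j=yf_j$, and $p'_{j-1}=p_{j-1}+1$ (the last because $\alpha'_j=y\alpha_j$):
\begin{equation*}
y^{p'_{j-1}}f'_{j-1}-xf'_j \;=\; y\bigl(y^{p_{j-1}}f_{j-1}-xf_j\bigr)+\theta_{j-1}\,y^{p_{j-1}}f'_j.
\end{equation*}
The hypothesis $g_{j-1}(a,\theta)=0$ and Procedure \ref{divisionprocedure} applied in the first step yield a representation $y^{p_{j-1}}f_{j-1}-xf_j=\sum_{s\geq j}h_sf'_s$ with $h_s\in R$. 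Substituting and rewriting $y^{p_{j-1}}=y^{p_{j-1}-1}\cdot y$, which is legitimate since $p_{j-1}\geq 1$ in any normal pattern, the identity becomes
\begin{equation*}
y^{p'_{j-1}}f'_{j-1}-xf'_j \;=\; \sum_{s\geq j}(yh_s)\,f'_s \,+\, \theta_{j-1}\,y^{p_{j-1}-1}(yf'_j).
\end{equation*}
The crucial ingredient is the ideal inclusion $y\cdot I_2\subset I_3$: since $\alpha'_l$ is a minimal generator of $I_2$, the box $y\alpha'_l$ lies outside $\Gamma_2$ and, as $\Gamma_3=\Gamma_2\cup\{\alpha'_l\}$, also outside $\Gamma_3$, so $y\alpha'_l\in I_3$ and hence $yf'_s\in K$ for every $s$. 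Using the explicit expressions $f''_s=f'_s+\eta_sf'_l$ for $s<l$, $f''_l=yf'_l$, and $f''_s=f'_s$ for $s>l$, one verifies term by term that $yf'_s\in(f''_s,f''_l)$ for all $s\geq j$, and likewise $yf'_j\in(f''_j,f''_l)$. It follows that the entire right-hand side belongs to $(f''_s:s\geq j)=(f''_s:s>j-1)$. By the uniqueness characterization of $g'_{j-1}$ extracted from Procedure \ref{divisionprocedure}, namely that $g'_{j-1}(a,\theta,\eta)=0$ iff $y^{p'_{j-1}}f'_{j-1}-xf'_j\in(f''_s:s>j-1)$, this forces $g'_{j-1}=0$ as desired.

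The main obstacle I expect is the case analysis distinguishing $l<j$ from $l>j$: in the latter, converting $f'_s$ to $f''_s$ for $j\leq s<l$ generates spurious $\eta_sf'_l$ terms, and checking that these are absorbed into multiples of $f''_l=yf'_l$ rather than surviving as an $f'_l$ residue is precisely where the extra power of $y$ coming from $p'_{j-1}=p_{j-1}+1$ is essential. Modulo this bookkeeping, the argument cleanly reduces the three-step relation to the two-step relation already supplied by $g_{j-1}=0$.
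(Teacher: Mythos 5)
Your proof is correct and follows essentially the same route as the paper's: both multiply the two-step relation $y^{p_{j-1}}f_{j-1}-xf_j\in(f'_j,\dots,f'_d)$ by $y$, use $p'_{j-1}=p_{j-1}+1$, and exploit the extra factor of $y$ to absorb the $\eta$-corrections through $f''_l=yf'_l$ — your packaging of this as $y\cdot I_2\subset I_3$, i.e. $yf'_s\in(f''_s,f''_l)$ for all $s\geq j$, is exactly the computation the paper performs by direct substitution of (\ref{f2j}). The only cosmetic difference is that you obtain the base relation from the hypothesis $g_{j-1}=0$, whereas the paper rederives it from $I_1\cap\langle\Gamma_1\rangle=0$ together with the observation $\text{in}_y(h_j)>0$; these amount to the same thing (and note that under the hypothesis one necessarily has $j<l$, so the case $l<j$ you worry about at the end never occurs).
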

\begin{proof}
Since  $\deg \alpha_j \leq \deg \alpha_{l} -2$, necessarily $j<l$ and $j-1< r'$. Then in particular we have: 
\begin{equation}
\label{f2j}
\begin{matrix}
f''_{j-1} = f_{j-1} +\theta_{j-1} f_j + \eta_{j-1} f_l,\\
f''_{j} = yf_j+ \eta_{j} f_l, \\
f''_{l} = yf_l. 
\end{matrix}
\end{equation}
The equations $g'_i(a_{\alpha, \beta},\theta_t,\eta_s)$ are defined by 
\begin{equation}
\label{equationsg'}
y^{p'_i}f'_i - xf'_{i+1} = g'_i (a_{\alpha, \beta},\theta_t, \eta_s) f'_l \quad \text{mod } \, (f''_{i+1}, \dots, f''_d).
\end{equation}
In fact we will see that for $i=j-1$ we have
\begin{equation}
\label{questogoal}
y^{p'_{j-1}}f'_{j-1} - xf'_{j} = 0 \quad \text{mod } \, (f''_{j}, \dots, f''_d).
\end{equation}
This implies that $g'_{j-1}(a_{\alpha, \beta},\theta_t, \eta_s)$ is always equal to zero, as wanted. \\

\hspace{3em}Call $\frac{\alpha_l}{y}=\beta \in \Gamma_1$. Consider an elimination procedure exactly  as in \ref{divisionprocedure} in the proof of Lemma \ref{dim2} to write, relative to $I_1$, 
\[
y^{p_{j-1}}f_{j-1} - xf_{j} = G_{j-1}(a_{\alpha, \beta})\beta \quad \text{mod } \, (f_{j}, \dots, f_d).
\]
Here $G_{j-1}(a_{\alpha, \beta})$ is a polynomial expression in the coefficients $(a_{\alpha, \beta})_{\alpha, \beta} \in \AAA^{\dim M_P}$. 
 
Since $I_1 \cap \langle \Gamma_1 \rangle=0$, $G_{j-1}(a_{\alpha, \beta})=0$ identically on $\AAA^{\dim M_P}$, i.e. $y^{p_{j-1}}f_{j-1} - xf_{j} = \sum_{i\geq j} h_i f_{i}$. Observe that $\text{in}_y(h_j) > 0$. Multiply now both sides of the last equation by $y$, and add and subtract few terms to get: 
\begin{equation}
\label{questa}
y^{p_{j-1}}\left(yf_{j-1}+\theta_{j-1}yf_j -\theta_{j-1}yf_j\right) - xyf_{j} =  h_j yf_{j}+ \dots+h_l yf_l +\dots+ h_dy f_d.
\end{equation}
Observe that $p'_{j-1}=p_{j-1}+1$. Then rearranging we have: 
\[
y^{p'_{j-1}}f'_{j-1} - xf'_{j} \,=\, \left(\theta_{j-1}y^{p_{j-1}}+  h_j\right) yf_{j}+ \dots+h_l yf_l +\dots+ h_dy f_d.
\]
Since as observed $\text{in}_y(h_j) > 0$, and $p_{j-1}\geq 1$ we can factor a further $y$ from the $R$-multiple of $yf_j$, add and subtract $\eta_j f_l$, to write: 
\[
y^{p'_{j-1}}f'_{j-1} - xf'_{j} \,=\, \left(\theta_{j-1}y^{p_{j-1}-1}+  \tilde{h}_j\right) y(yf_{j}+\eta_j f_l-\eta_j f_l)+ \dots+h_l yf_l +\dots+ h_dy f_d.
\]
Now by substituting (\ref{f2j}) in (\ref{questa}), we have (\ref{questogoal}), as desired.
\end{proof}
\begin{example}
$\,$

\begin{minipage}{0.45\textwidth}
\ytableausetup{boxsize=2em, aligntableaux=bottom}
\begin{ytableau} 
\none[\qquad f_{d}=y^{k_d}]\\
a_{1,2}&\none[ f_{d-1}]\\
a_{1,2}&a_{1,2}&\none[\quad f_{l}] \\
d_1&a_{1,2}&*(lightgray)\quad \, a_{1,2}\longleftarrow & \none[\,\,\,\,\,\beta] \\
\,&d_1&a_{1,2}&\none[f_2]\\
\,&\,&d_1&a_{1,2}&\none[yf_1] & \none[\qquad\quad y^3f'_0 -xyf_1]\\
\,&\, &\,& a_1 & *(green)f_1 \\
\,&\, &\,&\,& a_1    \\
\,&\, &\,&\,& \, &\none[\quad f_0 ]  
\end{ytableau}
\end{minipage}
\begin{minipage}{0.54\textwidth}\small{
In the situation depicted on the left $\alpha_j$ is the green box, marked $f_1$, while $\alpha_l$ is the box marked with $f_l$. Since $\deg \alpha_l = \deg \alpha_j +2$  we need to prove that $g'_0$ is trivially satisfied. The idea is that this equation was already satisfied since there must be a relation for $y^2f_0-xf_1$. More explicitly in the proof we write an equation $G_0$ that represent the coefficient of $\beta$ (the gray box) once we apply the Procedure \ref{divisionprocedure} to $y^2f_0-xf_1$: since $\beta$ is in $\Gamma_1$ we find that $G_0$ must be identically zero. Now the key point is that when we do this we have an extra power of $y$ in all the interesting terms. This allows us to relate $G_0$ to $g_0$ with some manipulations. }
\end{minipage}
\end{example}

\begin{observation}Now we deal with case ii) of \ref{casesstandard}. If $d+1=d'+1=d''$ then there is nothing new to prove, as $\diamond\,M (\Gamma_1, \Gamma_2, \Gamma_3)=0$. Moreover, as we already observed there exists only one $K$ that sits inside $J(a_{\alpha, \beta}, \theta_i)$, so the dimension is the expected one. If $d+1=d'=d''$ then we still need to prove that: if $\deg \alpha_j \leq \deg \alpha_l-2$ then $g'_0$, the first of the equations $g'_i$ $i=0, \dots, \star \, M(\Gamma_2, \Gamma_3)$ is trivially satisfied. But this is obvious since in this case $g_0$ is the equation relating $yxf_0$ and $xyf_0$: 
\[
y^{p_0}f'_0+xf'_1 = yxf_0-xyf_0 = 0. 
\]
\end{observation}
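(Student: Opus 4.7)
The final statement completes the proof of the dimension inequality
$\dim M_{P((x,y), T_1, T_2, T_3)} \geq \dim T_{I_{\Gamma_1}, I_{\Gamma_2}, I_{\Gamma_3}} M_{T_1, T_2, T_3}$
by handling the two remaining configurations in case ii) of Definition~\ref{casesstandard}. The plan is to work out explicitly the structure of the admissible ideals $J$ and $K$ obtained from Iarrobino's standard generators in each sub-case, and to compare the number of free parameters directly with the tangent-space count of Lemma~\ref{dimensiontangent123}.

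In the sub-case $d+1 = d'+1 = d''$, the second jump is of type $l2)$ with $\alpha_l = \alpha'_0 = x^{d'}$. Mimicking Corollary~\ref{MT1T2dimension} one step higher, the only admissible $K$ sitting inside $J(a_{\alpha, \beta}, \theta_i)$ is $(xf'_0, yf'_0, f'_1, \dots)$, so no new $\eta$-parameters are introduced. On the tangent side Lemma~\ref{dimensiontangent123} gives $\bullet M(\Gamma_2, \Gamma_3) = \star M(\Gamma_2, \Gamma_3) = \diamond M(\Gamma_1, \Gamma_2, \Gamma_3) = 0$, so the expected dimension also does not grow, and the inequality follows.

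The interesting sub-case is $d+1 = d' = d''$. Here the first jump is $j2)$ with $\alpha_j = \alpha_0 = x^d$, so $J(a_{\alpha, \beta}) = (xf_0, yf_0, f_1, \dots, f_d)$, which means $f'_0 = xf_0$ and $f'_1 = yf_0$; the second jump is generic $l1)$ and introduces the $\eta_s$-parameters together with equations $g'_i$ defined by $y^{p'_i} f'_i - x f'_{i+1} \equiv g'_i f'_l \pmod{f''_{i+1}, \dots}$, exactly as in Lemma~\ref{dim2}. The crux is that the first of these equations is \emph{automatically satisfied}: since $p'_0 = 1$ and $f'_0, f'_1$ share the common factor $f_0$, one has the commutator identity $y f'_0 - x f'_1 = y(xf_0) - x(yf_0) = 0$, forcing $g'_0 \equiv 0$. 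This recovers precisely the extra $+1$ in the tangent dimension contributed by $\diamond M(\Gamma_1, \Gamma_2, \Gamma_3) = 1$ whenever $\deg \alpha_j \leq \deg \alpha_l - 2$.

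The main obstacle is a careful accounting: one must verify that $g'_0$ really lies in the enumerated list (i.e.\ that $0 < \star M(\Gamma_2, \Gamma_3)$ under the hypothesis $\deg \alpha_l \geq d+2$), that no other $g'_i$ is simultaneously trivialized by this commutator identity, and that in the boundary situation $\deg \alpha_l = d+1$ (where $\diamond = 0$) no spurious gain in dimension arises. Once these compatibilities are confirmed, combining with the estimate \eqref{MT1T2T3dimension} and Lemma~\ref{diamondequationissatisfied} yields the desired inequality in case ii) as well, thereby completing the proof of Proposition~\ref{geometricdimensionvstangentspace}.
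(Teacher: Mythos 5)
Your proposal is correct and follows essentially the same route as the paper: in the sub-case $d+1=d'+1=d''$ you note the unique $K$ and the vanishing of all the relevant counts, and in the sub-case $d+1=d'=d''$ you identify $f'_0=xf_0$, $f'_1=yf_0$ and observe that the commutator identity $y f'_0 - x f'_1 = y(xf_0)-x(yf_0)=0$ forces $g'_0\equiv 0$, which is exactly the paper's argument (and you even write the sign correctly where the paper's display has a typo). The additional compatibility checks you list are reasonable due diligence but do not change the argument.
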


\begin{proposition}
\label{mainproposition}
Let $T_1, T_2, T_3$ be three admissible sequences of nonnegative integers as in \ref{admissible} that differ in the indexes $m$ and $m'$ as in (\ref{admissiblejump}). Then the Hilbert-Samuel's strata $M_{{T_1, T_2, T_3}}$ is a smooth variety of dimension
\begin{align*}
\dim\,\, M_{{T_1, T_2, T_3}} \,\, &=\,\, (n-1) + \sum_{j\geq d }(t_{j-1}-t_j)\frac{(t_{j-1}-t_j+1)}{2} + (t_{m-1}-t_m) + (t_{m'-1}-t_{m'}) +\\&+ \begin{cases}1 &\text{ if } m'\geq m+2, \\0   &\text{ otherwise.}\end{cases}
\end{align*}
It has an affine cell decomposition with cells parametrized by nested Young diagrams $\Gamma_1, \Gamma_2, \Gamma_3$ that differ in only one box: $\Gamma_3=\Gamma_2 \sqcup \{\alpha_l\}=  \Gamma_1 \sqcup \{\alpha_l\}\sqcup \{\alpha_j\}$ with $\Gamma_1\vdash n$ and such that $T(\Gamma_i) = T_i$. Call $\alpha_j = (u_j, v_j)$ and $\alpha_l = (u_l, v_l)$, then the affine cell indexed by $ \left(\Gamma_1, \Gamma_2, \Gamma_3\right)$ has dimension given by the following formula: 
\begin{align}
\label{posskewmale}
\pos \left(\Gamma_1, \Gamma_2, \Gamma_3\right) &= \#\,\left\{(u,v) \in \Gamma_1 \left\vert \,\, h_{u,v} \neq 0,1 \right.\right\}\,+\nonumber \\ &+ \#\,\left\{(u,v) \in \Gamma_1 \left\vert \,\, (h_{u,v} = 0 \text{ and } u=u_j )\text{ or } (h_{u,v} = 1 \text{ and } v=v_j) \right.\right\}\, + \nonumber\\ & +  \#\,\left\{(u,v) \in \Gamma_2 \left\vert \,\, (h'_{u,v} = 0 \text{ and } u=u_l )\text{ or } (h'_{u,v} = 1 \text{ and } v=v_l) \right.\right\}\, +\\ & + \begin{cases} 1 & \text{ if } u_l+v_l > u_j +v_j +2, \\
1 & \text{ if } u_l+v_l = u_j +v_j +2 \text{and } v_l >v_j, \\
0 & \text{ otherwise.} \end{cases}\nonumber
\end{align}
Here $h_{u,v}$ is the hook difference of the box $(u,v)$ with respect to the Young diagram $\Gamma_1$, and $h'_{u,v}$ is the hook difference of the box $(u,v)$ with respect to the Young diagram $\Gamma_2$. The Poincar\'{e} polynomial of $M_{{T_1, T_2, T_3}}$ is 
\[
P_q\, \left( M_{{T_1, T_2, T_3}} \right) \quad = \quad \sum_{(\Gamma_1, \Gamma_2, \Gamma_3)\vdash [n, n+1, n+2],\,\,\, T(\Gamma_i)= T_i} \,\,\, q^{\pos \left(\Gamma_1, \Gamma_2, \Gamma_3\right)  }\,\,.
\]
\end{proposition}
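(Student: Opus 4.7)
The plan has four main steps: establish smoothness of $M_{T_1,T_2,T_3}$, deduce the affine cell decomposition via Bia\l ynicki-Birula, translate the tangent dimension formula into the hook-style combinatorial expression (\ref{posskewmale}), and read off the Poincar\'{e} polynomial.

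First I establish smoothness. Combining the inequality (\ref{MT1T2T3dimension}) with Lemma \ref{diamondequationissatisfied}, which shows that one of the cutting equations $g'_{j-1}$ is trivially satisfied precisely in the $\diamond M=1$ case, yields
\[
\dim M_{P((x,y),T_1,T_2,T_3)} \;\geq\; \dim M_{P((x,y),T_1)} + \bullet M(\Gamma_1,\Gamma_2) + \bullet M(\Gamma_2,\Gamma_3) - \star M(\Gamma_1,\Gamma_2) - \star M(\Gamma_2,\Gamma_3) + \diamond M(\Gamma_1,\Gamma_2,\Gamma_3).
\]
By Lemma \ref{dimensiontangent123}(1) the right-hand side is exactly $\dim T_{(I_{\Gamma_1},I_{\Gamma_2},I_{\Gamma_3})} M_{T_1,T_2,T_3}$, which is an \emph{upper} bound for the dimension of the chart at its unique fixed point. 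Hence both are equal and the chart is smooth at the fixed point. By (the three-flag analogue of) Lemma \ref{lemmagoettschecells} every point in this chart flows to the fixed point under $\TT_{1^+}$, so upper semicontinuity of the tangent-space dimension forces smoothness throughout the chart. Finally, the analogue of Proposition \ref{normalcover} for nested triples covers $M_{T_1,T_2,T_3}$ by finitely many such charts (one per system of parameters $(x, y-a_i x)$), giving global smoothness. The closed formula for $\dim M_{T_1,T_2,T_3}$ in the statement is then obtained by substituting Corollary \ref{dimMT} and the explicit counts (\ref{bulletsforstandard}) into the equality above.

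Next, smoothness lets me apply Theorem \ref{Bialynicki-Birula} to the $\TT_{1^+}$-action on $M_{T_1,T_2,T_3}$. The fixed points are exactly the nested triples of monomial ideals of the prescribed Hilbert-Samuel types, i.e.\ the triples $(\Gamma_1,\Gamma_2,\Gamma_3)\vdash[n,n+1,n+2]$ with $T(\Gamma_i)=T_i$; existence of the attracting limits inside $M_{T_1,T_2,T_3}$ is again Lemma \ref{lemmagoettschecells}. Theorem \ref{Bialynicki-Birula}(2) identifies the cell at each fixed point with its positive tangent space, whose dimension is computed by Lemma \ref{dimensiontangent123}(3).

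The main obstacle is the purely combinatorial identification
\[
\dim T^+_{(I_{\Gamma_1},I_{\Gamma_2},I_{\Gamma_3})} M_{T_1,T_2,T_3} \;=\; \pos(\Gamma_1,\Gamma_2,\Gamma_3),
\]
where the right-hand side is defined by (\ref{posskewmale}). For the first summand this is Goettsche's identity (\ref{posskew1}): $\dim T^+_{I_1}M_{T_1}=\#\{(u,v)\in\Gamma_1 : h_{u,v}\notin\{0,1\}\}$. For the two middle summands I will prove that
\[
\bullet M^+(\Gamma_1,\Gamma_2)-\star M^+(\Gamma_1,\Gamma_2) \;=\; \#\{(u,v)\in\Gamma_1 : (h_{u,v}=0,\, u=u_j) \text{ or } (h_{u,v}=1,\, v=v_j)\},
\]
and the analogous identity for $(\Gamma_2,\Gamma_3)$, by setting up a sign-cancelling bijection between bullets (generators of $\Gamma_2$) and stars (corners of $\Gamma_2$) that lie on antidiagonals weakly below $\Lambda_{\alpha_j}$: on each such antidiagonal, consecutive bullets and stars pair off along the staircase of $\Gamma_1$, and the unpaired contributions are exactly the boxes in $\Gamma_1$ sharing a column with $\alpha_j$ (for hook difference $0$) or a row with $\alpha_j$ (for hook difference $1$); the $M^+$ variant restricts to the half of the antidiagonal $\Lambda_{\alpha_j}$ strictly to the right of $\alpha_j$, which accounts for the conditions on $u_j$ and $v_j$. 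The last summand of (\ref{posskewmale}) is a direct unpacking of $\diamond M^+(\Gamma_1,\Gamma_2,\Gamma_3)$: it equals $1$ iff the diamond in position $xy\alpha_j$ sits strictly below $\Lambda_{\alpha_l}$, or on $\Lambda_{\alpha_l}$ and strictly to the right of $\alpha_l$, which is exactly the piecewise formula given. Once these three combinatorial equalities are established, Proposition \ref{Fulton} immediately yields the stated Poincar\'{e} polynomial.
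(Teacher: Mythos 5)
Your proposal is correct and follows essentially the same route as the paper: smoothness from matching the lower bound (\ref{MT1T2T3dimension}) plus Lemma \ref{diamondequationissatisfied} against the tangent-space dimension of Lemma \ref{dimensiontangent123}, propagation over the normal-pattern charts, then Bia\l ynicki-Birula and Proposition \ref{Fulton}. The only difference is that you explicitly flag and sketch the combinatorial identification of $\bullet M^+ - \star M^+$ with the hook-difference counts in (\ref{posskewmale}), a translation the paper's proof leaves implicit; your antidiagonal pairing argument is a reasonable way to supply it.
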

\begin{proof}
Thanks to equation (\ref{MT1T2T3dimension}) and Lemma \ref{diamondequationissatisfied} we know that the Hilbert-Samuel stratum $M_{{T_1, T_2, T_3}}$ is covered by opens isomorphic to $M_{P((x,y), T_1, T_2,T_3)}$. One of these opens is smooth since at his only torus fixed point it has dimension equal to the dimension of the Zariski tangent space,  thanks to equation (\ref{MT1T2T3dimension}), Lemma \ref{diamondequationissatisfied} and Lemma \ref{dimensiontangent123}. Thanks to Theorem \ref{Bialynicki-Birula} we know that it has a cell decomposition with cells whose dimensions are given by Lemma \ref{dimensiontangent123}. Thanks to Theorem \ref{Fulton} the decomposition in affine cells gives us a basis for the homology, and we can then calculate the Poincar\'{e} polynomial. 
\end{proof}

As a consequence we immediately get the following. 
\begin{proposition}
\label{affinepavingfor123}
The space $\Hi^{n, n+1, n+2}(0)$ has an affine paving given by the attracting sets at the fixed points for the $\TT_{1^+}$ action. The Poincar\'{e} polynomial of $\Hi^{n, n+1, n+2}(0)$ is given by 
\begin{equation}
P_q\, \left( \Hi^{n, n+1, n+2}(0) \right) \quad = \quad \sum_{(\Gamma_1, \Gamma_2, \Gamma_3)\vdash [n, n+1, n+2]} \,\,\, q^{\pos \left(\Gamma_1, \Gamma_2, \Gamma_3\right)  }\,\,
\end{equation}
where $\pos \left(\Gamma_1, \Gamma_2, \Gamma_3\right)$ is as in (\ref{posskewmale}). 
\end{proposition}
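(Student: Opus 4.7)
The plan is to assemble the affine paving of the whole punctual flag Hilbert scheme by gluing together the pavings of the individual Hilbert-Samuel strata that we just constructed in Proposition \ref{mainproposition}. The key input beyond that proposition is Lemma \ref{lemmagoettschecells}, which tells us that under the chosen generic one-parameter subgroup $\TT_{1^+}$ (whose weights satisfy $0 < w_1 < w_2$ and $(n+2) w_1 > (n+1) w_2$), the stratification by Hilbert-Samuel type behaves well with the attracting sets: each stratum $M_{T_1, T_2, T_3}$ is a union of attracting sets of $\Hi^{n, n+1, n+2}(0)$, and inside each stratum the $\TT_{1^+}$-action induces the same attracting-set decomposition one would get intrinsically.

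First I would decompose $\Hi^{n, n+1, n+2}(0) = \bigsqcup_{\mathbf{T}} M_{\mathbf{T}}$ into its Hilbert-Samuel strata, indexed by admissible triples $\mathbf{T} = (T_1, T_2, T_3)$ as in Lemma \ref{admissible}. The torus-fixed points of $\Hi^{n, n+1, n+2}(0)$ are in bijection with triples $(\Gamma_1, \Gamma_2, \Gamma_3) \vdash [n, n+1, n+2]$, and each such triple lies in exactly one stratum $M_{\mathbf{T}}$ with $T_i = T(\Gamma_i)$. By Proposition \ref{mainproposition}, each $M_{\mathbf{T}}$ is smooth and its attracting cells for the $\TT_{1^+}$-action are affine of the dimensions computed via $\pos(\Gamma_1, \Gamma_2, \Gamma_3)$ in formula (\ref{posskewmale}). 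By Lemma \ref{lemmagoettschecells}(1) these attracting cells coincide with the attracting cells of $\Hi^{n, n+1, n+2}(0)$ at the same fixed points, so gluing the stratawise pavings yields an affine cell decomposition of the whole punctual flag Hilbert scheme indexed by all triples $(\Gamma_1, \Gamma_2, \Gamma_3) \vdash [n, n+1, n+2]$.

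To promote this cell-by-cell description to a genuine filtration with affine successive quotients (as required by Proposition \ref{Fulton}), I would order the triples $(\Gamma_1, \Gamma_2, \Gamma_3)$ compatibly with the closure order of the cells — for instance by total decreasing dimension of the attracting set, with ties broken by a linear extension of the specialization order among fixed points. The hypothesis that the ambient space $\Hi^{n, n+1, n+2}(\CC^2)$ is projective (after compactifying via $\Hi^{n, n+1, n+2}(\PP^2)$ as in the proof of Theorem \ref{ES}) guarantees that all limits $\lim_{t \to 0} t \cdot Z$ exist, so the closure of each attracting cell is contained in the union of lower-dimensional cells, and the filtration is well-defined.

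Finally, Proposition \ref{Fulton} applies and gives a graded basis for $H_*\bigl(\Hi^{n, n+1, n+2}(0)\bigr)$ by fundamental classes of the cell closures, so the Poincaré polynomial is simply the sum
\[
P_q\bigl(\Hi^{n, n+1, n+2}(0)\bigr) = \sum_{\mathbf{T}} P_q(M_{\mathbf{T}}) = \sum_{(\Gamma_1, \Gamma_2, \Gamma_3) \vdash [n, n+1, n+2]} q^{\pos(\Gamma_1, \Gamma_2, \Gamma_3)},
\]
using the stratawise formula of Proposition \ref{mainproposition}. The main obstacle — namely the failure of smoothness of $\Hi^{n, n+1, n+2}(\CC^2)$, which prevented a direct application of Bialynicki-Birula to the ambient space — has already been circumvented in Proposition \ref{mainproposition} by proving smoothness stratum by stratum; at this stage the proposition is essentially a bookkeeping consequence of that work together with Lemma \ref{lemmagoettschecells}.
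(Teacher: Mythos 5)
Your proposal is correct and follows exactly the route the paper intends: the paper states this proposition as an immediate consequence of Proposition \ref{mainproposition} together with Lemma \ref{lemmagoettschecells} and Proposition \ref{Fulton}, giving no separate proof, and your argument simply fills in that same reasoning in more detail. The extra care you take about ordering the cells into a genuine filtration is a standard point the paper leaves implicit, and your treatment of it is fine.
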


\begin{remark}
The natural goal would now be to find a generating series for the Poincar\'{e} polynomials of $\Hi^{n, n+1, n+2}(0)$ as $n$ varies. However the above expression appears hard to sum. We will deal with this problem in the next chapter. 
\end{remark}

\subsection{$G_{T_1, T_2, T_3}$ is smooth}

\hspace{3em}We prove, with the same exact arguments, that the homogenous Hilbert-Samuel's strata $G_{T_1, T_2, T_3}$ are smooth. One of the main reasons why the proof is basically the same as for the case of $M_{T_1, T_2, T_3}$ is the following lemma, that dates back to Iarrobino. 

\begin{lemma}{\cite[Lemma~2.6.]{iarrobino1977punctual}}
Let $I\in M_{P(T_1)}$ be an ideal with normal pattern. Then $I\in G_{P(T_1)}$ if and only if each standard generator is homogenous. If the standard generators of $I$ are $(f_0, \dots, f_d)$ then the associated graded ideal $\rho_T(I)$ has standard generators $(F_0, \dots, F_d)$ where $F_i$ is the initial form of $f_i$, i.e. $\rho(f_i) = F_i$. 
\end{lemma}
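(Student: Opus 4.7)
My plan is to exploit the uniqueness clauses of Theorem \ref{standard}, parts (2) and (4), together with the elementary fact that taking the initial form preserves the combinatorial shape prescribed by those statements. Concretely, if $f_i=\alpha_i+\sum_{\beta\in P^{\geq}_{\alpha_i}}a_{\alpha_i,\beta}\,\beta+\sum_{\gamma\in S^{\geq}_{\alpha_i}}d_{\alpha_i,\gamma}\,\gamma$ is a standard generator of $I$, then its initial form $\rho(f_i)$ is obtained simply by retaining the terms of degree $\deg\alpha_i$, i.e.\ those indexed by $P^{=}_{\alpha_i}\subset P^{\geq}_{\alpha_i}$ and $S^{=}_{\alpha_i}\subset S^{\geq}_{\alpha_i}$. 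This is precisely the shape prescribed for standard generators of an ideal in $G_P$ by part (4) of the theorem.

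For the ``if and only if'', I first dispatch the backward direction: if every $f_i$ is homogeneous then $I=(f_0,\ldots,f_d)$ is a homogeneous ideal, and combined with the hypothesis $I\in M_{P(T_1)}$ this gives $I\in G_{P(T_1)}$. For the forward direction I start from $I\in G_{P(T_1)}$ and invoke Theorem \ref{standard}(4) to produce homogeneous standard generators $\tilde F_0,\ldots,\tilde F_d$ of $I$. Because the shape allowed by (4) is a special case of the shape allowed by (2) (with all the extra free coefficients indexed by $P^{\geq}\setminus P^{=}$ and $S^{\geq}\setminus S^{=}$ set to zero), these $\tilde F_i$ also qualify as standard generators of $I$ in the sense of (2); uniqueness in (2) then forces $f_i=\tilde F_i$, and so the $f_i$ are homogeneous.

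For the second assertion I need two ingredients. First, that $\rho_T(I)\in G_{P(T_1)}$: the associated graded ideal $\rho_T(I)$ is homogeneous by construction, and its graded pieces satisfy $\rho_T(I)_j=I_j$ inside $R_j$, so the pattern condition (ii) transfers verbatim from $I$ to $\rho_T(I)$. Theorem \ref{standard}(4) then endows $\rho_T(I)$ with unique standard generators $F'_0,\ldots,F'_d$. Second, that $F'_i=\rho(f_i)$: both polynomials are homogeneous of degree $\deg\alpha_i$, both lie in $\rho_T(I)$ (the $\rho(f_i)$ because they are the initial forms of elements of $I$), and both carry coefficient $1$ on the monomial $\alpha_i$, so their difference $F'_i-\rho(f_i)$ is a homogeneous element of $\rho_T(I)$ whose expansion is supported entirely inside $\Gamma_T$. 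Since $\rho_T(I)$ has normal pattern, $\rho_T(I)\cap\langle\Gamma_T\rangle=0$, forcing $F'_i=\rho(f_i)$.

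I do not anticipate any real obstacle: the argument is structural and rests entirely on the uniqueness of standard generators and on the trivial observation that taking the initial form commutes with the truncations $P^{\geq}\leadsto P^{=}$ and $S^{\geq}\leadsto S^{=}$. The only step that demands a little care is the verification that normal pattern $P(T_1)$ is inherited by $\rho_T(I)$, but this reduces to the elementary graded-pieces identity $\rho_T(I)_j=I_j$.
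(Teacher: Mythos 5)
Your argument is correct. The paper itself gives no proof of this lemma --- it is cited directly from Iarrobino, with only a remark that the notation will be expanded --- so there is nothing to compare against line by line; but your route through the uniqueness clauses of Theorem \ref{standard} (homogeneous standard generators are in particular standard generators in the sense of (2), so uniqueness forces $f_i=\tilde F_i$; and $F'_i-\rho(f_i)$ is supported in $\Gamma_T$ and lies in $\rho_T(I)$, hence vanishes by the pattern condition $\rho_T(I)\cap\langle\Gamma_T\rangle=0$) is exactly the intended one, and you correctly isolate the one point needing care, namely that $\rho_T(I)_j=I_j$ so the normal pattern is inherited by the associated graded ideal.
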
 

For completeness we expand on notation and content. The complication on the ideals is inversely proportional to the complications on the indexes: even though everything is homogeneous so contains less terms, we need to distinguish those terms with more complicated indexes. In fact basically only the definition of $J(a_{\alpha, \beta}, \theta_i)$ of \ref{definitionJ}, and the form of the equations $g_i(a_{\alpha, \beta}, \theta_s)$ of \ref{equationsg} are different but all the arguments are the same.\\ 

\hspace{3em}Let $T_1, T_2, T_3$ be admissible sequences of nonnegative integers. Let $I(a_{\alpha, \beta})\in G_{P(T_1)}$ be with standard generators $(f_0, \dots, f_d)$ given by (\ref{standardgeneratorsG}) for the point $(a_{\alpha, \beta})_{\alpha, \beta}\in \AAA^{\dim G_{T_1}}$. Then we can read from the elements of $T_1$ how many generators we have of each degree: we have $t_{1, d-1}-t_{1, d}+1$ generators of degree $d$, and $t_{1, k-1}-t_{1, k}$ generators of degree $k$ with $k>d$, where $d$ is the initial degree of $I(a_{\alpha, \beta})$. \\

\hspace{3em}Suppose now that $\alpha_j$, the box in $\Gamma_{T_2}\setminus \Gamma_{T_1}$, is of degree $d+k$ i.e. 
\begin{align*}
T_1= (0,1, 2, \dots, d, t_{1, d}, t_{1, d+1}, \dots, t_{1, d+k}, t_{1, d+k+1}, \dots ), \\
T_2= (0,1, 2, \dots, d, t_{1, d}, t_{1, d+1}, \dots, t_{1, d+k}+1, t_{1, d+k+1}, \dots ).
\end{align*}

Suppose first $k>0$ so that $\alpha_j$ does not have degree $d$ and $d'=d$. This is the general case, we will treat the others later. In particular it must be $j = t_{1, d+k+1}-1$, since we always want Young diagrams of standard normal form $T_2$. Then for $\theta_i$ with $i= t_{1, d+k},\dots, t_{1, d+k+1}-2 $ we can define the family of ideals $J(a_{\alpha, \beta}, \theta_i)$ as 
\begin{equation}
\label{definitionJ}
J(a_{\alpha, \beta}, \theta_i) :=
\begin{pmatrix} 
f'_0 = &f_0  \\
\dots&\dots\\
f'_{t_{1, d+k}-1} =&f_{t_{1, d+k}-1}\\
f'_{t_{1, d+k}} =&f_{t_{1, d+k}} + \theta_{t_{1, d+k}} f_{t_{1, d+k+1}-1} \\
\dots \\
f'_{t_{1, d+k+1}-2} =&f_{t_{1, d+k+1}-2} + \theta_{t_{1, d+k+1}-2} f_{t_{1, d+k+1}-1} \\
f'_{t_{1, d+k+1}-1} =&yf_{t_{1, d+k+1}-1}\\
f'_{t_{1,d+k+1}} =&f_{t_{1, d+k+1}} \\
\dots&\dots\\
f'_{d}=&f_d
\end{pmatrix},
\text{ with }
\begin{matrix}
(a_{\alpha, \beta})_{\alpha, \beta} \in \AAA^{\dim G_{T_1}}\\
\, \\
(\theta_i)_{i} \in \AAA^{t_{1, d+k+1}-t_{1, d+k}-1}.
\end{matrix}
\end{equation}
We define this family because we are looking for enough $J$ such that $(I(a_{\alpha, \beta}), J) \in G_P(T_1, T_2)$. 

\begin{example}
$\,$

\begin{minipage}{0.45\textwidth}
\begin{center}
\ytableausetup{boxsize=2em, aligntableaux=bottom}
\begin{ytableau} 
\none[\qquad f_4=y^{k_4}]\\
\,&\none[\quad f_{3}] \\
\,&\,\\
\,&\,&\none[\quad f_{t_{k+d+1}-1}]\\
\,&\,&\,& \none[f_2]\\
\, &\,& \, &\, &\none[f_1]\\
\, &\,&\,&\, &\,    \\
\, &\,&\,& \, & \,&\none[\quad f_0 ]  
\end{ytableau}
\end{center}
\end{minipage}
\begin{minipage}{0.45\textwidth}
In this example: $T_1=(1, 2, 3, 4, 5, 5, 2)$, and $T_2=(1, 2, 3, 4, 5, 5, 3)$. Here $\alpha_j$ is marked by $f_j = f_{t_{k+d+1}-1}$, its degree is $d+k=d+2=7$, $j$ is $3=t_{k+d+1}-1$, and finally $t_{1, d+k+1}-t_{1, d+k}-1$ is $5-2-1=2$. Every polynomial here is homogeneous, i.e. it contains monomials that are only on its antidiagonal. Then, when we add the box $\alpha_j$ we can modify only $f_1$ and $f_2$. 
\end{minipage}
\end{example}

\begin{observation}
As it was happening in the case $M_P$ the possible free new coordinates we are adding are as many as $\bullet \, G(\Gamma_1, \Gamma_2)$. Now we look at the equations we have to impose. 
\end{observation}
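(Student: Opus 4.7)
The plan is to verify the counting claim by matching the parameter space of the family $J(a_{\alpha,\beta},\theta_i)$ introduced in the homogeneous setting against the combinatorial quantity $\bullet\,G(\Gamma_1,\Gamma_2)$ from Definition \ref{starsanddots}. First I would observe that the modification $f'_i = f_i+\theta_i f_j$ preserves homogeneity only when $\deg f_i = \deg f_j = d+k$, so the index $i$ is forced to range exactly over those $i\in\{0,\dots,d\}$ for which $\alpha_i$ lies on the antidiagonal $\Lambda_{\alpha_j}$ and $i\neq j$. Reading this directly off the sequence $T_1$ gives the count $(t_{1,d+k-1}-t_{1,d+k})-1$ in the generic regime $d+k>d$, with the analogous count at the initial-degree boundary.

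Next I would compute $\bullet\,G(\Gamma_1,\Gamma_2)$ directly from the combinatorics. Because both $\Gamma_1$ and $\Gamma_2$ have normal form, the only transitions available at step $j$ are case 1a and case 2 of Definition \ref{cases12}. In case 1a the box $\alpha_j$ is promoted to $y\alpha_j\in\Lambda_{\alpha_j+1}$, while every other generator of $I_1$ on $\Lambda_{\alpha_j}$ survives as a generator of $I_2$; hence $\bullet\,G(\Gamma_1,\Gamma_2)$ equals the number of generators of $I_1$ on $\Lambda_{\alpha_j}$ minus one, matching the previous paragraph. In case 2 (forcing $\alpha_j=\alpha_0=x^d$), no $\theta_i$ parameter is available because $\alpha_0$ is the only generator of $I_1$ on $\Lambda_{\alpha_j}$, and simultaneously $\bullet\,G(\Gamma_1,\Gamma_2)=0$ since $\alpha_j$ is replaced by $y\alpha_j$ on the next antidiagonal.

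The only step requiring real care is the boundary bookkeeping at the initial degree, where generators at degree $d$ are counted with an extra $+1$ (as in the formula for $\star\,G$ recalled in (\ref{r})). This is a routine adjustment once one observes that the subtraction rule is the same in all cases, so the identity $\bullet\,G(\Gamma_1,\Gamma_2)=\#\{\text{generators of }I_1\text{ on }\Lambda_{\alpha_j}\}-1$ holds uniformly; the main obstacle I anticipate is keeping the indexing conventions (upward-$y$ versus rightward-$x$, and the reversed ordering of the $\alpha_i$) consistent while unwinding the definition. With the observation in hand, the next step, in complete analogy with Lemma \ref{dim2}, will be to exhibit $\star\,G(\Gamma_1,\Gamma_2)$ equations cutting out $G_{P((x,y),T_1,T_2)}$ inside $\AAA^{\dim G_{T_1}}\times\AAA^{\bullet\,G(\Gamma_1,\Gamma_2)}$, and then to iterate twice and handle a $\diamond\,G$-type exception to conclude the smoothness of $G_{T_1,T_2,T_3}$.
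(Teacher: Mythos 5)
Your proposal is correct and follows essentially the same route as the paper: it identifies the $\theta_i$ with the generators of $I_1$ of the same degree as $\alpha_j$ other than $\alpha_j$ itself, and matches that count against $\bullet\,G(\Gamma_1,\Gamma_2)$ using the fact that for normal patterns only cases 1a) and 2) occur, with the case-2) and initial-degree boundaries handled exactly as in Corollary \ref{othercasesond}. Your count $(t_{1,d+k-1}-t_{1,d+k})-1$ also agrees with the worked example (where it equals $5-2-1=2$), which is the reliable check here given the index slips in the surrounding displayed formulas.
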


\begin{lemma}
\label{dim2G}
Call $r= *G(\Gamma_1, \Gamma_2)$. Then there exist $r$ equations $g_{t_{1, d+k-2}}, \dots, g_{t_{1, d+k-1}}$ in the $((a_{\alpha, \beta}),(\theta_i))_{(\alpha, \beta) \in PG, i=t_{1, d+k},\dots, t_{1, d+k+1}-2}$ defined as 
\begin{equation}
\label{equationsg}
y^{p_{i-1}}f_{i-1} - xf_{i} = g_i (a_{\alpha, \beta},\theta_s) f_j \quad \text{mod } \, (f'_{i+1}, \dots, f_{j-1}), 
\end{equation}
such that 
\[
(I(a_{\alpha, \beta}), J(a_{\alpha, \beta}, \theta_i)) \in G_{P((x,y), T_1, T_2)} \iff g_i (a_{\alpha, \beta},\theta_s) = 0 \text{ for all } i={t_{1, d+k-2}},\dots,  {t_{1, d+k-1}}.
\]
Equivalently if we define $Y$ to be the variety cut out by the $g_i$ then we have 
\[
\AAA^{ \#\,PG} \times \AAA^{t_{1, d+k+1}-t_{1, d+k}-1} \supset Y := \{(a_{\alpha, \beta},\theta_s) \vert g_i (a_{\alpha, \beta},\theta_s) = 0\} \xrightarrow{\text{isom}}  M_{P((x,y), T_1, T_2)}.
\]
\end{lemma}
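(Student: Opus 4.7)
The plan is to mirror the argument used for Lemma \ref{dim2}, exploiting the simplification that all ideals and all standard generators are now homogeneous.

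First I would construct the equations $g_i$. The standard generators $f_0,\dots,f_d$ of $I(a_{\alpha,\beta})$ are homogeneous of degrees $\deg\alpha_0,\dots,\deg\alpha_d$, so for each $i$ the expression $y^{p_{i-1}}f_{i-1}-xf_i$ is homogeneous of degree $\deg\alpha_{i-1}+p_{i-1}$. I would run a homogeneous analog of Procedure \ref{divisionprocedure}: at each step eliminate, using the $f'_s$ with $s>i$, the monomial of lowest $y$-degree that lies outside $\Gamma_2$, and use $f_j$ rather than $f'_j$ whenever the term to be cancelled is $\alpha_j$. Homogeneity forces the reduction to terminate after sweeping through a single antidiagonal, and since $J(a_{\alpha,\beta},\theta_i)$ and $I(a_{\alpha,\beta})$ agree in every degree other than $\deg\alpha_j$, the only indices for which the reduction can leave a nontrivial multiple of $f_j$ are those with $\deg\alpha_{i-1}+p_{i-1}=\deg\alpha_j$. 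Counting these indices gives exactly $r=\star G(\Gamma_1,\Gamma_2)$ equations $g_i(a_{\alpha,\beta},\theta_s)$, indexed by $i=t_{1,d+k-2},\dots,t_{1,d+k-1}$.

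Next I would show that the vanishing of the $g_i$ is equivalent to $J(a_{\alpha,\beta},\theta_i)\cap\langle\Gamma_2\rangle=0$. One direction is immediate: if some $g_i$ is nonzero, the reduction exhibits a nonzero $\CC$-linear combination of monomials in $\Gamma_2$ that lies in $J$, since $f_j\notin J$. For the converse I would run the same double reverse induction as in Lemma \ref{dim2}: suppose by contradiction that there exists $f\in J$ of the form $\alpha_j+\sum_{\beta\in\Gamma_1}c_\beta\beta$; write $f=\sum_i h_if'_i$ and induct first on the $J$-leading index $\inn_J(f)$ and then, within a fixed level, on $\inn_y(h_t)$. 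The equations $g_i=0$ let me push up the leading $y$-term of $h_t$, and homogeneity of the $f'_s$ guarantees that only finitely many substitutions are needed to reach the base case.

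The main technical obstacle is the bookkeeping needed to isolate the precise range of indices producing nontrivial equations and to check that, because we are in the homogeneous setting, the reduction never spills into a different degree. Once this is confirmed, the sufficiency half carries over essentially verbatim from the proof of Lemma \ref{dim2}, because the reductions used there preserve the filtration by $y$-leading degree. The degenerate case $k=0$ (so $d'=d+1$ and $\alpha_j$ is itself a minimal generator of degree $d$) is handled as in Corollary \ref{MT1T2dimension}: there is a unique admissible $J$, and both $\bullet G(\Gamma_1,\Gamma_2)$ and $\star G(\Gamma_1,\Gamma_2)$ vanish, so the statement is trivially true.
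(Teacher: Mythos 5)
Your proposal is correct and follows exactly the route the paper takes: the paper's own proof of this lemma is a two-line remark that the argument is "exactly the same as for Lemma \ref{dim2}," with the equations corresponding, after multiplication by $x$, to the generators on the antidiagonal immediately below $\alpha_j$ — which is precisely your degree condition $\deg\alpha_{i-1}+p_{i-1}=\deg\alpha_j$. Your sketch simply spells out in more detail the homogeneous specialization of Procedure \ref{divisionprocedure} and the double reverse induction that the paper leaves implicit.
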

\begin{proof}
The proof is exactly the same as for Lemma \ref{dim2}. The equations are in correspondence, by multiplying by $x$, with the free boxes on the antidiagonal immediately below $\alpha_j$ since everything is homogenous and has to remain homogenous. 
\end{proof}

\begin{corollary}
\label{othercasesond}
With the notations as above we have that: 
\[
\dim G_{P((x,y), T_1, T_2)} \geq \dim G_{P((x,y), T_1} +\bullet \, G(\Gamma_1, \Gamma_2) -\star \, G(\Gamma_1, \Gamma_2).
\]
\end{corollary}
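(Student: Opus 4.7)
The plan is to mirror the proof of Corollary \ref{MT1T2dimension}, which handled the analogous statement for $M_P$. Since Lemma \ref{dim2G} already carries the heart of the argument, I only have to split into cases according to the initial degrees $d$ of $T_1$ and $d'$ of $T_2$, and treat the single case the lemma does not address.

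First I would observe that, because $T_2$ is obtained from $T_1$ by adding one box at some index $m$ and the initial degree can neither drop nor jump by more than one, the only possibilities are $d' = d$ and $d' = d+1$. In the case $d' = d$, Lemma \ref{dim2G} directly supplies the affine family $\AAA^{\dim G_P(T_1)} \times \AAA^{\bullet \, G(\Gamma_1, \Gamma_2)}$ of pairs $(I(a_{\alpha,\beta}), J(a_{\alpha,\beta}, \theta_i))$ and cuts it down by $\star \, G(\Gamma_1, \Gamma_2)$ equations $g_i$ to give a subvariety mapping isomorphically to $G_{P((x,y),T_1,T_2)}$. This immediately yields the inequality $\dim G_{P((x,y), T_1, T_2)} \geq \dim G_{P((x,y),T_1)} + \bullet \, G(\Gamma_1, \Gamma_2) - \star \, G(\Gamma_1, \Gamma_2)$.

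The remaining case $d' = d+1$ forces $m = d$, $t_{1,d} = d$, and $\alpha_j = x^d$: in effect $\Gamma_2$ contains the entire triangle of degree $\leq d$. I would argue that for each $I \in G_{P((x,y),T_1)}$ with standard generators $(f_0, \dots, f_d)$ there is a unique $J \in G_{P((x,y),T_2)}$ with $J \subset I$, namely $J = (xf_0, yf_0, f_1, \dots, f_d)$: any graded $J \subset I$ of the correct colength must have vanishing degree-$d$ part and hence is forced to contain $x \cdot I_d$ and $y \cdot I_d$ as its new degree-$(d{+}1)$ generators. This gives a bijection $G_{P((x,y),T_1)} \leftrightarrow G_{P((x,y),T_1,T_2)}$, so the dimensions coincide. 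One then checks from the definitions in \ref{starsanddots} that both $\bullet \, G(\Gamma_1, \Gamma_2)$ and $\star \, G(\Gamma_1, \Gamma_2)$ vanish here, since no standard monomial generator and no vertex of $\Gamma_2$ can sit on the antidiagonal $\Lambda_{\alpha_j}$ of degree $d$ when $\Gamma_2$ is already saturated through that degree. The bound thus reduces to the trivial $\dim G_{P((x,y),T_1,T_2)} \geq \dim G_{P((x,y),T_1)}$.

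The principal difficulty, insofar as there is one, is already defused by Lemma \ref{dim2G}: all that remains is the bookkeeping for the degenerate boundary case where the initial degree jumps. The only point requiring care is verifying that the combinatorial invariants $\bullet \, G$ and $\star \, G$ correctly evaluate to zero when $\alpha_j = x^d$; this follows immediately from the hypothesis $t_{2,d} = d+1$, which eliminates any generator or inner corner on the degree-$d$ antidiagonal of $\Gamma_2$.
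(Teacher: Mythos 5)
Your proposal follows the paper's own route: invoke Lemma \ref{dim2G} for the generic case and handle the jump $d'=d+1$ by showing the graded subideal $J$ is unique, namely $J=(xf_0,yf_0,f_1,\dots,f_d)$, with both $\bullet\,G$ and $\star\,G$ vanishing. Your uniqueness argument (the degree-$d$ part of $J$ must vanish, so $J_e=I_e$ for $e\neq d$) and your check that no generator or vertex of $\Gamma_2$ lies on $\Lambda_{\alpha_j}$ are both correct and match the paper's reasoning.

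The one point you gloss over is that your dichotomy $d'=d$ versus $d'=d+1$ conflates two sub-cases of $d'=d$: the generic one $\deg\alpha_j>d$, which is the only situation in which Lemma \ref{dim2G} is actually set up (the whole discussion preceding it assumes $k>0$), and the sub-case $\deg\alpha_j=d$ with $t_{2,d}\leq d$, where the box is added on the initial antidiagonal without saturating it. The paper treats this third case separately and observes that the same argument goes through after a shift of indices, because $I(a_{\alpha,\beta})$ then has $t_{1,d-1}-t_{1,d}+1$ generators of degree $d$ (including $f_0$) rather than the count used in the lemma. This is a bookkeeping omission rather than a substantive error — the affine family and the equations are constructed identically — but as written your proof does not literally cover that case.
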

\begin{proof}
We need to consider the other cases i.e. when $d\neq d'$ and when $\deg \alpha_j = d$. Observe that in the homogenous case the above expression actually sais that the dimension of $G_{P((x,y), T_1, T_2)}$ can be lower than that of $G_{P((x,y), T_1}$. \\

\hspace{3em}Suppose we are in the case $d+1=d'$. Then $\alpha_j=\alpha_0$ $\deg \alpha_j = d$. Then there is actually only one ideal $J(a_{\alpha, \beta})$ such that 
\[
\left(I(a_{\alpha, \beta}), J(a_{\alpha, \beta})\right) \in G_{P((x,y), T_1, T_2)}
\]
and it is $J(a_{\alpha, \beta}) = (xf_0,yf_0, f_1, \dots, f_d)$ where the $f_i$ are the standard generators of $I(a_{\alpha, \beta})$. It is still true that 
\[
\dim G_{P((x,y), T_1, T_2)} \geq \dim G_{P((x,y), T_1)} +\bullet \, G(\Gamma_1, \Gamma_2) -\star \, G(\Gamma_1, \Gamma_2)
\]
because $\bullet \, G(\Gamma_1, \Gamma_2)\,= \,\star \, G(\Gamma_1, \Gamma_2)=0$ and $\dim G_{P((x,y), T_1, T_2)} \geq \dim G_{P((x,y), T_1)}$. \\

\hspace{3em}Suppose finally $d=d'$ and $\deg \alpha_j = d$. Then the reasoning for the case $d=d'$ and $\deg \alpha_j > d$ still works, the only difference being that there is one more standard generator of $I(a_{\alpha, \beta})$ with degree $d$, i.e. $f_0$. So up to a change of indexes everything is the same. 
\end{proof}

\begin{corollary}{\cite[Proposition 3.4.12]{cheah1998cellular}}
Let $T_1$ and $T_2$ be two admissible nested sequences of nonnegative integers as in \ref{admissible}. Call $m$ the index for which $t_{2,m}= t_{1,m}+1$ and $d$ the initial degree of $T_1$. Then  $G_{T_1, T_2}$ is smooth, and of dimension
\[ \dim G_{T_1, T_2} = \sum_{j\geq d}\, (t_{j-1}-t_j+1)(t_{j}-t_{j+1}) + (t_{m-1}-t_{m}) -(t_{m-2}-t_{m-1}).
\]
It has the affine cell decomposition given by the Bialynicki-Birula decompositions whose cells have dimensions as specified in Lemma \ref{dimensiontangent12}. 
\end{corollary}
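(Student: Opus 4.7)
The plan is to mirror the argument given for $M_{T_1, T_2}$ (and carried further in Proposition \ref{mainproposition} for the three-step case). First I would invoke Proposition \ref{normalcover} (which covers $G_T$ as well as $M_T$), iterated to the nested pair as was done for $M_{T_1, T_2, T_3}$, to cover $G_{T_1, T_2}$ by finitely many opens isomorphic to $G_{P((u,v), T_1, T_2)}$. Each such open is $\TT_{1^+}$-stable and contains the shared unique torus fixed point $(I_{\Gamma_{T_1}}, I_{\Gamma_{T_2}})$, so it suffices to verify smoothness at this single fixed point of one such open, say $G_{P((x,y), T_1, T_2)}$. Because every point of this open flows under $\TT_{1^+}$ to the fixed point and the dimension of the Zariski tangent space is upper-semicontinuous, smoothness at the fixed point will propagate to all points, hence to $G_{T_1, T_2}$.

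Next I would produce a sandwich of dimensions at that fixed point. Corollary \ref{othercasesond} supplies the lower bound
\[
\dim G_{P((x,y), T_1, T_2)} \;\geq\; \dim G_{P((x,y), T_1)} + \bullet G(\Gamma_{T_1}, \Gamma_{T_2}) - \star G(\Gamma_{T_1}, \Gamma_{T_2}),
\]
while Lemma \ref{dimensiontangent12}(2) gives the matching upper bound
\[
\dim T_{(I_{\Gamma_{T_1}}, I_{\Gamma_{T_2}})} G_{T_1, T_2} \;=\; \dim G_{T_1} + \bullet G(\Gamma_{T_1}, \Gamma_{T_2}) - \star G(\Gamma_{T_1}, \Gamma_{T_2}).
\]
Since $\dim G_{T_1} = \dim G_{P((x,y), T_1)}$ by Theorem \ref{standard}(3) (or directly by Proposition \ref{smoothnessIarrobino}), the outer quantities agree, forcing equality throughout and smoothness at the fixed point. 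The affine cell decomposition is then the direct output of Theorem \ref{Bialynicki-Birula}, whose cell dimensions were already computed in Lemma \ref{dimensiontangent12}(4).

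Finally, the closed formula for $\dim G_{T_1, T_2}$ is obtained by substituting the expression from Corollary \ref{dimMT} for $\dim G_{T_1}$ and using the standard-normal-form computation summarised in (\ref{bulletsforstandard}) to evaluate
\[
\bullet G(\Gamma_{T_1}, \Gamma_{T_2}) - \star G(\Gamma_{T_1}, \Gamma_{T_2}) \;=\; (t_{m-1} - t_m) - (t_{m-2} - t_{m-1}).
\]
The main obstacle I would expect is handling the boundary cases inside Corollary \ref{othercasesond}, namely $d' = d + 1$ (forcing $\alpha_j = \alpha_0$) or $\deg \alpha_j = d$: in those cases only a single homogeneous lift $J(a_{\alpha,\beta})$ exists and one must check that both sides of the sandwich still collapse to the same value. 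This verification is precisely what is packaged in Corollary \ref{othercasesond}, together with the standard-normal-form dimension count, so no genuinely new argument is required beyond the case analysis already performed for the $M$-strata.
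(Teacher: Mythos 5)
Your proposal is correct and follows essentially the same route as the paper: the paper's own (very terse) proof likewise combines the lower bound on $\dim G_{P((x,y),T_1,T_2)}$ from the standard-generator construction (Lemma \ref{dim2G} and Corollary \ref{othercasesond}) with the tangent-space dimension from Lemma \ref{dimensiontangent12} and the formula for $\dim G_{T_1}$ from Proposition \ref{dimMT}, then applies Bialynicki-Birula. Your write-up merely makes explicit the sandwich argument and the semicontinuity step that the paper leaves implicit, and you correctly flag that the boundary cases are exactly those already disposed of in Corollary \ref{othercasesond}.
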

\begin{proof}
The proof is clear by looking at Lemma \ref{dim2} and at Lemma \ref{dimensiontangent12} on the dimension of the tangent space and at Proposition \ref{dimMT} on the dimension of $G_T$. Observe that in this case, contrary to the case $M_{ T_1, T_2}$ we actually need this proof, since it is not true that $G_{ T_1, T_2}$ is union of affine cells for $\Hi^{n, n+1}(\PP^2)$. 
\end{proof}

Having understood how to pass from $G_{P(T_1)}$ to $G_{P(T_1, T_2)}$ we can understand how to go from $G_{P(T_1, T_2)}$ to $G_{P(T_1, T_2, T_3)}$.\\

\hspace{3em}We start by supposing that $d=d'=d''$ and $\deg \alpha_j, \deg \alpha'_l > d$, in fact say $\deg \alpha_j = d+ k$ and $\deg \alpha'_l= d+k'$, $k, k' >0$. Let again $I(a_{\alpha, \beta}) \in G_{P(T_1)}$ with $(a_{\alpha, \beta})_{\alpha, \beta} \in \AAA^{ \#\,PG}$ be the ideal with standard generators $(f_0, \dots, f_d)$ as in (\ref{standardgeneratorsG}). Let $(\theta_i)_{t_{1, d+k} \leq i \leq t_{1, d+k+1}-2} \in \AAA^{t_{1, d+k+1}-t_{1, d+k}-1} $, and $(\eta_i)_{t_{2, d+k'} \leq i \leq t_{2, d+k'+1}-2} \in \AAA^{t_{2, d+k'+1}-t_{2, d+k'}-1} $. Then we define the ideals $J(a_{\alpha, \beta}, \theta_i)$ as in (\ref{definitionJ}) and the ideals $K(a_{\alpha, \beta}, \theta_i, \eta_s)$ mimicking the construction in definition (\ref{definitionJ}).
\begin{equation}
\label{definitionK}
K(a_{\alpha, \beta}, \theta_i, \eta_s) :=
\begin{pmatrix} 
f''_0 = &f'_0  \\
\dots&\dots\\
f''_{t_{2, d+k'}-1} =&f'_{t_{2, d+k'}-1}\\
f''_{t_{2, d+k'}} =&f'_{t_{2, d+k'}} + \eta_{t_{2, d+k'}} f'_{t_{2, d+k'+1}-1} \\
\dots \\
f''_{t_{2, d+k'+1}-2} =&f'_{t_{2, d+k'+1}-2} + \eta_{t_{2, d+k'+1}-2} f_{t_{2, d+k'+1}-1} \\
f''_{t_{2, d+k'+1}-1} =&yf'_{t_{2, d+k'+1}-1}\\
f''_{t_{2, d+k'+1}} =&f_{t_{2, d+k'+1}} \\
\dots&\dots\\
f''_{d}=&f'_d
\end{pmatrix}.
\end{equation}
The results of Lemma (\ref{dim2G}) tell us that there exists $r = *G(\Gamma_1, \Gamma_2)$ equations $g_i(a_{\alpha, \beta}, \theta_s)$ in $\AAA^{ \#\,PG}\times \AAA^{t_{1, d+k+1}-t_{1, d+k}-1}$ and $r' = *G(\Gamma_2, \Gamma_3)$ equations $g'_i(a_{\alpha, \beta}, \theta_t, \eta_s)$ in $\AAA^{ \#\,PG}\times \AAA^{t_{1, d+k+1}-t_{1, d+k}-1} \times \AAA^{t_{2, d+k'+1}-t_{2, d+k'}-1}$ such that:
\begin{align*}
(I(a_{\alpha, \beta}), J(a_{\alpha, \beta}, \theta_i)) \in G_{P((x,y), T_1, T_2)} &\iff g_i (a_{\alpha, \beta},\theta_s) = 0 \text{ for all } i,
\end{align*} and 
\begin{equation}
\label{equationfor3G}
\begin{matrix}
(I(a_{\alpha, \beta}), J(a_{\alpha, \beta}, \theta_i), K(a_{\alpha, \beta}, \theta_i, \eta_s)) \in G_{P((x,y), T_1, T_2,T_3)} \text{ with }  g_i (a_{\alpha, \beta},\theta_s) = 0\,\, \forall \,i \\
\begin{sideways}$\iff\,\,$ \end{sideways} \\ 
g'_i (a_{\alpha, \beta},\theta_t,\eta_s) = 0 \text{ for all } i. 
\end{matrix}
\end{equation}
Then, since $t_{1, d+k+1}-t_{1, d+k}-1=\bullet \, G(\Gamma_1, \Gamma_2)$ and $t_{2, d+k'+1}-t_{2, d+k'}-1= \bullet \, G(\Gamma_2, \Gamma_3)$ we have that
\begin{equation}
\label{equationdimensionGT1T2T3}
\dim G_{P((x,y), T_1, T_2, T_3)}\geq \dim G_{P((x,y), T_1} + \bullet \, G(\Gamma_1, \Gamma_2) +\bullet \, G(\Gamma_2, \Gamma_3)-\star \, G(\Gamma_1, \Gamma_2)-\star \, G(\Gamma_2, \Gamma_3)
\end{equation}
which is good enough to prove $\dim G_{P((x,y), T_1, T_2, T_3)} \geq \dim T_{I_{\Gamma_1},I_{\Gamma_2},I_{\Gamma_3}} G_{ T_1, T_2, T_3}$ in all cases where $\diamond \, M(\Gamma_1, \Gamma_2, \Gamma_3)=0$. \\

\hspace{3em}The last step needed is then to show that when  $\diamond \, M(\Gamma_1, \Gamma_2, \Gamma_3)=1$, one of the equations we found is not necessary. This is the content of the next Lemma.

\begin{lemma}
\label{starnotneeded}
Utilize all notations as above, and suppose $\deg \alpha_j = \deg \alpha'_{l} -2$. Then $t_{2, d+k'} \leq j \leq t_{1, d+k+1}-2$ and  
\begin{align*}
g'_{j}(a_{\alpha, \beta},\theta_t,\eta_s)=0 \text{ for all } &\left((a_{\alpha, \beta})_{\alpha, \beta},(\theta_t)_{t},(\eta_s)_{t}\right) \in \AAA^{ \#\,PG}\times \AAA^{\bullet \, G(\Gamma_1, \Gamma_2)} \times \AAA^{\bullet \, G(\Gamma_2, \Gamma_3)} \\ 
\text{ such that }\quad g_i (a_{\alpha, \beta},\theta_s)& = 0\,\,\, \forall \,i=t_{1, d+k},\dots, t_{1, d+k+1}-2  \text{ and } \\
g'_i (a_{\alpha, \beta},\theta_t,\eta_s)& = 0 \,\, \forall \,i=t_{2, d+k'},\dots, t_{2, d+k'+1}-2,\,\, i\neq j.
\end{align*}
In other words the equation $g'_{j}$ is not necessary because trivially satisfied whenever $\deg \alpha_j = \deg \alpha_{l} -2$, even though it appears in the list (\ref{equationfor3G}), being $t_{2, d+k'}\leq j \leq t_{1, d+k+1}-2$. 
\end{lemma}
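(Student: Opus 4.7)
The plan is to transpose, step by step, the argument of Lemma \ref{diamondequationissatisfied} to the homogeneous setting, exploiting the simplification that every standard generator $f_i$ is now homogeneous of degree $\deg \alpha_i$ and, thanks to the normal pattern, $q_i = 1$ for all $i$. First I will locate the relevant box: set $\beta := \alpha'_l / y$. The normal-pattern convention (each anti-diagonal filled from the left) together with the degree hypothesis $\deg \alpha'_l = \deg \alpha_j + 2$ forces $\beta$ to lie in $\Gamma_1$, one anti-diagonal above that of $\alpha_j$. The same combinatorics pins $j$ into the range $t_{2, d+k'} \le j \le t_{1, d+k+1} - 2$ for which Lemma \ref{dim2G} produces an equation $g'_j$, so $g'_j$ really does belong to the list in (\ref{equationfor3G}).

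Next I will run Procedure \ref{divisionprocedure} on $y^{p_{j-1}} f_{j-1} - x f_j$ relative to the generators of $I(a_{\alpha,\beta})$, restricted to the homogeneous piece of degree $\deg \alpha_{j-1} + p_{j-1}$. The only box of $\Gamma_1$ the reduction can leave behind is a multiple of $\beta$; since $I_1 \cap \langle \Gamma_1 \rangle = 0$ that coefficient vanishes, giving a homogeneous identity
\[
y^{p_{j-1}} f_{j-1} - x f_j \;=\; \sum_{i \ge j} h_i\, f_i,
\]
with each $h_i$ homogeneous of the appropriate degree and $\mathrm{in}_y(h_j) \ge 1$, exactly as in the proof of Lemma \ref{diamondequationissatisfied} (where the only property of $h_j$ used was precisely this lower bound).

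I will then multiply both sides by $y$ and rewrite the LHS in the $f'_i$ generators: using $f'_{j-1} = f_{j-1} + \theta_{j-1} f_j$, $f'_j = y f_j$ and $p'_{j-1} = p_{j-1} + 1$, adding and subtracting $\theta_{j-1}\, y^{p_{j-1}+1} f_j$ on the left yields
\[
y^{p'_{j-1}} f'_{j-1} - x f'_j \;=\; \bigl(\theta_{j-1} y^{p_{j-1}} + h_j\bigr)\, y f_j \;+\; \sum_{i > j} h_i\, y f_i.
\]
Since $\mathrm{in}_y(h_j) \ge 1$ the coefficient of $y f_j$ factors a further $y$; adding and subtracting $\eta_j f'_l$ and using $f''_j = y f'_j + \eta_j f'_l$ together with $f''_l = y f'_l$ to absorb the remaining $y f_l$ contribution rewrites the right-hand side as an $R$-combination of $f''_j, f''_{j+1}, \ldots, f''_d$. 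By the definition of $g'_j$ this forces $g'_j(a_{\alpha,\beta}, \theta_t, \eta_s) \equiv 0$, which is the content of the lemma.

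The main obstacle will be the bookkeeping that channels the extra factor of $y$ produced at the second step precisely through the two successive adjustments $f'_j \rightsquigarrow f''_j$ and $f'_l \rightsquigarrow f''_l$: this is where the degree gap $\deg \alpha'_l - \deg \alpha_j = 2$ is used in an essential way and is exactly the reason the analogous identity fails whenever $\alpha'_l$ lies on the same anti-diagonal as $\alpha_j$ or only one step above. The residual edge cases ($d \ne d'$, $d' \ne d''$ or $\deg \alpha_j = d$) either force $\diamond\, G(\Gamma_1, \Gamma_2, \Gamma_3) = 0$, so that nothing is to be proved, or collapse to the trivial identity $y x f_0 - x y f_0 = 0$, mirroring Corollary \ref{othercasesond} and the remark following Lemma \ref{diamondequationissatisfied}.
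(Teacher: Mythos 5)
Your proposal is correct and follows essentially the same route as the paper: the paper's own proof of Lemma \ref{starnotneeded} simply defers to the argument of Lemma \ref{diamondequationissatisfied}, and what you have written is a faithful transposition of that argument to the homogeneous setting (division against $I_1$ to kill the coefficient of $\alpha'_l/y$, multiplication by $y$, the extra factor of $y$ coming from $\mathrm{in}_y(h_j)\ge 1$, and absorption into the $f''_i$), together with the same treatment of the degenerate cases on $d, d', d''$. No gaps to report.
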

\begin{proof}
The proof is exactly as in the case of $M_{P((x,y), T_1, T_2, T_3)} $.
\end{proof}

\begin{observation}

Now we will deal with the cases left aside on $d, d'$ and $d''$. If $d+1=d'+1=d''$ then there is nothing new to prove, as observed in the proof of \ref{othercasesond}. If $d+1=d'=d''$ then we still need to prove that: if $\deg \alpha_j \leq \deg \alpha_l-2$ then $g'_0$ the first of the equations $g'_i$ $i=0, \dots, \star \, M(\Gamma_2, \Gamma_3)$ is trivially satisfied. But this is obvious since, in this case: 
\[
y^{p_0}f'_0+xf'_1 = yxf_0-xyf_0 = 0. 
\]
\end{observation}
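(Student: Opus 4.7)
The plan is to dispose of the two remaining degree configurations one at a time, each by direct inspection of the standard generators of $J$ and $K$ built in \eqref{definitionJ} and \eqref{definitionK}. In both configurations the point is that the generic counting performed via Lemma \ref{dim2G} and \eqref{equationdimensionGT1T2T3} only fails to match the tangent space dimension of Lemma \ref{dimensiontangent123} by exactly the correction $\diamond G(\Gamma_1,\Gamma_2,\Gamma_3)$, so it suffices to exhibit one equation $g'_0$ that drops out for free.

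First I would handle the case $d+1=d'+1=d''$. Here the initial degree is forced to jump both when passing from $I$ to $J$ and when passing from $J$ to $K$, so $\alpha_j$ must equal $\alpha_0$ in the list of generators of $I(a_{\alpha,\beta})$ and $\alpha'_l$ must be the corresponding bottom generator of $J(a_{\alpha,\beta},\theta_i)$. As already observed in Corollary \ref{othercasesond}, this means there is a unique $J$ above $I$, namely $(xf_0,yf_0,f_1,\dots,f_d)$, and a unique $K$ above that $J$; no free $\theta_i$ or $\eta_s$ are introduced. On the combinatorial side one checks that $\bullet G(\Gamma_1,\Gamma_2)=\star G(\Gamma_1,\Gamma_2)=0$ and likewise at the second step, and that $\diamond G(\Gamma_1,\Gamma_2,\Gamma_3)=0$ as well, so the inequality \eqref{equationdimensionGT1T2T3} is already an equality and the argument of Proposition \ref{mainproposition} transfers verbatim.

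For the case $d+1=d'=d''$ I would observe that $\alpha_j=\alpha_0$ has degree $d$ but $\alpha'_l$ sits higher up, and that the first two standard generators of $J$ are then
\[
f'_0 \;=\; xf_0, \qquad f'_1 \;=\; yf_0,
\]
which are homogeneous of the same degree $d+1$, giving $p'_0=1$. The equation $g'_0$ is extracted from the left-hand side
\[
y^{p'_0}f'_0 - x f'_1 \;=\; y\cdot xf_0 \,-\, x\cdot yf_0 \;=\; 0
\]
by the reduction procedure of Lemma \ref{dim2G}; applied to $0$ it trivially outputs $g'_0\equiv 0$. The only housekeeping left is to confirm that $g'_0$ really does appear in the enumeration produced by Lemma \ref{dim2G}, which amounts to verifying that $\diamond G(\Gamma_1,\Gamma_2,\Gamma_3)=1$ in this configuration; this is immediate since $\alpha_j$ lies on the anti-diagonal two below $\alpha'_l$ by hypothesis.

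There is no real obstacle here: the whole content reduces to the commutativity relation $yxf_0=xyf_0$ in $R=\CC[[x,y]]$. The mild delicacy is only bookkeeping, namely translating between the $M$-case indices used in Lemma \ref{diamondequationissatisfied} and the slightly relabelled $G$-case indices of Lemma \ref{dim2G}, so that $g'_0$ here plays the role of $g'_{j-1}$ in the generic case.
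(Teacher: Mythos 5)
Your proposal is correct and follows essentially the same route as the paper: the case $d+1=d'+1=d''$ is dispatched by the uniqueness of $J$ and $K$ together with the vanishing of all the $\bullet$, $\star$ and $\diamond$ counts (exactly as in Corollary \ref{othercasesond}), and the case $d+1=d'=d''$ reduces to the identity $y^{p'_0}f'_0-xf'_1=yxf_0-xyf_0=0$, so the division procedure outputs $g'_0\equiv 0$. The paper states this more tersely, but the content is identical.
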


\begin{proposition}
Let $T_1, T_2, T_3$ be three admissible sequences of nonnegative integers as in \ref{admissible}. Then the homogenous Hilbert-Samuel's stratum $G_{{T_1, T_2, T_3}}$ is a smooth variety of dimension
\begin{align*}
\dim\,\, G_{{T_1, T_2, T_3}} \,\, &=\,\,  \sum_{j\geq d }(t_{j-1}-t_j+1)(t_{j}-t_{j+1}) \,\,+\\ &+ \,\,(t_{m-1}-t_m-1) -(t_{m-2}-t_{m-1}) + (t_{m'-1}-t_{m'}) -(t_{m'-2}-t_{m'-1})\,\,+\\&+\,\, \begin{cases}1 &\text{ if } m'\geq m+2, \\0   &\text{ otherwise.}\end{cases}
\end{align*}
It has an affine cell decomposition with cells parametrized by nested Young diagrams $\Gamma_1, \Gamma_2, \Gamma_3$ that differ in only one box: $\Gamma_3=\Gamma_2 \sqcup \{\alpha_l\}=  \Gamma_1 \sqcup \{\alpha_l\}\sqcup \{\alpha_j\}$ with $\Gamma_1\vdash n$ and such that $T(\Gamma_i) = T_i$. The affine cell indexed by $ \left(\Gamma_1, \Gamma_2, \Gamma_3\right)$  has dimension given by the following formula: 
\begin{align}
\pos \left(\Gamma_1, \Gamma_2, \Gamma_3\right) &= \#\,\left\{(u,v) \in \Gamma_1 \left\vert \,\, h_{u,v} =-1 \right.\right\}\,\,+\,\, \bullet\,G^+(\Gamma_1, \Gamma_2)\,\, -\,\,\star\,G^+(\Gamma_1, \Gamma_2) \,\,+ \\ &\qquad+\,\,\bullet\,G^+(\Gamma_2, \Gamma_3)\,\, -\,\,\star\,G^+(\Gamma_2, \Gamma_3)\,\, +\,\,\diamond\, G^+(\Gamma_1, \Gamma_2, \Gamma_3).\nonumber
\end{align}
 The Poincar\'{e} polynomial of $G_{{T_1, T_2, T_3}}$ is 
\[
P_q\, \left( G_{{T_1, T_2, T_3}} \right) \quad = \quad \sum_{(\Gamma_1, \Gamma_2, \Gamma_3)\vdash [n, n+1, n+2], \,\,T(\Gamma_i)= T_i} \,\,\, q^{\pos \left(\Gamma_1, \Gamma_2, \Gamma_3\right)  }\,\,.
\]
\end{proposition}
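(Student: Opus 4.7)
The plan is to mirror the proof of Proposition \ref{mainproposition} for $M_{T_1, T_2, T_3}$, substituting at each step the homogeneous analogues we have already established. First, I would combine inequality (\ref{equationdimensionGT1T2T3}) with Lemma \ref{starnotneeded} to obtain
\[
\dim G_{P((x,y), T_1, T_2, T_3)} \,\geq\, \dim G_{P((x,y), T_1)} + \sum_{i=1,2}\bigl(\bullet\, G(\Gamma_i,\Gamma_{i+1}) -\star\, G(\Gamma_i,\Gamma_{i+1})\bigr) + \diamond\, G(\Gamma_1,\Gamma_2,\Gamma_3),
\]
together with the analogous statement for the cases treated in Corollary \ref{othercasesond} (i.e. $d+1=d'$ or $\deg\alpha_j=d$). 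Since $G_{P((x,y),T_1)} \cong \mathbb{A}^{\dim G_{T_1}}$ by Theorem \ref{standard}, the right-hand side equals $\dim T_{(I_{\Gamma_1}, I_{\Gamma_2}, I_{\Gamma_3})} G_{T_1, T_2, T_3}$ by point (2) of Lemma \ref{dimensiontangent123}.

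Next, since $G_{P((x,y), T_1, T_2, T_3)}$ is an open neighbourhood of the unique torus-fixed point $(I_{\Gamma_1},I_{\Gamma_2},I_{\Gamma_3})$ it contains, and since the Zariski tangent dimension always upper-bounds the local dimension, the inequality above is forced to be an equality, and the fixed point is smooth. Every other point of $G_{P((x,y), T_1, T_2, T_3)}$ flows to this fixed point under the $\TT_{1^+}$-action, so the tangent dimension there is no larger than at the fixed point; hence $G_{P((x,y), T_1, T_2, T_3)}$ is smooth everywhere. The homogeneous version of Iarrobino's covering result (Proposition \ref{normalcover}) then covers $G_{T_1, T_2, T_3}$ by finitely many opens isomorphic to $G_{P((u,v), T_1, T_2, T_3)}$ (one per generic system of parameters), so $G_{T_1,T_2,T_3}$ is smooth of the stated dimension.

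The dimension formula itself is assembled by plugging the expressions for $\bullet\, G$, $\star\, G$ and $\diamond\, G$ from the standard-pattern computation (equations (\ref{bulletsforstandard}) and following) into the dimension of $G_{T_1}$ given by Corollary \ref{dimMT}; this is bookkeeping. Once smoothness is in hand, $G_{T_1, T_2, T_3}$ is a smooth (not necessarily projective) variety carrying the $\TT_{1^+}$-action of Definition \ref{tori}, with isolated fixed points indexed by triples $(\Gamma_1,\Gamma_2,\Gamma_3)\vdash[n,n+1,n+2]$ satisfying $T(\Gamma_i)=T_i$, and the attracting sets form an affine paving by Theorem \ref{Bialynicki-Birula} (the projectivity hypothesis is not needed here since by Lemma \ref{lemmagoettschecells} each $G_{\mathbf{T}}$ is a union of attracting sets of the projective ambient $\Hi^{n+2}(\PP^2)$, so all flows converge). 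The dimension of the cell at $(\Gamma_1,\Gamma_2,\Gamma_3)$ is $\dim T^+_{(I_{\Gamma_1}, I_{\Gamma_2}, I_{\Gamma_3})} G_{T_1, T_2, T_3}$, which by point (4) of Lemma \ref{dimensiontangent123} is exactly $\pos(\Gamma_1,\Gamma_2,\Gamma_3)$.

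Finally, Proposition \ref{Fulton} gives a graded basis of the Borel--Moore homology indexed by the closures of these cells, and summing over the labels yields the claimed Poincaré polynomial. The only step requiring a genuinely new idea -- namely the trivialisation of the extra equation in the $\diamond=1$ case -- has already been dispatched by Lemma \ref{starnotneeded}, so there is no real obstacle; the rest is a careful transcription from the $M$-case to the $G$-case where every ``degree $\leq$'' becomes ``degree $=$''.
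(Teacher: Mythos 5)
Your proposal is correct and follows essentially the same route as the paper's own (much terser) proof: combine inequality (\ref{equationdimensionGT1T2T3}) with Lemma \ref{starnotneeded} and the tangent-space computation of Lemma \ref{dimensiontangent123} to force smoothness, then invoke Theorem \ref{Bialynicki-Birula} and Proposition \ref{Fulton} for the paving and the Poincar\'{e} polynomial. The extra details you supply (semicontinuity of the tangent dimension along the flow, the Iarrobino covering by normal-pattern opens, the convergence of limits) are exactly the ones the paper leaves implicit.
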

\begin{proof}
Thanks to equation \ref{equationdimensionGT1T2T3} and Lemma \ref{starnotneeded} we know that the Hilbert-Samuel stratum $G_{{T_1, T_2, T_3}}$ has everywhere dimension equal to the dimension of its Zariski tangent space, then it is smooth. Thanks to Theorem \ref{Bialynicki-Birula} we know that it has a cell decomposition with cells whose dimensions are given by Lemma \ref{dimensiontangent123}. Thanks to Theorem \ref{Fulton} the decomposition in affine cells gives us a basis for the homology, and we can then calculate the Poincar\'{e} polynomial. 
\end{proof}
\section{Strata for longer flags are singular}
\hspace{3em}In this section we deal with flags of more than three ideals, i.e. starting at $\Hi^{n, n+1, n+2, n+3}(0)$. The goal is to prove that the Hilbert-Samuel's strata in these cases are not all smooth, so that the techniques we used until now will not yield results towards the understanding of their homology. In fact we will show a bit more, namely that there are attracting sets that are not smooth, in particular not isomorphic to affine cells. We will specify at what $n$, the length of the biggest ideal, these singular attracting sets start appearing, while for lower $n$ we still have an affine cell decomposition and a basis for the homology. Everything in this section is obtained by direct computation. 

\subsection{Four flag case}
Consider the punctual Hilbert scheme of flags of four nested ideals
\[
\Hi^{n,n+1, n+2, n+3} (0).
\] 
We will see that starting at $n=6$ we can find an Hilbert-Samuel's stratum
\[
M_{T_1, T_2, T_3, T_4} \subset \Hi^{6,7,8, 9} (0)
\]
that is not smooth. We will see that it is not smooth by exhibiting an attracting cell that is not smooth. We can also show, by direct computation that $6$ is the smallest integer for which this happens. 
\begin{lemma}
For $n= 1, 2, 3, 4, 5$ all the Hilbert-Samuel's strata $M_{T_1, T_2, T_3, T_4}$ of the Hilbert scheme $\Hi^{n,n+1, n+2, n+3} (0)$ are smooth. They have an affine paving that implies that $\Hi^{n,n+1, n+2, n+3} (0)$ has an affine paving. The Poincar\'{e} polynomials of the total spaces are given by: 
\begin{align*}
P_q \left(\Hi^{1,2, 3, 4} (0)\right) &= 1+3q+4q^2+2q^3,\\
P_q \left(\Hi^{2,3, 4, 5} (0)\right) &= 1+4q+8q^2+9q^3+4q^4,\\
P_q \left(\Hi^{3,4, 5, 6} (0)\right) &= 1+4q+10q^2+14q^3+13q^4+6q^5,\\
P_q \left(\Hi^{4,5, 6, 7} (0)\right) &= 1+4q+11q^2+22q^3+30q^4+25q^5+9q^6,\\
P_q \left(\Hi^{5,6, 7, 8} (0)\right) &= 1+4q+11q^2+24q^3+42q^4+51q^5+36q^6+11q^7.\\
\end{align*} 

For all $n\geq 6$ there is at least one Hilbert-Samuel's stratum that is not smooth and with an attracting set that is not isomorphic to an affine space.
\end{lemma}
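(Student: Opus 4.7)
The plan is to extend the tangent-space machinery of Chapter~2 to the four-flag situation. Mimicking Definitions~\ref{definitionB(I1I2)} and~\ref{definitionB(I1I2I3)}, I would construct a weight basis $B(I_1, I_2, I_3, I_4)$ of $T_{(I_1,\ldots,I_4)}\Hi^{n,n+1,n+2,n+3}(\CC^2)$ at each torus-fixed point by combining vectors of the form $(0,0,0,h_{\alpha''',\alpha''_m})$, $(0,0,h_{\alpha'',\alpha'_l},\Su(\cdot))$, $(0,h_{\alpha',\alpha_j},\Su(\cdot),\Su(\Su(\cdot)))$ and $(\ff,\Su(\ff),\Su^2(\ff),\Su^3(\ff))$, discarding indices in analogues of $\text{Obs}$, $\text{PObs}$ and $\text{NotP}$, and adding a ``double diamond'' correction whenever the three added boxes $\alpha_j \subset \alpha'_l \subset \alpha''_m$ lie on pairwise distant antidiagonals. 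On the geometric side, I would iterate Iarrobino's standard-generator construction (Procedure~\ref{divisionprocedure}) twice more, producing an ambient affine space of parameters $(a_{\alpha,\beta},\theta_i,\eta_s,\zeta_t)$ cut out by three batches of polynomial relations $g,g',g''$, together with the diamond-redundancies generalising Lemma~\ref{diamondequationissatisfied}.

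For $n=1,\ldots,5$ the number of admissible quadruples $\mathbf{T}$ is small, so I would then simply enumerate them, and, for every fixed point in every $M_{\mathbf{T}}$, check by hand that the number of independent equations $g,g',g''$ matches the dimension of the tangent space predicted by $B(I_1,I_2,I_3,I_4)$; this gives smoothness, the announced affine pavings, and the Poincaré polynomials as $P_q(\Hi^{n,n+1,n+2,n+3}(0))=\sum_{\mathbf{\Gamma}} q^{\pos_{\TT_{1^+}}(\mathbf{\Gamma})}$, summed over admissible quadruples of nested Young diagrams.

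\textbf{Second part ($n \geq 6$).} Here the plan is to exhibit a single explicit fixed point in $\Hi^{6,7,8,9}(0)$ whose attracting set fails to be affine, and then propagate. The candidate should be a configuration where two independent ``diamond'' obstructions appear at different steps of the flag and interact: I would pick $\Gamma_1$ small enough that the redundancy-counting of the first part genuinely breaks, e.g.\ a staircase on six boxes with $\alpha_j,\alpha'_l,\alpha''_m$ placed on three strictly decreasing antidiagonals so that both triples $(\Gamma_1,\Gamma_2,\Gamma_3)$ and $(\Gamma_2,\Gamma_3,\Gamma_4)$ contribute a diamond, and the corresponding $g'$- and $g''$-relations share a common variable. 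A direct calculation of $\dim T$ via the basis $B(I_1,I_2,I_3,I_4)$ and of $\dim M_{\mathbf{T}}$ via the analogue of equation~(\ref{MT1T2T3dimension}) should then show $\dim T > \dim M_{\mathbf{T}}$ at this fixed point, whence its attracting cell cannot be isomorphic to an affine space.

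For general $n\geq 6$, I would propagate this example by augmenting $\Gamma_1$ with $n-6$ extra boxes placed far from $\alpha_j,\alpha'_l,\alpha''_m$ (for instance appending them at the bottom of the first column): admissibility is preserved, the combinatorics producing the extra tangent direction at the three added boxes is local and unchanged, while the geometric dimension grows by exactly $n-6$, so the strict inequality persists.

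\textbf{Main obstacle.} The hard part is the second part: producing the explicit singular configuration and \emph{rigorously} verifying that the extra tangent vector is not killed by some hidden redundancy analogous to Lemma~\ref{diamondequationissatisfied}. This amounts to a careful local analysis, extending Procedure~\ref{divisionprocedure}, of the relations among the standard generators of the four ideals to show that none of the candidate extra equations collapses trivially. The first part is essentially mechanical bookkeeping, tedious but routine once the four-flag analogue of $B$ is in place.
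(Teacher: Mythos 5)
Your overall strategy coincides with the paper's: hand-check the finitely many admissible quadruples for $n\leq 5$, exhibit one explicit singular attracting set in $\Hi^{6,7,8,9}(0)$, and propagate it to all $n\geq 6$ by padding $\Gamma_1$ with boxes far from the action (the paper lengthens the first column exactly as you suggest, though it finds the resulting equation changes enough that it redoes the computation separately for $n\geq 7$). However, there are two concrete gaps in your plan for the decisive second part. First, your heuristic for locating the singular fixed point --- three added boxes on antidiagonals far enough apart that \emph{both} consecutive triples $(\Gamma_1,\Gamma_2,\Gamma_3)$ and $(\Gamma_2,\Gamma_3,\Gamma_4)$ carry a diamond --- does not match the actual example and is essentially unrealizable for $|\Gamma_1|=6$: two diamonds would force the added boxes to sit in degrees $d$, $d+2$, $d+4$, which no admissible chain of external corners of a $6$-box diagram supports. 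The paper's singular point is the flag of normal-pattern ideals $(x^2,xy^2,y^4)\supset(x^3,x^2y,xy^2,y^4)\supset(x^3,x^2y,xy^3,y^4)\supset(x^3,x^2y,xy^3,y^5)$, whose three added boxes lie on \emph{consecutive} antidiagonals (degrees $2,3,4$), so neither consecutive triple has a diamond; the obstruction is a genuinely four-step interaction (a ``skip-one'' diamond between the first and third added boxes, with the middle box preventing the redundancy of Lemma \ref{diamondequationissatisfied} from applying).

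Second, the logic ``$\dim T > \dim M_{\mathbf T}$, computed via the analogue of (\ref{MT1T2T3dimension})'' does not close: that equation only gives a \emph{lower} bound on $\dim M_{\mathbf T}$ (ambient parameters minus number of equations), whereas non-smoothness requires an upper bound on the dimension, i.e.\ a proof that the cutting equations are genuinely nontrivial. You flag this as the ``main obstacle'' but offer no mechanism to resolve it. The paper resolves it by brute force: it parametrizes the attracting set as the hypersurface $\{G=0\}\subset\AAA^9$ with $G=\eta_2(\theta_1(b_1-a_1)-\theta_0-\theta_1^2)+b_1\theta_1(b_1-a_1)+a_2\theta_1$ obtained from Procedure \ref{divisionprocedure}, observes that $G\not\equiv 0$ (so the attracting set really is a hypersurface) and that $G$ has vanishing partial derivatives along an explicit locus through the fixed point, hence the attracting set is singular and cannot be an affine space. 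Until you produce the analogue of this explicit $G$ and verify both that it is nonzero and that it is singular, the second half of the lemma is not proved.
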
 

For the proof of the first part we checked by hand that the all the attracting sets for $n\leq 5$ are indeed affine, the computation is a bit long but straightforward. \\

\hspace{3em} We now show that for $n=6$ we have an attracting cell that is not isomorphic to an affine space, and thus proving that the corresponding Hilbert-Samuel's strata is not smooth. The fixed point is the following: \\

\begin{minipage}{0.3\textwidth}
\begin{center}
\ytableausetup{boxsize=1.5em, aligntableaux=bottom}
\begin{ytableau} 
3 \\
\,  \\
\,&2\\
\,&\, \\
\,&\, &1
\end{ytableau}
\end{center}
\end{minipage} 
\begin{minipage}{0.68\textwidth}
The fixed point represented on the left is the flag
\begin{align*}
(&x^2 , xy^2, y^4)\supset (x^3, x^2 y, xy^2, y^4)\supset \\
&\supset (x^3, x^2y , xy^3, y^4)\supset (x^3,x^2y , xy^3, y^5).
\end{align*} 
It is the flag of monomial ideals with normal patterns \\
$\quad
(1,2,2,1), (1,2,3,1), (1,2,3,2), (1,2,3,2,1).
$ 
\end{minipage} 

\vspace{1em}

The attracting cell is described as follow. Let $I_1=(f_0,f_1, f_2)$ be in $M_{P(T_1)}$ with standard generators $f_0, f_1, f_2$ given by 

\begin{minipage}{0.50\textwidth}
\begin{align*}
f_0 &= x^2+a_1xy+a_2y^2+a_3y^3, \\
f_1&= xy^2+b_1y^3,\\
f_2&=y^4. 
\end{align*}
\end{minipage}
\begin{minipage}{0.38\textwidth}
where $a_1, a_2, a_3$ and $b_1$ in $\CC$ are the free affine coordinates. 
\end{minipage}

The attracting cell is 
\begin{align}
\label{attractingcell4}
(f_0,f_1, f_2)\supset (xf_0, yf_0, f_1, &f_2)\supset (xf_0+\theta_0 f_1, yf_0+\theta_1 f_1, yf_1, f_2)\supset\\  &\supset(xf_0+\theta_0 f_1+\eta_0 y^4, yf_0+\theta_1 f_1+\eta_1 y^4, yf_1+\eta_2y^4, y^5)\nonumber
\end{align} 
where in the last step we need to impose the equation 
\[
G\left((a, b), (\theta), (\eta)\right)=0 \qquad\text{ for }\quad (a_1, a_2, a_3, b_1, \theta_0, \theta_1,\eta_0, \eta_1,\eta_2) \in \AAA^9.
\]
Here $G$ is defined with a procedure similar to the Procedure \ref{divisionprocedure} 
as 
\[
yg_0 - xg_1 \equiv G\left((a, b), (\theta), (\eta)\right) y^4 \quad \text{mod } \quad (g_1, g_2, g_3),
\]
where we called $g_0, g_1, g_2, g_3$ the generators (that need not be standard) of the last ideal in (\ref{attractingcell4}) in the order we have written them. Doing explicitly the computations we find 
\[
G\left((a, b), (\theta), (\eta)\right)= \eta_2(\theta_1(b_1-a_1)-\theta_0-\theta^2_1)+b_1\theta_1(b_1-a_1)+a_2\theta_1.
\]
Taking partial derivatives we see that the points 
\[
a_2= b_1(a_1-b_2), \theta_0=\theta_1=\eta_2=0, a_1, b_1, \eta_0, \eta_1 \in  \CC
\]
are all singular points for the equation $G$, so that the attracting cell is not isomorphic to an affine space. \\

\hspace{3em}Observe that we just proved that also the projective strata $G_{T_1, T_2, T_3, T_4}$ is not smooth, as the extra dimension of $M_{T_1, T_2, T_3, T_4}$, represented by the coordinate $a_3$, does not play a role in what we said.  If $n\geq 7$, the situation is slightly different as what we describe is a phenomenon that belongs to the affine fiber of the projective map $\rho_{T_1, T_2, T_3, T_4}: M_{T_1, T_2, T_3, T_4} \to  G_{T_1, T_2, T_3, T_4}$. The reasoning is however completely similar. We give the details for completness.  

\vspace{0.7em}
\begin{minipage}{0.3\textwidth}
\begin{center}
\ytableausetup{boxsize=1.5em, aligntableaux=bottom}
\begin{ytableau} 
3 \\
\none[\vdots] \\
\none[n-7 \text{ boxes}]\\
\none[\vdots] \\
\,  \\
\,&2\\
\,&\, \\
\,&\, &1
\end{ytableau}
\end{center}
\end{minipage} 
\begin{minipage}{0.68\textwidth}
The fixed point represented on the left is the flag
\begin{align*}
(&x^2 , xy^2, y^{n-2})\supset (x^3, x^2 y, xy^2, y^{n-2})\supset \\
&\supset (x^3, x^2y , xy^3, y^{n-2})\supset (x^3,x^2y , xy^3, y^{n-2}).
\end{align*} 
It is the flag of monomial ideals with normal patterns 
\begin{align*}
(1,2,2,1, 1, \dots, 1), (1,2,3&,1,1,  \dots, 1), (1,2,3,2,1 \dots, 1),\\& (1,2,3,2,1, \dots, 1, 1).
\end{align*}
\end{minipage} 

\vspace{1em}

The attracting cell is described as follow. Let $I_1=(f_0,f_1, f_2)$ be in $M_{P(T_1)}$ with standard generators $f_0, f_1, f_2$ given by 
\begin{align*}
f_0 &= x^2+a_1xy+a_2y^2+a_3y^3+\dots+a_{n-3}y^{n-3}, \\
f_1&= xy^2+b_1y^3+b_2y^5+\dots+b_{n-5}y^{n-3},\\
f_2&=y^{n-2}. 
\end{align*}
Now $a_1, a_{n-4}, a_{n-3}$ and $b_1, b_2, \dots, b_{n-5}$ in $\CC$ are the free affine coordinates, 
while $a_{2}, \dots,  a_{n-5}$ depend polynomially on the previous coordinates according to equations that arise imposing
\begin{equation}
\label{attracting5interactive}
y^2f_0-xf_1 \equiv 0 \text{ mod } (f_1, f_2).
\end{equation}
The attracting cell is 
\begin{align}
\label{attractingcell4bis}
(f_0,f_1, f_2)\supset (&xf_0, yf_0, f_1, f_2)\supset (xf_0+\theta_0 f_1, yf_0+\theta_1 f_1, yf_1, f_2)\supset\\  &\supset(xf_0+\theta_0 f_1+\eta_0 y^{n-2}, yf_0+\theta_1 f_1+\eta_1 y^{n-2}, yf_1+\eta_2y^{n-2}, y^{n-1})\nonumber
\end{align} 
where in the last step we need to impose the equation 
\[
G\left((a, b), (\theta), (\eta)\right)=0 \qquad\text{ for }\quad ((a_i)_{1\leq i\leq n-3}, (b_i)_{1\leq i\leq n-5}, \theta_0, \theta_1,\eta_0, \eta_1,\eta_2) \in \AAA^{2n-3}.
\]
Here $G$ is defined with a Procedure similar to \ref{divisionprocedure} as 
\[
yg_0 - xg_1 \equiv G\left((a, b), (\theta), (\eta)\right) y^{n-2} \quad \text{mod } \quad (g_1, g_2, g_3).
\]
We called $g_0, g_1, g_2, g_3$ the generators (that need not be standard) of the last ideal in (\ref{attractingcell4bis}) in the order we have written them. Doing explicitly the computations as in Procedure \ref{divisionprocedure}, and using the equations coming from (\ref{attracting5interactive}),  we find 
\[
G((a, b), (\theta), (\eta))= \eta_2(\theta_1(b_1-a_1)-\theta_0-\theta^2_1)+b_{n-5}(2\theta_1b_1+\theta_1a_1)+a_{n-3}\theta_1.
\]
Taking partial derivatives we see that the points 
\[
a_{n-3}= b_{n-5}(a_1-2b_1), \theta_0=\theta_1=\eta_2=0 
\]
are all singular points for the equation $G$, so that the attracting cell is not isomorphic to an affine space.

\subsection{Five, and longer, flag case}
Consider the punctual Hilbert scheme of flags of five nested ideals
\[
\Hi^{n,n+1, n+2, n+3, n+4} (0).
\] 
We will see that starting at $n=3$ we can find an Hilbert-Samuel's stratum 
\[
M_{T_1, T_2, T_3, T_4, T_5} \subset \Hi^{3,4,5, 6,7} (0)
\]
that is not smooth. Again we will see that it is not smooth by exhibiting an attracting cell that is not smooth. We can also show, by direct computation, that $3$ is the smallest integer for which this happens. 
\begin{lemma}
For $n= 1, 2$ all the Hilbert-Samuel's strata $M_{T_1, T_2, T_3, T_4, T_5}$ of the Hilbert scheme $\Hi^{n,n+1, n+2, n+3,n+4} (0)$ are smooth. The spaces $\Hi^{n,n+1, n+2, n+3,n+4} (0)$ for $n=1,2$ have an affine paving and their Poincar\'{e} polynomials are given by: 
\begin{align*}
P_q \left(\Hi^{1,2, 3, 4,5} (0)\right) &= 1+4q+8q^2+9q^3+4q^4,\\
P_q \left(\Hi^{2, 3, 4,5,6} (0)\right) &= 1+5q+13q^2+22q^3+23q^4+11q^5+q^6.\\
\end{align*} 
For $n\geq 3$ there is at least one Hilbert-Samuel's stratum that is not smooth, and with an attracting cell that is not isomorphic to an affine space. Let $n\geq 2$, then at least one of the Hilbert-Samuel's stratum of $\Hi^{n, \dots, n+5}(\CC^2)$ is not smooth, and has an attracting cell that is not isomorphic to an affine space. For all $k\geq 6$ and all $n\geq 1$ at least one of the Hilbert-Samuel's stratum of $\Hi^{n, \dots, n+k}(\CC^2)$ is not smooth, and has an attracting cell that is not isomorphic to an affine space.
\end{lemma}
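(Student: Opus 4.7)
The Poincar\'{e} polynomial identities for $\Hi^{1,2,3,4,5}(0)$ and $\Hi^{2,3,4,5,6}(0)$ would be established by a direct, finite enumeration, in the spirit of the computations already carried out in the four-flag section. Concretely, for each of the finitely many flags $(\Gamma_1,\dots,\Gamma_5) \vdash [n,\dots,n+4]$ with $n\in\{1,2\}$, I would apply the iterative construction used in the three-flag case (building $J,K$ from Iarrobino's standard generators) one more time to produce an ideal $L$, obtain the obstruction equations $g_i''$, $g_i'''$ by the Procedure \ref{divisionprocedure} at each step, and verify by hand that these equations cut out an affine space of the dimension predicted by Lemma \ref{dimensiontangent123} suitably extended to four-step flags. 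Summing $q^{\dim}$ over all fixed points then yields the stated polynomials.

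For the non-smoothness in the $\Hi^{n,\dots,n+4}(0)$ case with $n \geq 3$, the plan is to exhibit a single fixed point whose attracting cell is a singular hypersurface, mimicking the four-flag example of Lemma 3.3.1. The natural candidate is the flag whose successive added boxes trace the pattern that produced the equation $G=\eta_2(\theta_1(b_1-a_1)-\theta_0-\theta_1^2)+b_1\theta_1(b_1-a_1)+a_2\theta_1$ in the four-flag setting, extended by one further step: the final ideal $L$ will carry new parameters $(\xi_i)$ and a new obstruction equation $G'((a,b),(\theta),(\eta),(\xi))=0$ produced by the same division procedure, which multiplies the original quadratic factor $\theta_1(b_1-a_1)-\theta_0-\theta_1^2$ by a $\xi$-variable. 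The Jacobian check will then be literally the same as for the four-flag case, since the singular locus of $G$ lifts tautologically to a singular locus of $G'$ along $\{\xi=0\}$.

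For the cases $\Hi^{n,\dots,n+5}(0)$ with $n\geq 2$ and $\Hi^{n,\dots,n+k}(0)$ with $k\geq 6$, $n\geq 1$, the strategy is a \emph{stability} argument: adding one more step to the flag only introduces additional free parameters and an additional obstruction equation whose leading term is a new parameter times an already-vanishing factor. More precisely, I would show that if a fixed point $(\Gamma_1,\dots,\Gamma_k)$ yields a singular attracting cell cut out by a polynomial $G$ with nontrivial critical locus $Z$, then the flag $(\Gamma_0 \subset \Gamma_1 \subset \dots \subset \Gamma_k)$ obtained by prepending a compatible $\Gamma_0$ (or by appending $\Gamma_{k+1}$) has an attracting cell cut out by $G$ together with new equations linear in fresh parameters; the critical locus of the enlarged system then still contains $Z$ times an affine space. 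Iterating this allows one to descend from $(n,k)=(6,3)$ in the four-flag lemma to $(3,4)$, $(2,5)$, and $(1, k\geq 6)$.

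The main obstacle is a careful bookkeeping check: when one enlarges the flag, the new parameters enter the recursive definition of the $f_i^{(\ell)}$ through earlier Iarrobino coefficients $d_{\alpha,\gamma}$, so one must verify that the specific polynomial $G$ constructed at the base case is not trivialised by some new relation appearing in the extended Procedure \ref{divisionprocedure}. The key point to exploit is that all new terms entering $G$ (after extension) are divisible either by one of the freshly introduced variables $\theta,\eta,\xi,\dots$ or by a standard coefficient $a_\bullet$ of sufficiently high $y$-degree, so evaluating at the singular locus of the original $G$ (where the fresh variables vanish) reproduces exactly the original vanishing; no new smoothness is created. With this verified, the Jacobian criterion applies uniformly and establishes non-smoothness in every case claimed.
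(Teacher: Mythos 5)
Your base-case plan for $n=1,2$ (finite enumeration of fixed points and direct verification that each attracting set is affine) is exactly what the paper does, so that part is fine. The problems are in how you produce the singular examples. First, a numerical obstruction: the four-flag singular example lives in $\Hi^{6,7,8,9}(0)$, so its largest ideal has length $9$; any flag obtained from it by appending or prepending steps still has largest ideal of length $\geq 9$, and can therefore only yield five-flag examples with $n\geq 5$, six-flag examples with $n\geq 4$, and so on. This can never reach the sharp bounds claimed in the statement ($n\geq 3$ for $k=4$, $n\geq 2$ for $k=5$, $n\geq 1$ for $k\geq 6$). The paper instead exhibits a genuinely new five-flag fixed point in $\Hi^{3,4,5,6,7}(0)$, namely $(x,y^3)\supset(x^2,xy,y^3)\supset(x^2,xy^2,y^3)\supset(x^2,xy^2,y^4)\supset(x^2,xy^2,y^5)$, whose attracting set sits in $\AAA^7$ cut out by the single equation $(\eta-\omega)(\eta-\theta)=0$ coming from a determinant condition — a union of two hyperplanes, visibly not an affine cell — and all the remaining cases are derived from this example and its higher-length analogues, not from the four-flag one.

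Second, your ``stability'' argument is genuinely gapped. From the fact that an attracting cell is cut out by a polynomial $G$ with nontrivial critical locus $Z$, you cannot conclude that the cell of an extended flag, cut out by $G$ together with new equations in fresh parameters, is still singular: imposing further equations can smooth out singular points (already $\{xy=0\}\cap\{x=y\}$ is a smooth line through the singular point of $\{xy=0\}$). Your proposed fix — that all new terms are divisible by fresh variables or high-degree coefficients — is precisely the hard verification, and it is not carried out. The paper sidesteps this entirely by choosing its singular example so that its completion to any longer flag is combinatorially \emph{forced}: the extra boxes can only be placed one per row in the first column, so there is a unique extension of the fixed point and the attracting cell of the longer flag is literally the same variety, with no new parameters and no new equations. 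That uniqueness-of-completion observation is the missing idea; without it (or a genuine proof of your divisibility claim), the propagation to $k=5$, $k\geq 6$ and to small $n$ does not go through.
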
 

The only case left aside is $\Hi^{n, \dots, n+5}(\CC^2)$ for $n=1$. This is naturally isomorphic to $\Hi^{2,3,4,5,6}(\CC^2)$, so we know an affine paving for it. Again, to prove the first part of the Lemma  we wrote down explicitly the attracting sets and checked that they are all affine, in the cases where we claim that they are so. \\

\hspace{3em} We now show that for $n=3$ we have an attracting cell that is not isomorphic to an affine space, thus proving also that the corresponding Hilbert-Samuel's stratum is not smooth. 

\begin{example}
\label{exampleflag5}
The fixed point is the following: \\

\begin{minipage}{0.3\textwidth}
\begin{center}
\ytableausetup{boxsize=1.5em, aligntableaux=bottom}
\begin{ytableau} 
4 \\
3  \\
\,\\
\,&2 \\
\,&1
\end{ytableau}
\end{center}
\end{minipage} 
\begin{minipage}{0.68\textwidth}
The fixed point represented on the left is the flag
\begin{align*}
(x,y^3)\supset (x^2, &xy, y^3)\supset (x^2, xy^2, y^3)\supset \\
& \supset(x^2, xy^2, y^4)\supset (x^2, xy^2, y^5).
\end{align*} 
This is the flag of monomial ideals with normal patterns \\
$\quad (1,1,1), (1,2,1), (1,2,2), (1,2,2,1), (1,2,2,1,1)$
\end{minipage} 

\vspace{0.5em}
Let us call $(\omega, a, \theta, \alpha, \eta, \beta_1, \beta_2)$ the coordinates of  $\AAA^7$. The attracting set is isomorphic to the subspace $Y
\subset \AAA^7$ cut out by the equation $
(\eta-\omega)(\eta-\theta)=0$
i.e. it is the intersection of two linear spaces of dimension $6$ in a linear space of dimension $5$. The explicit isomorphism between $Y$ and $M_P$ is given by parametrizing $M_P$ as 
\begin{align*}
\begin{pmatrix}x+\omega y+a y^2\\y^3\end{pmatrix}&\supset \begin{pmatrix}x^2+\omega xy\\ xy+\omega y^2\\ y^3\end{pmatrix}\supset \begin{pmatrix}x^2+(\omega+\theta)xy+\omega\theta y^2\\ xy^2\\ y^3\end{pmatrix}\supset \\
& \supset\begin{pmatrix}x^2(\omega+\theta)xy+\omega\theta y^2+\alpha y^3\\ xy^2+\eta y^3\\ y^4\end{pmatrix}\supset \begin{pmatrix}x^2(\omega+\theta)xy+\omega\theta y^2+\alpha y^3+\beta_1 y^4\\ xy^2+\eta y^3+\beta_2 y^4\\ y^5\end{pmatrix}
\end{align*} 
and noticing that in the last inclusion we have to impose the equation 
\[
0=\det \begin{pmatrix}1 & \omega+\theta & \omega\theta & 0 & 0\\ 
0 & 1 & \omega+\theta & \omega\theta & 0\\
0 & 0&1 & \omega+\theta & \omega\theta \\
0 & 0 & 1 & \eta  &0\\
0 & 0 & 0 & 1 & \eta \\
\end{pmatrix} \,= \, (\eta-\omega)(\eta-\theta)=0
\]
because otherwise $y^4 \in I_5$. 
\end{example}
\begin{remark}
The fixed point we just described is also a fixed point of $\Hi^{1,2, 3, 4,5,6,7} (0)$,  in the sense that the flag of five ideals we described can be uniquely extended to a flag of seven ideals (combinatorially there is only one way to extend the above skew standard diagram to a Young standard diagram). Observe now that the projection map $p_2: \Hi^{1,2, 3, 4,5,6,7} (0) \to \Hi^{2}(0)$ is a Zariski locally trivial fibration, as $\text{GL}_2$ acts by automorphisms on $\Hi^{2}(0) \cong \PP^1$ by changing the coordinates of the plane. \\

\hspace{3em}One can then check that the attracting set described above in Example \ref{exampleflag5} is \emph{the only one} that is not an affine cell. The checking is done by writing explicitly all the remaining towhundredthirtyone cells. In particular all the sets attracted by fixed points of the form
\ytableausetup{boxsize=0.8em, aligntableaux=bottom}
\begin{ytableau} 
\none[\dots] \\
1&2 &\none[\dots]
\end{ytableau} 
 are affine cells. This mean that the fiber $p_2^{-1}((y, x^2)) \subset \Hi^{1,2, 3, 4,5,6,7} (0)$ has an affine paving, and thus, since $p_2$ is locally trivial on $\PP^1$, that the entire  $\Hi^{1,2, 3, 4,5,6,7} (0)$ has an affine paving, even though it is not the one coming from the attracting sets. Then we can compute by hand the Poincar\'{e} polynomial of $\Hi^{1,2, 3, 4,5,6,7} (0)$ and the result is the following: 
 \[
 P_q \left(\Hi^{1,2, 3, 4,5,6,7} (0)\right) = 1+6q+19q^2+41q^3+63q^4+64q^5+32q^6+5q^7.
 \]
 For longer flags, or for flags of five ideals but of higher lengths, unfortunately this no longer holds and I do not know if there is an affine cell decomposition.
\end{remark}

\begin{remark}
\label{positivepartlongercases}
One can still write a weight basis for the tangent spaces at the fixed points of $\Hi^{1,2, 3, 4,5,6,7} (\CC^2)$ and see if the weights with respect to the torus $\TT_{1^+}$ action have some geometrical meaning. The positive part, however, fails to give the right Poincar\'{e} polynomial, as direct computations show. This is why we didn't investigate it further for longer flag cases. 
\end{remark}

\hspace{3em}Now we show that for flags of ideals of higher lengths the analogous stratum is not smooth, slightly differently, but in a similar way. We will also show that all longer flags have the analogous stratum that is not smooth. 

\vspace{0.7em}
\begin{minipage}{0.3\textwidth}
\begin{center}
\ytableausetup{boxsize=1.5em, aligntableaux=bottom}
\begin{ytableau} 
4 \\
3  \\
\none[\vdots] \\
\none[k-5 \text{ boxes}]\\
\none[\vdots] \\
\,\\
\,&2 \\
\,&1
\end{ytableau} 
\end{center}
\end{minipage} 
\begin{minipage}{0.68\textwidth}
The fixed point represented on the left is the flag
\begin{align*}
(x,y^k)\supset (x^2, &xy, y^k)\supset (x^2, xy^2, y^k)\supset \\
& \supset(x^2, xy^2, y^{k+1})\supset (x^2, xy^2, y^{k+1})
\end{align*} 
that is the flag of monomial ideals with normal patterns
\begin{align*}\quad (1,1,\dots, 1), (1,2,1, \dots, 1), (1,2,2,1, \dots, 1),  \\
(1,2,2,1, \dots, 1, 1), (1,2,2,1, \dots, 1, 1, 1)
\end{align*}
\end{minipage} 

\vspace{0.5em}
Here we call $(\omega, a_1,\dots, a_{k-2}, \theta, \beta_1, \beta_2, \gamma_1, \gamma_2)$ the coordinates of  $\AAA^{k+4}$. Then the attracting cell is isomorphic to the subspace $Y
\subset \AAA^{k+4}$ cut out by the single equation \[
(a_{k_3}-\beta_2)(\omega-\theta)=0.\]
Thus for every $n\geq 3$ we have an Hilbert-Samuel's stratum in $\Hi^{n, n+1, n+2, n+3, n+4}(0)$ not smooth. 

\begin{observation} We now show that for all longer flags the analogous stratum is not smooth. Consider the above picture. There is only one way to complete it to a skew diagram with $k$ boxes, i.e. we need to put the number $1, 2, \dots, k-5$ in the first column. For the same reason there is only one attracting cell associated to this picture in $\Hi^{N-k, N-k+1,\dots, N}$ and its description is exactly as before. Then this proves that for every $k$ there is an Hilbert-Samuel's stratum in $\Hi^{N-k, N-k+1,\dots, N}$ with $k\leq N$ that is not smooth, thus showing the claim that the three step case is the last one where all the Hilbert-Samuel's strata are smooth. 
\end{observation}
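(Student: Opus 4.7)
\begin{proofspecial}[Proof proposal]
The plan is to treat the affirmative and negative claims separately, since the first is a finite verification and the second requires exhibiting explicit singular attracting cells.

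For the two smooth cases, I would proceed by brute force, enumerating all torus-fixed flags $(\Gamma_1,\dots,\Gamma_5)\vdash [n,\dots,n+4]$ for $n=1,2$ and writing down, for each one, the attracting cell as in the four-step case. Concretely, I would parametrize $M_{P(T_1)}$ using Iarrobino's standard generators from Theorem \ref{standard}, and then iterate the construction used in Lemma \ref{dim2} four times, introducing new affine parameters $\theta_i,\eta_i,\xi_i,\zeta_i$ at each step and imposing the relations obtained by the division Procedure \ref{divisionprocedure}. For each fixed point I would check that all the resulting polynomial equations cut out an affine space inside the product of affine spaces (either because they are trivially satisfied for degree reasons, or because they can be used to eliminate variables linearly). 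Since for $n\leq 2$ the number of fixed points is small, this is a feasible if tedious computation; once all attracting cells are affine, Theorem \ref{Bialynicki-Birula} gives an affine paving of the ambient space, and Proposition \ref{Fulton} yields the claimed Poincar\'e polynomials, which one reads off by recording the dimension of each cell as computed from the positive weight count.

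For the negative half, the strategy is to exhibit one explicit singular attracting cell and propagate it. For $n=3$ I would use the flag already displayed in Example \ref{exampleflag5}: starting from the standard generator $x+\omega y+ay^2$ of $I_1$, iterate the Lemma \ref{dim2} construction four times introducing the parameters $(\omega,a,\theta,\alpha,\eta,\beta_1,\beta_2)$. In the fifth step, the requirement that the generator $y^5$ be added (equivalently, that $y^4$ not lie in the ideal) forces a $5\times 5$ determinantal relation among the expansion coefficients, which factors as $(\eta-\omega)(\eta-\theta)=0$. This variety is a union of two hyperplanes in $\AAA^7$ and is singular along their intersection, so the attracting cell is not an affine space; since $M_{T_1,\dots,T_5}$ contains this attracting cell as an open subset, it cannot be smooth. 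For $n\geq 4$ the same picture with a taller first column still yields a single equation of the form $(a_{k-3}-\beta_2)(\omega-\theta)=0$ in $\AAA^{k+4}$, with the division being identical above the ``new'' row; I would spell out this one computation and check that only the two linear coordinates $a_{k-3},\beta_2$ on the first side and $\omega,\theta$ on the second enter non-trivially, so the equation always factors as a product of two linear forms and defines a singular variety.

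For the longer-flag statements, I would invoke the observation that a marked Young diagram of the specific shape above admits a unique extension to a marked flag of length $k\geq 6$ (respectively, length $6$ for $n\geq 2$), because all additional boxes are forced into the first column. The corresponding attracting cell in $\Hi^{n,\dots,n+k}(0)$ is then parametrized by the same equations plus additional free linear parameters coming from the completion, so it remains singular of the same local type. This gives, for each $(n,k)$ in the stated range, a Hilbert-Samuel stratum which is not smooth.

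The main obstacle is the finite verification for $n=1,2$ in the five-flag case: there is no single conceptual reason why all those attracting cells are affine, and one really has to check each fixed point, relying on Lemma \ref{dim2} and Lemma \ref{diamondequationissatisfied} (and their direct analogues for one additional step) to see that each new relation either reduces to an already imposed one or linearly eliminates one of the new coordinates. Organizing this case analysis cleanly, rather than the non-smoothness argument, is where care is needed.
\end{proofspecial}
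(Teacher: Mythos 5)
Your proposal follows the paper's argument essentially verbatim: the same explicit singular attracting cells (the determinantal relation $(\eta-\omega)(\eta-\theta)=0$ for $n=3$ and its variant $(a_{k-3}-\beta_2)(\omega-\theta)=0$ for taller first columns), the same brute-force verification of the low-$n$ smooth cases, and the same key observation that the marked diagram extends uniquely to longer flags because all additional boxes are forced into the first column. The only cosmetic difference is that you allow the completion to contribute ``additional free linear parameters,'' whereas the paper's point is that the extension is entirely rigid (the added smaller ideals are uniquely determined by the curvilinear one, so the attracting cell is literally unchanged); either way the singularity persists, so this does not affect the conclusion.
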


\chapter{Generating function and $\Hi^{n, n+2}(\CC^2)$.}

\hspace{3em}The goal of this chapter is to prove the formula for the generating function of the Poincar\'{e} polynomials of $\Hi^{n, n+1, n+2}(0)$:
\begin{equation}
\label{generating123}
\sum_{n\geq 0} P_q\left(\Hi^{n,n+1, n+2}(0)\right) z^n = \frac{q+1}{(1-zq)(1-z^2q^2)}\,\,
\prod_{k\geq 1} \frac{1}{1-z^kq^{k-1}}.
\end{equation}
To do so we will need three ingredients.\\

\hspace{3em}\textbf{(1)} Study the positive part of the tangent spaces at fixed points of $\Hi^{n, n+1, n+2}(\CC^2)$ with respect to the torus $\TT_{\infty}$ i.e. the generic one dimensional torus that has weights $w_1, w_2$ such that $w_1<w_2$ and $1\ll \frac{w_1}{w_2}$. As it happens for previous cases the positive part with respect to this torus is much easier to understand combinatorially. Observe, however, that since $\Hi^{n, n+1, n+2}(\CC^2)$ is not smooth we do not know that the attracting sets for this torus character are affine varieties, nor do we have an immediate geometric interpretation of the positive part. This is why we need the following step. \\

\hspace{3em}\textbf{(2)} Prove that we can actually change the one dimensional torus and take weights as described in the previous point but still obtain the same result for the Poincar\'{e} polynomial. In the process we show that the attracting sets for this torus are still affine cells.\\

\hspace{3em}\textbf{(3)} Study the space $\Hi^{n, n+2}(\CC^2)$ and two associated spaces as described below. One of them is smooth, the generating function for its Betti numbers is known, and we claim it has the same Betti numbers as $\Hi^{n, n+1, n+2}(0)$. To prove the claim, we will only need to match the combinatorial data of the fixed points and the respective positive part of the tangent spaces to obtain the wanted result for $\Hi^{n, n+1, n+2}(0)$. This relation is clear only at the combinatorial level, and not at the geometric one. As a byproduct we will obtain the last result of the thesis i.e. a combinatorial formula for the Poincar\'{e} polynomials of $\Hi^{n, n+2}(0)$ and a closed expression for their generating function:
\begin{equation}
\sum_{n\geq 0 } P_q\, \left(\Hi^{n, n+2}(0) \right) z^n = \frac{1+q-qz}{(1-zq)(1-z^2q^2)}\,\, \prod_{m\geq 1} \frac{1}{1-z^mq^{m-1}}.
\end{equation} 

\section{A tale of two tori}
\hspace{3em}We start motivating the work of this section by showing that, indeed, the positive part of the tangent spaces at fixed points of $\Hi^{n, n+1, n+2}(\CC^2)$ for the torus with weights $1\ll \frac{w_1}{w_2}$ is combinatorially easier. Then we will prove that we can use the positive part with respect to this torus to write down the Poincar\'{e} polynomial. 

\begin{lemma}
Consider the action of $\TT_{\infty}$ on $R=\CC[x,y]$. Let $(\Gamma_1, \Gamma_2, \Gamma_3)$ be three nested Young diagrams representing a fixed point for this torus action on $\Hi^{n, n+1, n+2}(\CC^2)$. Then the positive part of the tangent space at $(\Gamma_1, \Gamma_2, \Gamma_3)$ is 
\begin{equation}
\label{posskew}
\text{pos}_{\infty}\left(\Gamma_1, \Gamma_2, \Gamma_3 \right) = \begin{cases} n+2-\ell(\Gamma_3)+1 & \text{ if } \deg_y \alpha'_l \geq \deg_y \alpha_j+2, \\
n+2-\ell(\Gamma_3) & \text{ otherwise. }
\end{cases}
\end{equation}
Here $\ell(\Gamma_3)$ is the number of columns of $\Gamma_3$ and with the notations of the last chapter we call $\alpha'_l$ and $\alpha_j$ the boxes such that 
\[
\Gamma_3=\Gamma_2\cup \{\alpha'_l\} = \Gamma_1 \cup \{\alpha_j\}  \cup \{\alpha'_l\}.
\]
\end{lemma}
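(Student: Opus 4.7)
I would base the proof on the weight basis $B(I_1,I_2,I_3)$ constructed in Definition \ref{definitionB(I1I2I3)}. Under the one-parameter torus $\TT_\infty$ with weights $0<w_1\ll w_2$, a monomial weight $aw_1+bw_2$ is positive iff $b>0$, or $b=0$ and $a>0$. Applying this criterion to the three types of basis vectors: a Type C vector $\bigl(f_{\alpha,\beta},\Su(f_{\alpha,\beta}),\Su(\Su(f_{\alpha,\beta}))\bigr)$ is positive iff $\beta\in P_\alpha$ and $\deg_y\beta>\deg_y\alpha$ (vectors with $\beta\in Q_\alpha$ have negative weight, and those with $\beta\in P_\alpha$ on the same row as $\alpha$ have negative $x$-weight); a Type B vector $(0,h_{\alpha'_i,\alpha_j},\Su(h_{\alpha'_i,\alpha_j}))$ is positive iff $\alpha_j$ lies strictly above $\alpha'_i$, or on the same row strictly to its right; Type A is analogous with $\alpha'_l$ replacing $\alpha_j$.

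The counting would follow the pattern of the proof of Theorem \ref{ES} applied to $\Hi^{n+2}(\CC^2)$, in which the positive count at $I_3$ equals $\sum_{\alpha''}\#\{\beta\in P_{\alpha''}:\deg_y\beta>\deg_y\alpha''\}=(n+2)-\ell(\Gamma_3)$, the deficit $\ell(\Gamma_3)$ coming from the boxes of $P_{\alpha''}$ on the same row as $\alpha''$. I would reorganise the positive vectors of $B(I_1,I_2,I_3)$ into three contributions telescoping against this baseline: the positive Type C vectors start from the $n-\ell(\Gamma_1)$ positive elements of $B(I_1)$ minus the positive members of $\mathrm{Obs}(I_1,I_2)\cup\mathrm{PObs}$, while the positive Type B and Type A vectors fill in the contributions associated with the new boxes $\alpha_j$ and $\alpha'_l$. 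A direct matching of pairs with the pairs $(\alpha'',\beta)$ in $B(I_3)$ shows that the aggregate equals $(n+2)-\ell(\Gamma_3)$.

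The subtlety producing $\epsilon$ stems from the set $\mathrm{NotP}$ of Definition \ref{definitionNotP}, which records those pairs in $\mathrm{Obs}(I_2,I_3)$ that admit no preimage under $\mathrm{Prec}$. In case a) of Definition \ref{casesjl}, the single NotP-element sits at a corner adjacent to $\alpha_j$; its $\TT_\infty$-weight is strictly positive exactly when $\deg_y\alpha'_l\geq\deg_y\alpha_j+2$, and this extra positive vector is not cancelled by any element of $\mathrm{PObs}$, giving the $+1$. In case b) of Definition \ref{casesjl}, $\alpha'_l$ is the immediate neighbour of $\alpha_j$, so $\deg_y\alpha'_l\leq\deg_y\alpha_j+1$ and $\epsilon=0$; the possibly multiple NotP-elements are then exactly balanced against the Type B vectors excluded by $\mathrm{Obs}(I_2,I_3)$.

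The main obstacle is precisely this case analysis, compounded by the branches of Definitions \ref{cases12} and \ref{cases123} that dictate the structure of $\mathrm{Obs}(I_1,I_2)$ and $\mathrm{Obs}(I_2,I_3)$, together with the boundary configurations $j=0$ or $l=s''$ where some of $p,q,p',q'$ are infinite. In each configuration one verifies that the bookkeeping is elementary and that the asymmetric $y$-tiebreaker of $\TT_\infty$ (contrary to the degree-based tiebreaker of $\TT_{1^+}$ used in Lemma \ref{dimensiontangent123}) replaces the $\diamond$-correction of that lemma by the cleaner criterion $\deg_y\alpha'_l\geq\deg_y\alpha_j+2$ stated in the lemma.
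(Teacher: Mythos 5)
Your proposal is correct and takes essentially the same route as the paper's proof: both work through the weight basis $B(I_1,I_2,I_3)$ with the row-based positivity criterion of $\TT_{\infty}$, telescope the count against the one-ideal baseline (you match against $B(I_3)$ giving $n+2-\ell(\Gamma_3)$, the paper increments from $B(I_1)$ via $\ell(\Gamma_3)=\ell(\Gamma_1)+\cdots$, which is the same bookkeeping), and locate the extra $+1$ in the single $\text{NotP}$ vector of case a), positive precisely when $\deg_y \alpha'_l \geq \deg_y \alpha_j +2$. Your treatment of case b), where the $\text{NotP}$ elements cancel against the type B vectors excluded by $\text{Obs}(I_2,I_3)$, is exactly the paper's Observation on the counts being the same across cases.
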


\begin{proof}
Observe that an eigenvector for $\TT^2$ with eigenvalues $\lambda^r\mu^s$, where $\lambda$ and $\mu$ are independent torus characters, is positive with respect to the chosen weights if and only if $s$ is positive.  \\

\hspace{3em}Recall the definition of $B=B(I_{\Gamma_1}, I_{\Gamma_2}, I_{\Gamma_3})$ in \ref{definitionB(I1I2I3)}.\begin{align}
\label{eqbi1i2i3}
B\left(I_{\Gamma_1}, I_{\Gamma_2} , I_{\Gamma_3}\right) \quad &=\nonumber\\
& B\left(I_{\Gamma_1}\right) \setminus \left\{\ff \lvert (\alpha, \beta) \in \text{Obs}(I_{\Gamma_1}, I_{\Gamma_2})\right\} \setminus \left\{f_{\alpha, \beta} \lvert (\alpha, \beta) \in \text{PObs}(I_{\Gamma_1}, I_{\Gamma_2}, I_{\Gamma_3})\right\}  \nonumber\\
\cup\,\,& \left\{\left(0, h_{\alpha'_i,\alpha_j }, \Su(h_{\alpha'_i,\alpha_j })\right)\left\vert\,\, (\alpha'_i,\alpha_j) \notin \text{Obs}(I_{\Gamma_2}, I_{\Gamma_3})    \right.\right\} \cup \\\cup \,\,&\left\{\left(0, 0, h_{\alpha''_i,\alpha'_l }\right) \left\vert\,\,i=0,\dots s'' \right.\right\}\, . \nonumber
\end{align}

\hspace{3em}A vector $v\in B$ is positive with respect to the chosen weights, if and only if it represents a map from a box to a box that is in a strictly higher row. In terms of monomials, if $v$ is label by $\alpha\mapsto \beta$ then $v$ is positive if and only if $\deg_y \beta > \deg_y \alpha$. Let us analyze all the terms in (\ref{eqbi1i2i3}). 

\hspace{3em}It is clear, and we already know, that $\pos(B(\Gamma_1))= n - \ell(\Gamma_1)$. \\

\hspace{3em}Suppose for the moment that $\alpha'_l$ is not on the same row or column of $\alpha_j$. \\

\hspace{3em}Consider now what we exclude and what we add by removing the set of vectors $\{\ff \lvert (\alpha, \beta) \in$ $ \text{Obs}(I_{\Gamma_1}, I_{\Gamma_2})\} $ and by adjoining $\left\{\left(0, h_{\alpha'_i,\alpha_j }, \Su(h_{\alpha'_i,\alpha_j })\right)\left\vert\,\, (\alpha'_i,\alpha_j) \notin \text{Obs}(I_{\Gamma_2}, I_{\Gamma_3})    \right.\right\}$. In the latter set we do not need to exclude any term worrying about $(\alpha'_i,\alpha_j) \in \text{Obs}(I_{\Gamma_2}, I_{\Gamma_3})$,  thanks to the current hypothesis on the relative position of $\alpha'_l$ and $\alpha_j$. Looking at definition \ref{definitionObs23}, we have that $\ff$ is such that $(\alpha, \beta) \in  \text{Obs}(I_{\Gamma_1}, I_{\Gamma_2})$ and positive if and only if $\alpha = \alpha_i$, $\beta=\frac{\alpha_j}{y^{p_i}}$ with $i\leq j-1$ since we have that
\begin{equation}
\label{degalphaj}
\deg_y \alpha_j = \deg_y \alpha_i + \sum_{k=i}^{j-1}p_k
\end{equation}
for all $i<j$. Still looking at equation  (\ref{degalphaj}), we have that $h_{\alpha'_i,\alpha_j}$ is positive if and only if $i\leq j$. Thus the total contribution of these two sets to the positive part of $B(\Gamma_1, \Gamma_2, \Gamma_3)$ is
\begin{align*}
\pos \left( \left\{\left(0, h_{\alpha'_i,\alpha_j }, \Su(h_{\alpha'_i,\alpha_j })\right)\left\vert\,\, (\alpha'_i,\alpha_j) \notin \text{Obs}(I_{\Gamma_2}, I_{\Gamma_3})    \right.\right\} \right) +&\\
-\pos \left( \left\{\ff \lvert (\alpha, \beta) \in \text{Obs}(I_{\Gamma_1}, I_{\Gamma_2})\right\} \right)
 &\quad= \quad \begin{cases}1& \text{ if } 0<j,\\
0 &\text{ if } 0=j.
\end{cases}
\end{align*}
Completely analogously for the contribution of $\left\{f_{\alpha', \beta} \lvert (\alpha', \beta) \in \text{Obs}(I_{\Gamma_2}, I_{\Gamma_3})\right\} $ when we remove it, and of $\left\{\left(0, 0, h_{\alpha''_i,\alpha'_l }\right) \left\vert\,\,i=0,\dots s'' \right.\right\}$ when we add it. \\

\hspace{3em}The only term left to be considered is $\left\{f_{\alpha', \beta} \lvert (\alpha', \beta) \in \text{NotP}(I_{\Gamma_1}, I_{\Gamma_2}, I_{\Gamma_3})\right\}$. This, under the current hypothesis that  $\alpha'_l$ is not on the same row or column as $\alpha_j$, contains only an element given by Definition \ref{definitionNotP}, and this element is positive with respect to the chosen torus action if and only if $\deg_y \alpha_l > \deg_y\alpha_j + 2$. 

Then noticing that  
\[
\ell(\Gamma_3) = \ell(\Gamma_1) \quad +\quad  \begin{cases} 1 & \text{ if } 0<j,\\  0 & \text{ if } 0=j\end{cases} \quad +\quad  \begin{cases} 1 & \text{ if } 0<l,\\  0 & \text{ if } 0=l\end{cases} \quad 
\]
we obtain exactly what we wanted. \\

\hspace{3em}If we now suppose that $\alpha'_l=y\alpha_j$ we can say, analogously, that
\begin{align}
\label{equationsome1}
\left( \left\{\left(0, h_{\alpha'_i,\alpha_j }, \Su(h_{\alpha'_i,\alpha_j })\right)\left\vert\,\, (\alpha'_i,\alpha_j) \notin \text{Obs}(I_{\Gamma_2}, I_{\Gamma_3})    \right.\right\} \right) +&\nonumber\\
+ \pos\left(\left\{\left(0, 0, h_{\alpha''_i,\alpha'_l }\right) \left\vert\,\,i=0,\dots, s'' \right.\right\} \right) +\nonumber\\
-\pos \left( \left\{\ff \lvert (\alpha, \beta) \in \text{Obs}(I_{\Gamma_1}, I_{\Gamma_2})\right\} \right)+\\
- \pos \left( \left\{f_{\alpha', \beta} \lvert (\alpha', \beta) \in \text{Obs}(I_{\Gamma_2}, I_{\Gamma_3})\right\}\right)+ \nonumber\\
+ \pos \left( \left\{f_{\alpha', \beta} \lvert (\alpha', \beta) \in \text{NotP}\right\}\right)
 &\quad= \quad \begin{cases}2 & \text{ if }\,\,\,\, 0<j,\\
1 &\text{ if } \,\,\,\,0=j \nonumber
\end{cases}
\end{align}
since in this case $(\alpha'_i,\alpha_j) \notin \text{Obs}(I_{\Gamma_2}, I_{\Gamma_3}) $ if and only if  $(\alpha'_i, \frac{\alpha_j}{y}) \in \text{NotP}(I_{\Gamma_1}, I_{\Gamma_2}, I_{\Gamma_3})$  (notice also that, since $\alpha'_l=y\alpha_j$, we always have at least a positive vector as $h_{\alpha''_0, \alpha'_l}$). Then the statement is proved also in this case. \\

\hspace{3em}Finally if $\alpha'_l=x\alpha_j$ then we do not have any positive vectors in either \\ $\left\{h_{\alpha'_i,\alpha_j }\left\vert\,\, (\alpha'_i,\alpha_j) \notin \text{Obs}(I_{\Gamma_2}, I_{\Gamma_3})\right. \right\}$ or in $\left\{f_{\alpha', \beta} \lvert (\alpha', \beta) \in \text{NotP}(I_{\Gamma_1},I_{\Gamma_2}, I_{\Gamma_3})\right\}$. So that the left hand side of (\ref{equationsome1}) is either $1$ if $0<j$ or $0$ if $j=0$, that, noticing $j=0$ iff $\ell(\Gamma_3)= \ell(\Gamma_1)+2$, is what we wanted. 
\end{proof}

\hspace{3em}As promised formula (\ref{posskew}) is much easier than the formula (\ref{posskewmale}). 
Now we deal with the more cumbersome task of proving that we get the same polynomial by using one or the other. We will see that the main part of the work is to understand in more details why we can use freely one formula instead of the other in the case of $\Hi^{n}(0)$ and in the case of $\Hi^{n, n+1}(0)$. Then we will be able to tackle the  case $\Hi^{n, n+1, n+2}(0)$. We start by fixing some notation to better frame the problem.

\begin{definition}
We want to give names to the one dimensional subtori of $\TT^2$ that are most relevant for us. We will always assume that the weights $w_1, w_2$ are such that $w_1< w_2$. We will denote $\TT_W \subset \TT^2$ the one dimensional subtorus given by putting $\frac{w_2}{w_1}=W$. Most of these tori are \emph{generic} in the sense that the fixed points sets for their action on $\Hi^{n}(\CC^2)$, $\Hi^{n+1}(\CC^2)$ and $\Hi^{n+2}(\CC^2)$ are isolated. However we care also about those special values of $W$ that are not \emph{generic}. So we say that $W \in \QQ$ is a \emph{wall} for $n$ if the fixed point set of $\Hi^{n}(\CC^2)$ is not discrete. Similarly we say that $W \in \QQ$ is a \emph{wall} for $[n, n+1]$ if the fixed point set of $\Hi^{n, n+1}(\CC^2)$ is not discrete. Finally we say that $W \in \QQ$ is a \emph{wall} for $[n, n+1, n+2]$ if the fixed point set of $\Hi^{n, n+1, n+2}(\CC^2)$ is not discrete. This typically happens for values like: $\frac{m}{k}$ with $1\leq m,k \leq n+1$. \\

\hspace{3em}If $W$ is a wall for $n$ (resp. for $[n, n+1]$, resp. for $[n,n+1,n+2]$) we will denote $W^+$ and $W^-$ two values in $\QQ$ that are respectively bigger and smaller than $W$ by a small amount, small enough that between $W^+$ and $W$ and between $W$ and $W^-$ there are no other walls for $n$ (resp. for $[n, n+1]$, resp. for $[n,n+1,n+2]$). \\

\hspace{3em} Recall the special names we gave the two extreme cases we are interested in with respect to $n\in \NN$:
\[
\TT_{\infty} := \begin{cases} w_1< w_2 & \text{\small{ and}} \\
n+2 < \frac{w_2}{w_1} &\,
\end{cases} \qquad \qquad \TT_{1^+} := \begin{cases} w_1< w_2 & \text{ \small{and}} \\
\frac{n+2}{n+1} > \frac{w_2}{w_1}. &\,
\end{cases}
\] 
These two tori are generic. The torus $\TT_{\infty}$ is the one we would like to use, as it gives formula (\ref{posskew}). However only the second one i.e. $\TT_{1^+}$ is such that the Hilbert-Samuel strata $M_{T}$ are union of attracting sets of $\Hi^{\mathfrak{n}}(\PP^2)$, and so, in the case of $\Hi^{n, n+1, n+2}(0)$, only for $\TT_{1^+}$ we know that the attracting sets are affine and that the positive parts of the tangent spaces at a fixed point give the dimension of the affine cells that contracts to it.
\end{definition}

\begin{example} 
\label{examplewall1}
A typical wall for $n$ happens for value like: $\frac{m}{k}$ with $1\leq m,k \leq n+1$. 

For example: let $n=3$, then the only wall is $W=2$. For any generic torus the fixed point sets  of  $\Hi^3(0)$ consists of the three isolated points given by the monomial ideals of length 3, i.e. $(x, y^3), (y, x^3)$ and $(x^2, xy, y^2)$. If  $W=2$ then the fixed points set consists of two components, one with all the points of the form $(\omega_1 y +\omega_2 x^2)+\mathfrak{m}^3$ with $[\omega_1, \omega_2] \in \PP^1$ and the other with the isolated point $(x, y^3)$. 

\begin{minipage}{0.6\textwidth}
\begin{center}
\ytableausetup{boxsize=1.3em, aligntableaux=bottom}
\begin{ytableau} 
\none[y^2]\\
\, &\none[xy]\\
\,&\, &\none[x^2]\\
\end{ytableau}
\begin{ytableau} 
\none[]&\none[]&\none[]&\none[]&\none[]\\
\none[]&\none[]&\none[\omega_1 y]&\none[+]&\none[\omega_2 x^2]&\none[]&\none[]\\
\none[]&\none[]&\none&\none[\longrightarrow]&\none[]&\none[]
\\\end{ytableau}
\begin{ytableau} 
\none[y]\\
\,&\, &\,  &\none[x^3]
\end{ytableau}
\end{center} 
\end{minipage} 
\begin{minipage}{0.39\textwidth}
\small{The one dimensional fixed components of the torus $\TT_{2}$ acting on $\Hi^3(0)$.}
\end{minipage}

\end{example}

\begin{example}
Even when we know that a torus action gives an affine cell decomposition, we cannot change freely the one parameter subgroup, hoping to still get an affine cell decomposition, or hoping to extract useful informations by looking at the positive part of the tangent spaces at the fixed points. We give an example. Consider $X$ the variety that consists of three $\PP^1$ that meet at the point $P$ so that the intersection is planar, i.e. $\dim T_P X = 2$. Suppose that $P$ is the point $[1:0]$ for each of the $\PP^1$ and that all other points of $X$ are smooth. Consider a $\TT^1_a$ action on $X$ that restricts to each $\PP^1$ as the usual rescaling action with weights $1$ and $a$, i.e.  $t\cdot [\omega_1:\omega_2] = [t \omega_1: t^a \omega_2]$. Then, if $a<1$, the $\TT^1_a$ action gives a decomposition of $X$ in affine cells parametrized by fixed points $P=P_1, P_2, P_3, P_4$ and given by  $A_i=\left\{x\in X \left \vert\,\, \lim_{t\to 0} t\cdot x = P_i \right\}\right.$, each of which has dimension given by the positive part of the tangent space at the fixed point. However if $a>1$ this is not longer true, as the only attracting set that is not a point is $A_1$ and the positive part of the tangent space at $P_1=P$ is only two dimensional thanks to the planar hypothesis. 
\end{example}

\begin{notation} 
Denote with $\Delta$ the positive quadrant $\NN\times \NN$ seen as a union of boxes. In the rest of the chapter we denote $\Gamma \vdash n$,  $(\Gamma_1, \Gamma_2) \vdash [n, n+1]$ (or the shorter version $\Gamma \vdash [n, n+1]$) and  $(\Gamma_1, \Gamma_2, \Gamma_3) \vdash [n, n+1, n+2]$ (or the shorter version $\Gamma \vdash [n, n+1, n+2]$) respectively Young diagram of size n, a couple of Young diagrams of size $n$ and $n+1$ that differ only in one box marked with a\, \ytableausetup{boxsize=0.9em, aligntableaux=bottom}\begin{ytableau} *(green)1\end{ytableau} , and a triple of Young diagrams of sizes $n,n+1$ and $n+2$ that differ, in order, in one box marked with a\,  \ytableausetup{boxsize=0.9em, aligntableaux=bottom}\begin{ytableau} *(green)1\end{ytableau} , and in one box marked with a\, \ytableausetup{boxsize=0.9em, aligntableaux=bottom}\begin{ytableau} *(cyan)2\end{ytableau} . We identify these with fixed points of, respectively,  $\Hi^{n}(\CC^2)$, $\Hi^{n,n+1}(\CC^2)$ or $\Hi^{n, n+1, n+2}(\CC^2)$. In this chapter all the objects we associated to the latter we can consider as associated to the diagrams themselves. For example we will often talk about the ideal $\Gamma$ to mean the ideal $I_{\Gamma}$. Also, for example, given $\Gamma\vdash n$ we will talk about its outer corners as the standard monomial generators of the associated monomial ideal $I_{\Gamma}$, and we will denote $B(\Gamma)$ the basis of the tangent space $T_{\Gamma} \Hi^{n}(\CC^2)$ constructed in Chapter 2, and so on. 
\end{notation}
\begin{definition}
\label{definitionposw}
Let $n\in \NN$ and $W$ be a wall for $n$ or $[n, n+1]$ or $[n, n+1, n+2]$. Then we define for $\Gamma\vdash n$ or $\Gamma\vdash [n,n+1]$ or $\Gamma\vdash [n, n+1, n+2]$ the following integers. 
\begin{align*}
\text{pos}_{W^+}\left(\Gamma \right)&:= \# \left\{ v\in B(\Gamma) \left\vert v \text{ is positive  wrt to the weights of } \TT_{W^+} \right. \right\} \\
\text{pos}_{W^-}\left( \Gamma \right)&:= \#\left\{ v\in B(\Gamma) \left\vert v \text{ is positive  wrt to the weights of } \TT_{W^-} \right.\right\} \\
\text{pos}_{W}\left(\Gamma \right)&:= \#\left\{ v\in B(\Gamma) \left\vert v \text{ is positive  wrt to the weights of } \TT_{W}\right.\right\}\\
\text{s}^+_{W}\left(\Gamma \right)&:= \#\left\{ v\in B(\Gamma) \left\vert v \text{ has zero weight wrt to} \TT_{W} \text{ and is positive wrt to } \TT_{W^+}\right.\right\}\\
\text{s}^-_{W}\left(\Gamma \right)&:= \#\left\{ v\in B(\Gamma) \left\vert v \text{  has zero weight wrt to } \TT_{W} \text{ and is positive wrt to } \TT_{W^-}\right.\right\}
\end{align*}
We also use the same operators to count the corresponding numbers of vectors of any subset of one of the $B(\Gamma)$.
\end{definition}

\begin{proposition}
\label{changingtorus}
Let $n\in \NN$ and let $W$ be a wall for $n$. Then for every $k \in \NN$ we have 
\[
\#\left\{ \Gamma \vdash n\left\vert \pos_{W^+}\left(\Gamma \right) = k  \right. \right\} = \# \left\{ \Gamma \vdash n\left\vert \pos_{W^-}\left(\Gamma \right) = k  \right. \right\} .
\]
In particular we have that the following two polynomials are the same: \[
\sum_{\Gamma \,\vdash\, n } q^{\pos_{\infty}\left(\Gamma\right)} = \sum_{\Gamma \,\vdash\, n } q^{\pos_{1^+}\left(\Gamma\right)}\,\,.
\]
\end{proposition}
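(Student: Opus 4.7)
The plan is to deduce both statements from the smoothness of $\Hi^n(\PP^2)$ and the topological invariance of the Borel-Moore Betti numbers of $\Hi^n(0)$. Since $\Hi^n(\CC^2)$ is smooth by Proposition \ref{hilbsmooth}, and $\Hi^n(\PP^2)$ is covered by opens isomorphic to $\Hi^n(\CC^2)$, the ambient projective variety $\Hi^n(\PP^2)$ is smooth.

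Both $\TT_{W^+}$ and $\TT_{W^-}$ are generic by the very definition of a wall, so each has the same isolated fixed points on $\Hi^n(\PP^2)$ as the full $\TT^2$. I would run the argument of Theorem \ref{ES} for both tori simultaneously: extend $\TT_{W^\pm}$ to a one-parameter action on $\PP^2$ with weights satisfying $W_0 < W_1 < W_2$ (possible for any $0 < w_1 < w_2$), so that $P_0 = [1:0:0]$ is a source. Applying Theorem \ref{Bialynicki-Birula} to $\Hi^n(\PP^2)$ produces an affine cell decomposition; the cells lying in $\Hi^n(0)$ are exactly those attracted to monomial ideals $\Gamma \vdash n$, and Theorem \ref{Bialynicki-Birula}(2) tells us that the cell attracted to $\Gamma$ has complex dimension $\pos_{W^\pm}(\Gamma)$. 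This yields two affine pavings of $\Hi^n(0)$, one for each choice of torus.

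By Proposition \ref{Fulton} applied to each paving, the Borel-Moore Betti number $b_{2k}(\Hi^n(0))$ equals the number of cells of complex dimension $k$, so
\begin{equation*}
\#\{\Gamma \vdash n : \pos_{W^+}(\Gamma) = k\} \;=\; b_{2k}\bigl(\Hi^n(0)\bigr) \;=\; \#\{\Gamma \vdash n : \pos_{W^-}(\Gamma) = k\},
\end{equation*}
which is the first assertion. Summing with weights $q^k$ yields the polynomial identity at a single wall; the specific equality between $\pos_\infty$ and $\pos_{1^+}$ follows by iterating the wall-crossing across the finitely many walls separating $\TT_\infty$ from $\TT_{1^+}$.

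The argument is short precisely because $\Hi^n(\CC^2)$ is smooth, so that Bialynicki-Birula applies with respect to any generic one-parameter subgroup and all cells are automatically affine. The main difficulty I expect later is that the analogous invariance statement for $\Hi^{n, n+1, n+2}(0)$ cannot be proved this way, since the ambient space is singular; in that setting the argument will have to be run stratum by stratum on the smooth Hilbert-Samuel strata provided by Proposition \ref{mainproposition}, with additional combinatorial bookkeeping to track how the affine cells recombine as the one-parameter subgroup crosses walls.
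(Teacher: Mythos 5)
Your proof is correct, and the paper itself concedes as much: in the remark following the statement the author writes that one ``could prove this fact simply by using smoothness of $\Hi^{n}(\CC^2)$,'' which is exactly your route. But the paper deliberately does not take it. Its actual proof (given in \ref{proofofchangingtorus}) is a refined wall-crossing argument: it defines combinatorial ``sliding'' transformations $T_{\alpha,\beta}$ on Young diagrams attached to the weight-$W$ tangent vectors, partitions $\{\Gamma\vdash n\}$ into orbits, identifies these orbits with the connected components $F_i$ of the $\TT_W$-fixed locus (Lemma \ref{orbitsandcomponents}), and then uses smoothness of the $F_i$ together with Poincar\'{e} duality \emph{on each component} to show that $\pos_W$ is constant on each orbit and that the distribution of $s^+_W$ matches that of $s^-_W$ orbit by orbit (Lemmas \ref{slidingandpos} and \ref{slidingandssymmetric}); the global identity then follows from $\pos_{W^\pm}(\Gamma)=\pos_W(\Gamma)+s^\pm_W(\Gamma)$. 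What your argument buys is brevity: global smoothness of $\Hi^n(\PP^2)$ plus topological invariance of the Borel--Moore Betti numbers settles the count in a few lines. What the paper's argument buys is a strictly finer, localized statement that survives when global smoothness fails: for $\Hi^{n,n+1,n+2}(0)$ the ambient space is singular, so one cannot invoke Bialynicki--Birula for an arbitrary generic subtorus, and it is precisely the orbit-level machinery (sliding transformations, smoothness of the fixed components proved combinatorially, duality within each component) that is transported to Proposition \ref{changingtorus3}. You correctly anticipate this obstruction in your closing paragraph, but note that the paper's resolution is not to run the argument ``stratum by stratum on the Hilbert--Samuel strata'' as you suggest; it is to redo the orbit/component analysis directly on $\{\Gamma\vdash[n,n+1,n+2]\}$, proving the key invariances combinatorially where smoothness of the ambient space is no longer available.
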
 
\begin{remark}
The statement is equivalent to the existence of a bijection of sets 
\[
\phi_W : \left\{ \Gamma \vdash n \right\} \to \left\{ \Gamma \vdash n \right\}
\]
such that $\pos_{W^+}(\Gamma) =\pos_{W^+}(\phi_W(\Gamma))$. We do not claim that there is a preferred such bijection, as Example \ref{orbitn10} shows.\\

\hspace{3em}As already said we could prove this fact simply by using smoothness of $\Hi^{n}(\CC^2)$. Smoothness, as we saw, implies that every generic one dimensional torus gives a decomposition of $\Hi^{n}(0)$ in affine cell whose dimensions are given by the positive parts of the tangent spaces at the attracting fixed points. \\

\hspace{3em}However we need to understand better what is going on in order to prove a similar statement for $\Hi^{n, n+1, n+2}(0)$. In particular we will introduce some combinatorial transformations on the set $\left\{ \Gamma \vdash n \right\}$ so to give a better description of $\phi_W$ and prove that a similar map, with similar properties exists for the set $\left\{ \Gamma \vdash [n, n+1, n+2] \right\} $. In \ref{proofofchangingtorus} we will give a more ad hoc proof of Proposition \ref{changingtorus}. Before introducing the definition we present an example. 
\end{remark}

\begin{example}
Consider the case $n=3$. There is only one wall $W=2$. We depict below the dimension of the positive parts (for the notation cfr. Definition \ref{definitionposw} ) at the fixed points with respect to the two different tori $\TT_{W^+} = \TT_{\infty}$ and $\TT_{W^-} = \TT_{1^+}$. 
\begin{equation*}
\begin{matrix}
\ytableausetup{boxsize=1.9em, aligntableaux=bottom}
\begin{ytableau} 
\,\\
\, &\none[]\\
\, &\none[]
\end{ytableau}
\qquad\qquad
&
\ytableausetup{boxsize=1.9em, aligntableaux=bottom}
\begin{ytableau} 
\none[]\\
\, &\none[]\\
\, &\,
\end{ytableau}
\qquad\qquad
&
\ytableausetup{boxsize=1.9em, aligntableaux=bottom}
\begin{ytableau} 
\none[]\\
\none[] &\none[]\\
\,&\, &\,
\end{ytableau}\\
\begin{matrix}
\text{pos}_{W^+}=2\\
\text{pos}_{W^1}=2
\end{matrix}
\qquad\qquad
&
\begin{matrix}
\text{pos}_{W^+}=1\\
\text{pos}_{W^-}=0
\end{matrix}
\qquad\qquad
&
\begin{matrix}
\text{pos}_{W^+}=0\\
\text{pos}_{W^-}=1
\end{matrix}

\end{matrix}
\end{equation*}

The two polynomials 
\[
q^2 +q+1=\sum_{\Gamma \,\vdash\, n } q^{\pos_{\infty}\left(\Gamma\right)} = \sum_{\Gamma \,\vdash\, n } q^{\pos_{1^+}\left(\Gamma\right)} = q^2+1+q
\]
are clearly the same. We will see that in passing the wall $W$ the last two fixed points "exchange roles". In more details: observe that there are two tangent vectors of weight $W$ that are:
\[
f_{x^2, y} \in B\left(\ytableausetup{boxsize=1em, aligntableaux=bottom}
\begin{ytableau} 
\, &\none[]\\
\,&\, 
\end{ytableau}\right)
\qquad \text{ and } \qquad 
f_{y, x^2} \in B\left(\ytableausetup{boxsize=1em, aligntableaux=bottom}
\begin{ytableau} 
\none[] &\none[]\\
\,&\,&\,
\end{ytableau}\right).
\]
These are the two vectors that are, respectively, positive only for $\TT_{W^+}$ and positive only for $\TT_{W^-}$. They are also the tangent vectors to the $\TT_{W}$ fixed $\PP^1$ that connects the corresponding two fixed points: see Example \ref{examplewall1}. The plan is the following: we will define combinatorial transformations of Young diagrams associated to these two vectors that keep track of how the dimensions of the positive parts change in crossing the wall. For example the transformation associated to the vector $f_{x^2, y}$ is $T_{x^2, y}$:

\begin{center}

\ytableausetup{boxsize=1.3em, aligntableaux=bottom}
\begin{ytableau} 
\none[]\\
\, &\none[]\\
\,&\, &\none[]\\
\none[\pos_W= 0, (s^+_W, s^-_W) = (1,0)]
\end{ytableau}
\begin{ytableau} 
\none[]&\none[]&\none[]&\none[]&\none[]\\
\none[]&\none[]&\none[]&\none[T_{x^2, y}]&\none[]&\none[]\\
\none[]&\none[]&\none&\none[\longrightarrow]&\none[]&\none[]\\
\none[]&\none[]&\none[]&\none[]&\none[]
\end{ytableau}
\begin{ytableau} 
\none[]\\
\,&\, &\,  &\none[]\\
\none[]&\none[\pos_W= 0, (s^+_W, s^-_W) = (0,1)]
\end{ytableau}
\end{center}  
\end{example}

\begin{definition}[Sliding boxes]
\label{sliding}
Let $n\in \NN$, $W$ be a wall for $n$ and $\Gamma\vdash n$. For every $\ff \in B(\Gamma)$ that has zero weight with respect to $\TT_{W}$ we want to define a new partition $T_{\alpha, \beta}(\Gamma)$ of $n$ obtained from $\Gamma$ by sliding down (or up) some boxes in the opposite direction of the translation $\alpha \mapsto \beta$. 
\begin{example}[$W=\frac{4}{2}$]
$\,$
\vspace{1em}

\begin{minipage}{0.24\textwidth}
\begin{center}
\ytableausetup{boxsize=1.3em, aligntableaux=bottom}
\begin{ytableau} 
\beta \\
\, &\, &\,  \\
\,&\, &\, &\, &\none[\alpha] \\
\end{ytableau}
\end{center}k
\end{minipage} 
\begin{minipage}{0.08\textwidth}
\begin{center}
\ytableausetup{boxsize=1.4em, aligntableaux=bottom}
\begin{ytableau} 
\none[]\\
\none[T_{\alpha, \beta}]\\
\none[\longrightarrow]
\\\end{ytableau}
\end{center} 
\end{minipage} 
\begin{minipage}{0.24\textwidth}
\begin{center}
\ytableausetup{boxsize=1.3em, aligntableaux=bottom}
\begin{ytableau} 
\none[]\\
\, &\, &\,  \\
\,&\, &\, &\, &\, \\
\end{ytableau}
\end{center} 
\end{minipage} 
\begin{minipage}{0.41\textwidth}
\small{Here only one of the boxes of $\Gamma$ is involved and the sliding is easily described. In other cases we need to slide in many steps. See Example \ref{examplesliding2}} below.
\end{minipage}
\end{example}
Recall that by definition $\ff \in \text{Hom}_R(I, \bigslant{R}{I})$ with $I = I_{\Gamma}$ and $\alpha$ is a minimal generator for $I$ and $\beta \in \Gamma$ is such that either $\beta \in P_{\alpha}$ or $\beta \in Q_{\alpha}$. The Definition is at \ref{definitionf}.  We consider $\ff$ as a set theoretic map of boxes of $\Delta$. 

\hspace{3em}Call $l$ the horizontal distance in number of boxes between $\beta$ and $\alpha$, and $m$ the vertical distance in number of boxes between $\alpha$ and $\beta$, where a negative distance means that we move either to the left or downwards, so that in Laurent monomials $\frac{y^m}{x^l} \alpha = \beta$ and in terms of boxes $\beta = \alpha + (-l, m)$. The choice is such that if $\beta \in P_{\alpha}$ then $l>0$ and $m>0$ whereas if $\beta \in Q_{\alpha}$ $l<0$ and $m<0$.   Define 
\[
B^1 = \left\{\gamma \in \Gamma \left\vert\,\, \exists \,\,\delta \in \Delta \setminus \Gamma \text{ with } \ff(\delta) = \gamma \right.  \right\} \quad \text{and } \quad B^1=\bigsqcup_{i} B^1_i
\]
where the $B^1_i$'s are the connected components of $B^1$, and two boxes are connected if they share a side or a corner. We start numbering from top to bottom if $\beta \in P_{\alpha}$ and from bottom to top if $\beta \in Q_{\alpha}$. Call $\alpha^1$ the generators of $I_{\Gamma}$ such that $\ff(\alpha^1 ) \in B^1_1$ in the lowest row of $B^1_1$ if $\beta \in P_{\alpha}$, resp. in the highest row of $B^1_1$ if $\beta \in Q_{\alpha}$. Call $\beta^1= \ff(\alpha^1)$.

\hspace{3em}We define $T_{\alpha, \beta}(\Gamma)$ by performing different steps. Suppose now, for the rest of the definition that $\beta \in P_{\alpha}$. The case $\beta \in Q_{\alpha}$ is completely analogous, in fact one can pass to the transpose Young diagram, perform $T_{\alpha, \beta}$ there, and re-transpose everything to get the result. 

At the step $1$ we slide all of $B^1_1$ by $(l, -m)$. Call what we obtain 
\[
T_{\alpha, \beta}^1(\Gamma) = \Gamma\setminus B^1_1 \sqcup \left\{\gamma+ (l, -m) \vert \gamma \in B^1_1 \right\}.
\]
If $T_{\alpha, \beta}^1(\Gamma)$ is Young diagram we stop and we define it $T_{\alpha, \beta}(\Gamma)$. Otherwise we look at all $\gamma \in B^1_1 +(l, -m)$ that are in a row strictly above $\beta^1+(l, -m)$ and such that $\gamma +(l, -m)$ is not already in $B^1_1+(l,-m)$ and slide all of these by adding again $(l,-m)$. We call what we obtain $T_{\alpha, \beta}^2(\Gamma)$, if it is a Young diagram we stop, otherwise we continue until all the elements of $B_1$ have been slid down the maximum, but always staying above $\alpha^1$. Call this step $T_{\alpha, \beta}^{s_1}(\Gamma)$. If $T_{\alpha, \beta}^{s_1}(\Gamma)$ is a Young diagram, we stop. Otherwise we restart from the generator immediately below of $\alpha^1$ call it $\alpha^2$ and call $\beta^2 = \ff(\alpha^2)$. We define: 
\[
B^2 = \left\{\gamma \in T_{\alpha, \beta}^1(\Gamma) \left\vert\,\,\deg_y(\gamma)<\deg_y (\beta_1)\text{ and } \exists \,\,\delta \in \Delta \setminus T_{\alpha, \beta}^1(\Gamma) \text{ with } \ff(\delta) = \gamma \right.  \right\} \text{, } \,\, B^2=\bigsqcup_{i } B^2_i
\]
 where $B^2_i$ are the connected components. We perform all the previous steps for $B^2_1$, stopping the first time we get a Young diagram. Again if we do not get a Young diagram when all the boxes above $\beta^2$ have been slid down the most possible, we define $\alpha^3$ and $\beta^3$ and $B^3$ as in the step $2$ and keep going until you do reach a Young diagram. Lemma \ref{slidingexists} makes sure that this procedure actually creates a Young diagram $T_{\alpha, \beta}(\Gamma)$.  
\begin{example}[$W=\frac{4}{2}$]
\label{examplesliding2}

A more complex example. Here we need to perform few steps before getting to a Young diagram. 

\vspace{1em}
\begin{minipage}{0.4\textwidth}
\begin{center}
\ytableausetup{boxsize=1em, aligntableaux=bottom}
\begin{ytableau} 
*(yellow)\, \\
*(yellow) &*(yellow)\, &*(yellow)\, \\
*(yellow)\bullet &*(yellow) \,&*(yellow)\, &*(yellow)\, \\
\,&*(yellow) \star&*(yellow)\, &*(yellow)\, \\
\,& \,&\, &\,&\none[\bullet] \\
\,&\,& \,&\, &\,&\none[\star] \\
\,&\,& \,&\, &\, &\, &\, &\, &*(yellow)\,&*(yellow) \,\\
\,&\,& \,&\, &\, &\, &\, &\, &\,&*(yellow) \\
\,&\,& \,&\, &\, &\, &\, &\, &\,&\, &\,&\, \\
\,&\, &\, &\, &\, &\, &\,&\, &\,&\, &\,&\,&\,\\
\end{ytableau}

\small{Here $(\alpha, \beta)=(\bullet, \bullet)$. $B^1$ is in yellow, $B^1_1$ is the upper component of it. $(\alpha^1, \beta^1)=(\star, \star)$. }
\end{center}
\end{minipage} 
\begin{minipage}{0.08\textwidth}
\begin{center}
\ytableausetup{boxsize=1.4em, aligntableaux=bottom}
\begin{ytableau} 
\none[]\\
\none[T^1_{\bullet \bullet}]\\
\none[\longrightarrow]
\\\end{ytableau}
\end{center} 
\end{minipage} 
\begin{minipage}{0.4\textwidth}
\begin{center}
\ytableausetup{boxsize=1em, aligntableaux=bottom}
\begin{ytableau} 
\none[] \\
\none[] &\none[] &\none[] \\
\none[]&\none[]&\none[] &\none[] &*(orange)\,\\
\,&\none[] &\none[] &\none[] &*(orange) &*(orange)\, &*(orange)\,\\
\,& \,&\, &\,&*(yellow) &*(yellow) \,&*(yellow)\, &*(yellow)\, \\
\,&\,& \,&\, &\,&*(yellow) &*(yellow)\, &*(yellow)\, \\
\,&\,& \,&\, &\, &\, &\, &\, &*(yellow)\,&*(yellow) \,\\
\,&\,& \,&\, &\, &\, &\, &\, &\,&*(yellow) \\
\,&\,& \,&\, &\, &\, &\, &\, &\,&\, &\,&\, \\
\,&\, &\, &\, &\, &\, &\,&\, &\,&\, &\,&\,&\,\\
\end{ytableau}

\small{$T^1_{\bullet, \bullet}(\Gamma)$ is not a Y diagram. To do the next step we need to slide down the orange part.}
\end{center}
\end{minipage} 
\begin{minipage}{0.08\textwidth}
\begin{center}
\ytableausetup{boxsize=1.4em, aligntableaux=bottom}
\begin{ytableau} 
\none[]\\
\none[T^2_{\bullet \bullet}]\\
\none[\longrightarrow]
\\\end{ytableau}
\end{center} 
\end{minipage}

\vspace{1em}

\begin{minipage}{0.4\textwidth}
\begin{center}
\ytableausetup{boxsize=1em, aligntableaux=bottom}
\begin{ytableau} 
\,&\none[] &\none[] &\none[] \\
\,& \,&\, &\,&*(white) &*(white) \,&*(cyan)\, &*(cyan)\,&*(cyan)\, \\
\,&\,& \,&\, &\,&*(white) &*(cyan)\beta^2 &*(cyan)\, &*(cyan) &*(cyan)\, &*(cyan)\,\\
\,&\,& \,&\, &\, &\, &\, &\, &*(white)\,&*(white) \,\\
\,&\,& \,&\, &\, &\, &\, &\, &\,&*(white)&\none[\alpha^2] \\
\,&\,& \,&\, &\, &\, &\, &\, &\,&\, &\,&\, \\
\,&\, &\, &\, &\, &\, &\,&\, &\,&\, &\,&\,&\,\\
\end{ytableau}

\small{This is $T^2_{\bullet \bullet}(\Gamma)=T^{s_1}_{\bullet \bullet}(\Gamma)$. Since it is still not a Y diagram we need to define $B^2$, in light blue, and slide $\beta^2$ to $\alpha^2$ .}
\end{center}
\end{minipage} 
\begin{minipage}{0.08\textwidth}
\begin{center}
\ytableausetup{boxsize=1.4em, aligntableaux=bottom}
\begin{ytableau} 
\none[]\\
\none[T^{s_1+1}_{\bullet \bullet}]\\
\none[\longrightarrow]
\\
\none[]\\
\none[]\\
\end{ytableau}
\end{center} 
\end{minipage} 
\begin{minipage}{0.4\textwidth}
\begin{center}
\ytableausetup{boxsize=1em, aligntableaux=bottom}
\begin{ytableau} 
\,&\none[] &\none[] &\none[] \\
\,& \,&\, &\,&*(white) &*(white) \,\\
\,&\,& \,&\, &\,&*(white) \\
\,&\,& \,&\, &\, &\, &\, &\, &*(violet)\beta^3 &*(violet) \, &*(violet)\, &*(violet)\,&*(violet)\,\\
\,&\,& \,&\, &\, &\, &\, &\, &*(white)&*(white)&*(white)&*(white)\, &*(white) &*(white)\, &*(white)\, \\
\,&\,& \,&\, &\, &\, &\, &\, &\,&\, &\,&\,&\none[\alpha^3] \\
\,&\, &\, &\, &\, &\, &\,&\, &\,&\, &\,&\,&\,\\
\end{ytableau}

\small{This is $T^4_{\bullet \bullet}(\Gamma)=T^{s_2}_{\bullet \bullet}(\Gamma)$. Since it is still not a Y diagram we need to define $B^3$, in violet, and  slide $\beta^3$ to $\alpha^3$ .}
\end{center}
\end{minipage} 
\begin{minipage}{0.08\textwidth}
\begin{center}
\ytableausetup{boxsize=1.4em, aligntableaux=bottom}
\begin{ytableau} 
\none[]\\
\none[\dots ]\\
\none[\longrightarrow]\\
\none[]\\
\none[]\\\end{ytableau}
\end{center} 
\end{minipage} 

\vspace{1em}

\begin{center}
\ytableausetup{boxsize=1em, aligntableaux=bottom}
\begin{ytableau} 
\,&\none[] &\none[] &\none[] \\
\,& \,&\, &\,&*(yellow) &*(yellow) \,\\
\,&\,& \,&\, &\,&*(yellow) \\
\,&\,& \,&\, &\, &\, &\, &\, \\
\,&\,& \,&\, &\, &\, &\, &\,&\,&\none[\diamond]  \\
\,&\,& \,&\, &\, &\, &\, &\, &\,&\, &\,&\,&*(yellow) &*(yellow) \, &*(yellow)\, &*(yellow)\,&*(yellow)\,\\
\,&\, &\, &\, &\, &\, &\,&\, &\,&\, &\,&\,&\,&*(yellow)\diamond &*(yellow)&*(yellow)&*(yellow)\, &*(yellow) &*(yellow)\,  \\
\end{ytableau}

\small{This is $T^6_{\bullet \bullet}(\Gamma)=T^{s_4}_{\bullet \bullet}(\Gamma)$ the last step. It is a Young diagram. We put in yellow $B^1$ for $(\alpha', \beta')= (\diamond, \diamond)$. If we were to perform $T_{\alpha', \beta'}$ we would get bet to the original Young diagram $\Gamma$. See Lemma \ref{inversesliding} below. \\
One can check the affirmations of Lemmas \ref{slidingandpos} and \ref{slidingandssymmetric} below in this example and appreciate their geometrical and free of indexes proof. }
\end{center}
\end{example}
\textbf{Definition \ref{sliding} }[Sliding boxes continued]
Suppose that $\Gamma\vdash n$ and $\beta\in P_{\alpha}\cup Q_{\alpha}$, as above, are fixed. Define $\Delta_{n}\subset \Delta$ the first $n\times n$ boxes in $\Delta$, i.e. where all the Young diagrams $\Gamma \vdash n$ live. It is convenient to see $T_{\alpha, \beta}$ as a bijection of $\Delta_{n+2}$ to itself. To this end we define $T_{\alpha, \beta}$ on $\Delta_{n+2} \setminus \Gamma$ referring to the same steps described in the first part of the definition: if $\delta \in \Delta_{n+2} \setminus \Gamma$ and at any point $t$ there is $\gamma \in T^t_{\alpha, \beta}(\Gamma)$ such that $T^{t+1}_{\alpha, \beta}(\gamma)=\delta$ and $t$ was not the last step in the definition of $T_{\alpha, \beta}$, then we pose $T^{t+1}_{\alpha, \beta}(\delta)=\gamma$, otherwise we leave $\delta$ invariant i.e. $T^{t+1}_{\alpha, \beta}(\delta)=\delta$. 
\end{definition}
\begin{notation}
Given $\Gamma \vdash n$ and $v \in B(\Gamma)$ if $v=\ff$ we will denote, according to convenience and context, the transformation defined with any of the following notations $T_{\alpha, \beta} = T=T_{\ff}=T_v$, and we will say that it a transformation \emph{of weight } $W$. We collect all such transformations in a set that we denote $\Theta_W$. Then we subdivide $\{\Gamma \vdash n\}$ in orbits for the action of $\Theta_W$. 
\begin{equation}
\label{slidingorbits}
\left\{ \Gamma\, \vdash\, n \right\} \quad =\quad \bigsqcup _{i=1}\,\,\,\mathcal{O}_i.
\end{equation}
\end{notation}

\begin{lemma}
\label{slidingexists}
With the notations as in Definition \ref{sliding}, $T^{u}_{\alpha, \beta}(\Gamma)$, the last step of the procedure, is a Young diagram, so that the definition is well posed. 
\end{lemma}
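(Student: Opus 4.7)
The plan is to establish three properties in sequence: preservation of size, termination, and the Young diagram property of the terminal configuration.

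The zero-weight hypothesis on $f_{\alpha,\beta}$ with respect to $\TT_W$ is equivalent to the relation $l\,w_1 = m\,w_2$ on the weights. Consequently the sliding displacement $(l,-m)$ preserves the $\TT_W$-weight $u\,w_1 + v\,w_2$ of every box it moves, since $l\,w_1 + (-m)\,w_2 = 0$. Because each step of the procedure is a bijection between a subset of boxes and its translate, the cardinality $|T^t_{\alpha,\beta}(\Gamma)|=n$ is constant and the multiset of $\TT_W$-weights of the occupied boxes is an invariant of the entire procedure. For termination, suppose first $\beta \in P_\alpha$, so $l,m > 0$; the monovariant $\Phi(X) := \sum_{\gamma \in X} \gamma_y$ drops by $m$ times the size of each sliding batch and is bounded below by $0$ on subsets of $\Delta_{n+2}$, forcing the procedure to halt in finitely many steps. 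The case $\beta \in Q_\alpha$ is symmetric (use $\sum_\gamma \gamma_x$, or transpose $\Gamma$ as noted in the definition).

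The substantive point is that the terminal configuration is a Young diagram. I would prove the following invariant by induction on the step index $t$: every cell $(u,v) \in T^t_{\alpha,\beta}(\Gamma)$ whose neighbor $(u-1,v)$ or $(u,v-1)$ is absent (and has nonnegative coordinates) lies in one of the components $B^{j}_i$ scheduled to slide in a subsequent step. The invariant holds at $t=0$ because $\Gamma$ is itself a Young diagram and no offending cells exist. For the inductive step, after sliding a batch $B^{j}_i$ by $(l,-m)$, any newly created "overhang" cell must have had its missing lower or left neighbor occupy a position that was previously the image $f_{\alpha,\beta}(\delta)$ of some $\delta$ outside the configuration; the $R$-module structure of $f_{\alpha,\beta}$ then forces such an overhang cell either to lie in the next scheduled component $B^{j}_{i+1}$, or (once we exhaust a given $B^j$) to be captured by the freshly recomputed $B^{j+1}$, with anchor $\alpha^{j+1}$ well-defined by the rule in the definition. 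When no offending cells remain and no scheduled batch is left, the configuration is a Young diagram by definition and the procedure halts.

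The main obstacle is making the "$R$-module forcing" in the inductive step airtight: one must verify that the relation between an overhang cell in $T^t_{\alpha,\beta}(\Gamma)$ and the original map $f_{\alpha,\beta}$ is faithfully preserved by the intermediate slidings, so that the connected-component recipe applied to the modified configuration indeed picks up every obstruction to the Young condition. A cleaner way to encapsulate this is probably to reinterpret $T_{\alpha,\beta}(\Gamma)$ as the unique Young diagram whose box content along each antidiagonal of $\TT_W$-weight agrees with that of $\Gamma$ but with the roles of $\alpha$ and $\beta$ exchanged; verifying that the explicit step-by-step definition realizes this global transformation then reduces to the case-analysis outlined above, split according to whether $\beta \in P_\alpha$ or $\beta \in Q_\alpha$.
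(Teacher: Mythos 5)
Your reduction of the problem is sound and agrees with the paper's: the box count is preserved at every step, termination is clear (your monovariant $\sum_\gamma \gamma_y$ works, though the procedure is in any case bounded by the finitely many boxes of $\Delta_{n+2}$), and everything hinges on showing that the terminal configuration has no ``overhang,'' i.e.\ no box $(i,j)$ present with $(i-1,j)$ or $(i,j-1)$ absent. But that last point is precisely where your argument stops short, and you say so yourself: the inductive invariant ``every offending cell lies in a batch scheduled to slide later'' is exactly the content of the lemma, and the ``$R$-module forcing'' needed to propagate it through the intermediate slidings is asserted, not proved. The fallback you offer does not rescue this: a Young diagram is \emph{not} uniquely determined by its box content along the $\TT_W$-antidiagonals. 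Already for $n=3$ and $W=2$ the diagrams of $(y,x^3)$ (boxes $(0,0),(1,0),(2,0)$) and of $\mathfrak{m}^2$ (boxes $(0,0),(1,0),(0,1)$) have the same weight multiset $\{0,w_1,2w_1\}$, so ``the unique Young diagram whose antidiagonal content agrees with that of $\Gamma$ with $\alpha$ and $\beta$ exchanged'' is not well defined.

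The paper closes the gap by a different, terminal-step contradiction rather than a step-by-step invariant. Suppose at the last step $T^u(\Gamma)$ there is $(i,j)$ present with $(i-1,j)$ absent (the case $\beta\in P_\alpha$; the other is transposed). Tracing $(i,j)$ back to the batch it came from, the fact that $\ff$ is an $R$-homomorphism (equivalently $\beta\in P_\alpha\cup Q_\alpha$) forces the box $(i-l-1,j+m)$ to have been present at some earlier step; and since $u$ is the \emph{last} step there is no attracting exterior corner left, so $(i-l-1,j+m)$ cannot have slid away and must still be present in $T^u(\Gamma)$. Repeating the argument with $(i-2,j),(i-3,j),\dots$ yields $(i-l-N,j+m)\in T^u(\Gamma)$ for every $N\in\NN$, contradicting finiteness. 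If you want to salvage your forward induction you would need to prove a statement of exactly this strength at each intermediate step; the paper's observation is that it suffices to prove it once, at the end, where the hypothesis ``no attracting exterior corners remain'' is available and does the real work.
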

\begin{proof}
The fact that $T_{\alpha, \beta}$ is well defined is actually equivalent to the fact that $\ff$ is an $R=\CC[x, y]$ homomorphism $\ff: I_{\Gamma} \to \bigslant{R}{I_{\Gamma}}$, so it is equivalent to the fact that $\beta \in P_{\alpha} \cup Q_{\alpha}$. \\

\hspace{3em}We can suppose that $\beta \in P_{\alpha}$ as the other case is completely analogous.

\hspace{3em}Call $T=T_{\alpha, \beta}$, and let $m,l\in \NN$ be such that  $\alpha= (-l, m)+\beta$. At any step $k$ of the procedure described in Definition \ref{sliding} we call a box $(s,t)$ outside $T^k(\Gamma)$ an \emph{exterior corner} if $(s, t-1)$ and $(s-1, t) \in T^k(\Gamma)$. Of course at each step the total number of boxes is still $n$, and we have something that is connected. So we only need to prove that whenever $(i, j) \in T^{u}(\Gamma)$ then $(i-1, j)\text{ and } (i, j-1) \in T^{u}(\Gamma)$. Suppose now by contradiction that there is $(i,j)\in T^{u}(\Gamma)$ but, for example $(i-1, j)\notin T^{u}(\Gamma)$. Observe that this force $(i-l, j+m)\in T^{u-s}(\Gamma)$ for some $s>1$. If $(i-l-1, j+m)\notin T^{u-s}(\Gamma)$ for any $s>0$ we find the contradiction, since then we would have (a multiple of) $f_{\alpha, \beta}((i-1, j))= 0$ but (a multiple of) $f_{\alpha, \beta}((i, j)) \neq 0$ which is absurd since $\ff$ is an $R$ homomorphism. Then, indeed, for an $s'>0$, $(i-l-1, j+m)\in T^{u-s'}(\Gamma)$. Since by contradiction we supposed $(i-1, j) \notin T^u(\Gamma)$ we have that either $(i-l-1, j+m)$ is still there at the last step $T^u$ or it must slide to $(i-1, j)$ and then to $(i-l-1, j+m)$. But the latter scenario violates the hypothesis that $T^u$ was the last step because it means that there is still an attracting exterior corner to the right of $(i,j)$. Thus at the step $T^{u}$, the box $(i-l-1, j+m)$ is still there. Then lets look at $(i-2, j)$. It cannot be in $T^{u}(\Gamma)$ otherwise $(i-1,j)$ would have been an attracting exterior corner. Repeating the above argument we find that $(i-l-1, j+m)\in T^{u}(\Gamma)$. Then we can keep going, and we find that $(i-l-N, j+m)\in T^{u}(\Gamma)$ for every $N\in \NN$, that is absurd. 
\end{proof}
\begin{example}
$\,$
\begin{center}
\ytableausetup{boxsize=1em, aligntableaux=bottom}
\begin{ytableau} 
\,\\
\,&\,\\
\,&\bullet \\
\,&\, &\, &\none[\bullet] \\
\end{ytableau}\qquad\qquad
\ytableausetup{boxsize=1em, aligntableaux=bottom}
\begin{ytableau} 
\,\\
\,\\
\, &\none[]&\none[]&\,\\
\,&\, &\, &\,  \\
\end{ytableau}

\small{Suppose we want to mimic the procedure of Definition \ref{sliding} for the couple $(\bullet, \bullet)$ that does not represent an $f_{\bullet, \bullet} \in B(I_\Gamma)$. The result is clearly not a Young diagram, as explained in the proof.}
\end{center}
\end{example}

\begin{lemma}
\label{inversesliding}
Given $n\in \NN$, $W$ a wall for $n$ and $\Gamma \vdash n$, every sliding $T_{\alpha, \beta}$ with $\ff \in B(\Gamma)$ of weight $W$ has an inverse of the same form, meaning that there exists $f_{\alpha', \beta'} \in B(T_{\alpha, \beta}(\Gamma))$ such that 
\[
T_{\alpha', \beta'}\left(T_{\alpha, \beta}\left(\Gamma\right)\right) = \Gamma.
\]
If $\beta \in P_\alpha$, then $\beta' \in Q_{\alpha'}$ and vice-versa. The weight of the inverse transformation is still $W$. 
\end{lemma}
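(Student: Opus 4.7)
The plan is to construct the inverse sliding explicitly by running the procedure of Definition \ref{sliding} in reverse. Assume $\beta\in P_{\alpha}$ (the case $\beta\in Q_{\alpha}$ follows by transposing Young diagrams). Write $\beta-\alpha=(-l,m)$ with $l,m>0$, so that $T_{\alpha,\beta}$ translates certain boxes of $\Gamma$ by the vector $(l,-m)$, and write $\Gamma':=T_{\alpha,\beta}(\Gamma)$. Let $B=\bigsqcup_{k}B^{k}_{1}\subset\Gamma$ denote the union of all the connected components actually slid in the construction, so that $\Gamma'=(\Gamma\setminus B)\cup(B+(l,-m))$. Working on the bijection of $\Delta_{n+2}$ extended at the end of Definition \ref{sliding}, the inverse set-theoretic map exists; the content of the lemma is that it is itself of the form $T_{\alpha',\beta'}$ for some $f_{\alpha',\beta'}\in B(\Gamma')$ of the same $\TT_W$-weight, and with $\beta'\in Q_{\alpha'}$.

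First I would locate the candidates $(\alpha',\beta')$. Let $\alpha'$ be the external corner of $\Gamma'$ at which the very first step of the reverse sliding should begin: concretely, the upper-left endpoint of the lowest row of $B+(l,-m)$ in $\Gamma'$, i.e.\ the box that plays the role of the starting generator when one reads the picture of $\Gamma'$ upside-down relative to how $\Gamma$ was read. Set $\beta':=\alpha'+(l,-m)$, which is $l$ boxes to the right and $m$ boxes below $\alpha'$, and which manifestly lies in $\Gamma'$ (it is one of the boxes that were just slid there). I would then check the three required properties: (i) $\alpha'$ is a minimal monomial generator of $I_{\Gamma'}$, because its neighbours to the left and below in $\Delta$ belong to $\Gamma'$ by construction (these are exactly the two boxes vacated at the last step of $T_{\alpha,\beta}$); (ii) $\beta'\in Q_{\alpha'}$ in the sense of Definition \ref{defPalpha}, i.e.\ $x^{q'_{\alpha'}}\beta'\in I_{\Gamma'}$, which follows because the vertical $m$-drop from $\alpha'$ to $\beta'$ realizes precisely the smallest height difference occurring at the corner $\alpha'$ of $\Gamma'$---this is the mirror of the argument, also used in Lemma \ref{slidingexists}, that forces $T_{\alpha,\beta}(\Gamma)$ to be a Young diagram at termination; (iii) the $\TT_W$-weight of $f_{\alpha',\beta'}$ is zero, because the monomial ratio $\beta'/\alpha'=x^{l}y^{-m}$ is the reciprocal of $\beta/\alpha=x^{-l}y^{m}$, so its weight is the negative of the (zero) weight of $f_{\alpha,\beta}$, hence also zero.

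Finally I would verify that $T_{\alpha',\beta'}(\Gamma')=\Gamma$ by matching the procedure step by step with the reverse of the one producing $\Gamma'$. At each stage of Definition \ref{sliding}, the block to be translated and the direction of translation are determined uniquely: one takes the connected component $B^{k}_{1}$ closest to the current active generator and applies the unique rigid translation $\pm(l,-m)$ that keeps the intermediate figure compatible with a Young diagram at the end. Since both $T_{\alpha,\beta}$ (applied to $\Gamma$) and $T_{\alpha',\beta'}$ (applied to $\Gamma'$) follow the same deterministic recipe but in opposite directions, the composition acts as the identity on $\Delta_{n+2}$, so that $T_{\alpha',\beta'}(\Gamma')=\Gamma$. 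The main obstacle in writing this rigorously is the bookkeeping in item (ii): one must check that the box chosen as $\alpha'$ is truly an external corner of $\Gamma'$, and that the pieces reached by successive steps of $T_{\alpha',\beta'}$ are precisely the translated blocks $B^{k}_{1}+(l,-m)$ in reverse order of $k$. Once this correspondence of blocks is established---most easily by appealing to the connectedness property used in Lemma \ref{slidingexists} and then running it symmetrically on $\Gamma'$---the statement is immediate, and the preservation of weight follows from the computation in (iii) above.
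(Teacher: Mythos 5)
Your overall strategy is the one the paper uses: the inverse is the sliding attached to the new external corner that the forward procedure creates, the translation vector is negated (whence $P_{\alpha}\leftrightarrow Q_{\alpha'}$ and the preservation of the weight, exactly as in your item (iii)), and the two deterministic procedures undo each other block by block. However, your explicit identification of $(\alpha',\beta')$ does not work as written. You place $\alpha'$ in \emph{the lowest row of} $B+(l,-m)$ and then claim that $\beta'=\alpha'+(l,-m)$, which lies $m$ rows further down, is ``one of the boxes that were just slid there'': a box $m$ rows below the lowest row of the slid blocks cannot itself be a slid box. Likewise, in (i) you assert that the left and lower neighbours of $\alpha'$ lie in $\Gamma'$ because they are ``the two boxes vacated at the last step,'' but a vacated box is by definition \emph{not} in $\Gamma'$. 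The paper avoids this bookkeeping by naming the pair directly from the forward procedure: if $\alpha^u$ is the generator used in the last step of Definition \ref{sliding} and $\beta^u=f_{\alpha,\beta}(\alpha^u)$, then after the final slide the position $\beta^u$ is vacated and becomes an external corner of $T_{\alpha,\beta}(\Gamma)$ while the position $\alpha^u$ is now occupied, so one takes $\alpha'=\beta^u$ and $\beta'=\alpha^u$; these differ by exactly $(l,-m)$, giving $\beta'\in Q_{\alpha'}$ and the same weight. With that correction your remaining points (the weight computation and the step-by-step reversal of the blocks) go through and coincide with the paper's short argument.
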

\begin{proof}
The proof is immediate from the construction of $T_{\alpha, \beta}\left(\Gamma\right)$: suppose $\alpha_u$ is, in Definition \ref{sliding}, the last step we need to complete to obtain a Young diagram, then it is sufficient to take $\alpha'=\beta^u = \ff(\alpha^u)$ and $\beta'= \alpha^u$.
\end{proof}
\begin{observation}
If $\ff \in B(\Gamma)$ is of weight $W$, then it contributes to $s^+_{W}(\Gamma)$ (resp. to $s^+_{W}(\Gamma)$ ) if and only if $\beta \in P_{\alpha}$ (resp.  $\beta \in Q_{\alpha}$). 
\end{observation}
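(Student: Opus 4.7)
The statement is an elementary computation of the $\TT^2$-weight of $f_{\alpha,\beta}$ and how it perturbs when we shift the one-parameter subgroup across the wall $W$. Write $\alpha=x^{c}y^{d}$ and $\beta=x^{a}y^{b}$, so that by the construction in Lemma \ref{definitionf} the vector $f_{\alpha,\beta}$ is of pure weight equal to the weight of the Laurent monomial $\beta/\alpha=x^{a-c}y^{b-d}$. Under the torus $\TT_W$ with weights $w_{1}<w_{2}$ and $w_2/w_1=W$, this weight is
\[
\mathrm{wt}_{W}(f_{\alpha,\beta})=(a-c)w_{1}+(b-d)w_{2}=w_{1}\bigl[(a-c)+(b-d)W\bigr].
\]

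The first step is simply to translate the hypothesis ``$f_{\alpha,\beta}$ has weight $0$ under $\TT_{W}$'' into the linear relation $(a-c)+(b-d)W=0$. To check the sign under the nearby tori $\TT_{W^{\pm}}$, I would perturb $W\rightsquigarrow W+\varepsilon$ with $\varepsilon$ a small positive (respectively negative) rational: under this perturbation the weight becomes $w_{1}(b-d)\varepsilon$, so when $\mathrm{wt}_{W}=0$ the sign of $\mathrm{wt}_{W^{+}}$ (resp.\ $\mathrm{wt}_{W^{-}}$) equals the sign of $b-d$ (resp.\ the opposite sign of $b-d$).

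The final step is to read off from Definition \ref{defPalpha} that $P_{\alpha}$ consists of boxes strictly above and to the left of $\alpha$, i.e.\ with $b-d>0$ and $a-c<0$, while $Q_{\alpha}$ consists of boxes strictly below and to the right of $\alpha$, i.e.\ with $b-d<0$ and $a-c>0$. Combined with the sign computation of the previous paragraph, this immediately gives: $\beta\in P_{\alpha}$ forces $\mathrm{wt}_{W^{+}}(f_{\alpha,\beta})>0$ so $f_{\alpha,\beta}$ contributes to $s^{+}_{W}(\Gamma)$, while $\beta\in Q_{\alpha}$ forces $\mathrm{wt}_{W^{-}}(f_{\alpha,\beta})>0$ so $f_{\alpha,\beta}$ contributes to $s^{-}_{W}(\Gamma)$. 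Since $P_{\alpha}$ and $Q_{\alpha}$ are disjoint and cover the set of boxes indexing the weight basis $B(\Gamma)$ at $\alpha$, the equivalences are both ways, completing the observation. There is no real obstacle here; the only thing to be mildly careful about is the convention that $\TT_{W^{+}}$ corresponds to a \emph{larger} ratio $w_{2}/w_{1}$, which is exactly what makes the $y$-degree difference $b-d$ govern the sign.
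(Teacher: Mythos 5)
Your proof is correct and is precisely the direct weight computation that the paper leaves implicit (the observation is stated there without proof): writing $\mathrm{wt}_{W'}(f_{\alpha,\beta})=w_1[(a-c)+(b-d)W']$ and differentiating in $W'$ at the wall is exactly the intended argument. One small imprecision: $P_{\alpha}$ may contain boxes on the \emph{same} row as $\alpha$ (and $Q_{\alpha}$ boxes in the same column), so ``$b-d>0$'' is not read off from the definition of $P_{\alpha}$ alone; however, the zero-weight relation $(a-c)+(b-d)W=0$ with $W$ finite and $\beta\neq\alpha$ forces $b-d\neq 0$, so your sign analysis and the two-way equivalence go through unchanged.
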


\hspace{3em}We are now ready to prove the key properties of the transformations we defined on the set of Young diagrams. These are combinatorial properties related to the quantities defined in \ref{definitionposw} even though we will give an easy geometric proof for them. 
\begin{lemma}
\label{slidingandpos}
Let $n\in \NN$, $W$ be a wall for $n$ and $\Gamma \vdash n$. Let $T$ be a transformation of $\Gamma$ with weight $W$. Then the following two facts hold
\begin{itemize}
\item[(1)]  $\pos_W(\Gamma) = \pos_W\left(T(\Gamma)\right)$, 
\item[(2)] $s^+_W(\Gamma) + s^-_W\left (\Gamma\right ) =  s^+_W\left(T(\Gamma)\right) + s^-_W\left(T(\Gamma)\right). $
\end{itemize}
\end{lemma}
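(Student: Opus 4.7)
The plan is to deduce both statements from smoothness of $\Hi^{n}(\CC^2)$ by understanding the connected component of the $\TT_W$-fixed locus that contains $\Gamma$. Concretely, I would proceed as follows.

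First, since $\Hi^n(\CC^2)$ is smooth (Proposition \ref{hilbsmooth}) and carries a $\TT_W$-action, the fixed locus $\Hi^n(\CC^2)^{\TT_W}$ is a smooth subvariety, and decomposes into smooth connected components. The key geometric claim is that $\Gamma$ and $T_{\alpha,\beta}(\Gamma)$ lie in the \emph{same} connected component $Z$ of $\Hi^n(\CC^2)^{\TT_W}$. To prove this, recall that $\ff$ has $\TT_W$-weight zero by hypothesis, so the deformation of $I_\Gamma$ obtained by replacing each generator $\alpha_i$ by $\alpha_i + t\ff(\alpha_i)$ (with $t\in \CC$) produces a one-parameter family $\{I_t\}_{t\in \CC}$ of ideals all fixed by $\TT_W$, with $I_0 = I_\Gamma$. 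This family, possibly extended by rescaling, traces out an irreducible curve in $Z$; a direct calculation of the limit $\lim_{t\to \infty} t\cdot I_t$ with respect to a generic character of $\TT^2/\TT_W$ (performed in exactly the same spirit as Procedure \ref{divisionprocedure}, by successively reducing the leading terms) reproduces the combinatorial sliding rule of Definition \ref{sliding} and identifies this limit with $I_{T_{\alpha,\beta}(\Gamma)}$. Thus $\Gamma$ and $T(\Gamma)$ are connected through a chain of $\TT_W$-fixed irreducible curves, hence lie in the same component $Z$.

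Once this is established, statement (2) is immediate: the tangent space $T_\Gamma Z$ at the smooth point $\Gamma$ is exactly the zero-$\TT_W$-weight subspace of $T_\Gamma \Hi^n(\CC^2)$, since $Z$ is the maximal $\TT_W$-invariant smooth subvariety through $\Gamma$. Any element of $B(\Gamma)$ of $\TT_W$-weight zero has, under a generic perturbation $\TT_{W^\pm}$, either strictly positive or strictly negative weight; by definition these are counted respectively by $s^+_W(\Gamma)$ and $s^-_W(\Gamma)$. Hence
\[
s^+_W(\Gamma)+s^-_W(\Gamma) \;=\; \dim T_\Gamma Z \;=\; \dim Z,
\]
where the last equality uses smoothness of $Z$. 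The same identity holds at $T(\Gamma)$, and since both points lie in the same connected smooth $Z$, the two sums agree.

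For statement (1), use the $\TT_W$-equivariant normal bundle $\mathcal{N}:= \mathcal{N}_{Z/\Hi^n(\CC^2)}$. Because $\TT_W$ acts trivially on $Z$, $\mathcal{N}$ splits $\TT_W$-equivariantly as a sum of sub-bundles indexed by $\TT_W$-characters, each of constant rank along $Z$. In particular the positive-weight sub-bundle $\mathcal{N}^+$ has constant rank. At a $\TT^2$-fixed point $\Gamma \in Z$, the zero-$\TT_W$-weight vectors of $T_\Gamma\Hi^n(\CC^2)$ are exactly tangent to $Z$, so every element of $B(\Gamma)$ counted by $\pos_W(\Gamma)$ lies in $\mathcal{N}^+_\Gamma$; conversely $\mathcal{N}^+_\Gamma$ is spanned by elements of $B(\Gamma)$ since $B(\Gamma)$ is a weight basis. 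Thus $\pos_W(\Gamma) = \operatorname{rk} \mathcal{N}^+$, which is constant on $Z$; applying this at both $\Gamma$ and $T(\Gamma)$ yields $\pos_W(\Gamma) = \pos_W(T(\Gamma))$. The main obstacle, and the step that requires a careful computation rather than pure formalism, is the explicit identification of the Gröbner-type limit of the family $\{I_t\}$ with the combinatorial sliding procedure; everything else is then a clean consequence of smoothness and $\TT_W$-equivariance.
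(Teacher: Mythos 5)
Your proposal is correct and follows essentially the same route as the paper: both arguments place $\Gamma$ and $T(\Gamma)$ in the same connected component of the $\TT_W$-fixed locus by exhibiting a $\TT_W$-invariant rational curve joining them, and then deduce (1) from the constancy of the rank of the positive part of the normal bundle along the smooth fixed component and (2) from the constancy of the dimension of its tangent space. The only cosmetic difference is that the paper writes the connecting curve directly as the pencil of ideals generated by $\omega_1\gamma+\omega_2 T(\gamma)$ (so that the identification of the endpoint with $I_{T(\Gamma)}$ is built into the definition), whereas you deform the standard generators and must then verify via a Gr\"obner-type limit computation that the degeneration reproduces the sliding rule.
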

\begin{lemma}
\label{slidingandssymmetric}
Let $n\in \NN$, $W$ be a wall for $n$ and $\Gamma \vdash n$. Consider the division of $\{\Gamma\vdash n\}$ in orbits for transformations of weight $W$ as in (\ref{slidingorbits}). Then for every $i\in \NN$ and every $k\in \NN$ we have 
\[
\# \left\{\Gamma \in \mathcal{O}_i \left\vert s^+_{W}(\Gamma)=k \right.\right\} = \# \left\{\Gamma \in \mathcal{O}_i \left\vert s^-_{W}(\Gamma)=k \right.\right\}\, .
\]
\end{lemma}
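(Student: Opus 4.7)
The plan is to prove this combinatorial statement by interpreting both sides geometrically via a Bialynicki-Birula argument on the fixed locus of $\TT_W$. Since $W$ is a wall for $n$, the fixed locus $F_W := \left(\Hi^n(\CC^2)\right)^{\TT_W}$ contains positive-dimensional components. Because $\Hi^n(\CC^2)$ is smooth, $F_W$ is smooth; the connected components of the corresponding fixed locus in the projective compactification $\Hi^n(\PP^2)$ are moreover smooth projective varieties (cf.\ Theorem \ref{ES} for the analogous reduction to $\PP^2$). The first step is to identify each orbit $\mathcal{O}_i$ with the set of $\TT^2$-fixed points contained in one connected component $Z_i$ of the relevant fixed locus. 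The key observation is that at a $\TT^2$-fixed point $\Gamma \in Z_i$, the tangent space $T_\Gamma Z_i$ coincides with the subspace of $T_\Gamma \Hi^n(\CC^2)$ spanned by those weight vectors in $B(\Gamma)$ whose $\TT^2$-weight restricts trivially to $\TT_W$, i.e.\ those $f_{\alpha,\beta}\in B(\Gamma)$ of pure weight $W$. Each such vector integrates to a $\TT_W$-fixed rational curve in $Z_i$ joining $\Gamma$ to another $\TT^2$-fixed point, which by a direct verification must be precisely $T_{\alpha,\beta}(\Gamma)$, so orbits of sliding match connected components exactly. This matches Lemma \ref{slidingandpos}(2), which then reads $\dim Z_i = s^+_W(\Gamma) + s^-_W(\Gamma)$ for every $\Gamma \in \mathcal{O}_i$.

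Next I would apply the Bialynicki-Birula theorem (Theorem \ref{Bialynicki-Birula}) to $Z_i$ using the one-dimensional torus $\TT_{W^+}/\TT_W$ acting on $Z_i$. By construction, the weight of a tangent vector $f_{\alpha,\beta} \in T_\Gamma Z_i$ under this induced action is positive precisely when $f_{\alpha,\beta}$ has positive $\TT_{W^+}$-weight; thus the $\TT_{W^+}$-attracting cell in $Z_i$ at $\Gamma$ has complex dimension $s^+_W(\Gamma)$. Smoothness and projectivity of $Z_i$ together with Proposition \ref{Fulton} then give
\[
b_{2k}(Z_i) \;=\; \#\bigl\{\Gamma \in \mathcal{O}_i \,\big\vert\, s^+_W(\Gamma) = k\bigr\}.
\]
The same argument using the opposite generic subtorus $\TT_{W^-}/\TT_W$ produces the identity
\[
b_{2k}(Z_i) \;=\; \#\bigl\{\Gamma \in \mathcal{O}_i \,\big\vert\, s^-_W(\Gamma) = k\bigr\},
\]
and comparing the two yields the lemma immediately. (Alternatively, one may appeal to Poincaré duality on $Z_i$ together with the identity $s^+_W(\Gamma) + s^-_W(\Gamma) = \dim Z_i$.)

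The main obstacle I expect is the first step: verifying rigorously that the combinatorial orbits under $\Theta_W$ are in bijection with the connected components of the $\TT_W$-fixed locus, and that each weight-$W$ sliding $T_{\alpha,\beta}$ is realised by a genuine $\TT_W$-fixed $\PP^1 \subset Z_i$ whose two $\TT^2$-fixed endpoints are $\Gamma$ and $T_{\alpha,\beta}(\Gamma)$. This amounts to integrating the weight-$W$ tangent vector $f_{\alpha,\beta}$ to a one-parameter family of ideals and identifying its other endpoint combinatorially; the explicit description of $T_{\alpha,\beta}$ in Definition \ref{sliding} (together with Lemma \ref{inversesliding}, which already matches the symmetry of the two endpoints of a $\PP^1$) is tailor-made to make this identification, so once one carries out the verification the BB/Poincaré-duality step is immediate.
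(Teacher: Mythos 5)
Your proposal is correct and follows essentially the same route as the paper: identify each orbit $\mathcal{O}_i$ with the $\TT^2$-fixed points of a smooth projective connected component of the $\TT_W$-fixed locus (the paper's Lemma \ref{orbitsandcomponents}, proved by exhibiting the explicit $\TT_W$-invariant $\PP^1$ joining $\Gamma$ to $T_{\alpha,\beta}(\Gamma)$), then apply Bialynicki-Birula to that component and conclude by Poincar\'{e} duality together with $s^+_W(\Gamma)+s^-_W(\Gamma)=\dim F_i$. Your variant of running Bialynicki-Birula for both $\TT_{W^+}$ and $\TT_{W^-}$ and comparing Betti numbers is an equivalent repackaging of that duality step.
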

To prove the previous two lemmas we need the following. 
\begin{lemma}
\label{orbitsandcomponents}
Let $n\in \NN$, $W$ be a wall for $n$ and $\Gamma \vdash n$. Consider the division of $\{\Gamma\vdash n\}$ in orbits for transformations of weight $W$ as in (\ref{slidingorbits}). Let 
\[\left(\Hi^{n}(\CC^2)\right)^{\TT_W} \quad =\quad \bigsqcup_{i} \,\,\, F_i\]
be the decomposition in connected components of the fixed points set of $\Hi^{n}(\CC^2)$ for the $T_W$ action. Then there is a bijective correspondence between the components $(F_i)_i$ and the orbits $(\mathcal{O}_i)_i$ given by: 
\[
{F}_i \mapsto \{\text{fixed points for the } \TT^+_W \text{ action  on } F_i\}. 
\]
\end{lemma}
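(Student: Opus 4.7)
The plan is to give a geometric reading of the combinatorial sliding operation $T_{\alpha,\beta}$: each such transformation corresponds to traveling along a $\TT_W$-invariant irreducible curve joining two $\TT^2$-fixed points inside a component $F_i$, and conversely every such curve arises this way. Since $\Hi^n(\CC^2)$ is smooth (Proposition \ref{hilbsmooth}), each component $F_i$ of the fixed locus of $\TT_W$ is smooth and $\TT^2$-invariant. The $\TT^2$-fixed points of $F_i$ are monomial ideals, and the tangent space at a monomial ideal $\Gamma \in F_i$ decomposes as $T_\Gamma F_i = (T_\Gamma \Hi^n(\CC^2))^{\TT_W}$, which by Lemma \ref{B(I)basis} has an explicit basis given by those $f_{\alpha,\beta} \in B(\Gamma)$ whose weight is exactly $W$, i.e.\ by transformations of weight $W$ available at $\Gamma$.

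First I would establish that the $\TT^+_W$-fixed points of $F_i$ are exactly the $\TT^2$-fixed points of $F_i$. This is because $\TT^+_W$ is chosen generic, so its fixed locus on $\Hi^n(\CC^2)$ is the set of $\TT^2$-fixed points; restricting to the $\TT_W$-invariant subvariety $F_i$ identifies the $\TT^+_W$-fixed points of $F_i$ with the monomial ideals in $F_i$. So the map $F_i \mapsto \{\text{monomial ideals lying on } F_i\}$ is well-defined and injective on components.

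Next, and this is the main point, I would match each $\TT^2$-fixed irreducible curve $C \subset F_i$ through $\Gamma$ with a sliding transformation. Such a $C$ is a smooth rational curve with two $\TT^2$-fixed points; its tangent direction at $\Gamma$ is a weight-$W$ eigenvector in $T_\Gamma \Hi^n(\CC^2)$, hence (up to scalar) equals some $f_{\alpha,\beta}\in B(\Gamma)$ of weight $W$. To identify the other fixed point of $C$ with $T_{\alpha,\beta}(\Gamma)$, I would construct the curve explicitly: deform the generators of $I_\Gamma$ along the one-parameter family produced by $f_{\alpha,\beta}$, close up to a $\TT^2$-invariant $\PP^1$ using the $\TT_W$-action, and compute the monomial ideal at $t=\infty$. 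The algorithmic sliding in Definition \ref{sliding} is precisely what describes this limit: the successive "steps" $T^1,T^2,\dots$ record the propagation, forced by $R$-linearity, of the initial move $\alpha \mapsto \beta$ through all generators of $I_\Gamma$ lying in the same $\TT_W$-weight level, which is exactly the syzygy calculation needed to close up the family into a $\PP^1$. The well-posedness of $T_{\alpha,\beta}$ (Lemma \ref{slidingexists}) is the combinatorial shadow of the fact that the opposite endpoint is again a monomial ideal; the existence of an inverse sliding of the same weight (Lemma \ref{inversesliding}) corresponds to exchanging the two endpoints of $C$.

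Having matched invariant curves with weight-$W$ transformations, the equivalence relation "lying in the same connected component $F_i$" is generated on monomial ideals by chains of such curves, so it coincides with the equivalence relation generated by the action of $\Theta_W$. This gives the wanted bijection $F_i \leftrightarrow \mathcal{O}_i$, and combined with the first step identifies $\mathcal{O}_i$ with the set of $\TT^+_W$-fixed points of $F_i$. The hard part is the explicit identification of the limit monomial ideal with $T_{\alpha,\beta}(\Gamma)$: the combinatorial procedure in Definition \ref{sliding} looks ad hoc, and showing that it reproduces the degeneration along $C$ step by step requires a careful syzygy computation, which I expect to handle by induction on the number of generators of $I_\Gamma$ that sit at the $\TT_W$-weight level of $\beta$ — precisely the connected components $B_i^1, B_i^2,\dots$ used in the definition.
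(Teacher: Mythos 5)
Your proposal is built around the same central object as the paper's proof: a pointwise $\TT_W$-fixed rational curve in $\Hi^n(0)$ whose tangent direction at $\Gamma$ is a weight-$W$ vector $\ff$ and whose second torus-fixed point is $T_{\alpha,\beta}(\Gamma)$. The difference is in the execution, and it matters. You propose to deform $I_\Gamma$ in the direction $\ff$, close the family up to an invariant $\PP^1$, and then compute the monomial ideal at the opposite end --- and you correctly flag this limit computation as the hard step, to be handled by a syzygy-chasing induction that you do not carry out. The paper avoids that computation entirely: because $T_{\alpha,\beta}$ is defined in \ref{sliding} as a bijection of the whole truncated quadrant $\Delta_{n+2}$, the curve can be written in closed form as the pencil of ideals $\left(\omega_1\gamma+\omega_2 T_{\alpha,\beta}(\gamma)\ \vert\ \gamma\in\Delta_{\Gamma,T}\right)$, $[\omega_1:\omega_2]\in\PP^1$, as in (\ref{p1twinvariante}). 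Both endpoints $I_\Gamma$ and $I_{T_{\alpha,\beta}(\Gamma)}$ are then visible by setting $\omega_2=0$ or $\omega_1=0$, the $\TT_W$-invariance is manifest on generators, and the only thing left to verify is that every member of the pencil is an ideal of colength $n$, which is delegated to Iarrobino's standard-generator results (Theorem \ref{standard}, with the roles of $x$ and $y$ exchanged, using $W>1$). If you insist on your dynamical formulation, your induction must in effect reconstruct exactly this pencil.

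The second, more substantive issue is in your converse direction. You want to classify \emph{all} $\TT^2$-invariant irreducible curves in $F_i$ through $\Gamma$ and match each one with a sliding, on the grounds that its tangent direction is ``some $\ff$ of weight $W$''. This is not true in general: a single $\TT^2$-character can occur with multiplicity at least $2$ inside the $\TT_W$-weight-zero part of $T_\Gamma\Hi^n(\CC^2)$ (two boxes of a partition can have equal arm and equal leg --- e.g.\ the boxes $(0,1)$ and $(1,0)$ of the staircase $(4,3,2,1)$ --- and the resulting repeated character is killed by $\TT_W$ for the appropriate wall). In that situation there is a positive-dimensional family of invariant curves whose tangent directions are nontrivial combinations of several $f_{\alpha,\beta}$, and your curve-by-curve identification of the second endpoint breaks down. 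The paper never classifies invariant curves: it only uses the curves it has constructed, one for each weight-$W$ basis vector, and closes the argument by observing that their tangent directions at $\Gamma$ are linearly independent and $s^+_W(\Gamma)+s^-_W(\Gamma)$ in number, hence exhaust $T_\Gamma F_i=\bigl(T_\Gamma\Hi^n(\CC^2)\bigr)^{\TT_W}$ (Corollary \ref{localdimesnion}), together with smoothness of the fixed locus. You should either adopt that route or restrict your chain-of-curves argument to the finitely many curves you can actually control.
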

\begin{proof}
Let $\Gamma \in \mathcal{O}_i$ and $T_{\alpha, \beta}$ be a transformation of $\Gamma$ of weight $W$. Without loss of generality we can assume that $\beta \in P_{\alpha}$, otherwise we consider the inverse picture. We denote $T_{\alpha, \beta}$ simply $T$. Observe that $\gamma\in \Delta_{n}$ then $\omega_1\gamma+\omega_2T (\gamma)$ with $[\omega_1:\omega_2] \in \PP^1$ is $\TT_W$ invariant by definition of $T$. Define 
\[
\Delta_{\Gamma, T} \,\,\,\,\,:=\quad  \left\{\gamma \in \Delta_{n+3}\left\vert \gamma\notin \Gamma\text{ or } \gamma\notin T(\Gamma) \right.\right\}.
\]
It is clear that all the generators of $I_{\Gamma}$ and of $I_{T(\Gamma)}$ are in $\Delta_{\Gamma, T}$. 
 Then consider the following $\PP_1$  of ideals embedded in $\Hi^{n}(0)$ 
\begin{equation}
\label{p1twinvariante}
\left(\omega_1\gamma +\omega_2T(\gamma)\left\vert \gamma\in \Delta_{\Gamma, T} \right.\right) \qquad [\omega_1:\omega_2] \in \PP^1.
\end{equation}

The fact that it is actually a family of ideals in $\Hi^n(0)$ is a consequence of the results of Iarrobino Theorem \ref{standardgenerators}, by exchanging in the proposition the role of $x$ and $y$ and using the fact that $W>1$. 

\hspace{3em}The fact that is a $T_W$ invariant family is clear since every generator of the ideal is. 

\hspace{3em}Finally observe that when $[\omega_1:\omega_2]=[0,1] \in \PP^1$ then we get $I_{\Gamma}$ and when $[\omega_1:\omega_2]$ is the point $[1,0] \in \PP^1$ we get $I_{T(\Gamma)}$.

\hspace{3em}This proves that whenever two points are in the same orbit $\Gamma, \Gamma'\in \mathcal{O}_i$, then they are in the same connected component $F_i$. Observe however that we proved more: we proved that locally around each fixed point $\Gamma \in \mathcal{O}_i$ the dimension of $F_i$ is exactly $s_W^+(\Gamma)+s^-_W(\Gamma)$ since the $\PP^1$'s described in (\ref{p1twinvariante}) are all different for different $T$'s, and the tangent space of  $\left(\Hi^n(\CC^2)\right)^{\TT_W}$ at $\Gamma$ has a basis formed by those vectors that contribute to $s_W^+(\Gamma)+s^-_W(\Gamma)$. Then, this concludes the proof, and it is worth a separate statement for future reference.   
\end{proof}

\begin{corollary}
\label{localdimesnion}
With the notation of the previous Lemma we have that, locally, around each point $\Gamma \in \mathcal{O}_i$ the dimension of $\left(\Hi^n(\CC^2)\right)^{\TT_W}$ is equal to $s_W^+(\Gamma)+s^-_W(\Gamma)$.
\end{corollary}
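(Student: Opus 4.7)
The plan is to read this essentially off the proof of Lemma \ref{orbitsandcomponents}, recasting it in the language of tangent spaces and weight bases. First I would recall that $\Hi^n(\CC^2)$ is smooth (Proposition \ref{hilbsmooth}), so the fixed locus $\left(\Hi^{n}(\CC^2)\right)^{\TT_W}$ is smooth as well, and its tangent space at a fixed point $\Gamma$ is precisely the $\TT_W$-invariant subspace of $T_\Gamma \Hi^n(\CC^2)$. Therefore it suffices to show that
\[
\dim \left(T_\Gamma \Hi^n(\CC^2)\right)^{\TT_W} \;=\; s_W^+(\Gamma)+s_W^-(\Gamma).
\]

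Next I would use the weight basis $B(\Gamma)$ constructed in Lemma \ref{B(I)basis}. Since $B(\Gamma)$ consists of pure weight eigenvectors for the full torus $\TT^2$, the invariant subspace for the one-parameter subgroup $\TT_W$ is simply the span of those $f_{\alpha,\beta}\in B(\Gamma)$ whose weight character $\lambda^a\mu^b$ satisfies $a w_1+b w_2=0$. Each such weight-zero vector becomes non-zero in weight when we perturb the weights to either $\TT_{W^+}$ or $\TT_{W^-}$; by genericity it is then either positive with respect to $\TT_{W^+}$ (in which case it contributes to $s_W^+(\Gamma)$) or positive with respect to $\TT_{W^-}$ (contributing to $s_W^-(\Gamma)$). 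These two cases are disjoint and exhaust all weight-zero vectors, so the $\TT_W$-invariant subspace has dimension exactly $s_W^+(\Gamma)+s_W^-(\Gamma)$, giving the local dimension of $F_i$ at $\Gamma$.

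There is no real obstacle: the only subtle point is to verify that weight-zero vectors of $\TT_W$ split cleanly into the two types $s_W^\pm$, which is immediate from the definitions in \ref{definitionposw}. Optionally one can cross-check the count geometrically, as in the proof of Lemma \ref{orbitsandcomponents}: each transformation $T=T_{\alpha,\beta}$ of weight $W$ with $\ff\in B(\Gamma)$ produces a $\TT_W$-invariant $\PP^1$ inside $\Hi^n(0)$, and the tangent direction of this $\PP^1$ at $\Gamma$ is exactly the weight-zero vector $f_{\alpha,\beta}$. Since distinct $T$'s give rise to linearly independent tangent directions and since Lemma \ref{inversesliding} ensures that both $P_\alpha$- and $Q_\alpha$-type vectors arise in this way, the $\PP^1$'s altogether span the $\TT_W$-invariant tangent subspace. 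This confirms, from the geometric side, that the local dimension of the fixed component $F_i$ at $\Gamma$ is $s_W^+(\Gamma)+s_W^-(\Gamma)$, as claimed.
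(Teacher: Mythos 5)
Your proposal is correct and follows essentially the same route as the paper: the paper's own justification (embedded at the end of the proof of Lemma \ref{orbitsandcomponents}) likewise identifies the tangent space of $\left(\Hi^n(\CC^2)\right)^{\TT_W}$ at $\Gamma$ with the span of the weight-zero vectors of $B(\Gamma)$, which are counted by $s_W^+(\Gamma)+s_W^-(\Gamma)$, and uses the explicit $\TT_W$-invariant $\PP^1$'s from (\ref{p1twinvariante}) as the matching lower bound on the local dimension. Your observation that each weight-zero vector becomes positive for exactly one of $\TT_{W^+}$, $\TT_{W^-}$ is the correct and complete justification of the count.
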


Now we are ready to give the proof of two main lemmas.  
\begin{proof}[\textbf{Proof of Lemma \ref{slidingandpos} and \ref{slidingandssymmetric}}]
The two fixed points $\Gamma$ and $T_{\alpha, \beta}(\Gamma)$ are in the same $\mathcal{O}$ orbit, then they are, thanks to Lemma \ref{orbitsandcomponents}, fixed points in the same component $F$. Since $\Hi^{n}(\CC^2)$ is smooth, its fixed points components are smooth, so $F$ is smooth. Then the integers $\pos_W(\Gamma)$ and $\pos_W\left(T_{\alpha, \beta}(\Gamma)\right)$ are the dimension of the positive normal bundle of $F$ in $\Hi^{n}(\CC^2)$, thus they are the same, and $s^+_{W}(\Gamma)+s^-_W({\Gamma})$  and $s^+_{W}(T_{\alpha, \beta}(\Gamma))+s^-_W(T_{\alpha, \beta}(\Gamma))$ are the dimension of the tangent space at, respectively, $\Gamma$ and $T_{\alpha, \beta}(\Gamma)$ in $F$, and since $F$ is smooth they, also, are the same. This proves Lemma \ref{slidingandpos}. \\

\hspace{3em}Consider $F$ as ambient variety. It is smooth, as said before, and projective, since contained in $\Hi^{n}(0)$. We have an action of $\TT^2$ on it, and of all its one dimensional subtori. Consider then the $\TT_{W^+}$ action on $F$: it has finitely many fixed points i.e. the elements of $\mathcal{O}$. Then we can apply Theorem \ref{Bialynicki-Birula} to obtain an affine cell decomposition, where each cell has dimension equal to the positive part of the tangent space at $\Gamma \in \mathcal{O}$ of $F$; this positive part is exactly $s_W^+(\Gamma)$. Recall also that $s^+_{W}(\Gamma)+s^-_W({\Gamma})$ is the dimension of $F$. Then we can apply Poincar\'{e} duality to have for all $k\in \NN$
\[
\# \left\{\Gamma \in \mathcal{O} \left\vert s^+_{W}(\Gamma)=k \right.\right\} = \# \left\{\Gamma \in \mathcal{O}_i \left\vert s^+_{W}(\Gamma)=s^+_{W}(\Gamma)+s^-_W({\Gamma})-k \right.\right\} 
\]
that, rearranging, is what we wanted to prove Lemma \ref{slidingandssymmetric}.
 \end{proof}
 
 \begin{proofspecial}[\textbf{ of Proposition \ref{changingtorus}}]
 \label{proofofchangingtorus}
 Now we can give a proof that is more convenient for us, of the fact that we can compute the positive part of the tangent spaces at the fixed points with respect to either the torus $\TT_{\infty}$ or the torus $\TT_{1^+}$ and obtain the same total polynomials 
 \[
 \sum_{\Gamma \vdash n} q^{\pos_{\infty}(\Gamma)} =  \sum_{\Gamma \vdash n} q^{\pos_{1^+}(\Gamma)}.
 \]
 In fact we consider the polynomial on the left and we start taking smaller $W$ to examine $\sum_{\Gamma \vdash n} q^{\pos_{W}(\Gamma)}$: nothing changes until we hit $W_1$ the first wall for $n$. Then passing on the other side only the vectors $\ff \in B(\Gamma)$ of weight $W_1$, for $\Gamma\vdash n$, are affected. Precisely those that contribute to $s^+_{W_1}(\Gamma)$ stop contributing, i.e. they contribute only on the left of the wall $W_1$ and those that contribute to $s^-_{W_1}(\Gamma)$ start contributing i.e. they contribute only on the right of the wall. Observe that 
 \begin{align*}
 \pos_{W^+} (\Gamma) &= \pos_{W}(\Gamma) + s^+_W(\Gamma)\qquad \text {and }\\
 \pos_{W^-} (\Gamma) &= \pos_{W}(\Gamma) + s^-_W(\Gamma).
 \end{align*}
 Then since we were able to group the Young diagrams of size $n$ in sets where the positive part that is not affected by passing the wall is fixed and the part that is effected by passing the wall is symmetric, we are sure that the results before and after the wall are the same. We repeat this for all the walls until we reach $W=1^+$. Observe that in this way we prove the existence of a map $\phi_W$ on the set $\{\Gamma\vdash n\}$ such that $pos_{W^+}(\Gamma) = \pos_{W^-}(\phi_W(\Gamma))$ even though we do not suggest that there is a preferred such map, as Example \ref{orbitn10} below shows.  
  \end{proofspecial}

\begin{example}
\label{orbitn10}
We look at a specific orbit $\mathcal{O}$ for $n=10$ and the wall $W=2$. We indicate with a couple $\bullet, \bullet$  (resp. $\star, \star$ ) a vector $\ff\in B(\Gamma)$ of weight $W$ that contributes to $s^+_W(\Gamma)$ (resp. $s^-_W(\Gamma)$). In this example the same box can be marked with both, or with two stars, meaning that two vectors are represented by maps that start or end there. We have the three possibilities $(l, m) = (2, 1), (4,2)$ and $(6,3)$ correponding to $W= \frac{2}{1}=\frac{4}{2}$ and $\frac{6}{3}$. For all $\Gamma$ depicted we have $\pos_W(\Gamma)=3$ and $s^+_W(\Gamma) + s^-_W\left (\Gamma\right )= 3$. In every line the vector $\left(s^+_W(\Gamma),s^-_W(\Gamma)\right)$ is constant and its value is indicated under the corresponding line. On the right we put the graph that represents the $T_{\bullet, \bullet}\,$'s, between the $\Gamma$'s, the $T_{\star, \star}$, not depicted, are the inverse arrows. 

\vspace{1em}

\begin{minipage}{0.7\textwidth}
\begin{center}
\ytableausetup{boxsize=1em, aligntableaux=bottom}
\begin{ytableau} 
\bullet\\
\,&\bullet &\none[\bullet]  \\
\,&\,& \bullet &\none[\bullet] \\
\,&\, &\, &\, &\none[\bullet] \\
\end{ytableau}

\small{$\left(s^+_W(\Gamma),s^-_W(\Gamma)\right) = (3, 0) $}
\end{center}

\begin{center}
\ytableausetup{boxsize=1em, aligntableaux=bottom}
\begin{ytableau} 
\,\\
\bullet &\bullet  \\
\,&\,&\none[\,\,\,\,\bullet, \star] \\
\,&\, &\, &\, &\star&\none[\bullet] \\
\end{ytableau}\qquad
\ytableausetup{boxsize=1em, aligntableaux=bottom}
\begin{ytableau} 
\,\\
\bullet &\none[\star]  \\
\,&\,& \bullet &\star \\
\,&\, &\, &\, &\none[\bullet] \\
\end{ytableau}\qquad
\ytableausetup{boxsize=1em, aligntableaux=bottom}
\begin{ytableau} 
\none[\star]\\
\bullet &\bullet &\star\\
\,&\,&\,  &\none[\bullet] \\
\,&\, &\, &\, &\none[\bullet] \\
\end{ytableau}

\small{$\left(s^+_W(\Gamma),s^-_W(\Gamma)\right) = (2, 1) $}
\end{center}

\begin{center}
\ytableausetup{boxsize=1em, aligntableaux=bottom}
\begin{ytableau} 
\bullet\\
\,&\none[\star] \\
\,&\,&\none[\star] \\
\,&\, &\, &\, &\star&\star&\none[\bullet] \\
\end{ytableau}\qquad
\ytableausetup{boxsize=1em, aligntableaux=bottom}
\begin{ytableau} 
\none[\star]\\
\bullet  \\
\,&\,& \none[\,\,\,\,\bullet, \star] \\
\,&\, &\, &\, &\star&\,&\star \\
\end{ytableau} \qquad
\ytableausetup{boxsize=1em, aligntableaux=bottom}
\begin{ytableau} 
\none[\star]\\
\,&\,&\star&\bullet&\,\\
\,&\, &\, &\, &\star &\none[\bullet] \\
\end{ytableau}

\small{$\left(s^+_W(\Gamma),s^-_W(\Gamma)\right) = (1,2) $}
\end{center}

\begin{flushright}
\ytableausetup{boxsize=1em, aligntableaux=bottom}
\begin{ytableau} 
\none[\,\,\,\,\star, \star]\\
\,&\,& \star& \none[\star] \\
\,&\, &\, &\, &\star&\star&\, \\
\end{ytableau}

\small{$\left(s^+_W(\Gamma),s^-_W(\Gamma)\right) = (0, 3) $}
\end{flushright}
\end{minipage}
\begin{minipage}{0.3\textwidth}
\[
\begin{tikzcd}
 \,& \bullet
  \arrow{dr}
  \arrow{d}
  \arrow{dl}  &\, \\
   \bullet  \arrow{d} \arrow{drr}& \bullet  \arrow{dl}  \arrow{dr} &\bullet  \arrow{dl}\arrow[bend left]{dd} \\
 \bullet  \arrow{r} & \bullet  \arrow{dr}  &\bullet  \arrow{d} \\
\,&\, &\bullet 
\end{tikzcd}\]
 \end{minipage}
\end{example}
\begin{observation}
\label{stillaffine}
Observe that, through this wall crossing procedure, we just reproved the statements that says that the attracting sets of $\Hi^n(0)$ for the torus $\TT_{\infty}$ are affine, by supposing that those for the torus $\TT_{1^+}$ are. In fact an attracting set is isomorphic to an affine space if and only if it is smooth, thanks to Bialynicki-Birula. Then our wall crossing procedure tells us that locally around each fixed point the dimension of the attracting set is always at least as big as the tangent space to the attracting set. In fact to each linearly independent vector $v\in B(\Gamma)$ of weight $W$ we associated another fixed point given by $T_v(\Gamma)$ and an invariant $\PP^1$ that connects the two fixed points and this proves the claim on the local dimension. See also \ref{localdimesnion}. 
\end{observation}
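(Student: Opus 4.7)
The plan is to use the wall-crossing analysis we have just developed, running from $\TT_{1^+}$ to $\TT_\infty$ across the finitely many walls $1<W_1<W_2<\cdots<W_r<\infty$ for $n$, and to combine this with the local dimension estimate for the fixed loci provided by Lemma \ref{orbitsandcomponents} (really its Corollary \ref{localdimesnion}). The base case is free: for $\TT_{1^+}$ the attracting sets of $\Hi^n(0)$ are affine cells, by the Bialynicki-Birula theorem \ref{Bialynicki-Birula} applied to $\Hi^n(\PP^2)$, whose smoothness and whose $\TT_{1^+}$-fixed points carrying $\Hi^n(0)$ as a union of attracting cells we already established in Theorem \ref{ES}. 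Between any two consecutive walls the attracting set decomposition does not change, so the induction reduces to the single wall-crossing step $\TT_{W^-}\rightsquigarrow \TT_{W^+}$.

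Fix such a wall $W$ and a fixed point $\Gamma$. The component $F=F_\Gamma$ of $(\Hi^n(\CC^2))^{\TT_W}$ containing $\Gamma$ is projective (being closed in $\Hi^n(0)$ after further restricting to the appropriate subvariety) and, by Lemma \ref{orbitsandcomponents}, consists precisely of the orbit $\mathcal{O}\ni \Gamma$ together with the $\PP^1$-families $\omega_1\gamma+\omega_2 T_v(\gamma)$ indexed by $v\in B(\Gamma)$ of pure $\TT_W$-weight $W$. Each such $v$ produces a distinct $\TT_W$-invariant $\PP^1\subset F$ through $\Gamma$ whose tangent direction at $\Gamma$ is exactly the pure-weight line spanned by $v$; since these $v$'s are linearly independent in $B(\Gamma)$, the tangent space $T_\Gamma F$ contains at least $s^+_W(\Gamma)+s^-_W(\Gamma)$ linearly independent directions. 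But by definition $s^+_W(\Gamma)+s^-_W(\Gamma)$ is also an upper bound for $\dim T_\Gamma F$, since these are the only vectors in $B(\Gamma)=\operatorname{basis}(T_\Gamma \Hi^n(\CC^2))$ of $\TT_W$-weight zero. Hence $F$ is smooth at $\Gamma$.

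Next, I would feed this smoothness back into Bialynicki-Birula. Because $F$ is smooth and projective with finitely many $\TT_{W^+}$-fixed points, Theorem \ref{Bialynicki-Birula} gives an affine paving of $F$ whose cell through $\Gamma$ has dimension $s_W^+(\Gamma)$. The $\TT_{W^+}$-attracting set of $\Gamma$ in $\Hi^n(0)$ fibers over this cell: its tangent space at $\Gamma$ splits as the sum of $T_\Gamma F^+$ together with the $\TT_{W^+}$-positive part of the normal bundle to $F$, which is the direct sum of the pure-weight lines in $B(\Gamma)$ of strictly positive $\TT_{W^+}$-weight outside those of weight $W$. Because the ambient $\Hi^n(\CC^2)$ is smooth, this normal contribution assembles into an honest $\TT_{W^+}$-equivariant vector bundle whose restriction to the cell is a sum of line bundles; the attracting set is then the total space, an affine bundle of dimension $\pos_{W^+}(\Gamma)$ over the affine cell in $F$, hence itself affine. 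This completes the induction step.

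The main obstacle will be the tangent-space count identifying $\dim T_\Gamma F$ with $s^+_W(\Gamma)+s^-_W(\Gamma)$; this is the step where one must verify that the weight-zero basis vectors of $B(\Gamma)$ for the $\TT_W$-action really are all tangent to $F$ and not merely to the first-order thickening, which is what the explicit $\PP^1$-constructions in Lemma \ref{orbitsandcomponents} secure. Everything else — the affineness of attracting sets given smoothness, the splitting of the normal bundle into lines, and the wall-by-wall induction — follows formally from Białynicki-Birula and the smoothness of the ambient $\Hi^n(\CC^2)$.
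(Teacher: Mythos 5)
Your proposal is correct, and it rests on the same key input as the paper — the $\TT_W$-invariant $\PP^1$'s of Lemma \ref{orbitsandcomponents} connecting $\Gamma$ to $T_v(\Gamma)$, which control the fixed component $F$ and its local dimension as in Corollary \ref{localdimesnion} — but it reaches affineness by a genuinely different final mechanism. You run Bialynicki-Birula ``in stages'': smoothness of $F$ (which you derive from the $\PP^1$'s plus the upper bound $\dim T_\Gamma F\le s^+_W(\Gamma)+s^-_W(\Gamma)$), an affine paving of $F$, and then a description of the $\TT_{W^+}$-attracting set as an affine bundle over the cell in $F$ whose fibre is the positive part of the normal bundle of $F$ in $\Hi^n(\CC^2)$. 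The paper instead argues directly on the attracting set itself: the $\PP^1$'s (together with the directions unaffected by the wall) show that its local dimension at $\Gamma$ is at least $\pos_{W^+}(\Gamma)=\dim T^+_\Gamma$, whence the attracting set is smooth at the fixed point and therefore an affine space by Theorem \ref{Bialynicki-Birula}. The trade-off is worth noting: your bundle argument explicitly invokes smoothness of the ambient $\Hi^n(\CC^2)$ to split off the normal directions, so it is tailored to the case at hand, whereas the paper's local-dimension-versus-tangent-space comparison is deliberately phrased so that it transfers verbatim to the singular ambient space $\Hi^{n,n+1,n+2}(\CC^2)$ after Proposition \ref{changingtorus3}, where the smoothness of the fixed components is supplied combinatorially (Lemma \ref{orbitsandcomponents3}) rather than inherited from the ambient variety; your step 4 would need to be reworked in that setting.
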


\begin{remark}
We divided the content of Lemma \ref{slidingandpos} and  \ref{slidingandssymmetric} in two different statements to underline this fact: while the first Lemma has a combinatorial proof, however less nice because full of indexes, we were not able to prove the second statement only combinatorially. \\

\hspace{3em}Moreover observe that the fact that $s^+_W(\Gamma)+s^-_W(\Gamma)$ is constant along $\Gamma \in \mathcal{O}_i$ is equivalent to the fact that $F_i$ is smooth, so that Lemma $\ref{slidingandssymmetric}$ can be seen as a geometric consequence of the results of Lemma \ref{slidingandpos} if this is proved combinatorially. This is what we will do for the case $[n, n+1, n+2]$. 
\end{remark}

\hspace{3em}Now we describe the little changes we need to implement to pass to the case $[n, n+1]$. They are mostly about the definition of the sliding transformation associated to elements of the basis of a tangent space: in fact we need to specified how we slide the marked box  \,\ytableausetup{boxsize=0.9em, aligntableaux=bottom}\begin{ytableau} *(green)1\end{ytableau} .

\begin{definition}{[Sliding for $[n, n+1]$]}
\label{sliding2}
Let $n\in \NN$, $W$ be a wall for $[n, n+1]$, and $\Gamma=(\Gamma_1, \Gamma_2)\vdash [n, n+1]$ a fixed point for the $\TT_{W^+}$ action on $\Hi^{n, n+1}(\CC^2)$. As usual call $\alpha_j$ the box in $\Gamma_2\setminus \Gamma_1$ marked with a\, \ytableausetup{boxsize=0.9em, aligntableaux=bottom}\begin{ytableau} *(green)1\end{ytableau} . \\

\hspace{3em}We will either define $T(\Gamma)$ by performing $T(\Gamma_1)$ and moving $\alpha_j$ as an any other element of $\Delta_{n+1}\setminus \Gamma_1$, or we will perform $T(\Gamma_2)$ and move $\alpha_j$ as any other element of $\Gamma_2$. Let us see the details. \\
 
\hspace{3em}Suppose $v \in B(\Gamma_1, \Gamma_2)$ is of weight $W$. Then either $v$ is of type $(\ff, \Su(\ff))$ with $\ff\in B(\Gamma_1)$ or v is of type $(0, h_{\alpha'_i, \alpha_j})$. In the latter case we define $T_{\alpha'_i, \alpha_j}(\Gamma)$ simply by sliding $\alpha_j \mapsto \alpha'_i$, which  is always possible since $W>1$.
\vspace{0.6em}

 \begin{minipage}{0.24\textwidth}
\begin{center}
\ytableausetup{boxsize=1.3em, aligntableaux=bottom}
\begin{ytableau} 
\none[\alpha'_2] \\
\, &\, &\,  \\
\,&\, &\, &*(green)1 \\
\end{ytableau}
\end{center}
\end{minipage} 
\begin{minipage}{0.08\textwidth}
\begin{center}
\ytableausetup{boxsize=1.4em, aligntableaux=bottom}
\begin{ytableau} 
\none[]\\
\none[T_{\alpha'_2, \alpha_j}]\\
\none[\longrightarrow]
\\\end{ytableau}
\end{center} 
\end{minipage} 
\begin{minipage}{0.24\textwidth}
\begin{center}
\ytableausetup{boxsize=1.3em, aligntableaux=bottom}
\begin{ytableau} 
*(green)1\\
\, &\, &\,  \\
\,&\, &\,  \\
\end{ytableau}
\end{center} 
\end{minipage} 
\begin{minipage}{0.4\textwidth}\small{
$W=\frac{3}{2}$, $v\in B(\Gamma_1, \Gamma_2)$ is of the form $(0, h)$, so it involves only two boxes: the sliding and all its properties are immediate. }
\end{minipage} 
\vspace{0.6em}

If $v= (\ff, \Su(\ff))$ we need to distinguish two cases: either $\alpha_j$ is not involved in the sliding, meaning that seen as generator of $\Gamma_1$ it does not attract any box of $\Gamma_1$ at any step of the procedure that defines $T_{\alpha, \beta}(\Gamma_1)$, or it does. In the first case we follow the rules of $T_{\alpha, \beta}(\Gamma_2)$: it must be $\alpha_j \neq \alpha$ and $\alpha$  is still a minimal generator of $\Gamma_2$ so  we can  consider $\alpha_j$ as any other box of $\Gamma_2$ and we perform the sliding $T_{\alpha, \beta}$ on $\Gamma_2$ and define
\[
T_{\alpha, \beta}(\Gamma_1, \Gamma_2) = T_{\alpha, \beta}(\Gamma_2)
\]
with the marked box \ytableausetup{boxsize=0.9em, aligntableaux=bottom}\begin{ytableau} *(green)1\end{ytableau} in $T_{\alpha, \beta}(\alpha_j)$ seen as any other element of $\Gamma_2$. 

\vspace{1em}
\begin{minipage}{0.34\textwidth}
\begin{center}
\ytableausetup{boxsize=1.3em, aligntableaux=bottom}
\begin{ytableau} 
*(green)1\\
\bullet &\, \\
\, &\, &\none[\bullet]\\
\,&\, &\,  \\
\end{ytableau}
\end{center}
\end{minipage} 
\begin{minipage}{0.1\textwidth}
\begin{center}
\ytableausetup{boxsize=1.4em, aligntableaux=bottom}
\begin{ytableau} 
\none[T_{\bullet, \bullet}]\\
\none[\longrightarrow]\\
\none[]
\\\end{ytableau}
\end{center} 
\end{minipage} 
\begin{minipage}{0.34\textwidth}
\begin{center}
\ytableausetup{boxsize=1.3em, aligntableaux=bottom}
\begin{ytableau} 
\,\\
\, &\, &\,  &\,&\, &\,&*(green)1  \\
\end{ytableau}
\end{center} \end{minipage} 

\begin{center}\small{
$W=\frac{2}{1}$, $v\in B(\Gamma_1, \Gamma_2)$ is of the form $(f_{\bullet, \bullet}, \Su(f_{\bullet, \bullet}))$, and $\alpha_j$ is not attracting any box in the sliding: we simply treat it as another box of $\Gamma_2$ and slide everything with the rule $T_{\bullet, \bullet}(\Gamma_2)$.}
\end{center}

\hspace{3em}In the second case, i.e. if $T_{\alpha, \beta}(\Gamma_1)$ involves sliding a box $\beta_j$ onto $\alpha_j$, we preform $T_{\alpha, \beta}(\Gamma_1)$ as if $\alpha_j$ was not there and then we put $\alpha_j$ where  $\beta_j$ was, i.e. we look at $\alpha_j$ as any other element of $\Delta_{n+1}$.

 \begin{minipage}{0.34\textwidth}
\begin{center}
\ytableausetup{boxsize=1.3em, aligntableaux=bottom}
\begin{ytableau} 
\bullet &\, \\
\, &\star &\none[\bullet]\\
\,&\, &\, &*(green)1 \\
\end{ytableau}
\end{center}
\end{minipage} 
\begin{minipage}{0.1\textwidth}
\begin{center}
\ytableausetup{boxsize=1.4em, aligntableaux=bottom}
\begin{ytableau} 
\none[T_{\bullet, \bullet}]\\
\none[\longrightarrow]\\
\none[]
\\\end{ytableau}
\end{center} 
\end{minipage} 
\begin{minipage}{0.34\textwidth}
\begin{center}
\ytableausetup{boxsize=1.3em, aligntableaux=bottom}
\begin{ytableau} 
\,&*(green)1\\
\, &\, &\,  &\,&\, &\,  \\
\end{ytableau}
\end{center} 
\end{minipage}   

\begin{center}\small{
As before $\Gamma_1$ and $f_{\bullet, \bullet}$ are the same. However, here $\alpha_j$ is attracting the box marked with a star, so first we do the sliding $T_{\bullet, \bullet}(\Gamma_1)$ then we put $\alpha_j$ where there was the star.}
\end{center}  

\hspace{3em}For $\Gamma=(\Gamma_1, \Gamma_2)\vdash [n, n+1]$ and $v\in B(\Gamma)$, we will denote the result of one of the transformations defined with any of the following notations: 
\[
T_v(\Gamma)= T(\Gamma)= T(\Gamma_1, \Gamma_2)= (T(\Gamma_1), T(\Gamma_2))
\]
depending on the aspect of the sliding we want to underline, and we see $T$ as a bijection on $\Delta_{n+1}$ to itself as in Definition \ref{sliding}. 
\end{definition}

Having clarified the transformations we perform in this case, all the remaining steps are as before. The interpretation of the following Lemma \ref{slidingandpos2} in equations (\ref{slidingandposobs}) and (\ref{slidingandsobs}) is what we need to deal with the case $[n, n+1,n+2]$. 
\begin{observation}
Let $n\in \NN$ and $W$ be a wall for $[n, n+1]$. The transformations of \ref{sliding2} are well defined and every transformation is invertible. They partition the set of $(\Gamma_1, \Gamma_2)\vdash [n,n+1]$ in orbits
\begin{equation}
\label{slidingorbits2}
\left\{\,\,(\Gamma_1, \Gamma_2)\,\vdash\, [n,n+1]\,\,\right\} \quad =\quad \bigsqcup_{i=1} \,\, \mathcal{O}_i;
\end{equation}
where an orbit $\mathcal{O}_i $ is closed under the action of transformations $ T$  of weight $W$.
\end{observation}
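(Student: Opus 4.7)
The plan is to mirror the proof of the analogous statement for $\{\Gamma\vdash n\}$, handling the extra bookkeeping for the marked box $\alpha_j$ by reducing to either Lemma \ref{slidingexists} on $\Gamma_1$ or on $\Gamma_2$, depending on which of the three situations in Definition \ref{sliding2} we are in.

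First I would establish well-definedness. For $v=(0,h_{\alpha'_i,\alpha_j})$ the transformation only moves the box $\alpha_j$ to the position $\alpha'_i$, and since the vector has pure weight $W>1$ the box $\alpha'_i$ is both a standard monomial generator of $\Gamma_2$ and still yields a legal Young diagram after the swap (this is the same check as in Lemma \ref{suiv}). For $v=(\ff,\Su(\ff))$ with $\alpha_j$ not attracted by the sliding procedure on $\Gamma_1$, I would apply Lemma \ref{slidingexists} directly to $\Gamma_2$: the hypothesis that $\alpha_j\neq \alpha$ and that $\alpha_j$ does not become the target of $\Su(\ff)$ at any step guarantees that the box-level procedure is exactly the one on $\Gamma_2$, so its output is a Young diagram $\Gamma_2'$ of size $n+1$, whose subset $\Gamma_1'$ obtained by removing $T(\alpha_j)$ is automatically a Young diagram of size $n$ (since $\alpha_j$ is an outer corner of $\Gamma_2$ and $T$ respects the generator structure). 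For the remaining case, where $\alpha_j$ is attracted, I would apply Lemma \ref{slidingexists} to $\Gamma_1$ viewed without $\alpha_j$ to obtain $T(\Gamma_1)$, and then set $T(\alpha_j):=\beta_j$, the box whose position $\alpha_j$ was about to absorb; one checks by the same local argument used in the proof of Lemma \ref{slidingexists} that $(T(\Gamma_1),T(\Gamma_1)\cup\{\beta_j\})$ is again a valid pair $\vdash[n,n+1]$.

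Invertibility is then obtained exactly as in Lemma \ref{inversesliding}: the construction of $T(\Gamma_1,\Gamma_2)$ terminates at a last step $u$ with an explicit pair $(\alpha^u,\beta^u)$, and the swap $(\alpha',\beta'):=(\beta^u,\alpha^u)$ defines an element $f_{\alpha',\beta'}$ (suitably completed with the marked-box data of the new configuration) that reverses $T$. A quick case check shows that if $v$ was of type $(0,h)$ the inverse is of type $(0,h)$ on the image pair, and if $v$ was of type $(f,\Su(f))$ the inverse is again of that form; in both cases the weight is preserved because inversion swaps the roles of $P_\alpha$ and $Q_\alpha$ without changing the vector $\alpha'-\beta'$ up to sign.

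Granted well-definedness and invertibility of each individual transformation, the orbit decomposition is formal: the relation ``$\Gamma\sim\Gamma'$ iff there is a finite sequence of weight-$W$ transformations taking one to the other'' is reflexive (apply no transformation), symmetric (by the invertibility just shown, with inverse again of weight $W$), and transitive (composition of sequences). The equivalence classes are exactly the $\mathcal{O}_i$ of (\ref{slidingorbits2}), and by construction each $\mathcal{O}_i$ is stable under every $T\in\Theta_W$. The main obstacle in making this rigorous is the middle case of Definition \ref{sliding2}, where one must check that placing the marked box at $T(\alpha_j)=\beta_j$ produces a legitimate flag pair; this amounts to verifying that $\beta_j$ is an outer corner of $T(\Gamma_1)$, which follows from the observation that the attracting step $\beta_j\to \alpha_j$ in the original procedure is precisely the one that would have been executed next had $\alpha_j$ been absent, so $\beta_j$ sits at the boundary of $T(\Gamma_1)$ after the sliding terminates.
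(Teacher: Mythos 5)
Your proposal is correct and follows essentially the route the paper intends: the paper states this as an unproved observation, relying on the analogy with Lemma \ref{slidingexists} (termination in a Young diagram) and Lemma \ref{inversesliding} (invertibility), and your write-up supplies exactly those reductions together with the formal orbit argument. The one point you rightly flag as needing care --- that the marked box $\alpha_j$ lands in a position compatible with the flag structure, so that both $T(\Gamma_1)$ and $T(\Gamma_2)$ are Young diagrams --- is indeed the only genuinely new content beyond the $\{\Gamma\vdash n\}$ case, and your sketch of it is sound.
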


\begin{lemma}
\label{slidingandpos2}
Let $n\in \NN$, $W$ be a wall for $[n, n+1]$ and $\Gamma \vdash [n, n+1]$. Let $T$ be a transformation of $\,\Gamma$ with weight $W$. Then the following two facts hold
\begin{itemize}
\item[(1)]  $\pos_W(\Gamma) = \pos_W\left(T(\Gamma)\right)$, 
\item[(2)] $s^+_W(\Gamma) + s^-_W\left (\Gamma\right ) =  s^+_W\left(T(\Gamma)\right) + s^-_W\left(T(\Gamma)\right). $
\end{itemize}
\end{lemma}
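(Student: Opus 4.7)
The plan is to repeat the geometric strategy used for Lemma \ref{slidingandpos}, replacing $\Hi^n(\CC^2)$ with $\Hi^{n,n+1}(\CC^2)$, which is still smooth by Cheah's theorem. First I would establish the analog of Lemma \ref{orbitsandcomponents}: namely that the orbits $\mathcal{O}_i$ of the $\Theta_W$--action on $\{(\Gamma_1,\Gamma_2)\vdash[n,n+1]\}$ are exactly the fixed points sets of the components of $\left(\Hi^{n,n+1}(\CC^2)\right)^{\TT_W}$ under the further residual $\TT_{W^+}$--action. To do so, for each transformation $T_v$ of weight $W$ based at $\Gamma=(\Gamma_1,\Gamma_2)$, I would produce a $\TT_W$--invariant $\PP^1\subset \Hi^{n,n+1}(0)$ connecting $\Gamma$ to $T_v(\Gamma)$. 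When $v=(0,h_{\alpha'_i,\alpha_j})$, the family is simply $(I_{\Gamma_1},\,(\omega_1\alpha_j+\omega_2\alpha'_i)+I_{T(\Gamma_2)})$, which is a $\TT_W$--invariant $\PP^1$ of flags because $\alpha_j$ and $\alpha'_i$ have the same $\TT_W$--weight. When $v=(\ff,\Su(\ff))$ with $\alpha_j$ unaffected by the sliding, one takes the $\PP^1$ of (\ref{p1twinvariante}) applied to $\Gamma_2$ (now viewing $\alpha_j$ as any other box of $\Gamma_2$), together with its projection obtained by removing the marked box; both are $\TT_W$--invariant flags. When $v=(\ff,\Su(\ff))$ and $\alpha_j$ would be attracted by some $\beta_j$ during the sliding of $\Gamma_1$, then by construction $\beta_j$ lies in a strictly higher row than $\alpha_j$ and has the same $\TT_W$--weight, so along the $\PP^1$ built for $\Gamma_1$ we lift to flags by adjoining the box $\omega_1\alpha_j+\omega_2\beta_j$.

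Once such invariant $\PP^1$'s are produced, standard arguments give the analog of Corollary \ref{localdimesnion}: locally around each $\Gamma\in\mathcal{O}_i$, the dimension of $\left(\Hi^{n,n+1}(\CC^2)\right)^{\TT_W}$ equals $s^+_W(\Gamma)+s^-_W(\Gamma)$, because a basis of the tangent space at $\Gamma$ to this fixed locus is formed exactly by those $v\in B(\Gamma_1,\Gamma_2)$ of weight $W$, and distinct such $v$'s produce distinct $\PP^1$'s. Smoothness of $\Hi^{n,n+1}(\CC^2)$ then forces each connected component $F_i$ of the fixed locus to be smooth (fixed loci of torus actions on smooth varieties are smooth). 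Consequently both $\dim F_i = s^+_W(\Gamma)+s^-_W(\Gamma)$ and $\dim N^+_{F_i/\Hi^{n,n+1}(\CC^2)}=\pos_W(\Gamma)$ are constant as $\Gamma$ ranges over $\mathcal{O}_i\subset F_i^{\TT_{W^+}}$, establishing both (1) and (2).

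The main obstacle is verifying rigorously that the third type of $\PP^1$ (when $\alpha_j$ is swept up during the sliding of $\Gamma_1$) is indeed a well defined family of flags $I_1\supset I_2$ in $\Hi^{n,n+1}(\CC^2)$: one must check that the slid configuration with $\alpha_j$ placed at the old position of $\beta_j$ genuinely yields an ideal pair whose containment is preserved along the whole $\PP^1$. This reduces to a direct verification that the standard generators of $I_{\Gamma_2}$ can be $\omega$--deformed coherently with the $\omega$--deformation of the standard generators of $I_{\Gamma_1}$ coming from (\ref{p1twinvariante}), which is precisely why the construction of $\Su$ in Lemma \ref{suiv} is compatible with the sliding. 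Once this is checked, the rest is formal.
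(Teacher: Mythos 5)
Your argument is essentially the paper's: it too reduces both claims to the $[n,n+1]$ analog of Lemma \ref{orbitsandcomponents}, producing $\TT_W$-invariant $\PP^1$'s connecting $\Gamma$ to $T_v(\Gamma)$ \emph{with the same exact recipe} as in the $\Hi^{n}$ case and then using smoothness of $\Hi^{n,n+1}(\CC^2)$ to read off $\pos_W$ as the rank of the positive normal bundle of the fixed component and $s^+_W+s^-_W$ as the dimension of its tangent space. The only blemish is a small notational slip: for $v=(0,h_{\alpha'_i,\alpha_j})$ the pencil should be $\omega_1\alpha_j+\omega_2\alpha'_i$ adjoined to the monomial ideal whose complement is $\Gamma_1\cup\{\alpha_j,\alpha'_i\}$, rather than to $I_{T(\Gamma_2)}$, which already contains the monomial $\alpha_j$.
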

\begin{lemma}
\label{slidingandssymmetric2}
Let $n\in \NN$, $W$ be a wall for $[n, n+1]$ and $\Gamma \vdash [n, n+1]$. Consider the division of $\{\Gamma\vdash [n,n+1]\}$ in orbits for transformations of weight $W$ as in (\ref{slidingorbits2}). Then for every $i\geq 1$ and every $k\in \NN$ we have 
\[
\# \left\{\Gamma \in \mathcal{O}_i \left\vert s^+_{W}(\Gamma)=k \right.\right\} = \# \left\{\Gamma \in \mathcal{O}_i \left\vert s^-_{W}(\Gamma)=k \right.\right\} .
\]
\end{lemma}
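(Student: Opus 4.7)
The plan is to adapt the geometric argument used in the proof of Lemma \ref{slidingandssymmetric}, now working inside the smooth projective ambient space $\Hi^{n,n+1}(\CC^2)$ rather than $\Hi^{n}(\CC^2)$. First I would establish the analog of Lemma \ref{orbitsandcomponents} for the two-step flag case, namely that the decomposition of $\{\Gamma \vdash [n,n+1]\}$ into sliding orbits $\mathcal{O}_i$ corresponds to the decomposition of $\bigl(\Hi^{n,n+1}(\CC^2)\bigr)^{\TT_W}$ into connected components $F_i$. To do this, for every $\Gamma=(\Gamma_1,\Gamma_2)$ and every $v\in B(\Gamma)$ of weight $W$ I will exhibit a $\TT_W$-invariant $\PP^1$ inside $\Hi^{n,n+1}(\CC^2)$ joining $\Gamma$ and $T_v(\Gamma)$. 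For $v=(0,h_{\alpha'_i,\alpha_j})$ this $\PP^1$ is obtained by keeping $I_{\Gamma_1}$ fixed and taking the family $I_{\Gamma_2}(\omega_1,\omega_2)$ generated by the generators of $I_{\Gamma_2}$ in which the generator $\alpha_j$ is replaced by $\omega_1\alpha_j+\omega_2\alpha'_i$. For $v=(\ff,\Su(\ff))$ one uses the $\PP^1$ of (\ref{p1twinvariante}) already constructed at the level of $\Gamma_1$ (or of $\Gamma_2$, depending on the case of Definition \ref{sliding2}), and one checks directly from the definition of $\Su$ that the family of flags $(I_1(\omega),I_2(\omega))$ remains nested, and that the marked box slides coherently; this is where the case split introduced in Definition \ref{sliding2} (whether $\alpha_j$ attracts a box during $T_{\alpha,\beta}$) is used in an essential way.

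Once this correspondence is in place, the remaining argument is formal. Each $F_i$ is a closed subscheme of the smooth projective variety $\Hi^{n,n+1}(\CC^2)$ stable under the full two-torus, hence it is smooth and projective (by Cheah's smoothness theorem together with the standard fact that fixed loci of reductive group actions on smooth varieties are smooth). By Corollary \ref{localdimesnion}, applied in the two-step setting via the previous paragraph, the dimension of $F_i$ at each fixed point $\Gamma\in \mathcal{O}_i$ equals $s^+_W(\Gamma)+s^-_W(\Gamma)$, which by Lemma \ref{slidingandpos2}(2) is constant along the orbit $\mathcal{O}_i$, confirming that $F_i$ is equidimensional and giving its common dimension $d_i$.

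Next I would apply the Bialynicki-Birula theorem (Theorem \ref{Bialynicki-Birula}) to $F_i$ with the generic one-dimensional subtorus $\TT_{W^+}$ acting: since $F_i$ is smooth and projective with isolated $\TT_{W^+}$-fixed points (exactly the elements of $\mathcal{O}_i$), we obtain an affine cell decomposition of $F_i$ in which the cell attracted to $\Gamma\in \mathcal{O}_i$ has dimension equal to the positive part of $T_\Gamma F_i$ for the $\TT_{W^+}$-action. But $T_\Gamma F_i$ is precisely the span of the basis vectors of $B(\Gamma)$ of weight exactly $W$ (with respect to $\TT_W$), and among these the ones that acquire strictly positive weight under $\TT_{W^+}$ are by definition the $s^+_W(\Gamma)$ vectors. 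Hence the cell at $\Gamma$ has dimension $s^+_W(\Gamma)$.

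Finally, Poincaré duality on the smooth projective variety $F_i$ (applied to the Betti numbers read off from the affine cell decomposition via Proposition \ref{Fulton}) gives
\[
\#\{\Gamma\in \mathcal{O}_i\,\vert\, s^+_W(\Gamma)=k\}\;=\;\#\{\Gamma\in \mathcal{O}_i\,\vert\, s^+_W(\Gamma)=d_i-k\}.
\]
Since $s^-_W(\Gamma)=d_i-s^+_W(\Gamma)$ on $\mathcal{O}_i$, the right-hand side equals $\#\{\Gamma\in \mathcal{O}_i\,\vert\, s^-_W(\Gamma)=k\}$, which is the desired equality. The main obstacle in this plan is the verification in the first paragraph that the $\TT_W$-invariant $\PP^1$ constructed in $\Hi^n(\CC^2)$ actually lifts to a $\PP^1$ of \emph{flags} in $\Hi^{n,n+1}(\CC^2)$, i.e.\ that the family of pairs $(I_1(\omega),I_2(\omega))$ is nested for all $\omega\in\PP^1$; once this is confirmed case by case following the subdivision of Definition \ref{sliding2}, the rest of the proof is a direct transcription of the one given for Lemma \ref{slidingandssymmetric}.
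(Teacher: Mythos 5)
Your proposal is correct and follows essentially the same route as the paper, which proves Lemmas \ref{slidingandpos2}, \ref{slidingandssymmetric2} and \ref{orbitsandcomponents2} as a bundle by declaring the argument ``completely analogous'' to the one-step case: $\TT_W$-invariant $\PP^1$'s identify sliding orbits with fixed components $F_i$, smoothness of the ambient $\Hi^{n,n+1}(\CC^2)$ gives smoothness of the $F_i$, and Bialynicki--Birula for $\TT_{W^+}$ together with Poincar\'{e} duality on each $F_i$ yields the symmetry between $s^+_W$ and $s^-_W$. (One small notational slip: for $v=(0,h_{\alpha'_i,\alpha_j})$ the deformed generator of $I_{\Gamma_2}$ should be $\omega_1\alpha'_i+\omega_2\alpha_j$, since it is $\alpha'_i$, not $\alpha_j$, that is a generator of $I_{\Gamma_2}$.)
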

To prove the previous two lemmas we need the following. 
\begin{lemma}
\label{orbitsandcomponents2}
Let $n\in \NN$, $W$ be a wall for $[n, n+1]$ and $\Gamma \vdash [n,n+1]$. Consider the division of $\{\Gamma\vdash [n, n+1]\}$ in orbits for transformations of weight $W$ as in (\ref{slidingorbits2}). Let 
\[
\left(\Hi^{n, n+1}(\CC^2)\right)^{\TT_W} \quad =\quad \bigsqcup_{i} \,\,\, F_i
\]
be the decomposition in connected components of the fixed points set of $\Hi^{n, n+1}(\CC^2)$ for the $T_W$ action. Then there is a bijective correspondence between  the components $(F_i)_i$ and the orbits $(\mathcal{O}_i)_i$ given by: 
\[
F_i \mapsto \{\text{fixed points for the } \TT^+_W \text{ action  on } F_i\}. 
\]
\end{lemma}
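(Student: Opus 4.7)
The plan is to follow exactly the strategy used to prove Lemma \ref{orbitsandcomponents} in the single-ideal case: for each sliding transformation $T=T_v$ of weight $W$ acting on $\Gamma=(\Gamma_1,\Gamma_2) \vdash [n,n+1]$, I produce an explicit $\TT_W$-invariant $\PP^1 \subset \Hi^{n,n+1}(\CC^2)$ whose two $\TT^2$-fixed points are $\Gamma$ and $T(\Gamma)$. This immediately shows that $\Gamma$ and $T(\Gamma)$ lie in the same connected component of the fixed locus, so every orbit $\mathcal{O}_i$ is contained in a single component $F_j$. Since these $\PP^1$'s are indexed bijectively by the weight-$W$ vectors of $B(\Gamma_1,\Gamma_2)$, they will also account for \emph{all} fixed directions inside the fixed component at $\Gamma$, which will give the converse.

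First I would handle the easy case $v=(0,h_{\alpha'_i,\alpha_j})$: here $\Gamma_1$ is unchanged and only the marked box slides from $\alpha_j$ to $\alpha'_i$, two outer corners of $\Gamma_1$ of equal $\TT_W$-weight. The required $\PP^1$ is the family
\[
\bigl(I_{\Gamma_1},\; \mathfrak{m}\cdot I_{\Gamma_1} + \langle \omega_1\alpha_j - \omega_2\alpha'_i\rangle + \langle \gamma : \gamma \text{ other minimal generator of } I_{\Gamma_1}\rangle\bigr), \qquad [\omega_1:\omega_2]\in\PP^1,
\]
which is colength one in $I_{\Gamma_1}$, $\TT_W$-equivariant because $\alpha_j$ and $\alpha'_i$ share the same $\TT_W$-weight, and specializes at the two $\TT^2$-fixed points to $(I_{\Gamma_1},I_{\Gamma_2})$ and $(I_{\Gamma_1},I_{T(\Gamma_2)})$ respectively. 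Next, for $v=(\ff,\Su(\ff))$ I would take the $\TT_W$-invariant $\PP^1 \subset \Hi^{n}(\CC^2)$ joining $I_{\Gamma_1}$ and $I_{T(\Gamma_1)}$ produced in Lemma \ref{orbitsandcomponents}, write down explicit standard generators $I^{(t)}$ along the base using Iarrobino's parametrization (Theorem \ref{standard}), and then extend to a flag $\PP^1 \subset \Hi^{n,n+1}(\CC^2)$ by specifying at each $t$ the unique $\TT_W$-equivariant colength-one sub-ideal $J^{(t)}\subset I^{(t)}$ whose class in $I^{(t)}/\mathfrak{m} I^{(t)}$ is dictated by the recipe $\Su(\ff)$ of Lemma \ref{suiv}. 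By construction $J^{(t)}$ specializes to $I_{\Gamma_2}$ at $t=0$ and to $I_{T(\Gamma_2)}$ at $t=\infty$, precisely because Definition \ref{sliding2} was set up to transport the marked box along the $\ff$-deformation.

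Having shown $\mathcal{O}_i \subset F_{\tau(i)}$ for a well-defined map $\tau$, I would argue the inverse direction and hence bijectivity as follows. Each $F_i$ is smooth (it is the fixed locus of a torus action on the smooth variety $\Hi^{n,n+1}(\CC^2)$, smoothness by Cheah \cite{cheah1998cellular}) and projective. The $\PP^1$'s produced above give linearly independent tangent directions to $F_i$ at each fixed point $\Gamma$, one for every weight-$W$ vector in $B(\Gamma_1,\Gamma_2)$; therefore, as in the proof of Corollary \ref{localdimesnion} transferred to the flag setting, the local dimension of $F_i$ at $\Gamma$ is exactly $s^+_W(\Gamma)+s^-_W(\Gamma)$. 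Combined with smoothness and connectedness of $F_i$, this forces every $\TT^2$-fixed point of $F_i$ to be reachable from $\Gamma$ by iterating the basic $\PP^1$'s, i.e.\ to lie in the same orbit $\mathcal{O}_{\tau^{-1}(i)}$, completing the bijection.

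The main obstacle will be the second sub-case of $v=(\ff,\Su(\ff))$, namely when the sliding procedure $T_{\alpha,\beta}(\Gamma_1)$ of Definition \ref{sliding} passes a box $\beta_j$ through the position of the marked box $\alpha_j$. Here one must verify that the length-$(n+1)$ sub-ideal really does extend algebraically across the collision in a $\TT_W$-equivariant way, so that at the other end of the $\PP^1$ the marked box ends up at the correct position $\beta_j$ prescribed by Definition \ref{sliding2}. Concretely this reduces to checking, using the explicit generators $f_i$ from (\ref{standardgenerators}) along the $\PP^1$ and the definition of $\Su(\ff)$, that the family of hyperplanes in $I^{(t)}/\mathfrak{m} I^{(t)}$ cut out by $\Su(\ff)$ has a well-defined limit at both ends and matches the expected combinatorics of the marked box. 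Once this compatibility is in place, the rest of the argument is direct.
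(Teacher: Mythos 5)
Your proposal is correct and follows essentially the same route as the paper, which itself only sketches this lemma as "completely analogous" to the one-ideal case: explicit $\TT_W$-invariant $\PP^1$'s joining $\Gamma$ to $T_v(\Gamma)$ for each weight-$W$ basis vector, then smoothness of the fixed components (inherited from smoothness of $\Hi^{n,n+1}(\CC^2)$) plus the $s^+_W+s^-_W$ dimension count to get the bijection. The "collision" obstacle you flag is handled automatically if you use the paper's uniform recipe $\left(\omega_1\gamma+\omega_2 T(\gamma)\,\middle\vert\,\gamma\in\Delta_{\Gamma_i,T}\right)$ for $i=1,2$ with the same $[\omega_1:\omega_2]$, since $\Delta_{\Gamma_2,T}\subset\Delta_{\Gamma_1,T}$ makes the flag condition hold identically along the whole $\PP^1$.
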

\begin{proof}
The proof of the complete bundle of results is completely analogous to that for $\Hi^n(\CC^2)$.\\

\hspace{3em}In the proof of Lemma \ref{orbitsandcomponents2} for each $v\in B(\Gamma_1, \Gamma_2)$ with weight $W$ we can, again, easily write down a $\TT_W$ invariant $\PP^1$ connecting $(\Gamma_1, \Gamma_2)$ and $T_v(\Gamma_1, \Gamma_2)$ with the same exact recipe. \\

\hspace{3em}We can still use that the $F_i$ are smooth, since the ambient space $\Hi^{n, n+1}(\CC^2)$ is. Then the quantity $\pos_W(\Gamma)$ is the dimension of the positive normal bundle, and thus the same for each $\Gamma \in F_i$ fixed point. The quantity  $s^+_W(\Gamma)+s^-_W(\Gamma)$ is also the same for every $\Gamma$ since it is the dimension of the tangent space of $F_i$ at $\Gamma \in F_i$. Moreover the number of such $\Gamma$ with given $s^+_W(\Gamma)$ is equal to the number of such $\Gamma$ with given $s^-_W(\Gamma)$ again thanks to Poincar\'{e} duality for the $F_i$ 's.   
\end{proof}

\hspace{3em}We want to spell out what points (1) and (2) of Lemma \ref{slidingandpos2} mean in terms of Definition \ref{definitionB(I1I2)} of $B(\Gamma_1, \Gamma_2)$ . 

\begin{remark} 
Use the notation of Definition \ref{sliding2}. Call $\alpha_i, i=0,\dots,s$ the generators of $\Gamma_1$, $\alpha_j$ the box in $\Gamma_2\setminus\Gamma_1$ and $\beta_i, i=0, \dots, t$ the generators of $T(\Gamma_1)$ and $\beta_h$ the box in $T(\Gamma_2)\setminus T(\Gamma_1)$. Since $B(\Gamma_1, \Gamma_2)$ is constructed by elements of the form $(0,h_{\alpha_i, \alpha_j})$ and $(\ff, \Su(\ff))$ for those $\alpha, \beta$ that are not in $\text{Obs}(\Gamma_1, \Gamma_2)$ and analogously for $T(\Gamma_1, \Gamma_2)$ we have by Lemma \ref{slidingandpos2} 
\begin{align*}
 \pos_W(\Gamma_1) - \pos_W(\text{Obs}(\Gamma_1, \Gamma_2))&+\pos_W(\{h_{\alpha_i, \alpha_j}\vert i=0, \dots, s\})= \nonumber \\=
 \pos_W(\Gamma_1, \Gamma_2) = &
 \pos_W(T(\Gamma_1, \Gamma_2)) = \\
= \pos_W(T(\Gamma_1)) - \pos_W(\text{Obs}(T(\Gamma_1, \Gamma_2)))&+\pos_W(\{h_{\beta_i, \beta_h}\vert i=0, \dots, t\}). \nonumber
\end{align*}
Since we know from Lemma \ref{slidingandpos} that $\pos_W(\Gamma_1)=\pos_W(T(\Gamma_1))$ we obtain that 
\begin{align}
\label{slidingandposobs}
 \pos_W(\text{Obs}(\Gamma_1, \Gamma_2))&-\pos_W(\{h_{\alpha_i, \alpha_j}\vert i=0, \dots, s\})\nonumber \\ = &\pos_W(\text{Obs}(T(\Gamma_1, \Gamma_2)))-\pos_W(\{h_{\beta_i, \beta_j}\vert i=0, \dots, t\}).
\end{align}
Analogously, for $s^+_W$ and $s^-_W $ we obtain that 
\begin{align}
\label{slidingandsobs}
 (s^+_W+s^-_W) &\left( \text{Obs}(\Gamma_1, \Gamma_2)\right)- (s^+_W+s^-_W)\left(\{h_{\alpha_i, \alpha_j}\vert i=0, \dots, s\}\right) \nonumber \\ 
 &\quad= \quad(s^+_W+s^-_W)\left( \text{Obs}(T(\Gamma_1, \Gamma_2))\right)- (s^+_W+s^-_W)\left(\{h_{\beta_i, \beta_j}\vert i=0, \dots, t\} \right).
\end{align}
\end{remark}
\begin{observation}
\label{alphaisteadofalphaprime}
Notice this important fact: we can write (\ref{slidingandposobs}) and (\ref{slidingandsobs}) in terms of the generators of $\Gamma_1$ and $T(\Gamma_1)$ and \emph{not} in term of the generators of $\Gamma_2$ and $T(\Gamma_2)$ because we are only interested in the positive part, and since $W>1$ there is not difference in the two: for example if $\alpha_j$ is of case $3)$, according to cases \ref{cases12}, then the list of $\alpha_i$ and that of $\alpha'_i$ differ only around $\alpha_j$ and in particular we have two new generators $x\alpha_j$ and $y\alpha_j$, but we have that $\left(0, h_{x\alpha_j, \alpha_j}\right)$ and $\left(0, h_{y\alpha_j, \alpha_j}\right)$ cannot be possibly positive for any $W$. Similarly for the other cases. This observation saves us a lot of work in distinguishing cases later.
\end{observation}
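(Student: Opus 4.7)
The plan is to examine, for each case of Definition \ref{cases12}, exactly which vectors appear in the indexing sets $\{(0, h_{\alpha'_i, \alpha_j}) : i = 0, \dots, s'\}$ and $\{(0, h_{\alpha_i, \alpha_j}) : i = 0, \dots, s\}$, and to verify that the symmetric difference between these two lists consists entirely of vectors whose weights cannot contribute to $\pos_W$, $s^+_W$, or $s^-_W$ for any wall $W > 1$.

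First I would read off from Definition \ref{cases12} exactly how the list of generators of $I_{\Gamma_2}$ differs from that of $I_{\Gamma_1}$. In cases 1a and 1b the only change is the replacement of $\alpha_j$ by $y\alpha_j$ or $x\alpha_j$; in case 2 both $x\alpha_j$ and $y\alpha_j$ are inserted while $\alpha_j$ disappears; in case 3 only $\alpha_j$ is removed and no new generator appears. Consequently every element of the symmetric difference of the two indexing sets is of the form $h_{x\alpha_j, \alpha_j}$, $h_{y\alpha_j, \alpha_j}$, or (only on the $\Gamma_1$-side, as a bookkeeping placeholder that is not itself in the basis) $h_{\alpha_j, \alpha_j}$. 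In case 3 both lists are indexed by the generators of $\Gamma_1 \setminus \{\alpha_j\}$, which coincide with the generators of $\Gamma_2$, so there is nothing to check.

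Next I would compute the weights under $\TT^2$. The map $x\alpha_j \mapsto \alpha_j$ has weight $-w_1$ and the map $y\alpha_j \mapsto \alpha_j$ has weight $-w_2$; since every one-parameter subgroup used throughout the chapter satisfies $0 < w_1 < w_2$, both weights are strictly negative with respect to $\TT_W$ and therefore also with respect to $\TT_{W^+}$ and $\TT_{W^-}$. A strictly negative weight precludes any contribution to $\pos_W$ (which demands positivity at $W$), and equally to $s^+_W$ and $s^-_W$, since those quantities require positivity at the neighboring generic tori. The placeholder $h_{\alpha_j, \alpha_j}$ has identically zero weight and so contributes to none of the three quantities either.

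The identical argument applied to the pair $(T(\Gamma_1), T(\Gamma_2))$, with $\alpha_j$ replaced by the corresponding box $\beta_h$, shows that the symmetric difference between the $T(\Gamma_1)$- and $T(\Gamma_2)$-indexed lists also consists only of vectors of weight $-w_1$, $-w_2$, or $0$. Therefore equations (\ref{slidingandposobs}) and (\ref{slidingandsobs}) remain valid after reindexing the $h$-vector sets over the generators of $\Gamma_1$ and $T(\Gamma_1)$ instead of over the generators of $\Gamma_2$ and $T(\Gamma_2)$. The only mild obstacle is clerical: one must not forget to verify the case-by-case count of added or removed generators (in particular the symmetric role of $x\alpha_j$ and $y\alpha_j$ in case 2), but once the weight signs are in hand the cancellation is automatic and uniform across all four cases of Definition \ref{cases12}.
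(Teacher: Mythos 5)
Your proposal is correct and follows essentially the same reasoning the paper gives inline in the observation: the only vectors in the symmetric difference of the two indexing sets are of the form $h_{x\alpha_j,\alpha_j}$ or $h_{y\alpha_j,\alpha_j}$ (or the weight-zero placeholder), whose $\TT^2$-weights $-w_1$, $-w_2$ are strictly negative for every subtorus with $0<w_1<w_2$, so they contribute to none of $\pos_W$, $s^+_W$, $s^-_W$. As a minor point in your favor, your case bookkeeping (two new generators in case 2, none in case 3) is the correct reading of Definition \ref{cases12}, whereas the paper's own text attributes the two new generators to case 3.
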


\hspace{3em}Now we are well placed to use equation (\ref{slidingandposobs}) and (\ref{slidingandsobs}) for the next step i.e. $[n, n+1, n+2]$. We start by spelling out the definitions of the transformation associated to a vector of weight $W$ in the tangent spaces of fixed points, even though the definitions are, in this case, exactly as in the previous case.  

\begin{definition}{Sliding for $[n, n+1, n+2]$}
\label{sliding3}

Let $n\in \NN$, $W$ be a wall for $[n, n+1, n+2]$, and $\Gamma=(\Gamma_1, \Gamma_2, \Gamma_3)\vdash [n, n+1, n+2]$ be a fixed point for the $\TT_{W^+}$ action on $\Hi^{n, n+1, n+2}(\CC^2)$. As usual call $\alpha_j$ the box in $\Gamma_2\setminus \Gamma_1$ marked with a \ytableausetup{boxsize=0.9em, aligntableaux=bottom}\begin{ytableau} *(green)1\end{ytableau} and $\alpha'_l$ the box in $\Gamma_3\setminus \Gamma_2$ marked with \ytableausetup{boxsize=0.9em, aligntableaux=bottom}\begin{ytableau} *(cyan)2\end{ytableau}. \\

\hspace{3em}Suppose $v \in B(\Gamma_1, \Gamma_2, \Gamma_3)$ is of weight $W$. We either have $v$  of type $(0,0,h_{\alpha''_i, \alpha'_l})$ or $v$ of type $\left(0, h_{\alpha'_i, \alpha_j}, \Su(h_{\alpha'_i, \alpha_j})\right)$ or, finally, $v$ of type  $\left(\ff, \Su(\ff), \Su(\Su(\ff))\right)$ with $\ff\in B(\Gamma_1)$. \\

\hspace{3em}In the first case we define $T_{\alpha''_i, \alpha_l}(\Gamma)$ simply by sliding $\alpha_l \mapsto \alpha''_i$, which  is always possible since $W>1$. This is exactly as if we were looking at the transformation of $\Gamma_3$ without caring for the marked boxes. 
\vspace{0.6em}

 \begin{minipage}{0.24\textwidth}
\begin{center}
\ytableausetup{boxsize=1.3em, aligntableaux=bottom}
\begin{ytableau} 
\none[\alpha''_2] \\
\, &\, &\, &*(cyan)2 \\
\,&\, &\, &*(green)1 \\
\end{ytableau}
\end{center}
\end{minipage} 
\begin{minipage}{0.08\textwidth}
\begin{center}
\ytableausetup{boxsize=1.4em, aligntableaux=bottom}
\begin{ytableau} 
\none[]\\
\none[T_{\alpha''_2, \alpha'_l}]\\
\none[\longrightarrow]
\\\end{ytableau}
\end{center} 
\end{minipage} 
\begin{minipage}{0.24\textwidth}
\begin{center}
\ytableausetup{boxsize=1.3em, aligntableaux=bottom}
\begin{ytableau} 
*(cyan)2\\
\, &\, &\,  \\
\,&\, &\,&*(green)1  \\
\end{ytableau}
\end{center} 
\end{minipage} 
\begin{minipage}{0.4\textwidth}\small{
$W=\frac{3}{1}$, $v\in B(\Gamma_1, \Gamma_2, \Gamma_3)$ is of the form $(0,0, h)$, so it involves only two boxes: the sliding and all its properties are immediate. }
\end{minipage} 
\vspace{0.6em}

\hspace{3em}If $v= \left(0, h_{\alpha'_i, \alpha_j}, \Su(h_{\alpha'_i, \alpha_j})\right)$ we need to distinguish two cases: If  $\alpha'_i = \alpha'_l$ we simply switch $\alpha_j$ and $\alpha'_l$ as it would have happen following the rule described in \ref{sliding2}: we are performing the transformation for $\Gamma_2$ and looking at $\alpha'_l$ as any other element of $\Delta_{n+2}\setminus{\Gamma_2}$. If $\alpha'_i \neq \alpha_l$ again we slide according to the transformation $T_{\alpha'_i,\alpha_j }$ of $\Gamma_3$ without caring for the marked boxes. Observe that we move more than one box only if it it happens that $\alpha'_l$ is on the same row or column of $\alpha_j$. In this case we might need to slide more than two boxes. 

\vspace{1em}
\begin{minipage}{0.34\textwidth}
\begin{center}
\ytableausetup{boxsize=1.3em, aligntableaux=bottom}
\begin{ytableau} 
*(green)1&*(cyan)2\\
\, &\, &\none[\alpha'_1]\\
\,&\, &\,  \\
\end{ytableau}
\end{center}
\end{minipage} 
\begin{minipage}{0.1\textwidth}
\begin{center}
\ytableausetup{boxsize=1.4em, aligntableaux=bottom}
\begin{ytableau} 
\none[T_{\alpha'_1, \alpha_j}]\\
\none[\longrightarrow]\\
\none[]
\\\end{ytableau}
\end{center} 
\end{minipage} 
\begin{minipage}{0.34\textwidth}
\begin{center}
\ytableausetup{boxsize=1.3em, aligntableaux=bottom}
\begin{ytableau} 
\,\\
\, &\, &\,  &\,&*(green)1 &*(cyan)2 \\
\end{ytableau}
\end{center} \end{minipage} 

\begin{center}\small{
$W=\frac{2}{1}$, $v\in B(\Gamma_1, \Gamma_2, \Gamma_3)$ is of the form $\left(0, h_{\alpha'_i, \alpha_j}, \Su(h_{\alpha'_i, \alpha_j})\right)$, we think of everything happening for $\Gamma_3$ without special meanings for \ytableausetup{boxsize=0.9em, aligntableaux=bottom}\begin{ytableau} *(green)1\end{ytableau} and \ytableausetup{boxsize=0.9em, aligntableaux=bottom}\begin{ytableau} *(cyan)2\end{ytableau}}\, .
\end{center}

\hspace{3em}Finally $v= \left(\ff, \Su(\ff), \Su(\Su(\ff))\right)$. Again the definition goes as for the case $[n, n+1]$: we either perform $T(\Gamma_1)$ or $T(\Gamma_2)$ or $T(\Gamma_3)$ and look at $\alpha_j$ as in $\Delta_{n+2}\setminus \Gamma_1$ or in $\Gamma_2$ and at $\alpha'_l$ as in $\Delta_{n+2}\setminus \Gamma_2$ or in $\Gamma_3$ depending on whether some box slide onto them or not.

\vspace{1em}
\begin{minipage}{0.34\textwidth}
\begin{center}
\ytableausetup{boxsize=1.3em, aligntableaux=bottom}
\begin{ytableau} 
*(green)1&*(cyan)2\\
\bullet &\, \\
\, &\, &\none[\bullet]\\
\,&\, &\,  \\
\end{ytableau}
\end{center}
\end{minipage} 
\begin{minipage}{0.1\textwidth}
\begin{center}
\ytableausetup{boxsize=1.4em, aligntableaux=bottom}
\begin{ytableau} 
\none[T_{\bullet, \bullet}]\\
\none[\longrightarrow]\\
\none[]
\\\end{ytableau}
\end{center} 
\end{minipage} 
\begin{minipage}{0.34\textwidth}
\begin{center}
\ytableausetup{boxsize=1.3em, aligntableaux=bottom}
\begin{ytableau} 
\,\\
\, &\, &\,  &\,&\, &\,&*(green)1&*(cyan)2  \\
\end{ytableau}
\end{center} \end{minipage} 

\begin{center}\small{
$W=\frac{2}{1}$, $v\in B(\Gamma_1, \Gamma_2, \Gamma_3)$ is of the form $\left(f_{\bullet, \bullet}, \Su(f_{\bullet, \bullet}), \Su(\Su(f_{\bullet, \bullet}))\right)$, and nor $\alpha_j$ nor $\alpha'_j$ are attracting any box in the sliding: we simply treat them as any another boxes and slide everything with the rule $T_{\bullet, \bullet}(\Gamma_3)$.}
\end{center}

 \begin{minipage}{0.34\textwidth}
\begin{center}
\ytableausetup{boxsize=1.6em, aligntableaux=bottom}
\begin{ytableau} 
\bullet &\, \\
\, &\, &*(cyan)\bullet\, 2\\
\,&\, &\, &*(green)1 \\
\end{ytableau}
\end{center}
\end{minipage} 
\begin{minipage}{0.1\textwidth}
\begin{center}
\ytableausetup{boxsize=1.6em, aligntableaux=bottom}
\begin{ytableau} 
\none[T_{\bullet, \bullet}]\\
\none[\longrightarrow]\\
\none[]
\\\end{ytableau}
\end{center} 
\end{minipage} 
\begin{minipage}{0.34\textwidth}
\begin{center}
\ytableausetup{boxsize=1.6em, aligntableaux=bottom}
\begin{ytableau} 
*(cyan)2\\
\,&*(green)1\\
\, &\, &\,  &\,&\, &\,  \\
\end{ytableau}
\end{center} 
\end{minipage}   

\begin{center}\small{
Same situation as before meaning that $\Gamma_1$ and $f_{\bullet, \bullet}$ are the same. Here $\alpha_j$ and $\alpha'_l$ are each attracting a respective box: first we do the sliding $T_{\bullet, \bullet}(\Gamma_1)$ and then we put $\alpha_j$ and $\alpha'_l$ in the boxes that occupied their respective old positions.}
\end{center}  

\hspace{3em}For $\Gamma=(\Gamma_1, \Gamma_2, \Gamma_3)\vdash [n, n+1, n+2]$, and $v \in B(\Gamma)$,  we will denote the result of one of the transformations defined with any of the following notation: 
\[
 T(\Gamma)=T_v(\Gamma)= T(\Gamma_1, \Gamma_2, \Gamma_3)= (T(\Gamma_1), T(\Gamma_2), T(\Gamma_3))=\dots
\]
depending on the aspect of the sliding we want to underline. Again we see $T_v$ as a transformation of $\Delta_{n+2}$ as in definitions \ref{sliding} and \ref{sliding2}. 
\end{definition}

\hspace{3em}Now we are well placed to prove all the analogous results we proved in the previous cases. 

\begin{observation}
Let $n\in \NN$ and $W$ be a wall for $[n,n+1,n+2]$. The transformations of \ref{sliding3} are well defined and every transformation is invertible. They partition the set of $(\Gamma_1, \Gamma_2, \Gamma_3)\vdash [n,n+1, n+2]$ in orbits
\begin{equation}
\label{slidingorbits3}
\left\{\,\,(\Gamma_1, \Gamma_2, \Gamma_3)\,\vdash\, [n,n+1, n+2]\,\,\right\} \quad =\quad \bigsqcup_{i=1} \,\, \mathcal{O}_i;
\end{equation}
where $\mathcal{O}_i$ is closed under the action of transformations $T$ of weight  $W$. 
\end{observation}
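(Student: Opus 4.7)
The plan is to establish three separate claims: well-definedness of each transformation $T_v$, invertibility of each $T_v$ by exhibiting an explicit inverse of the same shape, and finally the orbit decomposition, which is a formal consequence of invertibility plus closure of $\Theta_W$ under inverses.

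First I would verify well-definedness by reducing to the already-proven Lemma \ref{slidingexists}. Definition \ref{sliding3} realizes $T_v$ by performing a sliding on one of $\Gamma_1$, $\Gamma_2$, or $\Gamma_3$ and tracking where the two marked boxes end up. Case by case on the three types of vector in the basis $B(\Gamma_1,\Gamma_2,\Gamma_3)$ of (\ref{base3}): for $v = (0,0,h_{\alpha''_i,\alpha'_l})$ the sliding only swaps $\alpha'_l$ with $\alpha''_i$ (a valid one-box sliding, exactly as in Definition \ref{sliding2}, which produces a Young diagram because $W > 1$), leaving $\Gamma_1$ and $\Gamma_2$ fixed; the inclusion $T(\Gamma_2) = \Gamma_2 \subset T(\Gamma_3)$ is automatic since $\alpha''_i$ is a corner of $\Gamma_2$. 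For $v = (0, h_{\alpha'_i,\alpha_j}, \Su(h_{\alpha'_i,\alpha_j}))$ the underlying sliding $T_{\alpha'_i,\alpha_j}$ on $\Gamma_2$ is well-defined by Lemma \ref{slidingexists}, $\Gamma_1$ is unchanged, and the rule of Definition \ref{sliding2} (treating $\alpha'_l$ either as an element of $\Gamma_3$ or of $\Delta\setminus\Gamma_2$, depending on whether a box slides onto its position) ensures $T(\Gamma_2) \subset T(\Gamma_3)$. The most delicate case is $v = (f_{\alpha,\beta},\Su(f_{\alpha,\beta}),\Su(\Su(f_{\alpha,\beta})))$; here the condition $(\alpha,\beta) \notin \text{Obs}(\Gamma_1,\Gamma_2)\cup\text{PObs}$ built into $B(\Gamma_1,\Gamma_2,\Gamma_3)$ is exactly what is needed so that the sliding on $\Gamma_1$ can be simultaneously lifted to slidings on $\Gamma_2$ and $\Gamma_3$, with $\alpha_j$ and $\alpha'_l$ placed according to whether a box of $\Gamma_1$ slides onto them.

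Next I would establish invertibility by directly adapting the argument of Lemma \ref{inversesliding}. In each of the three cases, the inverse of $T_v$ is of the same type as $v$: for the first type the inverse is the sliding that swaps $T(\alpha'_l)$ back to $\alpha'_l$; for the second it is the analogous inverse sliding on $\Gamma_2$ (with $P$ and $Q$ roles reversed, as in Lemma \ref{inversesliding}); for the third, the inverse picks $(\alpha',\beta') = (\beta^u,\alpha^u)$ from the final step of the procedure defining $T_{\alpha,\beta}(\Gamma_1)$, and one checks that the resulting vector lies in $B(T(\Gamma_1),T(\Gamma_2),T(\Gamma_3))$ (in particular outside the corresponding $\text{Obs} \cup \text{PObs}$). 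The weight is preserved because the underlying box-to-box translation $\alpha\mapsto\beta$ just reverses.

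Finally, the partition into orbits is formal. Define $\Gamma \sim \Gamma'$ iff there is a finite composition of transformations of weight $W$ sending $\Gamma$ to $\Gamma'$. Reflexivity is the empty composition, transitivity is concatenation, and symmetry is the content of the invertibility claim just established, since the inverse of each $T_v$ is itself of the form $T_{v'}$ for some $v'$ of weight $W$. Thus $\sim$ is an equivalence relation on $\{\Gamma \vdash [n,n+1,n+2]\}$, its classes are exactly the orbits $\mathcal{O}_i$, and (\ref{slidingorbits3}) follows. I expect the main obstacle to be the careful bookkeeping in the third case, ensuring simultaneously that the lifted slidings on $\Gamma_2,\Gamma_3$ preserve the flag structure and that the inverse transformation still belongs to the admissible basis $B$ of the new triple; once this is checked, the rest of the argument is routine.
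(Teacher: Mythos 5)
Your proposal is correct and follows the same route the paper intends: the paper states this as an Observation without proof, relying implicitly on the reduction of Definition \ref{sliding3} to slidings on a single diagram (so that well-definedness comes from Lemma \ref{slidingexists} and invertibility from Lemma \ref{inversesliding}), with the orbit decomposition then formal. Your extra care in checking that the inverse vector lands in the basis $B$ of the new triple is a reasonable refinement of the same argument, not a different approach.
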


\begin{lemma}
\label{slidingandpos3}
Let $n\in \NN$, $W$ be a wall for $[n, n+1, n+2]$ and $\Gamma \vdash [n, n+1, n+2]$. Let $T$ be a transformation of $\,\Gamma$ with weight $W$. Then the following two facts hold
\begin{itemize}
\item[(1)]  $\pos_W(\Gamma) = \pos_W\left(T(\Gamma)\right)$, 
\item[(2)] $s^+_W(\Gamma) + s^-_W\left (\Gamma\right ) =  s^+_W\left(T(\Gamma)\right) + s^-_W\left(T(\Gamma)\right). $
\end{itemize}
\end{lemma}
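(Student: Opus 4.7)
The strategy is to reduce the statement to the already-proven analogue for single Young diagrams (Lemma \ref{slidingandpos}) and for pairs (equations (\ref{slidingandposobs}) and (\ref{slidingandsobs})), by exploiting the three-block decomposition of $B(\Gamma_1, \Gamma_2, \Gamma_3)$ coming from Definition \ref{definitionB(I1I2I3)}. First I would observe that, by inspection of Definition \ref{sliding3}, every transformation $T$ of weight $W$ on a triple restricts to a transformation $T|_{12}$ of the pair $(\Gamma_1, \Gamma_2)$ and a transformation $T|_{23}$ of the pair $(\Gamma_2, \Gamma_3)$, each again of weight $W$ in the sense of Definition \ref{sliding2}. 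This is immediate from the construction: in each of the three sub-cases of Definition \ref{sliding3} the sliding was defined by performing the corresponding one-diagram transformation on one of $\Gamma_1, \Gamma_2, \Gamma_3$ and letting all other boxes (including the two marked ones) follow passively as self-bijections of $\Delta_{n+2}$.

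Next, I would split the basis as $B(\Gamma_1, \Gamma_2, \Gamma_3) = B_1 \sqcup B_2 \sqcup B_3$ with $B_1 := \{(0,0,h_{\alpha''_i, \alpha'_l})\}$, $B_2 := \{(0, h_{\alpha'_i, \alpha_j}, \Su(h_{\alpha'_i, \alpha_j})) : (\alpha'_i, \alpha_j) \notin \text{Obs}(I_2, I_3)\}$, and $B_3 := \{(\ff, \Su(\ff), \Su(\Su(\ff))) : (\alpha, \beta) \notin \text{Obs}(I_1, I_2) \cup \text{PObs}\}$. Each vector in this basis inherits its $\TT^2$-weight from the underlying translation of boxes, so its status for $\TT_W$, $\TT_{W^+}$ or $\TT_{W^-}$ depends only on this combinatorial datum. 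Using the weight-preserving bijection $\text{Prec}: \text{PObs} \to \text{Obs}(I_2, I_3) \setminus \text{NotP}$ (multiplication by one of $1, x, y$ preserves the translation vector and hence the weight), the contribution of $B_3$ to $\pos_W$ equals $\pos_W(B(\Gamma_1)) - \pos_W(\text{Obs}(\Gamma_1, \Gamma_2)) - \pos_W(\text{Obs}(\Gamma_2, \Gamma_3)) + \pos_W(\text{NotP})$; using Observation \ref{countsarethesame} to match the case-b) correction of $B_2$, one then obtains the clean identity
\[
\pos_W(\Gamma) \; = \; \pos_W(B(\Gamma_1)) \; + \; A_{12} \; + \; A_{23} \; + \; \pos_W(\text{NotP}),
\]
where $A_{ij} := \pos_W(\{h_{\cdot,\cdot}\}) - \pos_W(\text{Obs}(\Gamma_i, \Gamma_j))$ for $ij \in \{12, 23\}$, and with a completely analogous formula for $s^+_W + s^-_W$. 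Lemma \ref{slidingandpos} handles the first summand, while applying (\ref{slidingandposobs}) to $T|_{12}$ and to $T|_{23}$ handles the two quantities $A_{12}$ and $A_{23}$; the $s$-version (\ref{slidingandsobs}) gives the matching invariance for point (2).

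The main obstacle will then be verifying that the $\text{NotP}$ correction is preserved under $T$. The key claim is that each sliding in Definition \ref{sliding3} preserves the relative displacement of $\alpha_j$ and $\alpha'_l$: either both marked boxes translate by the same vector (when $v$ is of the third type $(\ff, \Su(\ff), \Su(\Su(\ff)))$), or exactly one of them moves while the other remains fixed but the combinatorial corner $xy\alpha_j$ tracks $\alpha_j$, or the two marked boxes swap positions coherently. In case a), $\text{NotP}$ is a singleton whose $\TT^2$-weight depends only on the anti-diagonal difference $\deg\alpha'_l - \deg\alpha_j - 2$ together with whether the relevant translation is in $P$ or $Q$; this data is invariant under each of the three sliding types, so the $W$-weight and $(s^+_W, s^-_W)$-type of the singleton are preserved. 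In case b), where $\alpha'_l \in \{x\alpha_j, y\alpha_j\}$, one runs through the corresponding subcases of Definition \ref{definitionNotP} and matches $\text{NotP}$ against the excluded subset $\{(\alpha'_i, \alpha_j)\} \cap \text{Obs}(I_2, I_3)$ of $B_2$; again via Observation \ref{countsarethesame} the contribution cancels coherently on both sides of $T$. Summing the three blocks then yields both (1) and (2).
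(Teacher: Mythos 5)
Your overall architecture --- splitting $B(\Gamma_1,\Gamma_2,\Gamma_3)$ into its three blocks, converting the $\text{PObs}$ contribution into $\text{Obs}(\Gamma_2,\Gamma_3)\setminus\text{NotP}$ via the weight-preserving $\text{Prec}$, and reducing to Lemma \ref{slidingandpos} plus equation (\ref{slidingandposobs}) applied to the two pairs, so that only a residual term remains to be controlled --- is essentially the paper's proof. The paper performs the same reduction (writing everything in terms of the generators of $\Gamma_1$ via Observation \ref{alphaisteadofalphaprime}, so its residual $A(j,l)$ also carries the conversion terms that your bookkeeping absorbs into $A_{23}$), and the entire content of the proof is then the invariance of that residual, which the paper establishes by a case analysis over the sixteen combinations of the case of $\alpha_j$ before $T$ and the case of $T(\alpha_j)$ after $T$, exhibiting explicit weight-preserving bijections such as (\ref{multicases}) among the contributing vectors.

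It is exactly at this last step that your argument has a genuine gap. The claim that each sliding preserves the relative displacement of $\alpha_j$ and $\alpha'_l$ is false: for a second-type vector $(0,h_{\alpha'_i,\alpha_j},\Su(h_{\alpha'_i,\alpha_j}))$ with $\alpha'_i\neq\alpha'_l$ the sliding moves $\alpha_j$ while $\alpha'_l$ stays put, and for a third-type vector a box of $\Gamma_1$ may slide onto $\alpha_j$ but not onto $\alpha'_l$ (or vice versa), so the displacement between the two marked boxes genuinely changes. Likewise, the $\TT_W$-positivity of the $\text{NotP}$ vector is \emph{not} a function of the anti-diagonal difference $\deg\alpha'_l-\deg\alpha_j-2$: for a wall $W=w_2/w_1\neq 1$ the sign of $aw_1+bw_2$ is not determined by $a+b$. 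What actually saves the argument --- and what you would need to say --- is that the displacement imposed on $\alpha_j$ by a weight-$W$ sliding has $\TT_W$-weight zero, so the $\TT_W$-weight of the residual vector is unchanged even though its position, its anti-diagonal, and its $P$-versus-$Q$ type all change (the latter is why point (2) only asserts invariance of the sum $s^+_W+s^-_W$); and one must separately handle the transitions in which the case of $\alpha_j$ in the sense of Definition \ref{cases12} changes under $T$ (so that $\text{NotP}$ is given by a different formula before and after), and in which the triple passes between cases a) and b) of Definition \ref{casesjl} (so that $\text{NotP}$ changes from a singleton to the set matched by Observation \ref{countsarethesame}). These transitions are precisely where the paper's bijections (\ref{multicases}) do the real work, and they are not covered by your trichotomy of sliding types.
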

\begin{proof}

We use Observation \ref{alphaisteadofalphaprime} to cut the number of possible cases we need to distinguish between: in fact we want to use equations (\ref{slidingandposobs}) and (\ref{slidingandsobs}) separately on $\alpha_j$ and $\alpha'_l$. \\

\hspace{3em}Call $\alpha_i, i=0,\dots,s$ the generators of $\Gamma_1$, $\alpha_j$ the box in $\Gamma_2\setminus\Gamma_1$ and $\alpha'_l$ the box in $\Gamma_3\setminus\Gamma_2$. Call as well  $\beta_i, i=0, \dots, t$ the generators of $T(\Gamma_1)$,  $\beta_h$ the box in $T(\Gamma_2)\setminus T(\Gamma_1)$ and $\beta'_k$ the box in $T(\Gamma_3)\setminus T(\Gamma_3)$. 

\hspace{3em}Suppose now  that $\alpha'_l$ is not on the same row or column as $\alpha_j$.\\

\hspace{3em}Suppose also $j<l$, the other cases is treated in a completely similar way (in fact one can pass to the transpose to treat it.). Given the definition of $B(\Gamma_1, \Gamma_2, \Gamma_3)$ we can write that 
\begin{align*}
 \pos_W(\Gamma_1, \Gamma_2, \Gamma_3) &= \\
 &\,\,\pos_W\left(\Gamma_1 \right) -  \pos_W\left(\text{Obs}(\Gamma_1, \Gamma_2)\right) -  \pos_W\left(\text{Obs}(\Gamma_1, \Gamma_3) \right)\qquad+\\
  +\,\,&\pos_W\left( \{h_{\alpha_i, \alpha_j}\vert i=0, \dots, s\}\right) + \pos_W\left(\{h_{\alpha_i, \alpha'_l}\vert i=0, \dots, s\}\right) \qquad+\\
  + \,\,&\begin{cases} \pos_W\left( \alpha_{j-1} \mapsto \frac{\alpha_l}{y^{p_{j-1}}} \right) & \text{ if we are in case j 1a) or j3)}, \\
  0 & \text{ if if we are in case j 1b) or j2)}  \end{cases}\qquad+\\
  - \,\,& \pos_W\{\left(\alpha_j \mapsto \alpha_l \right)\}\qquad+ \\
  + \,\,&\begin{cases} \pos_W\left( y\alpha_{j} \mapsto {\alpha_l} \right) & \text{ if  we are in case j 1 a),}  \\
   \pos_W\left( x\alpha_{j} \mapsto {\alpha_l} \right) & \text{ if  we are in case j 1 b),}  \\
   \pos_W\left( x\alpha_{j} \mapsto {\alpha_l} \right) + \pos_W\left( y\alpha_{j} \mapsto {\alpha_l} \right) & \text{ if  we are in case j 2),}  \\
  0 & \text{ if we are in case j 3)}.
  \end{cases} 
\end{align*}
The cases we refer to are from Definition $\ref{cases12}$. 
 
\hspace{3em}The last three terms are there to compensate the choice of writing the other terms only with respect to the generators of $\Gamma_1$. Applying now Equation (\ref{slidingandposobs}) twice to $\alpha_j$ and to $\alpha_l$ and the fact that $\pos_W(\Gamma_1) = \pos(T(\Gamma_1))$, we see that we only need to deal with the last 3 terms, and prove that they contribute the same amount before and after the sliding $T$. Call this contribution $A(j , l)$. Since positiveness of $A(j, l)$ depends only on the \emph{relative} positions of $\alpha_j$ and $\alpha_l$ we can suppose that $\alpha_l$ is fixed by $T$ and that only $\alpha_j$ is affected by $T$. Then there are three cases: $\alpha'_l$ is higher than $\alpha_j$ and  $T(\alpha'_l)$ is higher than $T(\alpha_j)$, or $\alpha'_l$ is higher than $\alpha_j$ but  $T(\alpha_j)$ is higher than $T(\alpha'_l)$ and $\alpha_j$ is higher than $\alpha_l$ and  $T(\alpha_j)$ is higher than $T(\alpha'_l)$. We deal with the first case the others are extremely similar.

\hspace{3em}There are three possibilities: if $\alpha_j$ also is fixed by $T$ but its case is different after $T$; if $\alpha_j$ is slid by $T$ as an element of $\Gamma_2$; and finally if there is a $\beta_j \in \Gamma_1$ that slid to $\alpha_j$ with $T$. Since $A(j, l)$ now depends only on the case of $\alpha_j$ we can reduce the cases we need to analyze to 16: the cases of $\alpha_j$ times the cases $T(\alpha_j)$. However, every single case is immediate and ultimately they are all similar to one another. 

\hspace{3em}We do for example the case $j 2)$ and $ T(j) 1a)$. Here we have that the following vectors have weights that coincide: (in the first column we look at $\Delta_{n+2}$ before $T$, on the second column we look at $\Delta_{n+2}$ after $T$. We use the notation $\alpha -1$ to mean the generator immediately below $\alpha$.)
\begin{align}
\label{multicases}
\left(y\alpha_j \mapsto \alpha_l \right) &\longleftrightarrow \left(yT(\alpha_j ) \mapsto \alpha_l \right);\nonumber\\
\left(T(\alpha_j) \mapsto \alpha_l \right) &\longleftrightarrow \left(\alpha_j \mapsto \alpha_l \right);\\
\left(x\alpha_j\mapsto \alpha_l \right) &\longleftrightarrow \left(T(\alpha_j)-1 \mapsto \frac{\alpha_l}{y^{p_{T(\alpha_j)-1}}} \right). \nonumber
\end{align}
All other vectors remain the same before and after $T$. That proves that the weight of $A(j,l)$ remains the same after the transformation $T$. All other cases are completely similar. 

\begin{center}
\ytableausetup{boxsize=1.3em, aligntableaux=bottom}
\begin{ytableau} 
*(cyan)\alpha_l\\
\,\\
\,\\
\none[\vdots]\\
\,\\
\, &\, &\, &\, &\none[\bullet]\\
\, &\, &\, &\, &*(green)\alpha_j &\none[\diamond] \\
\, &\, &\, &\, &\, &\, &\, &\, &\, &\, \\
\, &\, &\, &\, &\, &\, &\, &\, &\, &\, &\none[\star]\\
\, &\, &\, &\, &\, &\, &\, &\, &\, &\, &\,\\
\, &\, &\, &\, &\, &\, &\, &\, &\, &\, &\,\\
\, &\, &\, &\, &\, &\, &\, &\, &\, &\, &\,\\
\, &\, &\, &\, &\, &\, &\, &\, &\, &\, &\,&\,\\
\, &\, &\, &\, &\, &\, &\, &\, &\, &\, &\,&\,&\none[\dots] \\
\end{ytableau}
\ytableausetup{boxsize=1.3em, aligntableaux=bottom}
\begin{ytableau} 
*(cyan)\alpha_l\\
\,\\
\,\\
\none[\vdots]\\
\,\\
\, &\, &\, &\, \\
\, &\, &\, &\,&\none[\star] \\
\, &\, &\, &\, &\, &\, &\, &\, &\, &\,&\none[\bullet] \\
\, &\, &\, &\, &\, &\, &\, &\, &\, &\,&*(green)\alpha_j  \\
\, &\, &\, &\, &\, &\, &\, &\, &\, &\, &\,\\
\, &\, &\, &\, &\, &\, &\, &\, &\, &\, &\,\\
\, &\, &\, &\, &\, &\, &\, &\, &\, &\, &\,&\none[\diamond]\\
\, &\, &\, &\, &\, &\, &\, &\, &\, &\, &\,&\,\\
\, &\, &\, &\, &\, &\, &\, &\, &\, &\, &\,&\,&\none[\dots] \\
\end{ytableau}

\small{An example of the case j 2) T(j) 1a). The symbols are the generators that correspond to one another in \ref{multicases}}.
\end{center}

\hspace{3em}The proof for $s^+_W$ and $s^-_W $ is completely analogous. In fact if one applies equation (\ref{slidingandsobs}) everything one needs to analyze is the the weight of the same term $A(j,l)$ as above. But then for every cases, there is a conservation of the total quantities of weights of the vectors that change before and after $T$. \end{proof}

\begin{lemma}
\label{orbitsandcomponents3}
Let $n\in \NN$, $W$ be a wall for $[n, n+1, n+2]$ and $\Gamma \vdash [n, n+1, n+2]$. Consider the division of $\{\Gamma\vdash [n, n+1, n+2]\}$ in orbits for transformations $T$ of weight $W$ as in (\ref{slidingorbits}). Let 
\[
\left(\Hi^{n, n+1, n+2}(\CC^2)\right)^{\TT_W} \quad =\quad \bigsqcup_{i} \,\,\, F_i
\]
be the decomposition in connected components of the fixed points set of $\Hi^{n, n+1, n+2}(\CC^2)$ for the $T_W$ action. Then there is a bijective correspondence between the components $(F_i)_i$ and the orbits $(\mathcal{O}_i)_i$ and it is given by: 
\[
F_i \mapsto \{\text{fixed points for the } \TT^+_W \text{ action  on } F_i\}. 
\]
Moreover the $F_i$ are smooth. 
\end{lemma}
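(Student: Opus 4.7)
The plan is to mimic the strategy of Lemmas \ref{orbitsandcomponents} and \ref{orbitsandcomponents2}, with the essential new difficulty being that we can no longer appeal to smoothness of the ambient space $\Hi^{n,n+1,n+2}(\CC^2)$. The argument splits into three steps: (a) elements of the same $\Theta_W$-orbit lie in the same connected component $F_i$; (b) each $F_i$ is smooth; (c) the correspondence is a bijection.

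For (a), given $\Gamma=(\Gamma_1,\Gamma_2,\Gamma_3)$ and a weight-$W$ vector $v\in B(\Gamma)$, I would write down an explicit $\TT_W$-invariant $\PP^1\subset \Hi^{n,n+1,n+2}(0)$ connecting $\Gamma$ to $T_v(\Gamma)$. Concretely, using the bijection $T_v\colon \Delta_{n+2}\to\Delta_{n+2}$ from Definition \ref{sliding3}, I set
\[
\Delta_{\Gamma,T_v}^{(k)} := \bigl\{\gamma\in\Delta_{n+2}\ \bigl|\ \gamma\notin\Gamma_k\ \text{or}\ \gamma\notin T_v(\Gamma_k)\bigr\},\qquad k=1,2,3,
\]
and consider the flag of ideals
\[
\bigl(\omega_1\gamma+\omega_2 T_v(\gamma)\ \bigl|\ \gamma\in\Delta_{\Gamma,T_v}^{(k)}\bigr),\qquad [\omega_1\!:\!\omega_2]\in\PP^1,\quad k=1,2,3.
\]
That each individual ideal has the correct colength follows exactly as in the proof of Lemma \ref{orbitsandcomponents} (via Iarrobino's standard generators, applied to $\Gamma_1$, $\Gamma_2$, $\Gamma_3$ separately). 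That the nesting is preserved follows from the fact that, by construction, $T_v$ restricts compatibly on $\Gamma_1\subset\Gamma_2\subset\Gamma_3$: a sliding on the outer diagram only rearranges boxes coherently with the inner ones, and the marked boxes $\alpha_j,\alpha'_l$ are tracked as prescribed in Definition \ref{sliding3}. The family is $\TT_W$-invariant because every $\gamma$ and $T_v(\gamma)$ carry the same $\TT_W$-weight (that is precisely the hypothesis $\mathrm{wt}_W(v)=0$). Hence $\Gamma$ and $T_v(\Gamma)$ lie in the same connected component $F_i$.

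For (b), the crucial new input (compared to the two previous cases) is that we cannot simply invoke smoothness of the ambient. Instead I would combine a tangent-space upper bound with a dimension lower bound. The tangent space to the fixed locus $(\Hi^{n,n+1,n+2}(\CC^2))^{\TT_W}$ at $\Gamma$ is the zero-weight subspace of $T_\Gamma\,\Hi^{n,n+1,n+2}(\CC^2)$; by Lemma \ref{B(123)base} this has dimension exactly $s^+_W(\Gamma)+s^-_W(\Gamma)$. On the other hand, for every weight-$W$ vector $v\in B(\Gamma)$ the invariant $\PP^1$ constructed above gives a nontrivial tangent direction inside $F_i$ at $\Gamma$, and these directions are linearly independent (they correspond to the linearly independent $v$'s). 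Thus the local dimension of $F_i$ at $\Gamma$ is at least $s^+_W(\Gamma)+s^-_W(\Gamma)$, hence equal to it, and the Zariski tangent space has the same dimension. Therefore $F_i$ is smooth at $\Gamma$, and by Lemma \ref{slidingandpos3}(2) this common dimension is constant on the orbit $\mathcal{O}_i$, so $F_i$ is smooth of pure dimension $s^+_W(\Gamma)+s^-_W(\Gamma)$.

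For (c), surjectivity of the map $F_i\mapsto F_i^{\TT_{W^+}}$ onto orbits is the content of (a). Injectivity amounts to showing that if $\Gamma$ and $\Gamma'$ are $\TT_W$-fixed points in the same $F_i$, they lie in the same $\Theta_W$-orbit. Since $F_i$ is smooth, connected and projective, and carries a residual $\TT_{W^+}$-action with isolated fixed points, Bialynicki-Birula produces an affine paving of $F_i$ by the attracting cells at its $\TT_{W^+}$-fixed points; the closures of these cells are connected by the invariant $\PP^1$'s above, which are precisely the $T_v$-transformations. Hence any two fixed points of $F_i$ are related by a chain of such $T_v$'s, i.e.\ belong to the same orbit.

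The main obstacle is step (b): producing the lower bound on $\dim F_i$ at $\Gamma$ without ambient smoothness. The verification that the three-term family $(\omega_1\gamma+\omega_2 T_v(\gamma))$ really defines a flag of ideals of colengths $n,n+1,n+2$, for every $v$ and every relative position of $\alpha_j,\alpha'_l$ dictated by Definition \ref{cases123}, is where the case-splitting in Definition \ref{sliding3} does real work; a careful check that the standard-generator argument of Iarrobino goes through uniformly across the cases (in particular that sliding preserves the admissibility conditions of Lemma \ref{admissible} for the three Hilbert-Samuel types involved) is what secures the whole argument. Once this is done, Lemma \ref{slidingandssymmetric3} -- the analogue of Lemma \ref{slidingandssymmetric2} -- will follow from smoothness of $F_i$ together with Bialynicki-Birula and Poincaré duality on $F_i$, exactly as in the proof of Lemma \ref{slidingandssymmetric}.
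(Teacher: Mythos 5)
Your proposal is correct and follows essentially the same route as the paper: the $\TT_W$-invariant $\PP^1$'s attached to each weight-$W$ basis vector give the orbit-to-component map and the lower bound on the local dimension of $F_i$, while the zero-weight part of the weight basis $B(\Gamma_1,\Gamma_2,\Gamma_3)$ gives the matching upper bound $s^+_W(\Gamma)+s^-_W(\Gamma)$ on the tangent space, with the combinatorial constancy from Lemma \ref{slidingandpos3}(2) replacing the ambient smoothness that was available in the $n$ and $[n,n+1]$ cases. You are in fact more explicit than the paper (which compresses all of this into two sentences), particularly on the injectivity of the component-to-orbit correspondence.
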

\begin{proof}
The proof that there is a bijection between orbits of transformations $T$ of weight $W$ and fixed points components for $\TT_W$ goes exactly as before, since, as before, we can construct for each such $T$ a $\TT_W$ invariant $\PP^1$ connecting $\Gamma$ and $T(\Gamma)$. \\

\hspace{3em}This time however, we already know that $s^+_W(\Gamma) + s^-_W\left (\Gamma\right )$ is constant for all $\Gamma \in \mathcal{O}_i$ thanks to the combinatorial proof of Lemma \ref{slidingandpos3}. Then we have smoothness of $F_i$, even though the ambient space is not smooth.
\end{proof}

\begin{lemma}
\label{slidingandssymmetric3}
Let $n\in \NN$, $W$ be a wall for $[n, n+1, n+2]$ and $\Gamma \vdash [n, n+1, n+2]$. Consider the division of $\left\{\Gamma\vdash [n,n+1, n+2]\right\}$ in orbits for transformations of weight $W$ as in (\ref{slidingorbits2}). Then for every $i \geq 1$ and every $k\in \NN$ we have 
\[
\# \left\{\Gamma \in \mathcal{O}_i \left\vert s^+_{W}(\Gamma)=k \right.\right\} = \# \left\{\Gamma \in \mathcal{O}_i \left\vert s^-_{W}(\Gamma)=k \right.\right\} .
\]
\end{lemma}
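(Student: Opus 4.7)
The plan is to mimic the proofs of Lemmas \ref{slidingandssymmetric} and \ref{slidingandssymmetric2}, with the crucial input being Lemma \ref{orbitsandcomponents3}. That lemma already tells us two things we need: first, that each orbit $\mathcal{O}_i$ equals the set of torus fixed points on a single connected component $F_i$ of $\left(\Hi^{n,n+1,n+2}(\CC^2)\right)^{\TT_W}$, and second, that $F_i$ is smooth. Of course $F_i$ is also projective, being a closed subvariety of $\Hi^{n,n+1,n+2}(0)$. So the combinatorial input from Lemma \ref{slidingandpos3} has already done the heavy lifting that was automatic in the previous two cases (where smoothness of $F_i$ came for free from smoothness of the ambient space).

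With $F_i$ smooth and projective in hand, I would apply Theorem \ref{Bialynicki-Birula} to the residual $\TT_{W^+}$ action on $F_i$. This action is generic for $F_i$ in the sense that its fixed point set is exactly $\mathcal{O}_i$, which is finite. By construction, for $\Gamma\in\mathcal{O}_i$ the positive part of the tangent space $T_\Gamma F_i$ with respect to $\TT_{W^+}$ consists of those vectors in $T_\Gamma \Hi^{n,n+1,n+2}(\CC^2)$ which have zero weight for $\TT_W$ and positive weight for $\TT_{W^+}$, i.e.\ its dimension is exactly $s^+_W(\Gamma)$. Thus Bialynicki-Birula produces an affine paving of $F_i$ whose cell through $\Gamma$ has dimension $s^+_W(\Gamma)$. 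In particular, by Proposition \ref{Fulton},
\[
b_{2k}(F_i) \;=\; \#\left\{\Gamma\in\mathcal{O}_i\,\left\vert\,s^+_W(\Gamma)=k\right.\right\}.
\]
Running the same argument with $\TT_{W^-}$ in place of $\TT_{W^+}$ gives an affine paving of $F_i$ (by the same $\mathcal{O}_i$-indexed attracting sets, viewed now for the opposite one-parameter subgroup) whose cell through $\Gamma$ has dimension $s^-_W(\Gamma)$, so that
\[
b_{2k}(F_i) \;=\; \#\left\{\Gamma\in\mathcal{O}_i\,\left\vert\,s^-_W(\Gamma)=k\right.\right\}.
\]
Comparing these two expressions for the same Betti number yields the desired equality. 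Equivalently, one can argue using Poincaré duality on the smooth projective $F_i$, combined with the identity $s^+_W(\Gamma)+s^-_W(\Gamma)=\dim F_i$ (constant over $\mathcal{O}_i$ by Lemma \ref{slidingandpos3}(2)), to conclude the symmetry directly.

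The main obstacle would have been proving smoothness of the $F_i$ without being able to invoke smoothness of the ambient space, but this has already been handled in Lemma \ref{orbitsandcomponents3} precisely because the combinatorial part of Lemma \ref{slidingandpos3} was proved by hand. Once smoothness of $F_i$ is granted, the argument is a routine application of Bialynicki-Birula and Poincaré duality and is identical to the previous two cases.
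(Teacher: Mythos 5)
Your proposal is correct and follows essentially the same route as the paper: smoothness and projectivity of the fixed component $F_i$ (supplied by Lemma \ref{orbitsandcomponents3}, which rests on the combinatorial proof of Lemma \ref{slidingandpos3}) feed into Bialynicki-Birula, and the symmetry is then extracted exactly as in the proof of Lemma \ref{slidingandssymmetric}. The paper phrases the final step via Poincar\'{e} duality together with $s^+_W(\Gamma)+s^-_W(\Gamma)=\dim F_i$, which you also note as the equivalent formulation of your two-decomposition comparison.
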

\begin{proof}
Thanks to Lemma \ref{orbitsandcomponents3} we know that the $F_i$'s are smooth. Then we can apply Poincar\'{e} duality as in the proof of Lemma \ref{slidingandssymmetric} to conclude. 
\end{proof}

\begin{proposition}
\label{changingtorus3}
Let $n\in \NN$ and let $W$ be a wall for $[n, n+1, n+2]$. Then for every $k \in \NN$ we have 
\[
\#\left\{ \Gamma\, \vdash \,\,[n, n+1, n+2] \left\vert \pos_{W^+}\left(\Gamma \right) = k  \right. \right\} = \# \left\{ \Gamma\, \vdash \,\,[n, n+1, n+2]  \left\vert \pos_{W^-}\left(\Gamma \right) = k  \right. \right\} .
\]
In particular we have that the following two polynomials are the same: \begin{equation}
\label{changetorusequation}
\sum_{\Gamma \,\vdash\, [n, n+1, n+2]  } q^{\pos_{\infty}\left(\Gamma\right)} = \sum_{\Gamma \,\vdash\, [n, n+1, n+2]  } q^{\pos_{1^+}\left(\Gamma\right)}\,\,.
\end{equation}
\end{proposition}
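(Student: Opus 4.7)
The plan is to deduce Proposition \ref{changingtorus3} from the combinatorial machinery already established for sliding transformations, in exact parallel to the proof of Proposition \ref{changingtorus} sketched in \ref{proofofchangingtorus}. The key identities to exploit are
\[
\pos_{W^+}(\Gamma) \,=\, \pos_W(\Gamma) + s^+_W(\Gamma), \qquad \pos_{W^-}(\Gamma) \,=\, \pos_W(\Gamma) + s^-_W(\Gamma),
\]
which hold because a vector $v\in B(\Gamma)$ that is not of weight $W$ has the same sign for $\TT_{W^+}$ and $\TT_{W^-}$, while a vector of weight $W$ becomes positive for exactly one of the two perturbed tori.

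First I would fix the wall $W$ and use the decomposition \eqref{slidingorbits3} of $\{\Gamma\vdash [n,n+1,n+2]\}$ into orbits $\mathcal{O}_i$ for the sliding transformations of weight $W$ introduced in Definition \ref{sliding3}. By Lemma \ref{slidingandpos3}(1), $\pos_W$ is constant on each orbit; call this constant $p_i$. Therefore, for each $i$,
\[
\sum_{\Gamma\in\mathcal{O}_i} q^{\pos_{W^+}(\Gamma)} \,=\, q^{p_i}\sum_{\Gamma\in\mathcal{O}_i} q^{s^+_W(\Gamma)}, \qquad \sum_{\Gamma\in\mathcal{O}_i} q^{\pos_{W^-}(\Gamma)} \,=\, q^{p_i}\sum_{\Gamma\in\mathcal{O}_i} q^{s^-_W(\Gamma)}.
\]
Lemma \ref{slidingandssymmetric3} asserts precisely that for every $k$ the two sets $\{\Gamma\in\mathcal{O}_i : s^+_W(\Gamma)=k\}$ and $\{\Gamma\in\mathcal{O}_i : s^-_W(\Gamma)=k\}$ have the same cardinality, so the two sums on the right differ only by the relabelling $s^+_W \leftrightarrow s^-_W$ and are therefore equal as polynomials in $q$. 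Summing over all orbits, this gives the wall-crossing identity
\[
\sum_{\Gamma\,\vdash\,[n,n+1,n+2]} q^{\pos_{W^+}(\Gamma)} \,=\, \sum_{\Gamma\,\vdash\,[n,n+1,n+2]} q^{\pos_{W^-}(\Gamma)},
\]
which, reading off coefficients, is exactly the statement about the cardinalities $\#\{\Gamma : \pos_{W^{\pm}}(\Gamma) = k\}$.

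Finally, to obtain the global identity \eqref{changetorusequation}, I would start from $W=\infty$ and repeatedly cross every wall $W$ for $[n,n+1,n+2]$ down to $W=1^+$: the set of walls in $\QQ$ between these two values is finite (walls occur at rationals of the form $m/k$ with $1\leq m,k\leq n+2$), and away from any wall the polynomial $\sum_{\Gamma} q^{\pos_{\TT}(\Gamma)}$ is locally constant in the one-parameter subgroup. Each wall crossing preserves the polynomial by the argument above, and after crossing the last wall we land in the chamber containing $\TT_{1^+}$. This yields \eqref{changetorusequation}.

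The main obstacle has already been overcome by the earlier work: once Lemma \ref{slidingandpos3} is proved combinatorially, Lemma \ref{orbitsandcomponents3} produces the $\TT_W$-fixed components $F_i$ and shows that they are smooth (despite the ambient $\Hi^{n,n+1,n+2}(\CC^2)$ being singular), and then Lemma \ref{slidingandssymmetric3} is a direct application of Poincar\'e duality on these smooth projective $F_i$. The present proposition is then purely a bookkeeping step that converts these orbit-level statements into the global polynomial identity. As an immediate byproduct, just as in Observation \ref{stillaffine}, the same wall-crossing argument shows that the attracting sets for $\TT_{\infty}$ on $\Hi^{n,n+1,n+2}(0)$ are affine cells of the expected dimensions $\pos_{\infty}(\Gamma)$, since locally at each fixed point the dimension of the attracting set stays at least equal to the dimension of its Zariski tangent space along each wall crossing.
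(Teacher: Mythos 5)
Your proof is correct and follows essentially the same route as the paper: the paper's own proof of Proposition \ref{changingtorus3} simply invokes the wall-crossing argument of \ref{proofofchangingtorus}, substituting Lemmas \ref{slidingandpos3} and \ref{slidingandssymmetric3} for their $[n]$-analogues, which is precisely the orbit-by-orbit bookkeeping you carry out. Your explicit factorization $\sum_{\Gamma\in\mathcal{O}_i} q^{\pos_{W^\pm}(\Gamma)} = q^{p_i}\sum_{\Gamma\in\mathcal{O}_i} q^{s^\pm_W(\Gamma)}$ is a clean way of writing out what the paper leaves implicit.
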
 

\begin{proof}
The proof is exactly the same as the proof of \ref{proofofchangingtorus}, using Lemmas \ref{slidingandpos3} and \ref{slidingandssymmetric3}. 
\end{proof}
\begin{observation}
Observe that we just proved that the attracting sets for the torus action $\TT_{\infty}$ are affine cells as well. This follows thanks to a reasoning completely similar to the one presented in Observation \ref{stillaffine}.
\end{observation}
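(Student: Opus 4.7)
The plan is to iterate the wall-crossing argument of Observation \ref{stillaffine}. The base case is that the attracting sets for $\TT_{1^+}$ are affine: by Lemma \ref{lemmagoettschecells} each such attracting set lies entirely in a single Hilbert-Samuel stratum $M_{T_1,T_2,T_3}$, which is smooth by Proposition \ref{mainproposition}, so Bialynicki-Birula applies inside that smooth subvariety and yields an affine cell. Starting from this base case, I would cross the walls for $[n,n+1,n+2]$ one at a time, moving from $w_2/w_1 = 1^+$ up to $w_2/w_1 = \infty$. Between consecutive walls nothing changes, while at a wall $W$ only vectors of weight $W$ in the bases $B(\Gamma)$ are affected, exactly as in the proof of Proposition \ref{changingtorus3}.

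At a single wall $W$, let $\Gamma$ be a $\TT_{W^+}$-fixed point and let $A_{W^+}(\Gamma)$ be its attracting set in $\Hi^{n,n+1,n+2}(0)$. The tangent-space bound gives $\dim A_{W^+}(\Gamma) \le \pos_{W^+}(\Gamma) = \pos_W(\Gamma) + s^+_W(\Gamma)$, and the matching lower bound is the content of the argument. For this I would invoke Lemma \ref{orbitsandcomponents3}: the $\TT_W$-fixed component $F_i$ containing $\Gamma$ is smooth, and for every $v \in B(\Gamma)$ of weight $W$ contributing to $s^+_W(\Gamma)$ the $\TT_W$-invariant $\PP^1$ joining $\Gamma$ to $T_v(\Gamma)$ sits inside $A_{W^+}(\Gamma) \cap F_i$; the tangent vectors at $\Gamma$ of these $\PP^1$'s are independent and give $s^+_W(\Gamma)$ dimensions in $F_i$. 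Transversally to $F_i$, the $\pos_W(\Gamma)$ directions strictly positive for $\TT_W$ inside the inductively affine $A_{W^-}(\Gamma)$ remain strictly positive under the small perturbation from $W^-$ to $W^+$, so they persist as attracting directions. Combining, $\dim A_{W^+}(\Gamma) \ge \pos_W(\Gamma) + s^+_W(\Gamma)$; equality holds, so $A_{W^+}(\Gamma)$ is smooth and hence an affine cell.

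The main obstacle is that the ambient $\Hi^{n,n+1,n+2}(\CC^2)$ is singular, so Bialynicki-Birula is not available on it globally. What rescues the argument is that (i) the upper bound $\dim A \le \dim T^+_\Gamma A$ is purely local and uses only the $\TT^2$-equivariant embedding of a formal neighborhood of $\Gamma$ into the Zariski tangent space whose weight basis $B(\Gamma)$ was described in Chapter~2, and (ii) the $\TT_W$-fixed component $F_i$ is itself smooth by Lemma \ref{orbitsandcomponents3}, so Bialynicki-Birula applied to $F_i$ can be used to harvest the $s^+_W(\Gamma)$ directions inside $F_i$. The delicate step is to realize the decomposition of $A_{W^+}(\Gamma)$ into its $F_i$-tangent part and its transverse part as a $\TT_W$-equivariant affine bundle over the $\TT_{W^+}$-attracting cell of $\Gamma$ inside $F_i$; I would check this in a $\TT^2$-equivariant formal neighborhood of $\Gamma$ where the decomposition reduces to the linear-algebraic splitting of $B(\Gamma)$ along $\TT_W$-weights.
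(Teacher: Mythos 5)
Your proposal is correct and follows essentially the same route as the paper: the paper's own justification is precisely the wall-crossing argument of Observation \ref{stillaffine}, namely that the $\TT_W$-invariant $\PP^1$'s attached to the weight-$W$ vectors of $B(\Gamma)$ (via Lemma \ref{orbitsandcomponents3}) supply the lower bound on the local dimension of each attracting set matching the tangent-space upper bound, whence smoothness and affineness. Your additional care in splitting the attracting directions into the $F_i$-tangent part and the transverse part is a reasonable elaboration of the same sketch rather than a different argument.
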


Now we need to find a generating function for the polynomials on the left hand side of (\ref{changetorusequation}) when we consider all $n\in\NN$. This is the goal of the next section.
\section{$\Hi^{n, n+2}(\CC^2)$ }

We turn now our attention to $\Hi^{n, n+2}(\CC^2)$. By imposing a specific condition on the two ideals of length $n$ and $n+2$ of the flag, we define a smooth subspace $ \Hi^{n, n+2}(\CC^2)_{tr}$ of $\Hi^{n, n+2}(\CC^2)$ whose cell decomposition is studied by Nakajima and Yoshioka \cite[Chapter~5]{nakajima2008perverse}. It turns out that the generating function for the Poincar\'{e} polynomials of this smooth space resemble closely to the formula (\ref{generating123}). The similarity was noticed almost accidentally: the construction of the analogue subspace in the case of $\Hi^{n, n+1}(\CC^2)$ gives back the full $\Hi^{n, n+1}(\CC^2)$ and reproduces the results originally of Cheah. 

\hspace{3em}To prove our formula (\ref{generating123}) we thus simply need to match the combinatorics of the formula proved by Nakajima and Yoshioka. The question about some geometrical connections between the two families of spaces, however, remains mysterious . 

\hspace{3em} From the affine cell decomposition we found for $\Hi^{n, n+1, n+2}(0)$, we are then able to find an affine cell decomposition of $\Hi^{n, n+2}(0)$ and a generating function for its Poincar\'{e} polynomials (\ref{generating13}).

\begin{definition}
Let $(J, I)$ be a point of $\Hi^{n, n+2}(\CC^2)$. We say that $J, I$ are \emph{trivially related} if $\bigslant{J}{I} \cong \CC^2$ as trivial $\CC[x, y]$ modules. Said it otherwise 
\[
\bigslant{J}{I} \subseteq \text{ Ker } \begin{bmatrix} x \\ y  \end{bmatrix} : \bigslant{\CC[x,y]}{I} \to \CC^2 \otimes \bigslant{\CC[x,y]}{I}, 
\]
i.e. $xf \in I$ and $yf \in I$ for all $f \in J$. We then divide $\Hi^{n, n+2}(\CC^2)$ into two parts, those couples that are trivially related and those that are not
\[
\Hi^{n, n+2}(\CC^2) = \Hi^{n, n+2}(\CC^2)_{tr}\,\, \bigsqcup\,\,  \Hi^{n, n+2}(\CC^2)_{Ntr}.
\]
\end{definition}
We will see below more details of the following proposition, but we state it here to motivate the definition of this space. 
\begin{proposition}{\cite[Proposition~5.2, Corollary~5.4]{nakajima2008perverse}}
The space $\Hi^{n, n+2}(\CC^2)_{tr}$ is smooth for every $n$ in $\NN$. Moreover it has a cellular decomposition given by the torus action and the Poincar\'{e} polynomials have generating function: 
\begin{equation}
\label{generatingnakajima}
\sum_{n\geq 0} P_q\left(\Hi^{n, n+2}(0)_{tr}\right)t^n = \frac{1}{(1-tq)(1-t^2q^2)}\, \prod_{d=1}^{\infty}\, \frac{1}{1-q^{d-1}t^d}\,.
\end{equation}
\end{proposition}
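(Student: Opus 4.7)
The plan is to mirror the strategy developed throughout the thesis: identify the $\TT^2$-fixed points of $\Hi^{n,n+2}(\CC^2)_{tr}$, compute Zariski tangent spaces there, conclude smoothness from a dimension match, and then apply Theorem~\ref{Bialynicki-Birula} together with Proposition~\ref{Fulton} to read off the Poincar\'{e} polynomial from the attracting cells. First I would identify the fixed points: a pair of monomial ideals $(J,I)=(I_{\Gamma_1},I_{\Gamma_3})$ with $\Gamma_1\subset\Gamma_3$ and $|\Gamma_3\setminus\Gamma_1|=2$ lies in $\Hi^{n,n+2}(\CC^2)_{tr}$ precisely when, writing $\Gamma_3\setminus\Gamma_1=\{\alpha_j,\alpha_l\}$, neither of $\alpha_j,\alpha_l$ equals $x$ or $y$ times the other. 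This is exactly the complement of case (b) of Definition~\ref{casesjl}, and it is what forces $x(J/I),y(J/I)\subset I$.

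Next, the tangent space. Generalizing Lemma~\ref{Rmorphisms}, the space $T_{(J,I)}\Hi^{n,n+2}(\CC^2)$ sits inside $\Hom_R(J,R/J)\oplus\Hom_R(I,R/I)$ as the kernel of an analogue of $\phi-\psi$, and the infinitesimal triviality condition cuts out a further linear subspace. At a fixed point with non-adjacent added boxes I would build a pure-weight basis mimicking $B(I_1,I_2,I_3)$ from Definition~\ref{definitionB(I1I2I3)} but dropping the middle ideal. The crucial simplification is that non-adjacency of $\alpha_j$ and $\alpha_l$ decouples the two obstruction sets $\text{Obs}$ and removes both the $\text{PObs}$ correction and the stray $\diamond$-vector that made $\Hi^{n,n+1,n+2}(\CC^2)$ singular. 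A direct count then yields $\dim T_{(J,I)}=2(n+2)$, matching the expected dimension, so the space is smooth at every torus fixed point and hence everywhere, by upper semicontinuity of tangent dimension along the torus flow to a fixed point.

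Smoothness plus Theorem~\ref{Bialynicki-Birula} provides an affine paving of $\Hi^{n,n+2}(0)_{tr}$ indexed by these fixed points. Parallel to Theorem~\ref{ES} and formula (\ref{posskew}), one finds that the cell attached to $(\Gamma_1,\Gamma_3)$ has dimension $(n+2)-\ell(\Gamma_3)$ corrected by a small term depending only on how $\alpha_j$ and $\alpha_l$ sit in rows strictly above the generators of $I_{\Gamma_1}$.

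The main obstacle is summing the resulting Poincar\'{e} polynomials to recognize the product formula (\ref{generatingnakajima}). My strategy is a weight-preserving bijection that peels off the two non-adjacent boxes: to $(\Gamma_1,\Gamma_3)$ one associates the underlying Young diagram $\Gamma_1$ (whose contribution is the G\"{o}ttsche factor $\prod_d(1-q^{d-1}t^d)^{-1}$ from Theorem~\ref{generatingGoettsche}) together with two independent pieces of data describing the positions of $\alpha_j$ and $\alpha_l$. One expects each box placement to contribute either by extending an existing column or by opening a new one; summed over all choices these contribute $(1-tq)^{-1}$ and $(1-t^2q^2)^{-1}$ respectively. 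The delicate point, and where the real work lies, is verifying that this bijection is compatible with the grading by $q^{\pos_{\infty}}t^n$ in all edge cases: when a box falls on the bottom row, when it coincides with an outer corner of $\Gamma_1$, and when the non-adjacency constraint forbids an otherwise legal placement. This combinatorial bookkeeping is the heart of the Nakajima--Yoshioka argument, and it is the step I expect to consume the bulk of the work.
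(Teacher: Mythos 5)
You should first be aware that the paper contains no proof of this proposition: it is quoted wholesale from Nakajima--Yoshioka \cite[Prop.~5.2, Cor.~5.4]{nakajima2008perverse} and used as a black box. The thesis's own contribution in this chapter is only the combinatorial bridge to it --- the parametrization of fixed points by marked diagrams $(Y,S)$, the two-to-one map $b_n$ of Lemma \ref{proofpropertybn}, and the $\PP^1$-fibration of Lemma \ref{fibration}. So there is no proof in the paper to compare yours against, and I can only assess your proposal on its own terms.

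Your overall architecture (fixed points, weight basis, smoothness by dimension count, Theorem \ref{Bialynicki-Birula} plus Proposition \ref{Fulton}, then a product-formula bijection) is the right kind of argument, and your identification of the fixed points --- two added boxes neither of which is $x$ or $y$ times the other, equivalently two removable corners of $\Gamma_3$ --- agrees with the paper's Observation. But the crux of your smoothness argument contains a concrete error: the claimed tangent dimension $2(n+2)$ cannot be correct, because $\Hi^{n,n+2}(\CC^2)_{tr}$ is a \emph{proper closed} subvariety of the $2(n+2)$-dimensional nested Hilbert scheme. The triviality condition $xJ,\,yJ\subset I$ forces the length-two quotient $J/I$ to be supported at the origin with square-zero module structure; one checks directly that $\Hi^{1,3}(\CC^2)_{tr}$ is the single point $(\mathfrak{m},\mathfrak{m}^2)$ and that $\Hi^{2,4}(\CC^2)_{tr}$ is two-dimensional, so the expected dimension is $2n-2$, not $2n+4$. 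If your basis count really produced $2n+4$ independent tangent vectors, you would be exhibiting a singularity rather than proving smoothness: the linearized triviality condition must eliminate a further six of the vectors you retain, and identifying exactly which ones is where the real work of the smoothness proof lives. Two smaller issues: your guessed cell dimension ``$(n+2)-\ell(\Gamma_3)$ plus a correction'' does not match the clean formula $n+1-\ell(Y)$ of Proposition \ref{propositionNaksmooth} (there is no correction term), and the final summation --- the weight-preserving bijection producing the factors $(1-tq)^{-1}(1-t^2q^2)^{-1}$ --- is only gestured at, although it is the entire content of the generating-function claim. If you do carry it out, note also that formula (\ref{generatingnakajima}) as printed carries an index shift: the coefficient of $t^n$ on the right-hand side equals $P_q(\Hi^{n+1,n+3}(0)_{tr})$, as one sees already for $n=1$ against the unique fixed point of $\Hi^{1,3}(0)_{tr}$.
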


We observe the following connection between $\Hi^{n,n+1, n+2}(0)$ and $\Hi^{n, n+2}(0)$. Note however that this is not the connection that explains the similarities between the generating function: for that we would need to relate $\Hi^{n,n+1, n+2}(0)$ and $\Hi^{n+1, n+3}_{tr}(0)$
\begin{lemma}
\label{fibration}
Consider the projection on first and third factor, i.e. the map that forgets the middle element of a three steps flag 
\[
p_{1,3}: \Hi^{n, n+1, n+2}(0) \to \Hi^{n, n+2}(0).
\]
Then 
\[p_{1,3}^{-1} \left((J, I)\right)\quad \cong\quad \begin{cases} \PP^1 &\text{ if }  \,\,(J,I) \in \Hi^{n, n+2}(0)_{tr} \\ 
\{\text{pt}\} &\text{ if } \,\, (J,I) \in \Hi^{n, n+2}(0)_{Ntr}.\end{cases}
\]
We call $\Hi^{n, n+1, n+2}(0)_{tr}$ those triples projecting to $\Hi^{n, n+2}(0)_{tr}$, and $\Hi^{n, n+1, n+2}(0)_{Ntr}$ those triples projecting to $\Hi^{n, n+2}(0)_{Ntr}$. The projection restricted to $\Hi^{n, n+2}(0)_{tr}$
\[
p_{1,3}: \Hi^{n, n+1, n+2}(0)_{tr} \to \Hi^{n, n+2}(0)_{tr}
\]
is a Zariski locally trivial bundle.
\end{lemma}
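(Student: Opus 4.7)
The proof naturally splits into two parts: identifying the fibers set-theoretically, and then upgrading the trivial case to a Zariski locally trivial fibration.

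For the fiber description, I would observe that $p_{1,3}^{-1}\bigl((J,I)\bigr)$ parametrizes ideals $K$ with $J \supset K \supset I$ and $\dim J/K = \dim K/I = 1$. Equivalently, $K/I$ runs over the codimension-one $R$-submodules of the two-dimensional $R$-module $V := J/I$. Since $I,J$ are both supported at the origin, $x$ and $y$ act nilpotently on $V$, so any one-dimensional $R$-submodule $L \subset V$ must be annihilated by both $x$ and $y$; that is, $L$ must lie in the socle $\mathrm{Soc}(V) = \ker x \cap \ker y \subset V$.

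If $(J,I) \in \Hi^{n,n+2}(0)_{tr}$, then by definition $V$ is a trivial $R$-module, so $\mathrm{Soc}(V) = V$ and every one-dimensional subspace of $V$ is a submodule. Hence $p_{1,3}^{-1}\bigl((J,I)\bigr) = \PP(V) \cong \PP^1$. If instead $(J,I) \in \Hi^{n,n+2}(0)_{Ntr}$, then at least one of $x,y$ acts non-trivially on $V$; since $V$ is two-dimensional and the action is nilpotent, $\mathrm{Soc}(V)$ is exactly one-dimensional, giving a unique submodule $L$ and hence a single-point fiber.

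For the Zariski local triviality on $\Hi^{n,n+2}(0)_{tr}$, I would use the universal ideals $\mathcal{J} \supset \mathcal{I}$ living over $\Hi^{n,n+2}(0)$, which are flat families with fibers of constant colength. The quotient $\mathcal{V} := \mathcal{J}/\mathcal{I}$ is therefore a coherent sheaf whose fibers have constant dimension $2$, hence a rank-$2$ locally free sheaf. Over $\Hi^{n,n+2}(0)_{tr}$ the entire $\PP(\mathcal{V}_{(J,I)})$ sits inside the fiber, so by the fiber-description above $\Hi^{n,n+1,n+2}(0)_{tr}$ is canonically identified with the projectivization $\PP(\mathcal{V}|_{\Hi^{n,n+2}(0)_{tr}})$. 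Projectivizations of locally free sheaves are Zariski locally trivial $\PP^1$-bundles, which is exactly the claim.

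The only step that requires a bit of care is checking that $\mathcal{V}$ is locally free on $\Hi^{n,n+2}(0)_{tr}$ (rather than just on the trivial stratum as a set); but since flatness of $\mathcal{J}$ and $\mathcal{I}$ over the base already forces $\mathcal{V}$ to be flat with fibers of constant rank $2$, local freeness is immediate, and the construction of $\Hi^{n,n+1,n+2}(0)_{tr}$ as the projective bundle follows from the universal property of the relative Grassmannian (here, $\mathrm{Gr}(1,\mathcal{V})$) since the submodule condition is automatic over the trivial locus.
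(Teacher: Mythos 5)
Your proposal is correct and follows essentially the same route as the paper: the paper also identifies the fiber with the lines in the two-dimensional space $J/I$ that are closed under multiplication by $x$ and $y$ (it does this by choosing an explicit basis $g_1,g_2$ and normalizing, where you phrase it more structurally via the socle of $J/I$, which is a clean equivalent), and it likewise obtains Zariski local triviality over the trivially related locus by viewing $p_{1,3}$ as the restriction of the projection between flag varieties of subspaces, which is the same projective bundle $\PP(\mathcal{J}/\mathcal{I})$ you construct.
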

\begin{proof}
Let $(J,I) \in \Hi^{n, n+2}(0)$. Write $I=\langle f_1, f_2, \dots, f_N\rangle$ for a linear basis of $I$ seen as a vector space, i.e. $I \in$ $ \text{Gras}(N, \bigslant{\hat{R}}{\frak{m}^{n+2}})$, where $\hat{R}=\CC[[x,y]]$, $\frak{m}$ is the maximal ideal of $\hat{R}$ and $N+n+2= \frac{(n+2)(n+3)}{2}$.  Extend the basis of $I$ to one of $J$ as $J=\langle g_1, g_2, f_1, \dots, f_N \rangle $. We look at the Grassmannian of possible vector spaces $V$ of dimension $N+1$ such that 
\[
J\supset V \supset I.
\]
It is clear that this is isomorphic to a $\PP^1$. We need to prove that if $(J,I) \in \Hi^{n, n+2}(0)_{tr}$ then all such vector spaces are actually ideals, and if $(J,I) \in \Hi^{n, n+2}(0)_{Ntr}$ only one such vector space is an ideal. Remember that an ideal is a subvector space of $\hat{R}$ closed by multiplication for $x$ and $y$. 

\hspace{3em}Suppose first $(J,I) \in \Hi^{n, n+2}(0)_{tr}$. Then by definition we have $xg_i \in I$ and $yg_i \in I$ for $i=1,2$. Then if $V=\langle \omega_1 g_1 +\omega_2 g_2, f_1, \dots, f_N \rangle$ we have that $x(\omega_1 g_1 +\omega_2 g_2) \in I \subset V$ and $y(\omega_1 g_1 +\omega_2 g_2) \in I \subset V$, proving that all possible choices of $V$ give an ideal. 

\hspace{3em}Suppose now that $(J,I) \in \Hi^{n, n+2}(0)_{Ntr}$.Then by definition there must be a  choice of $[\omega_1 :\omega_2] \in \PP^1$ such that $x(\omega_1g_1 +\omega_2g_2) \notin I $ or $y(\omega_1g_1 +\omega_2g_2) \notin I $. Up to a linear chang of coordinates on $g_1$ and $g_2$ we can then suppose that $xg_1 \notin I$ while $yg_1, xg_2, yg_2 \in I$. Then we see that the only choice of $V$ that is an ideal is $V=\langle g_2, f_1, \dots, f_N \rangle$, as for all $\omega_2 \in \CC$ we have that  $\langle g_1+\omega_2 g_2, f_1, \dots, f_N \rangle$ is not closed by multiplication or it would be $N+2$ dimensional. \\

\hspace{3em} To prove the last claim observe that the map $p_{1,3}$ is simply the restriction to the space of ideals of the standard projection map defined at the level of flag varieties. Then it is a Zariski fibration since the map between flag varieties is. 
\end{proof}

\begin{remark}
We now want to prove that the Betti numbers of $\Hi^{n, n+1, n+2}(0)_{tr}$ coincide with those of $\Hi^{n-1, n, n+1}(0)$. Note the shift in the length of the ideals involved. The geometric relationship between the two spaces remains unclear.
\end{remark}

\begin{observation}[Fixed points $\Hi^{n, n+2}(\CC^2)_{tr}$]
The fixed points of $\Hi^{n, n+2}(\CC^2)_{tr}$ are in bijection with Young diagrams $Y$ of $n+2$ with $2$ \emph{removable} boxes marked. A removable box is a \emph{corner}, i.e. a box $(i, j)\in Y$ such that $(i+1, j) \notin Y$ and $(i, j+1) \notin Y$. We call such an object $(Y,S)$ where $Y$ is the Young diagram and $S$ is the set of marked boxes of cardinality 2.

\vspace{1em}
\begin{minipage}{0.45\textwidth}
\begin{center}
\ytableausetup{boxsize=1.5em, aligntableaux=bottom}
\begin{ytableau} 
\,  \\
\, &*(black)\, \\
\,&\,&\,\\
\,&C&\,&*(black)\,\\
\,&\,&\,&\,&\,
\end{ytableau}
\end{center}
\end{minipage}
\begin{minipage}{0.45\textwidth}
A Young diagram of size $n+2$ with two removable boxes marked in black. 
The box with the $C$ is the only box on the same column and on the same row of a marked box. 
\end{minipage}

\vspace{1em}

The fact that the two marked boxes cannot be in the same row or column is a consequence of the fact that we ask the monomial ideals $J, I$ to be trivially related. \\

We call \emph{relevant} all the boxes of $Y$ that are not marked and are not on the same row and the same column of a marked box. In the picture above the relevant boxes are the white, empty ones.  
\end{observation}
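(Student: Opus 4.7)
The plan is to combine the standard identification of $\TT^2$-fixed points of flag Hilbert schemes with pairs of nested Young diagrams, and then to translate the triviality condition of Definition into a pointwise combinatorial condition on the two boxes added.

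First, since $\TT^2$ acts on $\Hi^{n,n+2}(\CC^2)$ factor by factor via the rescaling action on $\CC[x,y]$, a $\TT^2$-fixed point of $\Hi^{n,n+2}(\CC^2)$ is a pair $(J,I)$ with $I \subset J$, both monomial, and with $\dim_{\CC}\bigl(R/J\bigr)=n$, $\dim_{\CC}\bigl(R/I\bigr)=n+2$. Equivalently this is a pair of Young diagrams $\Gamma_J \subset \Gamma_I$ with $|\Gamma_J|=n$ and $|\Gamma_I|=n+2$, so that $S := \Gamma_I \setminus \Gamma_J$ consists of exactly two boxes. The natural candidate for the bijection I would construct is
\[
(J,I) \,\longmapsto\, (\Gamma_I,\, S),\qquad \text{with } S=\Gamma_I\setminus\Gamma_J.
\]

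Next I would translate the condition $(J,I) \in \Hi^{n,n+2}(\CC^2)_{tr}$. Since $(J,I)$ is a fixed point, $J/I$ is spanned by the monomials $m_\alpha = x^{u_\alpha}y^{v_\alpha}$ for $\alpha=(u_\alpha,v_\alpha) \in S$. The triviality condition asks that $x m_\alpha \in I$ and $y m_\alpha \in I$ for each $\alpha \in S$. Since $I$ is the monomial ideal associated to $\Gamma_I$, this is exactly the requirement that $(u_\alpha+1,v_\alpha) \notin \Gamma_I$ and $(u_\alpha,v_\alpha+1) \notin \Gamma_I$, i.e. that $\alpha$ is a corner (removable box) of $\Gamma_I$. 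Conversely, if both boxes of $S$ are corners of $\Gamma_I$, then $\Gamma_J=\Gamma_I\setminus S$ is automatically a Young diagram (removing any subset of corners from a Young diagram yields a Young diagram), and the triviality condition is satisfied by the same computation. This gives the bijection with pairs $(Y,S)$ where $|Y|=n+2$ and $S$ is a $2$-element subset of corners of $Y$.

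It remains to observe that two distinct corners of the same Young diagram automatically lie in different rows and different columns: if $\alpha_1=(u,v_1)$ and $\alpha_2=(u,v_2)$ with $v_1<v_2$ were both in column $u$, then $(u,v_2)\in \Gamma_I$ would force $(u,v_1+1)\in\Gamma_I$ (since $v_1+1\le v_2$), contradicting that $\alpha_1$ is a corner; and symmetrically for rows. Hence no extra condition on $S$ beyond "two corners" needs to be imposed. The only subtlety — and really the only place where something could go wrong — is making sure the monomial-generator presentation of $J$ does give $J/I$ as a \emph{trivial} module, not merely a $2$-dimensional quotient; but this is immediate from the fact that $m_\alpha$ being killed by both $x$ and $y$ modulo $I$ is exactly the definition of the trivial $\CC[x,y]$-module structure on the line it spans, and the two lines are linearly independent because $\alpha_1\neq\alpha_2$. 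No step should be an obstacle: the whole argument is a direct unfolding of definitions, and the slight non-triviality (the automatic different-row/column property) is resolved by the elementary combinatorial observation above.
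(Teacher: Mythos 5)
Your proposal is correct and unfolds exactly the reasoning the paper leaves implicit in this observation: $\TT^2$-fixed points are pairs of nested monomial ideals, the triviality condition $\mathfrak{m}J\subseteq I$ translates box by box into both added boxes being corners of $\Gamma_I$, and the different-row/different-column property is then automatic. Since the paper states this as an observation without a written proof, your argument is a faithful and complete justification along the intended lines.
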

\begin{proposition}{\cite[Corollary~5.3]{nakajima2008perverse}} 
\label{propositionNaksmooth}
Let $\TT_{{\infty}}$ acts on $\Hi^{n, n+2}(\CC^2)_{tr}$.  Then the positive part of the tangent space of $\Hi^{n, n+2}(\CC^2)_{tr}$ at a fixed point $(Y,S)$ has dimension given by 
\begin{equation}
\label{posremovable}
\text{pos}_{\infty} \left((Y,S)\right) \quad = \quad n+1-\ell(Y).
\end{equation}
The Poincar\'{e} polynomial of $\Hi^{n, n+2}(\CC^2)_{tr}$ is 
\[
P_q\left(\Hi^{n, n+2}(\CC^2)_{tr}\right) = \sum_{(Y,S)} q^{\text{pos}_{\infty} \left((Y,S)\right)},
\]
where the sum is on all torus fixed points. 
\end{proposition}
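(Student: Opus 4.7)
Since $\Hi^{n, n+2}(\CC^2)_{tr}$ is smooth (this is part of the cited Nakajima--Yoshioka result, Proposition 5.2) and the $\TT_\infty$ action has isolated fixed points (the pairs $(Y,S)$ just described), Theorem \ref{Bialynicki-Birula} gives an affine cell decomposition with cells attracted to each fixed point, and cell dimension equal to $\dim T^{+}_{(Y,S)} \Hi^{n,n+2}(\CC^2)_{tr}$. Combined with Proposition \ref{Fulton}, this immediately yields the Poincar\'{e} polynomial formula once we establish \eqref{posremovable}. The entire task therefore reduces to computing the dimension of the positive part of the tangent space at a fixed point.

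For the computation of $pos_\infty((Y,S))$, the plan is to mimic the construction of the basis $B(I_1, I_2)$ from Section 2.2, adapted to the longer skew shape $|Y \setminus Y_J| = 2$. Writing $S=\{\alpha_{j_1}, \alpha_{j_2}\}$ for the two marked corners, I would first build a weight basis of $T_{J,I}\Hi^{n,n+2}(\CC^2)$ consisting of vectors of type $(f_{\alpha,\beta}, \Su\Su(f_{\alpha,\beta}))$ for $(\alpha,\beta)$ avoiding an obstruction set analogous to $\text{Obs}(I_1, I_2)$, together with vectors of type $(0, h_{\alpha'_i, \alpha_{j_k}})$ recording the two extra degrees of freedom coming from the boxes of $S$. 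The trivially-related condition then cuts this ambient tangent space down to $T_{J,I}\Hi^{n,n+2}(\CC^2)_{tr}$ by requiring that each generator of $J$ maps into the socle-like part of $R/I$; concretely, this eliminates those type-$(f, \Su\Su(f))$ vectors whose $\Su\Su$ component would create a nontrivial $R$-action on $J/I$.

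Once the basis is in place, counting weights for the $\TT_\infty$ action follows the same philosophy as in Theorem \ref{ES}: a pure-weight vector $\alpha \mapsto \beta$ contributes to the positive part iff $\deg_y \beta > \deg_y \alpha$. The argument for $\Hi^n(0)$ produced $n - \ell(\Gamma)$ as the difference between the total number of $P_\alpha$-type generators ($=n$) and the boxes on the bottom row ($=\ell(\Gamma)$). Here the same accounting applied to the basis for $T_{J,I}\Hi^{n,n+2}(\CC^2)_{tr}$ at $(Y,S)$ should yield $n+2-\ell(Y)$ from the $f$-type contribution (since $|Y|=n+2$), minus one for the single obstruction tied to the marked boxes that survives the triviality condition, giving exactly $n+1-\ell(Y)$.

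\textbf{Main obstacle.} The delicate part is the bookkeeping introduced by the triviality constraint: it interacts nontrivially with the $\text{Obs}$ and $\text{NotP}$-style sets that intervened in the three-flag analysis of Chapter 3, and one must check that after these combined cancellations the count collapses to the clean expression $n+1-\ell(Y)$ regardless of the configuration of $S$ (two corners, possibly one on the first column affecting $\ell(Y)$). Alternatively, and perhaps more cleanly, one can bypass a direct basis construction by invoking the ADHM/commuting-matrices description of $\Hi^{n,n+2}(\CC^2)_{tr}$ used by Nakajima--Yoshioka: the character of $T_{(Y,S)}\Hi^{n,n+2}(\CC^2)_{tr}$ decomposes into contributions indexed by pairs of boxes of $Y$, and the positive-weight count reduces, as for $\Hi^{n+2}(\CC^2)$, to counting boxes outside the bottom row, minus a correction of size one encoding the two-dimensional quotient $J/I$. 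This is the route taken in the cited Corollary 5.3 of \cite{nakajima2008perverse}, and in the write-up I would either reproduce this character computation or simply quote it, since no new geometric input is needed beyond what Nakajima--Yoshioka provide.
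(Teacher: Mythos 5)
The paper offers no proof of this proposition: it is imported verbatim from Nakajima--Yoshioka \cite[Corollary~5.3]{nakajima2008perverse}, and the only original content nearby is the combinatorial matching $b_n$ that comes afterwards. Your closing option --- ``simply quote it, since no new geometric input is needed'' --- is therefore exactly what the paper does, and for the purposes of this thesis that is the correct call. Your first paragraph (smoothness plus Theorem \ref{Bialynicki-Birula} plus Proposition \ref{Fulton} reduces everything to computing $\dim T^+_{(Y,S)}$) is also the right framing and matches how the surrounding results are organized.

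If you do intend to carry out the sketched direct computation rather than cite it, be aware of one genuine gap: $\Hi^{n,n+2}(\CC^2)_{tr}$ is only a locally closed subset of $\Hi^{n,n+2}(\CC^2)$, so its Zariski tangent space at $(J,I)$ is not obtained by simply selecting those vectors of a weight basis of $T_{(J,I)}\Hi^{n,n+2}(\CC^2)$ ``whose $\Su\Su$ component would create a nontrivial $R$-action on $J/I$'' --- that step presupposes explicit defining equations (or the ADHM presentation Nakajima--Yoshioka actually use) and is where smoothness is really proved, not merely quoted. Likewise the final accounting ``$n+2-\ell(Y)$ minus one surviving obstruction'' is asserted rather than verified; in the analogous three-flag computation of Chapter~3 the corresponding correction ($\diamond$, $\text{NotP}$, etc.) depends on the relative position of the two added boxes, so one would have to check that here the correction is identically $1$ for every admissible pair of corners $S$. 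Neither issue arises if you take the citation route.
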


Observe that, even though $Y$ is a partition of $n+2$, the formula compares the number of its columns with $n+1$ and not $n+2$.

\subsection*{A combinatorial correspondence}
We want to relate the parametrization of the fixed points of $\Hi^{n-1,n, n+1}(\CC^2)$ with that of $\Hi^{n, n+2}(\CC^2)_{tr}$. More precisely consider the two sets $A_n$ and $B_n$ consisting of 
\begin{itemize}
\item[$A_n$: ] Fixed points of $\Hi^{n-1,n, n+1}(\CC^2)$, $\Gamma=(\Gamma_1, \Gamma_2, \Gamma_3)$. We keep the notations of the previous chapters and we call the box $\Gamma_3\setminus \Gamma_2$ marked with \ytableausetup{boxsize=0.9em, aligntableaux=bottom}\begin{ytableau} *(cyan)2\end{ytableau} as $\alpha'_l$ and the box $\Gamma_2\setminus\Gamma_1$ marked with \ytableausetup{boxsize=0.9em, aligntableaux=bottom}\begin{ytableau} *(green)1\end{ytableau} as $\alpha_j$.
\item[$B_n$: ] A couple $(Y,S)$ of a Young diagram $Y$ of size $n+2$ and $S$ a subset of marked removable boxes of $Y$ of size $2$. 
\end{itemize}
The goal is to construct a $2$ to $1$ map $b_n: A_n\to B_n$ such that 
\begin{align}
\label{condition2to1}
\text{ if } \quad b_n^{-1}\left((Y,S)\right)= \left\{\Gamma, H \right\}& \quad \text{ and } \quad \text{pos}_{\infty}\left((Y,S)\right) = r \qquad \text{ then } 
\\
\text{pos}_{\infty}\left(\Gamma\right)& = r \text{ and } \text{pos}_{\infty}\left(H\right) = r+1 \nonumber
\end{align}
where the integers $\text{pos}_{\infty}\left((Y,S)\right)$ and $\text{pos}_{\infty}\left(\Gamma\right)$ are calculated combinatorially as in formulae (\ref{posremovable}) and (\ref{posskew}) respectively. \\

\hspace{3em}To construct the map $b_n: A_n\xrightarrow{2:1} B_n$ we actually start from an element of $B_n$ and create two elements of $A_n$ with a procedure that we show is both injective and surjective. \\

Let $\lambda \vdash n$ be a Young diagram of size $n$. We consider all possible $(Y,S) \in B_n$ with $Y\setminus S = \lambda$. We say that all such elements of $B_n$ are at \emph{level} $\lambda$. Call, as usual, $\alpha_0, \dots, \alpha_s$ the standard monomial generators of $I_{\lambda}$. Then we denote with $(\alpha_i, \alpha_k)$, with $i>k$, the element $(Y, S)$ of $B_n$ given by $Y=\lambda\cup \{\alpha_i\} \cup \{\alpha_k\}$ and $S=\{\alpha_i\} \cup \{\alpha_k\}$ i.e. with $\alpha_i, \alpha_k$ marked in black. Observe that we have $\frac{s(s+1)}{2}$ such choices. We order the elements of $B_n$ at level $\lambda$ in this way 
\[
(\alpha_i, \alpha_k) > (\alpha_r, \alpha_t) \text{ with } i>k, r>t \qquad \text{ if and only if }\qquad \begin{cases}i>r &\text{or,}\\
i=r, \, k>t.
\end{cases}
\]
It is clear that varying $\lambda\vdash n$ we obtain all of the elements of $B_n$.

\vspace{1em}
\begin{minipage}{0.45\textwidth}
\begin{center}
\ytableausetup{boxsize=1.3em, aligntableaux=bottom}
\begin{ytableau} 
*(lightgray)\alpha_3\\
\, &*(lightgray)\alpha_2 \\
\, &\, \\
\,&\,&*(lightgray)\alpha_1\\
\,&\,&\,&*(lightgray)\alpha_0
\end{ytableau}

\small{The Young diagram $\lambda$ is in white and in gray there are the $\alpha_i$s between which we need to chose two boxes to create an element of $B_n$ of level $\lambda$.  }
\end{center}
\end{minipage}
\begin{minipage}{0.45\textwidth}
\begin{center}
\ytableausetup{boxsize=1.3em, aligntableaux=bottom}
\begin{ytableau} 
*(black)\\
\, \\
\, &\, \\
\,&\,\\
\,&\,&\,&*(black)
\end{ytableau}
\ytableausetup{boxsize=1.3em, aligntableaux=bottom}
\begin{ytableau} 
\none[\,]\\
\none[\,]\\
\none[>]\\
\none[\,]\\
\none[\,]\\
\end{ytableau}
\ytableausetup{boxsize=1.3em, aligntableaux=bottom}
\begin{ytableau} 
\none[\,]\\
\, &*(black) \\
\, &\, \\
\,&\,&*(black)\\
\,&\,&\,
\end{ytableau}

\small{The element of $B_n$ represented as  $(\alpha_3, \alpha_0)$ is bigger then the one represented as $(\alpha_2, \alpha_1)$ }
\end{center} 
\end{minipage}

\vspace{1em}

Given $\lambda\vdash n$ we denote $\beta_0, \dots, \beta_{s-1}$ the \emph{corners} of $\lambda$. We start numbering from the right. 

\vspace{1em}
\begin{minipage}{0.35\textwidth}
\begin{center}
\ytableausetup{boxsize=1.3em, aligntableaux=bottom}
\begin{ytableau} 
\beta_2 \\
\, &\beta_1 \\
\,&\,\\
\,&\,&\beta_0
\end{ytableau}
\end{center}
\end{minipage}
\begin{minipage}{0.55\textwidth}
The corners of $\lambda\vdash n$. Recall that a corner is box in a Young diagram $\lambda$ such that there are no other boxes of $\lambda$ on its row to the right nor on its column above. 
\end{minipage}
\vspace{1 em}

We will denote $(\alpha_i, \beta_r)$ the element $\Gamma=(\Gamma_1, \Gamma_2, \Gamma_3)$ of $A_n$ given by $\Gamma_1=\lambda$, $\Gamma_2=\lambda\cup \{\beta_r\}$ and $\Gamma_3= \lambda\cup \{\beta_r\}\cup\{\alpha_i\}$. This is the same as marking with\, \ytableausetup{boxsize=0.9em, aligntableaux=bottom}\begin{ytableau} *(green)1\end{ytableau} the box $\beta_r$ and with\, \ytableausetup{boxsize=0.9em, aligntableaux=bottom}\begin{ytableau} *(cyan)2\end{ytableau} the box $\alpha_r$. We say that all elements in $A_n$ that arise in this way are at \emph{level} $\lambda$. It is clear that varying $\lambda\vdash n$ we obtain all of the elements of $A_n$.\\

\begin{definition}
Let $\lambda\vdash n$. We utilize all the notations as specified above. Given a couple of generators of $I_{\lambda}\,\,$, $(\alpha_{s-i}, \alpha_{s-k})$, with $0\leq i<k \leq s$,  we associate to it a couple of  corners of $\lambda$, precisely  $(\beta_{k-i-1}, \beta_{s-i-1})$ . Then to the element $(\alpha_i, \alpha_k) \in B_n$ at level $\lambda$ we associate the two elements of $A_n$ at level $\lambda$ given by:
\begin{equation}
\label{definitionbn}
(\alpha_i, \alpha_k) \in B_n \xrightarrow{b_n^{-1}}  \left\{ (\alpha_{s-i}, \beta_{k-i-1}), (\alpha_{s-k}, \beta_{s-i-1})\right\} \subset A_n.
\end{equation}
\end{definition}

\begin{observation}Maybe the above map is more clear if we interpret it in terms of the order we have introduced on $B_n$. More precisely, if 
\begin{align*}
(\alpha_s, \alpha_{s-1})>(\alpha_{s}, \alpha_{s-2})>(\alpha_{s}, \alpha_{s-3})>\dots &> (\alpha_s, \alpha_{0})>\\
>(\alpha_{s-1}, \alpha_{s-2})>(\alpha_{s-1}, \alpha_{s-3})>\dots&> (\alpha_{s-1}, \alpha_{0})>\\
\dots\\
 >(\alpha_2, \alpha_{1})&> (\alpha_2, \alpha_{0})>\\
 &> (\alpha_1, \alpha_{0})
\end{align*}
 is the ordered list of all elements of $B_n$ of level $\lambda$, then we associate to them, respectively,  
 \begin{align*}
 \left\{(\alpha_s, \beta_0), (\alpha_{s-1}, \beta_{s-1})\right\},  \left\{(\alpha_s, \beta_1), (\alpha_{s-2}, \beta_{s-1})\right\}, \dots, & \left\{(\alpha_s, \beta_{s-1}), (\alpha_{0}, \beta_{s-1})\right\},\\
  \left\{(\alpha_{s-1}, \beta_0), (\alpha_{s-2}, \beta_{s-2})\right\}, \dots,  &\left\{(\alpha_{s-1}, \beta_{s-2}), (\alpha_{0}, \beta_{s-2})\right\}, \\
\dots \\
  \left\{(\alpha_2,\beta_{0}), (\alpha_{1}, \beta_{1})\right\},  & \left\{(\alpha_2,\beta_1)( \alpha_{0}, \beta_1)\right\},\\
 & \left\{(\alpha_1,\beta_{0})( \alpha_{0}, \beta_0)\right\}. 
 \end{align*}

\end{observation}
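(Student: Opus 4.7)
The Observation asserts nothing new beyond the content of equation~(\ref{definitionbn}); it simply re-packages that formula as an ordered list of images under $b_n^{-1}$ once the $\binom{s+1}{2}$ elements of $B_n$ at level $\lambda$ are arranged according to the total order introduced just above. Accordingly, the plan is to verify, one row of the display at a time, that applying the formula in~(\ref{definitionbn}) to each element in the given order yields exactly the pair printed on the right. This is pure unpacking of definitions, so the proof is essentially an index-tracking exercise rather than a genuinely new argument.

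First I would settle the notational dictionary between~(\ref{definitionbn}) and the list. The sentence preceding~(\ref{definitionbn}) parametrizes couples of generators of $I_\lambda$ as $(\alpha_{s-i}, \alpha_{s-k})$ with $0 \le i < k \le s$, and associates to them the corners $(\beta_{k-i-1}, \beta_{s-i-1})$. Rewriting with $I := s-i$, $K := s-k$ so that $I > K$ (matching the convention $(\alpha_I, \alpha_K)$ with $I > K$ used throughout the list), the association becomes
\[
(\alpha_I,\alpha_K) \;\longmapsto\; \bigl\{(\alpha_I,\beta_{I-K-1}),\,(\alpha_K,\beta_{I-1})\bigr\}.
\]
The enumeration of $B_n$ at level $\lambda$ then reads lexicographically: the first index $I$ decreases from $s$ to $1$, and for each fixed $I$ the second index $K$ decreases from $I-1$ to $0$.

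Next I would check each of the three blocks of the display. For the top block $I=s$, varying $K$ from $s-1$ down to $0$ gives $I-K-1 = s-K-1$ ranging from $0$ to $s-1$, and $I-1 = s-1$ constant, so the image is $\{(\alpha_s,\beta_{s-K-1}),(\alpha_K,\beta_{s-1})\}$; setting $K=s-1,s-2,\dots,0$ one reads off exactly the first row of the Observation, terminating in $\{(\alpha_s,\beta_{s-1}),(\alpha_0,\beta_{s-1})\}$. The same substitution with $I=s-1$ produces $\{(\alpha_{s-1},\beta_{s-2-K}),(\alpha_K,\beta_{s-2})\}$ for $K=s-2,\dots,0$, matching the second row. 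The general row for fixed $I$ is identical after relabelling $I = s - t$. The two extremal small-index rows for $I=2$ and $I=1$ are then just the special cases $I \in \{1,2\}$, and they visibly agree with the last two lines of the display.

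The only thing to be careful about is the rotation $i \leftrightarrow s-i$ between the original parametrization by $(\alpha_{s-i},\alpha_{s-k})$ and the ordered enumeration by $(\alpha_I,\alpha_K)$ with $I>K$; once that is fixed, and once one observes that both $I-K-1$ and $I-1$ land in $\{0,1,\dots,s-1\}$ (so the corners $\beta_r$ are always well defined), there is no further obstacle. No substantive combinatorial content is being asserted by the Observation beyond the content of~(\ref{definitionbn}); the listing is simply the image of the total order on $B_n$ under the explicit formula, and verifying it is a bookkeeping check with no hard step.
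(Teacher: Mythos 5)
Your proposal is correct and matches what the paper does: the Observation carries no independent content beyond Definition~(\ref{definitionbn}), the paper offers no separate proof, and the verification is exactly the index substitution $I=s-i$, $K=s-k$ (giving $k-i-1=I-K-1$ and $s-i-1=I-1$) followed by running $K$ down from $I-1$ to $0$ for each fixed $I$. Your remark about the silent relabelling between $(\alpha_{s-i},\alpha_{s-k})$ and the ordered pairs $(\alpha_I,\alpha_K)$ with $I>K$ is the one genuine point of care, and you handle it correctly.
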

\begin{example}
Below we depict an example. On the left we have the first element of $B_n$ at level $\lambda$ and on the right the two elements of $A_n$ of level $\lambda$ in its preimage along $b_n$. Observe that $\text{pos}_{\infty}\left((\alpha_3, \alpha_2) \right) = \text{pos}_{\infty}\left((\alpha_3, \beta_0) \right)-1=\text{pos}_{\infty}\left((\alpha_2, \beta_2) \right)$. 

\vspace{0.3em}

\begin{minipage}{0.2\textwidth}
\begin{center}
\ytableausetup{boxsize=1 em, aligntableaux=bottom}
\begin{ytableau} 
*(black)\\
\, &*(black) \\
\, &\, \\
\,&\,\\
\,&\,&\,
\end{ytableau}
\begin{ytableau} 
\none[\,]\\
\none[\,]\\
\none[\mapsto]\\
\none[\,]\\
\none[\,]\\
\end{ytableau}
\end{center}
\end{minipage}
\begin{minipage}{0.34\textwidth}
\begin{center}
\ytableausetup{boxsize=1 em, aligntableaux=bottom}
\begin{ytableau} 
2\\
\, \\
\, &\, \\
\,&\,\\
\,&\,&1
\end{ytableau}
\ytableausetup{boxsize=1 em, aligntableaux=bottom}
\begin{ytableau} 
\none[\,]\\
\none[\,]\\
\none[\,]\\
\none[\,]\\
\none[,]\\
\end{ytableau}
\ytableausetup{boxsize=1.1em, aligntableaux=bottom}
\begin{ytableau} 
\none[\,]\\
1 & 2 \\
\, &\, \\
\,&\, \\
\,&\,&\,
\end{ytableau}
\end{center}
\end{minipage}

\vspace{0.8em}

The example below, instead, shows $b_n^{-1}\left((\alpha_2, \alpha_0) \right) = \left\{(\alpha_2, \beta_1), (\alpha_0, \beta_1)\right\}$. Observe that $\text{pos}_{\infty}\left((\alpha_2, \alpha_0) \right)$ $ = \text{pos}_{\infty}\left((\alpha_0, \beta_1) \right)=\text{pos}_{\infty}\left((\alpha_2, \beta_1) \right)-1 $.

\begin{minipage}{0.2\textwidth}
\begin{center}
\ytableausetup{boxsize=1 em, aligntableaux=bottom}
\begin{ytableau} 
\, &*(black) \\
\, &\, \\
\,&\,\\
\,&\,&\,&*(black)
\end{ytableau}
\begin{ytableau} 
\none[\,]\\
\none[\,]\\
\none[\mapsto]\\
\none[\,]\\
\none[\,]\\
\end{ytableau}
\end{center}
\end{minipage}
\begin{minipage}{0.34\textwidth}
\begin{center}
\ytableausetup{boxsize=1 em, aligntableaux=bottom}
\begin{ytableau} 
\,&2 \\
\, &1 \\
\,&\,\\
\,&\,&\,
\end{ytableau}
\ytableausetup{boxsize=1 em, aligntableaux=bottom}
\begin{ytableau} 
\none[\,]\\
\none[\,]\\
\none[\,]\\
\none[\,]\\
\none[,]\\
\end{ytableau}
\ytableausetup{boxsize=1.1em, aligntableaux=bottom}
\begin{ytableau} 
\none[\,]\\
\, \\
\, &1 \\
\,&\, \\
\,&\,&\,& 2
\end{ytableau}
\end{center}
\end{minipage}
\end{example}

\vspace{0.7em}
\begin{lemma}
\label{proofpropertybn}
The map $b_n : A_n \to B_n$  of Definition \ref{definitionbn} is two to one and satisfies the condition (\ref{condition2to1}). 
\end{lemma}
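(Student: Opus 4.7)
My plan is to verify separately that $b_n$ is two-to-one and that the positive-part identity (\ref{condition2to1}) holds; both reductions happen level by level, so I will fix $\lambda \vdash n$ and work only with the fibre of $b_n$ above $\lambda$.

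For bijectivity, first note that whenever $0 \leq i < k \leq s$ the indices $k-i-1$ and $s-i-1$ both lie in $\{0,\dots,s-1\}$, so the proposed preimages $(\alpha_{s-i},\beta_{k-i-1})$ and $(\alpha_{s-k},\beta_{s-i-1})$ are honest elements of $A_n$ at level $\lambda$. Since $\#A_n|_\lambda = s(s+1)$ and $\#B_n|_\lambda = \tfrac{1}{2}s(s+1)$, it suffices to show that every $(\alpha_I,\beta_R) \in A_n|_\lambda$ lies in exactly one preimage pair. I will do this by the case split $R < I$ versus $R \geq I$: in the first case, solving $\alpha_{s-i}=\alpha_I$ and $\beta_{k-i-1}=\beta_R$ gives the unique pair $(i,k)=(s-I,\, s-I+R+1)$, and the inequalities $0 \leq i < k \leq s$ translate precisely to $R < I$; in the second case, solving $\alpha_{s-k}=\alpha_I$ and $\beta_{s-i-1}=\beta_R$ gives $(i,k)=(s-R-1,\, s-I)$, valid precisely when $R \geq I$. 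Thus the two cases partition $A_n|_\lambda$ and $b_n$ is 2-to-1.

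The computation of positive parts rests on one geometric identification. Ordering the exterior corners $\alpha_0,\dots,\alpha_s$ of $\lambda$ from bottom-right to top-left and the interior corners $\beta_0,\dots,\beta_{s-1}$ analogously, one has $\beta_R = \alpha_{R+1} - (0,1)$, so $\beta_R$ sits immediately below $\alpha_{R+1}$ in the same column. Feeding this into formula (\ref{posskew}) for each preimage, and using that $\alpha_0$ is the only exterior corner whose adjunction increases the column count $\ell(\lambda)$, the $y$-degree condition $\deg_y \alpha'_l \geq \deg_y \alpha_j + 2$ reduces to a simple index comparison. A short case analysis on whether $s-k=0$ or $s-k \geq 1$ will then show that one preimage realises $\pos_\infty = r$ and the other $\pos_\infty = r+1$, matching (\ref{condition2to1}).

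The main obstacle is not conceptual but organisational: one must juggle two parallel parametrisations (the $B_n$ pair $(\alpha_{s-i},\alpha_{s-k})$ against the $A_n$ pair $(\alpha_I,\beta_R)$) and treat the boundary case $s-k=0$ separately, since it is precisely there that both $\ell(Y)$ and one of the $\ell(\Gamma_3)$'s jump by one while the other $\ell(\Gamma_3)$ does not. Once the identity $\beta_R = \alpha_{R+1} - (0,1)$ is in hand, what remains is a routine verification.
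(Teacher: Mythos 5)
Your argument follows the same route as the paper's: reduce to a fixed level $\lambda$, verify the two\--to\--one property there, and then compare the two explicit formulas (\ref{posremovable}) and (\ref{posskew}) by a case analysis on whether the lower marked box is $\alpha_0$. Your treatment of the two\--to\--one part is actually more careful than the paper's (which just asserts it): the count $\#A_n|_\lambda = s(s+1) = 2\,\#B_n|_\lambda$ together with your explicit inversion, split according to $R<I$ versus $R\geq I$, is a clean and complete verification. One correction, though: the identity you call the ``one geometric identification,'' namely $\beta_R = \alpha_{R+1}-(0,1)$, is false as a statement about boxes. The inner corner between the consecutive generators $\alpha_R$ and $\alpha_{R+1}$ sits at $\bigl(\deg_x\alpha_R - 1,\ \deg_y\alpha_{R+1}-1\bigr)$, so it lies one row below $\alpha_{R+1}$ but in general in a \emph{different} column (e.g.\ for $\lambda=(2,2)$ one has $\alpha_1=(0,2)$ and $\beta_0=(1,1)$); it is only in staircase-like examples such as the one drawn in the paper that the columns agree. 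Fortunately the only consequence you feed into (\ref{posskew}) is the relation $\deg_y\beta_R=\deg_y\alpha_{R+1}-1$, which does hold, so the reduction of the condition $\deg_y\alpha'_l\geq\deg_y\alpha_j+2$ to the index comparison $i-1>r$ survives and your concluding case analysis (which matches the paper's) goes through. You should restate the identification at the level of $y$\--degrees only.
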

\begin{proof}
It is clear that $b_n$ is well defined and two to one at level $\lambda$ for each $\lambda \vdash n$.  It is also clear that it is defined on all of $A_n$ and surjective. In fact all elements of $B_n$ and $A_n$ arise in the way we described and are involved in the construction. To prove that is well defined at different levels $\lambda$ and $\mu$ it is sufficient to observe that if there exist $\beta_1$ corner of $\lambda$,  $\beta_2$ corner of $\mu$, $\alpha_1$ generator of $I_{\lambda}$ and $\alpha_2$ generator of $I_{\mu}$ such that $(\alpha_1,\beta_1 )=(\alpha_2, \beta_2)$, then necessarily $\alpha_1=\alpha_2$ and so $\lambda = \mu$ implying that we are actually at the same $\lambda$ level. \\

Then we only need to prove that condition (\ref{condition2to1}) is satisfied. To do so observe that:
\[
\text{pos}_{\infty}\left((\alpha_i,\alpha_k)\right) = \begin{cases}n+1-\ell(\lambda) &\text{ for all } i>k>0,\\
 n-\ell(\lambda) &\text{ for all } i>k=0 \end{cases}
\]
whereas 
\[
\text{pos}_{\infty}\left((\alpha_i,\beta_r)\right) = \begin{cases}n+1-\ell(\lambda) &\text{ for all } r \geq i-1>-1, \\
n+1-\ell(\lambda)+1 &\text{ for all }  i>0, i-1>r, \\
 n-\ell(\lambda) &\text{ for all } i=0. \end{cases}
\]
Then looking at the definition of $b_n^{-1}(\alpha_{s-i}, \alpha_{s-k})$ in (\ref{definitionbn}) it is clear that : either $s-k=0$ so that $k-i-1\geq s-i-1$ and then 
\[\text{pos}_{\infty}\left((\alpha_{s-i}, \beta_{k-i-1})\right)=\text{pos}_{\infty}\left((\alpha_{s-i}, \alpha_{0})\right)+1 = \text{pos}_{\infty}\left((\alpha_{s-k}, \beta_{s-i-1})\right)+1;
\]
or $s-k>0$ and then $s-i>k-i-1$ so that, again, 
\[
\text{pos}_{\infty}\left((\alpha_{s-i}, \beta_{k-i-1})\right)=\text{pos}_{\infty}\left((\alpha_{s-i}, \alpha_{0})\right)+1 = \text{pos}_{\infty}\left((\alpha_{s-k}, \beta_{s-i-1})\right)+1.
\]
This completes the proof. 
\end{proof}

\begin{proposition}
The Poincar\'{e} polynomials of $\Hi^{n, n+1, n+2}(0)$ satisfy 
\begin{equation}
\label{generatingtheorem}
\sum_{n\geq 0 } P_q\, \left(\Hi^{n, n+1, n+2}(0) \right) z^n = \frac{q+1}{(1-zq)(1-z^2q^2)}\,\, \prod_{m\geq 1} \frac{1}{1-z^mq^{m-1}}.
\end{equation}
\end{proposition}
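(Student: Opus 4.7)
The strategy is a three-step passage from the complicated sum for $P_q(\Hi^{n,n+1,n+2}(0))$ (given by Proposition \ref{affinepavingfor123} in terms of $\pos_{1^+}$, which involves the intricate formula (\ref{posskewmale})) to the much simpler generating function of Nakajima--Yoshioka for the smooth spaces $\Hi^{m,m+2}(\CC^2)_{tr}$. All three ingredients have been prepared in the preceding sections; what remains is to assemble them.

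First, I would invoke Proposition \ref{changingtorus3} to replace $\pos_{1^+}$ with $\pos_{\infty}$, so that
\[
P_q(\Hi^{n,n+1,n+2}(0)) \;=\; \sum_{\Gamma\,\vdash\, [n,n+1,n+2]} q^{\pos_{\infty}(\Gamma)}.
\]
By formula (\ref{posskew}), $\pos_{\infty}(\Gamma)$ now depends only on $\ell(\Gamma_3)$ and the relative $y$-degrees of the marked boxes $\alpha_j$ and $\alpha'_l$; this is the crucial simplification that makes the generating function tractable and aligns the bookkeeping with the Nakajima--Yoshioka side.

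Second, I would apply the two-to-one combinatorial correspondence $b_n\colon A_n\to B_n$ of Lemma \ref{proofpropertybn}, where $A_n$ indexes the fixed points of $\Hi^{n,n+1,n+2}(\CC^2)$ contributing to the above sum and $B_n$ indexes the fixed points of $\Hi^{n,n+2}(\CC^2)_{tr}$ described by Proposition \ref{propositionNaksmooth}. Since every fiber of $b_n$ consists of two elements whose $\pos_{\infty}$-values are $r$ and $r+1$ for some $r$, the sum over $A_n$ factors as
\[
\sum_{\Gamma\in A_n} q^{\pos_{\infty}(\Gamma)} \;=\; (1+q)\sum_{(Y,S)\in B_n} q^{\pos_{\infty}((Y,S))} \;=\; (1+q)\,P_q\!\left(\Hi^{n,n+2}(0)_{tr}\right),
\]
the last equality being Proposition \ref{propositionNaksmooth}.

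Finally, multiplying by $z^n$, summing over $n\ge 0$, and inserting the Nakajima--Yoshioka generating function (\ref{generatingnakajima}) yields
\[
\sum_{n\ge 0} P_q(\Hi^{n,n+1,n+2}(0))\,z^n \;=\; (1+q)\sum_{n\ge 0} P_q(\Hi^{n,n+2}(0)_{tr})\,z^n \;=\; \frac{q+1}{(1-zq)(1-z^2q^2)}\,\prod_{k\ge 1}\frac{1}{1-z^kq^{k-1}},
\]
which is the identity claimed. The real work has been done already: the delicate point is Lemma \ref{proofpropertybn} (constructing a level-preserving $2{:}1$ correspondence whose fibers come in ``pos, pos${+}1$'' pairs) and Proposition \ref{changingtorus3} (justifying the wall-crossing from $\TT_{1^+}$ to $\TT_{\infty}$ on the singular ambient space $\Hi^{n,n+1,n+2}(\CC^2)$). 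With these prepared, the generating function itself follows by this short bookkeeping step and no further calculation is needed.
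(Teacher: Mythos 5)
Your proposal follows exactly the paper's own proof: replace $\TT_{1^+}$ by $\TT_{\infty}$ via Proposition \ref{changingtorus3}, apply the two-to-one correspondence of Lemma \ref{proofpropertybn}, and substitute the Nakajima--Yoshioka generating function (\ref{generatingnakajima}). The one thing to correct is an index shift in the middle step: the correspondence $b_n$ relates fixed points of $\Hi^{n-1,n,n+1}(\CC^2)$ (the set $A_n$, built from $\lambda\vdash n$ by removing a corner $\beta_r$ and adding a generator $\alpha_i$) to marked diagrams $(Y,S)$ with $|Y|=n+2$, so the identity it yields is $P_q(\Hi^{n,n+1,n+2}(0))=(q+1)\,P_q(\Hi^{n+1,n+3}(0)_{tr})$, not $(q+1)\,P_q(\Hi^{n,n+2}(0)_{tr})$ as in your displayed equation; the latter already fails at $n=0$, where $\Hi^{0,1,2}(0)\cong\PP^1$ while $\Hi^{0,2}(0)_{tr}=\emptyset$. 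Your final formula nevertheless comes out right because the series in (\ref{generatingnakajima}), normalized to have constant term $1$, is in fact $\sum_{n}P_q(\Hi^{n+1,n+3}(0)_{tr})z^n$, so the two off-by-one readings cancel.
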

\begin{proof}
We proved in Proposition \ref{affinepavingfor123} that $\Hi^{n, n+1, n+2}(0)$ has an affine paving, given by the attracting sets for the torus $\TT_{1^+}$, and that the dimensions of these affine cells are given by the positive parts of the tangent spaces. In the previous section, Proposition  \ref{changingtorus3}, we proved that the Poincar\'{e} polynomial for every $\Hi^{n, n+1, n+2}(0)$ can be computed by using the positive parts of the tangent spaces at fixed points with respect to the torus $\TT_{\infty}$ and given by formula (\ref{posskew}). Then, the  existence of map $b_n$ with property (\ref{condition2to1}), as proved in Lemma \ref{proofpropertybn}, shows that 
\[
P_q(\Hi^{n, n+1, n+2}(0))= (q+1)P_q(\Hi^{n+1, n+3}(0)_{tr})
\]
thanks to Proposition \ref{propositionNaksmooth}. Finally we use equation (\ref{generatingnakajima}) to conclude. 
\end{proof}

Now we see how we get for free a cell decomposition of $\Hi^{n, n+2}(0)$, a combinatorial formula for its Poincar\'{e} polynomial and a generating function for all the  Poincar\'{e} polynomials of $\Hi^{n, n+2}(0)$ for different $n\in \NN$. 

\begin{lemma}
\label{cellrespected}
Let $\Gamma= (\Gamma_1, \Gamma_2, \Gamma_3)\vdash [n, n+1, n+2]$ be a fixed point of $\Hi^{n, n+1, n+2}(0)_{Ntr}$. Call $\AAA_{\Gamma} \subset \Hi^{n, n+1, n+2}(0)$ the attracting affine cell with respect to the torus action $\TT_{1^+}$. If $I=(I_1, I_2, I_3) \in \AAA_{\Gamma}$, then $I\in \Hi^{n, n+1, n+2}(0)_{Ntr}$ . 
\end{lemma}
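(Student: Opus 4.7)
The plan is to show that $\Hi^{n,n+1,n+2}(0)_{tr}$ is a closed, $\TT_{1^+}$-invariant subset of $\Hi^{n,n+1,n+2}(0)$; the lemma will then follow from a one-line limit argument.

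First I would establish closedness of $\Hi^{n,n+2}(0)_{tr}\subset \Hi^{n,n+2}(0)$. By definition, a pair $(J,I)$ is trivially related iff the two-dimensional quotient $J/I$ is annihilated by the maximal ideal $\mathfrak{m}=(x,y)$, i.e., iff both multiplication maps $x,y\colon J/I\to J/I$ vanish. On the tautological family of length-$(n+2)$ quotients over $\Hi^{n,n+2}(0)$, these multiplications are morphisms of coherent sheaves, so their simultaneous vanishing is a closed condition; concretely, writing a local flat family of generators of $J$ modulo $I$, the coefficients of $xf$ and $yf$ in the complementary part of the tautological quotient produce explicit polynomial equations defining $\Hi^{n,n+2}(0)_{tr}$ inside the ambient product of Grassmannians. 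Moreover, this locus is manifestly $\TT^2$-invariant, since the torus acts by algebra automorphisms of $\CC[x,y]$ preserving $\mathfrak{m}$: if $\mathfrak{m} J\subseteq I$ then $\mathfrak{m}(t\cdot J)=t\cdot(\mathfrak{m} J)\subseteq t\cdot I$ for every $t\in\TT^2$. Because the projection $p_{1,3}\colon \Hi^{n,n+1,n+2}(0)\to \Hi^{n,n+2}(0)$ is torus-equivariant, the preimage $\Hi^{n,n+1,n+2}(0)_{tr}=p_{1,3}^{-1}(\Hi^{n,n+2}(0)_{tr})$ inherits both closedness and $\TT_{1^+}$-invariance.

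Finally I would conclude by contradiction: if some $I\in\AAA_{\Gamma}$ lay in $\Hi^{n,n+1,n+2}(0)_{tr}$, then by $\TT_{1^+}$-invariance its entire orbit $\{t\cdot I:t\in \TT_{1^+}\}$ would stay inside the closed locus $\Hi^{n,n+1,n+2}(0)_{tr}$, and so would the limit $\Gamma=\lim_{t\to 0} t\cdot I$, contradicting the hypothesis $\Gamma\in \Hi^{n,n+1,n+2}(0)_{Ntr}$. The only mildly technical step is the rigorous verification of closedness; I expect this to pose no real obstacle, being a standard property of tautological families on Hilbert schemes, and if one prefers a wholly concrete argument, it suffices to exhibit the polynomial equations mentioned above in the coordinates on the product of Grassmannians in which $\Hi^{n,n+2}(0)$ naturally embeds.
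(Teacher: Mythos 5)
Your proof is correct, but it takes a different route from the paper's. The paper argues pointwise and in the forward direction: from the hypothesis that the fixed point is not trivially related it extracts a monomial witness (one of $x\alpha_j$, $y\alpha_j$ landing in $\Gamma_3$), lifts that monomial to an element $f\in I_1$ with $\lim_{t\to 0}t\cdot f$ equal to it, and observes that $xf$ (or $yf$) cannot lie in $I_3$ because its limit does not lie in $I_{\Gamma_3}$; this exhibits an explicit witness that $I$ is not trivially related. (Incidentally, the paper's text says ``either $x\alpha'_l$ or $y\alpha'_l$ are not in $I_{\Gamma_3}$,'' which is a slip --- $x\alpha'_l$ and $y\alpha'_l$ always lie in $I_{\Gamma_3}$ since $\alpha'_l$ is an outer corner of $\Gamma_2$; the correct witnesses are $x\alpha_j$ or $y\alpha_j$, and non-triviality of the fixed point is exactly case b) of Definition \ref{casesjl}.) You instead prove the contrapositive globally: the trivially related locus is cut out by the vanishing of the multiplication maps $x,y\colon J/I\to R/I$ on the (locally free) tautological quotients, hence is closed and $\TT^2$-invariant, and therefore contains the limit of any of its points; since $\AAA_\Gamma$ consists of points limiting to $\Gamma\in\Hi^{n,n+1,n+2}(0)_{Ntr}$, none of them can be trivially related. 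Both arguments rest on the same semicontinuity phenomenon. Yours is softer and more general (it works verbatim for any one-parameter subgroup and makes clear that only one inclusion can be expected, consistent with the paper's subsequent Observation that attracting cells of trivially related fixed points may contain non-trivially related points); the paper's stays entirely within its explicit monomial framework and avoids any appeal to tautological sheaves, at the cost of being a single-witness computation. The only step you defer --- closedness of $\{(J,I):\mathfrak{m}J\subseteq I\}$ in the product of Grassmannians of subspaces of $R/\mathfrak{m}^{n+2}$ --- is indeed routine, since multiplication by $x$ and $y$ descends to $R/\mathfrak{m}^{n+2}$ and the containment condition is polynomial in local Grassmannian coordinates.
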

\begin{proof}
By hypothesis $\Gamma= (\Gamma_1, \Gamma_2, \Gamma_3) \in \Hi^{n, n+1, n+2}(0)_{Ntr}$. This means that either $x\alpha'_l$ or $y\alpha'_l$ are not in $I_{\Gamma_3}$, suppose $x\alpha'_l \notin I_{\Gamma_3}$. Since $\lim_{t\to0} t\cdot I_1 = I_{\Gamma_1}$ there exists $f \in I_1$ such that $\lim_{t\to 0} t\cdot f = \alpha'_l$. Since $\lim_{t\to 0} t\cdot xf = x\alpha'_{l}$, if $xf \in I_3$ we would have an absurd. Thus $I=(I_1, I_2, I_3)$ is not trivially related. 
\end{proof}

\begin{observation}
Notice that a similar statement for $\Gamma= (\Gamma_1, \Gamma_2, \Gamma_3)$ fixed point of $\Hi^{n, n+1, n+2}(0)_{tr}$ is false. Consider for example the fixed point given by the nested monomial ideals $(x, y^2) \supset (x^2, xy, y^2) \supset (x^2, xy, y^3)$. The attracting cell is parametrized as

\begin{minipage}{0.6\textwidth}
\[\AAA_{\Gamma} \cong \AAA^{3} \cong \left\{
\begin{matrix}
(x+\omega y , y^2) &\supset & (x^2, xy, y^2) \\
& \supset & (x^2+\alpha y^2, xy+\beta y^2, y^3)
\end{matrix}
\right\}\]
\end{minipage}
\begin{minipage}{0.35\textwidth}
\begin{center}
\ytableausetup{boxsize=1.1em, aligntableaux=bottom}
\begin{ytableau} 
\none[] \\
\none[] \\
\none[\Gamma =] \\
\none[] 
\end{ytableau}
\ytableausetup{boxsize=1.1em, aligntableaux=bottom}
\begin{ytableau} 
*(cyan) 2\\
\,\\
\,& *(green)1
\end{ytableau}
\end{center}
\end{minipage}

\vspace{0.8em}
One can see that all of the points in $\AAA_{\Gamma}$ with $\alpha \neq -\omega^2$ or $\beta\neq \omega$  are in $\Hi^{n, n+1, n+2}(0)_{Ntr}$. 
\end{observation}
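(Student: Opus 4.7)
The plan is to verify the claim by a direct computation using the defining triviality criterion: a flag $(I_1, I_3) \in \Hi^{n,n+2}(0)$ lies in $\Hi^{n,n+2}(0)_{tr}$ if and only if $xf, yf \in I_3$ for every $f \in I_1$, equivalently (by $R$-linearity of the condition modulo $I_3$) for every $f$ in a set of $R$-generators of $I_1$. For the parametrized point in $\AAA_\Gamma$, I have $I_1 = (x + \omega y,\ y^2)$, so I would reduce the question to checking the four products $x(x+\omega y)$, $y(x+\omega y)$, $xy^2$, $y^3$ modulo $I_3 = (x^2 + \alpha y^2,\ xy + \beta y^2,\ y^3)$.

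First I would fix the monomial basis $\{1,x,y,y^2\}$ of $R/I_3$ together with the reductions $x^2 \equiv -\alpha y^2$ and $xy \equiv -\beta y^2$ modulo $I_3$. Two of the four products lie in $I_3$ unconditionally: $y^3 \in I_3$ by definition, and $xy^2 = y(xy + \beta y^2) - \beta y^3 \in I_3$. For the remaining two the computation gives
\[
x(x+\omega y) \;=\; x^2 + \omega\,xy \;\equiv\; -(\alpha + \omega\beta)\,y^2 \pmod{I_3},
\]
\[
y(x+\omega y) \;=\; xy + \omega\,y^2 \;\equiv\; (\omega - \beta)\,y^2 \pmod{I_3}.
\]
Since $y^2$ is nonzero in $R/I_3$, both products lie in $I_3$ if and only if $\alpha + \omega\beta = 0$ and $\omega = \beta$ simultaneously; substituting the second relation into the first yields $\beta = \omega$ and $\alpha = -\omega^2$. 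Hence the trivially-related locus inside $\AAA_\Gamma$ is exactly the one-dimensional closed subvariety cut out by $\{\beta = \omega,\ \alpha = -\omega^2\}$, and its complement is precisely the set of points of $\AAA_\Gamma$ lying in $\Hi^{n, n+1, n+2}(0)_{Ntr}$, as asserted.

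There is no real obstacle: the whole argument is a half-page of reductions in $R/I_3$. The only step requiring a brief justification is that it suffices to test triviality on a generating set of $I_1$, which follows because the subset $\{f \in I_1 : xf, yf \in I_3\}$ is an $R$-submodule of $I_1$ containing $I_3$, so it equals $I_1$ iff it contains a system of $R$-generators, reducing the check to the two generators $x+\omega y$ and $y^2$. The key qualitative conclusion is that the $tr$ locus has codimension two inside the three-dimensional cell $\AAA_\Gamma$, which exhibits sharply how the analogue of Lemma \ref{cellrespected} fails for trivially-related fixed points: generic limits of non-trivially-related triples under $\TT_{1^+}$ can perfectly well converge to a trivially-related fixed point.
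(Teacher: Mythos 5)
Your computation is correct and is exactly the verification the paper leaves implicit behind "one can see": reducing $x(x+\omega y)$ and $y(x+\omega y)$ modulo $I_3$ in the basis $\{1,x,y,y^2\}$ yields the coefficients $-(\alpha+\omega\beta)$ and $(\omega-\beta)$ of $y^2$, whose simultaneous vanishing is $\beta=\omega$, $\alpha=-\omega^2$, and your reduction to the two $R$-generators of $I_1$ via the submodule $\{f\in I_1 : xf, yf\in I_3\}$ is sound. Nothing is missing.
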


\begin{lemma}
\label{cellprojected}
Consider the $\TT_{1^+}$ action on $R=\CC[x,y]$. Let $ (\Gamma_1, \Gamma_3)$ with $\Gamma_1 \vdash n$  and $\Gamma_3\vdash n+2$ be a fixed point of $\Hi^{n, n+2}(0)_{tr}$. Call $\{\alpha_j, \alpha_l \} = \Gamma_3\setminus \Gamma_1$ with $\alpha_j = \lim_{t\to 0 } t\cdot(\alpha_j +\alpha_l)$. We define 
\[
\Gamma := (\Gamma_1, \Gamma_2,\Gamma_3) \text{ with } \Gamma_2:= \Gamma_1\cup \{\alpha_j\}
\] 
and denote $\AAA_{\Gamma} \subset \Hi^{n, n+1, n+2}(0)$ the attracting affine cell of $\,\Gamma$. Then if ${A}_{\Gamma_1,\Gamma_3}\subset \Hi^{n, n+2}(0)_{tr}$ is the attracting set of the fixed point $\Gamma_1,\Gamma_3$ we have that:
\[
p_{1,3}: \AAA_{\Gamma}  \cong {A}_{\Gamma_1,\Gamma_3}
\] 
where $p_{1,3}:\Hi^{n, n+1, n+2}(0) \to \Hi^{n, n+2}(0) $ is the projection on the first and third factor.
\end{lemma}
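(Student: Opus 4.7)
The plan is to construct a natural section of the $\PP^1$-bundle $p_{1,3}$ over $A_{\Gamma_1, \Gamma_3}$ and show that its image is exactly $\mathcal{A}_\Gamma$.

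First I will check that $p_{1,3}$ sends $\mathcal{A}_\Gamma$ into $A_{\Gamma_1, \Gamma_3}$. Since $p_{1,3}$ is $\TT_{1^+}$-equivariant, $\lim_{t \to 0} t(I_1, I_3) = (\Gamma_1, \Gamma_3)$ for every $(I_1, I_2, I_3) \in \mathcal{A}_\Gamma$. The pair $(\Gamma_1, \Gamma_3)$ is trivially related because $I_{\Gamma_1}/I_{\Gamma_3}$ is spanned by the monomials $\alpha_j, \alpha_l$, each annihilated by multiplication by $x$ and $y$ modulo $I_{\Gamma_3}$ (the neighbouring boxes $x\alpha_{\bullet}, y\alpha_{\bullet}$ lie outside $\Gamma_3$).

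Second, I will show $p_{1,3}|_{\mathcal{A}_\Gamma}$ is a set-theoretic bijection onto $A_{\Gamma_1, \Gamma_3}$. Fix $(J, I) \in A_{\Gamma_1, \Gamma_3}$. By Lemma \ref{fibration}, $p_{1,3}^{-1}((J,I)) \cong \PP(J/I) \cong \PP^1$, so I must determine which $K$ in this $\PP^1$ satisfies $\lim_{t\to 0} tK = I_{\Gamma_2}$. Choose a basis $g_1, g_2$ of $J/I$ (which exists globally over the affine cell $A_{\Gamma_1, \Gamma_3}$ by Iarrobino's standard generators, cf.\ Theorem \ref{standard}) so that under the $\TT_{1^+}$-flow the line $\CC g_i$ converges to $\CC \alpha_j$ for $i = 1$ and to $\CC \alpha_l$ for $i = 2$. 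By hypothesis $\alpha_j$ has strictly smaller $\TT_{1^+}$-weight than $\alpha_l$, so for any $\omega_1 \neq 0$,
\[
\lim_{t \to 0} \CC \cdot t(\omega_1 g_1 + \omega_2 g_2) \;=\; \CC \alpha_j,
\]
whence $\lim_{t\to 0} t K_{[\omega_1 : \omega_2]} = I_{\Gamma_3} + (\alpha_j) = I_{\Gamma_1 \cup \{\alpha_l\}} \neq I_{\Gamma_2}$. Only at $[\omega_1 : \omega_2] = [0 : 1]$ does one obtain $\lim_{t\to 0} tK = I_{\Gamma_3} + (\alpha_l) = I_{\Gamma_2}$. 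Hence $\mathcal{A}_\Gamma \cap p_{1,3}^{-1}((J,I))$ is the single point $K = I + \CC g_2$.

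Third, I will exhibit the inverse $s \colon A_{\Gamma_1, \Gamma_3} \to \mathcal{A}_\Gamma$, $(J, I) \mapsto (J, I + \CC g_2, I)$, as a regular morphism. The distinguished line $\CC g_2 \subset J/I$ is characterized intrinsically as the unique line whose $\TT_{1^+}$-limit is $\CC \alpha_l$; in the Iarrobino-type affine coordinates on $A_{\Gamma_1, \Gamma_3}$, this amounts to prescribing the vanishing of a single coefficient (the $\alpha_j$-component of the standard expansion of a chosen generator of $J$ modulo $I$), so $s$ is algebraic. Since $p_{1,3}|_{\mathcal{A}_\Gamma}$ and $s$ are mutually inverse morphisms, $p_{1,3}|_{\mathcal{A}_\Gamma}$ is an isomorphism of varieties. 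The main obstacle is the last step: making precise how $g_2$ varies algebraically over the attracting set. This reduces to tracking the leading $\TT_{1^+}$-terms of the Iarrobino standard generators of $(J, I)$ via the elimination Procedure \ref{divisionprocedure}, and checking that the coefficient isolating $\alpha_l$ from $\alpha_j$ is non-degenerate throughout $A_{\Gamma_1, \Gamma_3}$.
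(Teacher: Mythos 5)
Your proposal is correct and follows essentially the same route as the paper: both identify the fiber of $p_{1,3}$ over a point of $A_{\Gamma_1,\Gamma_3}$ with the $\PP^1$ of middle ideals $I+\CC(\omega_1 g_1+\omega_2 g_2)$ via Lemma \ref{fibration}, and use the weight inequality encoded in $\alpha_j=\lim_{t\to 0}t\cdot(\alpha_j+\alpha_l)$ to show that only the point $[0:1]$ is attracted to $\Gamma$, which gives bijectivity. Your third step (exhibiting the inverse as a regular section in Iarrobino coordinates) is an extra care the paper skips — it simply concludes from set-theoretic bijectivity, which suffices here since a bijective morphism onto a smooth, hence normal, target in characteristic zero is an isomorphism — so it is a harmless refinement rather than a different argument.
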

\begin{proof}
By Lemma \ref{fibration} we have that 
\[
p_{1,3}^{-1}(I_{\Gamma_1}, I_{\Gamma_3})= \left\{I_{\Gamma_1}\supset I_{\Gamma_1} + \left(\omega_1 \alpha_j +\omega_2\alpha_l\right)\supset I_{\Gamma_3}\,\,\left\vert\,\, [\omega_1:\omega_2] \in \PP^1\,\right.\right\} \subset \Hi^{n, n+1, n+2}(0).
\] 
Since $p_{1,3}: \AAA_{\Gamma}  \to {A}_{\Gamma_1,\Gamma_3}$ is clearly surjective we only need to prove that it is injective. Let $I=(I_1, I_2, I_3), J=(J_1, J_2, J_3)\in \AAA_{\Gamma}$, with $p_{1,3}(I)= p_{1,3}(J)$. Then $I=J$. In fact let $f_j, f_l \in I_1=I_3$ be such that $f_j, f_l \notin I_3=J_3$ and $\lim_{t\to 0}t\cdot f_i = \alpha_i$ for $i=j,l$. Then $\omega_1 f_j +\omega_2 f_l \in I_2$ if and only if $[\omega_1:\omega_2]=[0:1]$: otherwise the hypothesis $\alpha_j = \lim_{t\to 0 }t\cdot ( \alpha_j +\alpha_l)$ implies $\alpha_j \in I_{\Gamma_2}$ that is an absurd by definition. The same is true for $J_2$ proving $J_2=I_2$ and thus $I=J$. 
\end{proof}

\begin{definition} We write $\Gamma=(\Gamma_1, \Gamma_3)\vdash [n, n+2]$ for a couple of nested Young diagrams of size $n$ and $n+2$ respectively.  We define a map 
\[s_n : \left\{\Gamma\, \vdash \,[n, n+2]\right\} \to \left\{\Gamma\, \vdash \,[n,n+1, n+2]\right\}\]
as $s_n(\Gamma_1, \Gamma_3) = (\Gamma_1, \Gamma_2, \Gamma_3)$ where 
\[
\Gamma_2 := \Gamma_1 \cup \left\{\alpha \in \Gamma_3\setminus \Gamma_1\,\, \left\vert\,\, \alpha \text{ has minimal degree and minimal $y$ degree in }\Gamma_3\setminus \Gamma_1\,\right.\right\}.
\]
\end{definition}

\begin{proposition}
Let $n\in \NN$. The space $\Hi^{n, n+2}(0)$ has an affine cell decomposition indexed by $\Gamma \vdash [n, n+2]$. Its Poincar\'{e} polynomial is given by: 
\begin{equation}
\label{poicarennpiudue}
P_q \,\left(\Hi^{n, n+2}(0) \right) = \sum_{\Gamma \vdash [n, n+2]} \,\, q^{\pos_{1^+} \left(s_n(\Gamma)\right)}
\end{equation}
where the quantity $\pos_{1^+} \left(s_n(\Gamma)\right)$ is specified in Formula \ref{posskewmale} of Proposition \ref{mainproposition}. The Poincar\'{e} polynomials satisfy:
\begin{equation}
\label{generating13}
\sum_{n\geq 0 } P_q\, \left(\Hi^{n, n+2}(0) \right) z^n = \frac{1+q-qz}{(1-zq)(1-z^2q^2)}\,\, \prod_{m\geq 1} \frac{1}{1-z^mq^{m-1}}.
\end{equation}

\end{proposition}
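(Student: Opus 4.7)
The plan is to push the affine cell decomposition of $\Hi^{n,n+1,n+2}(0)$ from Proposition \ref{affinepavingfor123} down to $\Hi^{n,n+2}(0)$ along the forgetful projection $p_{1,3}$ of Lemma \ref{fibration}. For each fixed point $\Gamma=(\Gamma_1,\Gamma_3)\vdash[n,n+2]$ I will define the candidate affine cell $A_\Gamma:=p_{1,3}(\AAA_{s_n(\Gamma)})\subset\Hi^{n,n+2}(0)$, splitting the verification along the stratification $\Hi^{n,n+2}(0)=\Hi^{n,n+2}(0)_{tr}\sqcup\Hi^{n,n+2}(0)_{Ntr}$ supplied by Lemma \ref{fibration}.

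On the trivial stratum Lemma \ref{cellprojected} applies directly: the triple $s_n(\Gamma)$ is precisely the distinguished lift produced there, and $p_{1,3}\colon\AAA_{s_n(\Gamma)}\xrightarrow{\sim}A_\Gamma$ is an isomorphism; the other lift of $\Gamma$ to $\Hi^{n,n+1,n+2}(0)_{tr}$ carries a cell of one higher dimension that fibres trivially with $\AAA^1$ over $A_\Gamma$ and contributes nothing downstairs. On the non-trivial stratum $\alpha'_l$ is immediately above or to the right of $\alpha_j$, so there is only one way to interpose a Young diagram of size $n{+}1$ between $\Gamma_1$ and $\Gamma_3$ and $s_n(\Gamma)$ is the unique lift; by Lemma \ref{cellrespected} the whole cell $\AAA_{s_n(\Gamma)}$ lies in $\Hi^{n,n+1,n+2}(0)_{Ntr}$, and by Lemma \ref{fibration} its $p_{1,3}$-image sits in $\Hi^{n,n+2}(0)_{Ntr}$, where fibres are reduced single points, so the restriction is proper and set-theoretically bijective. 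I would upgrade this to a scheme-theoretic isomorphism by producing an explicit algebraic inverse: at $(I_1,I_3)\in\Hi^{n,n+2}(0)_{Ntr}$ the length-two $R$-module $I_1/I_3$ is not the trivial one, so it contains a unique proper $R$-submodule (namely $xI_1/I_3$ or $yI_1/I_3$, whichever is one-dimensional), and $I_2$ is recovered as its preimage in $I_1$; reading this recipe off in the Iarrobino standard-generator coordinates of $\AAA_{s_n(\Gamma)}$ shows the inverse is polynomial. Together the two cases produce an affine paving of $\Hi^{n,n+2}(0)$ indexed by $\Gamma\vdash[n,n+2]$, with cell dimensions $\pos_{1^+}(s_n(\Gamma))$ read off from Proposition \ref{mainproposition}; this establishes (\ref{poicarennpiudue}).

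For the generating function I would combine Lemma \ref{fibration} with additivity of Poincaré polynomials on affine pavings, which on both sides of $p_{1,3}$ yields
\[
P_q\bigl(\Hi^{n,n+1,n+2}(0)\bigr)-P_q\bigl(\Hi^{n,n+2}(0)\bigr)\;=\;q\,P_q\bigl(\Hi^{n,n+2}(0)_{tr}\bigr).
\]
Let $F(z):=\frac{1}{(1-zq)(1-z^2q^2)}\prod_{m\geq 1}\frac{1}{1-z^mq^{m-1}}$ denote the Nakajima--Yoshioka series of (\ref{generatingnakajima}). Equation (\ref{generatingtheorem}) identifies $\sum_n P_q(\Hi^{n,n+1,n+2}(0))z^n=(q+1)F(z)$, while the identity $P_q(\Hi^{n,n+1,n+2}(0))=(q+1)P_q(\Hi^{n+1,n+3}(0)_{tr})$ supplied by the combinatorial correspondence $b_n$ used in the proof of (\ref{generatingtheorem}), together with the vanishing $P_q(\Hi^{0,2}(0)_{tr})=0$, re-indexes to $\sum_n P_q(\Hi^{n,n+2}(0)_{tr})z^n=zF(z)$. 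Substituting,
\[
\sum_{n\geq 0}P_q\bigl(\Hi^{n,n+2}(0)\bigr)z^n\;=\;(q+1)F(z)-qz\,F(z)\;=\;\frac{1+q-qz}{(1-zq)(1-z^2q^2)}\prod_{m\geq 1}\frac{1}{1-z^mq^{m-1}},
\]
which is the desired identity.

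The only genuinely new technical point is the promotion of the set-theoretic bijection $p_{1,3}\colon\AAA_{s_n(\Gamma)}\to A_\Gamma$ over the $Ntr$-locus to an algebraic isomorphism: unlike the $tr$ case, there is no $\PP^1$-bundle trivialisation on which to lean, and one must check that the module-theoretic recipe for the inverse is regular in the Iarrobino coordinates on $\AAA_{s_n(\Gamma)}$. Everything else is combinatorial bookkeeping built on top of the three-step affine paving and the generating-function identities already proved in the paper.
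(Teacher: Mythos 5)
Your proposal is correct and takes essentially the same route as the paper, which likewise obtains the affine paving from Lemmas \ref{cellrespected} and \ref{cellprojected} together with Proposition \ref{mainproposition} and derives (\ref{generating13}) by combining Lemma \ref{fibration} with (\ref{generatingnakajima}) and (\ref{generatingtheorem}); you moreover correctly supply two details the paper leaves implicit, namely the algebraic inverse $I_2=\mathfrak{m}I_1+I_3$ over the non-trivially-related locus (where Lemma \ref{cellprojected}, stated only for trivially related fixed points, does not apply and a bijective morphism alone would not suffice) and the index normalization $\sum_n P_q\left(\Hi^{n,n+2}(0)_{tr}\right)z^n=z\,F(z)$. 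One small caution: the intermediate identity $P_q\left(\Hi^{n,n+1,n+2}(0)\right)-P_q\left(\Hi^{n,n+2}(0)\right)=q\,P_q\left(\Hi^{n,n+2}(0)_{tr}\right)$ should not be justified by ``additivity of the paving over the trivially/non-trivially related strata on both sides,'' since the affine cells of $\Hi^{n,n+1,n+2}(0)$ attached to trivially related fixed points are not contained in the trivially related locus (see the observation following Lemma \ref{cellrespected}); the identity is nonetheless true and follows from your own cell-by-cell comparison of the two lifts of each trivially related pair together with the correspondence $b_n$.
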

\begin{proof}
Consider the action of $\TT_{1^+}$ on $\Hi^{n, n+2}(0)$: Lemma \ref{cellrespected} and Lemma \ref{cellprojected} show that the attracting sets are affine cells of the affine cell decomposition of \newline$\Hi^{n,n+1, n+2}(0)$. The dimensions of these cells were calculated in Proposition \ref{mainproposition}. Then Proposition \ref{Fulton} proves the first equality for the Poincar\'{e} polynomial. \\

\hspace{3em} To prove the identity (\ref{generating13}) for the generating function, thank to Lemma \ref{fibration}, it is enough to use the generating functions (\ref{generatingnakajima}) and (\ref{generatingtheorem}).
 
\end{proof}

\nocite{*}
\bibliographystyle{alpha}
\bibliography{thesis}
\end{document}